\documentclass[reqno,10pt]{amsart}
\usepackage[margin=1in]{geometry}

\usepackage{graphicx}
\usepackage{amsmath,amssymb}
\usepackage{xcolor}

\usepackage{hyperref}

\usepackage{wrapfig}

\newtheorem{theorem}{Theorem} 
\newtheorem{definition}{Definition}
\newtheorem{lemma}{Lemma}
\newtheorem{proposition}{Proposition}

\newtheorem{remark}{Remark}

\usepackage{enumerate}
\usepackage{amssymb }
\usepackage{cancel}

\makeatletter
\renewcommand*\env@matrix[1][*\c@MaxMatrixCols c]{%
  \hskip -\arraycolsep
  \let\@ifnextchar\new@ifnextchar
  \array{#1}}
\makeatother

\let\e=\varepsilon

\let\p=\partial

\let\O=\Omega

\let\o=\omega

\let\f=\varphi

\numberwithin{equation}{section}

\let\hide\iffalse
\let\unhide\fi

\newcommand{\R}{\mathbb{R}}

\newcommand{\be}{\begin{equation}}
\newcommand{\bm}{\begin{multline}}
\newcommand{\ee}{\end{equation}}
\newcommand{\dd}{\mathrm{d}}

\newcommand{\xb}{x_{\mathbf{b}}}

\newcommand{\tb}{t_{\mathbf{b}}}
\newcommand{\vb}{v_{\mathbf{b}}}

\newcommand{\tf}{t_{\mathbf{f}}}

\newcommand{\Bes}{\begin{eqnarray*}}
\newcommand{\Ees}{\end{eqnarray*}}
\newcommand{\Be}{\begin{equation} }
\newcommand{\Ee}{\end{equation}}
\newcommand{\Bs}{\begin{split}}
\newcommand{\Es}{\end{split}}

\newcommand{\Bex}{B_{\text{ext}}}

\pagestyle{plain}  
 

\def\p{\partial}

\def\O{\Omega}
\def\R{\mathbb{R}}

\def\B{\begin{equation}}
\def\E{\end{equation}}
\def\BN{\begin{eqnarray*}}
\def\EN{\end{eqnarray*}}

\begin{document}

\title{Lipschitz continuous solutions of the Vlasov-Maxwell systems with a conductor boundary condition} 
 
\author{Yunbai Cao}
\address{Department of Mathematics, Rutgers University, Piscataway, NJ 08854; email: yc1157@math.rutgers.edu}
\author{Chanwoo Kim}
\address{Department of Mathematics, University of Wisconsin-Madison, Madison, WI 53706; email: ckim.pde@gmail.com}

\begin{abstract}
We consider relativistic plasma particles subjected to an external gravitation force in a $3$D half space whose boundary is a perfect conductor. When the mean free path is much bigger than the variation of electromagnetic fields, the collision effect is negligible. As an effective PDE, we study the relativistic Vlasov-Maxwell system and its local-in-time unique solvability in the space-time locally Lipschitz space, for several basic mesoscopic (kinetic) boundary conditions: the inflow, diffuse, and specular reflection boundary conditions. We construct weak solutions to these initial-boundary value problems and study their locally Lipschitz continuity with the aid of a weight function depending on the solutions themselves. Finally, we prove the uniqueness of a solution, by using regularity estimate and realizing the Gauss's law at the boundary within Lipschitz continuous space.
\end{abstract}

\maketitle

 \subsubsection*{\textbf{\large{Introduction}}}

Plasma is the most abundant form of ordinary matter in universe, being mostly associated with stars. The Sun, our nearest star, is composed of 92.1$\%$ hydrogen and 7.8$\%$ helium by number, and 0.1$\%$ of heavier elements. At the central core, hydrogen burns into helium (so-called the p-p chain of reactions starting from the fusion of two protons into a nucleus of deuterium), which is the major reaction that drives the sun’s radiance (see the famous B$^2$FH paper \cite{B2FH} for details). 

 \begin{wrapfigure}{r}{0.25\textwidth}
	\centering
	\includegraphics[width=0.90\linewidth]{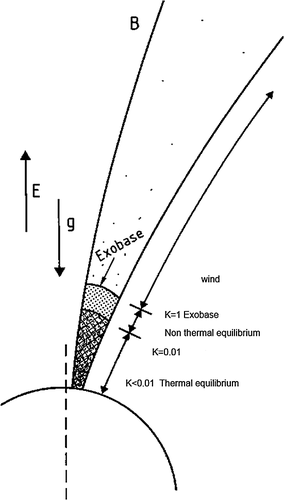} 
	\tiny{Figure 1. The different transition regions in a stellar (\cite{PP})} 
	\label{fig:wrapfig}$^1$
\end{wrapfigure}

At the upper atmosphere of the Sun (solar corona), electrons and protons escape from the solar corona (upper atmosphere), while traces of heavier elements have been identified (\cite{PLL}). This emission of plasma particles is called solar wind. The solar corona can be decomposed according to the Knudsen number of plasma. At low altitude the collision is dominant (Knudsen number $\ll $ density scale height), and hence the particles are assumed to be in hydrostatic/hydrodynamic equilibrium of MHD-type systems. Above this regime (exosphere), the collision rate between particles is assumed to be negligibly small: Knudsen number is about the density scale height of the Sun, which is an order of 100km. These two extreme Knudsen number regimes are separated by a narrow transition regime which is called the exobase (See Figure 1 adopted from \cite{PP}\footnote{permission to use the figure granted by \url{https://www.agu.org/Publish-with-AGU/Publish/Author-Resources/Policies/Permission-policy\#repository}}
). Above the exobase, there have been extensive research activities on the solar wind using collisionless Boltzmann equation (e.g. linear steady Vlasov model), which has been called the exospheric solar wind models. In the early 60s, Chamberlain suggested the ``solar breeze model" that the radial expansion of the solar corona results from the thermal evaporation of the hot coronal protons out of the gravitational field of the Sun \cite{Chamberlain}. In this model the ambient polarization electric field is implemented as a well-known Pannekoek-Rosseland (PR) electric field \cite{Pannekoek, Rosseland}, which will be discussed at \eqref{PR}. \textit{In this paper, we are interested in a kinetic description of the exospheric solar wind using the initial-boundary value problem of the full relativistic Vlasov-Maxwell system subjected to ambient polarization electric field, geomagnetic field, and gravitation.}


When the collision effect is negligible, the master equation describing dynamics of  two species plasma (an average of 95\% of the solar wind ions are protons \cite{PLL}) is the relativistic Vlasov-Maxwell system (RVM)
\Be \label{VMfrakF_I}
\begin{split}
	\p_t f_\pm + \hat v_\pm \cdot \nabla_x f_\pm +   \mathfrak F_\pm \cdot \nabla_v f_\pm = 0,  & \ \  \text{ in } \R_+ \times \O \times \mathbb R^3 ,
	\\ f_\pm(0,x,v) =  f_{0,\pm}(x,v) ,  & \ \  \text{ in }     \O \times \mathbb R^3.
\end{split}
\Ee
Here $f_\pm = f_\pm(t,x,v) \ge 0 $ represents the density distribution functions for the proton $(+)$ and electron $(-)$ respectively. The relativistic velocity is
\Be\label{rel_v_I}
\hat v_\pm = \frac{v}{\sqrt{m_\pm^2 + |v|^2 / c^2}},
\Ee
where $m_\pm$ is the magnitude of the masses of protons and electrons, and $c$ is the speed of light.

 The Lorentz force $\mathfrak F_\pm$ consists of self-consistent field electromagnetic fields plus given polarization electric field $E_{\text{ext}}$, geomagnetic field $B_{\text{ext}}$, and gravitation:
\Be \label{frakF_I}
\mathfrak F_\pm  =  e_\pm \left(E+ E_{\text{ext}} +  \frac{\hat v_\pm}{c}  \times (B + B_{\text{ext}}) \right) - m_\pm g \mathbf e_3,
\Ee 
The self-consistent fields $E(t,x)$, $B(t,x)$ are coupled with $f_\pm$ through the inhomogeneous Maxwell equations
\Be \label{Maxwell_I}
\begin{split}
	\p_t E & = c  \nabla_x \times B - 4 \pi J, \, \ \  \  \nabla_x \cdot E = 4\pi \rho, \ \ \text{ in }  \R_+ \times  \O,
	\\ \p_t B & = - c  \nabla_x \times E, \, \ \  \  \nabla_x \cdot B = 0, \ \ \text{ in }  \R_+ \times  \O,
\end{split}
\Ee
with initial conditions
\Be \label{BEinitdata_I}
E(0,x) = E_0(x),  \ \ B(0,x) = B_0(x), \ \ \text{ in } \O.
\Ee
Here, the electric density and current are defined as
\Be \label{rhoJ_I}
\rho = \int_{\mathbb R^3} ( e_+f_+ + e_- f_-) \dd v,  \ \ \  J = \int_{\mathbb R^3}  (\hat v_+ e_+ f_+  +  \hat v_- e_- f_-) \dd v.
\Ee

Due to its importance, there have been extensive studies on the global regularity of the Cauchy problem of RVM. Here, we only overview papers relevant to our approach, and we refer to \cite{LS, LSt2, DL} for a more complete list of references. In a classical solution context, Glassey and Strauss first studied a continuation criterion of the relativistic Vlasov-Maxwell system in the whole space $\mathbb R^3$ in \cite{GS}, using so-called the Glassey-Strauss representation, which is a crucial tool in our analysis of this paper. It was shown that classical solution exists for all time as long as the velocity support of the particle density function $f$ is compact. Later Klainerman and Staffilani prove the result using a different method in \cite{KS}. The work of \cite{GS} leads to substantial developments in \cite{GSc1, GSc2, GSc3, GS4, GS2, GS3}. Notably in \cite{GSc1, GSc2, GSc3}, Glassey and Schaeffer proved that in the two-dimensional and two-and-a-half dimensional case, for regular initial data with compact velocity support, the system has unique global in time solution. More recently, in \cite{LS} Luk and Strain proved a new continuation criterion for the system by showing that the classical solution exists for all time if the velocity support of $f$ is bounded after projecting to any two-dimensional plane. Then in \cite{LSt2}, they improve the result of \cite{GSc1, GSc2, GSc3} the two-dimensional and two-and-a-half dimensional case by only requiring the initial data to have polynomial decay in velocity space. In addition, they showed that in the three-dimensional case, a regular solution can be extended by assuming a bound on a certain moment of $f$. In a weak solution context, the global weak solutions of the RVM system were obtained in \cite{DL} using a velocity averaging lemma, and the questions of its uniqueness and global regularity are still open.  

In many applications of plasma models, the particles are in contact with a different phase through a sharp interface, which can be considered as a (either solid or moving) boundary. In the solar wind model, under the top of exobase, the space is filled with fully ionized plasma particles with a very short mean free path, which can be considered as a perfect conductor. We set the altitude of the exobase $x_3=0$ and consider the upper half space
\Be\label{domain}
\O  = \mathbb R^3_+ := \{ (x_1, x_2, x_3) \in \mathbb R^3: x_3 > 0 \} . \Ee
At the top of the exobase, we assume a perfect conductor boundary condition for the self-consistent electromagnetic fields. Denote by $n$ the outward unit normal of $\O$ (which is $n=-\mathbf e_3$ for our case); $[V]$ the jump of $V$ across $\p\O$: $[V](x_1,x_2)=  \lim_{x_3 \downarrow 0}V(x_1,x_2,x_3)- \lim_{x_3 \uparrow 0}V(x_1,x_2,x_3)$. Then from $\p_t B = - \nabla_x \times E$ and $\nabla_x \cdot B = 0$, we derive the jump conditions (see \cite{CKKRM} for the details)
\[ \label{jump_EB}
n \times  [E]  = 0 , \ \ \
n \cdot [B]    =0.
\]
In other words, the tangential electric fields $E_1$, $E_2$, and the normal magnetic field $B_3$ are continuous across the interface $\p\O$. Therefore, we obtain boundary conditions for a perfect conductor of the solutions $(E,B)$ to \eqref{VMfrakF}-\eqref{rhoJ}. 
\Be \label{E12B3bc_I}
E_1 = E_2 = 0, \,  B_3 = 0 , \text{ on }  \  \R_+ \times  \p \O.
\Ee
The initial-boundary value problem of the Vlasov-Maxwell system with the perfect conductor boundary condition has been studied by Guo in \cite{Guo93} for general domains with boundary. By approximating the phase space via a sequence of domains and linear systems and using the compactness result of \cite{DL}, he establishes a global existence of weak solutions for RVM with the perfect conductor boundary condition for various boundary conditions of $f$. The regularity question is highly nontrivial since the stability of the ballistic trajectory depends on the sign of the normal component of the field at the boundary. As a matter of fact, in \cite{Guo96}, he constructs an example of the RVM system such that the solution immediately does not belong to $C^1$. Under the favorite sign condition of the field at the boundary, in \cite{Guo95}, he constructs regular solutions for a 1D model of the Vlasov-Maxwell system on a half line. In the proof, he introduces an important weight function $\alpha$ and establishes a crucial velocity lemma. This technique motivates us to define the kinetic weight function in \eqref{alphadef} and build a weighted regularity estimate for $f$ along with it for the RVM system with boundary. We will discuss the role of kinetic weight in Definition \ref{def:alpha} and its remarks. There are several interesting related research lines. Here we only list some of them for readers' convenience: stationary solutions of the RVM (\cite{R}), initial-boundary value problem of Maxwell system in time-dependent domains (\cite{CS}), an inverse boundary value problem of Maxwell's equations (\cite{OPS}), a dielectric boundary problem \cite{BronoldFehske}, and a non-perfect conductor boundary problem (\cite{Dantas,Matus}).

Now we consider the gravitation (and the gravitation constant $g>0$), an ambient polarization electric field $E_{\text{ext}}$, and geomagnetic field $B_{\text{ext}}$ near the exobase. As we are only interested in the dynamics near the exobase, we can assume that $E_{\text{ext}}$ and $B_{\text{ext}}$ take forms of \Be \label{EBext_I}
E_{\text{ext}} = E_e \mathbf e_3 \text{ and } \Bex = B_e \mathbf e_3, 
\Ee
where $E_e, B_e$ are the magnitude of the fields, and $\mathbf e_3$ is a unit vector $(0 \ 0 \ 1)^T$. In the early 1920's Pannekoek and Rosseland independently calculated an electric potential of a Sun-like gaseous star, which consists of fully ionized matter in isothermal equilibrium (temperature$=T$). Recall that, for the two-species model, we have $e_{\pm}$ and $m_{\pm}$ be the charge and mass of negative/positive ions, respectively. Pannekoek and Rosseland conclude that the gravitational constant $g>0$ and the polarization electrical field $E_{\text{ext}}= E_e (0 \ 0 \ 1)^T$ satisfy the following condition at the exobase:  
\Be\label{PR_field}
\frac{E_{e}}{-g} = - \frac{m_+- m_-}{ e_+ + |e_-|}.
\Ee
For electron/proton gaseous star ($m_+>1800 m_-$ and $e_+=1=-e_-$), 
this identity implies that the polarization electrical field is \textit{upward}. Moreover, from $e_+ E_e= \frac{1}{2}(m_+ - m_-) g $, we derive the \textit{Pannekoek-Rosseland condition}:
\Be\label{PR}
 m_+ g >2e_+ E_{e}.
\Ee
This condition crucially implies that the gravitation effect dominates ambient electromagnetic one so that the acceleration of particles would be \textit{attractive to the boundary}. We will explain the importance of the Pannekoek-Rosseland condition qualitatively when defining the kinetic weight in Definition \ref{def:alpha} and its remarks. It might be worth mentioning another important physical domain with boundary in plasma physics which is a fusion reactor such as tokamak. In a lab on the earth fusion can happen above 100 million Celsius (much higher than Sun's temperature) and no boundary materials can effectively withstand direct contact with such heat. To solve this problem, scientists have devised plasma held inside a doughnut-shaped magnetic field: if a confining external magnetic field is large enough, the plasma is localized away from the boundary. In other words, the acceleration of particles due to this external electromagnetic field is \textit{repellent to the boundary}, which is the exact opposite effect of gravitation/polarization electric field satisfying the Pannekoek-Rosseland condition. In some sense, one can reduce the initial-boundary value problem to the Cauchy problem when the confining magnetic field is dominant (\cite{Zhang2,JSW}).

Finally we consider a boundary condition of density distribution of plasma particles on  the incoming phase boundary $\gamma_- := \{(x,v) \in \p \O \times \mathbb R^3: v_3>0 \}.$ In addition let $\gamma_+ := \{(x,v) \in \p \O \times \mathbb R^3: v_3<0 \}$ and $\gamma_0 := \{(x,v) \in \p \O \times \mathbb R^3: v_3=0 \}$ denote the outgoing phase boundary and grazing phase boundary, respectively. In this paper we consider the following three simple physical boundary conditions, which were originally proposed by James Clerk Maxwell \cite{Maxwell}. An inflow boundary condition (inflow BC) is given by a prescribed date $g_\pm : \R_+ \times \gamma_- \to \mathbb R$:
	\Be \label{inflow_I}
	f_\pm(t,x,v)  =  g_\pm(t,x,v), \ \ \text{on} \  \R_+ \times  \gamma_-.
	\Ee  
A diffuse boundary condition (diffuse BC) takes the form of 
	\Be \label{diffuseBC_I}
	f_\pm(t,x,v )   = 
	\frac{1}{2\pi T_w^2}e^{- \frac{|v|^2}{2 T_w}}
	 \int_{u_3 < 0 }  - f_\pm(t,x,u) \hat u_{\pm,3} du, \ \ \text{on} \  \R_+ \times  \gamma_-,
	\Ee
	where $T_w(x)$ is a positive smooth prescribed boundary temperature. As we are interested in a short-time dynamics from now on we assume the isothermal case $T_w(x)= 1$ for the sake of simplicity. We also have a generalized diffuse boundary condition \cite{CKLi}. Finally, a specular reflection boundary condition (specular BC) is given by
	\Be \label{spec_I}
	f_\pm(t,x,v_\parallel, v_3 )  = f_\pm(t,x, v_\parallel, -v_3 )  \ \ \text{on} \  \R_+ \times  \gamma_-.
	\Ee
For the diffuse BC and specular BC, the boundary conditions enjoy a null flux condition:
	$\int_{\mathbb R^3} f_\pm(t,x,v) \hat v_{\pm,3} dv = 0 \text{ for } x \in \p \O$
, which implies a conservation of mass for a strong solution of RVM
	$\int_{\O \times \mathbb R^3} f_\pm(t,x,v) dv dx = \int_{\O \times \mathbb R^3} f_\pm(0,x,v) dv dx \text{ for all } t \ge 0.$
One of the advantages of kinetic theory is that we can devise different boundary conditions from the microscopic interaction law of particles and boundaries. For example in a recent solar wind model, a non-Maxwellian inflow boundary condition is used to explain coronal heating phenomena (\cite{PP}).

 Stability of the RVM system has also been studied extensively. Notably, in \cite{LS1, LS2, LS3}, the authors study the spatially inhomogeneous equilibrium in domains without boundary. A sharp criterion for spectral stability was given in \cite{LS3} and the nonlinear stability is studied in \cite{LS2}. In the case of bounded domains, stability analysis of the system was carried out in \cite{NS1} when the domain is a $2$D disk with perfect conducting boundary which reflects particles specularly. And then later the authors consider the case when the domain is a $3$D solid torus. More recently, the stability analysis was generalized to any axisymmetric domains in \cite{Zhang1}.

\hide

In this paper, we consider a plasma particle dynamics near above the exobase using the two species relativistic Vlasov-Maxwell system subjected to the solar gravitation and given geoelectricmagentic fields. Our geoelectric fields only agree with the Pannekoek-Rosseland condition at the top of the exobase (not for all altitude)

 $(E_{\text{ext}}, B_{\text{ext}})$, which only satisfy so-called Pannekoek-Rosseland condition at the top of the exobase (\ref{PR}).

the Pannekoek-Rosseland electric field

Pannekoek-Rosseland

 The first one is the solar breeze model. It was pro- posed by Chamberlain (1960) who suggested that the protons with a velocity exceeding the critical escape velocity evapo- rate like neutral particles escape out of a planetary atmosphere (Jeans 1923, Brandt  Chamberlain 1960). Thus Chamberlain suggested that the radial expansion of the solar corona results from the thermal evaporation of the hot coronal protons out of the gravitational field of the Sun.

As indicated above, the present work is based on the ki- netic/exospheric model of the ion-exosphere originally devel- oped by Lemaire-Scherer (1970; 1971a) for geomagnetic field lines open to the magnetospheric tail. This model, initially ded- icated to the study of the polar wind, had subsequently been applied to model the solar wind (Lemaire-Scherer 1971b), with the assumption of Maxwellian VDF’s for the protons and electrons at the top of the collision-dominated part of the corona.

In zero order kinetic approximations, or exospheric models, two separate regions are considered: first, the collision-dominated barosphere at low altitude, in which the particles are assumed to be in hydrostatic/hydrodynamic equilibrium, and secondly, an exosphere in which the collision rate between particles is assumed to be negligibly small. These two extreme Knudsen number regimes are separated by a surface which is called the exobase.

To obtaina kineticdescriptionof the solar windphenomenas,everaaluthorshaveapplie exosperic theoreis o the collisionless region of the solar corona

 Among the kinetic approaches, the purely collisionless one is generally called the exospheric approach. Two classes of exo- spheric solar wind models have been developed during the last fourty years.

An average of 95 of the solar wind ions are protons. Helium is the most abundant heavy ion with 3.2 in average in the slow solar wind and 4.2 in the high speed solar wind [Schwenn, 1990]. However, this fraction is variable, espe- cially in the slow speed solar wind, and helium concentra- tion can sometimes be as high as 10 of the total ions concentration. Oxygen, carbon, neon, nitrogen, silicon, magnesium, iron, sulfur, and other heavy minor ions are also detected in much smaller amounts (around 1 all together).

three internal (thermonuclear, radiative (energy is transported
mainly by radiative diffusion), and
convective) zones, the solar surface (photosphere), the
lower (chromosphere) and upper atmosphere (corona),

The solar interior further
consists of a radiative zone, where energy is transported
mainly by radiative diffusion

Markus J. Aschwanden,
Chapter 11 - The Sun,
Editor(s): Tilman Spohn, Doris Breuer, Torrence V. Johnson,
Encyclopedia of the Solar System (Third Edition),
Elsevier,
2014,
Pages 235-259,

=
A kinetic model of the solar wind with Kappa distribution functions in the corona
M. Maksimovic1, V. Pierrard2, and J.F. Lemaire2
Astron. Astrophys. 324, 725–734 (1997)
==

\unhide

%

\subsubsection*{\textbf{\large{Main Theorems}}} 
As a major goal of this paper, we construct a weak solution of RVM in a locally Lipschitz space, in which we can guarantee a \textit{uniqueness!} The major difficulty is that the density distribution $f_{\pm}$ is singular at the grazing set $\gamma_0$ in general. Notably, a solution is discontinuous at the grazing set $\gamma_0$ (\cite{Kim}), and a derivative $\nabla_{x,v} f_{\pm}$ blows up at $\gamma_0$ (\cite{Guo95, GKTT1}). If a trajectory emanating from the grazing set can propagate inside the domain (either if the domain is not convex or the field is repellent to the boundary) then such singularities propagate inside the domain and the regularity of solutions become restrictive (\cite{Kim, Guo96, GKTT2, KimLee}). In particular, following the proof of Guo-Kim-Tonon-Trescases \cite{GKTT1}, we can deduce that a global $H^1(\O)$ bound is not possible for solutions $f_\pm$ of \eqref{VMfrakF}-\eqref{rhoJ} \& \eqref{E12B3bc}, in general. Unfortunately, such low regularity hardly guarantees uniqueness due to the nonlinear term $\mathfrak{F}_\pm \cdot \nabla_v f_{\pm}$. To overcome such obstacle, we adopt a kinetic weight function $\alpha_\pm(t,x,v)$ in the regularity estimate of a locally Lipschitz space, inspired by \cite{Guo96, GKTT1}.
 \begin{definition}[Kinetic weight]\label{def:alpha}Recall the Lorentz force $\mathfrak{F}_{\pm}$ in \eqref{frakF} with $(E_{\text{ext}}, B_{\text{ext}})$ in \eqref{EBext}. We define
 	\Be \label{alphadef}
 	\begin{split}
 		\alpha_\pm (t, x_\parallel, x_3, v) :=  & \sqrt{(x_3)^2+(\hat{v}_{\pm,3})^2 -2 \mathfrak F_{\pm,3}(t,x_\parallel, 0 ,v) \frac{x_3}{\langle v_\pm \rangle}}
 		\\ =  & \sqrt{(x_3)^2+(\hat{v}_{\pm,3})^2 +2\bigg(  m_\pm g  - e_\pm \Big(E_3 +E_e +  \frac{1}{c} (\hat v_\pm  \times B)_3 \Big)_{x_3=0}  \bigg) \frac{x_3}{\langle v _\pm\rangle}},
 	\end{split}
 	\Ee
where we have used that  $( \hat v_{\pm } \times B_{\text{ext}} )_3 = 0$ for \eqref{EBext}.
 \end{definition}
\begin{remark}
	Clearly, $\alpha_\pm$ is well-defined when $-\mathfrak{F}_{\pm,3} (t, x_\parallel, 0, v)$ is positive. In this paper, we assume this condition on the initial data at the boundary:  
	\Be \label{E0B0g}
	m_\pm g -e_\pm \Big(E_{0,3}(x) +E_e +  \frac{1}{c} (\hat v_\pm  \times B_0(x))_3 \Big)_{x_3=0}   >  c_1, \ \ \text{for some } \   c_1>0.
	\Ee
	\end{remark}
	
	\begin{remark}
		The condition \eqref{E0B0g} is not very restrictive under the Pannekoek-Rosseland condition \eqref{PR}. Note that $-\mathfrak{F}_{\pm,3} (t, x_\parallel, 0, v)$ equals 
	\Be\notag
\underbrace{\big(  m_\pm g  - e_\pm  E_e   \big)}  \frac{x_3}{\langle v _\pm\rangle}
  - e_\pm \Big(E_{0,3}  +  ( \frac{\hat v_\pm}{c}  \times B_0)_3 \Big)_{x_3=0}   \frac{x_3}{\langle v _\pm\rangle}.
	\Ee
	If the Pannekoek-Rosseland condition \eqref{PR} holds then the underlined coefficients of the first term, which corresponds to the net force at the equilibrium, has lower bounds:
	\Be\notag
\big(  m_+ g  - e_+ E_e   \big) 	> \frac{m_+g }{2}, \ \ \ \big(  m_- g  - e_-  E_e   \big)>|e_-| E_e.
	\Ee 
From \eqref{PR_field}, we know that both lower bounds are of the same size. If $E_{0,3}|_{x_3=0}$ and $B_{0,1}|_{x_3=0}$, $B_{0,2}|_{x_3=0}$ are smaller than such lower bounds then the condition \eqref{E0B0g} holds. It is the case when the initial state of plasma is either close to the neutral state or vacuum at the boundary. 
\end{remark}

 
 \begin{remark}Since being introduced in \cite{Guo95}, such weight function $\alpha$ and its variants have served important roles in the regularity analysis for various kinetic equations with boundary such as \cite{CAO2, CAO3, CAO1, CK, CKL, CKLi, GKTT1, HV}. Notably in \cite{GKTT1}, an $\alpha$-weighted $C^1$ solution for the Boltzmann equation was constructed in convex domains. In \cite{CKL}, the authors used a different version of kinetic weight to construct the global strong solution to the Vlasov-Poisson-Boltzmann (VPB) system in convex domains with diffuse BC. The result was generalized to the two-species case in \cite{CAO3}, and to the case of the presence of the external field in \cite{CAO2}. A generalized diffuse boundary condition (namely the Cercignani-Lampis boundary condition) for the VPB system is studied in \cite{CKLi}. A survey on the recent development in this direction can be found in \cite{CK}. 

 	\end{remark}


Although the problem has been set already (RVM system \eqref{VMfrakF_I}-\eqref{rhoJ_I} under the perfect conductor boundary condition of electromagnetic field \eqref{E12B3bc_I}
), we list them here redundantly for the sake of the reader's convenience: Let $\O$ the half space \eqref{domain}. We read the RVM system 
\Be \label{VMfrakF}
\begin{split}
	\p_t f_\pm + \hat v_\pm \cdot \nabla_x f_\pm +   \mathfrak F_\pm \cdot \nabla_v f_\pm = 0,  & \ \  \text{ in } \R_+ \times \O \times \mathbb R^3 ,
	\\ f_\pm(0,x,v) =  f_{0,\pm}(x,v) ,  & \ \  \text{ in }     \O \times \mathbb R^3.
\end{split}
\Ee
with the relativistic velocity, Lorentz force, and the external fields
\begin{align}
\hat v_\pm &=  {v}\Big/ {\sqrt{m_\pm^2 + |v|^2 / c^2}},\label{rel_v}\\
\mathfrak F_\pm  &=  e_\pm \left(E+ E_{\text{ext}} +  \frac{\hat v_\pm}{c}  \times (B + B_{\text{ext}}) \right) - m_\pm g \mathbf e_3,\label{frakF}\\
E_{\text{ext}} &= E_e \mathbf e_3 \text{ and } \Bex = B_e \mathbf e_3.  \label{EBext}
\end{align}  
The Maxwell's equations solve
\Be \label{Maxwell}
\begin{split}
	\p_t E & = c  \nabla_x \times B - 4 \pi J, \, \ \  \  \nabla_x \cdot E = 4\pi \rho, \ \ \text{ in }  \R_+ \times  \O,
	\\ \p_t B & = - c  \nabla_x \times E, \, \ \  \  \nabla_x \cdot B = 0, \ \ \text{ in }  \R_+ \times  \O,
\end{split}
\Ee 
\Be \label{BEinitdata}
E(0,x) = E_0(x),  \ \ B(0,x) = B_0(x), \ \ \text{ in } \O.
\Ee
where the electric density and current are defined as
\Be \label{rhoJ}
\rho = \int_{\mathbb R^3} ( e_+f_+ + e_- f_-) \dd v,  \ \ \  J = \int_{\mathbb R^3}  (\hat v_+ e_+ f_+  +  \hat v_- e_- f_-) \dd v.
\Ee
Finally we impose the perfect conductor boundary condition
\Be \label{E12B3bc}
E_1 = E_2 = 0, \,  B_3 = 0 , \text{ on }  \  \R_+ \times  \p \O,
\Ee
and consider the inflow BC, diffuse BC, and specular BC on the incoming boundary $\gamma_-$:
\begin{align}
	f_\pm(t,x,v)  =  g_\pm(t,x,v) \ \ &\text{on} \  \R_+ \times  \gamma_-,
	\label{inflow}\\
	f_\pm(t,x,v )    = 
	\frac{1}{2\pi T_w^2}e^{- \frac{|v|^2}{2 T_w}}
	\int_{u_3 < 0 }  - f_\pm(t,x,u) \hat u_{\pm,3} du \ \ &\text{on} \  \R_+ \times  \gamma_-,
	\label{diffuseBC}\\
	f_\pm(t,x,v_\parallel, v_3 )   = f_\pm(t,x, v_\parallel, -v_3 )  \ \ &\text{on} \  \R_+ \times  \gamma_-
	.\label{spec}
	\end{align}

We define a notation of weak solutions to this initial-boundary value problem.
\begin{definition}[Definition 1.5 of \cite{Guo93}] \label{weaksoldef}
	Let $f_\pm \in L^1_{\text{loc} } ((0,T) \times \O \times \mathbb R^3 ) \cap   L^1_{\text{loc} } ((0,T) \times \gamma_+ ) $, $f_{0, \pm } \in L^1_{\text{loc}}( \O \times \mathbb R^3 ) $, $g \in L^1_{\text{loc} } ((0,T) \times \gamma_- )$. Let $E, B \in  L^1_{\text{loc}}( (0,T) \times  \O ) $, $E_0, B_0  \in L^1_{\text{loc} }(\O ) $. Then $(f_\pm, E, B)$ is a weak solution of \eqref{VMfrakF}-\eqref{rhoJ} under the perfect conductor boundary condition of electromagnetic field \eqref{E12B3bc} and different boundary conditions for $f_{\pm}$ \eqref{inflow}, \eqref{diffuseBC}, or \eqref{spec}, if for any test functions 
	\[
	\begin{split}
		& \phi(t,x,v) \in C_c^\infty([0,T) \times \O \times \mathbb R^3 ), \text{ with } \text{ supp } \phi  \subset \{ [0, T) \times \bar \O \times \mathbb R^3 \} \setminus \{ (\{0\} \times \gamma ) \cup (0,T) \times \gamma_0 \}, \text{ and }
		\\ & \Psi(t,x)  \in C_c^\infty([0,T) \times \bar \O ; \mathbb R^3 ), \    \Phi(t,x) \in C_c^\infty([0,T) \times \O ; \mathbb R^3 ), 
	\end{split}
	\]
	we have
	\Be \label{weakf}
	\begin{split}
		&  \iint_{\O \times \mathbb R^3 } f_{0, \pm} \phi(0) dv dx + \int_0^T \iint_{\O \times \mathbb R^3 } (\p_t \phi + \nabla \phi \cdot \hat v + \mathfrak F_{\pm} \cdot \nabla_v \phi ) f_\pm  dv dx dt
		\\ = & \int_0^T \int_{\gamma_+ } \phi f_\pm \hat v_3 d v dS_x + \underbrace{  \int_0^T \int_{\gamma_- } \phi f_\pm \hat v_3 d v dS_x }_{\eqref{weakf}_{\text{BC} } },
	\end{split}
	\Ee 
		and
	\Be \label{Maxweak1}
	\int_0^T \int_\O E \cdot \p_t \Psi dx dt - \int_\O \Psi(0,x) \cdot E_0 dx  = - \int_0^T \int_\O (\nabla_x \times \Psi) \cdot B dx dt + 4\pi \int_0^T \int_\O \Psi \cdot J dx dt,
	\Ee
	\Be \label{Maxweak2}
	\int_0^T \int_\O B \cdot \p_t \Phi dx dt + \int_\O \Phi(0,x) \cdot B_0 dx = \int_0^T \int_\O (\nabla_x \times \Phi) \cdot E dx dt,
	\Ee
	and
	\Be \label{nablaEBweak}
	\nabla \cdot E = 4 \pi \rho, \  \nabla \cdot B = 0 \text{ in the sense of distributions in } (0,T) \times \O \times \mathbb R^3.
	\Ee
	Here, the boundary term of \eqref{weakf} is determined by different boundary conditions:	
	\[
	\begin{split}
		\eqref{weakf}_{\text{BC} }  = 
		\begin{cases}
			\int_0^T \int_{\gamma_- } \phi g_\pm \hat v_3  \, d v dS_x, \text{ for the inflow BC } \eqref{inflow},
			\\   \int_0^T \int_{\gamma_+ }  \left( - \frac{1}{2\pi T_w^2}   \int_{u_3 > 0 } e^{- \frac{|u|^2}{2 T_w}} \phi (t,x,u )\hat u_3 du  \right) \hat v_3   f_\pm  \, dv dS_x, \text{ for the diffuse BC } \eqref{diffuseBC},
			\\ \int_0^T \int_{\gamma_+} \phi(t,x,v_\parallel, - v_3 ) f_\pm \hat v_3 \  dv dS_x, \text{ for the specular BC } \eqref{spec}.
		\end{cases}
	\end{split}
	\]

\end{definition}

Now we state the main theorems.

\begin{theorem} [inflow BC] \label{main1}
Suppose the initial datum $f_{0,\pm}$ satisfies, for some $\delta>0$,
\Be \label{f0bdd}
\begin{split}
& \| \langle v \rangle^{4 + \delta } f_{0,\pm} \|_{L^\infty(\O \times \mathbb R^3) }   + \| \langle v \rangle^{5 + \delta }   \nabla_{x_\parallel} f_{0,\pm} \|_{L^\infty(\O \times \mathbb R^3) }  
 \\ & +  \| \langle v \rangle^{5 + \delta }   \alpha_\pm \p_{x_3} f_{0,\pm} \|_{L^\infty(\O \times \mathbb R^3) } + \| \langle v \rangle^{5 + \delta }    \nabla_v f_{0,\pm} \|_{L^\infty(\O \times \mathbb R^3) }  < \infty,
 \end{split}
 \Ee
  and the inflow boundary datum $g_\pm$ satisfies
  \Be \label{inflowdata}
    \| \langle v \rangle^{5 + \delta }   \p_t g_\pm \|_{L^\infty( (0 , \infty) \times \gamma_-) } +   \| \langle v \rangle^{5 + \delta }   \nabla_{x_\parallel} g_\pm \|_{L^\infty (0 , \infty) \times \gamma_-) } +  \| \langle v \rangle^{5 + \delta }   \nabla_{v} g_\pm \|_{L^\infty( (0 , \infty) \times \gamma_-) }    < \infty.
\Ee
Moreover, $E_0,B_0, g$ satisfies \eqref{E0B0g}, and the compatibility conditions 
\Be \label{EBintialC}
\begin{split}
\nabla \cdot E_0 =  4 \pi \rho_0 , \, \nabla \cdot B_0 = 0, & \text{ in } \O,
\\ E_{0, \parallel }  = 0, \,  B_{0,3} = 0 , & \text{ on } \p \O,
\end{split}
\Ee
and
\Be \label{E0B0bdd}
\| E_0 \|_{C^2(\O ) } + \| B_0 \|_{C^2(\O) } < \infty.
\Ee
Then there exists a unique solution $f_\pm(t,x,v), E(t,x,v), B(t,x,v) $ for $0 \le t \le T$ with $T \ll 1 $ to RVM for the inflow BC \eqref{inflow} in the sense of Definition \ref{weaksoldef}, such that,
\Be \label{inflowfreg}
\begin{split}
  \sup_{0 \le t \le T} \Big( & \| \langle v \rangle^{4+\delta}   \nabla_{x_\parallel} f_\pm(t) \|_{L^\infty(\O \times \mathbb R^3 )}   + \| \langle v \rangle^{5+\delta}  \alpha_\pm \p_{x_3} f_\pm(t) \|_{L^\infty(\O \times \mathbb R^3 )}   \\
 &  +  \| \langle v \rangle^{5+\delta}   \nabla_{v} f_\pm(t) \|_{L^\infty(\O \times \mathbb R^3 )}  \Big) <  \infty,
  \end{split}
\Ee
and
\Be \label{inflowEBreg}
\sup_{0 \le t \le T} \left( \| \nabla_x E(t) \|_{L^\infty( \O ) }  + \| \nabla_x B(t) \|_{L^\infty( \O ) } \right)  < \infty. 
\Ee
\end{theorem}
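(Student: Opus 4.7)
The plan is to set up a Picard-type iteration: given the $n$-th iterate $(f_\pm^n, E^n, B^n)$, define $f_\pm^{n+1}$ by solving the linear Vlasov equation \eqref{VMfrakF} with the Lorentz force frozen at $(E^n, B^n)$, then define $(E^{n+1}, B^{n+1})$ by solving \eqref{Maxwell} with sources $\rho^{n+1}, J^{n+1}$ built from $f_\pm^{n+1}$ via \eqref{rhoJ}. The linear Vlasov step is handled by the method of characteristics, reading off $f_\pm^{n+1}$ from $f_{0,\pm}$ or from the inflow data $g_\pm$ according to whether the backward trajectory exits through $\{t=0\}$ or $\gamma_-$. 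For a short time $T \ll 1$, I would show that the iterates remain in a closed ball of the norm on the left-hand side of \eqref{inflowfreg}--\eqref{inflowEBreg}, and that the map is a contraction in a weaker $L^\infty$-norm; this yields both local existence and uniqueness.

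The heart of the regularity estimate is a velocity lemma for the weight $\alpha_\pm$ of Definition \ref{def:alpha}. Computing $(\p_t + \hat v_\pm \cdot \nabla_x + \mathfrak F_\pm \cdot \nabla_v)\alpha_\pm^2$ along a characteristic of \eqref{VMfrakF}, the form of $\alpha_\pm^2$ as a quadratic invariant of the Hamiltonian linearized at the boundary should produce $|D_t \alpha_\pm| \lesssim \langle v\rangle\, \alpha_\pm$, and hence a Gronwall bound making $\alpha_\pm(t)$ comparable to its initial value for $0 \le t \le T$. Thanks to assumption \eqref{E0B0g} and short-time propagation of \eqref{E0B0bdd}, the lower bound stays strictly positive, giving quantitative control of the distance from the grazing set $\gamma_0$ and, in particular, an $L^\infty$ bound on $\alpha_\pm \nabla_x t_{\mathbf b}$, the quantity that otherwise blows up in the normal derivative of the backward exit time (cf.\ \cite{Guo95, GKTT1}).

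With the velocity lemma in hand, I would differentiate the characteristic representation of $f_\pm$. The tangential derivatives $\nabla_{x_\parallel}f_\pm$ and $\nabla_v f_\pm$ are controlled by Gronwall estimates on the variational equations for the characteristic ODEs, whose coefficients depend linearly on $\nabla_x E^n, \nabla_x B^n$. For $\p_{x_3} f_\pm$, the representation picks up factors of $\nabla_x t_{\mathbf b}$ of size $\alpha_\pm^{-1}$; multiplying by $\alpha_\pm$ absorbs that singularity, which is precisely the role of the weight in \eqref{inflowfreg}. The polynomial velocity weights $\langle v\rangle^{5+\delta}$ in \eqref{f0bdd}--\eqref{inflowdata} propagate thanks to the bounded relativistic speed $|\hat v_\pm| < c$, and they are chosen large enough to secure the integrability of the velocity moments that appear when we feed the result back into Maxwell.

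For Maxwell I would exploit the perfect conductor BC by extending $f_\pm, E, B$ to $\mathbb R^3$ through the canonical image reflection across $\p\O$ (odd extension of $E_\parallel$ and $B_3$, even extension of $E_3$ and $B_\parallel$), which is consistent with \eqref{E12B3bc} and the compatibility conditions \eqref{EBintialC}. After this reduction, the Glassey--Strauss representation expresses $\nabla_x E^{n+1}, \nabla_x B^{n+1}$ as a free Maxwell evolution of $(E_0, B_0)$ plus a retarded integral of the form $\int_0^t\!\!\int K(x-y,v)\cdot \nabla_{x,v} f_\pm^{n+1}(s,y,v)\,dv\,dy\,ds$, whose kernel becomes integrable in $L^\infty_x$ when paired against the weighted derivatives in \eqref{inflowfreg}. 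The main obstacle, as I see it, is bounding the $v$-integral of this kernel against $\alpha_\pm^{-1}\p_{x_3}f_\pm$: the weight degenerates on the set $\{x_3 = 0,\ \hat v_{\pm,3}=0\}$, but the compensating factor of $|\hat v_{\pm,3}|$ coming from the Glassey--Strauss kernel together with the $\langle v\rangle^{5+\delta}$ decay should deliver uniform integrability in $v$, closing the loop and permitting the contraction estimate that yields the uniqueness claim.
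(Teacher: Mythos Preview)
Your overall architecture---Picard iteration, velocity lemma for $\alpha_\pm$, differentiating the characteristic formula with the weight, and feeding back through Glassey--Strauss---is exactly the paper's strategy. But the paragraph where you close the loop on $\nabla_x E, \nabla_x B$ contains a real gap.

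You write that the $v$-integral of the Glassey--Strauss kernel against $\alpha_\pm^{-1}\p_{x_3}f_\pm$ is controlled because of ``the compensating factor of $|\hat v_{\pm,3}|$ coming from the Glassey--Strauss kernel.'' There is no such factor. The bulk Glassey--Strauss kernels have the form $\frac{(|\hat v|^2-1)(\hat v_i+\omega_i)}{(1+\hat v\cdot\omega)^2}$ and carry no smallness at $\hat v_3=0$; after taking $\p_{x_3}$ you face exactly
\[
\int_{B(x;t)\cap\{y_3>0\}}\frac{1}{|y-x|^2}\int_{\mathbb R^3}\frac{\langle v\rangle^{-4-\delta}}{\alpha(t-|y-x|,y,v)}\,dv\,dy,
\]
and the inner $v$-integral is \emph{not} uniformly bounded. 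The paper's mechanism is different and is the key technical point: one proves the local-to-nonlocal estimate (Lemma~\ref{1alphaintv})
\[
\int_{\mathbb R^3}\frac{dv}{\langle v\rangle^{4+\delta}\,\alpha(t,y,v)}\ \lesssim\ \ln\!\Big(1+\frac{1}{y_3}\Big),
\]
which is only \emph{logarithmically} singular in the distance to the boundary. That logarithm is then absorbed by the spatial integral against $|y-x|^{-2}$ (or $|y-x|^{-1}$) over the compact light cone, via H\"older in $y$. Without this step the iteration does not close; your proposed $|\hat v_3|$ cancellation would not.

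A secondary point: the even extension of $E_3$ and $B_\parallel$ yields the \emph{zero} Neumann condition, not the correct inhomogeneous ones $\p_3 E_3|_{\p\O}=4\pi\rho$, $\p_3 B_1|_{\p\O}=4\pi J_2$, $\p_3 B_2|_{\p\O}=-4\pi J_1$, which the paper derives from Gauss's law and Amp\`ere's law at the boundary (Lemma~\ref{Maxtowave}). The representation therefore splits $E_3=\tilde E_3+w$ (and similarly for $B_\parallel$), where $\tilde E_3$ is the even-extension solution and $w$ is a boundary-layer piece solving the homogeneous wave equation with the inhomogeneous Neumann data, computed via Laplace transform and the half-space Helmholtz Green's function (Lemmas~\ref{Helmu}--\ref{Helmu2}). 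This contributes the extra boundary integrals \eqref{Eest3bdrycontri}, \eqref{Bestbdrycontri} in Propositions~\ref{Eiform}--\ref{Biform}; your sketch omits them.
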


\begin{theorem} [diffuse BC] \label{main2}
Suppose $f_{0,\pm}$ satisfies \eqref{f0bdd}, and $E_0, B_0, g$ satisfy \eqref{E0B0g}, \eqref{EBintialC}, and \eqref{E0B0bdd}.  Then there exists a unique solution $f_\pm(t,x,v), E(t,x,v), B(t,x,v) $ for $0 \le t \le T$ with $T \ll 1 $ to RVM for the diffuse BC \eqref{diffuseBC} in the sense of Definition \ref{weaksoldef}, such that both \eqref{inflowfreg} and \eqref{inflowEBreg} hold. 
\hide\Be \label{inflowfreg}
 \sup_{0 \le t \le T} \left(  \| \langle v \rangle^{4+\delta}   \nabla_{x_\parallel} f_\pm(t) \|_{L^\infty(\O \times \mathbb R^3 )}   + \| \langle v \rangle^{5+\delta}  \alpha_\pm \p_{x_3} f_\pm(t) \|_{L^\infty(\O \times \mathbb R^3 )}   +  \| \langle v \rangle^{5+\delta}   \nabla_{v} f_\pm(t) \|_{L^\infty(\O \times \mathbb R^3 )}  \right) <  \infty,
\Ee
and
\Be \label{inflowEBreg}
 \sup_{0 \le t \le T} \left( \| \nabla_x E(t) \|_{L^\infty( \O ) }  + \| \nabla_x B(t) \|_{L^\infty( \O ) } \right)  < \infty. 
\Ee\unhide
\end{theorem}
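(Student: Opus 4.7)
The plan is to mimic the iteration scheme already used for Theorem \ref{main1} and to isolate the extra work that the nonlocal diffuse boundary condition \eqref{diffuseBC} demands. I would construct an approximating sequence $(f_\pm^{n+1}, E^{n+1}, B^{n+1})$ by freezing $(E^n,B^n)$ in the Lorentz force $\mathfrak F_\pm^n$, solving the linear Vlasov transport along characteristics with diffuse boundary data generated by $f_\pm^n$, and then updating $(E^{n+1},B^{n+1})$ from \eqref{Maxwell}--\eqref{BEinitdata} with \eqref{E12B3bc} and sources \eqref{rhoJ} computed from $f_\pm^{n+1}$. Because we only need $T\ll 1$, the Glassey--Strauss representation (used in the same way as in Theorem \ref{main1}) will control $\|\nabla_x E^{n+1}\|_{L^\infty}+\|\nabla_x B^{n+1}\|_{L^\infty}$ in terms of the weighted norms of $f^{n+1}_\pm$ in \eqref{inflowfreg}, provided those norms are propagated on $[0,T]$. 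Hence the heart of the matter is the weighted $L^\infty$ regularity estimate for $f_\pm^{n+1}$ under the diffuse BC, which replaces the trivial boundary term $g_\pm$ used in Theorem \ref{main1}.

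Next I would establish a velocity lemma for the weight $\alpha_\pm$ defined in \eqref{alphadef}: under assumption \eqref{E0B0g} and the bootstrap hypothesis $\sup_{0\le t\le T}\|\nabla_x E^n\|_{L^\infty}+\|\nabla_x B^n\|_{L^\infty}\le M$ with $T$ small, the quantity $\alpha_\pm$ evaluated on the characteristics of $\mathfrak F^n_\pm$ satisfies $\tfrac{d}{ds}\alpha_\pm\ge -C\alpha_\pm$, so that $\alpha_\pm$ is comparable forward and backward along trajectories. This is the direct analogue of Guo's velocity lemma from \cite{Guo95}, and it is what keeps the grazing-set singularity of $\partial_{x_3} f_\pm$ localized. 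For the tangential and velocity derivatives one applies $\nabla_{x_\parallel}$ and $\nabla_v$ to the Duhamel formula along characteristics; the new piece is the differentiation of the diffuse BC \eqref{diffuseBC}. Since \eqref{diffuseBC} only involves the tangential position $x_\parallel$ through $f_\pm^n(t,x,u)$ with $u_3<0$, a tangential derivative hits the outgoing trace and is controlled by $\|\langle v\rangle^{4+\delta}\nabla_{x_\parallel}f_\pm^n\|_{L^\infty}$ times an integrable Gaussian in $u$; velocity derivatives at the boundary fall on the explicit Gaussian $e^{-|v|^2/2}$ and produce only polynomial weights, which are absorbed by the $\langle v\rangle^{5+\delta}$ moment. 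For the normal derivative $\partial_{x_3}$, the boundary value is never differentiated in the bulk estimate: one uses the $\alpha_\pm$-weighted characteristic ODE so that the trace of $\alpha_\pm\partial_{x_3} f_\pm$ at $\gamma_-$ can be rewritten (via $\alpha_\pm|_{\gamma_-}^2=\hat v_{\pm,3}^2$) in terms of $\nabla_{x_\parallel}f_\pm$ and $\nabla_v f_\pm$ generated by the diffuse BC. Together these give a closed Grönwall-type estimate for \eqref{inflowfreg} on a uniform interval $[0,T]$ with $T\ll 1$.

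With uniform bounds in hand I would extract weak-$*$ limits and verify the weak formulation \eqref{weakf}--\eqref{nablaEBweak}: the field convergence is strong enough (by $\nabla E,\nabla B\in L^\infty$) that the Lorentz term $\mathfrak F_\pm^n\cdot\nabla_v f_\pm^{n+1}$ passes to the limit, and the diffuse boundary term $\eqref{weakf}_{\text{BC}}$ passes because the outgoing trace $f_\pm^n|_{\gamma_+}$ is uniformly bounded by \eqref{inflowfreg} away from $\gamma_0$ and the Gaussian kernel is $L^1$ in $u$. The compatibility conditions \eqref{EBintialC} propagate in the classical way, and Gauss's law $\nabla\cdot E=4\pi\rho$ holds in the bulk since the sources satisfy the continuity equation thanks to the null-flux property of the diffuse BC. Finally, uniqueness is obtained by subtracting two solutions and running an $L^2$ energy estimate: the difference solves a linear system whose coefficients are Lipschitz by \eqref{inflowfreg}--\eqref{inflowEBreg}, the diffuse BC contributes a nonnegative boundary term (it is a contraction in the outgoing $L^2_{|\hat v_3|}$ norm), and the field difference is controlled through a Hodge-type estimate on the half space combined with Gauss's law realized at the boundary in the Lipschitz class.

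\textbf{Main obstacle.} The delicate point is the normal-derivative estimate at the boundary. Unlike the inflow BC, where $\partial_{x_3}f_\pm$ at $\gamma_-$ is given from data, here it is implicitly coupled to the outgoing trace of $\partial_{x_3} f_\pm$ through the $\hat u_{\pm,3}\,du$ integral. I expect that closing the estimate for $\alpha_\pm\partial_{x_3}f_\pm$ requires showing that, at $\gamma_-$, the weight $\alpha_\pm$ exactly compensates the loss coming from $v_3\to 0^+$ so that the boundary trace of $\alpha_\pm\partial_{x_3}f_\pm$ can be expressed, via implicit differentiation of \eqref{diffuseBC} together with the velocity lemma, in terms of the already-bounded quantities $\nabla_{x_\parallel}f_\pm$ and $\nabla_v f_\pm$, without reintroducing an unbounded $1/\hat v_{\pm,3}$ factor. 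This is exactly the place where the Pannekoek--Rosseland lower bound in \eqref{E0B0g} enters, and it is the main reason the proof is short-time only.
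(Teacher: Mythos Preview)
Your overall strategy---iteration scheme, velocity lemma for $\alpha_\pm$, Glassey--Strauss control of $\nabla_x E,\nabla_x B$ from the weighted norms of $f$, and passage to the limit---matches the paper's. Two points, however, depart from the paper's argument in ways that matter.

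First, the ``closed Gr\"onwall-type estimate'' you describe for \eqref{inflowfreg} under the diffuse BC is not quite enough. When you trace $f^{n+1}$ back to $\gamma_-$, the diffuse data $c_\mu\mu(v_{\mathbf b})\int_{u_3<0}-f^n\hat u_{3}\,du$ contributes a multiplicative constant that is $O(1)$, not $O(T)$: the Maxwellian kernel integrates to a fixed number regardless of how small $T$ is, so a naive induction $\|\partial f^{n+1}\|\lesssim C\|\partial f^n\|$ does not give uniform-in-$n$ bounds. The paper handles this by expanding along \emph{stochastic cycles}: one iterates the boundary representation $k$ times, tracking the measure $d\Sigma_i^{k-1}$ on $\prod_j\mathcal V_j$, and shows (via a volume argument on $\{|\hat v^j_3|<\delta\}$) that the probability of $t^k>0$ after $k$ bounces is $\lesssim (1/2)^k$. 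The remaining $k-1$ terms all terminate at $t=0$ and are controlled by initial data. This geometric-series mechanism, not Gr\"onwall, is what makes the estimate close; your proposal does not mention it, and the ``Main obstacle'' paragraph, while correctly identifying that $\alpha_\pm|_{\gamma_-}=|\hat v_{\pm,3}|$ is the compensation, still treats the boundary as a single-step contribution.

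Second, your uniqueness argument via an $L^2$ energy estimate is a genuinely different route from the paper's. The paper runs an $L^\infty$ stability: the difference equation is integrated along characteristics (again with stochastic cycles), giving $\|\langle v\rangle^{4+\delta}(f-g)(t)\|_\infty\lesssim\int_0^t\|\mathfrak F_f-\mathfrak F_g\|_\infty\,ds$ thanks to the $\nabla_v g\in L^\infty$ bound; then Lemma~\ref{Maxtowave} and the Glassey--Strauss representation bound the field difference pointwise by $\sup_{s\le t}\|\langle v\rangle^{4+\delta}(f-g)(s)\|_\infty$, closing via Gr\"onwall in $L^\infty$. Your $L^2$ approach can in principle work---the perfect-conductor boundary makes the Poynting flux vanish and the diffuse BC is an $L^2_{|\hat v_3|}$ contraction on $\gamma$---but on the unbounded half-space $\Omega=\mathbb R^3_+$ you would need $f-g\in L^2_{x,v}$, which is not immediate from the $L^\infty$ regularity class; you would have to localize via finite propagation speed. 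The paper's $L^\infty$ route avoids this and also directly exploits the representation formulas already built for the existence part.
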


\begin{theorem} [specular BC] \label{main3}
Suppose $f_{0,\pm}$ satisfies
\Be \label{f0spec}
\begin{split}
 \| \langle v \rangle ^{5 + \delta }    e^{\frac{C}{\sqrt{ \alpha_\pm \langle v \rangle } } }   \nabla_{x} f_{0,\pm} \|_\infty +  \| \langle v \rangle^{5 + \delta }    e^{\frac{C}{\sqrt{ \alpha_\pm \langle v \rangle } } }      \nabla_v f_{0,\pm} \|_\infty  < \infty,
\end{split}
\Ee
for some $C> 0$. $E_0$, $B_0$ satisfy \eqref{E0B0g}, \eqref{EBintialC}, and \eqref{E0B0bdd}. Then there exists a unique solution $f_\pm(t,x,v)$, $E(t,x,v)$, $B(t,x,v) $ for $0 \le t \le T$ with $T \ll 1 $ to the RVM with system \eqref{spec} in the sense of Definition \ref{weaksoldef}, such that
\Be \label{specfbd}
 \sup_{0 \le t \le T} \left(  \| \langle v \rangle^{4+\delta}   \nabla_{x} f_\pm (t) \|_{L^\infty(\O \times \mathbb R^3 )}   + \| \langle v \rangle^{4+\delta}   \nabla_{v} f_\pm (t) \|_{L^\infty(\O \times \mathbb R^3 )}  \right) <  \infty,
\Ee
and \eqref{inflowEBreg} holds.
\end{theorem}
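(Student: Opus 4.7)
The plan is to construct the solution through a Picard-type iteration, following the scheme used for Theorems \ref{main1} and \ref{main2}, and then exploit the symmetry inherent in the specular reflection to replace the weighted $\alpha_\pm \partial_{x_3}f$ estimate with a full $\nabla_x f$ estimate at the cost of requiring the stronger exponential weight on the initial data. First I would define a sequence $(f_\pm^{n+1}, E^{n+1}, B^{n+1})$ by freezing $(E^n, B^n)$, solving the linear Vlasov equation for $f^{n+1}_\pm$ with specular BC \eqref{spec} along the backward characteristics $(X^n(s;t,x,v), V^n(s;t,x,v))$ driven by $\mathfrak F_\pm^n$, and then solving the linear Maxwell system with source $(\rho^{n+1}, J^{n+1})$ under the perfect-conductor BC \eqref{E12B3bc}. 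The uniform Lipschitz bound \eqref{inflowEBreg} on the fields is obtained as in the inflow case through the Glassey--Strauss representation adapted to the half-space with conductor BC, which only needs the $\langle v\rangle^{4+\delta}\nabla_{x,v} f$ control appearing on the right-hand side of \eqref{specfbd}.

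The heart of the argument is the propagation of the exponentially weighted Lipschitz estimate. Writing $f^{n+1}_\pm(t,x,v) = f_{0,\pm}(X^n(0), V^n(0))$ with specular reflections across boundary hits and differentiating in $(x,v)$ produces the Jacobian $\nabla_{x,v}(X^n, V^n)(0; t,x,v)$. Between bounces a Gronwall estimate for this Jacobian is controlled by $\|\nabla_x (E^n, B^n)\|_\infty$, which is uniformly bounded by the previous iteration step. At each specular bounce time $t_\mathbf{b}$, however, the Jacobian acquires a factor of order $1/|\hat v^n_{\pm,3}(t_\mathbf{b})|$, because the bounce map must invert the normal velocity at the crossing time; this is precisely the mechanism that forces the weighted formulation \eqref{inflowfreg} in the non-specular cases. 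The key saving observation is a Guo-type velocity lemma: under \eqref{E0B0g} (which persists for short times by continuity of the fields), the function $\alpha_\pm$ is \emph{dynamically equivalent up to a multiplicative constant} along each Hamiltonian leg, since $\alpha_\pm^2$ differs by a controlled remainder from the conserved vertical energy $\hat v_{\pm,3}^2 - 2\mathfrak F_{\pm,3}(t,x_\parallel,0,v)\, x_3 / \langle v_\pm\rangle$. Hence $|\hat v^n_{\pm,3}(t_\mathbf{b})| \gtrsim \sqrt{\alpha_\pm(t,x,v)\,\langle v\rangle^{-1}}$, the amplification per bounce is at worst $1/\sqrt{\alpha_\pm\langle v\rangle}$, and the Pannekoek--Rosseland attractiveness gives a strictly positive inter-bounce time so that the number of bounces on $[0,T]$ is $\lesssim 1/\sqrt{\alpha_\pm\langle v\rangle}$. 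Multiplying the per-bounce amplifications yields a total blow-up of order $\exp\bigl(C/\sqrt{\alpha_\pm\langle v\rangle}\bigr)$, exactly absorbed by the weight prescribed in \eqref{f0spec}.

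With the weighted Lipschitz bound in hand, a standard contraction/compactness argument in the norm appearing on the left of \eqref{specfbd} closes the iteration and yields the short-time solution $(f_\pm, E, B)$; compatibility of $\nabla\cdot E = 4\pi\rho$ at the boundary is preserved because $\rho$ inherits the Lipschitz regularity of $f_\pm$ and the Maxwell iteration is built to respect \eqref{EBintialC}. Uniqueness then follows by subtracting two such solutions $(f^1_\pm, E^1, B^1)$ and $(f^2_\pm, E^2, B^2)$ sharing the same data, writing the Vlasov difference along the common-style characteristics, and closing a Gronwall inequality on $\sup_{0\le t\le T}\bigl(\|f^1 - f^2\|_\infty + \|(E^1, B^1) - (E^2, B^2)\|_{W^{1,\infty}}\bigr)$; crucially, the full unweighted bound \eqref{specfbd} on $\nabla_x f$ avoids the $\alpha_\pm$-degeneracy that would otherwise obstruct closing the estimate for the Lorentz nonlinearity $\mathfrak F_\pm \cdot \nabla_v f$.

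The principal obstacle is the velocity lemma \emph{across specular bounces}: one has to verify that $\alpha_\pm$ evaluated immediately before a bounce is comparable to $\alpha_\pm$ immediately after, including in the near-grazing regime where $v_3 \mapsto -v_3$ inverts an arbitrarily small normal speed. The coercive lower bound \eqref{E0B0g}, a consequence of the Pannekoek--Rosseland condition, is what rescues this step by forcing a strictly positive attractive contribution $-\mathfrak F_{\pm,3}(t,x_\parallel,0,v)x_3/\langle v_\pm\rangle$ that dominates $\alpha_\pm^2$ near the boundary, and simultaneously guarantees that only finitely many bounces occur on $[0,T]$ so that the exponential budget $e^{C/\sqrt{\alpha_\pm\langle v\rangle}}$ in \eqref{f0spec} suffices to control \emph{all} of them uniformly in $n$.
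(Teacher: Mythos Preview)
Your overall architecture (Picard iteration, Jacobian of the specular characteristics, velocity lemma, exponential weight absorption) matches the paper, but the heart of the argument --- the quantitative bounce estimate --- is wrong on every count, and the errors are not cosmetic.

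First, your velocity-lemma claim $|\hat v_{\pm,3}(t_{\mathbf b})| \gtrsim \sqrt{\alpha_\pm/\langle v\rangle}$ is incorrect: at the boundary $x_3=0$ the definition gives $\alpha_\pm = |\hat v_{\pm,3}|$ exactly, and the velocity lemma makes this comparable to $\alpha_\pm(t,x,v)$, so $|\hat v_{\pm,3}(t_{\mathbf b})|\sim\alpha_\pm$, not $\sqrt{\alpha_\pm/\langle v\rangle}$. Second, your bounce count is off: the inter-bounce time satisfies $|t^\ell-t^{\ell+1}|\sim \alpha\langle v\rangle$ (see \eqref{tdiffalpha}), so the number of bounces on $[0,T]$ is $\sim 1/(\alpha\langle v\rangle)$, not $1/\sqrt{\alpha\langle v\rangle}$. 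Third, even granting your numbers, multiplying an amplification $1/\sqrt{\alpha\langle v\rangle}$ over $1/\sqrt{\alpha\langle v\rangle}$ bounces gives $\exp\bigl((1/\sqrt{\alpha\langle v\rangle})\cdot|\ln(\alpha\langle v\rangle)|\bigr)$, which is strictly larger than $e^{C/\sqrt{\alpha\langle v\rangle}}$ and would not be absorbed by \eqref{f0spec}. With the correct naive numbers (amplification $1/\alpha$ per bounce, $1/(\alpha\langle v\rangle)$ bounces) the situation is far worse.

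What actually produces the $e^{C/\sqrt{\alpha\langle v\rangle}}$ bound is a mechanism you have not identified. The paper computes the full $6\times 6$ bounce Jacobian $J^{\ell+1}_\ell=\partial(t^{\ell+1},x^{\ell+1}_\parallel,v^{\ell+1}_3,v^{\ell+1}_\parallel)/\partial(t^\ell,x^\ell_\parallel,v^\ell_3,v^\ell_\parallel)$ and shows that, after delicate cancellations (notably in $\partial v_3^{\ell+1}/\partial v_3^\ell$, where a $-1$ from the reflection and a $+2$ from the bounce-time dependence combine to give $+1+O(\text{small})$), the matrix is entrywise $\mathrm{Id}+O(\alpha\langle v\rangle)$ except for a single column of size $O(1)$. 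Diagonalizing this specific structure yields largest eigenvalue $1+O(\sqrt{\alpha\langle v\rangle})$, and raising that to the power $1/(\alpha\langle v\rangle)$ gives precisely $e^{C/\sqrt{\alpha\langle v\rangle}}$. Without this matrix computation (Lemma~\ref{dXVcl} in the paper) your argument does not close.
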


\begin{remark}
	A large class of functional spaces satisfy the condition \eqref{f0bdd}. Indeed any function, whose weak derivatives $\nabla_{x,v} f$ are bounded in $L^\infty(\O \times \mathbb R^3)$, and decays fast enough as $|v| \to \infty$, belongs to the space of \eqref{f0bdd}. Actually $\p_{x_3} f_{0, \pm}$ is allowed to be singular at the grazing set $\gamma_0$.
\end{remark}

\begin{remark}
	As far as the authors know, Theorem \eqref{main1}- \eqref{main3} provide the first unique solvability of the RVM system when the physical boundary has a global effect (cf. \cite{Zhang2,JSW}). The time span $T$ of existence depends on the size of the initial data $f_0$, $E_0$, $B_0$ and their derivatives, and $c_1$ in \eqref{E0B0g}. 
\end{remark}

\begin{remark}We prove the weighted regularity estimate using a Lagrangian approach of \cite{GKTT1}. We take a direct differentiation to the Lagrangian solution along the generalized characteristics. The generalized characteristics depend on the boundary condition. 
	
\end{remark}

\begin{remark}
Here we require that the initial data $f_0$ to vanish exponentially fast towards the grazing set \eqref{specfbd}. This allows us to establish the regularity estimate for specular BC \eqref{specfbd} with the $W^{1,\infty}$ field. We prove this theorem in section \ref{chapspec}.
\end{remark}







\subsubsection*{\textbf{\large{Difficulties and Key Ingredients}}} \label{diffidea}
The problem in this paper is a coupled system of hyperbolic equations and kinetic Vlasov equation with characteristic boundary condition: the problem suffers \textit{a loss of derivative of wave equation} (cf. \cite{CS}) and \textit{the boundary singularity of Vlasov equation} (cf. \cite{GKTT1, Guo95}) at the same time. The key difficulty in the construction of a \textit{unique} solutions of the RVM system with physical boundary conditions is a control of the nonlinear term $\mathfrak{F}_\pm \cdot \nabla_v f_\pm$. We overcome this difficulty by establishing a regularity estimate for both the electromagnetic field $E$ and $B$, and the density distribution $f_\pm$ using the Glassey-Strauss representation. We detail several key difficulties along the road. In this section and the rest of the paper, for sake of simplicity, we will consider the one-species relativistic Vlasov-Maxwell-system since the analysis of the two-species case does not process essential difference from that of the one species case:
\Be \label{VMfrakF1}
\begin{split}
	\p_t f + \hat v \cdot \nabla_x f +   \mathfrak F \cdot \nabla_v f = 0,  & \text{ in } \R_+ \times \O \times \mathbb R^3 ,
	\\ f(0,x,v) =  f_{0}(x,v) ,  & \text{ in } \O \times \mathbb R^3,
\end{split}
\Ee
We also set all the charge and mass of the plasma $f$ equal to one, so here $\hat v = \frac{1}{\sqrt{1 +|v|^2 } } $, and
\Be \label{frakF1}
\mathfrak F  =   \left(E + E_{\text{ext}}+  \frac{\hat v}{c}  \times (B + B_{\text{ext}}) \right) -  g \mathbf e_3,
\Ee 
and $E,B$ satisfies the Maxwell equations \eqref{Maxwell}, with
\Be \label{rhoJ1}
\rho = \int_{\mathbb R^3} f dv, \, J = \int_{\mathbb R^3}  \hat v f dv.
\Ee

\subsubsection*{$\bullet$\textbf{Wave equation and the Neumann BC}} From the Maxwell's equations \eqref{Maxwell}, we have the inhomogenous wave equations for $E$ and $B$:
\begin{align}
\p_t^2 E - \Delta_x E = -4\pi \nabla_x \rho - 4\pi \p_t J,  \  &\text{ in } \  \R_+ \times \O,\label{wave_eq_E}\\ 
\p_t^2 B - \Delta_x B = 4 \pi \nabla_x \times J, \  &\text{ in } \  \R_+ \times \O,\label{wave_eq_B}
\end{align}
with the boundary condition $E_1 = E_2 = 0, B_3 = 0$ on $\p\O$ in \eqref{E12B3bc} and the initial condition
\Be\label{initialC}
\begin{split}
	E|_{t=0} = E_0,\ \ \p_t E|_{t=0} = \p_t E_0:= \nabla_x \times B_0 - 4\pi J|_{t=0}, \text{ in } \O
	,
	\\
	B|_{t=0} = B_0,\ \ \p_t B|_{t=0} = \p_t B_0:=  - \nabla_x \times E_0, \text{ in } \O
	.
\end{split}
\Ee
The boundary conditions of $E_3, B_\parallel$ components are not a priori given, which causes some trouble handling weak solutions based on the Glassey-Strauss representation. Of course, if the fields $E,B \in C^2 (\O) \cap C^1( \bar \O)$, and $\rho \in C^1(\O ) \cap C(\bar \O)$, then from the Maxwell's equations \eqref{Maxwell} and the perfect conductor boundary condition \eqref{E12B3bc}, we deduce the Neumann boundary condition
\Be \label{E3B1B2bc}
\p_3 E_3 =  4\pi \rho, \p_3 B_1 = 4\pi J_2, \  \p_3 B_2 = - 4\pi J_1  \ \text{ on } \R_+ \times \p \O.
\Ee
\hide\Be
\p_{3} E_3 |_{\p \O} = 4 \pi \rho |_{\p \O }  + \p_1 E_1  |_{\p \O } + \p_2 E_2  |_{\p \O } = 4 \pi \rho|_{\p \O},
\Ee
and
\Be
\begin{split}
	\p_3 B_2  |_{\p \O} = &  \p_2 B_3 |_{\p \O} - \p_t E_1 |_{\p \O} - 4\pi J_1 |_{\p \O} = - 4 \pi J_1 |_{\p \O},
	\\ \p_3 B_1  |_{\p \O} = & \p_1 B_3 |_{\p \O} + \p_t E_2 |_{\p \O} + 4\pi J_2 |_{\p \O} = - 4 \pi J_2 |_{\p \O}.
\end{split}
\Ee 
Therefore $E_3$ and $B_\parallel$ satisfy the following Neumann boundary condition
\Be \label{E3B1B2bc}
\p_3 E_3 =  4\pi \rho, \p_3 B_1 = 4\pi J_2, \  \p_3 B_2 = - 4\pi J_1  \ \text{ on } \p \O.
\Ee\unhide
One of the main goals in this paper is to equip a solution space in which we can realize the Neumann BC \eqref{E3B1B2bc} in a suitable sense and hence guarantee a unique solvability. Indeed we can justify the Neumann boundary condition \eqref{E3B1B2bc} in a weak solution formulation testing
against smooth test functions that do not vanish at the boundary $\p \O$ in Lemma \ref{Maxtowave}, and prove the uniqueness of weak solution. 
We then carefully show in Lemma \ref{wavetoMax} that, assuming the continuity equation 
\[
\nabla \cdot J + \p_t \rho = 0,
\]
and some compatibility conditions of the initial datum \eqref{EBintialC}, the weak solution of wave equations with boundary conditions \eqref{wave_eq_E}-\eqref{E3B1B2bc} is indeed a solution to the Maxwell equations. This equivalence allows us to solve the RVM system by looking for solutions to the wave equations with boundary conditions \eqref{wave_eq_E}, \eqref{wave_eq_B}, which is the first step of our analysis.


\subsubsection*{$\bullet$\textbf{Glassey-Strauss Representation in the half space}}  
The wave equations \eqref{wave_eq_E}, \eqref{wave_eq_B} suffer from the ``loss of derivatives'' of $(E,B)$ with respect to the regularity of the source terms $\rho$ and $J$. As Glassey mentions in his book \cite{Glassey}, the key idea of the Glassey-Strauss representation is replacing the derivatives $\p_t, \nabla_x$ by a geometric operator $T$ in \eqref{def:T1} and a kinetic transport operator $S$ in \eqref{def:S1}:
\Be \label{pxptST}
\begin{split}
	\p_t  = \frac{S- \hat{v} \cdot T}{1+ \hat v \cdot \o}, \ \
	\p_i  = \frac{\o_i S}{ 1+ \hat v \cdot \o} + \left( \delta_{ij} - \frac{\o_i \hat{v}_j}{1+ \hat v \cdot \o}\right) T_j,
\end{split}\Ee
while, for $\o= \o(x,y) = \frac{y-x}{|y-x|}$,
\begin{align}
	T_i &:= \p_i - \o_i \p t, \label{def:T1}\\
	S &:= \p_t + \hat v \cdot \nabla_x. \label{def:S1}
\end{align}
Note that
\Be\label{T=y}
T_j f (t- |y-x| , y, v ) = \p_{y_j } [ f(t- |y-x|, y, v ) ],
\Ee
which is a tangential derivative along the surface of a backward light cone \cite{Glassey}. On the other hand, the Vlasov equation \eqref{VMfrakF1} implies that
\Be\label{S=Lf_v}
Sf= - \nabla_v \cdot [ (E + E_{\text{ext}} + \hat v \times ( B + B_{\text{ext}})- g \mathbf e_3)f].
\Ee
Therefore, in \cite{GS,Glassey}, they can take off the derivatives $T_j$, $S$ from $f$ using the integration by parts within the Green's formula of \eqref{wave_eq_E}--\eqref{wave_eq_B} by connecting the source terms to $f$ via \eqref{rhoJ1}. 

For our problem, we derive the Glassey-Strauss representation in the presence of a boundary. For $E_\parallel$ and $B_3$, to solve the wave equation with the Dirichlet boundary condition \eqref{E12B3bc}, we employ the odd extension of the initial data and the forcing term into the lower half space $\mathbb R^3_- : = \{ (x_1, x_2, x_3 ) \in \mathbb R^3: x_3 < 0 \} $. Solving the whole space wave equation with the oddly extended data gives us the solution that satisfies \eqref{E12B3bc}. On the other hand, for $E_3$ and $B_\parallel$, we decompose the solution into two parts: one with the Neumann boundary condition of \eqref{E3B1B2bc} and the zero forcing term and initial data, and the other part satisfying \eqref{wave_eq_E}-\eqref{initialC} with the zero Neumann boundary condition. For the first part, we find out the expression using the fundamental solution of the Helmholtz equation. And for the second part, we use the even extension to get the solution with the zero Neumann boundary condition.

For the expression in the lower half space, we introduce
\Be\label{def:o-}
\bar \o   =  \begin{bmatrix} \o_1 & \o_2  & - \o_3 \end{bmatrix} ^T,
\Ee
and
\Be\label{def:T-}
\begin{split}
	\bar{T}_3 f &=  -\p_{y_3} [f(t-|y-x|, y_\parallel, - y_3, v)] =   \p_{y_3} f - \bar{\omega}_3 \p_t f ,
	\\ \bar T _i f &=  \p_{y_i} [f(t-|y-x|, y_\parallel, - y_3, v)] =   \p_{y_i} f - \bar{\omega}_i \p_t f \, \text{ for } \ i=1,2.
\end{split}
\Ee
Then through direct calculation we obtain an explicit expressions of $E$ and $B$ by solving the wave equations \eqref{wave_eq_E}--\eqref{initialC} under the boundary condition \eqref{E12B3bc} and \eqref{E3B1B2bc} in Proposition \ref{Eiform} and Proposition \ref{Biform} respectively.

\subsubsection*{$\bullet$\textbf{Weighted $W^{1,\infty}$ estimate of $f$ and the regularity of the fields}} An intrinsic feature of the transport equation in domains with boundary is the singular behavior of its derivatives: the solution of a linear transport equation with physical boundaries is known to not have high regularity \cite{GKTT1}. However, to get the unique solvability, one must control $\nabla_v f $ effectively. This in turn requires the control of spatial derivatives of the distribution function and the spatial derivatives of electromagnetic fields. But due to the characteristic boundary, $f$ does not have high enough regularity to achieve the required regularity for $E$ and $B$ directly from the hyperbolic equations. We explain the ideas of the paper and the methods we use to overcome the difficulties in the rest of this section and the next.


 Let's consider a solution of the RVM system with inflow boundary data \eqref{VMfrakF1}-\eqref{rhoJ1}, \eqref{inflow}. The \textit{characteristics (trajectory)} is determined by the Hamilton ODEs, 
\Be \label{HamiltonODE}
\begin{split}
	\frac{d}{ds} X(s;t,x,v)&= \hat{V}(s;t,x,v),\\
	\frac{d}{ds} V(s;t,x,v)&= \mathfrak F (s,X(s;t,x,v),V(s;t,x,v)).
\end{split}
\Ee
We define \textit{the backward exit time} $\tb(t,x,v)$ as   
\Be\label{tb}
\tb (t,x,v) := \sup \{s \geq 0 : X(\tau;t,x,v) \in \O \ \ \text{for all } \tau \in (t-s,t) \}.
\Ee
Furthermore, we define $\xb (t,x,v) := X(t-\tb(t,x,v);t,x,v)$, and $\vb(t,x,v) := V(t-\tb(t,x,v);t,x,v)$. We can solve the Vlasov equation \eqref{VMfrakF1} with the inflow boundary condition \eqref{inflow} as
\[
f(t,x,v) = g( t -\tb(t,x,v) ,  X(t-\tb(t,x,v);t,x,v), V(t-\tb(t,x,v);t,x,v) ) \text{ for } t \ge \tb(t,x,v).
\]
From some direct computations (see \eqref{pxbvb} and \eqref{pxiF}), the derivatives of $f$ have a bound in general as
\[
\nabla_x f(t,x,v) \sim \nabla_x \tb(t,x,v),
\]
which can be further bounded from the direct computation of the characteristics (see \eqref{pxitb}) as
\Be \label{dtbbd}
\nabla_x \tb(t,x,v) \lesssim  \frac{1 + \sup_{0 \le s \le t} \| \nabla_x \mathfrak F(s) \|_\infty }{\hat{v}_{\mathbf{b},3}}.
\Ee
The formation of such singularity motivates us to introduce the following notion. As a first order approximation of $\hat{v}_{\mathbf{b},3}(t,x,v)$, we define the kinetic weight
\Be \label{alphadef}
\begin{split}
	\alpha(t, x_\parallel, x_3, v) :=  & \sqrt{(x_3)^2+(\hat{v}_{3})^2 -2 \mathfrak F_3(t,x_\parallel, 0 ,v) \frac{x_3}{\langle v \rangle}}
	\\ =  & \sqrt{(x_3)^2+(\hat{v}_{3})^2 -2\left( E_3(t,x_\parallel, 0 ) + E_e + (\hat v \times B)_3(t,x_\parallel, 0 )  -  g \right) \frac{x_3}{\langle v \rangle}}.
\end{split}
\Ee
Note that $\alpha = \hat v_3$ on $\p \O$. Crucially $\alpha$ is effectively invariant along the characteristics, thanks to the velocity lemma (Lemma \ref{vlemma}). This allows us to prove an $\alpha$-weighted bound on the derivatives of $f$, more specifically, we prove that for any $0 < \delta < 1$,
\Be
\langle v \rangle^{5 + \delta} \alpha \nabla_x f (t) \in L^\infty( \O \times \mathbb R^3 ), \text{ for } 0< t  < T.
\Ee

On the other hand, due to the generic singularity \eqref{dtbbd}, to close the estimate we need to bound $\nabla_x E$, $\nabla_x B$ by $ \langle v \rangle^{5 + \delta} \alpha \nabla_x f $ in the generalized Glassey-Strauss representation. By taking the derivatives directly to the formulas of $E$ and $B$, in Lemma \ref{EBW1inftylemma} we achieve the bound
\[
|  \nabla_x E | +  | \nabla_x B | \lesssim ``\text{ initial data }" + \sup_{0 \le t \le T}  \| \langle v \rangle^{5 + \delta} \alpha \p_{x_3} f (t) \|_\infty  \int_{ \O \cap \{ |x| < T \}} \int_{ \mathbb R^3} \frac{1}{ \langle v \rangle^{4 + \delta} \alpha (t,x,v) } dv dx.
\]  
Then from the local-to-nonlocal estimate (Lemma \ref{1alphaintv}), we derive 
\[
\int_{ \mathbb R^3} \frac{1}{ \langle v \rangle^{4 + \delta} \alpha (t,x,v) } dv \lesssim \ln(1+ \frac{1}{x_3} )  \in L^1_{\text{loc} } (\O),
\]
and we are able to close the estimate and conclude $E, B \in W^{1,\infty}((0,T) \times \O ).$

In the construction of solution, we study a sequence of solutions $(f^\ell, E^\ell, B^\ell )$ and pass the limit. To achieve a uniform estimate, we use a weight $\alpha^\ell(t,x,v)$, which is the same form of \eqref{alphadef} with exchanging $E,B$ to $E^\ell, B^\ell$. Since $\alpha^\ell$ depends on $E^\ell, B^\ell$ and hence $f^\ell$, when passing the limit of the sequence $\{ \alpha^{\ell-1} \p_{x_3} f^\ell \}_{\ell = 1}^\infty$, we need to verify that
\Be \label{alphp3flim}
\alpha^{\ell-1} \p_{x_3} f^\ell \overset{\ast}{\rightharpoonup} \alpha \p_{x_3} f \text{ in } L^\infty((0,T) \times \O \times \mathbb R^3 ).
\Ee
Obviously this convergence is nontrivial since the norm itself is nonlinear. To obtain this, we observe that since we can bound $\nabla E^\ell , \nabla B^\ell$ pointwisely, they have traces and a strong convergence 
\[
E^\ell |_{\p \O} \to E |_{\p \O} , \ B^\ell|_{\p \O} \to B |_{\p \O}. 
\]
Thus we can prove that a strong convergence $\alpha^{\ell-1}  \to \alpha $ in $L^\infty$. 
On the other hand, using a positive lower bound of $\alpha^\ell$ away from the grazing set, we obtain a upper uniform bound of $\p_{x_3} \alpha^{\ell-1}- \p_{x_3} \alpha$ locally. We then achieve the desired convergence \eqref{alphp3flim} using uniform bound of $f^\ell$. 
We refer to Lemma \ref{fEBregin} for more details.

Among other boundary conditions, we find that the specular boundary condition suffers most. Due to the lack of higher regularity of the fields (e.g. compare to \cite{CAO1} where the field is $C^2$), we can only derive an exponential-in-$\alpha$ singularity of the derivative of trajectory
\begin{equation}\label{lemma_Dxv1}
	\begin{split}
		| \p_{\mathbf e } X_{\mathbf{cl}}(s;t,x,v)| & \le C_1 \langle v \rangle  e^{\frac{C_1}{\sqrt{ \alpha(t,x,v) \langle v \rangle } } }   , 
		\\ | \p_{\mathbf e } V_{\mathbf{cl}}(s;t,x,v)| &  \le  C_1 \langle v \rangle  e^{\frac{C_1}{\sqrt{ \alpha(t,x,v) \langle v \rangle } } }.
	\end{split}
\end{equation}
Clearly, such a strong singularity can be harmful in our analysis based on the Glassey-Strauss representation. We study the specular BC problem with great care. Details are presented in section \ref{chapspec}.

\subsubsection*{$\bullet$\textbf{A Priori $L^\infty$ estimate of $\nabla_v f$ and uniqueness}} A simple Gronwall's inequality implies  
\Be \label{fgdiff}
\| \langle v \rangle^{4+ \delta } (f- g ) (t) \|_\infty \lesssim \| \langle v \rangle^{4+ \delta } (f- g )(0) \|_\infty + \sup_{0 \le t \le T } \|  \langle v \rangle^{4+ \delta } \nabla_v f(t) \|_\infty \int_0^t \|( E_f - E_g + B_f - B_g)(s) \|_\infty.
\Ee 
For constructing a solution and proving its uniqueness, we establish an effective stability estimate of the difference of solutions $f-g$, and $E_f - E_g$, $B_f - B_g$. To control the nonlinear term of the equation of $f-g$, $(E_f - E_g + B_f - B_g) \cdot \nabla_v f $, we establish an estimate of $\nabla_v f $. From the Lagrangian view point along the characteristics \eqref{HamiltonODE}, we have
\[
\begin{split}
& \nabla_v f(t,x,v) 
\\ & \sim \nabla_{x} f_0(X(0;t,x,v) , V(0;t,x,v) )  \cdot \nabla_v X(0;t,x,v) +  \nabla_{v} f_0(X(0;t,x,v) , V(0;t,x,v) )  \cdot \nabla_v V(0;t,x,v).
\end{split}
\]
Clearly effective control of $\nabla_x E$, $\nabla_x B$ is necessary. 
Now we crucially use our estimate of $\alpha \p_{x_3} f $ to obtain bounds for $\nabla_x E$, $\nabla_x B$ in the Glassey-Strauss representation, which in turn gives the a priori $L^\infty$ estimate of $\nabla_v f$. Using this $\nabla_v f$-bound and a pointwise bound from the Glassey-Strauss representation (Lemma \ref{EBlinflemma})
\[
\| E_{f-g}(t) \|_\infty +  \| E_{f-g}(t) \|_\infty \lesssim \sup_{0 \le s \le t } \| \langle v \rangle^{4+ \delta } ( f - g)(s) \|_\infty,
\]
we achieve 
an $L^\infty$ stability as $
\sup_{0 \le s \le t } \|  \langle v \rangle^{4+ \delta } (f- g ) (s) \|_\infty \lesssim e^{Ct } \| \langle v \rangle^{4+ \delta } (f- g )(0) \|_\infty.$

\hide

On the other hand, notice that $E_f - E_g = E_{f-g}$, $B_f - B_g = B_{f-g}$ solves the Maxwell equations 
\Be \label{Maxwellfg}
\begin{split}
\p_t E_{f-g} & = \nabla_x \times B_{f-g} - 4 \pi J_{f-g}, \, \nabla_x \cdot E_{f-g} = 4\pi \rho_{f-g},
\\ \p_t B_{f-g} & = - \nabla_x \times E_{f-g}, \, \nabla_x \cdot B_{f-g} = 0.
\end{split}
\Ee
Then Lemma \ref{Maxtowave} implies that $E_{f-g}$ and $B_{f-g}$ solves the wave equation
\Be
\begin{split}
\p_t^2 E_{f-g} - \Delta_x E_{f-g} = & -4\pi \nabla_x \rho_{f-g} - 4\pi \p_t J_{f-g},  
\\ \p_t^2 B_{f-g} - \Delta_x B_{f-g} =  &4 \pi \nabla_x \times J_{f-g},
\end{split}
\Ee
with
\Be
\begin{split}
& E_{f-g, \parallel } =0,  B_{f-g,3} = 0,  \text{ on } \p \O
\\ & \p_3 E_{f-g, 3} = 4 \pi \rho_{f-g}, \p_3 B_{f-g, 1 } = 4 \pi J_{f-g,2} , \p_3 B_2 = -4 \pi J_{f-g,1}, \text{ on } \p \O,
\end{split} 
\Ee
in the sense of \eqref{waveinner}, \eqref{waveD_weak}. Therefore, from Glassey-Strauss representation taking into account of the boundary, $E_{f-g}$ and $B_{f-g}$ has the form in Proposition \ref{Eiform} and Proposition \ref{Biform} with $f$ changes to $f-g$ everywhere. This allows us to bound (same as in Lemma \ref{EBlinflemma})
\[
\| E_{f-g}(t) \|_\infty +  \| E_{f-g}(t) \|_\infty \lesssim \sup_{0 \le s \le t } \| \langle v \rangle^{4+ \delta } ( f - g)(s) \|_\infty. 
\]
Together with \eqref{fgdiff}, we use Gronwall's inequality to get the $L^\infty$ stability
\[
\sup_{0 \le s \le t } \|  \langle v \rangle^{4+ \delta } (f- g ) (s) \|_\infty \lesssim e^{Ct } \| \langle v \rangle^{4+ \delta } (f- g )(0) \|_\infty.
\] \unhide

\hide

\subsubsection*{$\bullet$\textbf{Specular BC}} Among other boundary conditions, we find that the specular boundary condition suffers most. Due to the lack of higher regularity of the fields (e.g. compare to \cite{CAO1} where the field is $C^2$), we can only derive an exponential-in-$\alpha$ singularity of the derivative of trajectory
\begin{equation}\label{lemma_Dxv1}
	\begin{split}
		| \p_{\mathbf e } X_{\mathbf{cl}}(s;t,x,v)| & \le C_1 \langle v \rangle  e^{\frac{C_1}{\sqrt{ \alpha(t,x,v) \langle v \rangle } } }   , 
		\\ | \p_{\mathbf e } V_{\mathbf{cl}}(s;t,x,v)| &  \le  C_1 \langle v \rangle  e^{\frac{C_1}{\sqrt{ \alpha(t,x,v) \langle v \rangle } } }.
	\end{split}
\end{equation}
We achieve 

The proof of the theorem relies on the estimate of taking the derivatives to $f$ under the generalized characteristics for the specular reflection, i.e. the ``specular cycles",  $X_{\mathbf{cl}}(s;t,x,v) , V_{\mathbf{cl}}(s;t,x,v)  $ as in \eqref{cycles}. One has
\Be \label{pef1}
\begin{split}
	\p_{\mathbf e }  f(t,x,v ) & = \p_{\mathbf e} (f(0, X_{\mathbf{cl}}(0;t,x,v) , V_{\mathbf{cl}}(0;t,x,v) ) 
	\\ & = \nabla_x f_0 \cdot \p_{\mathbf e } X_{\mathbf{cl}}(0;t,x,v) +  \nabla_v f_0 \cdot \p_{\mathbf e } V_{\mathbf{cl}}(0;t,x,v).
\end{split}
\Ee

 Note that here we have a high singularity in terms of $\alpha$ due to the fact of lacking higher regularity of the fields (e.g. compare to \cite{CAO1} where the field is $C^2$). 

The proof of the theorem relies on the estimate of taking the derivatives to $f$ under the generalized characteristics for the specular reflection, i.e. the ``specular cycles",  $X_{\mathbf{cl}}(s;t,x,v) , V_{\mathbf{cl}}(s;t,x,v)  $ as in \eqref{cycles}. One has
\Be \label{pef1}
\begin{split}
\p_{\mathbf e }  f(t,x,v ) & = \p_{\mathbf e} (f(0, X_{\mathbf{cl}}(0;t,x,v) , V_{\mathbf{cl}}(0;t,x,v) ) 
\\ & = \nabla_x f_0 \cdot \p_{\mathbf e } X_{\mathbf{cl}}(0;t,x,v) +  \nabla_v f_0 \cdot \p_{\mathbf e } V_{\mathbf{cl}}(0;t,x,v).
\end{split}
\Ee
Then from the crucial estimate of the derivatives of the generalized characteristics in Lemma \ref{dXVcl}, for $\p_{\mathbf  e } \in \{ \nabla_x , \nabla_v \}$, we have
\begin{equation}\label{lemma_Dxv1}
\begin{split}
| \p_{\mathbf e } X_{\mathbf{cl}}(s;t,x,v)| & \le C_1 \langle v \rangle  e^{\frac{C_1}{\sqrt{ \alpha(t,x,v) \langle v \rangle } } }   , 
\\ | \p_{\mathbf e } V_{\mathbf{cl}}(s;t,x,v)| &  \le  C_1 \langle v \rangle  e^{\frac{C_1}{\sqrt{ \alpha(t,x,v) \langle v \rangle } } }.
\end{split}
\end{equation}
Note that here we have a high singularity in terms of $\alpha$ due to the fact of lacking higher regularity of the fields (e.g. compare to \cite{CAO1} where the field is $C^2$). From \eqref{lemma_Dxv1} and the strong decay of the initial data $f_0$ towards the grazing set in \eqref{f0spec}, we can control $\nabla_x f, \nabla_v f $ in an $L^\infty$ space. Details are presented in section \ref{chapspec}.

\unhide


\bigskip

\subsubsection*{\textbf{{Acknowledgements.}}} This project is supported in part by National Science Foundation under Grant No. 1900923 and 2047681 (NSF-CAREER). CK was supported by Brain Pool program funded by the Ministry of Science and ICT through the National Research Foundation of Korea (2021H1D3A2A01039047), and he thanks Professor Seung Yeal Ha for the kind hospitality of the BP program.

\tableofcontents

\section{Uniqueness of the Maxwell equations}
In this section, we consider the uniqueness of solution to the Maxwell equations in $(0,T) \times \O$ in a presence of free charge:
\begin{align}
 \label{Maxwell1} \p_t E &  = \nabla_x \times B - 4 \pi J,
\\ \label{Maxwell2}  \p_t B & = - \nabla_x \times E,
\\ \label{Maxwell3}  \nabla_x \cdot E&  = 4 \pi \rho,
\\  \label{Maxwell4} \nabla_x \cdot B  &= 0,
\end{align}
with initial condition:
\Be \label{EBat0}
E(0,x) = E_0(x), \ B(0,x) = B_0(x) \text{ in } \O,
\Ee
and the perfect conductor boundary condition:
\Be \label{percond}
E_1  = E_2 = 0 \text{ on } \p \O, \ B_3 = 0  \text{ on } \p \O.
\Ee
It is worth to recall that the boundary conditions for $E_3$ and $B_1, B_2$ are not given originally. 

\begin{definition}\label{sol_M}
For given $E_0, B_0, \rho , j$, we say functions
\Be \label{EBspace}
E(t,x), B(t,x) \in W^{1,\infty}((0,T) \times \O),
\Ee 
is a solution to the equations \eqref{Maxwell1}-\eqref{percond} if \eqref{Maxwell1}-\eqref{Maxwell4} holds almost everywhere in $(0,T) \times \O$, and \eqref{EBat0}, \eqref{percond} holds in the sense of trace.
\end{definition}
\begin{remark}
Traces of $W^{1,\infty}((0,T) \times \O)$ are well-defined in a classical sense since any uniformly continuous function in space and time can be extended up to $[0,T] \times \bar\O$. 
	\end{remark}

The goal of this section is to prove the following uniqueness result:
\begin{theorem}\label{EBunique}
Suppose $E_0(x), B_0(x) \in W^{1,p}(\O) $, and $\nabla_x \rho, \nabla_x J,  \p_t J \in  L^\infty((0,T); L_{\text{loc}}^p(\O))$ for some $p>1$, and 	\Be \label{conteq}
\nabla \cdot J = - \p_t \rho.
\Ee Then a solution $E(t,x), B(t,x) \in W^{1,\infty}((0,T) \times \O)$ to the equations \eqref{Maxwell1}-\eqref{percond} in the sense of Definition \ref{sol_M} is unique.
%
%
\end{theorem}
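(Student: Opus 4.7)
The plan is to subtract two candidate solutions and close a local $L^2$ energy estimate on backward light cones. Let $(E^{(i)}, B^{(i)})$, $i=1,2$, be two solutions in the sense of Definition \ref{sol_M} sharing the same $E_0, B_0, \rho, J$, and set $(\tilde E, \tilde B) := (E^{(1)}-E^{(2)}, B^{(1)}-B^{(2)})$. The source terms $\rho$ and $J$ cancel identically from every equation (consistent with \eqref{conteq}), so $(\tilde E, \tilde B) \in W^{1,\infty}((0,T)\times\O)$ solves the homogeneous Maxwell system with zero initial data and with $\tilde E_1 = \tilde E_2 = 0$, $\tilde B_3 = 0$ on $\p\O$, all in the classical trace sense since the functions are Lipschitz up to the boundary.

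Fix an arbitrary $(t_*, x_*) \in (0,T)\times\bar\O$ and define, for $0 \le t \le t_*$, the shrinking domain $D(t) := B(x_*, t_*-t) \cap \O$ together with the local energy $e(t) := \frac{1}{2}\int_{D(t)} (|\tilde E|^2 + |\tilde B|^2)\, dx$. I would compute $e'(t)$ by combining the Reynolds transport identity, which produces the negative contribution $-\frac{1}{2}\int_{\p B(x_*, t_*-t)\cap\O}(|\tilde E|^2+|\tilde B|^2)\,dS$ coming from the radius shrinking at speed one, with the homogeneous evolution equations and the Poynting-type vector identity $\nabla\cdot(\tilde B\times \tilde E) = \tilde E\cdot(\nabla\times \tilde B) - \tilde B\cdot(\nabla\times \tilde E)$. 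The net result is a surface integral $\int_{\p D(t)} (\tilde B\times \tilde E)\cdot \nu\, dS$ plus the negative Reynolds contribution.

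The boundary $\p D(t)$ splits into a spherical cap inside $\O$ and a flat disk lying on $\p\O$. On the flat disk, whose outward unit normal is $-\mathbf e_3$, the Poynting integrand reads $(\tilde B\times \tilde E)\cdot(-\mathbf e_3) = -(\tilde B_1 \tilde E_2 - \tilde B_2 \tilde E_1)$, which vanishes pointwise because of the tangential Dirichlet condition $\tilde E_1 = \tilde E_2 = 0$; this is the critical use of the perfect conductor boundary condition. On the spherical cap, the elementary inequality $|(\tilde B\times \tilde E)\cdot \nu|\le \frac{1}{2}(|\tilde E|^2 + |\tilde B|^2)$ exactly matches the magnitude of the Reynolds contribution, so the two cancel and $e'(t)\le 0$. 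Since $e(0) = 0$, I conclude $e\equiv 0$ on $[0, t_*]$; in particular, $(\tilde E, \tilde B)$ vanishes in a full neighborhood of $x_*$ at time $t_*$, and uniqueness follows by arbitrariness of $(t_*, x_*)$.

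The main technical hurdle I anticipate is justifying the differentiation of $e(t)$ and the divergence theorem on the Lipschitz moving domain $D(t)$ at merely $W^{1,\infty}$ regularity. The standard remedy is to approximate $\chi_{D(t)}$ by smooth, compactly supported cutoffs, execute the identities on smooth functions, and pass to the limit using the uniform pointwise bounds on $(\tilde E, \tilde B)$ and the fact that $\p_t\tilde E, \p_t\tilde B, \nabla\times\tilde E, \nabla\times\tilde B \in L^\infty$. A minor observation worth noting is that only the tangential Dirichlet condition on $E$ is actually invoked in the cancellation; the complementary condition $\tilde B_3|_{\p\O} = 0$ is not needed, so the uniqueness statement is in fact slightly more robust than \eqref{percond} literally demands.
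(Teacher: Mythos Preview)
Your energy argument on backward light cones is correct and is the classical route to uniqueness for the homogeneous Maxwell system with the perfect conductor condition; the Lipschitz regularity is ample for the divergence theorem on $D(t)$ and for the trace identities you invoke, and your observation that only $\tilde E_\parallel|_{\p\O}=0$ is actually used is accurate.

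The paper, however, takes a genuinely different path. Instead of working with the first-order system directly, it shows (Lemma~\ref{Maxtowave}) that any $W^{1,\infty}$ solution of \eqref{Maxwell1}--\eqref{percond} satisfies, in a weak sense, scalar wave equations for each component: Dirichlet problems for $E_1,E_2,B_3$ and Neumann problems for $E_3,B_1,B_2$, the Neumann data being $4\pi\rho$ and $\pm 4\pi J_{\underline i}$ respectively. Uniqueness is then reduced to uniqueness for these six wave problems (Lemmas~\ref{wavesol} and~\ref{wavesolD}), each proved by a duality argument with an explicit reflected test function. What this more elaborate route buys is precisely the realization of the Neumann boundary conditions $\p_3 E_3=4\pi\rho$, $\p_3 B_1=4\pi J_2$, $\p_3 B_2=-4\pi J_1$ in the weak formulation \eqref{waveinner}; these are not part of the original data \eqref{percond} but are indispensable for the Glassey--Strauss representation in the half-space (Propositions~\ref{Eiform}--\ref{Biform}) that drives the entire regularity theory of the paper. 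Your proof is shorter and more elementary for the bare uniqueness claim, and it makes transparent that neither the continuity equation nor the source regularity is truly needed once one passes to the difference; the paper's proof is longer because it is simultaneously setting up the wave-equation machinery it will reuse throughout.
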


The key of proof is to realize $E_3, B_1, B_2$ as weak solutions of inhomengenous wave equations with the Neumann boundary condition from a weak solution of Maxwell equations in the sense of Definition \ref{sol_M}. 


\begin{definition}\label{def:weak_wave}
Given any $u_0, u_1 : \O \to \mathbb R $, $G: (0,T) \times \O \to \mathbb R$, and $g : (0,T) \times \p \O \to \mathbb R$, we define a function $u(t) \in W^{1,p}(\O)$ for $t \in (0,T)$, $p > 1$ to be a weak solution of the inhomengenous wave equation with Neumann boundary condition:
\Be \label{waveNeu}
\begin{split}
\p_t^2 u - \Delta_x u &= G,
\\ -\p_{x_3} u(t,x) |_{\p \O } &= g, 
\\ u(0,x) = u_0, \, \p_t u(0,x ) & = u_1,
\end{split}
\Ee
if  for any $\phi \in C_c^\infty ([0,T) \times \bar \O )$, we have
\Be \label{waveinner}
\begin{split}
&\langle u, \phi \rangle_N 
\\ : =  & \int_{\O} ( u_1 (x) \phi(0,x) - u_0 (x) \p_t \phi(0,x)  )  dx + \int_0^T \int_\O  u(t,x) \left( \p_t^2 \phi(t,x)  - \Delta_x  \phi(t,x) \right)  dx dt  
\\ & - \int_0^T \int_{\p \O} u(t,x_\parallel, 0 ) \p_{x_3} \phi(t,x_\parallel, 0 ) dx_\parallel dt   + \int_0^T \int_{\p \O } g(t,x_\parallel ) \phi(t, x_\parallel, 0 )   dx_\parallel dt  - \int_0^T \int_\O  G \phi  \,  dx dt
\\  = &  0 ,
\end{split}
\Ee
and each terms in \eqref{waveinner} are all bounded. Note that since $u(t) \in W^{1,p}(\O)$, it has a trace $u(t) |_{\p \O} \in L^p (\p \O)$. Also, note that $\text{supp} ( \phi  ) \subset [0,T) \times \bar \O $ is compact, but $ \phi |_{t = 0 }  \neq 0$, and $\phi |_{\p \O } \neq 0 $ in general.

We also define a weak solution of the Dirichlet boundary problem:
\Be\label{waveD}
\begin{split}
\p_t^2 u - \Delta_x u &= G,
\\ u(t,x) |_{\p \O } &= g, 
\\ u(0,x) = u_0, \, \p_t u(0,x ) & = u_1,
	\end{split}
\Ee
if for any $\phi \in C_c^\infty((0,T) \times \bar \O) $, with $\phi |_{\p \O} = 0$, we have
\Be\label{waveD_weak}
\begin{split}
\langle u, \phi \rangle _{D} := &  \int_{\O} ( u_1 (x) \phi(0,x) - u_0 (x) \p_t \phi(0,x)  )  dx + \int_0^T \int_\O  u(t,x) \left( \p_t^2 \phi(t,x)  - \Delta_x  \phi(t,x) \right)  dx dt  
\\ & - \int_0^T \int_{\p \O } g(t,x_\parallel ) \p_{x_3} \phi(t, x_\parallel, 0 )   dx_\parallel dt  - \int_0^T \int_\O  G \phi  \,  dx dt
\\ = & 0,
\end{split}
\Ee
and each terms in \eqref{waveD_weak} are all bounded.
\end{definition}

\begin{lemma} \label{Maxtowave}
Suppose $E(t,x), B(t,x) \in W^{1,\infty}((0,T) \times \O)$ is a solution to the equations \eqref{Maxwell1}-\eqref{percond} in the sense of Definition \ref{sol_M}, and $E_0(x), B_0(x) \in W^{1,p}(\O) $, $\nabla_x \rho, \nabla_x J,  \p_t J \in  L^\infty((0,T); L_{\text{loc}}^p(\O))$ for some $p>1$. Then $E_1, E_2, B_3$ solve the wave equation with the Dirichlet boundary condition \eqref{waveD} in the sense of \eqref{waveD_weak} with \begin{align}
u_0 = E_{0,i}, \  u_1 = \p_t E_{0,i} : = (\nabla_x \times B)_i - 4 \pi J_{0,i} , \ G = -4\pi \p_{x_i} \rho - 4 \pi \p_t J_i, \ g = 0 , \ \ \text{for} \  E_i,  i =1,2, \label{E12sol} \\
 u_0 = B_{0,3}, \ u_1 = \p_t B_{0,3} := - (\nabla_x \times E_0)_3, \   G =  4 \pi (\nabla_x \times J )_3, \  g = 0, \ \ \text{for} \ B_3,  \label{B3sol}
 \end{align} 
respectively.

Moreover, $E_3, B_1, B_2$ solve the wave equation with the Neumann boundary condition \eqref{waveNeu}  in the sense of \eqref{waveinner} \text{ with }
\begin{align}
u_0 = E_{0,3}, \  u_1 = \p_t E_{0,3} : = - \p_2 B_{0,1} + \p_1 B_{0,2} - 4 \pi J_{0,3} , \ G = -4\pi \p_{x_3} \rho - 4 \pi \p_t J_3, \ g = - 4\pi \rho, \ \ \text{for} \  E_3, \label{E3sol} \\
 u_0 = B_{0,i}, \ u_1 = \p_t B_{0,i} := - (\nabla_x \times E_0)_i, \   G =  4 \pi (\nabla_x \times J )_i, \  g = (-1)^{i+1} 4 \pi J_{\underline i}, \ \ \text{for} \ B_i, \ i=1,2, \label{B12sol}
 \end{align} 
 respectively.
 \end{lemma}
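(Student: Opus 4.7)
I will verify the claimed weak formulations \eqref{waveD_weak}--\eqref{waveinner} component by component, directly from the Maxwell identities \eqref{Maxwell1}--\eqref{Maxwell4} and the conductor condition \eqref{percond}, using only the single weak derivative of $(E,B)$ available in $W^{1,\infty}((0,T)\times\Omega)$. The general template is the following: start from $\int u(\partial_t^2\phi-\Delta\phi)$, integrate by parts once in time (or once in space), substitute the emerging first derivative of $u$ via a Maxwell identity (which converts, e.g., $\partial_t E$ into a spatial derivative of $B$ and an explicit source term in $J$), and then integrate by parts once more so that all remaining derivatives sit on the smooth test function $\phi$. This bookkeeping guarantees no second derivative of $E$ or $B$ is ever invoked.

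For the tangential components $E_1,E_2,B_3$ the test function in \eqref{waveD_weak} satisfies $\phi|_{\partial\Omega}=0$, which annihilates every surface term produced by spatial integration by parts. After one IBP in time and substitution of $\partial_t E_i=(\nabla\times B)_i-4\pi J_i$ (respectively $\partial_t B_3=-(\nabla\times E)_3$), the residual spatial derivatives either fall on $\phi|_{\partial\Omega}=0$ or are tangential and yield no surface contribution because $n_1=n_2=0$ on $\partial\Omega$. The $t=0$ contributions match $u_1$ via the Maxwell identity evaluated at $t=0$, and the interior source collects into the $G$ of \eqref{E12sol}, \eqref{B3sol}.

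For the normal components, $u=E_3$ is the model case of \eqref{waveinner}. The time piece is handled as above, substituting $\partial_t E_3=\partial_1B_2-\partial_2B_1-4\pi J_3$ after one IBP in $t$ and then substituting $\partial_tB_1,\partial_tB_2$ from Faraday after a second IBP in $t$; the tangential spatial IBPs in $x_1,x_2$ carry no surface term. The new feature is $-\int E_3\,\Delta\phi$. One IBP of the normal Laplacian yields $+\int_{\partial\Omega}E_3\,\partial_3\phi$ (sign from $n_3=-1$), which cancels the explicit $-\int_{\partial\Omega}u\,\partial_3\phi$ term in \eqref{waveinner}, together with the interior $\int\partial_3E_3\,\partial_3\phi$. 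Into the latter I substitute $\partial_3E_3=4\pi\rho-\partial_1E_1-\partial_2E_2$ from \eqref{Maxwell3}. The $4\pi\rho$ piece, integrated by parts in $x_3$, produces $-4\pi\int\partial_3\rho\,\phi-4\pi\int_{\partial\Omega}\rho\,\phi$; the surface term matches $\int_{\partial\Omega}g\,\phi$ with $g=-4\pi\rho$ as in \eqref{E3sol}, while the $\partial_1E_1,\partial_2E_2$ pieces are moved onto $\phi$ by tangential IBP and leave only residual surface integrals of $\partial_3E_1,\partial_3E_2$ against $\phi$, which vanish because $E_1|_{\partial\Omega}=E_2|_{\partial\Omega}=0$.

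The cases $u=B_1$ and $u=B_2$ proceed in parallel, using $\partial_t B_i=-(\nabla\times E)_i$ and the Amp\`ere relation $\partial_tE=\nabla\times B-4\pi J$ in place of Gauss, together with the boundary identity $B_3|_{\partial\Omega}=0$ in place of $E_\parallel|_{\partial\Omega}=0$; the prescribed Neumann datum $g=(-1)^{i+1}4\pi J_{\underline i}$ then emerges from $\partial_3B_i=(-1)^{i+1}4\pi J_{\underline i}+\partial_{\underline i}B_3-\partial_t E_{\underline i}$ applied on $\partial\Omega$, after the same cancellation between the $\int_{\partial\Omega}B_i\,\partial_3\phi$ and the explicit $-\int_{\partial\Omega}u\,\partial_3\phi$ term. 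The main difficulty is purely organizational: one must arrange the IBPs so that no second derivative of $(E,B)$ ever appears, and track signs of normal surface contributions using $n=(0,0,-1)$. The hypothesis $\nabla_x\rho,\nabla_xJ,\partial_tJ\in L^\infty_tL^p_{\mathrm{loc},x}$ ensures all integrals in \eqref{waveinner}--\eqref{waveD_weak} are finite against the compactly supported $\phi$; the continuity equation \eqref{conteq} is automatic from Maxwell in the interior and is not logically required in this direction (its role is in the converse \eqref{wavetoMax}).
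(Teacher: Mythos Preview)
Your proposal is correct and follows essentially the same approach as the paper: both derive the weak wave identity by a sequence of single integrations by parts combined with substitutions from the first-order Maxwell system, so that no second derivative of $(E,B)$ ever appears, and both rely on the conductor conditions $E_\parallel|_{\partial\Omega}=0$, $B_3|_{\partial\Omega}=0$ to kill the residual surface terms. The only organizational difference is that the paper builds the identity by multiplying each Maxwell equation by a carefully chosen derivative of $\phi$ (e.g.\ $\partial_3\phi$, $\partial_1\phi$, $-\partial_2\phi$, $-\partial_t\phi$ for $E_3$) and summing, whereas you start from $\int u(\partial_t^2-\Delta)\phi$ and reduce; these are two orderings of the same cancellation. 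One minor slip: in your $E_3$ discussion the residual surface integrals that vanish are of $E_1\partial_1\phi$ and $E_2\partial_2\phi$ (not of $\partial_3E_1,\partial_3E_2$ against $\phi$), but the conclusion and the reason ($E_\parallel|_{\partial\Omega}=0$) are correct.
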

\begin{proof}\textit{Step 1. }We first consider $E_3, B_1, B_2$. Let $\phi(t,x) \in C^\infty_c( [0,T) \times \bar \O ) $. Since $E(t,x), B(t,x) \in W^{1,\infty}((0,T) \times \O)$, we can multiply $\p_{x_3} \phi $ to equation \eqref{Maxwell3}  and integrate over $(0,T) \times \O$ to get
\Be
\int_0^T \int_\O ( \nabla_x \cdot E ) \p_3 \phi dx dt = \int_0^T \int_\O 4 \pi \rho \p_3 \phi dx dt,
\Ee
then from integration by parts, we get
\Be \label{E3phi1}
\begin{split}
&\int_0^T \int_\O -  ( E_1 \p_1 \p_3 \phi + E_2 \p_2 \p_3 \phi + E_3 \p_3^2 \phi )  dx dt - \int_0^T \int_{\p \O} E_3 \p_3 \phi dx_\parallel dt  
\\ = &  \int_0^T \int_\O - 4 \pi \p_3 \rho  \phi dx dt -  \int_0^T \int_{\p \O } 4 \pi \rho \phi dx_\parallel dt.
\end{split}
\Ee

Next, multiplying $\p_1 \phi  $ to $\p_t B_2 = - (\p_3 E_1 - \p_1 E_3) $ and integrate over $(0,T) \times \O$ we get
\Be \label{ptB2p1phi}
\int_0^T \int_\O \p_t B_2 \p_1 \phi  dx dt = \int_0^T \int_\O   - (\p_3 E_1 - \p_1 E_3)  \p_1 \phi   dx dt.
\Ee
From integration by parts, the LHS of \eqref{ptB2p1phi} equals to
\[
\begin{split}
& \int_0^T \int_\O - B_2 \p_1 \p_t \phi  dx - \int_\O B_2(0,x) \p_1 \phi(0,x) dx
\\ = &  \int_0^T \int_\O \p_1 B_2 \p_t \phi dt dx + \int_\O \p_1 B_2(0,x)  \phi(0,x) dx.
\end{split}
\]
And the RHS of \eqref{ptB2p1phi} equals to 
\[
\int_0^T \int_\O (E_1 \p_3 \p_1 \phi - E_3 \p_1^2 \phi )dx dt,
\]
where we've used that $E_1 |_{\p \O} = 0 $. Thus we get
\Be \label{E3phi2}
\begin{split}
  \int_0^T \int_\O \p_1 B_2 \p_t \phi dt dx + \int_\O \p_1 B_2(0,x)  \phi(0,x) dx = \int_0^T \int_\O (E_1 \p_3 \p_1 \phi - E_3 \p_1^2 \phi )dx dt.
 \end{split}
\Ee
Similarly, multiplying $- \p_2 \phi  $ to $\p_t B_1 = - (\p_2 E_3 - \p_3 E_2) $ and integrate over $(0,T) \times \O$, using integration by parts and that $E_2 |_{\p \O} = 0 $, we get
\Be \label{E3phi3}
\begin{split}
 - \int_0^T \int_\O \p_2 B_1 \p_t \phi dt dx - \int_\O \p_2 B_1(0,x)  \phi(0) dx = \int_0^T \int_\O  (E_2 \p_3 \p_2 \phi - E_3 \p_2^2 \phi )dx dt.
 \end{split}
\Ee

Finally, multiplying $ - \p_t \phi$ to $\p_t E_3 = \p_1 B_2 - \p_2 B_1 - 4\pi J_3 $ and integrate over $(0,T) \times \O$ we get
\Be
 - \int_0^T \int_\O \p_t E_3 \p_t \phi dx dt = - \int_0^T \int_\O  (  \p_1 B_2 - \p_2 B_1 - 4\pi J_3 ) \p_t \phi dx dt.
\Ee
From integration by parts, this gives
\Be \label{E3phi4}
\begin{split}
&  \int_0^T \int_\O  E_3 \p_t^2 \phi dx dt  + \int_\O E_3(0) \p_t \phi(0) dx 
\\= & - \int_0^T \int_\O  (  \p_1 B_2 - \p_2 B_1 ) \p_t \phi dx dt  - \int_0^T \int_\O  4 \pi ( \p_t J_3 \phi ) dx dt - \int_\O  4 \pi J_3(0,x) \phi(0,x) dx.
\end{split}
\Ee

Adding up \eqref{E3phi1}, \eqref{E3phi2}, \eqref{E3phi3}, and \eqref{E3phi4}, we get
\Be
\begin{split}
& \int_0^T \int_\O ( \p_t^2 \phi -  \Delta_x \phi )E_3  dx dt + \int_\O \left( E_3(0,x)  \p_t \phi(0) + (\p_2 B_1(0,x) - \p_1 B_2(0,x)  + 4\pi J_3(0,x) ) \phi(0,x)  \right) dx 
\\ = &   \int_0^T \int_\O  ( - 4 \pi \p_3 \rho - 4\pi \p_t J_3 )   \phi dx dt + \int_0^T \int_{\p \O} E_3 \p_3 \phi dx_\parallel dt -  \int_0^T \int_{\p \O } 4 \pi \rho \phi dx_\parallel dt.  \end{split}
\Ee
This proves \eqref{E3sol}.

\textit{Step 2. }Next, we do the same procedure for $B_1, B_2$. Multiply $\p_t B_1 = - \p_2 E_3 + \p_3 E_2 $ by $ - \p_t \phi $ and integrate over $(0,T) \times \O$, we get
\[
- \int_0^T \int_\O \p_t B_1 \p_t \phi dx dt = -  \int_0^T \int_\O (  - \p_2 E_3 + \p_3 E_2 ) \p_t \phi dx dt. 
\]
From integration by parts and that $E_2 |_{\p \O } = 0$, this gives
\Be \label{B1phi1}
\int_0^T \int_\O B_1 \p_t^2 \phi dx dt + \int_\O B_1(0,x) \p_t \phi(0,x) dx =  - \int_0^T \int_\O ( E_3 \p_2 \p_t \phi - E_2 \p_3 \p_t \phi ) dx dt.
\Ee
Multiply $\p_1 \phi $ to \eqref{Maxwell4} and integrate over $(0,T) \times \mathbb R^3$, we get
\[
\int_0^T \int_\O (\nabla_x \cdot B) \p_1 \phi  dx dt = 0.
\]
From integration by parts and that $B_3 |_{\p \O } = 0$, this gives
\Be
\int_0^T \int_\O (   - B_1 \p_1^2 \phi - B_2 \p_2\p_1 \phi - B_3 \p_3 \p_1 \phi ) dx dt = 0. 
\Ee
Multiply $- \p_2 \phi $ to $ \p_t E_3 = \p_1 B_2 - \p_2 B_1 - 4 \pi J_3$ and integrate over $(0,T) \times \O$, we get
\[
- \int_0^T \int_\O \p_t E_3 \p_2 \phi dx dt =  - \int_0^T \int_\O  ( \p_1 B_2 - \p_2 B_1 - 4 \pi J_3 ) \p_2 \phi dx dt.
\]
From integration by parts, this gives
\Be
  \int_0^T \int_\O  E_3 \p_t \p_2 \phi dx dt - \int_\O \p_2 E_3(0,x)  \phi(0,x) dx = \int_0^T \int_\O (  B_2 \p_1 \p_2 \phi - B_1 \p_2^2 \phi  - 4\pi \p_2 J_3 \phi ) dx dt.
\Ee
Multiply $  \p_3 \phi $ to $\p_t E_2 = - (\p_1 B_3 - \p_3 B_1 ) - 4 \pi J_2$ and integrate over  $(0,T) \times \O$, we get
\[
 \int_0^T \int_\O \p_t E_2 \p_3 \phi dx dt =  \int_0^T \int_\O  ( - \p_1 B_3 + \p_3 B_1 - 4 \pi J_2 ) \p_3 \phi dx dt.
\]
From integration by parts and that $ E_2 |_{\p \O } = 0$, we have
\Be \label{B1phi4}
\begin{split}
 &-  \int_0^T \int_\O  E_2 \p_t \p_3 \phi dx dt + \int_\O \p_3 E_2(0,x) \phi (0,x) dx
 \\ &  = \int_0^T \int_\O ( B_3 \p_1 \p_3 \phi - B_1 \p_3^2 \phi + 4 \pi \p_3 J_2 \phi ) dx dt - \int_0^T \int_{\p \O} B_1 \p_3 \phi dx_\parallel dt + \int_0^T \int_{\p \O } 4 \pi J_2 \phi dx_\parallel dt.
 \end{split}
\Ee

Adding up \eqref{B1phi1}--\eqref{B1phi4}, we get
\Be
\begin{split}
& \int_0^T \int_\O  ( \p_t^2 \phi - \Delta_x  \phi  ) B_1  + \int_\O \left(B_1(0,x) \p_t \phi(0,x)  +  (\p_2 E_3(0,x)  - \p_3 E_2(0,x)  )\phi(0,x) \right) dx 
\\ = & \int_0^T \int_\O ( 4 \pi \p_2 J_3 -  4 \pi \p_3 J_2 ) dx dt  + \int_0^T \int_{\p \O} B_1 \p_3 \phi dx_\parallel dt- \int_0^T \int_{\p \O } 4 \pi J_2 \phi dx_\parallel dt.
\end{split}
\Ee
Using the same argument we also derive
\[
\begin{split}
& \int_0^T \int_\O  ( \p_t^2 \phi - \Delta_x  \phi  ) B_2  + \int_\O \left(B_2(0,x) \p_t \phi(0,x)  + (- \p_1 E_3(0,x)  - \p_3 E_1(0,x)  )\phi(0,x) \right) dx 
\\ = & \int_0^T \int_\O ( - 4 \pi \p_1 J_3 +  4 \pi \p_3 J_1 ) dx dt + \int_0^T \int_{\p \O} B_2 \p_3 \phi dx_\parallel dt +  \int_0^T \int_{\p \O } 4 \pi J_1 \phi dx_\parallel dt.
\end{split}
\]
This proves \eqref{B12sol}.
%

\textit{Step 3. }Next, we consider $E_1, E_2, B_3$. Let $\phi(t,x) \in C_c^\infty([0,T) \times \bar \O ) $ such that $\phi |_{\p \O } = 0$. Note that this implies $\p_1 \phi|_{\p \O} = \p_2 \phi|_{\p \O} = 0 $. Multiply $\p_{x_1} \phi $ to equation \eqref{Maxwell3}  and integrate over $(0,T) \times \O$ to get
\[
\int_0^T \int_\O ( \nabla_x \cdot E ) \p_1 \phi dx dt = \int_0^T \int_\O 4 \pi \rho \p_1 \phi dx dt,
\]
then from integration by parts and that $\p_2 \phi |_{\p \O} = 0$, we get
\Be \label{E1phi1}
\int_0^T \int_\O (-E_1 \p_1^2 \phi - E_2 \p_2 \p_1 \phi - E_3 \p_3 \p_1 \phi ) dx dt = -  \int_0^T \int_\O 4 \pi \p_1 \rho \phi dx dt.
\Ee
Next, multiply $\p_2 \phi $ to $\p_t B_3 = - (\p_1 E_2 - \p_2 E_1 ) $ and integrate over $(0,T) \times \O$ we get
\[
\int_0^T \int_\O \p_t B_3 \p_2 \phi   dx dt  = \int_0^T \int_\O   - (\p_1 E_2 - \p_2 E_1) (\p_2 \phi ) dx dt .
\]
From integration by parts, 
\Be \label{E1phi2}
\begin{split}
\int_0^T \int_\O \p_2 B_3 \p_t \phi dx dt  + \int_\O  \p_2 B_3(0,x)  \phi(0,x)  dx  = \int_0^T \int_\O (  E_2 \p_1 \p_2 \phi -  E_1 \p_2^2 \phi )  dx dt.
\end{split}
\Ee
Multiply $- \p_3 \phi $ to $\p_t B_2 =  \p_1 E_3 - \p_3 E_1  $ and integrate over $(0,T) \times \O$ we get
\[
- \int_0^T \int_\O \p_t B_2 \p_3 \phi   dx dt  = - \int_0^T \int_\O   (\p_1 E_3 - \p_3 E_1) (\p_3 \phi ) dx dt .
\]
From integration by parts and that $E_1 |_{\p \O} = \phi |_{\p \O} = 0$, we have
\Be \label{E1phi3}
- \int_0^T \int_\O \p_3 B_2 \p_t \phi dx dt - \int_\O \p_3 B_2(0,x) \phi(0,x) dx dt = \int_0^T \int_\O ( E_3 \p_1 \p_3 \phi - E_1 \p_3^2 \phi ) dx dt.
\Ee
Then multiplying $ - \p_t \phi$ to $\p_t E_1 = \p_2 B_3 - \p_3 B_2 - 4\pi J_1 $ and integrate over $(0,T) \times \O$, we get
\[
 - \int_0^T \int_\O \p_t E_1 \p_t \phi dx dt = - \int_0^T \int_\O  (  \p_2 B_3 - \p_3 B_2 - 4\pi J_1 ) \p_t \phi dx dt.
\]
From integration by parts, this gives
\Be \label{E1phi4}
\begin{split}
&  \int_0^T \int_\O  E_1 \p_t^2 \phi dx dt  + \int_\O E_1(0) \p_t \phi(0) dx 
\\= & - \int_0^T \int_\O  (  \p_2 B_3 - \p_3 B_2 ) \p_t \phi dx dt  - \int_0^T \int_\O  4 \pi ( \p_t J_1 \phi ) dx dt - \int_\O  4 \pi J_1 (0,x) \phi(0,x) dx.
\end{split}
\Ee

Adding up \eqref{E1phi1}-\eqref{E1phi4}, we get
\[
\begin{split}
& \int_0^T \int_\O ( \p_t^2 \phi -  \Delta_x \phi )E_1  dx dt + \int_\O \left( E_1(0,x)  \p_t \phi(0) + (\p_2 B_3(0,x) - \p_3 B_2(0,x)  + 4\pi J_1(0,x) ) \phi(0,x)  \right) dx 
\\ = &   \int_0^T \int_\O  ( - 4 \pi \p_1 \rho - 4\pi \p_t J_1 )   \phi dx dt.  \end{split}
\]
Using the same argument, we derive
\[
\begin{split}
& \int_0^T \int_\O ( \p_t^2 \phi -  \Delta_x \phi )E_2  dx dt + \int_\O \left( E_2(0,x)  \p_t \phi(0) + (- \p_1 B_3(0,x) + \p_3 B_1(0,x)  + 4\pi J_2(0,x) ) \phi(0,x)  \right) dx 
\\ = &   \int_0^T \int_\O  ( - 4 \pi \p_2 \rho - 4\pi \p_t J_2 )   \phi dx dt.  
\end{split}
\]
This proves \eqref{E12sol}. 

\textit{Step 4. }Next, multiply $\p_t B_3 = - \p_1 E_2 + \p_2 E_1 $ by $ - \p_t \phi $ and integrate over $(0,T) \times \O$, we get
\[
- \int_0^T \int_\O \p_t B_3 \p_t \phi dx dt = -  \int_0^T \int_\O (  - \p_1 E_2 + \p_2 E_1 ) \p_t \phi dx dt. 
\]
From integration by parts,
\Be \label{B3phi1}
 \int_0^T \int_\O B_3 \p_t^2 \phi dx dt + \int_\O B_3(0,x) \p_t \phi(0,x) dx  =  \int_0^T \int_\O ( -  E_2 \p_1 \p_t \phi + E_1 \p_2 \p_t \phi ) dx dt .
\Ee
Multiply $\p_3 \phi $ to \eqref{Maxwell4} and integrate over $(0,T) \times \mathbb R^3$, we get
\[
\int_0^T \int_\O (\nabla_x \cdot B) \p_3 \phi  dx dt = 0.
\]
From integration by parts and that $B_3 |_{\p \O } = 0$, this gives
\Be \label{B3phi2}
\int_0^T \int_\O (   - B_1 \p_1\p_3 \phi - B_2 \p_2\p_3 \phi - B_3 \p_3^2 \phi ) dx dt = 0. 
\Ee
Multiply $- \p_1 \phi $ to $ \p_t E_2 = - \p_1 B_3 + \p_3 B_1 - 4 \pi J_2$ and integrate over $(0,T) \times \O$, we get
\[
- \int_0^T \int_\O \p_t E_2 \p_1 \phi dx dt =  - \int_0^T \int_\O  ( -\p_1 B_3 + \p_3 B_1 - 4 \pi J_2 ) \p_1 \phi dx dt.
\]
From integration by parts and that $\p_1 \phi |_{\p \O } = 0$, this gives
\Be \label{B3phi3}
  \int_0^T \int_\O  E_2 \p_t \p_1 \phi dx dt - \int_\O \p_1 E_2(0,x)  \phi(0,x) dx = \int_0^T \int_\O ( - B_3 \p_1^2 \phi + B_1 \p_3 \p_1 \phi  - 4\pi \p_1 J_2 \phi ) dx dt.
\Ee
Multiply $  \p_2 \phi $ to $\p_t E_1 =  \p_2 B_3 - \p_3 B_2 - 4 \pi J_1$ and integrate over  $(0,T) \times \O$, we get
\[
 \int_0^T \int_\O \p_t E_1 \p_2 \phi dx dt =  \int_0^T \int_\O  (  \p_2 B_3 - \p_3 B_2 - 4 \pi J_1 ) \p_2 \phi dx dt.
\]
From integration by parts and that $\p_2 \phi |_{\p \O } = 0$, we have
\Be \label{B3phi4}
\begin{split}
-  \int_0^T \int_\O  E_1 \p_t \p_2 \phi dx dt + \int_\O \p_2 E_1(0,x) \phi (0,x) dx
 = \int_0^T \int_\O ( - B_3 \p_2^2 \phi + B_2 \p_3 \p_2 \phi + 4 \pi \p_2 J_1 \phi ) dx dt.
 \end{split}
\Ee

Adding up \eqref{B3phi1}-\eqref{B3phi4}, we get
\[
\begin{split}
& \int_0^T \int_\O  ( \p_t^2 \phi - \Delta_x  \phi  ) B_3  + \int_\O \left(B_3(0,x) \p_t \phi(0,x)  +  (\p_1 E_2(0,x)  - \p_2 E_1(0,x)  )\phi(0,x) \right) dx 
\\ = & \int_0^T \int_\O ( 4 \pi \p_1 J_2 -  4 \pi \p_2 J_1 ) dx dt .
\end{split}
\]
This proves \eqref{B3sol}.\end{proof}



Next, we prove the uniqueness of wave equation with Neumann BC \eqref{waveNeu} and Dirichlet BC \eqref{waveD}.
\begin{lemma} \label{wavesol}
%
Suppose $u(t,x), \tilde u(t,x)$ are weak solutions with the Neumann BC \eqref{waveNeu} with the same $u_0$, $u_1$, $G$, and $g$ in the sense of weak formulation \eqref{waveinner}. Then $u(t,x) = \tilde u(t,x)$.
\end{lemma}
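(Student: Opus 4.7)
The plan is to reduce to the homogeneous problem and then use a standard duality argument against test functions obtained by solving a backward Neumann wave equation. By linearity of the weak formulation \eqref{waveinner} in the data $(u,u_0,u_1,g,G)$, the difference $w := u - \tilde u$ is a weak solution corresponding to $u_0=u_1=0$, $G=0$, $g=0$; equivalently,
$$\int_0^T\!\!\int_\O w\,(\p_t^2\phi - \Delta_x\phi)\,dx\,dt \;-\; \int_0^T\!\!\int_{\p\O} w\,\p_{x_3}\phi\,dx_\parallel\,dt \;=\; 0$$
for every $\phi \in C_c^\infty([0,T)\times\bar\O)$. The goal is therefore to show $w \equiv 0$.

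Fix any $T_0 \in (0,T)$ and an arbitrary $F \in C^\infty_c((0,T_0)\times\O)$. I would construct a test function $\phi$ solving the backward Neumann wave problem
$$\p_t^2\phi - \Delta_x\phi = F \ \text{in } (0,T_0)\times\O, \quad \p_{x_3}\phi|_{\p\O}=0, \quad \phi(T_0,\cdot)=\p_t\phi(T_0,\cdot)=0,$$
extended by zero for $t \ge T_0$. Existence and smoothness come from even reflection across $\{x_3=0\}$: since $F$ is supported strictly inside $\O$, its even extension $\tilde F(t,x_\parallel,x_3) := F(t,x_\parallel,|x_3|)$ is smooth and compactly supported in $\mathbb R^3$, so solving the whole-space wave equation backward in time with source $\tilde F$ and zero terminal data yields a smooth $\tilde\phi$ that is even in $x_3$ by uniqueness/symmetry. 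Its restriction $\phi := \tilde\phi|_{\bar\O}$ is then smooth and automatically satisfies $\p_{x_3}\phi|_{\p\O}=0$. Because $F$ vanishes in a neighborhood of $t=T_0$, iterating the wave equation gives $\p_t^k \phi(T_0,\cdot)\equiv 0$ for all $k\ge 0$, so extending by zero past $T_0$ preserves $C^\infty$ regularity; finite speed of propagation on $\mathbb R^3$ gives spatial compact support uniformly in $t\in[0,T_0]$. Hence $\phi \in C^\infty_c([0,T)\times\bar\O)$ is an admissible test function in \eqref{waveinner}.

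Plugging this $\phi$ into the weak formulation for $w$, every boundary, initial, and source term vanishes ($u_0=u_1=0$ kills the initial terms, $G=0=g$ kills the source terms, and $\p_{x_3}\phi|_{\p\O}=0$ kills the remaining boundary integral), leaving
$$\int_0^{T_0}\!\!\int_\O w(t,x)\,F(t,x)\,dx\,dt = 0.$$
Since $F \in C^\infty_c((0,T_0)\times\O)$ was arbitrary, $w(t,x)=0$ almost everywhere on $(0,T_0)\times\O$; as $T_0<T$ was arbitrary, $w \equiv 0$ on $(0,T)\times\O$, completing the uniqueness.

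The main obstacle in executing this plan is the construction of the test function: one must simultaneously enforce the homogeneous Neumann condition and guarantee that the time-extension by zero past $T_0$ remains $C^\infty$ with compact spatial support. The even-reflection trick handles both cleanly precisely because $F$ is supported away from $\p\O$; this is the only nontrivial step, and it is why the uniqueness of the Neumann wave problem can be proved in the very weak class $W^{1,\infty}$ demanded by Definition \ref{sol_M}.
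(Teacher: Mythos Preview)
Your proposal is correct and follows essentially the same duality argument as the paper: reduce to the homogeneous case, construct a test function solving the backward Neumann wave problem via even reflection across $\{x_3=0\}$, and plug it into \eqref{waveinner}. The only cosmetic difference is that the paper implements the backward solve by time-reversal $t\mapsto T-t$ (so that compact support of $\psi$ in $(0,T)$ forces the transformed solution to vanish for small $t$), whereas you work backward from an intermediate $T_0<T$ and extend by zero; both devices serve the same purpose of producing an admissible $C_c^\infty([0,T)\times\bar\O)$ test function.
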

\begin{proof}
It suffices to show that if $u$ is the solution of \eqref{waveNeu} with
\Be \label{waveNeu0}
\begin{split}
u_0 = u_1 = G = g = 0,
\end{split}
\Ee
in the sense of \eqref{waveinner}, then for any $\psi \in C_c^\infty((0,T) \times \O )$,
\Be \label{intupsi}
\int_0^T \int_\O u(t,x) \psi(t,x)  dx dt = 0.
\Ee

Let $\tilde \psi (t,x) = \psi(T-t, x )$, then $\tilde \psi \in C_c^\infty((0,T) \times \O )$. We consider the function $\tilde v(t,x) : (0,T) \times \mathbb R^3 \to \mathbb R$ given by
\Be \label{vtest}
\begin{split}
\tilde v(t,x) :=  & \frac{1}{4 \pi } \int_{B(x;t)  \cap \{ y_3 > 0 \} }\frac{ \tilde \psi (t - |y-x|, y_\parallel, y_3 ) }{|y-x| } dy
\\ &  + \frac{1}{4 \pi } \int_{B(x;t)  \cap \{ y_3 < 0 \} }\frac{ \tilde  \psi ( t - |y-x|, y_\parallel, - y_3 ) }{|y-x| } dy.
\end{split}
\Ee
Then the function $\tilde v(t,x)$ is a weak solution of the wave equation in $(0,T) \times \mathbb R^3$
\Be \label{tildevwave}
\begin{split}
(\p_t^2 - \Delta_x) \tilde v(t,x)   = & \mathbf 1_{x_3 > 0 }  \tilde \psi(t,x) +   \mathbf 1_{x_3 < 0 }  \tilde \psi(t, \bar x) 
\\ \tilde v(0,x) = & 0, \ \p_t \tilde v(0,x) = 0,
\end{split}
\Ee
where $x= (x_1, x_2, -x_3)$. And since $\psi \in C_c^\infty((0,T) \times \O )$, the function $ \mathbf 1_{x_3 > 0 }  \tilde \psi(t,x) +   \mathbf 1_{x_3 < 0 }  \tilde \psi(t, \bar x)  $ is smooth in $(0,T) \times \mathbb R^3$. Thus $\tilde v $ is smooth. Moreover, for some small $\delta > 0$, 
\Be \label{vtsupp}
\tilde v(s,x) = 0 \text{ for }   s \in [ 0,\delta ),
\Ee
and a direct computation yields
\Be \label{p3tildev}
\p_{x_3} \tilde v = 0  \text{ on } (0,T) \times \p \O.
\Ee

Now, let $v(t,x) : [0,T) \times \bar \O \to \mathbb R^3$ be given by 
\Be \label{defv}
v(t,x) = \tilde v(T-t,x ).
\Ee
Then $v(t,x)$ is smooth, and by \eqref{vtsupp}, $v(t,x) \in C_c^\infty([0,T) \times \bar \O )  $. Moreover, from \eqref{tildevwave}, \eqref{p3tildev},
\Be \label{wavevneu}
\begin{split}
(\p_t^2 - \Delta_x )v(t,x) = & \tilde \psi(T-t, x ) \text{ in } (0,T) \times \O,
\\  \p_{x_3} v(t,x) = & 0 \text{ on } (0,T) \times \p \O.
\end{split}
\Ee
Now, since $u$ is the solution of \eqref{waveNeu} with $u_0 = u_1 = G = g = 0$, and $v(t,x) \in C_c^\infty([0,T) \times \bar \O )  $, $\p_{x_3} v|_{\p \O } = 0$, from \eqref{waveinner} we have
\Be \label{inner0}
\begin{split}
0 =  &  \int_{\O} ( u_1 (x) v(0,x) - u_0 (x) \p_t v(0,x)  )  dx + \int_0^T \int_\O   u(t,x) \left( \p_t^2 v(t,x)  - \Delta_x  v(t,x) \right) dx dt  
\\ & - \int_0^T \int_{\p \O } u(t,x_\parallel ) \p_{x_3} v(t, x_\parallel, 0 )   dx_\parallel dt  + \int_0^T \int_{\p \O } g(t,x_\parallel ) v(t, x_\parallel, 0 )   dx_\parallel dt  - \int_0^T \int_\O  G v  \,  dx dt
\\ = &   \int_0^T \int_\O u (t,x)  \tilde \psi (T-t, x ) dx dt
\\ = & \int_0^T \int_\O u(t,x) \psi (t,x) dx dt.
\end{split}
\Ee
Thus, we proved \eqref{intupsi} and this conclude the lemma.
\end{proof}

We also prove a similar version of the lemma that will be used later.
\begin{lemma} \label{wavesol2}
Let $u :(0,T) \times \O \to \mathbb R$ be a function such that for any $\phi \in C_c^\infty([0,T) \times \bar \O )$, with $\p_{x_3} \phi |_{\p \O} = 0$,
\Be \label{waveuN0}
\int_0^T \int_\O u ( \p_t^2- \Delta_x ) \phi dx dt  = 0,
\Ee
then $u =0$.
\end{lemma}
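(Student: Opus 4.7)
The plan is to proceed by a duality/density argument: for each test density $\psi \in C_c^\infty((0,T)\times \Omega)$, I will produce an admissible test function $\phi$ (namely $\phi \in C_c^\infty([0,T)\times \bar\Omega)$ with $\partial_{x_3}\phi|_{\partial\Omega}=0$) such that $(\partial_t^2 - \Delta_x)\phi = \psi$. Once such a $\phi$ is plugged into the hypothesis \eqref{waveuN0}, the identity collapses to
\[
0 \;=\; \int_0^T\!\!\int_\Omega u\,(\partial_t^2-\Delta_x)\phi \,dx\,dt \;=\; \int_0^T\!\!\int_\Omega u\,\psi \,dx\,dt,
\]
and since $\psi$ is arbitrary in $C_c^\infty((0,T)\times\Omega)$, this forces $u=0$ as a distribution, hence almost everywhere.

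To construct $\phi$ I would recycle the recipe of Lemma \ref{wavesol} verbatim. First time-reverse by setting $\tilde\psi(t,x):=\psi(T-t,x)$, still compactly supported in $(0,T)\times\Omega$. Extend $\tilde\psi$ evenly across $\partial\Omega$ to obtain a smooth, compactly supported source on $(0,T)\times\mathbb{R}^3$. Solve the free-space Cauchy problem $(\partial_t^2 - \Delta_x)\tilde v = \tilde\psi_{\mathrm{even}}$ with zero initial data using Kirchhoff's representation (the explicit formula in \eqref{vtest} is already the even-extension version of this). By symmetry of both the source and the Kirchhoff kernel in $x_3$, the solution $\tilde v(t,\cdot)$ is even in $x_3$, so $\partial_{x_3}\tilde v|_{\partial\Omega}=0$. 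Because $\tilde\psi$ vanishes for $t$ near $0$, finite speed of propagation gives $\tilde v\equiv 0$ on $[0,\delta)\times\mathbb{R}^3$ for some $\delta>0$, while the compact spatial support of $\tilde\psi$ and the bounded time window keep the spatial support of $\tilde v$ bounded on $[0,T]$. Setting $\phi(t,x):=\tilde v(T-t,x)$ then yields the desired $\phi \in C_c^\infty([0,T)\times\bar\Omega)$ with $\partial_{x_3}\phi|_{\partial\Omega}=0$ and $(\partial_t^2-\Delta_x)\phi=\psi$.

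There is essentially no obstacle here beyond bookkeeping, since all the real work (symmetry, smoothness, and support of $\tilde v$) was already done in the proof of Lemma \ref{wavesol}. The only point that deserves a one-line check is that $\phi$ indeed has compact support in $[0,T)\times\bar\Omega$, which follows from $\tilde v \equiv 0$ near $t=0$ (making $\phi\equiv 0$ near $t=T$) together with the spatial support bound obtained from finite propagation speed applied to the compactly supported source $\tilde\psi_{\mathrm{even}}$.
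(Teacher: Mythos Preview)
Your proposal is correct and follows essentially the same approach as the paper: both construct, for each $\psi \in C_c^\infty((0,T)\times\Omega)$, an admissible test function $\phi$ by time-reversing $\psi$, solving the whole-space wave equation with the evenly extended source via the Kirchhoff formula \eqref{vtest}, and then time-reversing back; the resulting $\phi$ satisfies $(\partial_t^2-\Delta_x)\phi=\psi$ and $\partial_{x_3}\phi|_{\partial\Omega}=0$, so the hypothesis gives $\int u\psi = 0$ and hence $u=0$. The paper's proof is slightly terser, simply citing the properties \eqref{vtest}--\eqref{wavevneu} established in Lemma~\ref{wavesol}, but the content is identical.
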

\begin{proof}
Take any $\psi \in C_c^\infty ((0,T) \times \O ) $. Let $\tilde \psi (t,x) = \psi(T-t, x )$, and define $\tilde v (t,x) $ in the same way as \eqref{vtest}. Then define $v(t,x)$ as in \eqref{defv}. Then, as showed in \eqref{vtest}-\eqref{wavevneu}, $v(t,x) \in C_c^\infty([0,T) \times \bar \O ) $, 
\[
\begin{split}
(\p_t^2 - \Delta_x )v(t,x) =  \psi(t, x ) \text{ in } (0,T) \times \O, \text{ and }  \p_{x_3} v  |_{\p \O } = 0 .
\end{split}
\]
Therefore, from \eqref{waveuN0}
\[
 \int_0^T \int_\O u \psi dx dt =  \int_0^T \int_\O u ( \p_t^2- \Delta_x ) v dx dt = 0.
\]
Thus $u = 0$.
\end{proof}

\begin{lemma} \label{wavesolD}
Suppose $u(t,x), \tilde u(t,x)$ are weak solutions with the Dirichlet BC \eqref{waveD} with the same $u_0$, $u_1$, $G$, and $g$ in the sense of weak formulation \eqref{waveD_weak}. Then $u(t,x) = \tilde u(t,x)$.
\end{lemma}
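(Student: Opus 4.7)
The proof will parallel Lemma \ref{wavesol}, but with the even reflection replaced by an odd reflection so that the constructed test function vanishes on $\partial\Omega$ (rather than having vanishing normal derivative), in accordance with the Dirichlet test-function class in \eqref{waveD_weak}.

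By linearity, it suffices to show that if $u$ is a weak solution of \eqref{waveD} with $u_0=u_1=G=g=0$, then $u\equiv 0$. To achieve this I would fix an arbitrary $\psi\in C_c^\infty((0,T)\times\Omega)$, set $\tilde\psi(t,x)=\psi(T-t,x)$, and define the odd-reflected retarded potential
\begin{equation*}
\tilde v(t,x):=\frac{1}{4\pi}\int_{B(x;t)\cap\{y_3>0\}}\frac{\tilde\psi(t-|y-x|,y_\parallel,y_3)}{|y-x|}\,dy\;-\;\frac{1}{4\pi}\int_{B(x;t)\cap\{y_3<0\}}\frac{\tilde\psi(t-|y-x|,y_\parallel,-y_3)}{|y-x|}\,dy.
\end{equation*}
Since $\psi$ is supported strictly inside $\Omega$, the extension $\mathbf 1_{x_3>0}\tilde\psi(t,x)-\mathbf 1_{x_3<0}\tilde\psi(t,\bar x)$ (with $\bar x=(x_1,x_2,-x_3)$) is smooth on $(0,T)\times\mathbb R^3$ and odd in $x_3$, so $\tilde v$ solves the whole-space wave equation with this odd source and zero initial data; consequently $\tilde v$ is smooth, compactly supported in $[0,T]\times\mathbb R^3$, vanishes for $t\in[0,\delta)$ by finite speed of propagation, and $\tilde v(t,x_\parallel,0)=0$ by its odd-in-$x_3$ symmetry inherited from the source.

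Next I would set $v(t,x):=\tilde v(T-t,x)$ for $(t,x)\in[0,T)\times\bar\Omega$. Then $v\in C_c^\infty([0,T)\times\bar\Omega)$ with $v|_{\partial\Omega}=0$, so $v$ is an admissible test function for the Dirichlet weak formulation \eqref{waveD_weak}. A direct computation gives
\begin{equation*}
(\partial_t^2-\Delta_x)v(t,x)=\psi(t,x)\quad\text{in }(0,T)\times\Omega.
\end{equation*}
Substituting $v$ into \eqref{waveD_weak} with all data set to zero, every boundary and initial term drops out, and we are left with
\begin{equation*}
0=\int_0^T\!\!\int_\Omega u(t,x)\,(\partial_t^2 v-\Delta_x v)\,dx\,dt=\int_0^T\!\!\int_\Omega u(t,x)\psi(t,x)\,dx\,dt.
\end{equation*}
Since $\psi\in C_c^\infty((0,T)\times\Omega)$ was arbitrary, this forces $u=0$ a.e., which proves the lemma. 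The only step requiring genuine care, and thus the main (though mild) obstacle, is verifying that $v$ lies in the prescribed test-function class, namely that the odd extension preserves smoothness across $\partial\Omega$ (guaranteed by $\psi$ being supported strictly inside $\Omega$) and that $v$ has compact support in $[0,T)\times\bar\Omega$ (guaranteed by finite speed of propagation together with the reversed-time vanishing of $\tilde\psi$ near $t=0$).
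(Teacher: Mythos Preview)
Your proof is correct and follows essentially the same approach as the paper's: both reduce by linearity to the zero-data case, construct via odd reflection a smooth test function $v$ (the paper calls it $w$) satisfying $(\partial_t^2-\Delta_x)v=\psi$ with $v|_{\partial\Omega}=0$ and compact support in $[0,T)\times\bar\Omega$, and then substitute into \eqref{waveD_weak} to conclude $\int_0^T\int_\Omega u\psi=0$.
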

\begin{proof}
It suffices to show that if $u$ is the solution of \eqref{waveD} with
\Be \label{waveD0}
\begin{split}
u_0 = u_1 = G = g = 0,
\end{split}
\Ee
in the sense of \eqref{waveD_weak}, then for any $\psi \in C_c^\infty((0,T) \times \O )$,
\Be \label{intupsiD}
\int_0^T \int_\O u(t,x) \psi(t,x)  dx dt = 0.
\Ee
Now, let $\tilde \psi(t,x) = \psi (T-t,x)$, and define the function $\tilde w(t,x) : (0,T) \times \mathbb R^3 \to \mathbb R^3$ as
\Be
\tilde w(t,x) : =  \frac{1}{4 \pi } \int_{B(x;t)  \cap \{ y_3 > 0 \} }\frac{ \tilde \psi (t - |y-x|, y_\parallel, y_3 ) }{|y-x| } dy - \frac{1}{4 \pi } \int_{B(x;t)  \cap \{ y_3 < 0 \} }\frac{ \tilde  \psi ( t - |y-x|, y_\parallel, - y_3 ) }{|y-x| } dy,
\Ee
and let $w(t,x) : [0,T) \times \bar \O \to \mathbb R^3$ be given by 
\Be
w(t,x) = \tilde w(T-t,x ).
\Ee
Then from direct computation and using the same argument as \eqref{vtest}-\eqref{wavevneu}, we get $w(t,x) \in C_c^\infty([0,T) \times \bar \O )  $, and
\Be \label{wavewneu}
\begin{split}
(\p_t^2 - \Delta_x )w(t,x) = & \tilde \psi(T-t, x ) = \psi(t,x) \text{ in } (0,T) \times \O,
\\   w(t,x) = & 0 \text{ on } (0,T) \times \p \O.
\end{split}
\Ee
Therefore from \eqref{waveD_weak} and \eqref{waveD0}, we have
\[
 0 =  \int_0^T \int_\O u  (\p_t^2 - \Delta_x )w dx dt  =  \int_0^T \int_\O u \psi dx dt.
\]
This proves \eqref{intupsiD}.\end{proof}

Next, we show that the Lipschitz solutions of the wave equations solves the Maxwell equations if the continuity equation {conteq} and some initial compatibility condition are satisfied. 
\begin{lemma} \label{wavetoMax}
	Suppose $E(t,x), B(t,x) \in W^{1, \infty}((0,T) \times \O  ) $, $\nabla_x \rho, \p_t J, \nabla_x J \in L^\infty((0,T); L_{\text{loc}}^{p}( \O )) $, with 
	\Be 
	\nabla \cdot J = - \p_t \rho.
	\Ee
	Assume 
	\Be \label{weakEass}
	\begin{split}
		& E_1, E_2 \text{ solves } \eqref{waveD} \text{ with } \eqref{E12sol}, \text{ and } E_3 \text{ solves } \eqref{waveNeu} 
		\text{ with } \eqref{E3sol},
		\\ & B_3 \text{ solves } \eqref{waveD} \text{ with } \eqref{B3sol}, \text{ and } B_1, B_2 \text{ solves } \eqref{waveNeu} 
		\text{ with } \eqref{B12sol}.
	\end{split}
	\Ee
	Further we assume compatibility conditions
	\begin{align}
		\nabla \cdot E_0 = 4 \pi \rho_0, \ \nabla_x \cdot B_0 =0, \text{ in } \O, \label{GaussE0} \\
		E_{0,1} = E_{0,2} = B_{0,3} = 0 \text{ on } \p \O. \label{Dirt0}
	\end{align}
	Then we have
	\Be 
	\begin{split}
		\p_t E & = \nabla_x \times B - 4 \pi J, \, \nabla_x \cdot E = 4\pi \rho,
		\\ \p_t B & = - \nabla_x \times E, \, \nabla_x \cdot B = 0.
	\end{split}
	\Ee
\end{lemma}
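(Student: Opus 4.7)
The plan is to introduce four ``defect'' quantities that measure the failure of each of Maxwell's equations, namely $G_E := \nabla \cdot E - 4\pi \rho$, $G_B := \nabla \cdot B$, $\mathcal{Q} := \partial_t B + \nabla \times E$, and $\mathcal{P} := \partial_t E - \nabla \times B + 4\pi J$, and to show each vanishes identically. The strategy is to first dispose of $G_E$ and $\mathcal{Q}$ by showing that each satisfies a homogeneous wave equation with zero initial data and zero boundary data, and then invoke the uniqueness lemmas (Lemmas \ref{wavesol}, \ref{wavesolD}, \ref{wavesol2}). The remaining identities $G_B = 0$ and $\mathcal{P} = 0$ then follow by algebraic manipulation using $G_E = 0$, $\mathcal{Q} = 0$, and the compatibility conditions \eqref{GaussE0}--\eqref{Dirt0}.

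For $G_E$: formally, summing $\partial_{x_i}$ of the $i$-th wave equation for $E_i$ and invoking continuity \eqref{conteq} yields $(\partial_t^2 - \Delta_x) G_E = -4\pi\partial_t(\nabla\cdot J + \partial_t\rho) = 0$. The initial data vanish: $G_E|_{t=0} = 0$ by \eqref{GaussE0}, and $\partial_t G_E|_{t=0} = \nabla\cdot(\nabla\times B_0 - 4\pi J_0) - 4\pi\partial_t\rho|_{t=0} = -4\pi(\nabla\cdot J_0 + \partial_t\rho|_{t=0}) = 0$ by continuity. On $\partial\Omega$, the tangential derivatives of $E_1, E_2$ vanish thanks to \eqref{percond}, so $G_E|_{\partial\Omega} = \partial_{x_3} E_3 - 4\pi\rho = 0$ by the Neumann BC of \eqref{E3sol}. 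The rigorous weak verification proceeds by testing the weak formulations \eqref{waveD_weak} for $E_1, E_2$ and \eqref{waveinner} for $E_3$ against gradient-type test functions $\partial_{x_i}\eta$, shifting spatial derivatives onto $\eta$, and then applying Lemma \ref{wavesolD} to conclude $G_E \equiv 0$.

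For $\mathcal{Q}$: we compute $(\partial_t^2 - \Delta)\mathcal{Q} = 4\pi\partial_t(\nabla\times J) - 4\pi\nabla\times\partial_t J = 0$ (using $\nabla\times\nabla\rho = 0$). The initial data vanish: $\mathcal{Q}|_{t=0} = -\nabla\times E_0 + \nabla\times E_0 = 0$ by the definition of $\partial_t B(0)$ in \eqref{B3sol}--\eqref{B12sol}, and after a short calculation using the wave equation for $B$ at $t=0$ and the definition of $\partial_t E(0)$, $\partial_t\mathcal{Q}|_{t=0}$ reduces to $\nabla(\nabla\cdot B_0) = 0$ by \eqref{GaussE0}. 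For the boundary, $\mathcal{Q}_3 = \partial_t B_3 + \partial_{x_1} E_2 - \partial_{x_2} E_1$ vanishes on $\partial\Omega$ since all three terms involve Dirichlet-vanishing quantities, yielding a homogeneous Dirichlet problem resolved by Lemma \ref{wavesolD}. For the tangential components, e.g.\ $\mathcal{Q}_1 = \partial_t B_1 + \partial_{x_2} E_3 - \partial_{x_3} E_2$, differentiating in $x_3$ on $\partial\Omega$ and using the Neumann BCs for $B_1, E_3$ together with the wave equation for $E_2$ evaluated at $x_3 = 0$ produces the cancellation $\partial_{x_3}\mathcal{Q}_1|_{\partial\Omega} = 4\pi\partial_t J_2 + 4\pi\partial_{x_2}\rho - 4\pi\partial_{x_2}\rho - 4\pi\partial_t J_2 = 0$. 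An identical computation gives $\partial_{x_3}\mathcal{Q}_2|_{\partial\Omega} = 0$, so Lemma \ref{wavesol} yields $\mathcal{Q}_1 = \mathcal{Q}_2 \equiv 0$.

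Once $G_E = 0$ and $\mathcal{Q} = 0$ are established, the remaining two Maxwell identities follow. Taking the divergence of $\mathcal{Q} = 0$ gives $\partial_t(\nabla\cdot B) = -\nabla\cdot(\nabla\times E) = 0$, whence $\nabla\cdot B(t) = \nabla\cdot B_0 = 0$ by \eqref{GaussE0}. For $\mathcal{P}$, substituting the wave equation $\partial_t^2 E = \Delta E - 4\pi\nabla\rho - 4\pi\partial_t J$, Faraday's law $\partial_t B = -\nabla\times E$, and the identity $\nabla\times(\nabla\times E) = \nabla(\nabla\cdot E) - \Delta E$ gives $\partial_t\mathcal{P} = -4\pi\nabla\rho + \nabla(\nabla\cdot E) = \nabla G_E = 0$; combined with $\mathcal{P}|_{t=0} = 0$ (from the definition of $\partial_t E(0)$ in \eqref{E12sol}--\eqref{E3sol}), this gives $\mathcal{P} \equiv 0$. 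The main obstacle throughout is the careful bookkeeping of the weak formulations: since $E, B \in W^{1,\infty}$ only, every second-order manipulation is distributional, and one must match the class of test functions used in \eqref{waveinner}/\eqref{waveD_weak} to the homogeneous boundary type of the defect quantity being analyzed; the tangential Neumann verification for $\mathcal{Q}_1, \mathcal{Q}_2$ is the most delicate step, since it requires all four boundary conditions assumed in \eqref{weakEass} to be invoked simultaneously with the continuity equation.
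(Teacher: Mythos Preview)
Your approach is correct and closely related to the paper's, but organized differently. The paper verifies each of the eight scalar Maxwell equations individually, in each case testing the candidate defect (e.g.\ $\partial_t E_1 - (\nabla\times B)_1 + 4\pi J_1$) against $(\partial_t^2-\Delta)\phi$ with $\phi$ in the appropriate boundary class, expanding via the weak formulations \eqref{waveinner}/\eqref{waveD_weak} for the constituent fields, and invoking Lemmas~\ref{wavesol2} or~\ref{wavesolD}. This means eight essentially parallel computations.

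Your route is more economical: you directly verify only $G_E=0$ and the three components of $\mathcal{Q}=0$ by the wave-uniqueness method, then obtain $G_B=0$ and $\mathcal{P}=0$ by differentiation and time-integration, exploiting the algebraic constraints linking Maxwell's equations. This halves the number of wave-uniqueness verifications and makes the logical structure clearer. The trade-off is that your two ``algebraic'' steps need additional care at the available regularity: showing $\mathcal{P}(0)=0$ requires identifying the trace of $\partial_t E$ at $t=0$ with the datum $u_1$ from \eqref{E12sol}--\eqref{E3sol}, which is not immediate from $E\in W^{1,\infty}$ alone (one uses that the interior wave equation, read off from \eqref{waveinner}/\eqref{waveD_weak} with compactly supported $\phi$, gives $\partial_t^2 E\in L^p_{\mathrm{loc}}$, hence $\partial_t E$ is continuous in $t$). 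The paper's uniform treatment sidesteps this by handling every equation the same way; your organization is arguably more transparent but demands this extra justification, which you correctly flag as the delicate point.
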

\begin{proof}
	Let's first prove $ \nabla \cdot E =4 \pi \rho$.
	
	In the view of Lemma \ref{wavesolD}, it suffices to show that for any $\phi(t,x) \in C_c^\infty( [0,T) \times \bar \O ) $ with $\phi |_{\p \O } = 0 $, we have
	\Be \label{GaussEweak}
	\int_0^T \int_\O ( \nabla \cdot E - 4 \pi \rho ) (\p_t^2 - \Delta_x)  \phi dx dt = 0.
	\Ee
	Now by direct computation with integration by parts, we have
	\[
	\begin{split}
		& \int_0^T \int_\O ( \nabla \cdot E - 4 \pi \rho ) (\p_t^2 - \Delta_x)  \phi dx dt
		\\ = & \int_0^T \int_\O  -  \sum_{i=1}^3 E_i (\p_t^2 - \Delta_x) (\p_{x_i} \phi ) dx dt - \int_0^T \int_{\p \O } E_3 (\p_t^2 - \Delta_x) \phi \ dx_\parallel dt  - \int_0^T \int_{\O } 4 \pi \rho (\p_t^2 - \Delta_x)  \phi dx dt
		\\ = &  \int_\O   (   -  \p_t E_{0} \cdot \nabla  \phi(0,x) +   E_{0} \cdot \nabla \p_t   \phi(0,x) ) dx  + 4 \pi \int_0^T \int_\O  \sum_{i=1}^3 ( \p_{x_i} \rho + \p_t J_i )    (\p_{x_i} \phi )  dx dt 
		\\ & - \int_0^T \int_{\p \O }  (E_3 \p_{x_3}^2 \phi  - 4 \pi \rho  \p_{x_3} \phi  ) \ d x_\parallel dt  - \int_0^T \int_{\p \O } E_3 (- \p_{x_3}^2 ) \phi \ dx_\parallel dt  - \int_0^T \int_{\O } 4 \pi \rho (\p_t^2 - \Delta_x)  \phi dx dt
		\\ = &    \int_\O   (    -  \p_t E_{0} \cdot \nabla  \phi(0,x) +   E_{0} \cdot \nabla \p_t   \phi(0,x)  ) dx  - 4 \pi \int_0^T \int_\O \rho \Delta_x \phi dx dt - 4\pi \int_0^T \int_\O \sum_{i=1}^3 J_i \p_t \p_{x_i } \phi dx dt
		\\ & - 4\pi \int_\O \sum_{i=1}^3  J_{0,i} \p_{x_i} \phi(0,x)  dx  - \int_0^T \int_{\O } 4 \pi \rho (\p_t^2 - \Delta_x)  \phi dx dt
		\\ = &   \int_\O     (    -  \p_t E_{0} \cdot \nabla  \phi(0,x) +   E_{0} \cdot \nabla \p_t   \phi(0,x)  ) dx + 4\pi \int_0^T \int_\O \nabla \cdot J \p_t \phi dx dt 
		\\ & + 4\pi \int_\O   J_0 \cdot \nabla  \phi(0,x) dx  - 4 \pi \int_0^T \int_\O \rho  \p_t^2 \phi dx dt
	\end{split} 
	\]
	where in the second equality we've used \eqref{weakEass}.
	
	Now, using $\nabla \cdot J = - \p_t \rho$, \eqref{GaussE0}, and integration by parts we obtain
	\[
	\begin{split}
		&  \int_0^T \int_\O ( \nabla \cdot E - 4 \pi \rho ) (\p_t^2 - \Delta_x)  \phi dx dt 
		\\ = &  \int_\O     (    -  \p_t E_{0} \cdot \nabla  \phi(0,x) +   E_{0} \cdot \nabla \p_t   \phi(0,x)  ) dx
		\\ &  - 4\pi \int_0^T \int_\O \p_t \rho \p_t \phi dx dt + 4\pi \int_\O J_0 \cdot \nabla  \phi(0,x) dx  - 4 \pi \int_0^T \int_\O \rho  \p_t^2 \phi dx dt
		\\ = &  \int_\O     (    - \p_t E_{0} + 4 \pi J_0 )  \cdot \nabla  \phi(0,x)  dx  - \int_\O  ( \nabla \cdot  E_{0} - 4 \pi \rho_0 )  \p_t   \phi(0,x) ) dx   
		\\ = & \int_\O - (\nabla_x \times B_0)   \cdot \nabla  \phi(0,x) dx =  0.
	\end{split} 
	\]
	This proves \eqref{GaussEweak}.
	
	Next, let's show that $ \p_t E_1 =  (\nabla_x \times B)_1 - 4 \pi J_1 $. It suffices to prove for any $\phi(t,x) \in C_c^\infty( [0,T) \times \bar \O ) $ with $ \phi |_{\p \O } = 0 $, we have
	\Be \label{Ampere1}
	\int_0^T \int_\O (\p_t E_1 - ( \nabla_x \times B)_1 + 4 \pi J_1 )(\p_t^2 - \Delta_x) \phi dx dt = 0.
	\Ee
	Using \eqref{weakEass} and integration by parts, we compute
	\[
	\begin{split}
		& \int_0^T \int_\O (\p_t E_1 - ( \nabla_x \times B)_1 + 4 \pi J_1 )(\p_t^2 - \Delta_x) \phi dx dt
		\\ = & - \int_0^T \int_\O E_1 ( \p_t^2 - \Delta_x ) (\p_t \phi  )dx dt - \int_\O E_{0,1} (\p_t^2 - \Delta_x) \phi(0,x)  dx  +  \int_0^T \int_\O 4\pi J_1 (\p_t^2 - \Delta_x) \phi dx dt 
		\\ &  + \int_0^T \int_\O   B_3  ( \p_t^2 - \Delta_x ) (\p_2 \phi )  dx dt  - \int_0^T \int_\O B_2 ( \p_t^2 - \Delta_x ) (\p_3 \phi ) dx dt  - \int_0^T \int_{\p \O } B_2(\p_t^2 - \Delta_x ) \phi d x_\parallel dt 
		\\ = & \int_\O \left( E_{0,1} \p_t^2 \phi(0,x) - \p_t E_{0,1} \p_t \phi(0,x) \right)  dx  + \int_0^T \int_\O 4 \pi ( \p_1 \rho  + \p_t J_1 ) (\p_t \phi ) dx dt - \int_\O E_{0,1} \p_t^2 \phi (0,x) dx + \int_\O  \Delta_x E_{0,1}  \phi (0,x) dx 
		\\ & + \int_0^T \int_\O 4\pi J_1 (\p_t^2 - \Delta_x) \phi dx dt - \int_\O B_{0,3}   (\p_t \p_2 \phi(0,x)  )dx  + \int_\O \p_t B_{0,3 } \p_2 \phi(0,x) dx + \int_0^T \int_\O 4 \pi (\nabla_x \times J )_3 (\p_2 \phi )  dx dt
		\\ & + \int_\O B_{0,2} ( \p_t \p_3 \phi(0,x) ) dx - \int_\O \p_t B_{0,2} \p_3 \phi(0,x) dx  - \int_0^T \int_\O  4 \pi (\nabla_x \times J)_2 (\p_3 \phi ) dx dt  - \int_0^T \int_{\p \O } B_2 \p_3^2 \phi dx_\parallel dt
		\\ & - \int_0^T \int_{\p \O} 4 \pi J_1 \p_3 \phi d x_\parallel dt   + \int_0^T \int_{\p \O } B_2 \p_3^2 \phi dx_\parallel dt
		\\ = & - \int_\O \p_t E_{0,1} \p_t \phi(0,x) dx  + \int_\O 4 \pi \rho_0 \p_1 \phi(0,x) dx + \int_0^T \int_\O 4 \pi \p_t \rho \p_1 \phi dx dt - \int_\O  4 \pi J_{0,1} \p_t \phi(0,x) dx + \int_\O  E_{0,1} \Delta_x \phi(0,x) dx
		\\ & - \int_0^T \int_\O 4 \pi J_1 \Delta_x \phi dx dt + \int_\O ( B_{0,2}  \p_t \p_3 \phi - B_{0,3}  \p_t \p_2 \phi (0,x) ) dx  - \int_\O ( - \p_t B_{0,3} \p_2 \phi(0,x)  + \p_t B_{0,2} \p_3 \phi(0,x) )dx 
		\\ & + \int_0^T \int_\O 4 \pi (-   J_2 \p_1 \p_2 \phi  +  J_1 \p_2^2 \phi  - J_3 \p_1 \p_3 \phi + J_1 \p_3^2 \phi ) dx dt + \int_0^T \int_{\p \O } 4 \pi J_1 \p_3 \phi dx_\parallel dt- \int_0^T \int_{\p \O} 4 \pi J_1 \p_3 \phi d x_\parallel dt.
	\end{split}
	\]
	Then from \eqref{conteq} and integration by parts,
	\[
	\begin{split}
		& \int_0^T \int_\O 4 \pi \p_t \rho \p_1 \phi dx dt  - \int_0^T \int_\O 4 \pi J_1 \Delta_x \phi dx dt+ \int_0^T \int_\O 4 \pi (-   J_2 \p_1 \p_2 \phi  +  J_1 \p_2^2 \phi  - J_3 \p_1 \p_3 \phi + J_1 \p_3^2 \phi ) dx dt
		\\ = & -  \int_0^T \int_\O 4 \pi \nabla \cdot J \p_1 \phi dx dt  - \int_0^T \int_\O 4 \pi J_1 \Delta_x \phi dx dt+ \int_0^T \int_\O 4 \pi (-   J_2 \p_1 \p_2 \phi  +  J_1 \p_2^2 \phi  - J_3 \p_1 \p_3 \phi + J_1 \p_3^2 \phi ) dx dt
		\\ = & \int_0^T \int_\O 4 \pi ( J_1 \p_1^2 \phi + J_2 \p_2 \p_1 \phi + J_3 \p_3 \p_1 \phi ) dx dt  - \int_0^T \int_\O 4 \pi J_1 \Delta_x \phi dx dt
		\\ & + \int_0^T \int_\O 4 \pi (-   J_2 \p_1 \p_2 \phi  +  J_1 \p_2^2 \phi  - J_3 \p_1 \p_3 \phi + J_1 \p_3^2 \phi ) dx dt
		\\ = & 0.
	\end{split}
	\]
	Thus,
	\Be
	\begin{split}
		& \int_0^T \int_\O (\p_t E_1 - ( \nabla_x \times B)_1 + 4 \pi J_1 )(\p_t^2 - \Delta_x) \phi dx dt
		\\ = &  \int_\O  ((-\p_t E_{0,1} - 4\pi  J_{0,1} ) \p_t \phi(0,x)   +  B_{0,2} \p_3 \p_t \phi(0,x)  -  B_{0,3}   \p_2 \p_t \phi(0,x) ) dx  
		\\ & + \int_\O ( -4\pi \p_1 \rho_0 \phi(0,x)   +  E_{0,1}  \Delta_x \phi(0,x) +  \p_t B_{0,3} \p_2 \phi(0,x) -  \p_t B_{0,2}   \p_3 \phi(0,x) ) dx.
	\end{split}
	\Ee
	From \eqref{weakEass}, we have $ -\p_t E_{0,1} - 4\pi  J_{0,1}  - \p_3 B_{0,2} + \p_2 B_{0,3}  = 0 $, and $\p_t B_0 = - \nabla_x \times E_0$. Together with \eqref{GaussE0}, \eqref{Dirt0}, we use integration by parts to get
	\[
	\begin{split}
		&  \int_\O  ((-\p_t E_{0,1} - 4\pi  J_{0,1} ) \p_t \phi(0,x)   +  B_{0,2} \p_3 \p_t \phi(0,x)  -  B_{0,3}   \p_2 \p_t \phi(0,x) ) dx  
		\\ & + \int_\O ( -4\pi \p_1 \rho_0 \phi(0,x)   +  E_{0,1}  \Delta_x \phi(0,x) +  \p_t B_{0,3} \p_2 \phi(0,x) -  \p_t B_{0,2}   \p_3 \phi(0,x) ) dx
		\\ = & 0
	\end{split}
	\]
	Thus, we conclude \eqref{Ampere1}. And from the same argument we can show that $ \p_t E_2 =  (\nabla_x \times B)_2 + 4 \pi J_2 $. 
	
	Next, let's prove $\p_t E_3  = ( \nabla_x \times B)_3 - 4 \pi J_3$. In the view of Lemma \ref{wavesol2}, it suffices to show that for any $\psi \in C_c^\infty( [0,T) \times \bar \O ) $ with $\p_{x_3} \psi |_{\p \O } = 0 $, we have
	\Be \label{Ampere3}
	\int_0^T \int_\O   (\p_t E_3 - ( \nabla_x \times B)_3 + 4 \pi J_3 )(\p_t^2 - \Delta_x) \psi dx dt = 0.
	\Ee
	
	Using \eqref{weakEass} and integration by parts, we compute
	\[
	\begin{split}
		& \int_0^T \int_\O (\p_t E_3 - ( \nabla_x \times B)_3 + 4 \pi J_3 )(\p_t^2 - \Delta_x) \psi dx dt
		\\ = & - \int_0^T \int_\O E_3 ( \p_t^2 - \Delta_x ) (\p_t \psi  )dx dt - \int_\O E_{0,3} (\p_t^2 - \Delta_x) \psi(0,x)  dx  +  \int_0^T \int_\O 4\pi J_3 (\p_t^2 - \Delta_x) \psi dx dt 
		\\ &  + \int_0^T \int_\O   B_2  ( \p_t^2 - \Delta_x ) (\p_1 \psi )  dx dt  - \int_0^T \int_\O B_1 ( \p_t^2 - \Delta_x ) (\p_2 \psi ) dx dt 
		\\ = & \int_\O \left( E_{0,3} \p_t^2 \psi(0,x) - \p_t E_{0,3} \p_t \psi(0,x) \right)  dx  + \int_0^T \int_\O 4 \pi ( \p_3 \rho  + \p_t J_3 ) (\p_t \psi ) dx dt   + \int_0^T \int_{\p \O } 4 \pi \rho \p_t \psi  dx_\parallel dt
		\\ & - \int_\O E_{0,3} \p_t^2 \psi (0,x) dx + \int_\O  \Delta_x E_{0,3}  \psi (0,x) dx  + \int_{\p \O } 4 \pi \rho_0 \psi(0,x_\parallel ) dx_\parallel + \int_0^T \int_\O 4\pi J_3 (\p_t^2 - \Delta_x) \psi dx dt
		\\ &  - \int_\O B_{0,2}   (\p_t \p_1 \psi(0,x)  )dx  + \int_\O \p_t B_{0,2 } \p_1 \psi(0,x) dx + \int_0^T \int_\O 4 \pi (\nabla_x \times J )_2 (\p_1 \psi )  dx dt - \int_0^T \int_{\p \O } (-4 \pi J_1  \p_1 \psi ) dx_\parallel dt
		\\ & + \int_\O B_{0,1} ( \p_t \p_2 \psi(0,x) ) dx - \int_\O \p_t B_{0,1} \p_2 \psi(0,x) dx  - \int_0^T \int_\O  4 \pi (\nabla_x \times J)_1 (\p_2 \psi ) dx dt  + \int_0^T \int_{\p \O } ( 4 \pi J_2  \p_2 \psi )  dx_\parallel dt
		\\ = & -  \int_\O   \p_t E_{0,3} \p_t \psi(0,x) dx + \int_0^T \int_\O 4 \pi \p_t \rho  \p_3 \psi dx dt + \int_\O 4 \pi \rho_0 \p_3 \psi(0,x) dx - \int_\O J_{0,3} \p_t \psi (0,x) dx
		\\ &  + \int_\O   E_{0,3} \Delta_x \psi (0,x) dx + \int_{\p \O } 4 \pi \rho_0 \psi(0,x_\parallel ) dx_\parallel - \int_0^T \int_\O 4 \pi J_3 \Delta_x \psi dx dt + \int_\O ( \p_1 B_{0,2} - \p_2 B_{0,1} ) \p_t \psi(0,x) dx  
		\\ & + \int_\O (- \p_t B_{0,1} \p_2 \psi(0,x) +  \p_t B_{0,2}  \p_1 \psi(0,x) ) dx  + \int_0^T \int_\O 4\pi (J_3 \p_2^2 \psi - J_2 \p_3 \p_2 \psi - J_1 \p_3 \p_1 \psi + J_3 \p_1^2 \psi) dx dt.
	\end{split}
	\]
	From \eqref{conteq} and integration by parts,
	\[
	\begin{split}
		& \int_0^T \int_\O 4 \pi \p_t \rho \p_3 \psi dx dt  - \int_0^T \int_\O 4 \pi J_3 \Delta_x \psi dx dt+ \int_0^T \int_\O 4 \pi  (J_3 \p_2^2 \psi - J_2 \p_3 \p_2 \psi - J_1 \p_3 \p_1 \psi + J_3 \p_1^2 \psi) dx dt
		\\ = & -  \int_0^T \int_\O 4 \pi \nabla \cdot J \p_3 \psi dx dt  - \int_0^T \int_\O 4 \pi J_3 \Delta_x \psi dx dt+ \int_0^T \int_\O 4 \pi (J_3 \p_2^2 \psi - J_2 \p_3 \p_2 \psi - J_1 \p_3 \p_1 \psi + J_3 \p_1^2 \psi)  dx dt
		\\ = & \int_0^T \int_\O 4 \pi ( J_1 \p_1 \p_3 \psi + J_2 \p_2 \p_3 \psi + J_3 \p_3^2 \psi ) dx dt   - \int_0^T \int_\O 4 \pi J_3 \Delta_x \psi dx dt
		\\ & + \int_0^T \int_\O 4 \pi  (J_3 \p_2^2 \psi - J_2 \p_3 \p_2 \psi - J_1 \p_3 \p_1 \psi + J_3 \p_1^2 \psi) dx dt
		\\ = & 0.
	\end{split}
	\]
	Thus,
	\Be
	\begin{split}
		& \int_0^T \int_\O (\p_t E_3 - ( \nabla_x \times B)_3 + 4 \pi J_3 )(\p_t^2 - \Delta_x) \psi dx dt
		\\ = &  \int_\O (-\p_t E_{0,3} - 4\pi  J_{0,3}  + \p_1 B_{0,2} - \p_2 B_{0,1}  ) \p_t \psi(0,x) dx  
		\\ & + \int_\O ( -4\pi \p_3 \rho_0 \psi(0,x)   +  E_{0,3} \Delta_x \psi(0,x)  -  \p_t B_{0,1} \p_2 \psi(0,x) +  \p_t B_{0,2} \p_1 \psi(0,x) ) dx.
	\end{split}
	\Ee
	From \eqref{weakEass}, we have $ -\p_t E_{0,3} - 4\pi  J_{0,3}  + \p_1 B_{0,2} - \p_2 B_{0,1}  = 0 $, and $\p_t B_0 = - \nabla_x \times E_0$. Together with \eqref{GaussE0}, \eqref{Dirt0}, we use integration by parts to get
	\[
	\begin{split}
		& \int_\O (-\p_t E_{0,3} - 4\pi  J_{0,3}  + \p_1 B_{0,2} - \p_2 B_{0,1}  ) \p_t \psi(0,x) dx  
		\\ & + \int_\O ( -4\pi \p_3 \rho_0 \psi(0,x)   +  E_{0,3} \Delta_x \psi(0,x)  -  \p_t B_{0,1} \p_2 \psi(0,x) +  \p_t B_{0,2} \p_1 \psi(0,x) ) dx
		\\ & = 0.
	\end{split}
	\]
	This concludes \eqref{Ampere3}.
	
	Next, we prove $ \p_t B_1 = -  (\nabla_x \times E)_1$. From Lemma \ref{wavesol2}, it suffices to prove that for any $\psi(t,x) \in C_c^\infty( [0,T) \times \bar \O ) $ with $ \p_{x_3} \psi |_{\p \O } = 0 $, we have
	\Be \label{Faraday1}
	\int_0^T \int_\O (\p_t B_1 + (\nabla_x \times E)_1  ) (\p_t^2 - \Delta_x ) \psi dx dt = 0.
	\Ee
	
	Using \eqref{weakEass} and integration by parts, we compute
	\[
	\begin{split}
		& \int_0^T \int_\O (\p_t B_1 + (\nabla_x \times E)_1  ) (\p_t^2 - \Delta_x ) \psi dx dt
		\\ = & - \int_0^T \int_\O B_1 ( \p_t^2 - \Delta_x ) (\p_t \psi  )dx dt - \int_\O B_{0,1} (\p_t^2 - \Delta_x) \psi(0,x)  dx   
		\\ &  - \int_0^T \int_\O   E_3  ( \p_t^2 - \Delta_x ) (\p_2 \psi )  dx dt  + \int_0^T \int_\O E_2 ( \p_t^2 - \Delta_x ) (\p_3 \psi ) dx dt  
		\\ = & \int_{\O}  ( B_{0,1} \p_t^2 \psi(0,x) - \p_t B_{0,1} \p_t \psi(0,x) )  dx  + \int_0^T \int_{\p \O }  4 \pi J_2 \p_t \psi dx_\parallel dt - \int_0^T \int_\O 4 \pi (\nabla_x \times J )_1  \p_t \psi dx dt 
		\\ & - \int_\O B_{0,1} (\p_t^2 - \Delta_x) \psi(0,x)  dx  
		\\ & + \int_\O (E_{0,3} \p_t \p_2 \psi(0,x) - \p_t E_{0,3} \p_2 \psi(0,x) ) dx + \int_0^T \int_{\p \O } 4 \pi \rho \p_2 \psi dx_\parallel dt + \int_0^T \int_\O 4 \pi ( \p_3 \rho + \p_t J_3 ) \p_2 \psi  dx dt
		\\ & + \int_\O (-E_{0,2}  \p_t \p_3 \psi(0,x) + \p_t E_{0,2}  \p_3 \psi (0,x) ) dx  - \int_0^T \int_\O 4 \pi ( \p_2 \rho + \p_t J_2 ) \p_3 \psi dx dt
		\\ = & - \int_\O \p_t B_{0,1} \p_t \psi (0,x) dx   + \int_0^T \int_\O 4\pi (J_3 \p_2 \p_t \psi - J_2 \p_3 \p_t \psi ) dx dt + \int_\O  B_{0,1} \Delta_x \psi(0,x) dx + \int_{ \p \O } 4\pi J_{0,2} \psi (0,x_\parallel ) dx_\parallel
		\\ & + \int_\O (- \p_2 E_{0,3}  + \p_3 E_{0,2} ) \p_t \psi(0,x) dx  + \int_\O (- \p_t E_{0,3 } \p_2 \psi(0,x) +  \p_t E_{0,2 } \p_3 \psi (0,x) ) dx
		\\ & +  \int_{\O} 4\pi ( - J_{0,3}  \p_2 \psi(0,x) + J_{0,2} \p_3 \psi(0,x) )  dx  + \int_0^T \int_\O 4\pi (-  J_3 \p_t \p_2 \psi + J_2 \p_t \p_3 \psi ) dx dt
		\\ = & \int_\O (-\p_t B_{0,1}  - \p_2 E_{0,3} + \p_3 E_{0,2} ) \p_t \psi(0,x) dx 
		\\ & + \int_\O (- \p_t E_{0,3} \p_2 \psi(0,x) + \p_t E_{0,2}  \p_3 \psi(0,x) +  B_{0,1} \Delta_x \psi(0,x) - 4\pi  J_{0,3} \p_2 \psi(0,x)  +  4\pi  J_{0,2}  \p_3 \psi(0,x) )  dx.
	\end{split}
	\]
	From \eqref{weakEass}, we have $ -\p_t B_{0,1}  - \p_2 E_{0,3} + \p_3 E_{0,2}  = 0$, and $\p_t E_0= \nabla_x \times B_0 - 4\pi J_0$.  Together with \eqref{GaussE0}, we use integration by parts to get
	\[
	\begin{split}
		& \int_\O (-\p_t B_{0,1}  - \p_2 E_{0,3} + \p_3 E_{0,2} ) \p_t \psi(0,x) dx 
		\\ & + \int_\O (- \p_t E_{0,3} \p_2 \psi(0,x) + \p_t E_{0,2}  \p_3 \psi(0,x) +  B_{0,1} \Delta_x \psi(0,x) - 4\pi  J_{0,3} \p_2 \psi(0,x)  +  4\pi  J_{0,2}  \p_3 \psi(0,x) )  dx
		\\  &= 0.
	\end{split}
	\]
	This proves \eqref{Faraday1}. Using the same argument we also prove $ \p_t B_2 = -  (\nabla_x \times E)_2$.
	
	Next, we prove $ \p_t B_3 = -  (\nabla_x \times E)_3$. From Lemma \ref{wavesolD}, it suffices to prove that for any $\phi \in C_c^\infty( [0,T) \times \bar \O ) $ with $ \phi |_{\p \O } = 0 $,
	\Be \label{Faraday3}
	\int_0^T \int_\O (\p_t B_3 +(\nabla_x \times E)_3 ) (\p_t^2 - \Delta_x) \phi dx dt = 0.
	\Ee
	
	Using \eqref{weakEass} and integration by parts, we compute
	\[
	\begin{split}
		& \int_0^T \int_\O (\p_t B_3 +(\nabla_x \times E)_3 ) (\p_t^2 - \Delta_x) \phi dx dt
		\\ = & - \int_0^T \int_\O B_3 ( \p_t^2 - \Delta_x ) (\p_t \phi  )dx dt - \int_\O B_{0,3} (\p_t^2 - \Delta_x) \phi(0,x)  dx   
		\\ &  - \int_0^T \int_\O   E_2  ( \p_t^2 - \Delta_x ) (\p_1 \phi )  dx dt  + \int_0^T \int_\O E_1 ( \p_t^2 - \Delta_x ) (\p_2 \phi ) dx dt  
		\\ = & \int_{\O}  ( B_{0,3} \p_t^2 \phi(0,x) - \p_t B_{0,3} \p_t \phi(0,x) )  dx   - \int_0^T \int_\O 4 \pi (\nabla_x \times J )_3  \p_t \phi dx dt - \int_\O B_{0,3} (\p_t^2 - \Delta_x) \phi(0,x)  dx  
		\\ & + \int_\O (E_{0,2} \p_t \p_1 \phi(0,x) - \p_t E_{0,2} \p_1 \phi(0,x) ) dx + \int_0^T \int_\O 4 \pi ( \p_2 \rho + \p_t J_2 ) \p_1 \phi  dx dt
		\\ & + \int_\O (-E_{0,1}  \p_t \p_2 \phi(0,x) + \p_t E_{0,1}  \p_2 \phi (0,x) ) dx  - \int_0^T \int_\O 4 \pi ( \p_1 \rho + \p_t J_1 ) \p_2 \phi dx dt
		\\ = & -  \int_\O \p_t B_{0,3} \p_t \phi (0,x) dx  + \int_0^T \int_\O 4\pi (J_2 \p_1 \p_t \phi - J_1 \p_2 \p_t \phi ) dx dt + \int_\O  B_{0,3} \Delta_x \phi(0,x) dx
		\\ & + \int_\O (- \p_1 E_{0,2} + \p_2 E_{0,1} ) \p_t \phi(0,x) dx  + \int_\O (- \p_t E_{0,2 } \p_1 \phi(0,x) +  \p_t E_{0,1} \p_2 \phi(0,x) ) dx
		\\ & + \int_0^T \int_\O 4 \pi (- J_2  \p_t \p_1 \phi + J_1 \p_t \p_2 \phi ) dx dt + \int_\O  4\pi (- J_{0,2} \p_1 \phi(0,x) + J_{0,1} \p_2 \phi(0,x) ) dx
		\\ = & \int_\O ( - \p_t B_{0,3} - \p_1 E_{0,2} + \p_2 E_{0,1} ) \p_t \phi(0,x) dx  
		\\ & + \int_\O (  B_{0,3}  \Delta_x \phi(0,x) -  \p_t E_{0,2} \p_1 \phi(0,x) +  \p_t E_{0,1} \p_2 \phi(0,x)  - 4\pi  J_{0,2}  \p_1\phi(0,x)  + 4\pi  J_{0,1} \p_2 \phi(0,x)  )dx
	\end{split}
	\]
	From \eqref{weakEass}, we have $ -\p_t B_{0,3}  - \p_1 E_{0,2} + \p_2 E_{0,1}  = 0$, and $\p_t E_0= \nabla_x \times B_0 - 4\pi J_0$.  Together with \eqref{GaussE0}, we use integration by parts to get
	\[
	\begin{split}
		& \int_\O ( - \p_t B_{0,3} - \p_1 E_{0,2} + \p_2 E_{0,1} ) \p_t \phi(0,x) dx  
		\\ & + \int_\O (  B_{0,3}  \Delta_x \phi(0,x) -  \p_t E_{0,2} \p_1 \phi(0,x) +  \p_t E_{0,1} \p_2 \phi(0,x)  - 4\pi  J_{0,2}  \p_1\phi(0,x)  + 4\pi  J_{0,1} \p_2 \phi(0,x)  )dx
		\\ & = 0.
	\end{split}
	\]
	This proves \eqref{Faraday3}. 
	
	Lastly, we prove $\nabla_x \cdot B = 0$. From Lemma \ref{wavesol2}, it suffices to prove that for any $\psi(t,x) \in C_c^\infty( [0,T) \times \bar \O ) $ with $ \p_{x_3} \psi |_{\p \O } = 0 $, we have
	\Be \label{GaussM1}
	\int_0^T \int_\O (\nabla_x \cdot B ) (\p_t^2 - \Delta_x )  \psi dx dt = 0.
	\Ee
	
	Using \eqref{weakEass}, integration by parts, and \eqref{Dirt0}, we compute
	\[
	\begin{split}
		& \int_0^T \int_\O (\nabla_x \cdot B ) (\p_t^2 - \Delta_x )  \psi dx dt
		\\ = &  - \int_0^T \int_\O \sum_{i=1}^3 B_i  (\p_t^2 - \Delta_x )  \p_{x_i}  \psi dx dt
		\\ = & \int_\O \sum_{i=1}^3 ( B_{0,i} \p_{x_i} \p_t \psi(0,x) -\p_t B_{0,i} \p_{x_i}  \psi(0,x) ) dx  - \int_0^T \int_\O \sum_{i=1}^3 4 \pi (\nabla_x \times J)_i \p_{x_i} \psi dx dt 
		\\ & + \int_0^T \int_{\p \O } 4\pi J_2 \p_{x_1} \psi dx_\parallel dt + \int_0^T \int_{\p \O } - 4\pi J_1 \p_{x_2} \psi dx_\parallel dt 
		\\ = & \int_\O (- \nabla_x \cdot B_0 ) \p_t \psi(0,x) dx  -  \int_\O \left( \sum_{i=1}^3 \p_t B_{0,i} \p_{x_i}  \psi(0,x) \right) dx.
	\end{split}
	\]
	From \eqref{GaussE0} we have $\nabla_x \cdot B_0 = 0$. And from \eqref{weakEass}, $\p_t B_0 = - \nabla_x \times E_0$, so from integration by parts and \eqref{Dirt0}, we have
	\[
	\int_\O \left( \sum_{i=1}^3 \p_t B_{0,i} \p_{x_i}  \psi(0,x) \right) dx = 0.
	\]
	This proves \eqref{GaussM1}.\end{proof}

Now we are ready to prove Theorem \ref{EBunique}.

\begin{proof}[\textbf{Proof of Theorem \ref{EBunique}}]
	%
	The proof is a direct consequence of previous lemmas. Let $(E,B) \in W^{1,\infty}((0,T) \times \O )$, and $(\tilde E, \tilde B) \in W^{1,\infty}((0,T) \times \O )$ be two solutions of \eqref{Maxwell1}-\eqref{percond}. 

	We consider $E_1- \tilde E_1$,  $E_2 - \tilde E_2$, and $B_3 - \tilde B_3$. From Lemma \ref{Maxtowave}, both $E_i$ and $\tilde E_i$ satisfy \eqref{E12sol}  for $i=1,2$, and both $B_3$ and $\tilde B_3$ satisfy \eqref{B3sol}. Therefore, from Lemma \ref{wavesolD}, we have
	\Be
	E_1 = \tilde E_1, \ E_2 = \tilde E_2, \ B_3 = \tilde B_3. 
	\Ee
	
	And for $E_3 - \tilde E_3$, $B_1 - \tilde B_1$, and $B_2 - \tilde B_2$. From Lemma \ref{Maxtowave}, we have both $E_3$ and $\tilde E_3$ satisfy \eqref{E3sol}, and both $B_i$ and $\tilde B_i$ satisfy \eqref{B12sol} for $i = 1,2$. Therefore, from Lemma \ref{wavesol}, we deduce that
	\Be
	E_3 = \tilde E_3, \ B_1 = \tilde B_1, \ B_2 = \tilde B_2.
	\Ee
	Thus, we get $E = \tilde E$, $B = \tilde B$, and this concludes the uniqueness of the solution. Now this solution should solve the Maxwell equation by Lemma \ref{wavetoMax}.
\end{proof}


\section{Glassey-Strauss Representation of $E$ and $B$}

In this section, we give a representation of the field $E$ and $B$ by solving the wave equations \eqref{wave_eq_E}, \eqref{wave_eq_B}, under the boundary condition \eqref{E12B3bc} and \eqref{E3B1B2bc}.

We first consider the electronic field $E$. The tangential component $E_\parallel = (E_1, E_2)$ satisfies
\Be \label{waveEparallel}
\begin{split}
\p_t^2 E_\parallel - \Delta_x E_\parallel = G_\parallel:=  -4 \pi \nabla_\parallel \rho - 4\pi \p_t J_\parallel   ,
\\
E_\parallel |_{t=0}=     E_{0\parallel}, \  \p_t E_\parallel |_{t=0} = \p_t E_{0\parallel}  ,
\end{split}
\Ee
and
\Be\label{Dirichlet}
E_\parallel =    0 \ \   \text{ on }   \  \p \O.
\Ee

Define
\Be\label{def:bar}
\begin{split}
\bar{x}= (x_\parallel, -x_3)  \ \ \ \text{for} \ \ x=  (x_\parallel,  x_3)=  (x_1, x_2,  x_3).
\end{split}
\Ee
To solve the Dirichlet boundary condition, we employ the odd extension of the data: for $i=1,2$, and $x \in \R^3$,
\Be
\begin{split}\label{odd_ext}
G_i(t,x_\parallel, x_3) = &  \mathbf 1_{x_3 > 0 }  G_i(t,x ) -  \mathbf 1_{x_3 <  0 }  G_i(t,\bar x),
\\  E_{0i} ( x_\parallel, x_3) = &  \mathbf 1_{x_3 > 0 }   E_{0i} (  x) -  \mathbf 1_{x_3 <  0 }   E_{0i}( \bar x),
\\  \p_t  E_{0i} ( x_\parallel, x_3) = &  \mathbf 1_{x_3 > 0 } \p_t  E_{0i} (  x) -  \mathbf 1_{x_3 <  0 } \p_t  E_{0i}( \bar x).
\end{split}
\Ee
Then the weak solution of $E_\parallel(t,x)$ to \eqref{waveEparallel} with data \eqref{odd_ext} in the whole space $\R^3$ takes a form of, for $i=1,2,$
\begin{align}
 &E_i(t,x ) =    \frac{1}{4 \pi t^2} \int_{\p B(x;t) \cap \{ y_3 > 0 \} }  \left( t \p_t  E_{0i}( y  ) +  E_{0i}(y ) + \nabla  E_{0i} (y ) \cdot (y-x) \right) dS_y \notag
\\  &  \ \ \ \ \  + \frac{1}{4 \pi t^2} \int_{\p B(x;t) \cap \{ y_3 < 0 \} }  \big( - t \p_t  E_{0i}( \bar y  ) -  E_{0i} ( \bar y)
 - \nabla  E_{0i} (\bar y) \cdot (\bar y - \bar x) 
\big)
 dS_y \notag
\\ & \ \ \ \  \ + \frac{1}{4 \pi } \int_{B(x;t)  \cap \{ y_3 > 0 \} }\frac{ G_i( t - |y-x|, y  ) }{|y-x| } dy \label{Eexpanmajor1}\\
&\ \  \ \ \  + \frac{1}{4 \pi } \int_{B(x;t)  \cap \{ y_3 < 0 \} }\frac{ -  G_i( t - |y-x|,  \bar y  ) }{|y-x| } dy, \label{Eexpanmajor2}
\end{align}
where $B(x,t)= \{ y \in \R^3: |x-y| <t\}$ and $\p B(x,t)= \{ y \in \R^3: |x-y| =t\}$. The above form is then a (weak) solution of \eqref{waveEparallel} and \eqref{Dirichlet}. Next, we express \eqref{Eexpanmajor1} and \eqref{Eexpanmajor2} using the Glassey-Strauss representation \cite{GS} in $\O$. Define
  \Be \label{pxptST}
\begin{split}
 \p_t  = \frac{S- \hat{v} \cdot T}{1+ \hat v \cdot \o}, \ \
\p_i  = \frac{\o_i S}{ 1+ \hat v \cdot \o} + \left( \delta_{ij} - \frac{\o_i \hat{v}_j}{1+ \hat v \cdot \o}\right) T_j,
\end{split}\Ee
 while, for $\o= \o(x,y) = \frac{y-x}{|y-x|}$,
 \begin{align}
T_i &:= \p_i - \o_i \p t, \label{def:T}\\
S &:= \p_t + \hat v \cdot \nabla_x. \label{def:S}
\end{align}
Note that
\Be\label{T=y}
T_j f (t- |y-x| , y, v ) = \p_{y_j } [ f(t- |y-x|, y, v ) ],
\Ee
and the Vlasov equation \eqref{VMfrakF1} implies that
 \Be\label{S=Lf_v}
 Sf= - \nabla_v \cdot [ (E + E_{\text{ext}} + \hat v \times ( B + B_{\text{ext}})- g \mathbf e_3)f].
 \Ee



 From \eqref{rhoJ1} and \eqref{pxptST},
\begin{align}
&\eqref{Eexpanmajor1}=- \int_{B(x;t) \cap \{y_3 > 0\}}  \frac{(   \p_i \mathbf{\rho} +\p_t {J}_i )(t-|y-x|,y )}{|y-x|} \dd y
\notag\\ &\ \  =- \int_{B(x;t) \cap \{y_3 > 0\}}   \int_{\R^3} (\p_i f + \hat{v}_i \p_t f) (t-|y-x|,y ,v)  \dd v \frac{ \dd y}{|y-x|}\notag
\\ &\ \  =- \int_{B(x;t) \cap \{y_3 > 0\}}  \int_{\R^3} \frac{\o_i + \hat{v}_i}{1+ \hat{v} \cdot \o}(Sf) (t-|y-x|,y ,v) \dd v \frac{ \dd y}{|y-x|} \notag
\\ & \ \ \  \ \  - \int_{
\substack{
B(x;t) \\  \cap \{y_3 > 0\}
}
}   \int_{\R^3}  \left( \delta_{ij} - \frac{(\o_i + \hat{v}_i)\hat{v}_j}{1+\hat{v} \cdot \o} \right) T_j f (t-|y-x|,y ,v) \dd v \frac{ \dd y}{|y-x|}.  \notag
\end{align}
Here, we followed the Einstein convention (when an index variable appears twice, it implies summation of that term over all the values of the index) and will do throughout this section.


Then replace $T_j f$ with \eqref{T=y} and apply the integration by parts to get the last term equals
\Be \label{upperT1}
\begin{split}
 &-    \int_{ \p B(x;t) \cap \{y_3>0 \}}    \o_j  \left( \delta_{ij} - \frac{(\o_i + \hat{v}_i)\hat{v}_j}{1+\hat{v} \cdot \o} \right) f(0, y,v)  \dd v \frac{\dd S_y}{|y-x|}
 \\  &+  \int_{B(x;t) \cap \{y_3= 0\}}   \int_{\R^3}  \left( \delta_{i3} - \frac{(\o_i + \hat{v}_i)\hat{v}_3}{1+\hat{v} \cdot \o} \right)  f (t-|y-x|,y_\parallel, 0 ,v)  \dd v \frac{ \dd y_\parallel}{|y-x|} \\
  &+  \int_{\substack{
B(x;t) \\  \cap \{y_3 > 0\}
}}   \int_{\R^3}
\frac{ (|\hat{v}|^2-1  )(\hat{v}_i +  \o_i ) }{  (1+  \hat v \cdot \o )^2}
  f (t-|y-x|,y ,v)  \dd v  \frac{\dd y}{|y-x|^2}.
\end{split}
\Ee
where we have used that, from \cite{GS,Glassey},
\Be\notag
 \frac{\p}{\p y_j}\left[\frac{1}{|y-x|}\left(\delta_{ij} - \frac{(\o_i + \hat{v}_i)\hat{v}_j}{1+\hat{v} \cdot \o} \right)\right]= \frac{ (|\hat{v}|^2-1  )(\hat{v}_i +  \o_i ) }{|y-x|^2 (1+  \hat v \cdot \o )^2}.
\Ee

In order to express \eqref{Eexpanmajor2} in the lower half space we modify the idea of Glassey-Strauss slightly. Define
 \Be\label{def:o-}
 \bar \o   =  \begin{bmatrix} \o_1 & \o_2  & - \o_3 \end{bmatrix} ^T.
 \Ee
We use the same $S$ of \eqref{def:S} but
\Be\label{def:T-}
\begin{split}
 \bar{T}_3 f &=  -\p_{y_3} [f(t-|y-x|, y_\parallel, - y_3, v)] =   \p_{y_3} f - \bar{\omega}_3 \p_t f ,
\\ \bar T _i f &=  \p_{y_i} [f(t-|y-x|, y_\parallel, - y_3, v)] =   \p_{y_i} f - \bar{\omega}_i \p_t f \, \text{ for } \ i=1,2.
\end{split}
\Ee
Then we get
\begin{align}
\p_t &=  \frac{S-   \hat{v} \cdot \bar{T} }{1+ \hat{v}  \cdot \bar{\omega} }, \label{ptST-}\\
 \p_{y_i} &=    \bar{T}_i  +   \bar{\o}_i  \frac{S-   \hat{v} \cdot \bar{T} }{1+ \hat{v}  \cdot \bar{\omega} } =  \frac{ \bar{\o}_i S }{1+ \hat{v}  \cdot \bar{\omega}   }
+ \bar{T}_i   -  \bar{\o}_i \frac{     \hat{v}   \cdot \bar{T} }{1+ \hat{v}  \cdot \bar\omega }.
 \label{pxST-}
\end{align}
Therefore, we derive
\hide\Be
\begin{split}
 \p_i + \hat{v}_i \p_t
=& \frac{\o_i + \hat{v}_i }{1+ \hat{v}_\parallel \cdot \omega_\parallel - \hat{v}_3 \o_3}S
 + T^-_i  - (\o_i   + \hat{v}_i  )\frac{    \hat{v}_\parallel  \cdot T^-_\parallel - \hat{v}_3 T_3^-}{1+ \hat{v}_\parallel \cdot \omega_\parallel - \hat{v}_3 \o_3},
 \\
 \p_3 + \hat{v}_3 \p_t = & \frac{-\o_3 + \hat{v}_3 }{1+ \hat{v}_\parallel \cdot \omega_\parallel - \hat{v}_3 \o_3}S
- T^-_3  - (-\o_3   + \hat{v}_3  )\frac{    \hat{v}_\parallel  \cdot T^-_\parallel - \hat{v}_3 T_3^-}{1+ \hat{v}_\parallel \cdot \omega_\parallel - \hat{v}_3 \o_3}.
\end{split}
\Ee
Or
\unhide
\Be\label{ST_lower}
\begin{split}
  \p_i + \hat{v}_i \p_t 
=  \frac{ \bar{\o}_i+ \hat{v}_i }{1+
  \hat v \cdot \bar\o
  }S
  +  \left(  \delta_{ij}  -  \frac{     \bar{\o}_i \hat{v}_j   + \hat{v}_i \hat{v}_j     }{1+ \hat v \cdot \bar\o }\right)  \bar{T}_j .
\end{split}\Ee

Now we consider \eqref{Eexpanmajor2}. From \eqref{ST_lower},
\begin{align}
&\eqref{Eexpanmajor2}=  \int_{B(x;t) \cap \{y_3 <0\}}  \int_{\R^3} (\p_i f + \hat{v}_i \p_t f) (t-|y-x|,\bar y ,v) \dd v \frac{ \dd y}{|y-x|}\notag
\\& \ \ =  \int_{B(x;t) \cap \{y_3 <0\}}  \int_{\R^3} \frac{ \bar{\o}_i + \hat{v}_i }{1+ \hat v \cdot \bar{\o} }(Sf) (t-|y-x|, \bar y ,v) \dd v \frac{ \dd y}{|y-x|} \notag
\\& \ \ +  \int_{\substack{
B(x;t) \\  \cap \{y_3 < 0\}
} }  \int_{\R^3}   \Big(  \delta_{ij} - \frac{ \bar{\o}_i \hat{v}_j+ \hat{v}_i \hat{v}_j}{1+ \hat v \cdot \bar{\o}}   \Big)  \bar{T}_j f (t-|y-x|, \bar y ,v)  \dd v \frac{ \dd y}{|y-x|}. \notag
\end{align}
%
 Applying \eqref{def:T-} and the integration by parts, we derive that the last term equals
\Be \label{GSlowerhalf2}
\begin{split}
 &  \int_{ \p B(x;t) \cap \{  y_3 < 0 \}} \int_{\R^3}   \bar{\o}_j    \Big(  \delta_{ij} - \frac{   \bar{\o} _i \hat{v}_j + \hat{v}_i \hat{v}_j}{1+ \hat v \cdot \bar{\o} }   \Big)  f(0, \bar y ,v)   \dd v \frac{\dd S_y}{|y-x|}
  \\ &+  \int_{B(x;t) \cap \{y_3 =0\}}  \int_{\R^3}  \iota_3   \Big( \delta_{i3} - \frac{\bar{\o}_i  \hat{v}_3 + \hat{v}_i  \hat{v}_3}{1+ \hat v \cdot \bar{\o} }   \Big)  f(t-|y-x|,y_\parallel, 0 ,v) \dd v  \frac{ \dd y}{|y-x|}\\
  &-  \int_{B(x;t) \cap \{y_3 <0\}}  \int_{\R^3}
   \frac{ (|\hat{v}|^2-1  )(\hat{v}_i +   \bar{\o}_i ) }{ (1+ \hat v \cdot \bar{\o} )^2}
   f(t-|y-x|, \bar y ,v) \dd v\frac{ \dd y}{|y-x|^2},
\end{split}
\Ee
where we have utilized the notation
\Be\label{iota}
\iota_i = +1 \ \ \text{ for } \ i=1,2,  \ \
\iota_3=-1,
\Ee
and the direct computation
\Be \label{DyT}
\iota_j
\frac{\p}{\p y_j}\left[
 \frac{1}{|y-x|}
  \Big(
  \delta_{ij} - \frac{ \iota_i \o_i \hat{v}_j + \hat{v}_i \hat{v}_j}{1+ \hat v \cdot \bar \o}
  \Big)\right]
 =   \frac{ (|\hat{v}|^2-1  )(\hat{v}_i +   \bar{\o}_i ) }{|y-x|^2 (1+ \hat v \cdot \bar{\o} )^2}.
\Ee
\hide

Now we compute $\sum_j \iota_j \frac{\p}{\p y_j}\left[ \frac{1}{|y-x|}  \Big( \delta_{ij} - \frac{ \iota_i \o_i \hat{v}_j + \hat{v}_i \hat{v}_j}{1+ \hat v \cdot \o^-}   \Big)\right] $. Note that
\[
\p_{y_j} |y-x|= \frac{(y-x)_j}{|y-x|}, \ \ \
\p_{y_j} \o_i = \frac{1}{|y-x|} \left(
\delta_{ij} - \frac{(y-x)_i (y-x)_j}{|y-x|^2}
\right)
 \]
We have

where we have computed
\Be
\begin{split}
 \sum_{j} \bigg\{& - \iota_j |y-x|^{-2}  \o_j  \Big(
  \delta_{ij} - \frac{ \iota_i  \o_i \hat{v}_j + \hat{v}_i \hat{v}_j}{1+ \hat v \cdot \o^-}
  \Big)\\
 &+ \iota_j  |y-x|^{-1} (-1)^2 [1+ \hat v \cdot \o^- ]^{-2}
  \left(\sum_k
 \hat{v}_k \iota_k \p_{y_j} \o_k
 \right)
 \left( \iota_i \o_i \hat{v}_j + \hat{v}_i \hat{v}_j \right)\\
 &+ \iota_j |y-x|^{-1} (-1) [1+ \hat v \cdot \o^-]^{-1}
 \iota_i \p_{y_j} \o_i \hat{v}_j \bigg\}\\
 \end{split}
\Ee
\Be
\begin{split}
 = \frac{1}{|y-x|^2 [1+ \hat v \cdot \o^-]^2}\sum_{j} \bigg\{&
 -[1+ \hat v \cdot \o^-]^2 \delta_{ij} \iota_j \o_j
 + [1+ \hat v \cdot \o^-] \{ \iota_i \o_i \iota_j\hat{v}_j + \hat{v}_i \iota_j\hat{v}_j\}   \o_j \\
 &+ \{ \iota_i \o_i  \iota_j\hat{v}_j + \hat{v}_i \iota_j \hat{v}_j\}
 \sum_k \iota_k \hat{v}_k (\delta_{kj} - \o_k \o_j) \\
 &- [1+ \hat v \cdot \o^-] \iota_j \hat{v}_j ( \iota_i\delta_{ij} - \iota_i \o_i \o_j )
 \bigg\}\\
\end{split}
\Ee
Now the summation equals
\Be
\begin{split}
& (\hat v \cdot \o^- ) ^2\hat{v}_i  - (\hat v \cdot \o^-)  ^2\hat{v}_i  + |\hat{v}|^2 \hat{v}_i\\
&-  (\hat v \cdot \o^-)^2 \iota_i \o_i + (\hat v \cdot \o^-)^2 \iota_i \o_i
+ (\hat v \cdot \o^-) \hat{v}_i
+|\hat{v}|^2 \iota_i \o_i -(\hat v \cdot \o^-)^2 \iota_i \o_i
- (\hat v \cdot \o^-) \hat{v}_i  + (\hat v \cdot \o^-)^2 \iota_i \o_i \\
&- 2(\hat v \cdot \o^-) \iota_i \o_i +(\hat v \cdot \o^-) \iota_i \o_i
- (\iota_i)^2 \hat{v}_i  + (\hat v \cdot \o^-) \iota_i \o_i \\
&- \iota_i \o_i \\
=&  |\hat{v}|^2 \hat{v}_i  + |\hat{v}|^2 \iota_i \o_i- (\iota_i)^2 \hat{v}_i - \iota_i \o_i .
\end{split}
\Ee

\unhide

Next, we consider the normal components of the Electronic field $E_3$. From \eqref{wave_eq_E}, \eqref{initialC}, and \eqref{E3B1B2bc}, we have
\Be \label{waveE3}
\begin{split}
\p_t^2 E_3 - \Delta_x E_3 = G_3:=   -4 \pi \p_3 \rho - 4\pi \p_t J_3  ,
\\ E_3 |_{t=0}=     E_{03}, \  \p_t E_3 |_{t=0} = \p_t E_{03}    ,
\end{split}
\Ee
and
\Be
  \p_3 E_3 =    4 \pi \rho   \ \   \text{ on } \ \  \p \O. \label{Neumann}
\Ee
It is convenient to decompose the solution into two parts: one with the Neumann boundary condition of \eqref{waveE3} and the zero forcing term and initial data
\Be \label{waveE3b}
\begin{split}
\p_t^2 w  - \Delta_x w  =  0     \ \  &\text{ in } \ \  \O,
\\ w  |_{t=0} =  0, \ \p_t w |_{t=0}  = 0   \ \  &\text{ in } \ \  \O,
\\ \p_3 w= 4 \pi \rho   \ \  &\text{ on } \ \  \p \O,
\end{split}\Ee
and the other part $\tilde E_3$ with the initial data of \eqref{waveE3} and the zero Neumann boundary condition. We achieve it by the even extension trick. Recall $\bar x$ in \eqref{def:bar}. For $x \in \R^3$, define \Be \label{wavetildeE32}
\begin{split}
G_3(t,x) = & \mathbf{1}_{x_3 > 0 } G_3(t,x ) +   \mathbf{1}_{x_3 < 0 } G_3(t,\bar x )  ,
\\  E_{03} (x)  = &  \mathbf 1_{x_3 > 0 }    E_{03  } (x ) +  \mathbf 1_{x_3 <  0 }    E_{03 } ( \bar x ),
\\  \p_t    E_{03} (x)   = &  \mathbf 1_{x_3 > 0 } \p_t  E_{03} ( x ) +  \mathbf 1_{x_3 <  0 } \p_t  E_{03}( \bar x ).
\end{split}
\Ee
The weak solution $\tilde E_3$ to \eqref{waveE3} with the data \eqref{wavetildeE32} in the whole space $\R^3$ take a form of
\begin{align}
\tilde E_3(t,x ) &=    \frac{1}{4 \pi t^2}
\int_{
\substack{\p B(x;t)\\ \cap \{ y_3 > 0 \}} }  \left( t \p_t  E_{03}( y ) +  E_{03}(y ) + \nabla  E_{03} (y ) \cdot (y-x) \right) dS_y\notag
\\  &   + \frac{1}{4 \pi t^2} \int_{ \substack{\p B(x;t) \\ \cap \{ y_3 < 0 \} }}  \big(  t \p_t  E_{03}( \bar y  ) +  E_{03} (\bar y)
  + \nabla  E_{03} ( \bar y ) \cdot (\bar y - \bar x  )   \big) dS_y\notag
\\ &  + \frac{1}{4 \pi } \int_{ \substack{ B(x;t) \\ \cap \{ y_3 > 0 \}} }\frac{ G_3( t - |y-x|, y  ) }{|y-x| } dy \label{tildeE3formula1}\\
&   + \frac{1}{4 \pi } \int_{\substack{ B(x;t)  \\  \cap \{ y_3 < 0 \} }}\frac{  G_3( t - |y-x|, \bar y ) }{|y-x| } dy. \label{tildeE3formula2}
\end{align}
Following the same argument to expand \eqref{Eexpanmajor1} and \eqref{Eexpanmajor2}, we derive that
\Be \label{E3expression1}
\begin{split}
&\eqref{tildeE3formula1}\\
& = - \int_{B(x;t) \cap \{y_3 > 0\}}  \int_{\R^3} \frac{\o_3 + \hat{v}_3}{1+ \hat{v} \cdot \o}(Sf) (t-|y-x|,y ,v) \dd v \frac{ \dd y}{|y-x|}
 \\  & + \int_{B(x;t) \cap \{y_3 > 0\}}   \int_{\R^3}\frac{ (|\hat{v}|^2-1  )(\hat{v}_3 +  \o_3 ) }{ (1+  \hat v \cdot \o )^2} f (t-|y-x|,y ,v)  \dd v  \frac{ \dd y}{|y-x|^2}
 \\ & -  \int_{ \p B(x;t) \cap \{  y_3>0 \}}   \o_j  \left( \delta_{3j} - \frac{(\o_3 + \hat{v}_3)\hat{v}_j}{1+\hat{v} \cdot \o} \right) f(0, y,v)  \dd v \frac{\dd S_y}{|y-x|}
 \\ & + \int_{B(x;t) \cap \{y_3= 0\}}   \int_{\R^3}  \left(  1 - \frac{(\o_3 + \hat{v}_3)\hat{v}_3}{1+\hat{v} \cdot \o} \right)  f (t-|y-x|,y_\parallel, 0 ,v)  \dd v \frac{ \dd y_\parallel}{|y-x|},
\end{split}
\Ee
\Be
\begin{split}\label{E3expression2}
&  \eqref{tildeE3formula2}\\
&=   - \int_{B(x;t) \cap \{y_3 <0\}}  \int_{\R^3} \frac{ \bar{\o}_3 + \hat{v}_3 }{1+ \hat v \cdot \bar{\o} }(Sf) (t-|y-x|, \bar y ,v) \dd v \frac{ \dd y}{|y-x|}
\\
&+ \int_{B(x;t) \cap \{y_3 <0\}}  \int_{\R^3}
   \frac{ (|\hat{v}|^2-1  )(\hat{v}_3 +   \bar{\o}_3 ) }{ (1+ \hat v \cdot \bar{\o} )^2}
   f(t-|y-x|,\bar y  ,v) \dd v\frac{ \dd y}{|y-x|^2} \\
&- \int_{\p B(x;t) \cap \{  y_3 < 0 \}} \int_{\R^3}   \bar{\o}_j    \Big(  \delta_{3j} - \frac{   \bar{\o} _3 \hat{v}_j + \hat{v}_3 \hat{v}_j}{1+ \hat v \cdot \bar{\o} }   \Big)  f(0,\bar y ,v)   \dd v \frac{\dd S_y}{|y-x|}
  \\ &+  \int_{B(x;t) \cap \{y_3 =0\}}  \int_{\R^3}     \Big( 1- \frac{ (\bar{\o}_3   + \hat{v}_3 ) \hat{v}_3}{1+ \hat v \cdot \bar{\o} }   \Big)  f(t-|y-x|,y_\parallel, 0 ,v) \dd v  \frac{ \dd y}{|y-x|}.
\end{split}
\Ee
Note that the weak derivative $\p_3$ to the form of $\tilde{E}_3$ solves the linear wave equation \eqref{waveE3} with oddly extended forcing term and the initial data in the sense of distributions. Thus it satisfies
\Be\label{p3tildeE3=0}
\p_3  \tilde E_3=0 \ \   \text{ on } \ \  \p \O.
\Ee

Now we consider \eqref{waveE3b}. We assume $\rho(t,x)$ for all $t\leq 0$, which implies $w(t,x)=0$ for all $t\leq 0$. Define the Laplace transformation:
\Be
W(p, x) = \int^\infty_{- \infty} e^{-p t} w (t,x ) dt, \ \  \  \
R(p, x) = \int^\infty_{- \infty} e^{-p t} \rho (t,x ) dt
.\label{LaplaceT}
\Ee
Then $W$ solves the Helmholtz equation with the same Neumann boundary condition:
\Be \label{Helmholtz}
\begin{split}
p^2  W - \Delta_x   W  =  0 & \ \  \text{ in }   \  \O,
\\ \p_n  W =  4 \pi R    & \ \  \text{ on }   \  \p \O.
\end{split}
\Ee
The solution for $( p^2 - \Delta _x  ) \Phi(x) =   \delta(x) $ in $\mathbb R^3$ is known as $  \frac{1}{4\pi } \frac{e^{\pm p |x| }}{ |x| }$. We choose \Be \label{fundaHelmPhi}
 \Phi(x) =  \frac{1}{4\pi } \frac{e^{- p |x| }}{ |x| }.
\Ee
We have the following identities:
\begin{lemma} \label{Helmu}{\it
Suppose $u \in C^2(\bar \O )$ is an arbitrary function. For a fixed $x \in \O$ and $\Phi$ in \eqref{fundaHelmPhi}, we have
\Be \label{uPhiintrep}
\begin{split}
u(x) &=   \int_\O \Phi(y-x) (   p^2- \Delta_x ) u (y) dy \\
&+ \int_{\p \O } \left[  \Phi(y-x) \p_n u(y) - u(y) \p_n \Phi(y-x) \right] dS_y.\end{split}
\Ee}
\end{lemma}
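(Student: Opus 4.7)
The plan is to apply the classical Green's second identity after excising a small ball around the singularity of $\Phi$. Set $\Omega_\e := \Omega \setminus \overline{B(x,\e)}$ for small $\e>0$. Since $\Phi(y-x) = \frac{1}{4\pi}\frac{e^{-p|y-x|}}{|y-x|}$ is smooth on $\Omega_\e$ and satisfies $(p^2-\Delta_y)\Phi(y-x)=0$ there, I can apply Green's second identity to $u$ and $\Phi(\cdot-x)$ on $\Omega_\e$:
\[
\int_{\Omega_\e}\big[\Phi\,\Delta u - u\,\Delta\Phi\big]\,dy \;=\; \int_{\partial\Omega_\e}\big[\Phi\,\partial_\nu u - u\,\partial_\nu \Phi\big]\,dS_y,
\]
with $\nu$ the outward unit normal to $\Omega_\e$. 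Using $\Delta\Phi = p^2\Phi$ on $\Omega_\e$, the left-hand side becomes $-\int_{\Omega_\e}\Phi(y-x)(p^2-\Delta)u(y)\,dy$, which tends to $-\int_\Omega\Phi(y-x)(p^2-\Delta)u(y)\,dy$ as $\e\downarrow 0$, thanks to the local integrability of $\Phi$ in $\mathbb R^3$.

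Next I would split $\partial\Omega_\e = \partial\Omega \cup \partial B(x,\e)$. On $\partial\Omega$ the outward normal $n$ coincides with that of $\Omega$, contributing the desired boundary integral. On $\partial B(x,\e)$ the outward normal to $\Omega_\e$ points toward $x$, i.e., $\nu=-\omega$ with $\omega=(y-x)/|y-x|$, so $\partial_\nu=-\partial_r$ with $r=|y-x|$. On $\partial B(x,\e)$ one computes
\[
\Phi\big|_{r=\e} \;=\; \frac{e^{-p\e}}{4\pi\e}, \qquad \partial_r\Phi\big|_{r=\e} \;=\; -\frac{(1+p\e)e^{-p\e}}{4\pi\e^2}.
\]
Because $u\in C^2(\bar\Omega)$, $|\nabla u|$ is bounded uniformly near $x$, so
\[
\Bigl|\int_{\partial B(x,\e)} \Phi\,\partial_r u \,dS_y\Bigr| \;\lesssim\; \frac{1}{4\pi\e}\cdot 4\pi\e^2\cdot \|\nabla u\|_\infty \;=\; O(\e)\to 0,
\]
while by the mean-value property and continuity of $u$,
\[
\int_{\partial B(x,\e)} u\,\partial_r\Phi \,dS_y \;=\; -\frac{(1+p\e)e^{-p\e}}{4\pi\e^2}\int_{\partial B(x,\e)} u\,dS_y \;\longrightarrow\; -u(x).
\]
Combining the two sphere contributions (recall the sign flip from $\partial_\nu=-\partial_r$) shows the boundary integral over $\partial B(x,\e)$ tends to $-u(x)$ as $\e\downarrow 0$.

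Passing to the limit and rearranging yields exactly \eqref{uPhiintrep}. The main technical point is the careful treatment of the $1/\e$ and $1/\e^2$ singularities on $\partial B(x,\e)$: the worse singularity in $\partial_r\Phi$ is precisely cancelled by the surface measure, producing the concentrated value $u(x)$, while the milder term in $\Phi\,\partial_r u$ is absorbed as an $O(\e)$ error. Everything else is classical Green-type manipulation; the only implicit ingredient beyond what is written is enough decay of $u$ at infinity (or the understanding that $\int_{\partial\Omega}$ and $\int_\Omega$ are meant in a principal-value / integrable sense) so that the boundary integral on the flat face $\partial\Omega=\{y_3=0\}$ is finite, which will be ensured in the concrete applications of the lemma by the exponential factor $e^{-p|y-x|}$ in $\Phi$ together with the regularity of $u$ inherited from the Laplace transform \eqref{LaplaceT}.
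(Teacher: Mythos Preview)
Your proposal is correct and follows essentially the same approach as the paper: excise a small ball $B(x,\e)$ around the singularity, apply Green's second identity on the punctured region using $(\Delta_y - p^2)\Phi = 0$ there, and pass to the limit by evaluating the two sphere integrals exactly as you do. The paper's computation of the limits on $\partial B(x,\e)$ matches yours line for line; your extra remark about decay at infinity (needed because $\Omega=\mathbb{R}^3_+$ is unbounded) is a reasonable caveat that the paper leaves implicit.
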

\begin{proof}The proof is rather standard. Fix $x \in \O$. Let $0 < \e \ll 1 $, and $B(x,\e)$ be a ball centered at $x$ with radius $\e$ such that $B(x, \e ) \subset \O$. Let $V_\e = \O - B(x,\e)$. Then, by the integration by parts,
\Be \notag
\begin{split}
& - \int_{V_\e} \Phi(y-x ) ( \Delta_y - p^2 ) u(y) dy
\\
&=    \int_{ \p \O }  u(y)   \p_n \Phi(y-x ) dS_y +  \int_{\p B(x,\e) }  u(y)   \p_n \Phi(y-x ) dS_y
\\ & \ \ -  \int_{\p \O}    \Phi(y-x )   \p_n  u(y)dS_y - \int_{\p B(x, \e) }    \Phi(y-x )   \p_n  u(y)dS_y.
\end{split}
\Ee
From \eqref{fundaHelmPhi}, $
 \int_{\p B(x, \e) }    \Phi(y-x )   \p_n  u(y)dS_y \lesssim 4\pi \e^2 \frac{e^{|p| \e }}{4 \pi \e }  \to 0,$ as $\e \to 0.$
And by direct computation,
\Be\notag
\begin{split}
 &\int_{\p B(x,\e) }  u(y)   \p_n \Phi(y-x ) dS_y =     \int_{\p B(x,\e) }  u(y)  \frac{-(y-x)}{|y-x| } \cdot  \nabla \Phi(y-x ) dS_y
 \\ = & \frac{1}{4 \pi }  \int_{\p B(x,\e) }  u(y)  \frac{-(y-x)}{|y-x| } \cdot  ( - i p |y-x | - 1 ) \frac{ e^{- i p |y-x | } (y-x) }{|y-x |^3 } dS_y
 \\ = &  \frac{1}{4 \pi }  \int_{\p B(x,\e) } \left( - (- p |y-x| - 1 ) \frac{ e^{- p|y-x |  }}{|y-x |^2 } \right) u(y)   dS_y
  \\ = &   \left( ( 1 - (- p) \e ) e^{- p \e} \right)  \left(  \frac{1}{4 \pi \e^2  }   \int_{\p B(x,\e) } u(y)   dS_y \right)
  \to   u(x), \text{ as } \e \to 0.
\end{split}
\Ee
Combining all together, and letting $\e \to 0$ we get \eqref{uPhiintrep}.\end{proof}

Next for $x \in \O$, let $\phi^x(y)$ be the function such that
\Be \label{phixyforO}
\begin{split}
(\Delta_y  - p^2 ) \phi_N^x(y) =  0 \ \  &\text{ in }  \ \ \O,
\\ \p_n \phi_N^x(y) =   \p_n \Phi(y-x ) \ \  &\text{ on } \ \  \p \O.
\end{split}
\Ee
The integration by parts implies
\Be \label{phixyintbyparts}
\begin{split}
0 = & \int_{\O} ( \Delta_y - p^2 ) \phi_N^x(y) u(y) dy
\\ = & \int_{\O } ( \Delta_y - p^2 )u(y)  \phi_N^x(y) dy + \int_{\p \O } [ \p_n \phi_N^x(y) u(y)  - \phi_N^x(y) \p_n u(y) ] dS_y
\\ = & \int_{\O } ( \Delta_y - p^2 )u(y)  \phi_N^x(y) dy + \int_{\p \O } [ \p_n  \Phi(y-x ) u(y)  - \phi_N^x(y) \p_n u(y) ] dS_y.
\end{split}
\Ee
By adding \eqref{phixyintbyparts} to \eqref{uPhiintrep}, we derive that
\Be \label{urepPhi2}
\begin{split}
u(x) = &  - \int_{\O} \left( \Phi(y-x) -\phi_N^x(y)     \right) ( \Delta_y - p^2 )u(y) dy\\
& + \int_{\p \O } ( \Phi(y-x) -\phi_N^x(y)  ) \p_n u(y) dS_y.
\end{split}
\Ee
For the half space $\O = \mathbb R_+^3$, we have, with $\bar x$ in \eqref{def:bar},
\Be \label{phiNxyformhalf}
\phi_N^x(y) = - \Phi(y- \bar x ) .
\Ee
\hide
Since $x,y \in  \mathbb R_+^3$, so $y - \tilde x \neq 0$, and $(\Delta_y  - p^2 ) ( - \Phi(y- \tilde x )) = 0$ for all $y \in  \mathbb R_+^3$. Also, by direct computation
\[
\p_{y_3}   \Phi(y-  x ) =  \frac{1}{4 \pi } \left(  \frac{(\pm p |y-x|  -1 ) e^{\pm p |y-x | } (y_3 - x_3 )  }{ |y-x |^3 }  \right).
\]
For $y \in \p \O$, $y_3 = 0$ and $|y - x | = |y - \tilde x | $, thus
\Be
\begin{split}
\p_{y_3} \phi_N^x(y) |_{y_3 = 0 }  = - \p_{y_3} \Phi(y- \tilde x )   |_{y_3 = 0 }  = & - \frac{1}{4 \pi } \left(  \frac{(\pm p |y- \tilde x|  -1 ) e^{\pm p |y-\tilde x | } (y_3 + x_3 )  }{ |y-\tilde x |^3 }  \right)  |_{y_3 = 0 }
\\ = &  - \frac{1}{4 \pi } \left(  \frac{(\pm p |y-  x|  -1 ) e^{\pm p |y- x | }  x_3   }{ |y- x |^3 }  \right)
\\ = &  \p_{y_3}   \Phi(y-  x )  |_{y_3 = 0 },
\end{split}
\Ee
and we proved \eqref{phiNxyformhalf}. \unhide

Finally, we derive that, from \eqref{urepPhi2} and \eqref{phiNxyformhalf}:
\begin{lemma} \label{Helmu2} {\it For $\O = \R^3_+$, and $\Phi$ in \eqref{fundaHelmPhi},
\Be
\begin{split}\label{formula:u}
u(x) =& - \int_{\O} \left(  \Phi(y-x) + \Phi(y-\bar x)  \right) ( \Delta_y - p^2 )u(y) dy\\
& + \int_{\p \O } ( \Phi(y-x) + \Phi(y-\bar x)  ) \p_n u(y) dS_y.
\end{split}
\Ee}
\end{lemma}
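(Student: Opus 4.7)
The plan is to combine the general representation formula \eqref{urepPhi2} with the explicit identification \eqref{phiNxyformhalf} for the corrector $\phi_N^x(y)$ in the half space. Since \eqref{urepPhi2} has already been derived, the only task is to verify that $\phi_N^x(y) := -\Phi(y - \bar x)$ is indeed the (Neumann) corrector solving \eqref{phixyforO} when $\O = \R^3_+$; once that is in hand, substitution gives $\Phi(y-x) - \phi_N^x(y) = \Phi(y-x) + \Phi(y-\bar x)$ in both integrands, which is exactly \eqref{formula:u}.

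First I would check the Helmholtz equation in the interior. Because $x \in \O = \R^3_+$ we have $\bar x = (x_\parallel, -x_3) \in \R^3_-$, so for every $y \in \bar \O$ the point $y - \bar x$ is nonzero and, as a translate of the fundamental solution constructed in \eqref{fundaHelmPhi}, $\Phi(y - \bar x)$ satisfies $(\Delta_y - p^2)\Phi(y-\bar x)=0$ classically on all of $\bar \O$. Hence $\phi_N^x$ solves the homogeneous Helmholtz equation in $\O$.

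Next I would verify the Neumann matching condition on $\p\O = \{y_3 = 0\}$, where the outward unit normal is $n = -\mathbf e_3$ so that $\p_n = -\p_{y_3}$. A direct computation gives
\[
\p_{y_3}\Phi(y-z) = -\frac{1}{4\pi}\,\frac{(1+p|y-z|)\,e^{-p|y-z|}}{|y-z|^3}(y_3 - z_3)
\]
for any fixed $z$. Taking $z = x$ and $z = \bar x$ and evaluating at $y_3 = 0$, the factor $|y-x|$ equals $|y-\bar x|$ since $x$ and $\bar x$ differ only in the third coordinate, while $y_3 - x_3 = -x_3$ and $y_3 - (-x_3) = x_3$ produce opposite signs. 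Therefore
\[
\p_{y_3}\bigl[-\Phi(y-\bar x)\bigr]\Big|_{y_3=0} \;=\; \p_{y_3}\Phi(y-x)\Big|_{y_3=0},
\]
which is exactly $\p_n \phi_N^x(y) = \p_n \Phi(y-x)$ on $\p\O$, i.e.\ \eqref{phixyforO} holds.

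Finally I would plug $\phi_N^x(y) = -\Phi(y-\bar x)$ into \eqref{urepPhi2}; both the volume and boundary integrands become $\Phi(y-x) + \Phi(y-\bar x)$ and the identity \eqref{formula:u} follows. The only mild subtlety is the sign/orientation bookkeeping of the outward normal on $\R^3_+$ in the Neumann condition, which is purely mechanical; there is no genuine obstacle here since this lemma is essentially a direct specialization of \eqref{urepPhi2} to the half-space via the method of images.
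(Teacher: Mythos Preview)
Your proposal is correct and follows exactly the paper's approach: the lemma is stated as an immediate consequence of the general representation \eqref{urepPhi2} together with the identification \eqref{phiNxyformhalf}, and your verification of \eqref{phiNxyformhalf} via the method of images (checking that $-\Phi(y-\bar x)$ is Helmholtz-harmonic in $\O$ and matches the Neumann data on $\p\O$) is precisely the computation the paper indicates but suppresses.
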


By applying \eqref{phiNxyformhalf} to \eqref{Helmholtz}, we derive that
\Be
\begin{split}\label{form_W}
W(p,x) &=   \int_{\p \O } ( \Phi(y-x) + \Phi(y-\bar x)  ) 4 \pi R (y) d S_y
\\  &=  2 \int_{\mathbb R^2 }\frac{e^{-p ( (y_1 - x_1)^2 +(y_2 - x_2 )^2 + x_3^2  )^{1/2}}}{ (y_1 - x_1)^2 +((y_2 - x_2 )^2 + x_3^2  )^{1/2}} R(y_1, y_2 ) d y_1 dy_2 .
\end{split}
\Ee
Using the inverse Laplace transform, we derive that
%
%
\Be \label{wexpression1}
\begin{split}
&w(t,x) =   \frac{1}{2\pi } \int_{-\infty}^\infty e^{(p_1 + i p_2 )t} W( p_1 + i p_2 ,x ) dp_2\\
& \ \ =    \frac{1}{ \pi   } \int_{-\infty}^\infty d p_2  \  e^{(p_1 + i p_2 )t}    \int_{\mathbb R^2 }d y_1 dy_2\frac{e^{ -( p_1 + i p_2) ( (y_1 - x_1)^2 +(y_2 - x_2 )^2 + x_3^2  )^{1/2}}}{ (y_1 - x_1)^2 +((y_2 - x_2 )^2 + x_3^2  )^{1/2}} \\
& \ \ \ \ \ \       \times    \int_{-\infty}^{\infty}  ds  \   e^{-( p_1 + i p_2) s } (- \rho(s,y_1,y_2 ))     .
\end{split}
\Ee
Finally, we derive that, using the identity $\int_{-\infty}^\infty e^{i p_2 t } d p_2 = 2 \pi \delta(t) $,
\Be \label{wexpression2}
\begin{split}
 &w(t,x)   = \frac{- 1}{ \pi   } \int_{\mathbb R^2 } \int_\R \int_\R \frac{e^{( p_1 + i p_2 )(t -s -  \sqrt{| y_\parallel - x_\parallel |^2 + x_3^2 } )  }}{ \sqrt{| y_\parallel - x_\parallel |^2 + x_3^2 } }  \rho( s, y_\parallel ) dp_2  ds dy_\parallel
\\  &= -  2  \int_{\mathbb R^2 } \int_\R  \frac{e^{p_1  ( t-s -  \sqrt{| y_\parallel - x_\parallel |^2 + x_3^2 } ) }  \delta(t-s -  \sqrt{| y_\parallel - x_\parallel |^2 + x_3^2 } ) }{ \sqrt{| y_\parallel - x_\parallel |^2 + x_3^2 } } \rho ( s, y_\parallel ) ds dy_\parallel
\\ &=  - 2  \int_{
\sqrt{| y_\parallel - x_\parallel |^2 + x_3^2 }<t
}  \frac{\rho (t -  \sqrt{| y_\parallel - x_\parallel |^2 + x_3^2 },y_\parallel ) }{ \sqrt{| y_\parallel - x_\parallel |^2 + x_3^2 }}   dy_\parallel.
\end{split}
\Ee

Collecting the terms, we conclude the following formula:
\begin{proposition} \label{Eiform}
\begin{align}
\label{Eesttat0pos}  &E_i(t,x)  
\\ &   =   \frac{1}{4 \pi  t^2} \int_{\p B(x;t) \cap \{ y_3 > 0 \} }  \left( t \p_t  E_{0,i}( y_\parallel, y_3 ) +  E_{0,i} (y_\parallel, y_3) + \nabla  E_{0,i} (y_\parallel, y_3) \cdot (y-x) \right) dS_y
\\  \label{Eesttat0neg}  & + \frac{1}{ 4 \pi  t^2} \int_{\p B(x;t) \cap \{ y_3 < 0 \} } \iota_i \big( - t \p_t  E_i(0, y_\parallel,  - y_3 ) -  E_i (0,y_\parallel, -y_3) 
\\ \notag & \quad \quad \quad \quad \quad \quad \quad  \quad \quad \quad \quad  - \nabla_\parallel  E_i (0,y_\parallel, - y_3) \cdot (y_\parallel-x_\parallel)  + \p_3  E_i (0,y_\parallel, - y_3) \cdot (y_3-x_3) \big) dS_y
 \\  \label{Eestbulkpos}  &  +   \int_{B(x;t) \cap \{y_3 > 0\}}  \int_{\R^3}\frac{ (|\hat{v}|^2-1  )(\hat{v}_i +  \o_i ) }{|y-x|^2 (1+  \hat v \cdot \o )^2} f(t-|y-x|,y ,v)\dd v \dd y
 \\ \label{Eestbulkneg} &- \int_{B(x;t) \cap \{y_3 <0\}}   \int_{\R^3} \iota_i \frac{ (|\hat{v}|^2-1 )(\hat{v}_i + \iota_i \o_i ) }{|y-x|^2 (1+ \hat v \cdot \o^-)^2}f(t-|y-x|,y_\parallel, -y_3 ,v) \dd v \dd y
 \\  \label{EestSpos} &- \int_{B(x;t) \cap \{y_3 > 0\}}  \int_{\R^3} \frac{\o_i + \hat{v}_i}{1+  \hat v \cdot \o }(Sf) (t-|y-x|,y ,v) \dd v \frac{ \dd y}{|y-x|}
\\  \label{EestSneg} &+\int_{B(x;t) \cap \{y_3 <0\}}  \int_{\R^3} \iota_i \frac{ \iota_i \o_i + \hat{v}_i }{1+ \hat v \cdot \o^-  }(Sf)(t-|y-x|,y_\parallel, -y_3 ,v) \dd v \frac{ \dd y}{|y-x|}
\\ \label{Eestbdrypos} &  + \int_{B(x;t) \cap \{y_3= 0\}} \int_{\R^3} \left( \delta_{i 3 }  -  \frac{(\o_i + \hat{v}_i)\hat{v}_3}{1+ \hat v \cdot \o }  \right) f (t-|y-x|,y_\parallel, 0 ,v) \dd v\frac{ \dd y_\parallel}{|y-x|}
\\  \label{Eestbdryneg} & -  \int_{B(x;t) \cap \{y_3 =0\}} \int_{\R^3} \iota_i \left( \delta_{i 3 } -   \frac{ \iota_i  \o_i \hat{v}_3 + \hat{v}_i  \hat{v}_3}{1+ \hat v \cdot \o^-} \right) f(t-|y-x|,y_\parallel, 0 ,v) \dd v \frac{ \dd y_\parallel }{|y-x|}
\\ \label{Eestinitialpos} &-  \int_{\p B(x;t) \cap \{ y_3>0 \} }  \int_{\mathbb R^3}  \sum_j \o_j  \left(\delta_{ij} - \frac{(\o_i + \hat{v}_i)\hat{v}_j}{1+ \hat v \cdot \o }\right) f(0,y,v) \dd v \frac{\dd S_y}{|y-x|}
 \\ \label{Eestinitialneg}  &  + \int_{\p B(x;t) \cap \{ y_3<0 \}} \iota_i \int_{\R^3} \sum_j   \iota_j    \omega_j   \Big( \delta_{ij} - \frac{ \iota_i  \o_i \hat{v}_j + \hat{v}_i \hat{v}_j}{1+ \hat v \cdot \o^-}   \Big)  f(0,y_\parallel, -y_3 ,v)  \dd v \frac{\dd S_y}{|y-x|}
 \\ \label{Eest3bdrycontri}  & - \delta_{i3}   \int_{ B(x;t) \cap \{y_3 =0\}}  \int_{\mathbb R^3 }  \frac{ 2 f (t - |y-x |, y_\parallel, 0, v )}{|y-x | }  dv d  S_y .
\end{align}
\end{proposition}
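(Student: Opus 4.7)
The plan is to treat the tangential components $E_1, E_2$ and the normal component $E_3$ separately, since they satisfy wave equations with different boundary data, and then sum the resulting representations to obtain the stated formula.

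For $i=1,2$, the component $E_i$ solves the wave equation \eqref{waveEparallel} with homogeneous Dirichlet data \eqref{Dirichlet}. I would begin with the odd extension \eqref{odd_ext} of $E_{0,i}$, $\p_t E_{0,i}$, and $G_i$ to all of $\R^3$ and invoke Kirchhoff's representation for the whole-space wave equation. The terms coming from the upper-half integration yield \eqref{Eesttat0pos}, \eqref{Eexpanmajor1}, and their analogues on $\{y_3 < 0\}$ yield \eqref{Eesttat0neg}, \eqref{Eexpanmajor2} with the appropriate sign $\iota_i$ from \eqref{iota}. Then the Glassey-Strauss step is to write $\p_i \rho + \p_t J_i = \int (\p_i f + \hat v_i \p_t f) dv$ and substitute the identities \eqref{pxptST} on the upper half and \eqref{ST_lower} on the lower half, producing an $Sf$-piece and a $T_j f$-piece (respectively $\bar T_j f$). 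An integration by parts in $y$ on the $T$-terms — using that $T_j$ and $\bar T_j$ are tangential derivatives along backward light cones, cf.\ \eqref{T=y}, \eqref{def:T-} — moves the derivative onto the weights and produces three classes of terms: the surface integral at $|y-x|=t$ (initial data, giving \eqref{Eestinitialpos}, \eqref{Eestinitialneg}); the bulk integral coming from differentiating the weights, which reduces to \eqref{Eestbulkpos}, \eqref{Eestbulkneg} by the algebraic identity \eqref{DyT}; and the boundary integral at $y_3=0$, yielding \eqref{Eestbdrypos}, \eqref{Eestbdryneg}.

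For the normal component $E_3$ the wave equation \eqref{waveE3} carries the inhomogeneous Neumann boundary condition \eqref{Neumann}. My plan is to decompose $E_3 = \tilde E_3 + w$ where $\tilde E_3$ inherits the data and forcing but has zero Neumann data, and $w$ solves \eqref{waveE3b} with zero data and forcing and the boundary flux $4\pi\rho$. For $\tilde E_3$ I would apply the \emph{even} extension \eqref{wavetildeE32} to $\R^3$ (this symmetry guarantees the zero Neumann condition \eqref{p3tildeE3=0}), and then repeat the Glassey-Strauss expansion exactly as in the tangential case; this yields the $i=3$ versions of lines \eqref{Eesttat0pos}–\eqref{Eestinitialneg}, with the different signs on the lower half captured by $\iota_3=-1$.

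For $w$ I would pass to the Laplace transform in $t$, reducing \eqref{waveE3b} to the Helmholtz problem \eqref{Helmholtz} with Neumann data $4\pi R$. Using the fundamental solution \eqref{fundaHelmPhi} together with the half-space Neumann Green's function constructed via the reflection $\phi_N^x(y) = -\Phi(y-\bar x)$ in \eqref{phiNxyformhalf}, Lemma \ref{Helmu2} gives the integral representation \eqref{form_W} of $W$. Inverting the Laplace transform as in \eqref{wexpression1}–\eqref{wexpression2} and recognizing the $\delta$-function in $t-s-\sqrt{|y_\parallel - x_\parallel|^2 + x_3^2}$ produces precisely the single extra boundary term \eqref{Eest3bdrycontri} (the factor $\delta_{i3}$ records that this contribution appears only in the normal component). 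Adding $w$ to $\tilde E_3$ completes the formula for $E_3$, and collecting all components gives \eqref{Eesttat0pos}–\eqref{Eest3bdrycontri}. The main technical obstacle is bookkeeping: one must verify the sign patterns from the odd/even extensions combine consistently with the $\iota_i$ factors and the modified operators $\bar\omega, \bar T$, and in particular that the algebraic identity \eqref{DyT} (which is the analogue on the reflected half of the key Glassey-Strauss identity) really produces the integrand in \eqref{Eestbulkneg}; this is what makes the two half-space bulk integrals line up into the unified representation.
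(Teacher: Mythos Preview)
Your proposal is correct and follows essentially the same route as the paper: odd extension for $E_\parallel$ and even extension for $\tilde E_3$, Kirchhoff's formula, the Glassey--Strauss substitution \eqref{pxptST}/\eqref{ST_lower} followed by integration by parts of the $T$- and $\bar T$-pieces (using \eqref{DyT} for the reflected bulk term), and the Laplace-transform/Helmholtz argument for the inhomogeneous Neumann part $w$ leading to \eqref{Eest3bdrycontri}. The only work left is exactly the sign and $\iota_i$ bookkeeping you already flagged.
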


Next, we solve for $B$. For $B_1, B_2$ we have, for $i =1,2,$
\Be \label{wavetildeB}
 \begin{split}
\p_t^2  B_i - \Delta_x  B_i = & 4 \pi ( \nabla_x \times J)_i:= H_i  \ \  \text{ in }  \ \ \O,
\\ \p_{x_3} B_1 = &  4 \pi J_2  , \ \p_{x_3} B_2 =  4 \pi J_1 \ \  \text{ on } \ \  \p \O,
\\  B_i(0,x) = & B_{0 i}, \p_t  B_i(0,x) = \p_t B_{0 i} \ \  \text{ in }  \ \ \O.
\end{split} \Ee

To solve \eqref{wavetildeB} we write $B_i = \tilde B_i + B_{bi} $ with $\tilde B_i$ satisfies the wave equation in $(0, \infty) \times \mathbb R^3 $ with even extension in $x_3$:
\Be \label{wavetildeBi}
\begin{split}
\p_t^2 \tilde B_i - \Delta_x \tilde B_i = &  \mathbf 1_{x_3 > 0 }  H_i(t,x) +  \mathbf 1_{x_3 <  0 }  H_i(t,\bar x),
\\  \tilde B_i(0,x)  = &  \mathbf 1_{x_3 > 0 }   B_{0i} (x) +  \mathbf 1_{x_3 <  0 }   B_{0i} (\bar x),
\\  \p_t  \tilde B_i (0 , x) = &  \mathbf 1_{x_3 > 0 } \p_t  B_{0i} (x) +  \mathbf 1_{x_3 <  0 } \p_t  B_{0i}(\bar x).
\end{split}
\Ee
And $B_{bi}$ satisfies
\Be \label{Bibeq}
\begin{split}
\p_t^2 B_{bi} - \Delta_x B_{bi} = 0 & \text{ in } \O,
\\ B_{bi} (0,x) = 0, \p_t B_{bi} = 0 & \text{ in } \O,
\\ \p_{x_3} B_{b1} = 4 \pi J_2, \ \p_{x_3} B_{b2} = - 4 \pi J_1 & \text{ on } \O.
\end{split}
\Ee
Then from \eqref{wavetildeBi},
\[ \label{tildeE3formula}
\begin{split}
& \tilde B_i (t,x )
\\ =  &  \frac{1}{4 \pi t^2} \int_{\p B(x;t) \cap \{ y_3 > 0 \} }  \left( t \p_t  B_{0i} (y ) +   B_{0i}(y) + \nabla   B_{0i} (y) \cdot (y-x) \right) dS_y
\\ \end{split}
\]
\[ 
\begin{split}
 & + \frac{1}{4 \pi t^2} \int_{\p B(x;t) \cap \{ y_3 < 0 \} }  \big(  t \p_t   B_{0i}(\bar y ) +   B_{0i}(\bar y)   + \nabla   B_{0i} (\bar y) \cdot (\bar y - \bar x)   \big) dS_y
\\ & + \frac{1}{4 \pi } \int_{B(x;t)  \cap \{ y_3 > 0 \} }\frac{ H_i ( t - |y-x|, y ) }{|y-x| } dy + \frac{1}{4 \pi } \int_{B(x;t)  \cap \{ y_3 < 0 \} }\frac{  H_i( t - |y-x|, \bar y ) }{|y-x| } dy.
\end{split}
\]
Applying \eqref{wexpression2} to \eqref{Bibeq},
\Be
\begin{split}
B_{bi}(t,x) = (-1)^i 2  \int_{ B(x;t) \cap \{y_3 =0\}}  \int_{\mathbb R^3 }  \frac{ \hat v_{\underline{i} }  f (t - |y-x |, y_\parallel, 0, v )}{|y-x | }  dv d  S_y,
\end{split}
\Ee
where we have used the notation
\Be\label{def_under_i}
	\underline i = \begin{cases} 2, \text{ if } i=1, \\ 1, \text{ if } i=2. \end{cases}
	\Ee	
Thus,
\Be \label{Bi12rep}
\begin{split}
B_i(t,x) 
= &  \frac{1}{4 \pi t^2} \int_{\p B(x;t) \cap \{ y_3 > 0 \} }  \left( t \p_t  B_{0i} ( y ) +   B_{0i}(y) + \nabla   B_{0i} (y) \cdot (y-x) \right) dS_y
\\  & + \frac{1}{4 \pi t^2} \int_{\p B(x;t) \cap \{ y_3 < 0 \} }  \big(  t \p_t   B_{0i}(\bar y ) +   B_{0i}(\bar y)  + \nabla   B_{0i} (\bar y) \cdot (\bar y - \bar x)  \big) dS_y
\\ & + \frac{1}{4 \pi } \int_{B(x;t)  \cap \{ y_3 > 0 \} }\frac{ H_i ( t - |y-x|, y ) }{|y-x| } dy + \frac{1}{4 \pi } \int_{B(x;t)  \cap \{ y_3 < 0 \} }\frac{  H_i( t - |y-x|, \bar y ) }{|y-x| } dy
\\ & + (-1)^i 2  \int_{ B(x;t) \cap \{y_3 =0\}}  \int_{\mathbb R^3 }  \frac{ \hat v_{\underline i }  f (t - |y-x |, y_\parallel, 0, v )}{|y-x | }  dv d  S_y.
\end{split}
\Ee

On the other hand, $B_3(t,x)$ satisfies
\[
\begin{split}
\p_t^2 B_3 - \Delta_x B_3 = & 4 \pi (\nabla_x \times J )_3 := H_3 \text{ in }  \O,
\\ B_3(0,x) = & B_{03}, \p_t B_3(0,x) = \p_t B_{03} \text{ in } \O,
\\ B_3 = & 0 \text{ on }  \p \O.
\end{split}
\]
Using the odd extension in $x_3$:
\[
\begin{split}
H_3(t,x)  = &  \mathbf 1_{x_3 > 0 }  H_3(t,x) -  \mathbf 1_{x_3 <  0 }  H_3(t,\bar x),
\\   B_{03} (x)  = &  \mathbf 1_{x_3 > 0 }   B_{03} (x) -  \mathbf 1_{x_3 <  0 }   B_{03} (\bar x),
\\  \p_t   B_{03} (0 , x) = &  \mathbf 1_{x_3 > 0 } \p_t  B_{03} (x) -  \mathbf 1_{x_3 <  0 } \p_t  B_{03}(\bar x),
\end{split}
\]
we get the expression for $B_3$:
\Be \label{B3rep}
\begin{split}
B_3(t,x) 
= &  \frac{1}{4 \pi t^2} \int_{\p B(x;t) \cap \{ y_3 > 0 \} }  \left( t \p_t  B_{03} (y ) +   B_{30}(y) + \nabla   B_{03}(y) \cdot (y-x) \right) dS_y
\\  & - \frac{1}{4 \pi t^2} \int_{\p B(x;t) \cap \{ y_3 < 0 \} }  \big(  t \p_t   B_{03}( \bar y ) +   B_{03} (\bar y)  + \nabla   B_{03} (\bar y) \cdot (\bar y - \bar x )  \big) dS_y
\\ & + \frac{1}{4 \pi } \int_{B(x;t)  \cap \{ y_3 > 0 \} }\frac{ H_3 ( t - |y-x|, y ) }{|y-x| } dy \\&- \frac{1}{4 \pi } \int_{B(x;t)  \cap \{ y_3 < 0 \} }\frac{  H_3( t - |y-x|, \bar y ) }{|y-x| } dy.
\end{split}
\Ee
Combining \eqref{Bi12rep} and \eqref{B3rep}, we get for $i=1,2,3$,
\begin{align}
  B_i(t,x) =
 \notag &  \frac{1}{4 \pi t^2} \int_{\p B(x;t) \cap \{ y_3 > 0 \} }  \left( t \p_t  B_{0i} ( y ) +   B_{0i}(y) + \nabla   B_{0i} (y) \cdot (y-x) \right) dS_y
\\   \notag & + \frac{\iota_i}{4 \pi t^2} \int_{\p B(x;t) \cap \{ y_3 < 0 \} }  \big(  t \p_t   B_{0i}( \bar y ) +   B_{0i}(\bar y)   + \nabla  B_{0i} (\bar y) \cdot (\bar y - \bar x )   \big) dS_y
\\ \label{Bexpanmajor1} & + \frac{1}{4 \pi } \int_{B(x;t)  \cap \{ y_3 > 0 \} }\frac{ H_i ( t - |y-x|, y ) }{|y-x| } dy
\\  \label{Bexpanmajor2} & + \frac{\iota_i}{4 \pi } \int_{B(x;t)  \cap \{ y_3 < 0 \} }\frac{  H_i( t - |y-x|, \bar y ) }{|y-x| } dy
\\ \notag & + (-1)^i  2( 1- \delta_{i3} )  \int_{ B(x;t) \cap \{y_3 =0\}}  \int_{\mathbb R^3 }  \frac{ \hat v_{\underline i }  f (t - |y-x |, y_\parallel, 0, v )}{|y-x | }  dv d  S_y.
\end{align}


Using \eqref{pxptST}, we have
\begin{align}
\notag \eqref{Bexpanmajor1}  =  &    \int_{B(x;t)   \cap \{ y_3 > 0 \} } \int_{\mathbb R^3} \frac{(  \nabla_x f \times \hat v )_i ( t - |y-x|, y, v ) }{|y-x| } dy
\\ \notag 
= &   \int_{B(x;t)  \cap \{ y_3 > 0 \} } \int_{\mathbb R^3}   \frac{ (\o \times \hat v)_i  }{ 1+ \hat{v} \cdot \o}  S  f ( t - |y-x|, y ,v  ) dv \frac{ dy} {|y-x|}
\\ \label{BupperT} & + \int_{B(x;t)  \cap \{ y_3 > 0 \} } \int_{\mathbb R^3}   \left(  (T \times \hat v)_i  - \frac{ (\o \times \hat v )_i \hat v \cdot T }{1+ \hat{v} \cdot  \o}\right)    f ( t - |y-x|, y, v ) dv \frac{ dy} {|y-x|}.
\end{align}
%
%
And for \eqref{BupperT}, we replace $T_j f$ with \eqref{T=y} and apply the integration by parts to get \eqref{BupperT} equals
\Be \label{BupperT1}
\begin{split}
 & \int_{  \p B(x; t)  \cap \{ y_3 > 0 \} } \int_{\mathbb R^3}   ( \o \times \hat v)_i  \left( 1   - \frac{  \hat v \cdot \o  }{1+ \hat{v} \cdot  \o}\right)     f ( 0, y, v ) dv \frac{ dS_y} {t}
\\ & +   \int_{ B(x;t)  \cap \{ y_3 = 0 \} } \int_{\mathbb R^3}      \left( - (e_3 \times \hat v)_i   + \frac{ ( \o \times \hat v  )_i }{1+ \hat{v} \cdot  \o}  ( \hat v \cdot e_3 )  \right)     f ( t- |y-x|, y_\parallel, 0, v ) dv \frac{ dy_\parallel} {|y-x|}
\\ & + \int_{B(x;t)  \cap \{ y_3 > 0 \} }  \int_{\mathbb R^3}    \frac{   ( \o \times \hat v  )_i \left( 1   -  |\hat v |^2  \right) }{( 1 + \hat v \cdot \o )^2|y-x |^2}  f   ( t - |y-x|, y, v )  dv dy .
\end{split}
\Ee
where we have used that, from \cite{GS,Glassey},
\Be \notag
\begin{split}
&    \p_{y_j }  \left( \frac{ ( \o \times \hat v  ) \hat v_j }{ ( 1 + \hat v \cdot \o )|y-x | } \right)
 \\ = &  \frac{   ( \o \times \hat v  ) \left( - ( \o \cdot \hat v )( 1 + \o \cdot \hat v ) -  ( \o \cdot \hat v ) ( 1 + \o \cdot \hat v )  -  |\hat v |^2  +  ( \o \cdot \hat v )^2 \right) }{( 1 + \hat v \cdot \o )^2|y-x |^2}
 \\ = &    \frac{   ( \o \times \hat v  ) \left( - 2 ( \o \cdot \hat v )   -  |\hat v |^2  -   ( \o \cdot \hat v )^2 \right) }{( 1 + \hat v \cdot \o )^2|y-x |^2},
 \end{split}
\Ee
and
\Be \notag
\begin{split}
& - \nabla_{y } (   \frac{1}{ |y-x|  }  ) \times \hat v +    \p_{y_j }  \left( \frac{ ( \o \times \hat v  ) \hat v_j }{ ( 1 + \hat v \cdot \o )|y-x | } \right)
\\ & =  \frac{   ( \o \times \hat v  ) \left( ( 1 + \hat v \cdot \o )^2 - 2 ( \o \cdot \hat v )   -  |\hat v |^2  -   ( \o \cdot \hat v )^2 \right) }{( 1 + \hat v \cdot \o )^2|y-x |^2}
 =   \frac{   ( \o \times \hat v  ) \left( 1   -  |\hat v |^2  \right) }{( 1 + \hat v \cdot \o )^2|y-x |^2}.
\end{split}
\Ee
Now we consider \eqref{Bexpanmajor2}. From \eqref{ST_lower},
\begin{align}
\notag \eqref{Bexpanmajor2}
& =  \iota_i \int_{B(x;t)   \cap \{ y_3 < 0 \} } \int_{\mathbb R^3} \frac{(  \nabla_x f \times \hat v )_i ( t - |y-x|, \bar y, v ) }{|y-x| } dy
\\ \notag 
 = &   \iota_i  \int_{B(x;t)  \cap \{ y_3 < 0 \} } \int_{\mathbb R^3}   \frac{ (\bar \o \times \hat v  )_i }{ 1+ \hat{v} \cdot \bar \o }  S  f ( t - |y-x|, \bar y ,v  ) dv \frac{ dy} {|y-x|}
\\ \label{BlowerT} &  + \iota_i \int_{B(x;t)  \cap \{ y_3 < 0 \} } \int_{\mathbb R^3}   \left(  ( \bar T \times \hat v )_i -  \frac{ ( \hat v \cdot  \bar T )  ( \bar \o \times \hat v )_i }{1+ \hat{v} \cdot  \bar \o}\right)    f ( t - |y-x|, \bar y, v ) dv \frac{ dy} {|y-x|}.
\end{align}
And for \eqref{BlowerT}, applying \eqref{def:T-} and the integration by parts, we derive that \eqref{BlowerT} equals
\Be \label{HSlowerhalf2}
\begin{split}
 &   \iota_i  \int_{ \p B(x; t) \cap \{ y_3 < 0 \} } \int_{\mathbb R^3}    (\bar \o \times \hat v)_i  \left( 1   - \frac{  \hat v \cdot \bar \o  }{1+ \hat{v} \cdot  \bar \o }\right)     f ( 0, \bar y, v ) dv \frac{ dS_y} {t}
\\  + &  \iota_i \int_{ B(x;t)  \cap \{ y_3 = 0 \} } \int_{\mathbb R^3}      \left( -(e_3 \times \hat v )_i  + \frac{  ( \bar \o \times \hat v )_i  }{1+ \hat{v} \cdot  \bar \o}  ( \hat v \cdot e_3 )  \right)   \frac{  f ( t- |y-x|, y_\parallel, 0, v ) }{|y-x|} dv d y_\parallel
\\ + & \iota_i \int_{B(x;t)  \cap \{ y_3 < 0 \} }  \int_{\mathbb R^3}  \left(    \frac{   ( \bar \o \times \hat v  )_i \left( 1   -  |\hat v |^2  \right) }{( 1 + \hat v \cdot \bar \o )^2|y-x |^2}  \right) f   ( t - |y-x|, \bar y, v )  dv dy,
\end{split}
\Ee
where we have used the direct computation
\Be \notag
\begin{split}
  \iota_j  \p_{y_j }  \left( \frac{ ( \bar \o \times \hat v  ) \hat v_j }{ ( 1 + \hat v \cdot \bar \o )|y-x | } \right) = &  \frac{   ( \bar \o \times \hat v  ) \left( - ( \hat v \cdot \bar \o )( 1 + \hat v \cdot \bar \o ) -   \hat v \cdot \bar \o - | \hat v |^2 \right) }{( 1 + \hat v \cdot \bar \o )^2|y-x |^2}
 \\ = &    \frac{   ( \bar \o \times \hat v  ) \left( - 2 ( \hat v \cdot \bar \o )   -  |\hat v |^2  -   ( \hat v \cdot \bar \o )^2 \right) }{( 1 +\hat v \cdot \bar \o )^2|y-x |^2},
 \end{split}
\Ee
and
\Be  \notag
\begin{split}
&-   \overline{ \nabla_y (|y-x|^{-1} ) }  \times \hat v  +   \iota_j  \p_{y_j }  \left( \frac{ ( \bar \o \times \hat v  ) \hat v_j }{ ( 1 + \hat v \cdot \bar \o )|y-x | } \right)
\\ & =  \frac{   ( \bar \o \times \hat v  ) \left( ( 1 + \hat v \cdot \bar \o )^2 - 2 ( \hat v \cdot \bar \o )   -  |\hat v |^2  -   ( \hat v \cdot \bar \o )^2 \right) }{( 1 + \hat v \cdot \bar \o )^2|y-x |^2}
 =   \frac{   ( \bar \o \times \hat v  ) \left( 1   -  |\hat v |^2  \right) }{( 1 + \hat v \cdot \bar \o )^2|y-x |^2}.
\end{split}
\Ee

Collecting the terms, we conclude the following formula:
\begin{proposition} \label{Biform}
\begin{align}
\label{Besttat0pos}& B_i(t,x ) 
\\ =  &  \frac{1}{4 \pi t^2} \int_{\p B(x;t) \cap \{ y_3 > 0 \} }  \left( t \p_t  B_{0,i}( y_\parallel, y_3 ) +  B_{0,i} (y_\parallel, y_3) + \nabla  B_{0,i} (y_\parallel, y_3) \cdot (y-x) \right) dS_y
\\ \notag & + \frac{\iota_i}{4 \pi t^2} \int_{\p B(x;t) \cap \{ y_3 < 0 \} }  \big(  t \p_t  B_{0,i}( y_\parallel,  - y_3 ) +  B_{0,i} (y_\parallel, -y_3) 
\\ \label{Besttat0neg}   & \quad \quad \quad \quad \quad \quad \quad  \quad \quad \quad \quad  + \nabla_\parallel   B_{0,i} (0,y_\parallel, - y_3) \cdot (y_\parallel-x_\parallel)  - \p_3   B_{0,i} (0,y_\parallel, - y_3) \cdot (y_3-x_3) \big) dS_y
\\ \label{Bestbulkpos}  & + \int_{B(x;t)  \cap \{ y_3 > 0 \} }  \int_{\mathbb R^3}   \frac{   ( \o \times \hat v  )_i \left( 1   -  |\hat v |^2  \right) }{( 1 + \hat v \cdot \o )^2|y-x |^2} f   ( t - |y-x|, y_\parallel, y_3, v )  dv dy
\\ \label{Bestbulkneg}  & + \int_{B(x;t)  \cap \{ y_3 < 0 \} }  \int_{\mathbb R^3}   \iota_i   \frac{   ( \o^- \times \hat v  )_i \left( 1   -  |\hat v |^2  \right) }{( 1 + \hat v \cdot \o^- )^2|y-x |^2} f   ( t - |y-x|, y_\parallel, - y_3, v )  dv dy
\\ \label{BestSpos} & +   \int_{B(x;t)  \cap \{ y_3 > 0 \} } \int_{\mathbb R^3}   \frac{(\o \times \hat v)_i  }{ 1+ \hat{v} \cdot \o}  S  f ( t - |y-x|, y_\parallel, y_3 ,v  ) dv \frac{ dy} {|y-x|}
\\ \label{BestSneg} & + \int_{B(x;t)  \cap \{ y_3 < 0 \} } \int_{\mathbb R^3}  \iota_i  \frac{ (\o^- \times \hat v)_i  }{ 1+ \hat{v} \cdot \o^-}  S  f ( t - |y-x|, y_\parallel,  - y_3 ,v  ) dv \frac{ dy} {|y-x|}
\\  \label{Bestbdrypos} & +   \int_{ B(x;t)  \cap \{ y_3 = 0 \} } \int_{\mathbb R^3}      \left( -(e_3 \times \hat v)_i   + \frac{ ( \o \times \hat v)_i   \hat v_3 }{1+ \hat{v} \cdot  \o}   \right)     f ( t- |y-x|, y_\parallel, 0, v ) dv \frac{ dy_\parallel} {|y-x|}
\\ \label{Bestbdryneg} & + \int_{ B(x;t)  \cap \{ y_3 = 0 \} } \int_{\mathbb R^3}  \iota_i    \left( - (e_3 \times \hat v )_i  + \frac{ ( \o^- \times \hat v)_i  \hat v_3   }{1+ \hat{v} \cdot  \o^-}    \right)     f ( t- |y-x|, y_\parallel, 0, v ) dv \frac{ dy_\parallel} {|y-x|}
\\ \label{Bestinitialpos} & + \int_{\p B(x;t) \cap \{ y_3>0 \} } \int_{\mathbb R^3}   \left(  \frac{  (  \o \times \hat v)_i   }{1+ \hat{v} \cdot  \o}\right)     f ( 0, y_\parallel, y_3, v ) dv \frac{ dS_y} {t}
\\ \label{Bestinitialneg} & + \int_{\p B(x;t) \cap \{ y_3 < 0 \} } \int_{\mathbb R^3}  \iota_i \left(  \frac{    ( \o^- \times \hat v )_i   }{1+ \hat{v} \cdot  \o^- }\right)     f ( 0, y_\parallel, - y_3, v ) dv \frac{ dS_y} {t}
\\ \label{Bestbdrycontri} & +(-1)^i 2 ( 1 - \delta_{i3} )   \int_{ B(x;t) \cap \{y_3 =0\}}  \int_{\mathbb R^3 }  \frac{ \hat v_{\underline{i} }  f (t - |y-x |, y_\parallel, 0, v )}{|y-x | }  dv d  S_y.
\end{align}
\end{proposition}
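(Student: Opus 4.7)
The plan is to parallel the derivation of Proposition \ref{Eiform} step by step, treating the three components of $B$ according to the type of boundary condition they satisfy. Since $B_3$ satisfies a homogeneous Dirichlet condition, I will use an odd extension of the source $H_3$ and the initial data across $\{x_3=0\}$ so that the resulting whole-space wave solution automatically vanishes on $\partial \O$. For $B_1, B_2$, which carry an inhomogeneous Neumann-type condition $\p_{x_3} B_i = \pm 4\pi J_{\underline i}$, I write $B_i = \tilde B_i + B_{bi}$, where $\tilde B_i$ is the even extension of the Cauchy problem (giving $\p_{x_3}\tilde B_i|_{\p\O}=0$) and $B_{bi}$ solves the homogeneous wave equation with the prescribed Neumann data and zero Cauchy data. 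The function $B_{bi}$ is obtained through the Laplace transform/Helmholtz method already used for $w$ in \eqref{waveE3b}--\eqref{wexpression2}, and yields precisely the surface integral \eqref{Bestbdrycontri} with the factor $(1-\delta_{i3})$.

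Having assembled the raw formulas \eqref{Bi12rep} and \eqref{B3rep}, the next step is to process the volume forcing terms $\int \frac{H_i(t-|y-x|,y)}{|y-x|}\,dy$ and $\int \frac{H_i(t-|y-x|,\bar y)}{|y-x|}\,dy$ via the Glassey-Strauss calculus. Writing $H_i = 4\pi(\nabla_x\times J)_i = 4\pi \int (\nabla_x f \times \hat v)_i\,dv$, I replace the spatial derivatives of $f$ using the identities \eqref{pxptST} on the upper half and \eqref{ptST-}--\eqref{pxST-} on the lower half, so that the derivative acting on $f$ becomes a linear combination of $Sf$ and $T_jf$ (respectively $\bar T_j f$). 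The $S$-pieces stay as bulk integrals and will contribute \eqref{BestSpos}--\eqref{BestSneg} once one substitutes \eqref{S=Lf_v} later. The $T_j$-pieces I rewrite using \eqref{T=y} (respectively \eqref{def:T-}) as $y$-derivatives of $f(t-|y-x|,y,v)$, then integrate by parts in $y$.

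The integration by parts produces exactly three kinds of contributions: a sphere term on $\p B(x;t)$ evaluated at $f(0,\cdot,\cdot)$, which becomes \eqref{Bestinitialpos}--\eqref{Bestinitialneg}; a planar boundary term on $\{y_3=0\}$, which merges with the explicit surface term from the Neumann Green's function and produces \eqref{Bestbdrypos}--\eqref{Bestbdryneg}; and a bulk term coming from the divergence of the coefficient of $T_j$, which upon using the computations of $\p_{y_j}\bigl[\tfrac{(\omega\times\hat v)_i \hat v_j}{(1+\hat v\cdot\omega)|y-x|}\bigr]$ (and its barred analogue) produces \eqref{Bestbulkpos}--\eqref{Bestbulkneg}. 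The two sphere integrals involving $\p_tB_0, \nabla B_0$ are already present from the Kirchhoff formula in the evenly/oddly extended whole-space solution, giving \eqref{Besttat0pos}--\eqref{Besttat0neg} once one accounts for the chain rule $\nabla_y[B_{0,i}(\bar y)]$ that flips the sign of the $\p_3$ component.

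The main obstacle, as in the proof of Proposition \ref{Eiform}, is the sign bookkeeping. One must correctly track the factor $\iota_i$ (with $\iota_3=-1$) coming from the parity of the extension, the factor $\iota_j$ from $\bar T_j$ in \eqref{def:T-}, and the Jacobian sign from the reflection $y\mapsto \bar y$, and then verify that the $\{y_3=0\}$ boundary contributions from the upper and lower half integrals combine consistently with the Neumann/Dirichlet surface terms. The identity \eqref{DyT} (and its analogue for cross products recorded just before \eqref{HSlowerhalf2}) is what guarantees that the $T$-integration by parts yields a clean $(1-|\hat v|^2)$ factor in the bulk. Once these signs are checked case-by-case for $i=1,2,3$ and the Dirichlet/Neumann surface pieces are matched, collecting all contributions produces exactly the listed terms \eqref{Besttat0pos}--\eqref{Bestbdrycontri}.
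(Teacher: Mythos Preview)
Your proposal is correct and follows essentially the same route as the paper: the paper too splits $B_i=\tilde B_i+B_{bi}$ for $i=1,2$ with the even extension and the Laplace/Helmholtz computation producing \eqref{Bestbdrycontri}, uses the odd extension for $B_3$, and then processes the forcing integrals \eqref{Bexpanmajor1}--\eqref{Bexpanmajor2} via the Glassey--Strauss substitution \eqref{pxptST}, \eqref{ptST-}--\eqref{pxST-} and integration by parts, exactly as you describe. The only thing to add is that the paper records the needed divergence identities for the cross-product kernels explicitly (the computations surrounding \eqref{BupperT1} and \eqref{HSlowerhalf2}), so you may simply cite those rather than rederiving them.
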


\section{Regularity estimate of the field}
With the formula for $E$ as in \eqref{Eesttat0pos}--\eqref{Eest3bdrycontri}, and $B$ as in \eqref{Besttat0pos}--\eqref{Bestbdrycontri}, we have the estimate of the fields.
\begin{lemma} \label{EBlinflemma}
There exists a $0 < T \ll 1 $ such that for any $t \in [0, T]$, we have
\Be \label{EBlinftybdd}
\| E(t) \|_\infty + \| B(t) \|_\infty \lesssim  \| E_0 \|_\infty + \| B_0 \|_\infty + t  \left( \sup_{ 0 \le t \le T} \| \langle v \rangle^{4+\delta} f(t) \|_\infty + \| E_0 \|_{C^1} + \|B_0 \|_{C^1 } \right).
\Ee
\end{lemma}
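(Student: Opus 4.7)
The plan is to estimate each of the eleven terms in the representation of Proposition \ref{Eiform} for $E_i$ and the ten terms in Proposition \ref{Biform} for $B_i$ separately, and then close the bound via the smallness of $T$. First I would treat the sphere averages \eqref{Eesttat0pos}, \eqref{Eesttat0neg} (and their $B$-analogues \eqref{Besttat0pos}, \eqref{Besttat0neg}). Using $\partial_t E_0=\nabla_x\times B_0-4\pi J_0$ and $\partial_t B_0=-\nabla_x\times E_0$, together with $|J_0|\leq\|\langle v\rangle^{4+\delta}f_0\|_\infty\int\langle v\rangle^{-(4+\delta)}dv$ (finite since $4+\delta>3$), these contribute $\lesssim\|E_0\|_\infty+\|B_0\|_\infty+t\bigl(\|E_0\|_{C^1}+\|B_0\|_{C^1}+\|\langle v\rangle^{4+\delta}f_0\|_\infty\bigr)$.

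Next, for the bulk terms \eqref{Eestbulkpos}, \eqref{Eestbulkneg}, \eqref{Bestbulkpos}, \eqref{Bestbulkneg}, the factor $1-|\hat v|^2=\langle v\rangle^{-2}$ makes the integrand of order $1/[\langle v\rangle^2(1+\hat v\cdot\omega)^2|y-x|^2]$. The classical change of variable $v\mapsto\hat v$ (with Jacobian $\langle v\rangle^5$) gives
\[
\sup_\omega\int_{\mathbb R^3}\frac{dv}{\langle v\rangle^{6+\delta}(1+\hat v\cdot\omega)^2}<\infty\quad\text{for}\ \delta>0,
\]
while $\int_{B(x;t)}|y-x|^{-2}dy\lesssim t$, yielding $\lesssim t\,\|\langle v\rangle^{4+\delta}f\|_\infty$. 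The boundary surface terms \eqref{Eestbdrypos}, \eqref{Eestbdryneg}, \eqref{Eest3bdrycontri} and \eqref{Bestbdrypos}, \eqref{Bestbdryneg}, \eqref{Bestbdrycontri}, as well as the sphere initial-data integrals \eqref{Eestinitialpos}, \eqref{Eestinitialneg}, \eqref{Bestinitialpos}, \eqref{Bestinitialneg}, are handled the same way: the coefficients are majorized by $C/(1+\hat v\cdot\omega)$, the resulting $v$-integral converges with the $4+\delta$ weight (since $\int d\sigma/(1+r\cos\theta)\sim r^{-1}\log\tfrac{1+r}{1-r}$ is integrable in $r$ against $r^2(1-r^2)^{(4+\delta-5)/2}$ for $\delta>-1$), and $\int dS_y/|y-x|\lesssim t$ on both the equatorial disk $\{y_3=0\}\cap B(x;t)$ and on $\partial B(x;t)$.

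The main obstacle is the $S$-terms \eqref{EestSpos}, \eqref{EestSneg}, \eqref{BestSpos}, \eqref{BestSneg}, which are the only integrals containing the unknowns $E,B$ themselves in the integrand. Using \eqref{S=Lf_v}, I would integrate by parts in $v$ (no boundary contribution since $f$ decays polynomially in $v$), turning each integrand into $\mathfrak Ff\cdot\nabla_v[\,\cdot\,]$ with
\[
\left|\nabla_v\!\left[\tfrac{\omega_i+\hat v_i}{1+\hat v\cdot\omega}\right]\right|+\left|\nabla_v\!\left[\tfrac{(\omega\times\hat v)_i}{1+\hat v\cdot\omega}\right]\right|\ \lesssim\ \frac{1}{\langle v\rangle(1+\hat v\cdot\omega)^2},
\]
which follows from $|\nabla_v\hat v|\lesssim\langle v\rangle^{-1}$. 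Pairing the pointwise bound $|\mathfrak F(s,y,v)|\leq C_*+\|E(s)\|_\infty+\|B(s)\|_\infty$ (where $C_*$ depends only on $E_e,B_e,g$) with $\int_{B(x;t)}|y-x|^{-1}dy\lesssim t^2$, these $S$-terms are controlled by $Ct^2\bigl(C_*+\mathcal E(t)\bigr)\|\langle v\rangle^{4+\delta}f\|_\infty$, where $\mathcal E(t):=\sup_{0\leq s\leq t}(\|E(s)\|_\infty+\|B(s)\|_\infty)$. The same analysis applies to the $B$-version using the kernel $(\omega\times\hat v)_i/(1+\hat v\cdot\omega)$.

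Collecting everything, I get an inequality of the form
\[
\mathcal E(t)\leq A+Bt+Ct^2\,\mathcal E(t)\sup_{0\leq s\leq T}\|\langle v\rangle^{4+\delta}f(s)\|_\infty,
\]
with $A=C(\|E_0\|_\infty+\|B_0\|_\infty)$ and $B$ depending on $\|E_0\|_{C^1}+\|B_0\|_{C^1}+\sup_{[0,T]}\|\langle v\rangle^{4+\delta}f\|_\infty$. Choosing $T$ so small that $CT^2\sup\|\langle v\rangle^{4+\delta}f\|_\infty\leq 1/2$ lets me absorb the last term into the left-hand side, yielding \eqref{EBlinftybdd}.
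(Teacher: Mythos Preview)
Your proof is correct and follows the same architecture as the paper's: bound each term in Propositions \ref{Eiform}--\ref{Biform} separately, treat the $S$-terms by integrating by parts in $v$ to produce a factor of $\mathfrak F$ (hence of $\|E\|_\infty+\|B\|_\infty$) multiplied by $t^2$, and then absorb via smallness of $T$.

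The one genuine difference is in how you control the $v$-integrals containing the factor $(1+\hat v\cdot\omega)^{-k}$. The paper never integrates this factor directly; instead it invokes the Glassey--Strauss pointwise inequalities
\[
\frac{1}{1+\hat v\cdot\omega}\le 2\langle v\rangle^{2},\qquad |\omega+\hat v|^2\le 2(1+\hat v\cdot\omega),
\]
which immediately give $|(|\hat v|^2-1)(\hat v_i+\omega_i)/(1+\hat v\cdot\omega)^2|\lesssim\langle v\rangle$ and $|\mathcal S^E_i|,|\mathcal S^B_i|\lesssim\langle v\rangle$. All $v$-integrals then reduce to $\int\langle v\rangle^{-(3+\delta)}dv<\infty$, with no angular analysis needed. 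You instead keep the $(1+\hat v\cdot\omega)^{-k}$ factor, use the substitution $v\mapsto\hat v$ with Jacobian $\langle v\rangle^5$, and compute the resulting integrals over the unit ball explicitly. Both routes yield the same closing inequality; the paper's is shorter and is the standard device from \cite{GS,GS2}, while yours makes the exact threshold on the exponent $4+\delta$ more visible and would adapt better if one wanted to lower that weight.
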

\begin{proof}
Using the expression for $E_i(t,x)$ in \eqref{Eesttat0pos}-\eqref{Eest3bdrycontri}, we have
\[
\begin{split}
| \eqref{Eesttat0pos}  | \le &    \frac{1}{ t^2} \int_{\p B(x;t) \cap \{ y_3 > 0 \} }  t |  \p_t  E_{0,i}( y ) + |  E_{0,i} (y) | + | \nabla  E_{0,i} (y )  | | (y-x) | dS_y
\\ \lesssim &\| E_0 \|_\infty +  t \left( \| \p_t E_0 \|_\infty +  + \| \nabla_x E_0 \|_\infty \right) \lesssim \| E_0 \|_\infty + t \| E_0 \|_{C^1}.
\end{split}
\]
And the same estimate can be made for \eqref{Eesttat0neg}. Thus
\Be \label{Elinftyest1}
| \eqref{Eesttat0pos}  |  + | \eqref{Eesttat0neg}  |  \lesssim \| E_0 \|_{C^1}.
\Ee
Next, from \cite{GS2} we have
\Be \label{vdecaybasic}
\frac{1}{1 + \hat v \cdot \o } \le \frac{ 2 (1+|v|^2 )}{ 1 + | v \times \o |^2 }  \le 2( 1 + |v|^2 ), \text{ and }  | \o + \hat v |^2 \le 2 ( 1 + \hat v \cdot \o ),
\Ee
and since $ 1 - |\hat{v}|^2 = \frac{1}{ 1 + |v|^2 }$,
\Be \label{vkerneldecay}
 |  \frac{ (|\hat{v}|^2-1 )(\hat{v}_i +  \o_i ) }{ (1+  \hat v \cdot \o )^2} | \le \frac{ \sqrt 2 }{ 1+ |v|^2 } \frac{1}{ (1 + \hat v \cdot \o)^{3/2} }  \le 4 \sqrt{ 1 + |v|^2 }.
\Ee
Thus,
\[
\begin{split}
| \eqref{Eestbulkpos}  | \le &     \int_{B(x;t) \cap \{y_3 > 0\}}  \int_{\R^3} \left| \frac{ (|\hat{v}|^2-1  )(\hat{v}_i +  \o_i ) }{|y-x|^2 (1+  \hat v \cdot \o )^2} \right| |  f(t-|y-x|,y ,v)| \dd v \dd y
\\ \le &   \int_{B(x;t) \cap \{y_3 > 0\}}  \int_{\R^3} \frac{4 \sqrt{ 1 + |v|^2 }  }{|y-x|^2} |  f(t-|y-x|,y ,v)| \dd v \dd y 
\\ \lesssim &  \sup_{ 0 \le t \le T} \| \langle v \rangle^{4+\delta} f(t) \|_\infty   \int_{B(x;t) \cap \{y_3 > 0\}} \frac{1}{|y-x |^2 }   \int_{\R^3} \frac{1}{ 1 + |v|^{3 + \delta } } dv dy
\\ \lesssim  & t  \sup_{ 0 \le t \le T} \| \langle v \rangle^{4+\delta} f(t) \|_\infty.
\end{split}
\]
We have the same estimate for \eqref{Eestbulkneg}, and thus
\Be \label{Elinftyest2}
| \eqref{Eestbulkpos}  | + | \eqref{Eestbulkneg}  | \lesssim  t \sup_{ 0 \le t \le T} \| \langle v \rangle^{4+\delta} f(t) \|_\infty.
\Ee

Next, from the equation \eqref{VMfrakF1} and the definition of $Sf$, we have
\[
Sf = - (E + E_{\text{ext}} + \hat v \times ( B + B_{\text{ext}}) - g\mathbf e_3 ) \cdot \nabla_v f.
\]
From integration by parts in $v$ and the fact that $ \nabla_v \cdot ( \hat v \times ( B + B_{\text{ext}})) =0$,
\Be \label{EestSposibp}
\begin{split}
& \eqref{EestSpos} 
\\ = &  \int_{B(x;t) \cap \{y_3 > 0\}}  \int_{\R^3} \frac{\o_i + \hat{v}_i}{1+  \hat v \cdot \o }((E + E_{\text{ext}} + \hat v \times ( B + B_{\text{ext}}) - g\mathbf e_3 ) \cdot \nabla_v f) (t-|y-x|,y ,v) \dd v \frac{ \dd y}{|y-x|}
\\ = &   \int_{B(x;t) \cap \{y_3 > 0\}}  \int_{\R^3} \nabla_v \left( \frac{\o_i + \hat{v}_i}{1+  \hat v \cdot \o } \right) \cdot (E + E_{\text{ext}} + \hat v \times ( B + B_{\text{ext}}) - g\mathbf e_3 )  f (t-|y-x|,y ,v) \dd v \frac{ \dd y}{|y-x|} 
\\ = &   \int_{B(x;t) \cap \{y_3 > 0\}}  \int_{\R^3}  \mathcal S^E_i ( v ,\o ) \cdot (E + \hat v \times ( B + B_{\text{ext}}) - g\mathbf e_3 )  
 f (t-|y-x|,y ,v) \dd v \frac{ \dd y}{|y-x|},
\end{split}
\Ee
where
\Be \label{calSi}
\mathcal S^E_i(\o,  v ) = \nabla_v \left( \frac{\o_i + \hat{v}_i}{1+  \hat v \cdot \o } \right) =  \frac{ (e_i - \hat v_i \hat v ) ( 1 + \hat v \cdot \o ) - ( \o_i + \hat v_i ) ( \o - (\o \cdot \hat v ) \hat v ) }{\langle v \rangle ( 1 + \hat v \cdot \o )^2 }.
\Ee
By writing  
\Be \label{ominusdotexp}
\o - (\o \cdot \hat v ) \hat v = \o(1 + \hat v \cdot \o ) - ( \hat v \cdot \o )( \o + \hat v ),
\Ee 
we have from \eqref{vdecaybasic},
\Be \label{calSiest}
\begin{split}
|  \mathcal S^E_i(\o, \hat v ) | \le &  | \frac{ (e_i - \hat v_i \hat v ) }{ \langle v \rangle ( 1 + \hat v \cdot \o )}  | + |  \frac{ \o ( \o_i + \hat v_i )  }{ \langle v \rangle ( 1 + \hat v \cdot \o )}  | + |  \frac{ ( \o_i + \hat v_i )  ( \hat v \cdot \o )( \o + \hat v )  }{ \langle v \rangle ( 1 + \hat v \cdot \o )^2}  |
\\ \le & 2 \sqrt{ 1 + |v|^2 } + 2 \sqrt{ 1 + |v|^2 }  + + 8 \sqrt{ 1 + |v|^2 }
\\ = & 12  \sqrt{ 1 + |v|^2 }.
\end{split}
\Ee
Thus,
\[
\begin{split}
& |\eqref{EestSpos}|
\\  \le &   \sup_{ 0 \le t \le T} \| \langle v \rangle^{4+\delta} f(t) \|_\infty   
\\ & \times \int_{B(x;t) \cap \{y_3 > 0\}}  \int_{\R^3}  | E + E_{\text{ext}} + \hat v \times ( B + B_{\text{ext}}) - g \mathbf e_3 |( t-|y-x | , y ) \frac{1}{ 1 + |v|^{3 + \delta }} dv \frac{dy}{|y-x | }  
\\ \lesssim &   \sup_{ 0 \le t \le T} \| \langle v \rangle^{4+\delta} f(t) \|_\infty    \sup_{ 0 \le t \le T} \left(  \| E(t)   \|_\infty +  \| B(t)   \|_\infty + g + E_e + |B_e|  \right)   \int_{B(x;t) \cap \{y_3 > 0\}}  \frac{1}{|y-x | } dy
\\ \lesssim & t^2  \sup_{ 0 \le t \le T} \| \langle v \rangle^{4+\delta} f(t) \|_\infty    \sup_{ 0 \le t \le T} \left(  \| E(t)   \|_\infty +  \| B(t)   \|_\infty  + g + E_e + |B_e| \right).
\end{split}
\]
Applying the same estimate to \eqref{EestSneg} we get
\Be \label{Elinftyest3}
|\eqref{EestSpos}| + |\eqref{EestSneg}| \lesssim t^2  \sup_{ 0 \le t \le T} \| \langle v \rangle^{4+\delta} f(t) \|_\infty    \sup_{ 0 \le t \le T} \left(  \| E(t)   \|_\infty +  \| B(t)   \|_\infty + g + E_e+ |B_e|  \right).
\Ee
Next, we have from \eqref{vdecaybasic},
\Be \label{vdecaybasic2}
| \frac{(\o_i + \hat{v}_i)}{1+ \hat v \cdot \o }  |  \le 2 \sqrt{ 1 + |v|^2}.
\Ee
So
\[
\begin{split}
| \eqref{Eestbdrypos} |  \le &   \int_{ \sqrt{t^2 - |z_\parallel |^2 } > x_3}  
\int_{\R^3} |  \frac{(\o_i + \hat{v}_i)\hat{v}_3}{1+ \hat v \cdot \o } | |  f (t- ( |z_\parallel | + x_3^2 )^{1/2} ,z_\parallel + x_\parallel, 0 ,v)  | \dd v\frac{ \dd z_\parallel}{(|z_\parallel |^2 + x_3^2)^{1/2}}
\\ \lesssim &   \sup_{ 0 \le t \le T} \| \langle v \rangle^{4+\delta} f(t) \|_\infty    \int_{ \sqrt{ |z_\parallel |^2 + x_3^2 } < t } \frac{1}{(|z_\parallel |^2 + x_3^2)^{1/2}}  \int_{\R^3} \frac{1}{1 + |v|^{3 + \delta } } dv  \dd z_\parallel
\\ \lesssim & t  \sup_{ 0 \le t \le T} \| \langle v \rangle^{4+\delta} f(t) \|_\infty.
\end{split} 
\]
And we have the same estimate for \eqref{Eestbdryneg}, thus
\Be \label{Elinftyest4}
| \eqref{Eestbdrypos} | + | \eqref{Eestbdryneg} |  \lesssim t  \sup_{ 0 \le t \le T} \| \langle v \rangle^{4+\delta} f(t) \|_\infty.
\Ee
Again from \eqref{vdecaybasic2}, 
\[
\begin{split}
| \eqref{Eestinitialpos} | \le &     \int_{|x-y| = t , \ y_3>0}  |  \sum_j \o_j  \left(\delta_{ij} - \frac{(\o_i + \hat{v}_i)\hat{v}_j}{1+ \hat v \cdot \o }\right) f(0, y,v) |  \dd v \frac{\dd S_y}{|y-x|}
\\ \lesssim & \frac{1}{t}   \| \langle v \rangle^{4+\delta} f(0) \|_\infty  \int_{|x-y| = t , \ y_3>0}  \int_{\mathbb R^3 }  \frac{1}{1 + |v|^{3 + \delta } } dv dS_y 
\\ \lesssim & t  \| \langle v \rangle^{4+\delta} f(0) \|_\infty.
\end{split}
\]
Using the same estimate for \eqref{Eestinitialneg}, we get
\Be \label{Elinftyest5}
| \eqref{Eestinitialpos} | + | \eqref{Eestinitialneg} |  \lesssim t \| \langle v \rangle^{4+\delta} f(0) \|_\infty.
\Ee
Next, we have
\Be \label{Elinftyest6}
\begin{split}
| \eqref{Eest3bdrycontri} |   \le &     \int_{ B(x;t) \cap \{y_3 =0\}}  \int_{\mathbb R^3 }  \frac{ 8 \pi |  f (t - |y-x |, y_\parallel, 0, v ) | }{|y-x | }  dv d  S_y 
\\ \lesssim &  \sup_{ 0 \le t \le T} \| \langle v \rangle^{4+\delta} f(t) \|_\infty  \int_{ \sqrt{ |z_\parallel |^2 + x_3^2 } < t } \frac{1}{(|z_\parallel |^2 + x_3^2)^{1/2}}  \int_{\mathbb R^3 } \frac{1}{1 + |v|^{4 + \delta } } dv  \dd z_\parallel
\\ \lesssim &t  \sup_{ 0 \le t \le T} \| \langle v \rangle^{4+\delta} f(t) \|_\infty.
\end{split}
\Ee

Next, we go to estimate for $B(t,x)$ in \eqref{Besttat0pos}--\eqref{Bestbdrycontri}. Similar to \eqref{Elinftyest1}, we have
\Be \label{Blinftyest1}
| \eqref{Besttat0pos} | + | \eqref{Besttat0neg} | \lesssim \| B_0 \|_{C^1 }.
\Ee
From \eqref{vdecaybasic} we have
\Be \label{vdecayBbulk}
|  \frac{   ( \o \times \hat v  ) \left( 1   -  |\hat v |^2  \right) }{( 1 + \hat v \cdot \o )^2} |  \le 2 |  \frac{ \o \times  v }{ (1 + |v|^2 )^{3/2}  ( 1 + \hat v \cdot \o )^2 }  | \le  8  \frac{  | \o \times  v |  \sqrt{ 1 + |v|^2 }  }{ (1 + | \o \times v |^2 )^2  } \le 8  \sqrt{ 1 + |v|^2 }.
\Ee
Thus 
\[
\begin{split}
| \eqref{Bestbulkpos} | + | \eqref{Bestbulkneg} |   \lesssim &  \sup_{ 0 \le t \le T} \| \langle v \rangle^{4+\delta} f(t) \|_\infty   \int_{B(x;t)} \frac{1}{|y-x |^2 }   \int_{\R^3} \frac{1}{ 1 + |v|^{3 + \delta } } dv dy
\\ \lesssim  & t \sup_{ 0 \le t \le T} \| \langle v \rangle^{4+\delta} f(t) \|_\infty.
\end{split}
\]
Next, using  the equation \eqref{VMfrakF1} and the definition of $Sf$, from integration by parts in $v$,
\Be \label{BestSposibp}
\begin{split}
& \eqref{BestSpos} 
\\= &  \int_{B(x;t) \cap \{y_3 > 0\}}  \int_{\R^3} \frac{(\o \times \hat v )_i}{1+  \hat v \cdot \o }((E + E_{\text{ext}} + \hat v \times ( B + B_{\text{ext}}) - g\mathbf e_3 ) \cdot \nabla_v f) (t-|y-x|,y ,v) \dd v \frac{ \dd y}{|y-x|}
\\ = &   \int_{B(x;t) \cap \{y_3 > 0\}}  \int_{\R^3} \nabla_v \left( \frac{(\o \times \hat v )_i}{1+  \hat v \cdot \o } \right) \cdot (E + E_{\text{ext}} + \hat v \times ( B + B_{\text{ext}}) - g\mathbf e_3 )  f (t-|y-x|,y ,v) \dd v \frac{ \dd y}{|y-x|} 
\\ = &   \int_{B(x;t) \cap \{y_3 > 0\}}  \int_{\R^3}  \mathcal S^B_i ( v ,\o ) \cdot (E + \hat v \times ( B + B_{\text{ext}}) - g\mathbf e_3 )  
 f (t-|y-x|,y ,v) \dd v \frac{ \dd y}{|y-x|},
\end{split}
\Ee
where by direct calculation we have
\Be \label{calBSirep}
\begin{split}
 \mathcal S^B_i(v,\o ) =  \nabla_v \left( \frac{( \o \times  \hat v )_i }{ 1 + \hat v \cdot \o } \right) = &   \nabla_v \left( \frac{( \o \times   v )_i }{ \sqrt{ 1+ |v|^2 } +  v \cdot \o } \right)  
 \\ = & \frac{ \nabla_v [ (\o \times v)_i ] }{\sqrt{1+|v|^2 }+  v \cdot \o  }  + \frac{ (\o \times v)_i ( \hat v  + \o ) }{ ( \sqrt{1+|v|^2 } + v \cdot \o )^2 }
 \\ = &  \frac{ \nabla_v [ (\o \times v)_i ] }{\sqrt{1+|v|^2 } (1 + \hat  v \cdot \o  ) }  + \frac{ (\o \times v)_i ( \hat v  + \o ) }{ ( \sqrt{1+|v|^2 } (1  + \hat v \cdot \o ) )^2 }.
\end{split}
\Ee
Therefore from \eqref{vdecaybasic}, we have
\Be \label{calBSiest}
\begin{split}
|   \mathcal S^B_i(v,\o ) | \le  \frac{ 2}{\sqrt{1+|v|^2 } (1 + \hat  v \cdot \o  ) }  + 8  \frac{ | \o \times v|  \sqrt{1+|v|^2 }  }{  (1  + | \o \times v |^2 ) ^2 } \le 12 \sqrt{1 + |v|^2 }.
\end{split}
\Ee
So we can use the same argument as in \eqref{EestSposibp}--\eqref{Elinftyest3} with \eqref{BestSposibp} and \eqref{calBSiest} to get
\[
\begin{split}
& |\eqref{BestSpos}| + |\eqref{BestSneg}| 
\\ \lesssim &   \sup_{ 0 \le t \le T} \| \langle v \rangle^{4+\delta} f(t) \|_\infty    \sup_{ 0 \le t \le T} \left(  \| E(t)   \|_\infty +  \| B(t)   \|_\infty + g + E_e + |B_e|  \right)   \int_{B(x;t)}  \frac{1}{|y-x | } dy
\\ \lesssim & t^2  \sup_{ 0 \le t \le T} \| \langle v \rangle^{4+\delta} f(t) \|_\infty    \sup_{ 0 \le t \le T} \left(  \| E(t)   \|_\infty +  \| B(t)   \|_\infty + g  + E_e + |B_e|  \right).
\end{split}
\]
Next, again from \eqref{vdecaybasic},
\Be \label{vdecaybasic3}
| \frac{ ( \o \times \hat v)_i   }{1+ \hat{v} \cdot  \o} | \le 2 \sqrt{1+ |v|^2 }.
\Ee
So similar to \eqref{Elinftyest4}, \eqref{Elinftyest5} we get
\Be 
| \eqref{Bestbdrypos} | + | \eqref{Bestbdryneg} |  \lesssim t \sup_{ 0 \le t \le T} \| \langle v \rangle^{4+\delta} f(t) \|_\infty,
\Ee
and
\Be 
| \eqref{Bestinitialpos} | + | \eqref{Bestinitialneg} |  \lesssim t \| \langle v \rangle^{4+\delta} f(0) \|_\infty.
\Ee
And finally same as \eqref{Elinftyest6}, we have
\Be \label{Blinftyest6}
| \eqref{Bestbdrycontri} |   \lesssim t  \sup_{ 0 \le t \le T} \| \langle v \rangle^{4+\delta} f(t) \|_\infty.
\Ee

Combining \eqref{Elinftyest1}, \eqref{Elinftyest2}, \eqref{Elinftyest3}, \eqref{Elinftyest4}, \eqref{Elinftyest5}, \eqref{Elinftyest6}, and \eqref{Blinftyest1}--\eqref{Blinftyest6}, we get
\Be \label{EBinftybd}
 \begin{split}
& \sup_{0 \le t \le T} \| E(t) \|_\infty + \sup_{0 \le t \le T} \| B(t) \|_\infty
\\  \lesssim & \| E_0 \|_\infty + \|B_0 \|_\infty + t(  \| E_0 \|_{C^1 } +  \| B_0 \|_{C^1 }   )
\\ & +  t \sup_{ 0 \le t \le T} \| \langle v \rangle^{4+\delta} f(t) \|_\infty \left( 1  +  t    ( \sup_{ 0 \le t \le T} \left(  \| E(t)   \|_\infty +  \| B(t)   \|_\infty  \right)  + g + E_e + |B_e| )\right).
\end{split} 
\Ee
Taking $0 < T \ll1 $, we conclude \eqref{EBlinftybdd}.

\end{proof}

Next we estimate the derivatives of the fields.

\begin{lemma} \label{EBW1inftylemma}
With the formula $E(t,x)$ as in \eqref{Eesttat0pos}--\eqref{Eest3bdrycontri}, and $B(t,x)$ as in \eqref{Besttat0pos}--\eqref{Bestbdrycontri}, there exists a $T \ll 1 $ such that for any $t \in [0, T]$,
\Be \label{nablaxparaE}
\begin{split}
\|  \nabla_{x_\parallel}  E(t) \|_\infty + \|  \nabla_{x_\parallel}  B(t) \|_\infty \lesssim & \| E_0 \|_{C^2} + \| B_0 \|_{C^2} + \sup_{0 \le t \le T} \left(   \| \langle v \rangle^{4 + \delta }   \nabla_{x_\parallel} f(t) \|_\infty  \right)
\\ & +  \sup_{0 \le t \le T}  \| \langle v \rangle^{4+\delta} f(t) \|_\infty   ,
\end{split}
\Ee
\Be \label{lambdanablaxE}
\begin{split}
\|  \p_{x_3}  E(t) \|_\infty + \|  \p_{x_3}  B(t) \|_\infty \lesssim & \| E_0 \|_{C^2} + \| B_0 \|_{C^2} + \sup_{0 \le t \le T} \left(  \| \langle v \rangle^{5 + \delta }  \alpha \p_{x_3} f(t) \|_\infty +  \| \langle v \rangle^{4 + \delta }   \nabla_{x_\parallel} f(t) \|_\infty  \right)
\\ & +  \sup_{0 \le t \le T}  \| \langle v \rangle^{4+\delta} f(t) \|_\infty,
\end{split}
\Ee
and
\Be \label{ptEBest}
\begin{split}
\| \p_t E (t) \|_\infty + \| \p_t B (t) \|_\infty \lesssim &  \| E_0 \|_{C^2}+ \| B_0 \|_{C^2} +  \sup_{ 0 \le t \le T} \left( \| \langle v \rangle^{4+\delta} \nabla_{x_\parallel} f(t) \|_\infty + \|  \langle v \rangle^{5+\delta} \alpha \p_{x_3} f(t) \|_\infty \right) 
\\ &  + \sup_{0 \le t \le T} \| \langle v \rangle^{4+\delta} f(t) \|_\infty.
\end{split}
\Ee


\end{lemma}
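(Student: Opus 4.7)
The approach is to differentiate the explicit Glassey--Strauss representations in Proposition \ref{Eiform} and Proposition \ref{Biform} term by term, using the standard device of changing variables $z = y-x$ in each volume integral over $B(x;t)$ and $z_\parallel = y_\parallel - x_\parallel$ on each boundary slice $\{y_3 = 0\}$. After this shift, the direction $\omega = z/|z|$ and all the $v$-kernels depend only on $z$ and $v$, so any subsequent $\partial_{x_j}$ falls directly onto $f(t-|z|,z+x,v)$ and is read as $(\partial_{y_j}f)(t-|z|,z+x,v)$; for $j=1,2$ this is $\nabla_{x_\parallel}f$, and for $j=3$ it is $\partial_{x_3}f$. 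The initial-data pieces \eqref{Eesttat0pos}--\eqref{Eesttat0neg}, \eqref{Besttat0pos}--\eqref{Besttat0neg} require one more $x$-differentiation and thus produce $\|E_0\|_{C^2}+\|B_0\|_{C^2}$; the light-cone pieces \eqref{Eestinitialpos}--\eqref{Eestinitialneg}, \eqref{Bestinitialpos}--\eqref{Bestinitialneg} similarly yield weighted $\nabla_x f_0$ bounds controlled by \eqref{f0bdd}.

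For the bulk $f/|y-x|^2$ terms \eqref{Eestbulkpos}--\eqref{Eestbulkneg}, \eqref{Bestbulkpos}--\eqref{Bestbulkneg}, the pointwise bounds \eqref{vdecaybasic}, \eqref{vkerneldecay}, \eqref{vdecayBbulk} used in Lemma \ref{EBlinflemma} apply unchanged. A tangential derivative is paired with $\nabla_{x_\parallel}f$ and the $v$-weight $\langle v\rangle^{-(4+\delta)}$ gives a prefactor $\lesssim T$. For $\partial_{x_3}$, multiplying and dividing by $\alpha$ extracts the norm $\|\langle v\rangle^{5+\delta}\alpha\,\partial_{x_3}f\|_\infty$ and leaves the quantity $\int_{B(x;t)}|y-x|^{-2}\int_{\mathbb R^3}\langle v\rangle^{-(4+\delta)}\alpha^{-1}\,dv\,dy$, which is finite (and small with $T$) by the local-to-nonlocal estimate $\int_{\mathbb R^3}\langle v\rangle^{-(4+\delta)}\alpha^{-1}dv\lesssim \ln(1+1/y_3)$ of Lemma \ref{1alphaintv}.

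For the $Sf$-terms \eqref{EestSpos}--\eqref{EestSneg}, \eqref{BestSpos}--\eqref{BestSneg}, one first integrates by parts in $v$ via the Vlasov equation exactly as in \eqref{EestSposibp}, \eqref{BestSposibp}, converting them into integrals of $(E+E_{\text{ext}}+\hat v\times(B+B_{\text{ext}})-g\mathbf e_3)f$ against $1/|y-x|$. Differentiation by $x_j$ then produces either (i) a derivative on the fields $E,B$, which after integration against $1/|y-x|$ yields $T^2(\|\nabla_x E\|_\infty+\|\nabla_x B\|_\infty)\|\langle v\rangle^{4+\delta}f\|_\infty$, to be absorbed via a smallness-of-$T$ bootstrap, or (ii) a derivative on $f$ which is handled exactly as in the bulk case above. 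The $y_3=0$ slice contributions \eqref{Eestbdrypos}--\eqref{Eestbdryneg}, \eqref{Eest3bdrycontri}, \eqref{Bestbdrypos}--\eqref{Bestbdryneg}, \eqref{Bestbdrycontri} are parametrized by $z_\parallel$, so $|y-x|=\sqrt{|z_\parallel|^2+x_3^2}$; tangential $\partial_{x_\parallel}$ falls on $f|_{y_3=0}$ as $\nabla_{x_\parallel}f$, while $\partial_{x_3}$ produces a factor $x_3/|y-x|$ times $\partial_t f|_{y_3=0}$ which the Vlasov equation rewrites as $-\hat v\cdot\nabla_x f-\mathfrak F\cdot\nabla_v f$; since $\alpha|_{y_3=0}=\hat v_3$, the normal-derivative piece is again controlled by $\|\langle v\rangle^{5+\delta}\alpha\partial_{x_3}f\|_\infty$.

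The time-derivative bound \eqref{ptEBest} is obtained by exactly the same method, exploiting the identity $\partial_t=S-\hat v\cdot\nabla_x$ from \eqref{pxptST} to rewrite time derivatives in terms of already-estimated objects whenever the derivative does not fall on the retarded argument; when it does, the $z$-shift and integration by parts transfer it back to a spatial derivative of $f$. The main obstacle throughout is the $\partial_{x_3}$ estimate, where the bulk kernel $1/|y-x|^2$, the grazing-set singularity of $\partial_{x_3}f$ encoded in the $\alpha$-weight, and the boundary-slice $\partial_t f|_{y_3=0}$ contribution all must be reconciled; this is achieved precisely because $\int\langle v\rangle^{-(4+\delta)}\alpha^{-1}dv$ is logarithmically integrable in $x_3$ via Lemma \ref{1alphaintv}, which is the device that closes the scheme and allows the smallness-of-$T$ bootstrap to absorb the residual field derivatives on the right-hand side.
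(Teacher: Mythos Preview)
Your approach is essentially the same as the paper's: differentiate the Glassey--Strauss representations term by term after the shift $z=y-x$, use the kernel bounds \eqref{vdecaybasic}--\eqref{vdecayBbulk}, handle $Sf$ via integration by parts in $v$ as in \eqref{EestSposibp}, and control the $\partial_{x_3}$ case through the logarithmic estimate of Lemma~\ref{1alphaintv}.

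Two remarks. First, for the time-derivative estimate \eqref{ptEBest} the paper does something much simpler than what you sketch: once \eqref{nablaxparaE} and \eqref{lambdanablaxE} are in hand, the Maxwell equations themselves give $\|\partial_t E\|_\infty \le \|\nabla_x B\|_\infty + \|\langle v\rangle^{4+\delta} f\|_\infty$ and $\|\partial_t B\|_\infty \le \|\nabla_x E\|_\infty$, so \eqref{ptEBest} is immediate. Your $\partial_t = S - \hat v\cdot\nabla_x$ route would also work but is unnecessary.

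Second, you gloss over a genuine technical point in the $\partial_{x_3}$ estimate of the boundary-slice terms. Differentiating \eqref{Eestbdrypos} in $x_3$ produces not only the $\partial_t f$ piece you mention but also a term where $\partial_{x_3}$ hits the kernel $(|z_\parallel|^2+x_3^2)^{-1/2}$ itself, yielding a factor $x_3/(|z_\parallel|^2+x_3^2)^{3/2}$ against $f$, which is not obviously integrable. In the paper this is handled by combining it with the $y_3=0$ surface contribution that arises from differentiating the bulk term \eqref{Eestbulkpos} (namely $\eqref{Eestbulkpos1}_2$): an algebraic identity reduces the sum to a tangential $y_\parallel$-divergence, which after integration by parts gives only $\nabla_{x_\parallel}f$ and $\partial_t f$ on $\{y_3=0\}$; see \eqref{Eestbdrypos2.5}--\eqref{Eestbdrypos2.6}. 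This cancellation is the one step your sketch does not account for.
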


\begin{proof}

We take derivative to $\frac{ \p}{\p_{x_k } }  E_i(t,x)$ in \eqref{Eesttat0pos}-\eqref{Eest3bdrycontri} and estimate each term.

First, by using the change of variables $z = y - x$ and spherical coordinate for $z$, we have
\Be \label{Eestat00}
\begin{split}
 \eqref{Eesttat0pos} = &  \frac{1}{4\pi t^2 } \int_{ \{ |z| = t , x_3 + z_3 > 0 \} }    \left( t \p_t  E_{0,i}( x + z  ) +  E_{0,i} (x+z) + \nabla  E_{0,i} ( x+ z) \cdot z \right) dS_z
 \\ = &  \frac{1}{4\pi t^2 } \int_{ \{ t \cos \phi > - x_3 \} }  \int_0^{2 \pi }  \left( t \p_t  E_{0,i}( x + z  ) +  E_{0,i} (x+z) + \nabla  E_{0,i} ( x+ z) \cdot z \right) t^2 \sin \phi d\theta d\phi.
 \end{split}
\Ee
Thus
\Be
\begin{split}
\frac{ \p}{\p_{x_k } }  \eqref{Eesttat0pos}  = &   \frac{1}{4\pi t^2 } \int_{ \{ |z| = t , x_3 + z_3 > 0 \} }    \left( t \p_{x_k } \p_t  E_{0,i}( x + z  ) +  \p_{x_k }  E_{0,i} (x+z) +  \nabla  \p_{x_k }  E_{0,i} ( x+ z) \cdot z \right) dS_z
\\ & + \frac{1}{4 \pi t^2 }  \delta_{k3 } \int_0^{2\pi } (  t \p_t  E_{0,i}( x_\parallel + z_\parallel, 0  ) +  E_{0,i} (x_\parallel + z_\parallel, 0 ) + \nabla  E_{0,i} ( x_\parallel + z_\parallel, 0 ) \cdot z )
\\ & \quad \quad \quad \quad \quad \quad \times     \left( \frac{d}{d x_3} \cos^{-1} \left( \frac{-x_3}{t} \right) \right) \left(  \sin \left( \cos^{-1} \left( \frac{-x_3}{t} \right) \right)  \right) t^2 d \theta
\\ = &   \frac{1}{4\pi t^2 } \int_{ \{ |z| = t , x_3 + z_3 > 0 \} }    \left( t \p_{x_k } \p_t  E_{0,i}( x + z  ) +  \p_{x_k }  E_{0,i} (x+z) +  \nabla  \p_{x_k }  E_{0,i} ( x+ z) \cdot z \right) dS_z
\\ & + \frac{1}{4 \pi t}  \delta_{k3 } \int_0^{2\pi } (  t \p_t  E_{0,i}( x_\parallel + z_\parallel , 0  ) +  E_{0,i} (x_\parallel + z_\parallel , 0) + \nabla  E_{0,i} ( x_\parallel + z_\parallel , 0) \cdot z )  d\theta.
\end{split}
\Ee
For $i = 1,2$, $E_{0,i} (  x_\parallel + z_\parallel , 0 ) = 0$, thus we have
\[
| \frac{ \p}{\p_{x_k } }  \eqref{Eesttat0pos}_{i=1,2}  | \lesssim  t \| \nabla_x \p_t E_0 \|_\infty + \| \nabla_x E_0 \|_\infty +  t \| \nabla_x ^2 E_0 \|_\infty + \| \p_t E_0 \|_\infty + \| \nabla_x E_0 \|_\infty  \lesssim \| E_0 \|_{C^2 }.
\] 
And we apply the same estimate for $  \frac{ \p}{\p_{x_k } }  \eqref{Eesttat0neg}_{i=1,2} $ to obtain
\[
| \frac{ \p}{\p_{x_k } }  \eqref{Eesttat0pos}_{i=1,2} |  + |  \frac{ \p}{\p_{x_k } }  \eqref{Eesttat0neg}_{i=1,2}  | \lesssim  \| E_0 \|_{C^2 }.
\]
For $i=3$, we use the cancellation for $ \frac{ \p}{\p_{x_k } }  \eqref{Eesttat0pos}_{i=3} + \frac{ \p}{\p_{x_k } }  \eqref{Eesttat0neg}_{i=3}  $  at $y_3 = 0$ to get
\Be \label{pxE03posneg}
\begin{split}
& \frac{ \p}{\p_{x_k } }  \eqref{Eesttat0pos}_{i=3} + \frac{ \p}{\p_{x_k } }  \eqref{Eesttat0neg}_{i=3} 
\\ =  &  \frac{1}{4\pi t^2 } \int_{ \{ |z| = t , x_3 + z_3 > 0 \} }    \left( t \p_{x_k } \p_t  E_{0,3}( x + z  ) +  \p_{x_k }  E_{0,3} (x+z) +  \nabla  \p_{x_k }  E_{0,3} ( x+ z) \cdot z \right) dS_z
\\ & + \frac{1}{4\pi t^2 } \int_{ \{ |z| = t , x_3 + z_3 < 0 \} }   \iota_k \left( t \p_{x_k } \p_t  E_{0,3}( x + z  ) +  \p_{x_k }  E_{0,3} (x+z) +  \nabla_\parallel  \p_{x_k }  E_{0,3} ( x+ z) \cdot z_\parallel - \p_{x_3}  \p_{x_k }  E_{0,3} ( x+ z) \cdot z_3  \right) dS_z
\\ & +  \frac{2}{4\pi t^2 }  \delta_{k 3} \int_{0}^{2 \pi } \left(   \p_{x_3} E_{0, 3 } ( x_\parallel + z_\parallel, 0 ) \right) \frac{-x_3 }{t} t^2 d\theta.
\end{split}
\Ee
Thus $| \frac{ \p}{\p_{x_k } }  \eqref{Eesttat0pos}_{i=3} |  + |  \frac{ \p}{\p_{x_k } }  \eqref{Eesttat0neg}_{i=3}  | \lesssim  \| E_0 \|_{C^2 }$, and therefore
\Be \label{Eestat01}
| \frac{ \p}{\p_{x_k } }  \eqref{Eesttat0pos} |  + |  \frac{ \p}{\p_{x_k } }  \eqref{Eesttat0neg}  | \lesssim  \| E_0 \|_{C^2 }.
\Ee

Next, using the change of variables $ z = y-x$ we have
\Be \label{Eestbulkpos1}
\begin{split}
\frac{ \p}{\p_{x_k } }  \eqref{Eestbulkpos}  =&  \int_{ \{ |z| < t  \} \cap \{z_3 > -x_3 \} } \int_{\R^3} \frac{ (|\hat{v}|^2-1 )(\hat{v}_i +  \o_i ) }{|z|^2 (1+  \hat v \cdot \o )^2} \p_{x_k}f (t-|z| ,z+x ,v) \dd v \dd z  
 \\& + \delta_{k3} \int_{ \{ |z| < t  \} \cap \{z_3 = -x_3 \} }   \int_{\R^3}\frac{ (|\hat{v}|^2-1  )(\hat{v}_i +  \o_i ) }{|z|^2 (1+  \hat v \cdot \o )^2} f(t-|z|,z+x ,v) \dd v \dd z_\parallel
 \\ = & \underbrace{ \int_{ \{ |y-x| < t  \} \cap \{y_3 > 0 \} } \int_{\R^3} \frac{ (|\hat{v}|^2-1 )(\hat{v}_i +  \o_i ) }{|y-x|^2 (1+  \hat v \cdot \o )^2} \p_{x_k}f (t-|y-x| ,y ,v) \dd v \dd y  }_{\eqref{Eestbulkpos1}_1}
 \\& + \underbrace{ \delta_{k3} \int_{ \{  (| y_\parallel - x_\parallel |^2 + |x_3|^2 )^{1/2} < t  \}  }   \int_{\R^3}\frac{ (|\hat{v}|^2-1  )(\hat{v}_i +  \o_i ) }{ ( |y_\parallel - x_\parallel  |^2 + x_3^2 ) (1+  \hat v \cdot \o )^2} f(t-|y-x|, y_\parallel, 0  ,v) \dd v \dd y_\parallel}_{\eqref{Eestbulkpos1}_2}.
\end{split}
\Ee
Using \eqref{vkerneldecay}, we have for $k =1,2$,
\Be \label{Eestbulkpospara2}
\begin{split}
& \int_{ \{ |y-x| < t  \} \cap \{y_3 > 0 \} } \int_{\R^3} \frac{ (|\hat{v}|^2-1 )(\hat{v}_i +  \o_i ) }{|y-x|^2 (1+  \hat v \cdot \o )^2} \p_{x_k}f (t-|y-x| ,y ,v) \dd v \dd y
 \\ \lesssim & \sup_{0 \le t \le T}   \| \langle v \rangle^{4+\delta}   \nabla_{x_\parallel} f (t) \|_\infty  \int_{ \{ |y-x| < t  \} } \int_{\R^3} \frac{ 1 }{|y-x|^2}  ( 1 + |v| )^{-3-\delta} \dd v \dd y
 \\ \lesssim &  \sup_{0 \le t \le T}   \| \langle v \rangle^{4+\delta}   \nabla_{x_\parallel} f (t) \|_\infty.
\end{split}
\Ee
For $k= 3$, we have for any $ 1 < p <  \frac{3}{2}$, from \eqref{vkerneldecay}, and Lemma \ref{1alphaintv} which will be proved in the next section,
\Be \label{Eestbulkpos2}
\begin{split}
  & \int_{ \{ |y-x| < t  \} \cap \{y_3 > 0 \} } \int_{\R^3} \frac{ (|\hat{v}|^2-1 )(\hat{v}_i +  \o_i ) }{|y-x|^2 (1+  \hat v \cdot \o )^2} \p_{x_3}f (t-|y-x| ,y ,v) \dd v \dd y
 \\ \lesssim & \sup_{0 \le t \le T} \| \langle v \rangle^{5+\delta}  \alpha \p_{x_3} f (t) \|_\infty   \int_{ \{ |y-x| < t  \} \cap \{y_3 > 0 \} } \int_{\R^3} \frac{1}{|y-x |^2 } \frac{ ( 1 + |v| )^{-4-\delta} }{ \alpha(t- |y-x |, y, v ) } dv dy
 \\ \lesssim &   \sup_{0 \le t \le T}  \left( \|  \langle v \rangle^{5+\delta}  \alpha \p_{x_3} f (t) \|_\infty  \right)   \int_{ \{ |y-x| < t  \} \cap \{y_3 > 0 \} }  \frac{1}{|y-x|^2}\left(1 +   \ln (1 + \frac{1}{y_3}) \right)  dy
 \\ \lesssim &  \sup_{0 \le t \le T}  \left( \|  \langle v \rangle^{5+\delta}  \alpha \p_{x_3} f (t) \|_\infty   \right)  \int_{ \{ |y-x| < t  \} \cap \{y_3 > 0 \} }  \left( \frac{1}{|y-x|^{2p } } + | \ln ( 1 + \frac{1}{y_3} ) |^{\frac{p}{p-1} }  \right) dy
 \\ \lesssim  &   \sup_{0 \le t \le T}  \|  \langle v \rangle^{5+\delta}  \alpha \p_{x_3} f (t) \|_\infty .
 \end{split}
\Ee
We leave the estimate of $\eqref{Eestbulkpos1}_2$ together with the estimate of $\p_{x_k} \eqref{Eestbdrypos}$ later.

Next, from \eqref{EestSposibp}, and using the change of variables $z = y -x $ and taking $\frac{\p}{\p_{x_k } } $ derivative to \eqref{EestSpos} we have
\Be \label{EestSpos1}
\begin{split}
\frac{ \p }{\p_{x_k } } \eqref{EestSpos} = &  \int_{B(x;t) \cap \{y_3 > 0\}}  \int_{\R^3}   \mathcal S^E_i (v ,\o ) \cdot ( \p_{x_k } E +   \hat v  \times  \p_{x_k } B)   f (t-|y-x|,y ,v) \dd v \frac{ \dd y}{|y-x|}
\\  & +  \int_{B(x;t) \cap \{y_3 > 0\}}  \int_{\R^3}   \mathcal S^E_i (v ,\o ) \cdot (  E +   \hat v  \times ( B + B_{\text{ext}}) - g\mathbf e_3 ) \p_{x_k}  f (t-|y-x|,y ,v) \dd v \frac{ \dd y}{|y-x|}
\\ & + \delta_{k3} \int_{B(x;t) \cap \{y_3 = 0\}}  \int_{\R^3}   \mathcal S^E_i (v ,\o ) \cdot (  E +   \hat v  \times ( B + B_{\text{ext}}) - g\mathbf e_3)  f (t-|y-x|,y_\parallel, 0 ,v) \dd v \frac{ \dd y_\parallel }{|y-x|}.
\end{split}
\Ee
Thus for $k=1,2$, from \eqref{calSiest} we have,
\Be \label{EestSpospara2}
\begin{split}
& |  \int_{B(x;t) \cap \{y_3 > 0\}}  \int_{\R^3}   \mathcal S^E_i (v ,\o ) \cdot ( \p_{x_k } E +   \hat v  \times  \p_{x_k } B)   f (t-|y-x|,y ,v) \dd v \frac{ \dd y}{|y-x|} | 
\\ \lesssim &  \sup_{0 \le t \le T} \| ( 1 + |v|^{4 +\delta } )  f(t) \|_\infty  \int_{B(x;t) \cap \{y_3 > 0\}}   \int_{\mathbb R^3 }   | ( \p_{x_k } E +   \hat v  \times  \p_{x_k } B) |  \left( \frac{1}{ 1 + |v|^{3 + \delta } } \right)  dv  \frac{ \dd y}{|y-x|} 
\\ \lesssim &   \sup_{0 \le t \le T} \|  ( 1 + |v|^{4 +\delta } )  f(t) \|_\infty \left(    \sup_{0 \le t \le T} \|   \nabla_{x_\parallel} E(t)  \|_\infty +  \sup_{0 \le t \le T} \|   \nabla_{x_\parallel} B(t)  \|_\infty   \right)  \times \int_{B(x;t) } \frac{dy}{|y-x| } 
\\ \lesssim & t^2  \sup_{0 \le t \le T} \|  ( 1 + |v|^{4 +\delta } )  f(t) \|_\infty \left(    \sup_{0 \le t \le T} \|   \nabla_{x_\parallel} E(t)  \|_\infty +  \sup_{0 \le t \le T} \|   \nabla_{x_\parallel} B(t)  \|_\infty   \right).
\end{split}
\Ee
Similarly, for $k=3$, 
\Be \label{EestSpos2}
\begin{split}
 & |  \int_{B(x;t) \cap \{y_3 > 0\}}  \int_{\R^3}   \mathcal S^E_i (v ,\o ) \cdot ( \p_{x_k } E +   \hat v  \times  \p_{x_3 } B)   f (t-|y-x|,y ,v) \dd v \frac{ \dd y}{|y-x|} | 
\\ \lesssim & t^{2 }  \sup_{0 \le t \le T} \| ( 1 + |v|^{4 +\delta } )  f(t) \|_\infty \left(    \sup_{0 \le t \le T} \|  \p_{x_3} E(t)  \|_\infty +  \sup_{0 \le t \le T} \|   \p_{x_3} B(t)  \|_\infty   \right),
\end{split}
\Ee
for any $0 < \delta \ll 1 $. Thus for $ t \ll 1$, the terms from \eqref{EestSpospara2}, \eqref{EestSpos2} will be absorbed into the LHS. From \eqref{calSiest}, we have for $k=1,2$,
\Be \label{EestSpospara3}
\begin{split}
& | \int_{B(x;t) \cap \{y_3 > 0\}}  \int_{\R^3}   \mathcal S^E_i (v ,\o ) \cdot (  E + E_{\text{ext}} +   \hat v  \times ( B + B_{\text{ext}}) - g\mathbf e_3) \p_{x_k}  f (t-|y-x|,y ,v) \dd v \frac{ \dd y}{|y-x|} |
\\ \lesssim &  \left(    \sup_{0 \le t \le T} \| E(t)  \|_\infty +  \sup_{0 \le t \le T} \|  B(t)  \|_\infty  + |B_e| + g  \right)   \sup_{0 \le t \le T}  \left(   \| \langle v \rangle^{4+\delta}   \nabla_{x_\parallel} f (t) \|_\infty  \right)
\\ & \quad \quad \times   \int_{B(x;t) \cap \{y_3 > 0\}}  \int_{\R^3} \frac{(1+ |v|^{-3 - \delta } )}{|y-x|} dv dy 
\\ \lesssim &  \left(    \sup_{0 \le t \le T} \| E(t)  \|_\infty +  \sup_{0 \le t \le T} \|  B(t)  \|_\infty + |B_e| +g   \right)   \sup_{0 \le t \le T}  \left(   \| \langle v \rangle^{4+\delta}   \nabla_{x_\parallel} f (t) \|_\infty  \right).
\end{split}
\Ee
And, for $k=3$,
\Be \label{EestSpos3}
\begin{split}
& | \int_{B(x;t) \cap \{y_3 > 0\}}  \int_{\R^3}   \mathcal S^E_i (v ,\o ) \cdot (  E + E_{\text{ext}} +   \hat v  \times ( B + B_{\text{ext}}) - g \mathbf e_3) \p_{x_3}  f (t-|y-x|,y ,v) \dd v \frac{ \dd y}{|y-x|} |
\\ \lesssim &  \left(    \sup_{0 \le t \le T} \| E(t)  \|_\infty +  \sup_{0 \le t \le T} \|  B(t)  \|_\infty  +|B_e| + E_e +g  \right)   \sup_{0 \le t \le T}  \left( \|  \langle v \rangle^{5+\delta}  \alpha \p_{x_3} f (t) \|_\infty   \right) 
\\  & \quad \quad \quad \times \int_{B(x;t) \cap \{y_3 > 0\}} \int_{\R^3}   \frac{1}{ |y-x| } \left( \frac{ \langle v \rangle^{ -4- \delta} }{ \alpha(t- |y-x |, y, v ) }  \right) dv  dy 
\\ \lesssim &  \left(    \sup_{0 \le t \le T} \| E(t)  \|_\infty +  \sup_{0 \le t \le T} \|  B(t)  \|_\infty + |B_e| +E_e+g   \right)   
\\ & \times \sup_{0 \le t \le T}  \left( \|  \langle v \rangle^{5+\delta}  \alpha \p_{x_3} f (t) \|_\infty  \right)  \int_{B(x;t) \cap \{y_3 > 0\}}   \frac{1}{ |y-x| } | \ln (y_3 ) |   dy 
\\ \lesssim &  \left(    \sup_{0 \le t \le T} \| E(t)  \|_\infty +  \sup_{0 \le t \le T} \|  B(t)  \|_\infty +|B_e| + E_e +g   \right)    \sup_{0 \le t \le T}  \left( \|  \langle v \rangle^{5+\delta}  \alpha \p_{x_3} f (t) \|_\infty   \right)
\\ &\quad \quad \times \int_{B(x;t) \cap \{y_3 > 0\}}  \left(  \frac{1}{ |y-x|^2  } + \left( \ln (y_3 ) \right)^2 \right)   dy 
\\ \lesssim & \left(    \sup_{0 \le t \le T} \| E(t)  \|_\infty +  \sup_{0 \le t \le T} \|  B(t)  \|_\infty   \right)   \sup_{0 \le t \le T}  \left( \|  \langle v \rangle^{5+\delta}  \alpha \p_{x_3} f (t) \|_\infty   \right).
\end{split}
\Ee
And
\Be \label{EestSpos4}
\begin{split}
& | \int_{B(x;t) \cap \{y_3 = 0\}}  \int_{\R^3}   \mathcal S^E_i (v ,\o ) \cdot (  E + E_{\text{ext}} +   \hat v  \times ( B + B_{\text{ext}}) - g \mathbf e_3)  f (t-|y-x|,y_\parallel, 0 ,v) \dd v \frac{ \dd y_\parallel }{|y-x|} | 
\\ \lesssim &   \left(    \sup_{0 \le t \le T} \| E(t)  \|_\infty +  \sup_{0 \le t \le T} \|  B(t)  \|_\infty + |B_e| + E_e +g   \right)  \sup_{0 \le t \le T} \| \langle v \rangle^{4+\delta} f(t) \|_\infty 
\\ & \quad \quad \times  \int_{ \{  (| y_\parallel - x_\parallel |^2 + |x_3|^2 ) < t^2  \}  }  \frac{1}{ ( |y_\parallel - x_\parallel  |^2 + x_3^2 )^{1/2}  } dy_\parallel
\\ \lesssim &   \left(    \sup_{0 \le t \le T} \| E(t)  \|_\infty +  \sup_{0 \le t \le T} \|  B(t)  \|_\infty + |B_e| +E_e + g   \right)  \sup_{0 \le t \le T} \| ( 1 + |v|^{4 + \delta }  ) f(t) \|_\infty. 
\end{split}
\Ee
Thus combining \eqref{EestSpos1},  \eqref{EestSpospara2}, \eqref{EestSpos2}, \eqref{EestSpospara3}, \eqref{EestSpos3}, \eqref{EestSpos4}, we get
\[
\begin{split}
| \nabla_{x_\parallel} \eqref{EestSpos} |  \lesssim &  t^{2}  \left(    \sup_{0 \le t \le T} \|  \nabla_{x_\parallel} E(t)  \|_\infty    +  \sup_{0 \le t \le T} \|  \nabla_{x_\parallel} B(t)  \|_\infty   \right)  +  \| \langle v \rangle^{4+\delta}   \nabla_{x_\parallel} f (t) \|_\infty  
\\ | \frac{ \p }{\p_{x_3 } } \eqref{EestSpos} |  \lesssim &  t^{2}  \left(    \sup_{0 \le t \le T} \|  \p_{x_3} E(t)  \|_\infty +  \sup_{0 \le t \le T} \|  \p_{x_3} B(t)  \|_\infty   \right) 
\\ & +   \sup_{0 \le t \le T}  \left( \|  \langle v \rangle^{5+\delta}  \alpha \p_{x_3} f (t) \|_\infty   \right) +   \sup_{0 \le t \le T} \|  (1 + |v|^{4 + \delta }) f(t) \|_\infty.
\end{split}
\]
By the same argument we get the same estimate for $ | \frac{ \p}{\p_{x_k } }  \eqref{EestSneg}  |  $. Therefore 
\Be \label{EestSfinal}
\begin{split}
| \nabla_{x_\parallel} \eqref{EestSpos} | + | \nabla_{x_\parallel} \eqref{EestSneg} |  \lesssim &  t^{2}  \left(    \sup_{0 \le t \le T} \|  \nabla_{x_\parallel} E(t)  \|_\infty    +  \sup_{0 \le t \le T} \|  \nabla_{x_\parallel} B(t)  \|_\infty   \right)  +  \| \langle v \rangle^{4+\delta}   \nabla_{x_\parallel} f (t) \|_\infty  
\\ | \frac{ \p }{\p_{x_3 } } \eqref{EestSpos} | + | \frac{ \p }{\p_{x_3 } } \eqref{EestSneg} |  \lesssim & t^{2}  \left(    \sup_{0 \le t \le T} \|  \p_{x_3} E(t)  \|_\infty +  \sup_{0 \le t \le T} \|  \p_{x_3} B(t)  \|_\infty   \right) 
\\ & +   \sup_{0 \le t \le T}  \left( \|  \langle v \rangle^{5+\delta}  \alpha \p_{x_3} f (t) \|_\infty   \right) +   \sup_{0 \le t \le T} \|  (1 + |v|^{4 + \delta }) f(t) \|_\infty.
\end{split}
\Ee

Next, using the change of variables $z_\parallel = y_\parallel - x_\parallel $ we have
\Be \label{Eestbdrypos1}
\begin{split}
& \frac{ \p }{\p_{x_k } } \eqref{Eestbdrypos}
\\ = &  \frac{ \p }{\p_{x_k } } \left(   \int_{ \sqrt{t^2 - |z_\parallel |^2 } > x_3}  
\int_{\R^3} \left( \delta_{i3 } -  \frac{(\o_i + \hat{v}_i)\hat{v}_3}{1+ \hat v \cdot \o } \right) f (t- ( |z_\parallel |^2 + x_3^2 )^{1/2} ,z_\parallel + x_\parallel, 0 ,v) \dd v\frac{ \dd z_\parallel}{(|z_\parallel |^2 + x_3^2)^{1/2}} \right)
\\ = &  ( 1 - \delta_{k3} )   \int_{ \sqrt{t^2 - |z_\parallel |^2 } > x_3}  \int_{\R^3}   \left( \delta_{i3 } -\frac{(\o_i + \hat{v}_i)\hat{v}_3}{1+ \hat v \cdot \o } \right)  \p_{x_k } f (t- ( |z_\parallel |^2 + x_3^2 )^{1/2} ,z_\parallel + x_\parallel, 0 ,v) \dd v\frac{ \dd z_\parallel}{(|z_\parallel |^2 + x_3^2)^{1/2}}
\\ & + \delta_{k3 }  \int_{ \sqrt{t^2 - |z_\parallel |^2 } > x_3}  \int_{\R^3}  \p_{x_3} \left( \frac{ 1}{(|z_\parallel |^2 + x_3^2)^{1/2}} \left( \delta_{i3 } -\frac{(\o_i + \hat{v}_i)\hat{v}_3}{1+ \hat v \cdot \o } \right)  \right)   f (t- ( |z_\parallel |^2 + x_3^2 )^{1/2} ,z_\parallel + x_\parallel, 0 ,v) \dd v \   \dd z_\parallel
\\ & + \delta_{k3}   \int_{ \sqrt{t^2 - |z_\parallel |^2 } > x_3}  \int_{\R^3} \left( \delta_{i3} - \frac{(\o_i + \hat{v}_i)\hat{v}_3}{1+ \hat v \cdot \o } \right)  \p_{t } f (t- ( |z_\parallel |^2 + x_3^2 )^{1/2} ,z_\parallel + x_\parallel, 0 ,v) \dd v\frac{ - x_3 }{(|z_\parallel |^2 + x_3^2)} \dd z_\parallel
\\ & -   \delta_{k3}   \int_{ \sqrt{t^2 - |z_\parallel |^2 } = x_3}   \int_{\R^3} \left( \delta_{i3} \frac{(\o_i + \hat{v}_i)\hat{v}_3}{1+ \hat v \cdot \o }  \right) f (0 ,z_\parallel + x_\parallel, 0 ,v) \dd v  \frac{x_3 }{\sqrt{t^2 - x_3^2} } \left( \frac{ z_\parallel }{\sqrt{t^2-x^2}} \cdot \frac{z_\parallel}{ | z_\parallel | } \right)  \frac{ \dd S_{ z_\parallel } }{t}.
\end{split} 
\Ee 
The first term is only contribute as the tangential derivative, from \eqref{vdecaybasic2},
\Be \label{Eestbdrypos2}
\begin{split}
&  |   ( 1 - \delta_{k3} )  \int_{ \sqrt{t^2 - |z_\parallel |^2 } > x_3}  \int_{\R^3} \left( \delta_{i3 } -\frac{(\o_i + \hat{v}_i)\hat{v}_3}{1+ \hat v \cdot \o } \right) \p_{x_k } f (t- ( |z_\parallel | + x_3^2 )^{1/2} ,z_\parallel + x_\parallel, 0 ,v) \dd v\frac{ \dd z_\parallel}{(|z_\parallel |^2 + x_3^2)^{1/2}} | 
 \\ \lesssim &   \sup_{0 \le t \le T} \| ( 1 + |v|^{4 + \delta} )  \nabla_{x_\parallel } f(t) \|_\infty  \int_{  \sqrt{t^2 - |z_\parallel |^2 } > x_3}  \int_{\mathbb R^3} \frac{ ( 1 + |v|^{4 + \delta} )^{-1}}{(|z_\parallel |^2 + x_3^2)^{1/2} }  dv d z_\parallel 
 \\ \lesssim &   \sup_{0 \le t \le T} \| ( 1 + |v|^{4 + \delta} )  \nabla_{x_\parallel } f(t) \|_\infty  \int_{r^2 + x_3^2 < t^2 }  \frac{ r }{ (r^2 + x_3^2 )^{1/2} } dr  \lesssim   \sup_{0 \le t \le T} \| ( 1 + |v|^{4 + \delta} ) \nabla_{x_\parallel } f(t) \|_\infty.
 \end{split}
\Ee
For the second term, recall $\eqref{Eestbulkpos1}_2$ using the identity (\cite{CKKRM})
\[
\frac{ (|\hat{v}|^2-1  )(\hat{v}_i +  \o_i ) }{ ( |y_\parallel - x_\parallel  |^2 + x_3^2 ) (1+  \hat v \cdot \o )^2} = \left. \sum_{j=1}^3 \frac{\p}{\p y_j}\left[\frac{1}{|y-x|}\left( \delta_{ij} - \frac{(\o_i + \hat{v}_i)\hat{v}_j}{1+\hat{v} \cdot \o} \right)\right] \right|_{y_3 = 0 },
\]
we have
\Be \label{Eestbdrypos2.5}
\begin{split}
& | \eqref{Eestbulkpos1}_2 +   \int_{ \sqrt{t^2 - |z_\parallel |^2 } > x_3}  \int_{\R^3}   \p_{x_3} \left( \frac{ 1}{(|z_\parallel |^2 + x_3^2)^{1/2}}  \left( \delta_{i3 } -\frac{(\o_i + \hat{v}_i)\hat{v}_3}{1+ \hat v \cdot \o } \right)  \right) f (t- ( |z_\parallel |^2 + x_3^2 )^{1/2} ,z_\parallel + x_\parallel, 0 ,v) \dd v \   \dd z_\parallel | 
\\ = & |   \delta_{k3}  \int_{ \{  (| y_\parallel - x_\parallel |^2 + |x_3|^2 )^{1/2} < t  \}  }   \int_{\R^3} \left. \sum_{j=1}^3 \frac{\p}{\p y_j}\left[\frac{1}{|y-x|}\left( \delta_{ij} - \frac{(\o_i + \hat{v}_i)\hat{v}_j}{1+\hat{v} \cdot \o} \right)\right]  f (t-|y-x|,y ,v) \right|_{y_3 = 0 } \dd v \dd y_\parallel  
\\ & + \delta_{k3}    \int_{  \{ | y_\parallel - x_\parallel |^2 + |x_3|^2 )^{1/2} < t \} }  \int_{\R^3}   \p_{x_3} \left( \frac{ \left( \delta_{i3 } -\frac{(\o_i + \hat{v}_i)\hat{v}_3}{1+ \hat v \cdot \o } \right)}{(|y_\parallel - x_\parallel  |^2 + x_3^2)^{1/2}}   \right) f (t- ( |y_\parallel - x_\parallel |^2 + x_3^2 )^{1/2} ,y_\parallel, 0 ,v) \dd v \   \dd y_\parallel  |
\\ = &   |   \delta_{k3}  \int_{ \{  (| y_\parallel - x_\parallel |^2 + |x_3|^2 )^{1/2} < t  \}  }   \int_{\R^3} \left. \sum_{j=1}^2 \frac{\p}{\p y_j}\left[\frac{1}{|y-x|}\left( \delta_{ij} - \frac{(\o_i + \hat{v}_i)\hat{v}_j}{1+\hat{v} \cdot \o} \right)\right]  f (t-|y-x|,y ,v) \right|_{y_3 = 0 } \dd v \dd y_\parallel  
\\ & +   \delta_{k3}  \int_{ \{  (| y_\parallel - x_\parallel |^2 + |x_3|^2 )^{1/2} < t  \}  }   \int_{\R^3} \left.  \left(  \frac{\p}{\p y_3}\left[\frac{1}{|y-x|}\left( \delta_{i3} - \frac{(\o_i + \hat{v}_i)\hat{v}_j}{1+\hat{v} \cdot \o} \right)\right]  f (t-|y-x|,y ,v) \right) \right|_{y_3 = 0 } \dd v \dd y_\parallel  
\\ & +  \delta_{k3}    \int_{  \{ | y_\parallel - x_\parallel |^2 + |x_3|^2 )^{1/2} < t \} }  \int_{\R^3}   \p_{x_3} \left( \frac{ 1}{(|y_\parallel - x_\parallel  |^2 + x_3^2)^{1/2}}  \left( \delta_{i3 } -\frac{(\o_i + \hat{v}_i)\hat{v}_3}{1+ \hat v \cdot \o } \right)  \right)
\\ & \quad \quad \times f (t- ( |y_\parallel - x_\parallel |^2 + x_3^2 )^{1/2} ,y_\parallel, 0 ,v) \dd v \   \dd y_\parallel  |
\\ =  &  |   \delta_{k3}  \int_{ \{  (| y_\parallel - x_\parallel |^2 + |x_3|^2 )^{1/2} < t  \}  }   \int_{\R^3} \sum_{j=1}^2 \frac{\p}{\p y_j}\left[\frac{1}{(|y_\parallel-x_\parallel |^2 + x_3^2 )^{1/2}}\left( \delta_{ij} - \frac{(\o_i + \hat{v}_i)\hat{v}_j}{1+\hat{v} \cdot \o} \right)\right]  
\\ & \quad \quad \times f (t-(|y_\parallel-x_\parallel |^2 + x_3^2 )^{1/2},y_\parallel, 0  ,v) \dd v \dd y_\parallel   |,
\end{split}
\Ee
where we've used the cancellation $  \left.  \frac{\p}{\p y_3}\left[\frac{1}{|y-x|}\left( \delta_{i3} - \frac{(\o_i + \hat{v}_i)\hat{v}_j}{1+\hat{v} \cdot \o} \right]   \right) \right|_{y_3 = 0 }  =  -  \p_{x_3} \left( \frac{ 1}{(|y_\parallel - x_\parallel  |^2 + x_3^2)^{1/2}}  \left( \delta_{i3 } -\frac{(\o_i + \hat{v}_i)\hat{v}_3}{1+ \hat v \cdot \o } \right)  \right) $. Thus from integration by parts and \eqref{vdecaybasic}, 
\Be \label{Eestbdrypos2.6}
\begin{split}
 &  |   \delta_{k3}  \int_{ \{  (| y_\parallel - x_\parallel |^2 + |x_3|^2 )^{1/2} < t  \}  }   \int_{\R^3}  \sum_{j=1}^2 \frac{\p}{\p y_j}\left[\frac{1}{|y-x|}\left( \delta_{ij} - \frac{(\o_i + \hat{v}_i)\hat{v}_j}{1+\hat{v} \cdot \o} \right)\right]  f (t-|y-x|,y ,v) \dd v \dd y_\parallel   |
 \\ \lesssim &  |   - \int_{ \{  (| y_\parallel - x_\parallel |^2 + |x_3|^2 )^{1/2} < t  \}  }  \left[\frac{| |y_\parallel -x_\parallel | }{(|y_\parallel -x_\parallel |^2 + x_3^2 ) }\left( \delta_{ij} - \frac{(\o_i + \hat{v}_i)\hat{v}_j}{1+\hat{v} \cdot \o} \right)\right] \p_t f (t-|y-x|,y ,v) \dd v \dd y_\parallel   
 \\ & -   \int_{ \{  (| y_\parallel - x_\parallel |^2 + |x_3|^2 )^{1/2} < t  \}  }  \left[\frac{1 }{(|y_\parallel -x_\parallel |^2 + x_3^2 )^{1/2}  }\left( \delta_{ij} - \frac{(\o_i + \hat{v}_i)\hat{v}_j}{1+\hat{v} \cdot \o} \right)\right] \nabla_{x_\parallel}  f (t-|y-x|,y ,v) \dd v \dd y_\parallel 
 \\ & +  \int_{ \{  (| y_\parallel - x_\parallel |^2 + |x_3|^2 )^{1/2} = t  \}  } \sum_{j=1}^2 \frac{\o_j }{t } \left( \delta_{ij} - \frac{(\o_i + \hat{v}_i)\hat{v}_j}{1+\hat{v} \cdot \o} \right)   f (0,y ,v) \dd v \dd S_{y_\parallel } | 
 \\ \lesssim  & |   \int_{ \{  (| y_\parallel - x_\parallel |^2 + |x_3|^2 )^{1/2} < t  \}  }  \left[\frac{| |y_\parallel -x_\parallel | }{(|y_\parallel -x_\parallel |^2 + x_3^2 ) }\left( \delta_{ij} - \frac{(\o_i + \hat{v}_i)\hat{v}_j}{1+\hat{v} \cdot \o} \right)\right] \p_t f (t-|y-x|,y ,v) \dd v \dd y_\parallel  | 
 \\ + & \sup_{ 0 \le t \le T }  \| ( 1 + |v|^{4 + \delta} ) \nabla_{x_\parallel} f(t) \|_\infty \int_{ |z_\parallel |^2 + x_3^2 < t^2 }     \frac{1}{ ( |z_\parallel |^2 + x_3^2 )^{1/2} }  \int_{\mathbb R^3 } \frac{1}{ 1 + |v|^{3 + \delta }  } dv d z_\parallel 
 \\ & +   \| ( 1 +|v|^{4 + \delta } ) f_0 \|_\infty   \int_{\mathbb R^3 } \frac{1}{ 1 + |v|^{3 + \delta }  } dv 
 \\ \lesssim & |   \int_{ \{  (| y_\parallel - x_\parallel |^2 + |x_3|^2 )^{1/2} < t  \}  }  \left[\frac{| |y_\parallel -x_\parallel | }{(|y_\parallel -x_\parallel |^2 + x_3^2 ) }\left( \delta_{ij} - \frac{(\o_i + \hat{v}_i)\hat{v}_j}{1+\hat{v} \cdot \o} \right)\right] \p_t f (t-|y-x|,y ,v) \dd v \dd y_\parallel  | 
 \\ & +   \sup_{0 \le t \le T}    \| ( 1 + |v|^{4 + \delta} ) \nabla_{x_\parallel} f(t) \|_\infty   + \| ( 1 +|v|^{4 + \delta } ) f_0 \|_\infty.
\end{split}
\Ee

Now from \eqref{VMfrakF1}, we write $\p_t f = -  \hat v \cdot \nabla_x f -(E + E_{\text{ext}}  + \hat v \times ( B + B_{\text{ext}}) - g \mathbf{e}_3 ) \cdot \nabla_v f $. Then using $\alpha  = \hat v_3$ on $\p \O$, integration by parts in $v$, and that $ \nabla_v \cdot ( \hat v \times B) =0$, we get
\Be \label{Eestbdrypos3}
\begin{split}
& |   \int_{ \sqrt{t^2 - |z_\parallel |^2 } > x_3}  \int_{\R^3} \frac{(\o_i + \hat{v}_i)\hat{v}_3}{1+ \hat v \cdot \o }  \p_{t } f (t- ( |z_\parallel | + x_3^2 )^{1/2} ,z_\parallel + x_\parallel, 0 ,v) \dd v \frac{ x_3 }{(|z_\parallel |^2 + x_3^2)} \dd z_\parallel | 
\\ \le  &  |   \int_{ \sqrt{t^2 - |z_\parallel |^2 } > x_3}  \int_{\R^3} \frac{(\o_i + \hat{v}_i)\hat{v}_3}{1+ \hat v \cdot \o } \left(  \hat v \cdot \nabla_x f (t- ( |z_\parallel | + x_3^2 )^{1/2} ,z_\parallel + x_\parallel, 0 ,v) \right) \dd v \frac{ x_3 }{(|z_\parallel |^2 + x_3^2)} \dd z_\parallel |
\\ & +   |   \int_{ \sqrt{t^2 - |z_\parallel |^2 } > x_3}  \int_{\R^3} \frac{(\o_i + \hat{v}_i)\hat{v}_3}{1+ \hat v \cdot \o } 
\\ & \quad \quad  \times \left( (E + E_{\text{ext}} + \hat v \times ( B + B_{\text{ext}}) - g \mathbf{e}_3 ) \cdot \nabla_v f (t- ( |z_\parallel | + x_3^2 )^{1/2} ,z_\parallel + x_\parallel, 0 ,v) \right) \dd v  \frac{ x_3 }{(|z_\parallel |^2 + x_3^2)} \dd z_\parallel |
\\ \lesssim &   \sup_{0 \le t \le T} \left(  \| ( 1 + |v|^{5 + \delta} ) \alpha \p_{x_3 } f(t) \|_\infty +  \| ( 1 + |v|^{4 + \delta} ) \nabla_{x_\parallel} f(t) \|_\infty \right) \int_{ \sqrt{t^2 - |z_\parallel |^2 } > x_3} \int_{\mathbb R^3} \langle v \rangle^{-4-\delta}   \frac{ x_3 }{(|z_\parallel |^2 + x_3^2)}  dv \dd z_\parallel
\\ & +   |   \int_{ \sqrt{t^2 - |z_\parallel |^2 } > x_3}  \int_{\R^3}  \nabla_v   \left( \frac{(\o_i + \hat{v}_i)\hat{v}_3}{1+ \hat v \cdot \o } \right) (E + E_{\text{ext}} + \hat v \times B - g \mathbf{e}_3 )   f (t- ( |z_\parallel | + x_3^2 )^{1/2} ,z_\parallel + x_\parallel, 0 ,v)  \dd v
\\ & \quad  \times  \frac{ x_3 }{(|z_\parallel |^2 + x_3^2)} \dd z_\parallel |
\\ \lesssim &   \sup_{0 \le t \le T} \| ( 1 + |v|^{5 + \delta} ) \left(  \| ( 1 + |v|^{5 + \delta} ) \alpha \p_{x_3 } f(t) \|_\infty +  \| ( 1 + |v|^{4 + \delta} ) \nabla_{x_\parallel} f(t) \|_\infty \right)  
\\ & + \left(  \sup_{0 \le t \le T} \| ( 1 + |v|^{4 + \delta} )f(t) \|_\infty  \right) \int_{ \sqrt{t^2 - |z_\parallel |^2 } > x_3} \int_{\mathbb R^3} (1 + |v|^{3 + \delta})^{-1}   \frac{ x_3 }{(|z_\parallel |^2 + x_3^2)}  dv \dd z_\parallel
\\ \lesssim &   \sup_{0 \le t \le T}   \left(  \| ( 1 + |v|^{5 + \delta} ) \alpha \p_{x_3 } f(t) \|_\infty +  \| ( 1 + |v|^{4 + \delta} ) \nabla_{x_\parallel} f(t) \|_\infty \right)  +  \sup_{0 \le t \le T} \| ( 1 + |v|^{4 + \delta} )f(t) \|_\infty , 
\end{split}
\Ee
where we've used from \eqref{calSi}, \eqref{calSiest}, and \eqref{vdecaybasic2} that
\[
\left|  \nabla_v   \left( \frac{(\o_i + \hat{v}_i)\hat{v}_3}{1+ \hat v \cdot  \o } \right) \right|  = \left|  \mathcal S^E_i( \o, \hat v ) \hat v_3 +  \frac{(\o_i + \hat{v}_i)}{1+ \hat v \cdot \o }  \nabla_v \hat v_3 \right| \le 14 \sqrt{1 + |v|^2}.
\]
By the same argument we have
\Be \label{Eestbdrypos3.5}
\begin{split}
& |   \int_{ \{  (| y_\parallel - x_\parallel |^2 + |x_3|^2 )^{1/2} < t  \}  }  \left[\frac{| |y_\parallel -x_\parallel | }{(|y_\parallel -x_\parallel |^2 + x_3^2 ) }\left( \delta_{ij} - \frac{(\o_i + \hat{v}_i)\hat{v}_j}{1+\hat{v} \cdot \o} \right)\right] \p_t f (t-|y-x|,y ,v) \dd v \dd y_\parallel  |
\\ \lesssim &    \sup_{0 \le t \le T}   \left(  \| ( 1 + |v|^{5 + \delta} ) \alpha \p_{x_3 } f(t) \|_\infty +  \| ( 1 + |v|^{4 + \delta} ) \nabla_{x_\parallel} f(t) \|_\infty \right)  +  \sup_{0 \le t \le T} \| ( 1 + |v|^{4 + \delta} )f(t) \|_\infty.
\end{split}
\Ee

We also have
\Be \label{Eestbdrypos4}
\begin{split}
& |   \int_{ \sqrt{t^2 - |z_\parallel |^2 } = x_3}   \int_{\R^3} \frac{(\o_i + \hat{v}_i)\hat{v}_3}{1+ \hat v \cdot \o } f (0 ,z_\parallel + x_\parallel, 0 ,v) \dd v  \frac{x_3 }{\sqrt{t^2 - x_3^2} } \left( \frac{ z_\parallel }{\sqrt{t^2 - x_3^2}} \cdot \frac{z_\parallel}{ | z_\parallel | } \right)  \frac{ \dd S_{ z_\parallel } }{t} | 
\\  \lesssim &   \|  \langle v \rangle^{4+\delta} f_0 \|_\infty   \int_{ |z_\parallel | = \sqrt{t^2 -x_3^2 } }  \int_{\mathbb R^3} (1 + |v|^{3 + \delta})^{-1} \frac{x_3}{ \sqrt{ t^2 - x_3^2 } } dv  \frac{ \dd S_{ z_\parallel } }{t} 
\\ \lesssim &   \| \langle v \rangle^{4+\delta} f_0 \|_\infty \left(  \frac{x_3}{t }  \right) \lesssim   \sup_{0 \le t \le T} \| \langle v \rangle^{4+\delta} f(t) \|_\infty.
\end{split}
\Ee
Thus from \eqref{Eestbdrypos1}, \eqref{Eestbdrypos2},  \eqref{Eestbdrypos2.5}, \eqref{Eestbdrypos2.6}, \eqref{Eestbdrypos3}, \eqref{Eestbdrypos3.5}, \eqref{Eestbdrypos4},  and together with \eqref{Eestbulkpos1}--\eqref{Eestbulkpos2}, we have
\[
\begin{split}
| \nabla_{x_\parallel}  \eqref{Eestbulkpos}  +  \nabla_{x_\parallel}  \eqref{Eestbdrypos} | \lesssim  &  \sup_{0 \le t \le T}  \left\{  \| \langle v \rangle^{4+\delta}  \nabla_{x_\parallel}  f(t) \|_\infty \right\},
\\ | \frac{ \p}{\p_{x_3 } }  \eqref{Eestbulkpos} +  \frac{ \p}{\p_{x_3 } }  \eqref{Eestbdrypos} | \lesssim  &  \sup_{0 \le t \le T}  \left\{ \|\langle v \rangle^{4+\delta}  f(t) \|_\infty   +  \|  \langle v \rangle^{5+\delta} \alpha \p_{x_3 } f(t) \|_\infty + \| \langle v \rangle^{4+\delta}  \nabla_{x_\parallel}  f(t) \|_\infty \right\}.
\end{split}
\]
By the same argument we get the same estimate for $  \frac{ \p}{\p_{x_k} }  \eqref{Eestbulkneg} + \frac{\p}{\p_{x_k } } \eqref{Eestbdryneg} $. Thus
\Be \label{Eestbdryfinal}
\begin{split}
 & |  \nabla_{x_\parallel}  \eqref{Eestbulkpos} + \nabla_{x_\parallel}  \eqref{Eestbdrypos} |+ |  \nabla_{x_\parallel}  \eqref{Eestbulkneg} + \nabla_{x_\parallel}  \eqref{Eestbdryneg} | \lesssim   \sup_{0 \le t \le T}  \left\{  \| \langle v \rangle^{4+\delta}  \nabla_{x_\parallel}  f(t) \|_\infty \right\},
\\ &  | \frac{ \p}{\p_{x_3 } }  \eqref{Eestbulkpos} + \frac{ \p}{\p_{x_3 } }  \eqref{Eestbdrypos} | +  |  \frac{ \p}{\p_{x_3 } }  \eqref{Eestbulkneg} + \frac{ \p}{\p_{x_3 } }  \eqref{Eestbdryneg} | 
\\ \lesssim  &  \sup_{0 \le t \le T}  \left\{ \|\langle v \rangle^{4+\delta}  f(t) \|_\infty   +  \|  \langle v \rangle^{5+\delta} \alpha \p_{x_3 } f(t) \|_\infty + \| \langle v \rangle^{4+\delta}  \nabla_{x_\parallel}  f(t) \|_\infty \right\}.
\end{split}
\Ee
Next, by using the change of variables $z = y - x$ and spherical coordinate for $z$, we have
\Be \label{Eestinitalpos0}
\begin{split}
 \eqref{Eestinitialpos} =  &   \int_{|z| = t , \ z_3>-x_3}  \sum_j \o_j  \left(\delta_{ij} - \frac{(\o_i + \hat{v}_i)\hat{v}_j}{1+ \hat v \cdot \o }\right) f(0, z+x ,v) \dd v \frac{\dd S_z}{t}
 \\ = &   \int_{  \ t \cos \phi >-x_3}  \int_0^{2\pi}  \sum_j \o_j  \left(\delta_{ij} - \frac{(\o_i + \hat{v}_i)\hat{v}_j}{1+ \hat v \cdot \o }\right) f(0, z+x ,v) \dd v \frac{ t^2 \sin \phi \, \dd \theta \dd \phi }{t}.
\end{split}
\Ee
Thus
\Be \label{pEestinitialpos}
\begin{split}
& \frac{ \p}{\p_{x_k } }   \eqref{Eestinitialpos}  
\\ = &-   \int_{  \ t \cos \phi >-x_3}  \int_0^{2\pi}  \sum_j \o_j  \left(\delta_{ij} - \frac{(\o_i + \hat{v}_i)\hat{v}_j}{1+ \hat v \cdot \o }\right) \p_{x_k}  f(0, z+x ,v)   ( t  \sin \phi )  \,  \dd v \dd \theta \dd \phi 
\\ &  - \delta_{k3}   \int_0^{2\pi}  \sum_j \o_j  \left(\delta_{ij} - \frac{(\o_i + \hat{v}_i)\hat{v}_j}{1+ \hat v \cdot \o }\right)  f(0, z_\parallel +x_\parallel, 0 ,v) \left( \frac{d}{d x_3} \cos^{-1} \left( \frac{-x_3}{t} \right) \right) \left( t \sin \left( \cos^{-1} \left( \frac{-x_3}{t} \right) \right) \right)  \,  \dd v \dd \theta
\\  =  & -  \int_{  \ t \cos \phi >-x_3}  \int_0^{2\pi}  \sum_j \o_j  \left(\delta_{ij} - \frac{(\o_i + \hat{v}_i)\hat{v}_j}{1+ \hat v \cdot \o }\right) \p_{x_k}  f(0, z+x ,v)   ( t  \sin \phi )  \,  \dd v \dd \theta \dd \phi 
\\ &  - \delta_{k3}   \int_0^{2\pi}  \sum_j \o_j  \left(\delta_{ij} - \frac{(\o_i + \hat{v}_i)\hat{v}_j}{1+ \hat v \cdot \o }\right)   f(0, z_\parallel +x_\parallel, 0 ,v) \frac{-1}{\sqrt{ 1- \left( \frac{x_3}{t} \right)^2  } } \frac{-1}{t}   \left( t \sqrt{ 1- \left( \frac{x_3}{t} \right)^2  }  \right)  \,  \dd v \dd \theta.
\end{split}
\Ee
So from \eqref{vdecaybasic2}, for $k=1,2$,
\Be
\begin{split}
& |   \int_{  \ t \cos \phi >-x_3}  \int_0^{2\pi}  \sum_j \o_j  \left(\delta_{ij} - \frac{(\o_i + \hat{v}_i)\hat{v}_j}{1+ \hat v \cdot \o }\right) \p_{x_k}  f(0, z+x ,v)   ( t  \sin \phi )  \,  \dd v \dd \theta \dd \phi | 
\\ \lesssim &   \| \langle v \rangle^{4+\delta} \nabla_{x_\parallel} f_0 \|_\infty   \int_{  \ t \cos \phi >-x_3}  \int_0^{2\pi}  \int_{\mathbb R^3}  \langle v \rangle^{-4-\delta} ( t  \sin \phi )  \,  \dd v \dd \theta \dd \phi 
\\ \lesssim &   \| \langle v \rangle^{4+\delta} \nabla_{x_\parallel} f_0 \|_\infty.
\end{split}
\Ee
And for $k=3$,
\Be
\begin{split}
& |   \int_{  \ t \cos \phi >-x_3}  \int_0^{2\pi}  \sum_j \o_j  \left(\delta_{ij} - \frac{(\o_i + \hat{v}_i)\hat{v}_j}{1+ \hat v \cdot \o }\right) \p_{x_3}  f(0, z+x ,v)   ( t  \sin \phi )  \,  \dd v \dd \theta \dd \phi | 
\\ \lesssim &  \| \langle v \rangle^{5+\delta} \alpha \p_{x_3} f_0 \|_\infty    \int_{  \ t \cos \phi >-x_3}  \int_0^{2\pi}  \int_{\mathbb R^3}  \frac{\langle v \rangle^{-4-\delta} }{\alpha(0, z +x ,v )}  ( t  \sin \phi )  \,  \dd v \dd \theta \dd \phi 
\\ \lesssim &  \| \langle v \rangle^{5+\delta} \alpha \p_{x_3} f_0 \|_\infty    \int_{  \ t \cos \phi >-x_3}  t \sin \phi |  \ln ( t \cos \phi + x_3 )  | d\phi 
\\ \lesssim &  \| \langle v \rangle^{5+\delta} \alpha \p_{x_3} f_0 \|_\infty  \int_0^{t + x_3} | \ln (s) | ds  \lesssim    \| \langle v \rangle^{5+\delta} \alpha f_0 \|_\infty,
\end{split}
\Ee
and
\Be
\begin{split}
 |  \int_0^{2\pi}  \sum_j \o_j &  \left(\delta_{ij} - \frac{(\o_i + \hat{v}_i)\hat{v}_j}{1+ \hat v \cdot \o }\right)  f(0, z_\parallel +x_\parallel, 0 ,v) \frac{-1}{\sqrt{ 1- \left( \frac{x_3}{t} \right)^2  } } \frac{-1}{t}   \left( t \sqrt{ 1- \left( \frac{x_3}{t} \right)^2  }  \right)  \,  \dd v \dd \theta |
\\ =  &  |  \int_0^{2\pi}  \sum_j \o_j  \left(\delta_{ij} - \frac{(\o_i + \hat{v}_i)\hat{v}_j}{1+ \hat v \cdot \o }\right)   f(0, z_\parallel +x_\parallel, 0 ,v) \,  \dd v \dd \theta |  \lesssim \|  \langle v \rangle^{4+\delta} f(0) \|_\infty.
\end{split}
\Ee
Therefore, we have
\[
\begin{split}
| \nabla_{x_\parallel}  \eqref{Eestinitialpos} | \lesssim &   \| \langle v \rangle^{4+\delta} \nabla_{x_\parallel} f_0 \|_\infty,
\\ | \frac{ \p}{\p_{x_3 } }   \eqref{Eestinitialpos} | \lesssim &  \| \langle v \rangle^{5+\delta} \alpha \p_{x_3} f_0 \|_\infty  + \| \langle v \rangle^{4+\delta} f(0) \|_\infty.
\end{split}
\]
And by the same argument we have the same estimate for $\frac{ \p}{\p_{x_k } }   \eqref{Eestinitialneg}$. Thus
\Be \label{Eestinitialfinal}
\begin{split}
| \nabla_{x_\parallel}  \eqref{Eestinitialpos} | + | \nabla_{x_\parallel}  \eqref{Eestinitialneg} | \lesssim &   \| \langle v \rangle^{4+\delta} \nabla_{x_\parallel} f_0 \|_\infty,
\\ | \frac{ \p}{\p_{x_3 } }   \eqref{Eestinitialpos} | +  | \frac{ \p}{\p_{x_3 } }   \eqref{Eestinitialneg} | \lesssim &  \| \langle v \rangle^{5+\delta} \alpha \p_{x_3} f_0 \|_\infty  + \| \langle v \rangle^{4+\delta} f(0) \|_\infty.
\end{split}
\Ee
Finally, we estimate $\frac{\p}{\p x_k } \eqref{Eest3bdrycontri} $. We have
\[
\begin{split}
& | \frac{\p}{\p x_k }  \eqref{Eest3bdrycontri} |  
\\ & \le 2  |  \frac{\p}{\p x_k }   \int_{ \sqrt{ |z_\parallel |^2 + x_3^2 } < t }  \int_{\mathbb R^3 }  \frac{  f (t -  \sqrt{ |z_\parallel |^2 + x_3^2 }, x_\parallel + z_\parallel, 0, v )}{ \sqrt{ |z_\parallel |^2 + x_3^2 } }  dv d z_\parallel |
\\ & \lesssim  ( 1 - \delta_{k3} )   \int_{ \sqrt{t^2 - |z_\parallel |^2 } > x_3}  \int_{\R^3}    |  \p_{x_k } f (t- ( |z_\parallel |^2 + x_3^2 )^{1/2} ,z_\parallel + x_\parallel, 0 ,v) |  \dd v\frac{ \dd z_\parallel}{(|z_\parallel |^2 + x_3^2)^{1/2}}
\\ & + \delta_{k3 }  \int_{ \sqrt{t^2 - |z_\parallel |^2 } > x_3}  \int_{\R^3} |   f (t- ( |z_\parallel |^2 + x_3^2 )^{1/2} ,z_\parallel + x_\parallel, 0 ,v) \dd v \  \p_{x_3} \left( \frac{ 1}{(|z_\parallel |^2 + x_3^2)^{1/2}} \right)  |  \dd z_\parallel
\\ & + \delta_{k3}   \int_{ \sqrt{t^2 - |z_\parallel |^2 } > x_3}  \int_{\R^3}  |   \p_{t } f (t- ( |z_\parallel |^2 + x_3^2 )^{1/2} ,z_\parallel + x_\parallel, 0 ,v) \dd v\frac{ - x_3 }{(|z_\parallel |^2 + x_3^2)} | \dd z_\parallel
\\ & -   \delta_{k3}   \int_{ 0}^{2 \pi }    \int_{\R^3} | \frac{ |  f (0 ,z_\parallel + x_\parallel, 0 ,v) | }{t} \dd v  \frac{x_3 }{\sqrt{t^2 - x_3^2} } \sqrt{t^2 - x_3^2}  d\theta.
\end{split}
\]
Similar to the estimate in \eqref{Eestbdrypos1}-\eqref{Eestbdrypos4}, we get
\[
\int_{ \sqrt{t^2 - |z_\parallel |^2 } > x_3}  \int_{\R^3}    |  \p_{x_k } f (t- ( |z_\parallel |^2 + x_3^2 )^{1/2} ,z_\parallel + x_\parallel, 0 ,v) |  \dd v\frac{ \dd z_\parallel}{(|z_\parallel |^2 + x_3^2)^{1/2}} \lesssim    \sup_{0 \le t \le T} \| ( 1 + |v|^{4 + \delta} )  \nabla_{x_\parallel } f(t) \|_\infty,
\]
\[
\begin{split}
 & \int_{ \sqrt{t^2 - |z_\parallel |^2 } > x_3}  \int_{\R^3} |   f (t- ( |z_\parallel |^2 + x_3^2 )^{1/2} ,z_\parallel + x_\parallel, 0 ,v) \dd v \  \p_{x_3} \left( \frac{ 1}{(|z_\parallel |^2 + x_3^2)^{1/2}} \right)  |  \dd z_\parallel
 \\ &  +  \int_{ 0}^{2 \pi }    \int_{\R^3} | \frac{ |  f (0 ,z_\parallel + x_\parallel, 0 ,v) | }{t} \dd v  \frac{x_3 }{\sqrt{t^2 - x_3^2} } \sqrt{t^2 - x_3^2}  d\theta \lesssim \sup_{0 \le t \le T} \| \langle v \rangle^{4+\delta} f(t) \|_\infty.
\end{split}
\]
And
\[
\begin{split}
     \int_{ \sqrt{t^2 - |z_\parallel |^2 } > x_3}  & \int_{\R^3}  |   \p_{t } f (t- ( |z_\parallel |^2 + x_3^2 )^{1/2} ,z_\parallel + x_\parallel, 0 ,v) \dd v\frac{ - x_3 }{(|z_\parallel |^2 + x_3^2)} | \dd z_\parallel 
\\   \lesssim &    \left(  \| ( 1 + |v|^{5 + \delta} ) \alpha \p_{x_3 } f(t) \|_\infty +  \| ( 1 + |v|^{4 + \delta} ) \nabla_{x_\parallel} f(t) \|_\infty \right)  +  \sup_{0 \le t \le T} \| ( 1 + |v|^{4 + \delta} )f(t) \|_\infty.
  \end{split}
\]
Therefore
\Be \label{E3estbdryfinal}
\begin{split}
| \nabla_{x_\parallel}  \eqref{Eest3bdrycontri} |   \lesssim &  \sup_{0 \le t \le T}  \left\{  \| \langle v \rangle^{4+\delta}  \nabla_{x_\parallel}  f(t) \|_\infty \right\},
 \\ | \frac{\p}{\p x_3}  \eqref{Eest3bdrycontri} |   \lesssim &  \sup_{0 \le t \le T}  \left\{ \|\langle v \rangle^{4+\delta}  f(t) \|_\infty   +  \|  \langle v \rangle^{5+\delta} \alpha \nabla_{x } f(t) \|_\infty  + \| \langle v \rangle^{4+\delta}  \nabla_{x_\parallel}  f(t) \|_\infty \right\}.
 \end{split}
\Ee

Next, we estimate the $\p_{x_k}$ derivatives to $B$. Using the same argument as in \eqref{Eestat00}--\eqref{Eestat01}, we get
\Be \label{Bestat01}
| \frac{ \p}{\p_{x_k } }  \eqref{Besttat0pos} |  + |  \frac{ \p}{\p_{x_k } }  \eqref{Besttat0neg}  | \lesssim  \| E_0 \|_{C^2 }.
\Ee
Next, using the decay of the kernel in \eqref{vdecayBbulk}, and following the same argument as in \eqref{Eestbulkpos1}--\eqref{Eestbulkpos2}, and \eqref{Eestbdrypos1}--\eqref{Eestbdryfinal},
 we obtain
\Be \label{Bestbulkfinal}
\begin{split}
& | \nabla_{x_\parallel}  \eqref{Bestbulkpos}  + \nabla_{x_\parallel}  \eqref{Bestbdrypos} | + | \nabla_{x_\parallel}  \eqref{Bestbulkneg} + \nabla_{x_\parallel}  \eqref{Bestbdryneg}  | \lesssim  \sup_{0 \le t \le T} \| \langle v \rangle^{4+\delta}   \nabla_{x_\parallel} f (t) \|_\infty 
\\ &  | \frac{ \p}{\p_{x_3 } }  \eqref{Bestbulkpos}  +  \frac{ \p}{\p_{x_3 } }  \eqref{Bestbdrypos}   | + | \frac{ \p}{\p_{x_3 } }  \eqref{Bestbulkneg} + \frac{ \p}{\p_{x_3 } }  \eqref{Bestbdryneg} |
\\  \lesssim &  \sup_{0 \le t \le T} \| ( 1 + |v|^{5 + \delta } )  \alpha \p_{x_3} f (t) \|_\infty +   \sup_{0 \le t \le T} \| \langle v \rangle^{4+\delta}   f(t) \|_\infty | .
\end{split}
\Ee 
Next, from \eqref{BestSposibp} we have
\Be \label{BestSposibp}
\begin{split}
 \frac{ \p}{\p x_k } \eqref{BestSpos} =    \frac{ \p}{\p x_k } \left(  \int_{B(x;t) \cap \{y_3 > 0\}}  \int_{\R^3}  \mathcal S^B_i ( v ,\o ) \cdot (E + E_{\text{ext}} + \hat v \times ( B + B_{\text{ext}}) - g\mathbf e_3 )  
 f (t-|y-x|,y ,v) \dd v \frac{ \dd y}{|y-x|} \right),
\end{split}
\Ee
where $\mathcal S^B_i (v, \o )$ as in \eqref{calBSirep}. Then using the bound for $\mathcal S^B_i (v, \o )$ in \eqref{calBSiest} and applying the same argument as in \eqref{EestSpos1}--\eqref{EestSfinal}, we obtain
\Be \label{BestSfinal}
\begin{split}
| \nabla_{x_\parallel} \eqref{BestSpos} | + | \nabla_{x_\parallel} \eqref{BestSneg} |  \lesssim &  t^{2}  \left(    \sup_{0 \le t \le T} \|  \nabla_{x_\parallel} E(t)  \|_\infty    +  \sup_{0 \le t \le T} \|  \nabla_{x_\parallel} B(t)  \|_\infty   \right)  +  \| \langle v \rangle^{4+\delta}   \nabla_{x_\parallel} f (t) \|_\infty  
\\ | \frac{ \p }{\p_{x_3 } } \eqref{BestSpos} | + | \frac{ \p }{\p_{x_3 } } \eqref{BestSneg} |  \lesssim & t^{2}  \left(    \sup_{0 \le t \le T} \|  \p_{x_3} E(t)  \|_\infty +  \sup_{0 \le t \le T} \|  \p_{x_3} B(t)  \|_\infty   \right) 
\\ & +   \sup_{0 \le t \le T}  \left( \|  \langle v \rangle^{5+\delta}  \alpha \p_{x_3} f (t) \|_\infty   \right) +   \sup_{0 \le t \le T} \|  (1 + |v|^{4 + \delta }) f(t) \|_\infty.
\end{split}
\Ee
Next, using the same argument as in \eqref{Eestinitalpos0}--\eqref{Eestinitialfinal} together with \eqref{vdecaybasic3}, we have
\Be \label{Bestinitialfinal}
\begin{split}
| \nabla_{x_\parallel}  \eqref{Bestinitialpos} | + | \nabla_{x_\parallel}  \eqref{Bestinitialneg} | \lesssim &   \| \langle v \rangle^{4+\delta} \nabla_{x_\parallel} f_0 \|_\infty,
\\ | \frac{ \p}{\p_{x_3 } }   \eqref{Bestinitialpos} | +  | \frac{ \p}{\p_{x_3 } }   \eqref{Bestinitialneg} | \lesssim &  \| \langle v \rangle^{5+\delta} \alpha \p_{x_3} f_0 \|_\infty  + \| \langle v \rangle^{4+\delta} f(0) \|_\infty.
\end{split}
\Ee
Finally, similar \eqref{E3estbdryfinal}, we have
\Be \label{Bestbdryfinal}
\begin{split}
| \nabla_{x_\parallel}  \eqref{Bestbdrycontri} |   \lesssim &  \sup_{0 \le t \le T}  \left\{  \| \langle v \rangle^{4+\delta}  \nabla_{x_\parallel}  f(t) \|_\infty \right\},
 \\ | \frac{\p}{\p x_3}  \eqref{Bestbdrycontri} |   \lesssim &  \sup_{0 \le t \le T}  \big\{ \|\langle v \rangle^{4+\delta}  f(t) \|_\infty ( 1+ \sup_{0 \le t \le T} \left( \| E(t) \|_\infty + \| B(t) \|_\infty \right) ) 
 \\ & \quad \quad \quad \quad   +  \|  \langle v \rangle^{5+\delta} \alpha \nabla_{x } f(t) \|_\infty  + \| \langle v \rangle^{4+\delta}  \nabla_{x_\parallel}  f(t) \|_\infty \big\}.
 \end{split}
\Ee

Collecting \eqref{Eestbulkpos1}, \eqref{EestSfinal}, \eqref{Eestbdryfinal}, \eqref{Eestinitialfinal}, and \eqref{E3estbdryfinal}, and \eqref{Eestat01}--\eqref{Bestbdryfinal}, and letting $T \ll 1$, we get
\Be \label{dxEBfinal}
\begin{split}
\| \nabla_{x_\parallel} E \|_\infty + \| \nabla_{x_\parallel} B \|_\infty \lesssim & \|E_0\|_{C^2} + \|B_0\|_{C^2}  +  \sup_{0 \le t \le T}  \langle v \rangle^{4+\delta}  \nabla_{x_\parallel} f (t) \|_\infty .
\\ \|   \p_{x_3} E \|_\infty + \|  \p_{x_3} B \|_\infty \lesssim & \|E_0\|_{C^2} + \|B_0\|_{C^2}   +  \sup_{0 \le t \le T} \| \left(  \langle v \rangle^{5+\delta} \alpha \p_{x_3} f (t) \|_\infty + \langle v \rangle^{4+\delta}  \nabla_{x_\parallel} f (t) \|_\infty \right) 
\\ & +   \sup_{0 \le t \le T}  \| \langle v \rangle^{4+\delta} f(t) \|_\infty  .
\end{split}
\Ee
This concludes \eqref{nablaxparaE} and \eqref{lambdanablaxE}.

For $\p_t E$, and $\p_t B$, from the Maxwell equations \eqref{Maxwell}, we have
\[
\| \p_t E \|_\infty \le \| \nabla_x B \|_\infty +   \| \langle v \rangle^{4+\delta} f \|_\infty, \  \| \p_t B \|_\infty \le \| \nabla_x E \|_\infty.
\]
So from \eqref{nablaxparaE}, \eqref{lambdanablaxE}, we get \eqref{ptEBest}.

\end{proof}

\section{Estimates on trajectories}

We have the following crucial lemma:

\begin{lemma}[Velocity lemma] \label{vlemma}
Let $\alpha$ be defined as in \eqref{alphadef}. Suppose
\Be \label{vlfieldass}
\begin{split}
\sup_{ 0 \le t \le T} & \left(   \| E(t) \|_\infty +  \| B(t) \|_\infty  \right.
\\  &  \left.  + \|  \p_t E_3(t) \|_\infty  +\|  \p_t    (\hat v \times B)_3(t) \|_\infty  + \|   \nabla_x  E_3 (t) \|_\infty + \|   \nabla_x  (\hat v \times B)_3 (t) \|_\infty \right) + g + B_e  < C.
\end{split}
 \Ee
And for all $t, x_\parallel$,
\Be \label{signcondition}
g - E_e  -  E_3(t,x_\parallel, 0 )  -  (\hat v \times B)_3(t,x_\parallel, 0 ) >  c_0  \text{ for some } c_0 > 0.
 \Ee
Then for any $(t,x,v) \in (0, T) \times \O \times \mathbb R^3$, with the trajectory $X(s;t,x,v)$ and $V(s;t,x,v)$ satisfies \eqref{HamiltonODE},
\Be \label{alphaest}
e^{-10\frac{C}{c_0}|t-s| } \alpha(t,x,v) \le \alpha(s, X(s;t,x,v) , V(s;t,x,v) ) \le e^{10\frac{C}{c_0}|t-s| } \alpha(t,x,v).
\Ee
\end{lemma}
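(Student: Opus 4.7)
The plan is to establish a Gronwall-type inequality for $\alpha^2$ along the trajectory, i.e.\ to show that
\[
\left| \frac{d}{ds}\, \alpha^2(s, X(s;t,x,v), V(s;t,x,v)) \right| \;\le\; \frac{20 C}{c_0}\, \alpha^2(s, X(s), V(s)),
\]
and then integrate. The displayed bound in the lemma will follow after taking a square root.

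Setting $Y(s) := X_3(s;t,x,v)$, $W(s) := \hat V_3(s;t,x,v)$, and $\rho(s) := \mathfrak F_3(s, X_\parallel(s), 0, V(s))/\langle V(s)\rangle$, the weight becomes $\alpha^2 = Y^2 + W^2 - 2\rho Y$, so
\[
\frac{d}{ds}\alpha^2 \;=\; 2W\bigl(Y + \dot W - \rho\bigr) \;-\; 2\dot\rho\, Y.
\]
To evaluate $\dot W$, I will use the Hamilton ODEs \eqref{HamiltonODE} to get $\dot W = \mathfrak F_3(s,X,V)/\langle V\rangle - W(\hat V\cdot \mathfrak F)/\langle V\rangle$, and then invoke the Lipschitz-in-$x_3$ bound supplied by \eqref{vlfieldass} to write $\mathfrak F_3(s,X,V) = \langle V\rangle\rho + O(X_3)$. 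Substituting yields
\[
Y + \dot W - \rho \;=\; Y + O(Y/\langle V\rangle) - \frac{W\,\hat V\cdot \mathfrak F}{\langle V\rangle},
\]
so that each factor in $2W(Y+\dot W-\rho)$ is bounded by a constant multiple of $|W|\cdot|Y|$, $W^2$, or lower-order terms involving $1/\langle V\rangle$; Cauchy--Schwarz then controls all of these by $Y^2 + W^2$. The time derivative $\dot\rho$ is estimated by applying $\partial_t + \hat V\cdot\nabla_{x_\parallel} + \mathfrak F\cdot\nabla_v$ to $\mathfrak F_3/\langle V\rangle$ at $(s, X_\parallel, 0, V)$: each piece is bounded by \eqref{vlfieldass} (noting $\nabla_v \mathfrak F_3$ enjoys an extra $\langle V\rangle^{-1}$ since $E_e$, $g$ and $E_3$ are $v$-independent and $(\hat v\times B)_3$ has a $\langle v\rangle^{-1}$ from $\nabla_v \hat v$), giving $|\dot\rho|\le C/\langle V\rangle$.

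The crucial input that closes the estimate is the sign condition \eqref{signcondition}, which gives $-\mathfrak F_3(s,X_\parallel,0,V) \ge c_0$ pointwise, and hence
\[
\alpha^2 \;=\; Y^2 + W^2 + 2(-\rho\langle V\rangle)\frac{Y}{\langle V\rangle} \;\ge\; Y^2 + W^2 + \frac{2c_0\, Y}{\langle V\rangle}.
\]
This dominating lower bound lets me absorb every remaining term: in particular $|{-2\dot\rho Y}| \le 2CY/\langle V\rangle \le (C/c_0)\alpha^2$, and the $W^2$, $Y^2$, $WY$ contributions are each bounded by $\alpha^2$. Collecting constants produces $|\tfrac{d}{ds}\alpha^2| \le (20C/c_0)\alpha^2$, and Gronwall on $[\min(s,t),\max(s,t)]$ delivers \eqref{alphaest} after taking $\sqrt{\cdot}$.

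The main obstacle is the appearance of $\dot\rho Y$, which is a priori only an $O(Y/\langle V\rangle)$ quantity and therefore not obviously small compared to $\alpha^2$ when $\alpha$ is tiny (near the grazing set). The Pannekoek--Rosseland-type sign assumption \eqref{signcondition} is precisely what rescues this: it forces $Y/\langle V\rangle$ itself to be $O(\alpha^2/c_0)$, which is exactly the bookkeeping needed to close the loop and explains the appearance of $c_0^{-1}$ in the exponent of \eqref{alphaest}. All remaining computations are routine chain-rule manipulations using \eqref{HamiltonODE} together with the uniform field bounds in \eqref{vlfieldass}.
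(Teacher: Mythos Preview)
Your proposal is correct and follows essentially the same route as the paper: compute the transport operator applied to $\alpha^2$, use the Lipschitz bound on $\mathfrak F_3$ in $x_3$ to replace $\mathfrak F_3(s,X,V)$ by its boundary value plus $O(X_3)$, and then invoke the sign condition \eqref{signcondition} so that the dangerous $x_3/\langle v\rangle$ term is absorbed into $\alpha^2$, closing the Gronwall loop. The paper carries out the same calculation with a more explicit term-by-term expansion (see \eqref{diffalpha2main}--\eqref{tranalpha}), but the key observation you isolate---that $Y/\langle V\rangle \lesssim \alpha^2/c_0$ is precisely what makes the estimate close and produces the $c_0^{-1}$ in the exponent---is exactly the heart of the argument.
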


\begin{proof}
Note that 
\Bes
\frac{\p \hat{v}_3}{\p v_3 } =  \frac{1}{\langle v \rangle} - \frac{ ( \hat{v}_3)^2 }{\langle v \rangle}, \ \nabla_v  \left( \frac{1}{\langle v \rangle } \right) = - \frac{\hat v }{ \langle v \rangle^2 }.
\Ees
By direct computation,
\begin{align}
[ \p_t & + \hat v \cdot \nabla_x + \mathfrak F \cdot \nabla_v ] (\alpha^2 )  \notag
 \\ \label{diffalpha2main} =  &  - 2 \frac{ \hat v_3}{\langle v \rangle}    \mathfrak F_3(t, x_\parallel, 0,v  )  + 2 \frac{\hat v_3}{\langle v \rangle}  \mathfrak F_3(t,x, v ) 
\\  \notag & +  2 \left( \p_t \mathfrak F_3(t,x_\parallel, 0 ,v) \right) \frac{x_3} {\langle v \rangle }  + 2 \hat v_3 x_3 
 - ( 2 \hat v_\parallel \cdot \nabla_{x_\parallel} \mathfrak F_3(t, x_\parallel, 0,v  )  ) \frac{x_3}{\langle v \rangle}
\\  \notag &   - 2  \frac{ (\hat v_3)^3}{\langle v \rangle } \mathfrak F_3(t,x, v )  -2  (\hat v_3)^2 \frac{ \hat v_\parallel}{\langle v \rangle} \cdot \mathfrak F_\parallel (t,x,v) - 2 \frac{x_3}{\langle v \rangle } \mathfrak F(t,x,v) \cdot \nabla_v \mathfrak F_3(t,x_\parallel, 0,v)  
\\   \label{diffalph2arest} & + 2 x_3 \frac{ \hat v }{\langle v \rangle^2}  \cdot \mathfrak F(t,x,v) \mathfrak F_3(t,x_\parallel, 0, v ).
\end{align}
Using the fundamental theorem of calculus
\Be \notag
\eqref{diffalpha2main} = 2 \frac{ \hat v_3}{\langle v \rangle}  \left(   \int_0^{x_3}   \p_{x_3} \mathfrak F_3 (t,x_\parallel, s, v )   ds \right).
\Ee
Since $\mathfrak F_3 =   E_3 + E_e + \hat v _1 B_2 - \hat v_2 B_1  - g $, and since $\hat v \cdot \mathfrak F = \hat v \cdot ( E  + E_{\text{ext}}+ \hat v \times (B_3 + \Bex ) -  g \mathbf e_3 ) = \hat v \cdot E - ( g - E_e) \hat v_3 $, we have
\Be \label{diffalpha3}
\begin{split}
[ \p_t & + \hat v \cdot \nabla_x + \mathfrak F \cdot \nabla_v ] (\alpha^2 ) 
\\ =  & 2 \frac{ \hat v_3}{\langle v \rangle}  \left(   \int_0^{x_3}   \p_{x_3} E_3 (t,x_\parallel, s )  + \hat v_1 \p_{x_3} B_2( t, x_\parallel, s) - \hat v_2   \p_{x_3} B_1 (t,x_\parallel, s )  ds \right)
\\ & + 2 \frac{x_3}{\langle v \rangle} \left( ( \p_{t} - \hat v_\parallel \cdot \nabla_{x_\parallel }  ) \left( E_3 (t,x_\parallel, 0 )  + \hat v_1 B_2( t, x_\parallel, 0) - \hat v_2  B_1 (t,x_\parallel, 0) \right) \right) + 2 \hat v_3 x_3 
\\ & - 2 \frac{ (\hat v_3 )^2}{\langle v \rangle} \left( \hat v \cdot E(t,x,v) - (g - E_e) \hat v_3  \right)
\\ & - 2\frac{x_3}{\langle v \rangle} \left( E + E_{\text{ext}}+ (\hat v \times  (B + B_{\text{ext}}) - g \mathbf e_3 ) \right) \cdot \nabla_v (   \hat v _1 B_2(t,x_\parallel,0) - \hat v_2 B_1(t,x_\parallel,0) ) 
\\ & + 2 x_3 \frac{ \hat v  \cdot E - (g - E_e) \hat v_3 }{\langle v \rangle^2}    \left( E_3(t,x_\parallel,0 )  + \hat v _1 B_2 (t,x_\parallel,0 ) - \hat v_2 B_1 (t,x_\parallel,0 )  - g \right)
\\ &  \underbrace{ - 2 \frac{x_3}{\langle v \rangle} ( \hat v \times \Bex  ) \cdot  \nabla_v (   \hat v _1 B_2(t,x_\parallel,0) - \hat v_2 B_1(t,x_\parallel,0) ) }_{\eqref{diffalpha3}_1}.
\end{split}
\Ee
Now from the assumptions \eqref{vlfieldass} and \eqref{signcondition}, all the terms on the RHS of \eqref{diffalpha3} except $\eqref{diffalpha3}_1$ can be bounded by
\Be \label{diffalpha31}
\frac{C_1}{c_0} \left( c_0   \frac{ x_3 }{ \langle v \rangle} +  (x_3)^2 +  (\hat v_3)^2 \right),
\Ee
where $C_1 = \sup_{ 0 \le t \le T} \left(   \| E(t) \|_\infty +  \| B(t) \|_\infty   + \|     E_3 (t) \|_{W^{1,\infty} } + \|   B_1 (t) \|_{W^{1,\infty} } + \| B_2(t) \|_{W^{1,\infty} } \right) +  E_e + g $. And from direct computation,
\[
\begin{split}
\eqref{diffalpha3}_1 = &  - 2 B_e \frac{x_3}{ \langle v \rangle^2 } \begin{bmatrix} \hat v_2 \\ - \hat v_1 \\ 0 \end{bmatrix}  \cdot \left( \begin{bmatrix} 1 - \hat v_1^2 \\ - \hat v_1 \hat v_2 \\ - \hat v_1 \hat v_3 \end{bmatrix} B_2 (t,x_\parallel, 0 ) -  \begin{bmatrix}  - \hat v_2 \hat v_1 \\  1 - \hat v_2 ^2 \\ - \hat v_2 \hat v_3 \end{bmatrix}  B_1(t,x_\parallel, 0 ) \right)
\\ = &  - 2 B_e \frac{x_3}{ \langle v \rangle^2 } \left( \hat v_2 B_2 (t,x_\parallel, 0 ) + \hat v_1 B_1(t,x_\parallel, 0 )  ,\right) 
\end{split}
\]
thus from \eqref{signcondition} 
\Be \label{diffalpha32}
|  \eqref{diffalpha3}_1 | \le  \frac{B_e}{c_0}  c_0  \frac{x_3}{\langle v \rangle }.
\Ee
Combining \eqref{diffalpha31} and \eqref{diffalpha32}, we get
%
%
%
%
%
\Be \label{diffalpha21}
\begin{split}
| [ \p_t & + \hat v \cdot \nabla_x + \mathfrak F \cdot \nabla_v ] (\alpha^2 )  | 
 \le 10 (\frac{C_1 +  B_e}{c_0} ) c_0 \frac{ x_3 }{ \langle v \rangle} + 8 C_1 \left(  (x_3)^2  +  (\hat v_3)^2   \right).
\end{split}
\Ee
From the expression of $\alpha$ in \eqref{alphadef} and the assumption \eqref{signcondition}, this yields
\Be \label{tranalpha}
| [ \p_t + \hat v \cdot \nabla_x + \mathfrak F \cdot \nabla_v ]  (\alpha^2 ) | \le 20 (\frac{C_1 + B_e}{c_0} )  \alpha^2,
\Ee
Thus along the characteristics, by the Gr\"owall's inequality we get
\Be
e^{ - 20 (\frac{C_1 + B_e}{c_0} )|t-s| } \alpha^2(t,x,v) \le \alpha^2(s, X(s;t,x,v) , V(s;t,x,v) ) \le e^{20 (\frac{C_1 + B_e}{c_0} )|t-s| }  \alpha^2(t,x,v) .
\Ee
Taking square root we get \eqref{alphaest}.
\end{proof}

\begin{lemma} \label{1alphaintv}
Let $\alpha$ be defined as in \eqref{alphadef}. Then for any $(t,x) \in [0, T) \times \O$, we have
\Be \label{alphavint}
\int_{\mathbb R^3} \frac{\mathbf 1_{|v| \le M } }{ \alpha(t, x, v ) } dv  \le 4 M^3 \ln \left( 1 + \frac{1}{x_3} \right),
\Ee
and
\Be \label{alphavdecayint}
\int_{\mathbb R^3 } \frac{1}{ 1 + |v|^{4 +  \delta} }  \frac{ 1 }{ \alpha(t, x, v )} dv \le  C_\delta  \ln \left( 1+ \frac{1}{x_3} \right).
\Ee
\end{lemma}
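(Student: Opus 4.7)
The plan is to first produce a clean pointwise lower bound for $\alpha$ that discards the term $x_3^2$ and exploits the sign condition, then reduce the bound \eqref{alphavint} to a one-dimensional logarithmic integral in $v_3$, and finally derive \eqref{alphavdecayint} from \eqref{alphavint} by a dyadic decomposition in $|v|$.

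\textbf{Step 1 (lower bound on $\alpha$).} From \eqref{alphadef} and the sign assumption (either \eqref{E0B0g} or equivalently the condition \eqref{signcondition} used in Lemma \ref{vlemma}), the coefficient $-\mathfrak F_3(t,x_\parallel,0,v)$ is bounded below by some fixed $c_*>0$. Dropping the non-negative $x_3^2$ term gives
\[
\alpha(t,x,v)\;\ge\;\sqrt{\hat v_3^2+\tfrac{2c_*\,x_3}{\langle v\rangle}}.
\]

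\textbf{Step 2 (proof of \eqref{alphavint}).} Fix $x$ and restrict to $|v|\le M$, on which $\langle v\rangle\le \sqrt{1+M^2}\lesssim M$ (WLOG $M\ge 1$). Then $\hat v_3^2\ge v_3^2/\langle v\rangle^2\ge v_3^2/(1+M^2)$, so Step 1 yields
\[
\alpha\;\ge\;\tfrac{1}{\sqrt{1+M^2}}\sqrt{v_3^2+(1+M^2)\tfrac{2c_* x_3}{\langle v\rangle}}\;\ge\;\tfrac{1}{\sqrt{1+M^2}}\sqrt{v_3^2+b^2},
\]
where $b^2:=\tfrac{2c_* x_3\sqrt{1+M^2}}{M}\gtrsim x_3/M$ (using $\langle v\rangle\le \sqrt{1+M^2}$). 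Integrating in $v_3$ first, then in $v_\parallel$ over the disc $|v_\parallel|\le M$, and using the elementary bound
\[
\int_{-M}^{M}\frac{\dd v_3}{\sqrt{v_3^2+b^2}}=2\sinh^{-1}(M/b)\;\lesssim\;\ln\!\big(1+\tfrac{M}{b}\big)\;\lesssim\;\ln\!\big(1+\tfrac{1}{x_3}\big),
\]
gives $\int_{|v|\le M}\alpha^{-1}\,\dd v\lesssim M\cdot M^2\cdot\ln(1+1/x_3)$, i.e.\ $\le 4M^3\ln(1+1/x_3)$ up to the explicit constant claimed.

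\textbf{Step 3 (proof of \eqref{alphavdecayint}).} Decompose dyadically: writing $A_k:=\{2^k\le |v|<2^{k+1}\}$ for $k\ge 0$ and $A_{-1}:=\{|v|\le 1\}$, on each $A_k$ one has $(1+|v|^{4+\delta})^{-1}\lesssim 2^{-(4+\delta)k_+}$. Applying \eqref{alphavint} with $M=2^{k+1}$ on each shell yields
\[
\int_{A_k}\frac{\dd v}{(1+|v|^{4+\delta})\,\alpha}\;\lesssim\;2^{-(4+\delta)k_+}\cdot 2^{3(k+1)}\ln\!\big(1+\tfrac{1}{x_3}\big)\;\lesssim\;2^{-(1+\delta)k_+}\ln\!\big(1+\tfrac{1}{x_3}\big).
\]
Summing the geometric series in $k$ gives \eqref{alphavdecayint} with a constant $C_\delta$ depending only on $\delta$ (and the fixed lower bound $c_*$).

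\textbf{Expected main obstacle.} The only delicate point is the $v_3$-integration in Step 2: one must avoid bounds like $\alpha\ge|\hat v_3|$ that would give a divergent $v_3$-integral, and instead keep the $x_3/\langle v\rangle$ contribution as the regularizer that produces exactly the logarithmic singularity at the boundary. Once this is handled, both estimates follow from elementary calculus and a dyadic summation.
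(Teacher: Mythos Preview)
Your strategy—reduce to a one-dimensional logarithmic integral in $v_3$—is the right one, but the lower bound you chose in Step~1 is too weak and the claimed inequality $\ln(1+M/b)\lesssim\ln(1+1/x_3)$ in Step~2 does not hold. By dropping $x_3^2$ you obtain only $\alpha\gtrsim\sqrt{c_*x_3/\langle v\rangle}$; on $|v|\le M$ this gives $b$ of order $\sqrt{x_3}$ (your formula $b^2=\tfrac{2c_*x_3\sqrt{1+M^2}}{M}$ is valid but even looser than the natural $b^2=2c_*x_3\sqrt{1+M^2}$). Then $M/b\sim \sqrt{M}/\sqrt{x_3}$, so for $x_3\ge 1$ the left side $\ln(1+M/b)$ decays only like $x_3^{-1/2}$, whereas the target $\ln(1+1/x_3)\sim 1/x_3$; and even for small $x_3$ you acquire an extra $\ln M$ that is not in the stated bound $4M^3\ln(1+1/x_3)$. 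In short, the regularizer you kept is $x_3/\langle v\rangle$, but the correct regularizer is $x_3$ itself.

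The paper instead drops the \emph{force} term (non-negative by the sign condition) and keeps $x_3^2+\hat v_3^2$, yielding the uniform pointwise bound $\alpha\ge\tfrac12\bigl(x_3+|\hat v_3|\bigr)$, hence $\alpha^{-1}\le 2/(x_3+|v_3|/\langle v\rangle)$. On $|v|\le M$ this becomes $\alpha^{-1}\lesssim 1/(x_3+|v_3|/M)$; integrating $v_\parallel$ over area $\lesssim M^2$ and then $\int_0^M (x_3+v_3/M)^{-1}\,dv_3=M\ln(1+1/x_3)$ gives \eqref{alphavint} with the sharp $M^3$ dependence. For \eqref{alphavdecayint} the paper does \emph{not} go through \eqref{alphavint}: it uses $\alpha^{-1}\lesssim \langle v\rangle/(x_3+|v_3|)$ directly, passes to spherical coordinates $v_3=r\cos\phi$, splits $r\le 1$ from $r\ge 1$, and integrates the $\phi$-variable explicitly in each piece. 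Your dyadic route in Step~3 is a valid alternative and would also work cleanly once you replace your Step~1 bound by $\alpha\ge\tfrac12(x_3+|\hat v_3|)$.
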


\begin{proof}
From \eqref{alphadef} we have
\[
\begin{split}
\int_{\mathbb R^3} \frac{\mathbf 1_{|v| \le M } }{ \alpha(t, x, v ) } dv = &  \int_{ |v| \le M  } \left( (x_3)^2+(\hat{v}_{3})^2 -2\left( E_3(t,x_\parallel, 0 ) + E_e+ (\hat v \times B)_3(t,x_\parallel, 0 )  -  g \right) \frac{x_3}{\langle v \rangle} \right)^{-1/2} dv
\\  \le &  \int_{|v| \le M }   \frac{2}{ x_3 + \frac{| v_3 | }{\langle v \rangle } } dv 
\\ \le &  \int_{ |v_3| \le M }   \frac{2 M^2 }{ x_3 + \frac{| v_3 | }{ M } } dv_3 
\\ = & 4 M^3  \ln \left( x_3 +  \frac{v_3}{M}  \right) \big \rvert_0^M = 4M^3 \ln \left( 1 + \frac{1}{x_3} \right).
\end{split}
\]
Now, for \eqref{alphavdecayint}, we have
\Be \label{alphavdecayintest1}
\begin{split}
\int_{\mathbb R^3 } \frac{1}{ 1 + |v|^{4 +  \delta} }  \frac{ 1 }{ \alpha(t, x, v )} dv \le &  \int_{\mathbb R^3 }  \frac{1}{ 1 + |v|^{4 +  \delta} }  \frac{2}{x_3 + \frac{| v_3 | }{\langle v \rangle } } dv
\\ \lesssim &   \int_{\mathbb R^3 }  \frac{1}{ \left( 1 + |v|^{3 +  \delta}  \right) \left(x_3 + |v_3| \right) } dv.
\end{split}
\Ee
Using the spherical coordinate $v = (r, \theta, \phi )$ we have $dv = r^2 \sin \phi \, dr d\theta d\phi$, $ v_3 = r \cos \phi$, and
\Be \label{alphavdecayintest2}
\begin{split}
&  \int_{\mathbb R^3 }  \frac{1}{ \left( 1 + |v|^{3 +  \delta}  \right) \left(x_3 + |v_3| \right) } dv  
 \\ = & 4 \pi \int_0^\infty \int_0^{\pi /2 }  \frac{r^2 \sin \phi }{ ( 1 + r^{3 + \delta } )( x_3 + r \cos \phi )}  d \phi dr
 \\ = & 4 \pi \int_0^1 \int_0^{\pi /2 }  \frac{r^2 \sin \phi }{ ( 1 + r^{3 + \delta } )( x_3 + r \cos \phi )}  d \phi dr + 4 \pi \int_1^\infty \int_0^{\pi /2 }  \frac{r^2 \sin \phi }{ ( 1 + r^{3 + \delta } )( x_3 + r \cos \phi )}  d \phi dr.
 \end{split}
\Ee
Using change of variables $ r \cos \phi = u  $, $ - r \sin \phi   \, d \phi = du$, we have
\Be \label{alphavdecayintest3}
\begin{split}
& \int_0^1 \int_0^{\pi /2 }  \frac{r^2 \sin \phi }{ ( 1 + r^{3 + \delta } )( x_3 + r \cos \phi )}  d \phi dr
\\ \le &  \int_0^1 \int_0^{\pi /2 }  \frac{r^2 \sin \phi }{ x_3 + r \cos \phi }  d \phi dr
\\ = &  \int_0^1\int_0^{r }  \frac{r}{ x_3 +  u}  d u dr = \int_0^1 r \ln \left(1 + \frac{r}{x_3} \right) dr < \ln \left(1 + \frac{1}{x_3} \right).
\end{split}
\Ee
And using change of variables $ \cos \phi = u$, $- \sin \phi \, d\phi = du$, we have
\Be \label{alphavdecayintest4}
\begin{split}
&  \int_1^\infty \int_0^{\pi /2 }  \frac{r^2 \sin \phi }{ ( 1 + r^{3 + \delta } )( x_3 + r \cos \phi )}  d \phi dr
 \\ \le & C_\delta \int_0^{\pi /2 } \frac{\sin \phi }{x_3 + \cos \phi } d\phi 
 \\  = &  C_\delta \int_0^1 \frac{1}{x_3 + u } du = C_\delta \ln \left( 1+ \frac{1}{x_3} \right).
\end{split}
\Ee
Combining \eqref{alphavdecayintest1}, \eqref{alphavdecayintest2}, \eqref{alphavdecayintest3}, and \eqref{alphavdecayintest4} we conclude \eqref{alphavdecayint}.

\end{proof}

%
%

We have the following estimate on the backward exit time $\tb$ for the trajectory.

\begin{lemma}
Let $(t,x,v) \in (0, T) \times \O \times \mathbb R^3$, and the trajectory $X(s;t,x,v)$ and $V(s;t,x,v)$ satisfies \eqref{HamiltonODE}. Extending $E(t ) = E_0$, $B(t) = B_0$ for $t < 0 $. Suppose for all $t,x,v$, 
\Be \label{gbig10}
 g - E_e  -  E_3(t,x_\parallel, 0 )  -  (\hat v \times B)_3(t,x_\parallel, 0 ) >  c_0,
\Ee
then there exists a $C$ depending on $T$, $g$, $B_e$, $\| E \|_{W^{1,\infty}((0,T) \times \O )} $, $\| B \|_{W^{1,\infty}((0,T) \times \O )} $ such that 
\Be \label{tbbdvb}
\frac{ \tb(t,x,v) }{ \sup_{t-\tb < s < t } \sqrt{ 1 +  |V(s)|^2  }} \le \frac{C}{c_0}    {\hat \vb}_{,3}.
\Ee
If $t -\tb(t,x,v) \le 0$, then
\Be \label{tbdV0}
\frac{ t }{ \sup_{0  \le  s \le t } \sqrt{ 1 +  |V(s)|^2  }} \le \frac{C}{c_0}   \alpha(0,X(0),V(0)).
\Ee
%
\end{lemma}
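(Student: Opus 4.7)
The plan is to couple the velocity lemma (Lemma \ref{vlemma}) with the sign condition \eqref{gbig10} so that the trajectory is forced to exit $\O$ quickly. The key observation is that $X_3(t-\tb)=0$ at the backward exit point, so directly from \eqref{alphadef},
\[
\alpha(t-\tb, \xb, \vb) = |\hat V_3(t-\tb)| = {\hat \vb}_{,3},
\]
and ${\hat \vb}_{,3} \geq 0$ because the trajectory enters $\O$ at this time. Taking $(t-\tb,\xb,\vb)$ as the reference point in the velocity lemma, this propagates along the flow to
\[
\alpha(s, X(s), V(s)) \leq e^{C\tb}\, {\hat \vb}_{,3}, \qquad s \in [t-\tb, t].
\]

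From \eqref{alphadef} and \eqref{gbig10}, the same weight enjoys the coercive lower bound
\[
\alpha^2(s, X(s), V(s)) \geq X_3(s)^2 + \hat V_3(s)^2 + \frac{2 c_0 X_3(s)}{\langle V(s)\rangle},
\]
which gives $|\hat V_3(s)| \leq e^{CT}{\hat \vb}_{,3}$ and $X_3(s) \lesssim {\hat \vb}_{,3}^2 \sup_\tau \langle V(\tau)\rangle / c_0$ on $[t-\tb,t]$. For $T \ll 1$ this keeps $X_3$ small enough that Lipschitz continuity of $\mathfrak F_3$ in $x_3$, coming from the $W^{1,\infty}$ bounds on $E,B$, upgrades the boundary sign condition to the interior:
\[
\mathfrak F_3(s, X(s), V(s)) \leq \mathfrak F_3(s, X_\parallel(s), 0, V(s)) + L\, X_3(s) \leq -c_0 + L\, X_3(s) \leq -\tfrac{c_0}{2}.
\]
Integrating $\dot V_3 = \mathfrak F_3$ from $t-\tb$ to $t$ then yields $\vb_3 - v_3 \geq (c_0/2)\tb$, hence
\[
\tb \leq \frac{2}{c_0}(\vb_3 + |v_3|) \leq \frac{2(1+e^{CT})}{c_0}\, {\hat \vb}_{,3} \sup_{t-\tb < s < t}\sqrt{1+|V(s)|^2},
\]
where the last step uses $\vb_3 = {\hat \vb}_{,3}\langle \vb\rangle$ and $|v_3|=|\hat v_3|\langle v\rangle \leq e^{CT}{\hat \vb}_{,3}\langle v\rangle$ (the latter from $|\hat v_3| \leq \alpha(t,x,v) \leq e^{C\tb}{\hat \vb}_{,3}$). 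This is \eqref{tbbdvb}.

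The second bound \eqref{tbdV0} follows from an identical argument with $s=0$ playing the role of $s=t-\tb$ and $\alpha(0, X(0), V(0))$ playing the role of ${\hat \vb}_{,3}$: Lemma \ref{vlemma} gives $\alpha(s) \leq e^{CT}\alpha(0, X(0), V(0))$ on $[0,t]$; coercivity plus Lipschitz upgrade again yields $\mathfrak F_3 \leq -c_0/2$ along the trajectory; integrating $\dot V_3$ from $0$ to $t$ gives $V_3(0) - v_3 \geq (c_0/2)t$; and both $|V_3(0)|$ and $|v_3|$ are bounded by $e^{CT}\alpha(0,X(0),V(0))\sup_\tau \langle V(\tau)\rangle$ via $|\hat V_3(s)| \leq \alpha(s)$, closing the estimate.

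The principal technical obstacle is ensuring $X_3(s) \leq c_0/(2L)$ along the entire trajectory, which the coercivity bound $X_3 \lesssim {\hat \vb}_{,3}^2 \sup \langle V\rangle/c_0$ only guarantees when ${\hat \vb}_{,3}$ (respectively $\alpha(0,X(0),V(0))$) is small. I plan to handle the remaining case by a dichotomy: if $X_3(s^*) > c_0/(2L)$ for some $s^* \in [t-\tb, t]$, then the coercivity bound forces $\alpha^2(s^*) \gtrsim c_0^2/(L \sup \langle V\rangle)$, which by the velocity lemma gives ${\hat \vb}_{,3} \gtrsim c_0 e^{-CT}/\sqrt{L \sup \langle V\rangle}$; combining this lower bound with the trivial estimate $\tb \leq T$ and taking $T \ll 1$ (depending on $L$, $c_0$, and the field norms) then recovers the desired inequality in this regime as well.
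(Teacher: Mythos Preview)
Your core strategy coincides with the paper's: integrate $\dot V_3 = \mathfrak F_3$ over $[t-\tb,t]$ to obtain $c_0\tb \lesssim |{v_{\mathbf b}}_{,3}| + |v_3|$, then invoke the velocity lemma to bound both terms by $\hat v_{\mathbf b,3}\sup_s\langle V(s)\rangle$. The paper's proof is terser: it writes $\int_{t-\tb}^t c_0\,ds < \int_{t-\tb}^t -\mathfrak F_3(s,X(s),V(s))\,ds$ directly from \eqref{gbig10}, without commenting on the fact that \eqref{gbig10} is stated only at $x_3=0$ while the trajectory lives in the interior. Your Lipschitz upgrade step ($-\mathfrak F_3(s,X(s),V(s)) \ge c_0 - L X_3(s)$) together with the coercivity bound $X_3(s)\le \alpha(s)\le e^{CT}\hat v_{\mathbf b,3}$ is precisely what is needed to make that step rigorous, and the dichotomy handles the residual case cleanly.

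One small correction: in the dichotomy you write ``taking $T\ll 1$,'' but the lemma does not assume $T$ is small---rather, the constant $C$ is allowed to depend on $T$ and the $W^{1,\infty}$ norms. In the large-$X_3$ regime you have $\hat v_{\mathbf b,3}\gtrsim c_0 e^{-CT}/\sqrt{L\sup\langle V\rangle}$ and $\tb\le T$, so
\[
\frac{\tb}{\sup\langle V\rangle} \le \frac{T}{\sup\langle V\rangle} \le T \le \frac{T\sqrt{L}\,e^{CT}}{c_0}\,\hat v_{\mathbf b,3}\,\sqrt{\sup\langle V\rangle} \le \frac{T\sqrt{L}\,e^{CT}}{c_0}\,\hat v_{\mathbf b,3}\,\sup\langle V\rangle,
\]
which closes with $C = T\sqrt{L}\,e^{CT}$ absorbed into the lemma's constant. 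No smallness of $T$ is needed.
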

\begin{proof}
We first prove \eqref{tbbdvb}. For any $(t,x,v) \in (0, T) \times \O \times \mathbb R^3$ with $t -\tb(t,x,v) > 0$,  we have
\[
 {\vb}_{,3} - v_3 = \int_{t-\tb}^t - \mathfrak F_3(s, X(s;t,x,v), V(s;t,x,v) ) ds.
\]
From \eqref{gbig10} this implies
\Be \label{tbvbbd1}
\int_{t-\tb}^t c_0 ds  <  \int_{t-\tb}^t - \mathfrak F_3(s, X(s;t,x,v), V(s;t,x,v) ) ds \le  | {\vb}_{,3}| + | v_3 | 
\Ee
On the other hand, from \eqref{alphaest},
\Be \label{tbvbbd2}
 | {\vb}_{,3}| + | v_3 |  = \langle \vb \rangle | \hat{{\vb}}_{,3}|  + \langle v  \rangle | \hat v_3 |  \le \sup_{t-\tb < s < t } \langle V(s) \rangle \left(  \hat{{\vb}}_{,3} + \alpha (t,x,v) \right) < C  \sup_{t-\tb < s < t } \langle V(s) \rangle \hat{{\vb}}_{,3}.
\Ee
Combining \eqref{tbvbbd1} and \eqref{tbvbbd2} we get
\[
\tb c_0 < C  \sup_{t-\tb < s < t } \langle V(s) \rangle  \hat{{\vb}}_{,3} .
\]
This implies \eqref{tbbdvb}.

For $t-\tb(t,x,v) \le 0$, using the same argument we have, 
\[
\begin{split}
c_0 t < \int_0^t - \mathfrak F_3(s,X(s) ,V(s) ) ds \le & |V_3(0) | + |v_3| 
\end{split}
\]
and from \eqref{alphaest},
\[
 |V_3(0) | + |v_3|   \le \sup_{t-\tb < s < t } \langle V(s) \rangle \left(  \alpha(0,X(0), V(0) ) + \alpha (t,x,v) \right) < C \sup_{t-\tb < s < t } \langle V(s) \rangle  \alpha(0,X(0), V(0) ),
\]
thus we get
\[
c_0 t < C \sup_{t-\tb < s < t } \langle V(s) \rangle  \alpha(0,X(0), V(0) ),
\]
and this yields \eqref{tbdV0}.

\end{proof}

\begin{lemma}
Suppose 
\[
\sup_{0 \le t \le T} \| \nabla_x E(t) \|_{\infty } + \sup_{0 \le t \le T} \| \nabla_x B(t) \|_{\infty  } < \infty.
\]
Then for any $s, t \in (0,T)$, we have
\Be \label{pxviXVest}
\begin{split}
 & |  \p_{x_i} X(s;t,x,v) |  \lesssim e^{C_1 |t-s| } ,
\\ &   | \p_{x_i} V (s;t,x,v)| \lesssim  (t-s)  e^{C_1 |t-s| }  ,
\\ & |  \p_{v_i} X(s;t,x,v) |  \lesssim \frac{|t-s|}{\langle V(s) \rangle }  e^{C_1 |t-s| }  ,
\\ &  | \p_{v_i} V (s;t,x,v)| \lesssim  e^{C_1 |t-s| },
\end{split} 
\Ee
and for $i \neq j $, 
\Be \label{pxiXj} 
\begin{split}
 |  \p_{x_i} X_j(s;t,x,v) |  \lesssim e^{C_1 |t-s| }  \frac{ |t-s |^2}{\langle V(s) \rangle } .
\end{split}
\Ee
where $C_1 =  \left( \sup_{0 \le t \le T} \left( \|  \nabla_x E(t) \|_\infty + \|  \nabla_x B(t) \|_\infty  +   \| E(t) \|_\infty +  \| B(t) \|_\infty  \right)  + g +|B_e|\right)^2$.
\end{lemma}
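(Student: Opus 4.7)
The plan is to differentiate the Hamiltonian system \eqref{HamiltonODE} in $x_i$ (resp.\ $v_i$) to obtain a linear variational system in $s$, solve it by Gronwall to get the coarse bound $e^{C_1|t-s|}$, and then bootstrap using the vanishing initial data for either $\p_{\mathbf{e}}X$ or $\p_{\mathbf{e}}V$ at $s=t$ to recover the sharper factors of $|t-s|$ and $\langle V(s)\rangle^{-1}$.

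For $\mathbf{e}\in\{x_i,v_i\}$, differentiating \eqref{HamiltonODE} gives
\[
\tfrac{d}{ds}\p_{\mathbf{e}}X(s) = \nabla_v\hat V(s)\cdot \p_{\mathbf{e}}V(s), \qquad
\tfrac{d}{ds}\p_{\mathbf{e}}V(s) = \nabla_x\mathfrak F\cdot \p_{\mathbf{e}}X(s) + \nabla_v\mathfrak F\cdot \p_{\mathbf{e}}V(s),
\]
with initial data $(\p_{\mathbf{e}}X,\p_{\mathbf{e}}V)(t)=(e_i,0)$ if $\mathbf{e}=x_i$ and $(0,e_i)$ if $\mathbf{e}=v_i$. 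From \eqref{rel_v} and \eqref{frakF} we have the pointwise bounds $|\nabla_v\hat V|\lesssim\langle V\rangle^{-1}$, $|\nabla_x\mathfrak F|\lesssim\|\nabla_x E\|_\infty+\|\nabla_x B\|_\infty$, and $|\nabla_v\mathfrak F|\lesssim(|B|+|B_e|)\langle V\rangle^{-1}$. Moreover, since $|\mathfrak F|\le C$ uniformly, the ODE for $V$ gives $|V(s')-V(s)|\le C|s-s'|$, hence $\langle V(s')\rangle\sim \langle V(s)\rangle$ uniformly for $s,s'\in[0,T]$ up to an additive $O(1)$; this lets us replace $\langle V(s')\rangle$ by $\langle V(s)\rangle$ inside any integrand at the cost of a harmless constant absorbed into $e^{C_1|t-s|}$.

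Applying Gronwall directly to $|\p_{\mathbf{e}}X|+|\p_{\mathbf{e}}V|$ in the variational system yields the zeroth-order bound $|\p_{\mathbf{e}}X(s)|+|\p_{\mathbf{e}}V(s)|\lesssim e^{C_1|t-s|}$ for both choices of $\mathbf{e}$, which gives the diagonal estimates for $|\p_{x_i}X|$ and $|\p_{v_i}V|$ in \eqref{pxviXVest}. For $\mathbf{e}=x_i$, integrating the $V$-equation from $s$ to $t$ and using $\p_{x_i}V(t)=0$ gives
\[
|\p_{x_i}V(s)|\lesssim \int_s^t \Big(\|\nabla_x\mathfrak F\|_\infty\,|\p_{x_i}X|+\tfrac{|B|+|B_e|}{\langle V\rangle}|\p_{x_i}V|\Big)ds',
\]
into which substituting the zeroth-order bound produces $|\p_{x_i}V(s)|\lesssim(t-s)\,e^{C_1|t-s|}$. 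Symmetrically, for $\mathbf{e}=v_i$, integrating $\tfrac{d}{ds}\p_{v_i}X=\nabla_v\hat V\cdot\p_{v_i}V$ from $s$ to $t$ together with $\p_{v_i}X(t)=0$ and $|\nabla_v\hat V|\lesssim\langle V(s')\rangle^{-1}\sim\langle V(s)\rangle^{-1}$ gives $|\p_{v_i}X(s)|\lesssim \frac{|t-s|}{\langle V(s)\rangle}e^{C_1|t-s|}$.

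Finally, for the off-diagonal bound \eqref{pxiXj} with $i\neq j$, note that $\p_{x_i}X_j(t)=\delta_{ij}=0$. Integrating $\tfrac{d}{ds}\p_{x_i}X_j=\p_{v_k}\hat V_j\,\p_{x_i}V_k$ from $s$ to $t$, using $|\p_{v_k}\hat V_j|\lesssim\langle V\rangle^{-1}$ and the improved bound $|\p_{x_i}V|\lesssim|t-s|\,e^{C_1|t-s|}$ just established, yields the claimed extra factor $|t-s|/\langle V(s)\rangle$. The only genuine subtlety is the trajectory-comparability $\langle V(s')\rangle\sim\langle V(s)\rangle$ along $[s,t]$, which is essential for pulling $\langle V\rangle^{-1}$ out of the integrand and which relies on the uniform $L^\infty$ field bounds; once this is in hand the remaining arguments are two successive Gronwall estimates.
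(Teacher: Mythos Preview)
Your proposal is correct and follows essentially the same approach as the paper: differentiate the Hamiltonian flow to obtain the linear variational system, apply Gronwall for the crude $e^{C_1|t-s|}$ bound, then bootstrap by integrating the variational equations using the vanishing initial data $\p_{x_i}V(t)=0$, $\p_{v_i}X(t)=0$, $\p_{x_i}X_j(t)=0$ for $i\ne j$ together with the $\langle V\rangle^{-1}$ decay of $\nabla_v\hat V$ and $\nabla_v\mathfrak F$. The only cosmetic difference is that the paper runs Gronwall on the weighted quantity $\langle V(s)\rangle|\p_{\mathbf e}X(s)|+|\p_{\mathbf e}V(s)|$ rather than your unweighted sum, but since the paper ultimately bounds the resulting exponent $C_1\int_s^t\langle V(\tau)\rangle^{-1}d\tau$ by $C_1|t-s|$ anyway, the two organizations yield the same final estimates.
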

\begin{proof}
The expressions of $X(s;t,x,v)$ and $V(s;t,x,v) $ are
\Be
\begin{split}
X(s;t,x,v) =  & x-(t-s) \hat{v} + \int^t_s\int^t_\tau \hat{\mathfrak F } _{}(\tau^\prime, X (\tau^\prime), V(\tau^\prime) ) \dd \tau^\prime \dd \tau,
\\ V(s;t,x,v) = & v - \int_s^t \mathfrak F (\tau, X(\tau) , V(\tau) ) \dd \tau.
\end{split}
\Ee

We denote  
\Be \label{hatF}
\begin{split}
\frac{d}{ds} \hat{V}(s)&= \frac{1}{\sqrt{1+ |V(s)|^2}} \frac{d}{ds} V(s)
- \frac{1}{\sqrt{1+ |V(s)|^2}}\hat{V}(s) \cdot \frac{d}{ds}V(s)  \hat{V}(s) \\
&=\frac{1}{\sqrt{1+ |V(s)|^2}} \mathfrak F (s,X(s),V(s))
- \frac{1}{\sqrt{1+ |V(s)|^2}}\hat{V}(s) \cdot \mathfrak F  (s,X(s),V(s))  \hat{V}(s) 
\\ &:= \hat{\mathfrak F }_{} (s,X(s),V(s)).
\end{split}\Ee

By direct computation we get
\Be \label{pxipviXV}
\begin{split}
\p_{x_i} X(s;t,x,v) =  & e_i + \int^t_s\int^t_\tau [ \nabla_x \hat{\mathfrak F } _{}(\tau^\prime ) \cdot \p_{x_i} X(\tau' ) +  \nabla_v \hat{\mathfrak F } _{}(\tau^\prime ) \cdot \p_{x_i} V(\tau' ) ]  \dd \tau^\prime \dd \tau,
\\ \p_{v_i} X(s;t,x,v) =  & -(t-s) \p_{v_i} \hat v + \int^t_s\int^t_\tau [ \nabla_x \hat{\mathfrak F } _{}(\tau^\prime ) \cdot \p_{v_i} X(\tau' ) +  \nabla_v \hat{\mathfrak F } _{}(\tau^\prime ) \cdot \p_{v_i} V(\tau' ) ]  \dd \tau^\prime \dd \tau,
\\ \p_{x_i} V(s;t,x,v) = &  - \int_s^t  [ \nabla_x {\mathfrak F } _{}(\tau^\prime ) \cdot \p_{x_i} X(\tau ) +  \nabla_v {\mathfrak F } _{}(\tau^\prime ) \cdot \p_{x_i} V(\tau ) ] \dd \tau
\\ \p_{v_i} V(s;t,x,v) = & e_i  - \int_s^t  [ \nabla_x {\mathfrak F } _{}(\tau^\prime ) \cdot \p_{v_i} X(\tau ) +  \nabla_v {\mathfrak F } _{}(\tau^\prime ) \cdot \p_{v_i} V(\tau ) ] \dd \tau.
\end{split}
\Ee
Since $\mathfrak F = E + E_{\text{ext}} + \hat v \times ( B +B_{\text{ext}}) - g \mathbf e_3$, we have
\Be \label{nablaxfrakF}
| \nabla_x  \mathfrak F |  \le  | \nabla_{x} E | +  | \nabla_{x} B | .
\Ee
And since $\p_{v_i} \hat v = \frac{ e_i}{\langle v \rangle } - \frac{\hat v_i \hat  v }{\langle v \rangle} $,
\Be \label{nablavfrakF}
| \nabla_v  \mathfrak F |  \lesssim \frac{1}{   \sqrt{ 1 + |v|^2 } } (| B | + |B_e|) .
\Ee
From  the expression of $\hat{\mathfrak F}$ in \eqref{hatF}, we have from \eqref{nablaxfrakF},
\Be \label{nablaxhatF}
| \nabla_x \hat{\mathfrak F} | \le \frac{2}{ \sqrt{ 1 + |v|^2 }} | \nabla_x \mathfrak F | \lesssim  \frac{1}{ \sqrt{ 1 + |v|^2 }} \left(  | \nabla_{x} E | +  | \nabla_{x} B | \right),
\Ee
and from \eqref{nablavfrakF},
\Be \label{nablavhatF}
\begin{split}
| \nabla_v \hat{\mathfrak F} | \le &  \left|  \nabla_v( \frac{1}{ \sqrt{1 +|v|^2}} )(\mathfrak F - \hat v \cdot \mathfrak F \cdot \hat v )  \right| +  \left| \frac{1}{ \sqrt{1 +|v|^2}} \left( \nabla_v \mathfrak F - \nabla_v ( \hat v \cdot \mathfrak F \cdot \hat v ) \right)  \right|
\\ \lesssim &   \frac{1}{ 1 +|v|^2} | \mathfrak F | + \frac{1}{ \sqrt{ 1 +|v|^2}}| \nabla_v  \mathfrak F |
\\ \lesssim  &  \frac{1}{ 1 +|v|^2}  \left( | E | + |B| + g + |B_e| \right)
\end{split}
\Ee
From \eqref{pxipviXV} and Fubini's theorem,
\[
\begin{split}
\p_{x_i} X(s) = &  e_i + \int_s^t \int_s^{\tau'} [ \nabla_x \hat{\mathfrak F } _{}(\tau^\prime ) \cdot \p_{x_i} X(\tau' ) +  \nabla_v \hat{\mathfrak F } _{}(\tau^\prime ) \cdot \p_{x_i} V(\tau' ) ]  \dd \tau \dd \tau'
\\  =  &  e_i + \int_s^t (\tau' -s ) [ \nabla_x \hat{\mathfrak F } _{}(\tau^\prime ) \cdot \p_{x_i} X(\tau' ) +  \nabla_v \hat{\mathfrak F } _{}(\tau^\prime ) \cdot \p_{x_i} V(\tau' ) ] \dd \tau'.
\end{split} 
\]
Therefore, from \eqref{nablaxhatF} and \eqref{nablavhatF} we have
\Be \label{pxiXsest1}
\begin{split}
| \p_{x_i} X(s)  |  \le &  1 + (t-s) \int_s^t \left(  | \nabla_x \hat{\mathfrak F } (\tau ) |  |  \p_{x_i} X(\tau ) |  +   | \nabla_v \hat{\mathfrak F } (\tau ) |  |  \p_{x_i} V(\tau )|  \right) \dd \tau
\\ \lesssim & 1 + (t-s) \left( \|  \nabla_x E \|_\infty + \|  \nabla_x B \|_\infty  \right)  \int_s^t  \frac{1}{\sqrt{ 1 +|V(\tau) |^2 }  }  |  \p_{x_i} X(\tau ) |  d\tau
\\ & \quad + (t-s) \left( \| E \|_\infty +  \| B \|_\infty + g  +|B_e| \right)  \int_s^t  \frac{1}{ 1 +|V(\tau) |^2 }  |  \p_{x_i} V(\tau ) |  d\tau.
\end{split}
\Ee
Thus 
\Be \label{pxiXsest2}
\begin{split}
& \langle V(s) \rangle | \p_{x_i} X(s)  | 
\\   \lesssim & \langle V(s) \rangle + (t-s) \left( \|  \nabla_x E \|_\infty + \|  \nabla_x B \|_\infty   \right) \frac{ \sup_{0 \le s \le t } \langle V(s) \rangle}{ \inf_{0 \le s \le t } \langle V(s) \rangle } \int_s^t  \frac{1}{\sqrt{ 1 +|V(\tau) |^2 }  }  \langle V(\tau) \rangle  |  \p_{x_i} X(\tau ) |  d\tau
\\ & \quad + (t-s) \left( \| E \|_\infty +  \| B \|_\infty + g +|B_e|  \right)  \frac{ \sup_{0 \le s \le t } \langle V(s) \rangle}{ \inf_{0 \le s \le t } \langle V(s) \rangle }  \int_s^t  \frac{1}{ \sqrt{ 1 +|V(\tau) |^2 }}  |  \p_{x_i} V(\tau ) |  d\tau
\\ \lesssim &   \langle V(s) \rangle + C_1 \int_s^t   \frac{1}{\sqrt{ 1 +|V(\tau) |^2 } }  \left(  \langle V(\tau) \rangle  |  \p_{x_i} X(\tau ) | +   |  \p_{x_i} V(\tau ) |  \right)  d\tau,
\end{split}
\Ee
where $C_1 =  \left( \sup_{0 \le t \le T} \left( \|  \nabla_x E(t) \|_\infty + \|  \nabla_x B(t) \|_\infty  +   \| E(t) \|_\infty +  \| B(t) \|_\infty  \right) + g + | B_e| \right)^2 $. From \eqref{pxipviXV}, \eqref{nablaxfrakF}, and \eqref{nablavfrakF}, 
\Be \label{pxiVsest2}
\begin{split}
| \p_{x_i} V(s)  | \lesssim  &   \left( \|  \nabla_x E \|_\infty + \|  \nabla_x B \|_\infty  \right)  \int_s^t   \frac{1}{\sqrt{ 1 +|V(\tau) |^2 }  }  \langle V(\tau) \rangle |  \p_{x_i} X(\tau ) |   d \tau
\\ &  +  (  \| B \|_\infty  + |B_e| ) \int_s^t   \frac{1}{\sqrt{ 1 +|V(\tau) |^2 } }  |  \p_{x_i} V(\tau ) |  \dd \tau
\\ \lesssim &  C_1 \int_s^t   \frac{1}{\sqrt{ 1 +|V(\tau) |^2 } }  \left(  \langle V(\tau) \rangle  |  \p_{x_i} X(\tau ) | +   |  \p_{x_i} V(\tau ) |  \right)  d\tau.
\end{split}
\Ee
Combine \eqref{pxiXsest2} and \eqref{pxiVsest2} we have
\Be \label{pxiXspxiVsest}
\langle V(s) \rangle | \p_{x_i} X(s)  | + | \p_{x_i} V(s)  | \lesssim  \langle V(s) \rangle +  C_1 \int_s^t   \frac{1}{\sqrt{ 1 +|V(\tau) |^2 } }  \left(  \langle V(\tau) \rangle  |  \p_{x_i} X(\tau ) | +   |  \p_{x_i} V(\tau ) |  \right)  d\tau.
\Ee
So from Gronwall's inequality,
\Be \label{pxiXspxiVsest2}
\langle V(s) \rangle | \p_{x_i} X(s)  | + | \p_{x_i} V(s)  | \lesssim  \langle V(s) \rangle e^{C_1 \int_s^t  \frac{1}{\sqrt{ 1 +|V(\tau) |^2 }  }  d\tau }  \lesssim e^{C_1|t-s| }  \langle V(s) \rangle,
\Ee
Next, using the same argument as \eqref{nablaxfrakF}--\eqref{pxiXspxiVsest}, from \eqref{pxipviXV} we get
\[ 
\begin{split}
\langle V(s) \rangle | \p_{v_i} X(s)  | + | \p_{v_i} V(s)  | \lesssim  1 +  C_1 \int_s^t   \frac{1}{\sqrt{ 1 +|V(\tau) |^2 } }  \left(  \langle V(\tau) \rangle  |  \p_{x_i} X(\tau ) | +   |  \p_{x_i} V(\tau ) |  \right)  d\tau.
\end{split}
 \]
Again by Gronwall's inequality,
\Be 
\langle V(s) \rangle | \p_{v_i} X(s)  | + | \p_{v_i} V(s)  | \lesssim  e^{C_1 |t-s|  } .
\Ee
Thus,
\Be \label{pviXspviVsest2}
|  \p_{v_i} X(s;t,x,v) |  \lesssim \frac{e^{C_1 |t-s|} }{ \langle V(s) \rangle  } ,  \,  | \p_{v_i} V (s;t,x,v)| \lesssim  e^{C_1 |t-s|} .
\Ee
Now plug \eqref{pxiXspxiVsest2}, \eqref{pviXspviVsest2} back to \eqref{pxipviXV} and using \eqref{nablaxhatF}, \eqref{nablavhatF}, and \eqref{pviXspviVsest2}, we have
\Be \label{pviXspviVsest3}
\begin{split}
 | \p_{v_i} X (s;t,x,v)| & \lesssim   \frac{ |t-s|}{\langle  v \rangle } +  e^{C_1 |t-s| }  \int_s^t \int_\tau^t  \frac{1}{  \langle V(\tau' ) \rangle^2 }  d\tau' d\tau 
 \\ & \lesssim \frac{|t-s|}{\langle V(s) \rangle }  e^{C_1 |t-s| },
\end{split}
\Ee
and
\Be
\begin{split}
| \p_{x_i} V(s;t,x,v)  |  & \lesssim  e^{C_1 |t-s| }  \int_s^t \frac{1}{ \langle V(\tau) \rangle }  \langle V(\tau) \rangle  \dd \tau \lesssim |t-s|   e^{C_1 |t-s| }
\end{split}
\Ee
From \eqref{pxiXspxiVsest2}, \eqref{pviXspviVsest2}, and \eqref{pviXspviVsest3} we conclude \eqref{pxviXVest}. Finally, for $i \neq j$, from \eqref{pxipviXV}, \eqref{nablaxhatF}, and \eqref{nablavhatF},
\[
| \p_{x_i } X_j (s;t,x,v) | \lesssim e^{C_1 |t-s| }  \int_s^t \int_\tau^t  \frac{1}{ \langle V(\tau' )   \rangle }   d\tau' d\tau \lesssim e^{C_1 |t-s| }  \frac{ |t-s |^2}{\langle V(s) \rangle }.
\]

%
%
%
\end{proof}

\section{$W^{1,\infty}$ estimate of inflow problem }
In this section, we prove an a priori estimate for the inflow problem \eqref{VMfrakF1}, \eqref{inflow}.
From \eqref{HamiltonODE}, we have
\Be \label{Frep}
f(t,x,v) = \mathbf 1_{\tb \ge t } f( 0, X(0), V(0)) +  \mathbf 1_{\tb < t } g(t - \tb, \xb, \vb ) .
\Ee

From \eqref{HamiltonODE}, we have
\Be \label{Xiformula}
\begin{split}
X_{i}(s;t,x,v) &= x_i - \int^t_s \hat{V}_i(\tau;t,x,v) \dd \tau \\
&=x_i - \int^t_s \left\{ \hat{v}_i - \int^t_\tau \hat{\mathfrak F }_i (\tau^\prime) \dd \tau^\prime\right\} \dd \tau\\
&= x_i-(t-s) \hat{v}_i + \int^t_s\int^t_\tau \hat{\mathfrak F } _{i}(\tau^\prime, X (\tau^\prime), V(\tau^\prime) ) \dd \tau^\prime \dd \tau.
\end{split}
\Ee
Set $s=t-\tb$ so that $X_3(t-\tb;t,x,v)=0$. Then 
 \Be
\begin{split}
\tb \hat{v}_3&=x_3+  \int^{t}_{t-\tb} \int^t_\tau \hat{\mathfrak F } _{3}(\tau^\prime) \dd \tau^\prime \dd \tau\\
\end{split}
\Ee
By taking derivatives we obtain
\Be
\begin{split}
\p_x x_3+ \int^{t}_{t-\tb} \int^t_\tau\p_x [\hat{\mathfrak F } _{3}(\tau^\prime, X(\tau^\prime), V(\tau^\prime))] \dd \tau^\prime \dd \tau 
=&
\Big(
\hat{v}_3 - \int^{t}_{t-\tb} \hat{\mathfrak F }_{3} (\tau^\prime) \dd \tau^\prime
\Big) \p_x \tb 
\\
=& \ { \hat \vb}_{,3} \p_x \tb,
\end{split}
\Ee
and hence 
\Be \label{pxitb}
\begin{split}
\p_{x_i} \tb  = &  \frac{1}{{ \hat \vb}_{,3}  } \left\{ \p_{x_i} x_3+ \int^{t}_{t-\tb} \int^t_\tau \p_{x_i} [\hat{\mathfrak F } _{3}(\tau^\prime, X(\tau^\prime), V(\tau^\prime))] \dd \tau^\prime \dd \tau \right\}
\\ = &  \frac{1}{{ \hat \vb}_{,3}  } \left\{ \p_{x_i} x_3+ \int^{t}_{t-\tb} \int^t_\tau  [ \nabla_x \hat{\mathfrak F } _{3}(\tau' ) \cdot \p_{x_i} X(\tau') + \nabla_v \hat{\mathfrak F } _{3}(\tau' ) \cdot \p_{x_i} V(\tau') ] \dd \tau^\prime \dd \tau \right\} .
\end{split}
\Ee

Similarly, 
\Be
\begin{split}
\p_{v_i} x_3 - \tb \p_{v_i} \hat v_3 + \int^{t}_{t-\tb} \int^t_\tau\p_{v_i} [\hat{\mathfrak F } _{3}(\tau^\prime, X(\tau^\prime), V(\tau^\prime))] \dd \tau^\prime \dd \tau 
=&
\Big(
\hat{v}_3 - \int^{t}_{t-\tb} \hat{\mathfrak F }_{3} (\tau^\prime) \dd \tau^\prime
\Big) \p_{v_i} \tb 
\\
=& \ { \hat \vb}_{,3} \p_{v_i} \tb,
\end{split}
\Ee
Thus
\Be \label{pvitb}
 \begin{split}
\p_{v_i} \tb  = &  \frac{1}{{ \hat \vb}_{,3}  } \left\{ - \tb \p_{v_i} \hat v_3 + \int^{t}_{t-\tb} \int^t_\tau \p_{v_i} [\hat{\mathfrak F } _{3}(\tau^\prime, X(\tau^\prime), V(\tau^\prime))] \dd \tau^\prime \dd \tau \right\}
\\ = &  \frac{1}{{ \hat \vb}_{,3}  } \left\{  - \tb \p_{v_i} \hat v_3 +\int^{t}_{t-\tb} \int^t_\tau  [ \nabla_x \hat{\mathfrak F } _{3}(\tau' ) \cdot \p_{v_i} X(\tau') + \nabla_v \hat{\mathfrak F } _{3}(\tau' ) \cdot \p_{v_i} V(\tau') ] \dd \tau^\prime \dd \tau \right\} .
\end{split}
\Ee

And we have
\Be \label{pxbvb}
\begin{split}
\p_{x_i} \xb = & e_i  -  (\p_{x_i} \tb ) \hat \vb  +\int^{t}_{t-\tb} \int^t_\tau  [ \nabla_x \hat{\mathfrak F } _{3}(\tau' ) \cdot \p_{x_i} X(\tau') + \nabla_v \hat{\mathfrak F } _{3}(\tau' ) \cdot \p_{x_i} V(\tau') ] \dd \tau^\prime \dd \tau
\\  \p_{x_i} \vb  = & -  (\p_{x_i } \tb ) \mathfrak F(t-\tb,\xb,\tb) - \int_{t-\tb}^t  [ \nabla_x {\mathfrak F } _{3}(\tau) \cdot \p_{x_i} X(\tau) + \nabla_v {\mathfrak F } _{3}(\tau ) \cdot \p_{x_i} V(\tau) ] \dd \tau
\\ \p_{v_i} \xb = &  -  (\p_{v_i} \hat v )  \tb - ( \p_{v_i} \tb  ) \hat \vb  +\int^{t}_{t-\tb} \int^t_\tau  [ \nabla_x \hat{\mathfrak F } _{3}(\tau' ) \cdot \p_{v_i} X(\tau') + \nabla_v \hat{\mathfrak F } _{3}(\tau' ) \cdot \p_{v_i} V(\tau') ] \dd \tau^\prime \dd \tau
\\  \p_{v_i} \vb  = & e_i -  (\p_{v_i } \tb ) \mathfrak F(t-\tb,\xb,\tb) - \int_{t-\tb}^t  [ \nabla_x {\mathfrak F } _{3}(\tau) \cdot \p_{v_i} X(\tau) + \nabla_v {\mathfrak F } _{3}(\tau ) \cdot \p_{v_i} V(\tau) ] \dd \tau.
\end{split}
\Ee

We have the following calculations for the derivatives of $f$ in \eqref{Frep}.
\Be \label{pxiF}
\begin{split}
 & \p_{x_i} f(t,x,v) 
\\ =  &  \mathbf 1_{\tb > t }  \{ \nabla_x  f_0( X(0), V(0)) \cdot \p_{x_i} X(0) +  \nabla_v  f_0(X(0), V(0)) \cdot \p_{x_i} V(0)  \} 
\\ & + \mathbf 1_{\tb <  t } \{-  \p_t g(t - \tb, \xb, \vb ) \p_{x_i} \tb +\nabla_x g(t - \tb, \xb, \vb ) \p_{x_i} \xb +\nabla_v g(t - \tb, \xb, \vb ) \p_{x_i} \vb  \}
\\ = &  \mathbf 1_{\tb > t }  \{ \nabla_x  f_0( X(0), V(0)) \cdot \p_{x_i} X(0) +  \nabla_v  f_0(X(0), V(0)) \cdot \p_{x_i} V(0)  \}
\\ & +  \mathbf 1_{\tb <  t } \bigg(  -  \p_t g(t - \tb, \xb, \vb ) \frac{1}{{ \hat \vb}_{,3}  } \left\{ \p_{x_i} x_3+ \int^{t}_{t-\tb} \int^t_\tau  [ \nabla_x \hat{\mathfrak F } _{3}(\tau' ) \cdot \p_{x_i} X(\tau') + \nabla_v \hat{\mathfrak F } _{3}(\tau' ) \cdot \p_{x_i} V(\tau') ] \dd \tau^\prime \dd \tau \right\}
\\ & \quad   + \nabla_x g(t - \tb, \xb, \vb ) \cdot  \left\{ e_i  -  (\p_{x_i} \tb ) \hat \vb  +\int^{t}_{t-\tb} \int^t_\tau  [ \nabla_x \hat{\mathfrak F } _{3}(\tau' ) \cdot \p_{x_i} X(\tau') + \nabla_v \hat{\mathfrak F } _{3}(\tau' ) \cdot \p_{x_i} V(\tau') ] \dd \tau^\prime \dd \tau \right\}
\\ & \quad  +  \nabla_v g(t - \tb, \xb, \vb ) \cdot \left\{   -  (\p_{x_i } \tb ) \mathfrak F(t-\tb,\xb,\tb) - \int_{t-\tb}^t  [ \nabla_x {\mathfrak F } _{3}(\tau) \cdot \p_{v_i} X(\tau) + \nabla_v {\mathfrak F } _{3}(\tau ) \cdot \p_{v_i} V(\tau) ] \dd \tau \right\}
\bigg) 
,
\end{split}
\Ee
and 
\Be \label{pviF}
\begin{split}
 & \p_{v_i} f(t,x,v) 
\\ =  &  \mathbf 1_{\tb > t }  \{ \nabla_x  f_0( X(0), V(0)) \cdot \p_{v_i} X(0) +  \nabla_v  f_0(X(0), V(0)) \cdot \p_{v_i} V(0)  \} 
\\ & + \mathbf 1_{\tb <  t } \{-  \p_t g(t - \tb, \xb, \vb ) \p_{v_i} \tb +\nabla_x g(t - \tb, \xb, \vb ) \p_{v_i} \xb +\nabla_v g(t - \tb, \xb, \vb ) \p_{v_i} \vb  \}
\\ = &   \mathbf 1_{\tb > t }  \{ \nabla_x  f_0( X(0), V(0)) \cdot \p_{v_i} X(0) +  \nabla_v  f_0(X(0), V(0)) \cdot \p_{v_i} V(0)  \} 
\\ & +  \mathbf 1_{\tb <  t } \bigg(  -  \p_t g(t - \tb, \xb, \vb ) \frac{1}{{ \hat \vb}_{,3}  } \left\{  - \tb \p_{v_i} \hat v_3 +\int^{t}_{t-\tb} \int^t_\tau  [ \nabla_x \hat{\mathfrak F } _{3}(\tau' ) \cdot \p_{v_i} X(\tau') + \nabla_v \hat{\mathfrak F } _{3}(\tau' ) \cdot \p_{v_i} V(\tau') ] \dd \tau^\prime \dd \tau \right\}  
\\ & \quad   + \nabla_x g(t - \tb, \xb, \vb ) \cdot  \left\{ -  (\p_{v_i} \hat v )  \tb - ( \p_{v_i} \tb  ) \hat \vb + +\int^{t}_{t-\tb} \int^t_\tau  [ \nabla_x \hat{\mathfrak F } _{3}(\tau' ) \cdot \p_{v_i} X(\tau') + \nabla_v \hat{\mathfrak F } _{3}(\tau' ) \cdot \p_{v_i} V(\tau') ] \dd \tau^\prime \dd \tau \right\}
\\ & \quad  +  \nabla_v g(t - \tb, \xb, \vb ) \cdot \left\{   e_i -  (\p_{v_i } \tb ) \mathfrak f(t-\tb,\xb,\tb) - \int_{t-\tb}^t  [ \nabla_x {\mathfrak F } _{3}(\tau) \cdot \p_{v_i} X(\tau) + \nabla_v {\mathfrak F } _{3}(\tau ) \cdot \p_{v_i} V(\tau) ] \dd \tau \right\}
\bigg) 
\end{split}
\Ee

\begin{proposition} \label{inflowprop}
Let $(f,E,B)$ be a solution of \eqref{VMfrakF1}, \eqref{inflow}, \eqref{Maxwell}. Suppose the fields satisfies \eqref{gbig10}, and
\[
\sup_{0 \le t \le T}  \left(\| \nabla_{x} E(t)  \|_\infty + \| \nabla_{x} B(t)  \|_\infty \right) < \infty.
\]
And assume that for $\delta > 0$, 
\[
\begin{split}
 \| \langle v \rangle^{5 + \delta }   \nabla_{x_\parallel} f_0 \|_\infty +  \| \langle v \rangle^{5 + \delta }   \alpha \p_{x_3} f_0 \|_\infty + \| \langle v \rangle^{5 + \delta }    \nabla_v f_0 \|_\infty  & < \infty,
\\   \| \langle v \rangle^{5 + \delta }   \p_t g \|_\infty +   \| \langle v \rangle^{5 + \delta }   \nabla_{x_\parallel} g \|_\infty +  \| \langle v \rangle^{5 + \delta }   \nabla_{v} g \|_\infty    & < \infty.
\end{split}
\]
then for $0 < T \ll 1$ small enough, we have
\Be \label{alphavnablafbd}
\begin{split}
& \sup_{0 \le t \le T} \left( \| \langle v \rangle^{4 + \delta }   \nabla_{x_\parallel} f(t) \|_\infty  + \|  \langle v \rangle^{5 + \delta }  \alpha \p_{x_3} f(t) \|_\infty  +  \|  \langle v \rangle^{5 + \delta }  \nabla_{v} f(t) \|_\infty \right) 
<  \infty.
\end{split} 
\Ee

\end{proposition}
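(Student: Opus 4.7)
\medskip

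\noindent\textbf{Proof proposal.} The plan is to argue directly from the Lagrangian representation of $f$ and the explicit formulas \eqref{pxiF}--\eqref{pviF} for $\nabla_x f$ and $\nabla_v f$, bounding each factor using the trajectory estimates \eqref{pxviXVest}--\eqref{pxiXj}, the exit-time estimate \eqref{tbbdvb}, and the velocity lemma \eqref{alphaest}. The key observation is that all the singular factors $1/\hat{v}_{\mathbf b,3}$ appearing in $\nabla_{x,v}\tb$ (cf. \eqref{pxitb}--\eqref{pvitb}) can be absorbed either by the weight $\alpha(t,x,v)$ via Lemma \ref{vlemma} (since $\alpha|_{x_3=0}=\hat v_3$ and hence $\alpha(t,x,v)\asymp \hat v_{\mathbf b,3}$ along the characteristic), or by the factor $\tb$ itself via \eqref{tbbdvb} (which says $\tb\lesssim \langle V\rangle \hat v_{\mathbf b,3}/c_0$). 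Throughout, for $T\ll 1$ and bounded fields, one has $\langle V(s)\rangle\asymp\langle v\rangle$ uniformly for $s\in[t-\tb,t]$, so the polynomial velocity weights transfer cleanly between $v$ and the trajectory values.

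\medskip

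\noindent First, for the tangential derivatives $\nabla_{x_\parallel}f$, the term $\partial_{x_k}\tb$ with $k=1,2$ has no boundary contribution ($\partial_{x_k}x_3=0$), so \eqref{pxitb} gives $|\partial_{x_k}\tb|\lesssim \tb^2/(\langle V\rangle\hat v_{\mathbf b,3})$; combined with \eqref{tbbdvb} this is bounded by $C\tb\le CT$. The factors $\partial_{x_k}\xb$, $\partial_{x_k}\vb$ are estimated directly by \eqref{pxviXVest}. Putting this into \eqref{pxiF} and using the hypothesis \eqref{inflowdata} on $g$, together with the initial-data bound \eqref{f0bdd} on the support $\{\tb>t\}$, one obtains $\|\langle v\rangle^{4+\delta}\nabla_{x_\parallel}f(t)\|_\infty<\infty$.

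\medskip

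\noindent Second, for $\nabla_v f$, the dangerous factor in \eqref{pvitb} is $\tb\,\partial_{v_i}\hat v_3/\hat v_{\mathbf b,3}$; since $|\partial_{v_i}\hat v_3|\lesssim \langle v\rangle^{-1}$ and $\tb/\hat v_{\mathbf b,3}\lesssim \langle V\rangle$ by \eqref{tbbdvb}, this quotient is bounded by a constant uniformly in $v$. The remaining terms in \eqref{pviF} are handled by \eqref{pxviXVest} (note the helpful gain $|\partial_{v_i}X|\lesssim |t-s|/\langle V(s)\rangle$), yielding $\|\langle v\rangle^{5+\delta}\nabla_v f(t)\|_\infty<\infty$.

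\medskip

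\noindent Third, for $\alpha\,\partial_{x_3}f$, the unique singular contribution is $\partial_{x_3}\tb\sim 1/\hat v_{\mathbf b,3}$, which is exactly compensated by multiplying by $\alpha(t,x,v)$: the velocity lemma \eqref{alphaest} together with $\alpha(t-\tb,\xb,\vb)=\hat v_{\mathbf b,3}$ gives $\alpha(t,x,v)/\hat v_{\mathbf b,3}\le e^{CT}$. On the set $\{\tb\ge t\}$ one similarly uses $\alpha(t,x,v)\asymp \alpha(0,X(0),V(0))$ so that $\alpha\,\nabla_x f_0\cdot\partial_{x_3}X(0)$ is controlled by the assumed bound $\|\langle v\rangle^{5+\delta}\alpha\,\partial_{x_3}f_{0}\|_\infty$ in \eqref{f0bdd}. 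All other factors in \eqref{pxiF} with $i=3$ are bounded as in the previous two steps, giving $\|\langle v\rangle^{5+\delta}\alpha\,\partial_{x_3}f(t)\|_\infty<\infty$.

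\medskip

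\noindent The expected main obstacle is the careful bookkeeping in the $\partial_{x_3}$ case: one must show that \emph{every} appearance of $\partial_{x_3}\tb$ (and of $\partial_{x_3}X$, $\partial_{x_3}V$ inside the nested double integrals in \eqref{pxiF}) is controlled after multiplication by a \emph{single} power of $\alpha(t,x,v)$, and that the polynomial velocity decay is not destroyed in the process. The clean pointwise identity $\alpha(t,x,v)\asymp\hat v_{\mathbf b,3}$ from Lemma \ref{vlemma} and the fact that derivatives hitting $\hat{\mathfrak F}_3$ in \eqref{pxitb} already carry a factor $1/\langle V(\tau')\rangle$ (by \eqref{nablaxhatF}--\eqref{nablavhatF}) will make the accounting work, and smallness of $T$ ensures the quadratic trajectory contributions are negligible compared to the leading boundary term, closing the estimate \eqref{alphavnablafbd}.
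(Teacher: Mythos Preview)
Your proposal is correct and follows essentially the same route as the paper: both arguments start from the Lagrangian formula \eqref{Frep} and the explicit derivative identities \eqref{pxiF}--\eqref{pviF}, control $\partial_{x_i}\tb,\partial_{v_i}\tb$ via \eqref{pxitb}--\eqref{pvitb}, and then estimate each factor using the trajectory bounds \eqref{pxviXVest}--\eqref{pxiXj}, the exit-time bound \eqref{tbbdvb}--\eqref{tbdV0}, and the velocity lemma \eqref{alphaest} to trade $1/\hat v_{\mathbf b,3}$ for $1/\alpha(t,x,v)$.

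One difference worth noting: the paper goes one step further than you do. After deriving bounds of the form
\[
\|\langle v\rangle^{5+\delta}\alpha\,\partial_{x_3}f\|_\infty
\;\lesssim\;\text{(data)}\;+\;\tb\bigl(\|\nabla_xE\|_\infty+\|\nabla_xB\|_\infty\bigr)\cdot\text{(data)},
\]
it invokes Lemma~\ref{EBW1inftylemma} (the Glassey--Strauss field estimate) to replace $\|\nabla_xE\|_\infty+\|\nabla_xB\|_\infty$ on the right by the weighted $f$-norms themselves, obtaining a self-referential inequality that closes for $T\ll 1$ and yields a bound depending \emph{only} on the initial and boundary data. For the proposition as literally stated (mere finiteness, under the standing hypothesis $\|\nabla_xE\|_\infty+\|\nabla_xB\|_\infty<\infty$), your direct argument already suffices; the paper's additional closure step buys the data-only estimate that is actually used in the uniform-in-$\ell$ bounds of the iteration scheme (cf.\ \eqref{flElBldsqbdin}).
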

\begin{proof}
For notational simplicity, we assume that the lower order terms of $E$ and $B$ are smaller than the higher order terms:
\Be
\begin{split}
& \sup_{0 \le t \le T} \| E(t) \|_\infty + g + |B_e| \lesssim  \sup_{0 \le t \le T } \| \nabla_{x} E(t) \|_\infty ,
\\ & \sup_{0 \le t \le T} \| B(t) \|_\infty + g + |B_e| \lesssim \sup_{0 \le t \le T } \| \nabla_{x} B(t) \|_\infty .
\end{split} 
\Ee

From \eqref{pxiF}, \eqref{pviF}, we have
\Be \label{nablaxfest}
\begin{split}
 & |  \p_{x_i}  f(t,x,v)  |
\\ \le &  \mathbf 1_{\tb > t }  \{ |  \nabla_x  f_0( X(0), V(0)) | |  \p_{x_i}  X(0)  | +  |  \nabla_v  f_0(X(0), V(0))| |  \p_{x_i} V(0)  | \}
\\ & +  \mathbf 1_{\tb <  t } \bigg(  |   \p_t g(t - \tb, \xb, \vb ) | \left|  \frac{1}{{ \hat \vb}_{,3}  }   \left\{ \p_{x_i} x_3+ \int^{t}_{t-\tb} \int^t_\tau  [ \nabla_x \hat{\mathfrak F } _{3}(\tau' ) \cdot \p_{x_i} X(\tau') + \nabla_v \hat{\mathfrak F } _{3}(\tau' ) \cdot \p_{x_i} V(\tau') ] \dd \tau^\prime \dd \tau \right\} \right|
\\ & \quad   + |  \nabla_x g(t - \tb, \xb, \vb )  | \left|  e_i  -  (\p_{x_i} \tb ) \hat \vb  +\int^{t}_{t-\tb} \int^t_\tau  [ \nabla_x \hat{\mathfrak F } _{3}(\tau' ) \cdot \p_{x_i} X(\tau') + \nabla_v \hat{\mathfrak F } _{3}(\tau' ) \cdot \p_{x_i} V(\tau') ] \dd \tau^\prime \dd \tau \right|
\\ & \quad  + |   \nabla_v g(t - \tb, \xb, \vb )|  \left|  -  (\p_{x_i } \tb ) \mathfrak F(t-\tb,\xb,\tb) - \int_{t-\tb}^t  [ \nabla_x {\mathfrak F } _{3}(\tau) \cdot \p_{x_i} X(\tau) + \nabla_v {\mathfrak F } _{3}(\tau ) \cdot \p_{x_i} V(\tau) ] \dd \tau \right|
\bigg).
\end{split}
\Ee
\Be \label{nablavfest}
\begin{split}
 & |  \p_{v_i}  f(t,x,v)  |
\\ \le &  \mathbf 1_{\tb > t }  \{ |  \nabla_x  f_0( X(0), V(0)) | |  \p_{v_i}  X(0)  | +  |  \nabla_v  f_0(X(0), V(0))| |  \p_{v_i} V(0)  | \}
\\ & +  \mathbf 1_{\tb <  t } \bigg(  |   \p_t g(t - \tb, \xb, \vb ) | \left|  \frac{1}{{ \hat \vb}_{,3}  }   \left\{ - \tb \p_{v_i} \hat v_3 + \int^{t}_{t-\tb} \int^t_\tau  [ \nabla_x \hat{\mathfrak F } _{3}(\tau' ) \cdot \p_{v_i} X(\tau') + \nabla_v \hat{\mathfrak F } _{3}(\tau' ) \cdot \p_{v_i} V(\tau') ] \dd \tau^\prime \dd \tau \right\} \right|
\\ & \quad   + |  \nabla_x g(t - \tb, \xb, \vb )  | \left|  -(\p_{v_i} \hat v ) \tb  -  (\p_{v_i} \tb ) \hat \vb  +\int^{t}_{t-\tb} \int^t_\tau  [ \nabla_x \hat{\mathfrak F } _{3}(\tau' ) \cdot \p_{v_i} X(\tau') + \nabla_v \hat{\mathfrak F } _{3}(\tau' ) \cdot \p_{v_i} V(\tau') ] \dd \tau^\prime \dd \tau \right|
\\ & \quad  + |   \nabla_v g(t - \tb, \xb, \vb )|  \left| e_i  -  (\p_{v_i } \tb ) \mathfrak F(t-\tb,\xb,\tb) - \int_{t-\tb}^t  [ \nabla_x {\mathfrak F } _{3}(\tau) \cdot \p_{v_i} X(\tau) + \nabla_v {\mathfrak F } _{3}(\tau ) \cdot \p_{v_i} V(\tau) ] \dd \tau \right|
\bigg).
\end{split}
\Ee
From \eqref{nablaxhatF}, \eqref{nablavhatF}, \eqref{pxviXVest}, \eqref{pxitb}, and \eqref{pvitb}, we have
\Be \label{pxitbest}
\begin{split}
| \p_{x_i} \tb | \le &   \frac{1}{{ \hat \vb}_{,3}  } \left\{ | \p_{x_i} x_3  | + \int^{t}_{t-\tb} \int^t_\tau  |  \nabla_x \hat{\mathfrak F } _{3}(\tau' ) \cdot \p_{x_i} X(\tau') + \nabla_v \hat{\mathfrak F } _{3}(\tau' ) \cdot \p_{x_i} V(\tau') | \dd \tau^\prime \dd \tau \right\}
\\ \le &  \frac{1}{{ \hat \vb}_{,3}  } \left\{ | \p_{x_i} x_3 |  + \int^{t}_{t-\tb} \int^t_\tau  |  \nabla_x \hat{\mathfrak F } _{3}(\tau' )| | \p_{x_i} X(\tau') |  + |  \nabla_v \hat{\mathfrak F } _{3}(\tau' ) | | \p_{x_i} V(\tau') | \dd \tau^\prime \dd \tau \right\}
\\ \lesssim &   \frac{1}{{ \hat \vb}_{,3}  }  \left( | \p_{x_i} x_3 | + \frac{  C  \tb^2}{\langle v \rangle}  ( \|  \nabla_{x} E \|_{L^\infty_{t,x} } +  \|  \nabla_{x} B \|_{L^\infty_{t,x} } )  \right),
\end{split}
\Ee
and
\Be \label{pvitbest}
\begin{split}
| \p_{v_i} \tb | \le &   \frac{1}{{ \hat \vb}_{,3}  } \left\{ | \tb \p_{v_i} \hat v_3  | + \int^{t}_{t-\tb} \int^t_\tau  |  \nabla_x \hat{\mathfrak F } _{3}(\tau' ) \cdot \p_{v_i} X(\tau') + \nabla_v \hat{\mathfrak F } _{3}(\tau' ) \cdot \p_{v_i} V(\tau') | \dd \tau^\prime \dd \tau \right\}
\\ \le &  \frac{1}{{ \hat \vb}_{,3}  } \left\{  | \tb \p_{v_i} \hat v_3  |   + \int^{t}_{t-\tb} \int^t_\tau  |  \nabla_x \hat{\mathfrak F } _{3}(\tau' )| | \p_{v_i} X(\tau') |  + |  \nabla_v \hat{\mathfrak F } _{3}(\tau' ) | | \p_{v_i} V(\tau') | \dd \tau^\prime \dd \tau \right\}
\\ \lesssim &   \frac{1}{{ \hat \vb}_{,3}  }  \left(  \frac{\tb}{\langle v \rangle } +  \frac{ C \tb^2}{\langle v \rangle^2}   ( \|  \nabla_{x} E \|_{L^\infty_{t,x} } +  \|  \nabla_{x} B \|_{L^\infty_{t,x} }  ) \right).
\end{split}
\Ee

Thus from \eqref{nablaxfest} and \eqref{pxitbest}, for $i = 1,2$, 
\begin{align}
 & \notag  | \langle v \rangle^{4+\delta} \p_{x_i}  f(t,x,v)  |
\\ \label{vnablaparaf0} & \le      \langle v \rangle^{4+\delta} \big(  |  \nabla_{x_\parallel}  f_0( X(0), V(0)) | |  \p_{x_i}  X_\parallel (0)  |+ |  \p_{x_3}  f_0( X(0), V(0)) | |  \p_{x_i}  X_3 (0)  | 
\\ \notag &\quad \quad \quad \quad \quad \quad \quad +  |  \nabla_v  f_0(X(0), V(0))| |  \nabla_x V(0)  | \big)
\\ \label{vnablaparaptg} & +    \langle v \rangle^{4+\delta}   |   \p_t g(t - \tb, \xb, \vb ) |   \frac{C \tb^2}{{ \hat \vb}_{,3} \langle v \rangle  }    (  \|  \nabla_{x} E \|_{L^\infty_{t,x} } +  \|  \nabla_{x} B \|_{L^\infty_{t,x} }  )
\\ \label{vnablaparaxg} &    +     \langle v \rangle^{4+\delta}   |  \nabla_x g(t - \tb, \xb, \vb )  | \left( 1 + \left( \frac{1}{{ \hat \vb}_{,3}  } + 1 \right) \frac{ C \tb^2}{\langle v \rangle}   (  \|  \nabla_{x} E \|_{L^\infty_{t,x} } +  \|  \nabla_{x} B \|_{L^\infty_{t,x} }  ) \right)
\\ \notag & +   \langle v \rangle^{4+\delta}  |   \nabla_v g(t - \tb, \xb, \vb )|  C  \left( \frac{\tb^2}{{ \hat \vb}_{,3}  \langle v \rangle } |  \mathfrak F(t-\tb,\xb,\tb)  | + \tb \right)   (  \|  \nabla_{x} E \|_{L^\infty_{t,x} } +  \|  \nabla_{x} B \|_{L^\infty_{t,x} } )  .
\end{align}
From \eqref{pxviXVest}, \eqref{pxipviXV}, and \eqref{tbdV0}, we have
\Be  \label{nablaparaf0est}  
\begin{split}
| \eqref{vnablaparaf0} | \le  & C \langle v \rangle^{4+\delta} \left(  |  \nabla_{x_\parallel}  f_0( X(0), V(0)) | +  t |  \p_{x_3}  f_0( X(0), V(0)) |  +  (1+|v|) |  \nabla_v  f_0(X(0), V(0))| \right)
\\ \le & ( C^2 +1 )  \left(  \| \langle v \rangle^{5+\delta}  \alpha  \p_{x_3} f_0 \|_{L^\infty_{x} } + \| \langle v \rangle^{4+\delta}  \nabla_{x_\parallel} f_0 \|_{L^\infty_{x} }   + \| \langle v \rangle^{5+\delta}   \nabla_v f_0 \|_{L^\infty_{x} } \right).
\end{split}
\Ee
And
\Be \label{nablaparagest}
\begin{split}
& |\eqref{vnablaparaptg}| +| \eqref{vnablaparaxg} | +  | \eqref{vnablaparaxg} |
\\  \le &       ( C + \tb C   (  \|  \nabla_{x} E \|_{L^\infty_{t,x} } +  \|  \nabla_{x} B \|_{L^\infty_{t,x} } ) ) 
\\ & \times  \left( \| \langle v \rangle^{4+\delta}   \p_t g \|_{L^\infty_{t,x} } + \| \langle v \rangle^{4+\delta}   \nabla_x g \|_{L^\infty_{t,x} }  + \| \langle v \rangle^{4+\delta}   \nabla_v g \|_{L^\infty_{t,x} } \right) .
\end{split}
\Ee

And from \eqref{nablaxfest} and \eqref{pxitbest}, for $i=3$,
\begin{align}
 & \notag  | \langle v \rangle^{5+\delta}  \alpha  \p_{x_3}  f(t,x,v)  |
\\ \label{alphavnablaf0} & \le      \langle v \rangle^{5+\delta} \alpha(t,x,v) \big(  |  \nabla_{x_\parallel}  f_0( X(0), V(0)) | |  \p_{x_3}  X_\parallel (0)  |+ |  \p_{x_3}  f_0( X(0), V(0)) | |  \p_{x_3}  X_3 (0)  | 
\\ \notag &\quad \quad \quad \quad \quad \quad \quad \quad \quad \quad \quad +  |  \nabla_v  f_0(X(0), V(0))| |  \nabla_x V(0)  | \big)
\\ \label{alphavptg} & +    \langle v \rangle^{5+\delta}    |   \p_t g(t - \tb, \xb, \vb ) |   \frac{\alpha(t,x,v)}{{ \hat \vb}_{,3}  }   ( 1 +  \frac{ C \tb^2}{\langle v \rangle }    (  \|  \nabla_{x} E \|_{L^\infty_{t,x} } +  \|  \nabla_{x} B \|_{L^\infty_{t,x} }) )
\\ \label{alphavnablaxg} &    +     \langle v \rangle^{5+\delta}    |  \nabla_x g(t - \tb, \xb, \vb )  | \frac{\alpha(t,x,v)}{{ \hat \vb}_{,3}  }   ( 1 +  \frac{ C \tb^2}{\langle v \rangle }  (  \|  \nabla_{x} E \|_{L^\infty_{t,x} } +  \|  \nabla_{x} B \|_{L^\infty_{t,x} }  ) )
\\  \label{alphavnablavg} &   +   \langle v \rangle^{5+\delta}  |   \nabla_v g(t - \tb, \xb, \vb )|  \frac{\alpha(t,x,v)}{{ \hat \vb}_{,3}  } |  \mathfrak F(t-\tb,\xb,\tb)  |  ( 1 +  \tb C   (  \|  \nabla_{x} E \|_{L^\infty_{t,x} } +  \|  \nabla_{x} B \|_{L^\infty_{t,x} }  ) ) .
\end{align}
Now from \eqref{pxviXVest} and the velocity lemma \eqref{alphaest},
\Be \label{alphavnablaf0est}
\begin{split}
|\eqref{alphavnablaf0} | \le &   C  \frac{ \langle v \rangle^{5+\delta}  \alpha(t,x,v) }{   ( 1 +|V(0)| )^{5 + \delta } \alpha(0,X(0),V(0))} \big( \| \langle v \rangle^{5+\delta}  \alpha  \p_{x_3} f_0 \|_{L^\infty_{x} } + \| \langle v \rangle^{5+\delta}  \nabla_{x_\parallel} f_0 \|_{L^\infty_{x} }  
\\ & \quad \quad \quad \quad \quad \quad \quad \quad \quad \quad \quad \quad \quad \quad \quad \quad + \| \langle v \rangle^{5+\delta}  \alpha \nabla_v f_0 \|_{L^\infty_{x} } \big)
\\ \le & C  \left( \| \langle v \rangle^{5+\delta}  \alpha  \p_{x_3} f_0 \|_{L^\infty_{x} } + \| \langle v \rangle^{5+\delta}  \nabla_{x_\parallel} f_0 \|_{L^\infty_{x} }  + \| \langle v \rangle^{5+\delta}  \alpha \nabla_v f(0) \|_{L^\infty_{x} } \right),
\end{split} 
\Ee
and
\Be \label{alphavptgxgvgest}
\begin{split}
& |\eqref{alphavptg} | + |\eqref{alphavnablaxg} | + |\eqref{alphavnablavg} |
\\  \le &    \frac{ \langle v \rangle^{5+\delta}  \alpha(t,x,v) }{   ( 1 + |\vb|  )^{5 + \delta }  \hat{ {\vb}_{,3} } } ( 1 + \tb C   (  \|  \nabla_{x} E \|_{L^\infty_{t,x} } +  \|  \nabla_{x} B \|_{L^\infty_{t,x} }  ) )
\\ & \times  \left( \| \langle v \rangle^{5+\delta}   \p_t g \|_{L^\infty_{t,x} } + \| \langle v \rangle^{5+\delta}   \nabla_x g \|_{L^\infty_{t,x} }  + \| \langle v \rangle^{5+\delta}   \nabla_v g \|_{L^\infty_{t,x} } \right)
\\  \le &       ( C + \tb C   (   \|  \nabla_{x} E \|_{L^\infty_{t,x} } +  \|  \nabla_{x} B \|_{L^\infty_{t,x} } ) )
 \\ & \times  \left( \| \langle v \rangle^{5+\delta}   \p_t g \|_{L^\infty_{t,x} } + \| \langle v \rangle^{5+\delta}   \nabla_x g \|_{L^\infty_{t,x} }  + \| \langle v \rangle^{5+\delta}   \nabla_v g \|_{L^\infty_{t,x} } \right).
\end{split} 
\Ee

Also, similarly from \eqref{nablavfest} and \eqref{pvitbest},
\begin{align}
 & \notag  | \langle v \rangle^{5+\delta}  \p_{v_i}  f(t,x,v)  |
\\ \notag & \le      \langle v \rangle^{5+\delta}  \big(  |  \nabla_{x_\parallel}  f_0( X(0), V(0)) | |  \p_{v_i}  X_\parallel (0)  |+ |  \p_{x_3}  f_0( X(0), V(0)) | |  \p_{v_i}  X_3 (0)  | 
\\ \notag &\quad \quad \quad \quad \quad \quad \quad +  |  \nabla_v  f_0(X(0), V(0))| |  \nabla_{v_i} V(0)  | \big)
\\ \notag & +    \langle v \rangle^{5+\delta}    |   \p_t g(t - \tb, \xb, \vb ) |   \frac{\tb}{{ \hat \vb}_{,3} \langle v \rangle  }    \times ( 1 + \frac{\tb}{\langle v \rangle} C   (  \|  \nabla_{x} E \|_{L^\infty_{t,x} } +  \|  \nabla_{x} B \|_{L^\infty_{t,x} } ) )
\\ \notag &    +     \langle v \rangle^{5+\delta}    |  \nabla_x g(t - \tb, \xb, \vb )  | \left( \frac{\tb}{\langle v \rangle} + \left( \frac{1}{{ \hat \vb}_{,3}  } + 1 \right) \frac{\tb^2}{\langle v \rangle}  C (    \|  \nabla_{x} E \|_{L^\infty_{t,x} } +  \|  \nabla_{x} B \|_{L^\infty_{t,x} }  ) \right)
\\ \notag &   +   \langle v \rangle^{5+\delta}  |   \nabla_v g(t - \tb, \xb, \vb )|  \left(( 1 +  \frac{\tb}{{ \hat \vb}_{,3}  \langle v \rangle } |  \mathfrak F(t-\tb,\xb,\tb)  |  + \tb  C   (  \|  \nabla_{x} E \|_{L^\infty_{t,x} } +  \|  \nabla_{x} B \|_{L^\infty_{t,x} } ) \right) 
\\ \notag  & \le   ( C_2^2 +1 )  \left(  \| \langle v \rangle^{5+\delta}  \alpha  \p_{x_3} f_0 \|_{L^\infty_{x} } + \| \langle v \rangle^{4+\delta}  \nabla_{x_\parallel} f_0 \|_{L^\infty_{x} }   + \| \langle v \rangle^{5+\delta}   \nabla_v f_0 \|_{L^\infty_{x} } \right)
\\  \notag &  \quad +      ( C + \tb C   (  \|  \nabla_{x} E \|_{L^\infty_{t,x} } +  \|  \nabla_{x} B \|_{L^\infty_{t,x} } ) )
 \\ \label{dvfestfinal} & \quad \quad \times  \left( \| \langle v \rangle^{5+\delta}   \p_t g \|_{L^\infty_{t,x} } + \| \langle v \rangle^{5+\delta}   \nabla_x g \|_{L^\infty_{t,x} }  + \| \langle v \rangle^{5+\delta}   \nabla_v g \|_{L^\infty_{t,x} } \right) .
\end{align}

Now from \eqref{lambdanablaxE} we have 
\[ \begin{split}
&  \|  \nabla_{x} E \|_{L^\infty_{t,x} } +  \|  \nabla_{x} B \|_{L^\infty_{t,x} }  
  \\ & \lesssim  \|  \langle v \rangle^{4+\delta} \nabla_{x_\parallel}  f(t,x,v)  \|_{L^{\infty}_{t,x} }  + \|  \langle v \rangle^{5+\delta}  \alpha \p_{x_3}  f(t,x,v)  \|_{L^{\infty}_{t,x} }  +C,
\end{split}  \]
 thus combining \eqref{nablaparaf0est}, \eqref{nablaparagest}, \eqref{alphavnablaf0est},  \eqref{alphavptgxgvgest}, and \eqref{dvfestfinal}, and by choosing $0 < T \ll 1 $ small enough 
we have
\Be \label{dxfestfinalin}
\begin{split}
 & \|  \langle v \rangle^{4+\delta} \nabla_{x_\parallel}  f(t,x,v)  \|_{L^{\infty}_{t,x} }  + \|  \langle v \rangle^{5+\delta}  \alpha \p_{x_3}  f(t,x,v)  \|_{L^{\infty}_{t,x} }  +   \|  \langle v \rangle^{5+\delta}  \nabla_{v}  f(t,x,v)  \|_{L^{\infty}_{t,x} }
 \\  \le &   2  \left( \| \langle v \rangle^{5+\delta}  \alpha \p_{x_3} f_0 \|_{L^\infty_{x} } + \| \langle v \rangle^{5+\delta}   \nabla_{x_\parallel} f_0 \|_{L^\infty_{x} }  + \| \langle v \rangle^{5+\delta}   \nabla_v f_0 \|_{L^\infty_{x} } \right) 
 \\ & + C_1    \left( \| \langle v \rangle^{5+\delta}   \p_t g \|_{L^\infty_{t,x} } + \| \langle v \rangle^{5+\delta}   \nabla_x g \|_{L^\infty_{t,x} }  + \| \langle v \rangle^{5+\delta}   \nabla_v g \|_{L^\infty_{t,x} } \right)
 \\ & + \frac{1}{2} \left(  \|  \langle v \rangle^{4+\delta} \nabla_{x_\parallel}  f(t,x,v)  \|_{L^{\infty}_{t,x} }  + \|  \langle v \rangle^{5+\delta}  \alpha \p_{x_3}  f(t,x,v)  \|_{L^{\infty}_{t,x} } \right) < \infty.
 \end{split}
\Ee
This conclude \eqref{alphavnablafbd}.

\end{proof}

We state and prove a variation of Ukai's trace theorem in \cite{Ukai}.
\begin{lemma} \label{traceUkai}
Suppose $f \in L^\infty( (0,T) \times \O \times \mathbb R^3 )$, and  $\mathfrak F \in W^{1,\infty}((0,T) \times \mathbb R^3 ) $ satisfy
\Be \label{transopass}
\p_t f + \hat v \cdot \nabla_x f + \mathfrak F \cdot \nabla_v f  = h \in L^\infty((0,T) \times \O \times \mathbb R^3 ).
\Ee
Then $f \in L^\infty((0,T) \times (\gamma \setminus \gamma_0 ) ) $, and
\Be \label{tracelinftyUkai}
 \sup_{0 \le t \le T } \| f(t) \|_{L^\infty( \gamma \setminus \gamma_0 )  } \le  \sup_{0 \le t \le T } \| f(t) \|_{L^\infty( \O \times \mathbb R^3 )  } .
 \Ee
\end{lemma}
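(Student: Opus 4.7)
The plan is to realize the trace of $f$ via the characteristic flow and to use that $f$ is absolutely continuous along characteristics thanks to the bounded source $h$.

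First, using the Lipschitz regularity of $\mathfrak F$, I would define a characteristic flow
$\Phi(s, t, x_b, v) := (t+s, X(t+s; t, x_b, v), V(t+s; t, x_b, v))$ for $(x_b, v) \in \gamma_-$ (where $v_3 > 0$) and small $s > 0$, and analogously with $s < 0$ for $(x_b, v) \in \gamma_+$. Because $X(t;t,x_b,v) = x_b$ and $\partial_s X_3(t;t,x_b,v) = \hat v_3$, the Jacobian determinant of $\Phi$ with respect to the seven parameters $(s, t, x_{b,1}, x_{b,2}, v_1, v_2, v_3)$ equals $\hat v_3 + O(s)$ at $s=0$; on each region $\{(x_b,v) \in \gamma \setminus \gamma_0 : |v_3| \ge \delta_0\}$ the map $\Phi$ is therefore a bi-Lipschitz diffeomorphism onto a tubular neighborhood (on the $\O$-side) of that portion of the boundary inside $(0,T)\times\O\times\mathbb R^3$.

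Next, I would pull \eqref{transopass} back by $\Phi$. Since $\partial_s (f\circ\Phi) = \big( (\partial_t + \hat v \cdot \nabla_x + \mathfrak F\cdot\nabla_v) f \big)\circ\Phi = h\circ\Phi$ distributionally on the parameter set, and $h\circ\Phi \in L^\infty$, for a.e. fixed $(t, x_b, v)$ the function $g(s) := (f\circ\Phi)(s, t, x_b, v)$ is absolutely continuous in $s$ with $|g(s_1) - g(s_2)| \le \|h\|_{L^\infty}|s_1-s_2|$. Hence the one-sided limit $\bar f(t,x_b,v) := \lim_{s\to 0^+} g(s)$ exists for a.e. $(t,x_b,v)$ and is the natural candidate for the trace on the corresponding portion of $\gamma\setminus\gamma_0$.

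Finally, setting $M := \sup_{0\le t\le T}\|f(t)\|_{L^\infty(\O\times\mathbb R^3)}$, the bi-Lipschitz property of $\Phi$ sends the null set $\{|f|>M\}$ back to a null set in parameter space; Fubini then gives, for a.e. $(t,x_b,v)$, a sequence $s_0 \to 0^+$ with $|g(s_0)| \le M$. Absolute continuity yields
\[
|\bar f(t,x_b,v)| \le |g(s_0)| + \|h\|_{L^\infty}\,s_0 \le M + \|h\|_{L^\infty}\,s_0,
\]
and letting $s_0 \to 0^+$ produces $|\bar f(t,x_b,v)| \le M$, which is \eqref{tracelinftyUkai}. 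The principal technical obstacle is the measure-theoretic bookkeeping needed to legitimately transfer the distributional equation and null sets through the merely Lipschitz map $\Phi$; this is handled in Ukai's classical argument, and the two sides $\gamma_\pm$ are treated identically (with $s<0$ replacing $s>0$).
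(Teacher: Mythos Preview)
Your proposal is correct and follows the same core idea as the paper: integrate along characteristics from a nearby interior point to the boundary point, bound the contribution of $h$ by $\|h\|_{L^\infty}$ times the elapsed time, and let that time tend to zero. The paper's version is more informal---it simply writes $f(t,x,v)=f(s,X(s;t,x,v),V(s;t,x,v))+\int_s^t h\,d\tau$ for a.e.\ $(t,x,v)\in(0,T)\times\gamma_+$ and lets $t-s\to 0$---whereas you supply the measure-theoretic scaffolding (the bi-Lipschitz boundary chart $\Phi$, transfer of null sets, and absolute continuity in $s$) that makes the ``a.e.\ on the boundary'' statement precise.
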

\begin{proof}
Denote the characteristics $X(s;t,x,v), V(s;t,x,v)$ which solves
\Be \label{charaholder}
\begin{split}
\frac{d}{ds} X(s;t,x,v) = & \hat V(s;t,x,v),
\\ \frac{d}{ds} V(s;t,x,v) = & \mathfrak F(s, X(s;t,x,v) ,V(s;t,x,v) ),
\end{split}
\Ee 
and $X(t;t,x,v) = x $, $V(t;t,x,v) = v$. Then since $\mathfrak F \in W^{1,\infty}((0,T) \times \mathbb R^3 ) $, the characteristics \eqref{charaholder} is H\"older continuous. From \eqref{transopass}, for almost every $(t,x,v) \in (0,T) \times \gamma_+$, and $ \max\{ 0, t-\tb(t,x,v) \} $,
\[
f(t,x,v) = f(s, X(s;t,x,v) , V(s;t,x,v) ) + \int_s^t h(\tau; X(\tau;t,x,v) , V(\tau;t,x,v)  ) d\tau.
\]
Thus
\[
\sup_{0 < t < T} \| f(t)  \|_{L^\infty(\gamma_+ ) } \le  \sup_{0 < t < T} \| f(t)  \|_{L^\infty(\O \times \mathbb R^3 ) } + (t-s) \| h \|_{L^\infty((0,T) \times \O \times \mathbb R^3 ) }.
\]
Since $ t -s  > 0$ can be arbitrarily small, we have
\[
\sup_{0 < t < T} \| f(t)  \|_{L^\infty(\gamma_+ ) } \le  \sup_{0 < t < T} \| f(t)  \|_{L^\infty(\O \times \mathbb R^3 ) }.
\]
Now, for $(x,v) \in \gamma_-$ and $s \in (t, \max \{ T, \tf(t,x,v) \} ) $, we have
\[
f(t,x,v) = f(s, X(s;t,x,v) , V(s;t,x,v) ) - \int_t^s h(\tau; X(\tau;t,x,v) , V(\tau;t,x,v)  ) d\tau.
\]
Using the same argument we get
\[
\sup_{0 < t < T} \| f(t)  \|_{L^\infty(\gamma_- ) } \le  \sup_{0 < t < T} \| f(t)  \|_{L^\infty(\O \times \mathbb R^3 ) }.
\]
This proves \eqref{tracelinftyUkai}.

\end{proof}

Next, we prove a trace theorem for the derivatives of $f$.

\begin{lemma} \label{tracepf}
Let $(f,E,B)$ be a solution of \eqref{VMfrakF1}, \eqref{inflow}, \eqref{Maxwell}. Suppose
\Be \label{nablaEBassin}
\sup_{0 \le t \le T}  \left(\| \nabla_{x} E(t)  \|_\infty + \| \nabla_{x} B(t)  \|_\infty \right) < \infty,
\Ee
\Be
 \| \langle v \rangle^{5 + \delta }   \p_t g \|_\infty +   \| \langle v \rangle^{5 + \delta }   \nabla_{x_\parallel} g \|_\infty +  \| \langle v \rangle^{5 + \delta }   \nabla_{v} g \|_\infty    < \infty,
\Ee
and
\Be \label{diffusefinalin}
 \sup_{0 \le t \le T} \left( \| \langle v \rangle^{4 + \delta }   \nabla_{x_\parallel} f(t) \|_\infty  + \|  \langle v \rangle^{5 + \delta }  \alpha \p_{x_3} f(t) \|_\infty  +  \|  \langle v \rangle^{5 + \delta }  \nabla_{v} f(t) \|_\infty \right) < \infty.
 \Ee
Then
\Be \label{pfgammapbd}
 \sup_{0 \le t \le T}  \left(  \| \langle v \rangle^{4 + \delta }  \nabla_{x_\parallel} f (t) \|_{L^\infty(\gamma \setminus \gamma_0) } +     \| \langle v \rangle^{5+\delta} \alpha \p_{x_3} f  (t,x,v)  \|_{L^\infty(\gamma \setminus \gamma_0 )  }  +  \| \langle v \rangle^{5 + \delta }  \nabla_{v} f (t) \|_{L^\infty(\gamma \setminus \gamma_0) }    \right) < \infty
 \Ee
\end{lemma}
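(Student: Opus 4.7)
The plan is to replicate the pointwise Lagrangian argument of Proposition \ref{inflowprop} at boundary points, rather than attempting to apply Lemma \ref{traceUkai} uniformly to all three weighted quantities. The weighted derivatives $\phi_1:=\langle v\rangle^{4+\delta}\nabla_{x_\parallel}f$ and $\phi_1':=\langle v\rangle^{5+\delta}\nabla_v f$ do satisfy transport equations $(\p_t+\hat v\cdot\nabla_x+\mathfrak F\cdot\nabla_v)\phi=h$ whose source $h$ is potentially problematic: commuting $\nabla_v$ past the Vlasov operator produces $\langle v\rangle^{5+\delta}\nabla_v\hat v\cdot\nabla_xf\sim\langle v\rangle^{4+\delta}\p_{x_3}f\sim(\alpha\langle v\rangle)^{-1}\phi_2$ (with $\phi_2:=\langle v\rangle^{5+\delta}\alpha\p_{x_3}f$), which is not in $L^\infty$ since only $\alpha\p_{x_3}f$ is controlled. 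A similar $\alpha$-factor difficulty arises in the commutator for $\phi_2$, preventing a clean global application of Lemma \ref{traceUkai}.

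On the incoming boundary $\gamma_-$, the inflow condition yields $f=g$ so that $\nabla_{x_\parallel}f|_{\gamma_-}=\nabla_{x_\parallel}g$ and $\nabla_vf|_{\gamma_-}=\nabla_vg$ are bounded by hypothesis. Evaluating the Vlasov equation on $\p\O$ gives $\hat v_3\,\p_{x_3}f=-\p_tg-\hat v_\parallel\cdot\nabla_{x_\parallel}g-\mathfrak F\cdot\nabla_vg$; since $\alpha=|\hat v_3|$ on $\p\O$, multiplying by $\alpha/\hat v_3=\pm 1$ immediately bounds $\alpha\p_{x_3}f|_{\gamma_-}$ in terms of the assumed norms of $g$ together with $\|E\|_\infty+\|B\|_\infty+g+|B_e|$ and the bound on $\nabla_v g$.

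On the outgoing boundary $\gamma_+\setminus\gamma_0$ the backward characteristic enters $\O$ and either reaches $t=0$ or hits $\gamma_-$ at time $t-\tb$, and the formulas \eqref{pxiF}, \eqref{pviF} remain valid. The estimates \eqref{alphavnablaf0}--\eqref{alphavnablavg} go through verbatim: the only potentially dangerous factor $\alpha(t,x,v)/\hat v_{b,3}$ is controlled by the velocity lemma \eqref{alphaest}, which on $\gamma$ (where $x_3=0$ and $\alpha(t,x,v)=|\hat v_3|$) yields $\hat v_{b,3}=\alpha(t-\tb,\xb,\vb)\ge e^{-CT}|\hat v_3|$, so the ratio is bounded by a constant depending only on $T$ and $c_0$. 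All remaining factors are controlled by the assumed $L^\infty$ norms of $f_0,g,E,B$ and the trajectory derivative bounds \eqref{pxviXVest}.

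The main obstacle is justifying that the Lagrangian formulas genuinely furnish the trace in $L^\infty(\gamma\setminus\gamma_0)$, i.e.\ that $\tb$ and the trajectory maps $(t,x,v)\mapsto(\xb,\vb)$ are continuous up to $\gamma_+\setminus\gamma_0$. This follows from the implicit function theorem applied to $X_3(t-\tb;t,x,v)=0$ under the transversality condition $\hat V_3(t-\tb;t,x,v)=\hat v_{b,3}>0$, which is guaranteed precisely by the exclusion of $\gamma_0$; smoothness of the ODE \eqref{HamiltonODE} requires $\mathfrak F\in W^{1,\infty}$, which is in turn supplied by \eqref{nablaEBassin} together with \eqref{ptEBest}. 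Combining the three pieces completes the bound \eqref{pfgammapbd}.
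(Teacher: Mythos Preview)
Your argument is correct, but it takes a different route from the paper's own proof. The paper does \emph{not} abandon the Ukai-style approach: it writes the transport equations for $\langle v\rangle^{p}\p_{\mathbf e}f$ and for $\langle v\rangle^{5+\delta}\alpha\,\p_{x_3}f$, integrates them along the characteristic from a nearby interior time $s$ to the boundary time $t$, and absorbs the source exactly by the mechanism you called ``potentially problematic.'' The point is that for a \emph{fixed} $(x,v)\in\gamma\setminus\gamma_0$ one has $\alpha(t,x,v)>0$, so the bound
\[
\Big|\int_s^t h(\tau,X(\tau),V(\tau))\,d\tau\Big|\;\le\;\frac{C(t-s)}{\alpha(t,x,v)}\cdot(\text{finite interior norms})
\]
can be made smaller than any $\epsilon>0$ by choosing $s$ close to $t$ (the choice of $s$ depends on the point, which is harmless since one only needs a pointwise $L^\infty$ bound). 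This yields directly $\|\langle v\rangle^{p}\p_{\mathbf e}f\|_{L^\infty(\gamma_\pm)}\le\|\langle v\rangle^{p}\p_{\mathbf e}f\|_{L^\infty(\O\times\mathbb R^3)}$ and similarly for the $\alpha$-weighted normal derivative. The same argument run forward in time handles $\gamma_-$.

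Your route instead re-runs the full Proposition~\ref{inflowprop} machinery (Lagrangian formulas \eqref{pxiF}--\eqref{pviF}, velocity-lemma control of $\alpha/\hat v_{\mathbf b,3}$) at boundary points of $\gamma_+$, and handles $\gamma_-$ by reading off derivatives of $g$ plus the Vlasov equation for $\alpha\p_{x_3}f$. This is valid and gives explicit bounds in terms of the data, but it duplicates work already done and is specific to the inflow setting. The paper's argument is shorter, yields the cleaner conclusion ``trace norm $\le$ interior norm,'' and is reused verbatim for the diffuse and specular boundary conditions later in the paper; your approach would require separate treatment of $\gamma_-$ in each case.
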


\begin{proof}
The proof uses similar argument as Ukai's proof of a trace theorem in \cite{Ukai}. Next, notice that for $ \p_{\mathbf e} \in \{ \nabla_{x_\parallel} ,\nabla_v \} $, and $p = 4 + \delta, 5+ \delta$, we have
\Be
\p_t  ( \langle v \rangle^{p} \p_{\mathbf e} f )  + \hat v \cdot \nabla_x (  \langle v \rangle^{p}\p_{\mathbf e} f  )+ \mathfrak F \cdot \nabla_v  (  \langle v \rangle^{p}\p_{\mathbf e} f ) = -  \langle v \rangle^{p}\p_{\mathbf e} \hat v \cdot \nabla_x f -  \langle v \rangle^{p} \p_{\mathbf e } \mathfrak F \cdot \nabla_v f -  \mathfrak F \cdot \nabla_v (  \langle v \rangle^{p} ) \p_{\mathbf e} f.
\Ee
Then for almost every $(x,v) \in \gamma_+$, and $ s \in ( \max\{ 0 ,  t- \tb(t,x,v) \} , t ) $, we have
\Be \label{pefexp1}
\begin{split}
\langle v \rangle^{p} \p_{\mathbf e} f (t,x,v) = &   \langle V(s;t,x,v) \rangle^{p}\p_{\mathbf e} f (s,X(s;t,x,v) ,V(s;t,x,v))
\\ &  + \int_s^t \left(  -  \langle v \rangle^{p} \p_{\mathbf e} \hat v \cdot \nabla_x f -  \langle v \rangle^{p} \p_{\mathbf e} \mathfrak F \cdot \nabla_v f  -  \mathfrak F \cdot \nabla_v (  \langle v \rangle^{p} ) \p_{\mathbf e} f \right) (\tau, X(\tau;t,x,v), V(\tau;t,x,v) ) d \tau.
\end{split}
\Ee
Thus, from \eqref{nablaEBassin} and \eqref{diffusefinalin},
\Be \label{pefbd1}
\begin{split}
& | \langle v \rangle^{p}  \p_{\mathbf e} f (t,x,v) | 
\\  \le & \sup_{0 \le s \le t} \| \langle v\rangle^p \p_{\mathbf e} f(s) \|_{\infty}  + \frac{ C (t-s) }{\alpha(t,x,v) }  \bigg( (  \sup_{0 \le t \le T}  (\| \nabla_{x} E(t)  \|_\infty + \| \nabla_{x} B(t) \|_\infty ) + g + |B_e| )
\\ & \quad \quad \quad \quad \times \sup_{ 0 \le s \le t } \left( \| \langle v \rangle^{4 + \delta}\nabla_{x_\parallel } f(t) \|_\infty +  \|  \langle v \rangle^{5 + \delta} \alpha \p_{x_3} f(t) \|_\infty   +  \|  \langle v \rangle^{5 + \delta}  \nabla_v f(t) \|_\infty \right) \bigg),
\end{split}
\Ee
since we can choose $s $ close enough to $t$ such that 
\[
\begin{split}
&  \frac{ C (t-s) }{\alpha(t,x,v) }  \bigg(  ( \sup_{0 \le t \le T}  (\| \nabla_{x} E(t)  \|_\infty + \| \nabla_{x} B(t) \|_\infty ) + g + |B_e|)
\\ & \quad \quad \quad \quad \times \sup_{ 0 \le s \le t } \left( \| \langle v \rangle^{4 + \delta}\nabla_{x_\parallel } f(t) \|_\infty +  \|  \langle v \rangle^{5 + \delta} \alpha \p_{x_3} f(t) \|_\infty   +  \|  \langle v \rangle^{5 + \delta}  \nabla_v f(t) \|_\infty \right) \bigg)< \e \ll 1,
\end{split}
\]
we have
\[
 | \langle v \rangle^{p}  \p_{\mathbf e} f (t,x,v) |  \le  \sup_{0 \le s \le t} \| \langle v\rangle^p \p_{\mathbf e} f(s) \|_{\infty}  + \e,
\]
for any $\e > 0$. Therefore, we get
\Be \label{xparavfbdpin}
\begin{split}
& \sup_{0 \le t \le T}  \| \langle v \rangle^{4 + \delta }  \nabla_{x_\parallel} f (t) \|_{L^\infty(\gamma_+) }  \le \sup_{0 \le t \le T}   \| \langle v \rangle^{4 + \delta }  \nabla_{x_\parallel} f (t) \|_\infty, 
\\ &  \sup_{0 \le t \le T}   \| \langle v \rangle^{5 + \delta }  \nabla_{v} f (t) \|_{L^\infty(\gamma_+) }  \le  \sup_{0 \le t \le T}   \| \langle v \rangle^{5 + \delta }  \nabla_{v} f (t) \|_\infty.
\end{split}
\Ee
Similarly, since
\[
\begin{split}
& (\p_t + \hat v \cdot \nabla_x +\mathfrak F \cdot \nabla_v )   ( \langle v \rangle^{5+\delta} \alpha \p_{x_3} f ) 
\\  & =  -  \langle v \rangle^{5+\delta} \alpha \p_{x_3 } \mathfrak F \cdot \nabla_v f -  \mathfrak F \cdot \nabla_v (  \langle v \rangle^{5+\delta} ) \alpha \p_{x_3} f - [(\p_t + \hat v \cdot \nabla_x + \mathfrak F \cdot \nabla_v ) \alpha ]  \langle v \rangle^{5+\delta} \p_{x_3} f
\\ & = : G_{\alpha} (t,x,v).
\end{split}
\] 
Then for almost every $(x,v) \in \gamma_+$, and $ s \in ( \max\{ 0 ,  t- \tb(t,x,v) \} , t ) $, we have
\Be \label{ap3fexp}
\begin{split}
& \langle v \rangle^{5+\delta} \alpha \p_{x_3} f  (t,x,v)  
\\ = & \langle V(s;t,x,v) \rangle^{5 +\delta}  \alpha \p_{x_3} f (s, X(s;t,x,v) , V(s;t,x,v) )  + \int_s^t G_\alpha(\tau , X(\tau; t,x,v) , V(\tau; t,x,v) ) d\tau.
\end{split}
\Ee
Since
\[
\begin{split}
| G_\alpha (t,x,v) | \le &   C \big(  (( \sup_{0 \le t \le T}  \| \nabla_{x} E(t)  \|_\infty + \| \nabla_{x} B(t) \|_\infty ) + g + |B_e|)
\\ & \quad \quad \quad \quad \times \sup_{ 0 \le s \le t } \left( \| \langle v \rangle^{4 + \delta}\nabla_{x_\parallel } f(t) \|_\infty +  \|  \langle v \rangle^{5 + \delta} \alpha \p_{x_3} f(t) \|_\infty   +  \|  \langle v \rangle^{5 + \delta}  \nabla_v f(t) \|_\infty \right) \big),
\end{split}
\]
using the same argument as \eqref{pefbd1}--\eqref{xparavfbdpin} , we obtain 
\Be \label{x3fbdpin}
\sup_{0 \le t \le T}   \| \langle v \rangle^{5+\delta} \alpha \p_{x_3} f  (t,x,v)  \|_{L^\infty(\gamma_+ ) } \le  \sup_{0 \le t \le T}     \| \langle v \rangle^{5+\delta} \alpha \p_{x_3} f  (t,x,v)  \|_\infty.
\Ee

Now, for $(x,v) \in \gamma_-$, and any $s \in (t, \max \{ T, \tf(t,x,v) \} ) $ we have the same formula \eqref{pefexp1} and \eqref{ap3fexp} for $ \langle v \rangle^p \p \mathbf e f $ and $\langle v \rangle ^{5 +\delta}\alpha \p_{x_3}  f $ respectively. Therefore by the same argument, we get
\Be \label{xparavfbdmin}
\begin{split}
& \sup_{0 \le t \le T}  \| \langle v \rangle^{4 + \delta }  \nabla_{x_\parallel} f (t) \|_{L^\infty(\gamma_-) }  \le \sup_{0 \le t \le T}   \| \langle v \rangle^{4 + \delta }  \nabla_{x_\parallel} f (t) \|_\infty, 
\\ &  \sup_{0 \le t \le T}   \| \langle v \rangle^{5 + \delta }  \nabla_{v} f (t) \|_{L^\infty(\gamma_-) }  \le  \sup_{0 \le t \le T}   \| \langle v \rangle^{5 + \delta }  \nabla_{v} f (t) \|_\infty,
\\  & \sup_{0 \le t \le T}   \| \langle v \rangle^{5+\delta} \alpha \p_{x_3} f  (t,x,v)  \|_{L^\infty(\gamma_-) } \le  \sup_{0 \le t \le T}     \| \langle v \rangle^{5+\delta} \alpha \p_{x_3} f  (t,x,v)  \|_\infty.
\end{split}
\Ee

Combining \eqref{xparavfbdpin}, \eqref{x3fbdpin}, and \eqref{xparavfbdmin}, we conclude \eqref{pfgammapbd}.
\end{proof}

\section{Local existence}
We prove the local existence for RVM system with the inflow boundary condition in this section. We recursively define a sequence of functions:
\[
f^0(t,x,v) = f_0(x,v), \ E^0(t,x) = E_0(t,x), \ B^0(t,x) = B_0(x).
\]
For $\ell \ge 1$, let $f^\ell$ be the solution of
\Be \label{fellseqin}
\begin{split}
\p_t f^\ell + \hat v \cdot \nabla_x f^\ell +  \mathfrak F^{\ell-1}   \cdot \nabla_v f^\ell = &  0, \text{ where }   \mathfrak F^{\ell-1} =  E^{\ell-1} + E_{\text{ext}} + \hat v \times ( B^{\ell-1}  + B_{\text{ext}} ) - g \mathbf e_3,
\\f^\ell(0,x,v)  = &  f_0(x,v) ,
\\ f^\ell(t,x,v) |_{\gamma_-} = &  g(t,x,v).
\end{split}
\Ee
Let $\rho^\ell = \int_{\mathbb R^3 } f^\ell dv, j^\ell = \int_{\mathbb R^3 } \hat v  f^\ell dv$. Let 
\Be \label{ElBlin}
E^\ell = \eqref{Eesttat0pos} + \dots + \eqref{Eest3bdrycontri}, \  B^\ell = \eqref{Besttat0pos} + \dots + \eqref{Bestbdrycontri}, \text{ with } f \text{ changes to } f^\ell. 
\Ee
We prove several uniform-in-$\ell$ bounds for the sequence before passing the limit.

\begin{lemma}
Suppose $f_0$ satisfies \eqref{f0bdd}, $E_0$, $B_0$ satisfy \eqref{E0B0g}, \eqref{E0B0bdd}, then there exits $M_1, M_2$, and $c_0$, such that for $0 < T \ll 1 $, 
\Be \label{fellboundin}
\begin{split}
 \sup_\ell \sup_{0 \le t \le T} \left( \| \langle v \rangle^{4 + \delta } f^\ell(t) \|_{L^\infty(\bar \O \times \mathbb R^3)}    \right) < & M_1, \ 
\\  \sup_\ell \sup_{0 \le t \le T} \left( \| E^\ell (t) \|_\infty + \| B^\ell (t) \|_\infty \right) + |B_e| + E_e + g < & M_2,
\\ \inf_{\ell} \inf_{ t ,x_\parallel} \left( g - E_e  -  E^\ell_3(t,x_\parallel, 0 )  -  (\hat v \times B^\ell)_3(t,x_\parallel, 0 ) \right) > &  c_0.
\end{split}
\Ee
\end{lemma}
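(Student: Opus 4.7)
The plan is a direct induction on $\ell$. Fix the constants
\begin{equation}\notag
M_1 := 2\big(\|\langle v\rangle^{4+\delta} f_0\|_\infty + \|\langle v\rangle^{4+\delta} g\|_\infty\big),\qquad M_2 := 2(\|E_0\|_\infty + \|B_0\|_\infty) + g + E_e + |B_e| + 1,
\end{equation}
and $c_0 := c_1/2$, where $c_1$ is the margin in \eqref{E0B0g}. The base case $\ell=0$ holds by hypothesis. Assuming all three bounds for indices up to $\ell-1$, we verify them for $\ell$; at the end we fix $T \ll 1$ once, small enough for the three smallness requirements below.

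\textbf{Weighted $L^\infty$ bound for $f^\ell$.} Since $\mathfrak F^{\ell-1}$ is uniformly bounded by $M_2$ (by the inductive hypothesis on $E^{\ell-1},B^{\ell-1}$), the characteristic flow of \eqref{fellseqin} satisfies $|V^{\ell-1}(s;t,x,v) - v|\le M_2 T$, and hence $\langle V^{\ell-1}(s)\rangle \le (1+M_2 T)\langle v\rangle$ on $s \in (\max\{0,t-\tb^{\ell-1}\}, t)$. Because $f^\ell$ is constant along these characteristics and is matched either to $f_0$ at $s=0$ or to $g$ at the hitting time on $\gamma_-$, we obtain $\|\langle v\rangle^{4+\delta} f^\ell(t)\|_\infty \le (1+M_2 T)^{4+\delta}(\|\langle v\rangle^{4+\delta} f_0\|_\infty + \|\langle v\rangle^{4+\delta} g\|_\infty) \le M_1$ for $T$ small.

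\textbf{Field bound.} Because $f^\ell$ solves the Vlasov equation with force $\mathfrak F^{\ell-1}$, we have $S f^\ell = -\nabla_v\cdot[\mathfrak F^{\ell-1} f^\ell]$ (using $\nabla_v\cdot(\hat v\times(B^{\ell-1}+B_\text{ext})) = 0$), so every integration-by-parts in the proof of Lemma \ref{EBlinflemma} goes through verbatim with $\mathfrak F$ replaced by $\mathfrak F^{\ell-1}$. The only nonlinear factor that had to be absorbed in the original proof now sits harmlessly on the right side, bounded by $M_2$ via the inductive hypothesis. The resulting estimate
\begin{equation}\notag
\sup_{0\le t\le T}(\|E^\ell(t)\|_\infty + \|B^\ell(t)\|_\infty) \le C_0(\|E_0\|_{C^1} + \|B_0\|_{C^1}) + T C_0 M_1(1+T M_2)
\end{equation}
is then $\le M_2$ for $T$ small, closing the second bound.

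\textbf{Sign condition.} It suffices to establish the continuity-in-time statement $\|E^\ell(t) - E_0\|_{L^\infty(\p\O)} + \|B^\ell(t) - B_0\|_{L^\infty(\p\O)} = O(T)$, after which \eqref{E0B0g} yields the margin $c_0 = c_1/2$ (using $|(\hat v\times(B^\ell - B_0))_3| \le |B^\ell - B_0|$ uniformly in $v$). The d'Alembert pieces \eqref{Eesttat0pos}-\eqref{Eesttat0neg} and \eqref{Besttat0pos}-\eqref{Besttat0neg} recover $E_0$ and $B_0$ at $t\to 0^+$ with modulus $O(T)(\|E_0\|_{C^2}+\|B_0\|_{C^2})$ by Taylor expansion on the spherical means; every remaining term in \eqref{Eestbulkpos}-\eqref{Eest3bdrycontri} and \eqref{Bestbulkpos}-\eqref{Bestbdrycontri} is $O(T)M_1$ exactly as in the estimates \eqref{Elinftyest2}-\eqref{Elinftyest6}, \eqref{Blinftyest1}-\eqref{Blinftyest6}. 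Fixing $T$ to satisfy the three smallness requirements simultaneously closes the induction.

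The main obstacle is the sign-condition step: one must verify that the oddly/evenly extended Kirchhoff integrals together with the boundary contributions \eqref{Eestbdrypos}-\eqref{Eestbdryneg}-\eqref{Eest3bdrycontri} jointly produce exactly $E_0|_{\p\O} = (0,0,E_{0,3})$ and $B_0|_{\p\O}=(B_{0,1},B_{0,2},0)$ in the limit $t\to 0^+$, with an explicit $O(T)$ modulus rather than only $o(1)$. This is where the perfect-conductor compatibility \eqref{EBintialC} is essential (to prevent jumps from the even/odd extensions) and where the $C^2$ assumption on the initial data \eqref{E0B0bdd} is used.
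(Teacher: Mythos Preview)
Your proposal is correct and follows essentially the same approach as the paper: induction on $\ell$, with the characteristic bound $\langle V^{\ell-1}(s)\rangle \le (1+M_2 T)\langle v\rangle$ for the weighted $L^\infty$ estimate on $f^\ell$, the argument of Lemma~\ref{EBlinflemma} (with $\mathfrak F^{\ell-1}$ in place of $\mathfrak F$) for the field bound, and the $O(T)$ deviation from the initial data for the sign condition. The paper treats the sign-condition step in one line, writing
\[
\inf_{t,x_\parallel}\big(g - E_e - E^{\ell+1}_3 - (\hat v\times B^{\ell+1})_3\big) > 2c_0 - CTM_1(1+T(M_2+g+|B_e|))
\]
and invoking smallness of $T$; your more detailed account of why the Kirchhoff pieces reproduce $E_0,B_0$ up to $O(T)$ on $\partial\Omega$ is the content behind that line.

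One minor inconsistency: your explicit choice $M_2 = 2(\|E_0\|_\infty + \|B_0\|_\infty) + g + E_e + |B_e| + 1$ cannot absorb the leading term $C_0(\|E_0\|_{C^1}+\|B_0\|_{C^1})$ of your own field estimate unless $C_0 \le 2$ and derivatives are controlled by values, which you have not arranged. The paper simply sets $M_2 = (C+1)(\|E_0\|_{C^1}+\|B_0\|_{C^1}) + |B_e| + E_e + g$ with $C$ the constant from \eqref{BEellinftyestinflow1}; adjusting your $M_2$ in the same way closes the loop.
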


\begin{proof}
Let $\ell \ge 1$. By induction hypothesis we assume that
\Be \label{inductfEBin}
\begin{split}
\sup_{ 0 \le i \le \ell }  \sup_{0 \le t \le T} \left( \| \langle v \rangle^{4 + \delta} f^{\ell-i}(t) \|_{L^\infty(\bar \O \times \mathbb R^3)}   \right) < & M_1,
\\  \sup_{ 0 \le i \le \ell }  \sup_{0 \le t \le T} \left(  \| E^{\ell-i} (t) \|_\infty + \| B^{\ell-i} (t) \|_\infty \right) + |B_e| + E_e + g < & M_2.
\end{split}
\Ee

Denote the characteristics $(X^\ell, V^\ell)$ which solves
\Be\label{XV_ell}
\begin{split}
\frac{d}{ds}X^\ell(s;t,x,v) &= \hat V^\ell(s;t,x,v),\\
\frac{d}{ds} V^\ell(s;t,x,v) &= \mathfrak F^\ell (s, X^\ell(s;t,x,v),V^\ell(s;t,x,v) ).\end{split}
\Ee

First, we note that for any $ 0 s < t$, since
\[
V^{\ell } (s;  t,x,v) = v - \int_{s}^{ t} \mathfrak F^{\ell } (\tau, X^{\ell}(\tau), V^{\ell}(\tau)  ) d\tau,
\]
from \eqref{inductfEBin}, we have
\[
|v| - (t-s)M_2 \le | V^{\ell } (s;   t, x, v) | \le |v | +  (t- s)M_2.
\]
Thus
\Be \label{Vsvin}
\left( 1 + (t-s)(M_2+g) \right)^{-1} \langle v \rangle  \le \langle V^{\ell } (s;   t, x, v ) \rangle \le \left( 1 + (t-s)(M_2+g) \right) \langle v \rangle.
\Ee
			
From \eqref{fellseqin} we have for any $(t,x,v) \in (0,T) \times \bar \O \times \mathbb R^3$, 
\Be
\begin{split}
f^{\ell+1}(t,x,v) = & \mathbf 1_{ {\tb}^\ell \le 0} f^{\ell+1}( 0, X^\ell(0), V^\ell(0)) +  \mathbf 1_{{\tb}^\ell \ge 0 } g ({\tb}^\ell , X^\ell({\tb}^\ell; t,x,v), V^\ell({\tb}^\ell;t,x,v) ).
\end{split}
\Ee
And \eqref{Vsvin} gives
\Be \label{fnmiterate2in}
\begin{split}
& \langle v \rangle^{4 + \delta }  |f^{\ell+1}(t,x,v) | 
\\  \le &   \mathbf 1_{ {\tb}^\ell \le 0} (1+ T (M_2+g))^{4 + \delta} | \langle V^\ell(0) \rangle^{4 + \delta }  f^{\ell+1}( 0, X^\ell(0), V^\ell(0)) |  
\\ &  +   \mathbf 1_{ {\tb}^\ell \ge 0 }    (1+ T (M_2+g))^{4 + \delta}   \langle V^\ell( {\tb}^\ell) \rangle^{4 + \delta}   g ({\tb}^\ell , X^\ell({\tb}^\ell; t,x,v), V^\ell({\tb}^\ell;t,x,v) ).
\end{split}
\Ee
Thus, by choosing $T \ll 1 $ we have
\Be \label{fell1finalin}
\sup_{0 \le t \le T} \|  \langle v \rangle^{4 + \delta } f^{\ell+1}(t) \|_{L^\infty(\bar \O \times \mathbb R^3 ) }   < M_1.
\Ee
Now from \eqref{ElBlin}, using the same argument in the proof of Lemma \ref{EBlinflemma}, we have
\Be \label{BEellinftyestinflow1}
 \begin{split}
& \sup_{0 \le t \le T} \| E^{\ell+1} (t) \|_\infty + \sup_{0 \le t \le T} \| B^{\ell+1} (t) \|_\infty
\\  \le &  C( \| E_0 \|_{\infty } +  \| B_0 \|_{\infty }  )  + C T ( \| E_0 \|_{C^1 } +  \| B_0 \|_{C^1 }  )
\\ &+   C T \sup_{ 0 \le t \le T} \|  \langle v \rangle^{4+\delta} f^\ell (t) \|_\infty \left( 1  +  T  (    \sup_{ 0 \le t \le T} \left(  \| E^\ell(t)   \|_\infty +  \| B^\ell(t)   \|_\infty  \right) + g + |B_e|  ) \right)
\\ \le  & C( \| E_0 \|_{C^1 } +  \| B_0 \|_{C^1 }  ) +   C T M_1 \left( 1  +  T     (M_2+g + |B_e|) \right).
\end{split} 
\Ee
Letting $M_2  = (C+1) ( \| E_0 \|_{C^1 } +  \| B_0 \|_{C^1 }  )  + |B_e| + E_e + g $ and $T \ll 1 $, we get
\Be \label{EBlifinalin}
  \sup_{0 \le t \le T} \| E^{\ell+1} (t) \|_\infty + \sup_{0 \le t \le T} \| B^{\ell+1} (t) \|_\infty + |B_e| + E_e +  g< M_2. 
\Ee
Next, from \eqref{E0B0g} and the proof of Lemma \ref{EBlinflemma}, by letting $c_0 = \frac{c_1}{2}$ in \eqref{E0B0g}, we get
\[
\begin{split}
 \inf_{ t ,x_\parallel} \left( g - E_e  -  E^{\ell+1}_3(t,x_\parallel, 0 )  -  (\hat v \times B^{\ell+1})_3(t,x_\parallel, 0 ) \right) > &  2 c_0 -  C T M_1 \left( 1  +  T     (M_2+g + |B_e|) \right).
\end{split}
\]
By choosing $T \ll 1$ small enough, we have
\Be \label{BEellinftyestinflow4}
 \inf_{ t ,x_\parallel} \left( g - E_e  -  E^{\ell+1}_3(t,x_\parallel, 0 )  -  (\hat v \times B^{\ell+1})_3(t,x_\parallel, 0 ) \right) >   c_0.
\Ee
Combining with \eqref{fell1finalin} and \eqref{EBlifinalin}, we conclude \eqref{fellboundin} by induction. 

\end{proof}

Next, we consider the derivative of the sequences. Define $\alpha^{\ell} $ as
\Be \label{alphan}
\begin{split}
\alpha^\ell(t, x, v ) =   \sqrt{(x_3)^2+(\hat{v}_{3})^2 -2\left( E^\ell_3(t,x_\parallel, 0 ) + E_e + (\hat v \times B^\ell)_3(t,x_\parallel, 0 )  -  g \right) \frac{x_3}{\langle v \rangle}}.
\end{split}
\Ee
We have the following estimate.
\begin{lemma}
Suppose $f_0$ satisfies \eqref{f0bdd}, \eqref{inflowdata}, $E_0$, $B_0$ satisfy \eqref{E0B0bdd}, then there exits $M_3, M_4$ such that for $0 < T \ll 1 $, 
\Be \label{flElBldsqbdin}
\begin{split}
 &\sup_\ell \sup_{0 \le t \le T} \left( \| \langle v \rangle^{4 + \delta } \nabla_{x_\parallel } f^\ell(t) \|_\infty  + \| \langle v \rangle^{5 + \delta }  \alpha^{\ell-1} \p_{x_3 } f^\ell(t) \|_\infty + \|  \langle v \rangle^{4 + \delta }  \nabla_v f^\ell(t) \|_\infty   \right) 
 \\& + \sup_\ell \sup_{0 \le t \le T} \left( \| \langle v \rangle^{4 + \delta } \nabla_{x_\parallel } f^\ell(t) \|_{L^\infty(\gamma \setminus \gamma_0)}  + \| \langle v \rangle^{5 + \delta }  \alpha^{\ell-1} \p_{x_3 } f^\ell(t) \|_{L^\infty(\gamma \setminus \gamma_0)} + \|  \langle v \rangle^{4 + \delta }  \nabla_v f^\ell(t) \|_{L^\infty(\gamma \setminus \gamma_0)}   \right) <  M_3 ,
\\ & \sup_\ell \sup_{0 \le t \le T} \left( \| \p_t E^\ell(t) \|_\infty +  \| \p_t B^\ell(t) \|_\infty + \| \nabla_{x } E^\ell(t) \|_\infty +\| \nabla_{x } B^\ell(t) \|_\infty  \right) < M_4.
\end{split} 
\Ee
\end{lemma}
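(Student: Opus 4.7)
My approach will be a bootstrap induction on $\ell$. Introducing constants $M_3, M_4$ to be chosen (depending only on the data) and assuming \eqref{flElBldsqbdin} holds for all $\ell' \le \ell$ together with the uniform $L^\infty$ bounds \eqref{fellboundin}, I will establish the bounds at level $\ell+1$ provided $T$ is sufficiently small. The structure parallels Proposition \ref{inflowprop}, but I must carefully track the one-index shift between the field $\mathfrak F^\ell$ that transports $f^{\ell+1}$ and the field $(E^{\ell+1}, B^{\ell+1})$ reconstructed from $f^{\ell+1}$ via \eqref{ElBlin}.

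For the derivatives of $f^{\ell+1}$, which solves the \emph{linear} transport equation \eqref{fellseqin} along the characteristics $(X^\ell, V^\ell)$ of \eqref{XV_ell}, I will apply the pointwise formulas \eqref{nablaxfest}--\eqref{nablavfest} combined with \eqref{pxitbest}, \eqref{pvitbest} and the trajectory bounds \eqref{pxviXVest} (with constant controlled by $M_4$). The velocity lemma (Lemma \ref{vlemma}) applies to the weight $\alpha^\ell$ because $\inf(g - E_e - E_3^\ell - (\hat v\times B^\ell)_3) \ge c_0$ by \eqref{fellboundin}, so $\alpha^\ell$ is comparable along $(X^\ell, V^\ell)$. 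Mimicking \eqref{nablaparaf0est}--\eqref{dvfestfinal} verbatim produces an affine-in-$M_4$ estimate
\begin{equation*}
\| \langle v\rangle^{4+\delta}\nabla_{x_\parallel} f^{\ell+1}(t)\|_\infty + \| \langle v\rangle^{5+\delta}\alpha^\ell \p_{x_3} f^{\ell+1}(t)\|_\infty + \| \langle v\rangle^{5+\delta}\nabla_v f^{\ell+1}(t)\|_\infty \le A + T\, \Phi(M_2,M_4)\, M_4,
\end{equation*}
with $A$ depending only on the data norms \eqref{f0bdd}, \eqref{inflowdata}.

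For the derivatives of $E^{\ell+1}, B^{\ell+1}$, the proof of Lemma \ref{EBW1inftylemma} goes through essentially unchanged when applied to the Glassey--Strauss representation of $(E^{\ell+1}, B^{\ell+1})$, the only care being that in evaluating $S f^{\ell+1} = -\nabla_v\cdot[\mathfrak F^\ell f^{\ell+1}]$ (cf.\ \eqref{EestSposibp}, \eqref{BestSposibp}) the field is $\mathfrak F^\ell$ rather than $\mathfrak F^{\ell+1}$; the local-to-nonlocal estimate of Lemma \ref{1alphaintv} is used with $\alpha^\ell$, whose uniform lower bound $c_0$ is furnished by \eqref{fellboundin}. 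This yields
\begin{equation*}
\| \nabla_x E^{\ell+1}\|_\infty + \| \nabla_x B^{\ell+1}\|_\infty \le A' + \Psi(M_1,M_2) \Bigl( \| \langle v\rangle^{4+\delta}\nabla_{x_\parallel} f^{\ell+1}\|_\infty + \| \langle v\rangle^{5+\delta}\alpha^\ell \p_{x_3} f^{\ell+1}\|_\infty \Bigr) + T\,\Psi'(M_1,M_2)\,M_4.
\end{equation*}
The time-derivative bounds $\|\p_t E^{\ell+1}\|_\infty + \|\p_t B^{\ell+1}\|_\infty$ then follow from the Maxwell equations \eqref{Maxwell} and \eqref{fellboundin}, while the trace bounds on $\gamma \setminus \gamma_0$ come from Lemma \ref{tracepf} applied to $f^{\ell+1}$ once the interior bounds are in hand. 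To close the induction I pick $M_3 \ge 2A$, then $M_4 \ge 2A' + 2\Psi(M_1,M_2) M_3$, and finally $T$ so small that $T\Phi(M_2,M_4) M_4 \le M_3/2$ and $T\Psi'(M_1,M_2) M_4 \le M_4/2$, giving $M_3, M_4$ as upper bounds at level $\ell+1$.

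The principal difficulty is that the clean absorption argument of Proposition \ref{inflowprop}, which used $\|\nabla_x E\|_\infty + \|\nabla_x B\|_\infty$ being generated by the \emph{same} $f$ and hence being a small perturbation of the left-hand side for small $T$, is no longer available in a single step: here $(E^{\ell+1},B^{\ell+1})$ is built from $f^{\ell+1}$ while the transport of $f^{\ell+1}$ sees the previous-step field $\mathfrak F^\ell$. The redeeming feature is that the inter-level coupling is affine with coefficient of order $T$, so the hierarchical choice of $M_3, M_4, T$ (in that order) closes the induction; this also explains why a pure small-data argument is unnecessary and a pure small-time argument suffices.
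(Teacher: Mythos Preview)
Your approach is correct and essentially identical to the paper's: both apply the estimates of Proposition \ref{inflowprop}, Lemma \ref{tracepf}, and Lemma \ref{EBW1inftylemma} at each level of the iteration and close by choosing $T$ small. You are more explicit than the paper about the one-index shift (that $Sf^{\ell+1}$ involves $\mathfrak F^\ell$ in the Glassey--Strauss formula for $E^{\ell+1}$) and the order in which $M_3, M_4, T$ must be chosen, whereas the paper simply invokes ``the same proof'' of those results and absorbs by taking $\sup_\ell$ on both sides at the end.
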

\begin{proof}
The proof is essentially the same as the proof of Proposition \ref{inflowprop}. 
From the uniform estimate \eqref{fellboundin}, and from the velocity lemma (Lemma \ref{vlemma}), we have for some $C>0$,
\Be
e^{-C|t-s| } \alpha^\ell(t,x,v) \le \alpha^\ell(s, X^\ell(s;t,x,v) , V^\ell(s;t,x,v) ) \le e^{C|t-s| } \alpha^\ell(t,x,v), \ \text{ for all } \ell.
\Ee
Therefore, following the same proof of Proposition \ref{inflowprop} and Lemma \ref{tracepf}, we get
\Be \label{}
\begin{split}
& \sup_{0 \le t \le T} \left( \| \langle v \rangle^{4 +\delta } \nabla_{x_\parallel} f^{\ell+1} (t)  \|_\infty  + \|  \langle v \rangle^{5 +\delta }  \alpha^\ell \p_{x_3} f ^{\ell+1} (t) \|_\infty  + \| \langle v \rangle^{5 +\delta } \nabla_{v} f^{\ell+1} (t)  \|_\infty \right)
\\ + & \sup_{0 \le t \le T} \left( \| \langle v \rangle^{4 + \delta } \nabla_{x_\parallel } f^\ell(t) \|_{L^\infty(\gamma \setminus \gamma_0)}  + \| \langle v \rangle^{5 + \delta }  \alpha^{\ell-1} \p_{x_3 } f^\ell(t) \|_{L^\infty(\gamma \setminus \gamma_0)} + \|  \langle v \rangle^{4 + \delta }  \nabla_v f^\ell(t) \|_{L^\infty(\gamma \setminus \gamma_0)}   \right) 
\\ \le & C_k \bigg( \left( \| \langle v \rangle^{4 +\delta }  \nabla_{x_\parallel}   f_0 \|_\infty +  \| \| \langle v \rangle^{5 +\delta }   \alpha \p_{x_3}   f_0 \|_\infty +  \| \| \langle v \rangle^{5 +\delta }  \nabla_{v}   f_0 \|_\infty  \right)  
\\  & +  \sup_{0 \le t \le T} \left( \| \langle v \rangle^{4 + \delta } \nabla_{x_\parallel } g (t) \|_{L^\infty(\gamma_-)}  + \| \langle v \rangle^{5 + \delta }  \nabla_v g(t) \|_{L^\infty(\gamma_-} + \|  \langle v \rangle^{5 + \delta }  \p_t g(t) \|_{L^\infty(\gamma_-)}   \right)  \bigg).
\end{split}
\Ee
Thus, by choosing $M_3 \gg 1$, we conclude
\Be \label{dfelluniin}
\begin{split}
& \sup_\ell \sup_{0 \le t \le T} \left( \| \langle v \rangle^{4 + \delta } \nabla_{x_\parallel } f^\ell(t) \|_\infty  + \| \langle v \rangle^{5 + \delta }  \alpha^{\ell-1} \p_{x_3 } f^\ell(t) \|_\infty + \| \langle v \rangle^{4 + \delta }  \nabla_v f^\ell(t) \|_\infty   \right)  \\& + \sup_\ell \sup_{0 \le t \le T} \left( \| \langle v \rangle^{4 + \delta } \nabla_{x_\parallel } f^\ell(t) \|_{L^\infty(\gamma \setminus \gamma_0)}  + \| \langle v \rangle^{5 + \delta }  \alpha^{\ell-1} \p_{x_3 } f^\ell(t) \|_{L^\infty(\gamma \setminus \gamma_0)} + \|  \langle v \rangle^{4 + \delta }  \nabla_v f^\ell(t) \|_{L^\infty(\gamma \setminus \gamma_0)}   \right)<  M_3.
\end{split}
\Ee
From this, we use the same argument to get \eqref{dxEBfinal} in the proof of Lemma \ref{EBW1inftylemma} and obtain
\[
\begin{split}
 \sup_{0 \le t \le T} & \left( \| \p_t E^{\ell+1}(t) \|_\infty +  \| \p_t B^{\ell+1}(t) \|_\infty + \| \nabla_{x } E^{\ell+1}(t) \|_\infty +\| \nabla_{x } B^{\ell+1}(t) \|_\infty   \right)
\\ \le & TC \sup_{1 \le i \le \ell}  \sup_{0 \le  t \le T}    \left( \| \p_t E^{i}(t) \|_\infty +  \| \p_t B^{i}(t) \|_\infty + \| \nabla_{x } E^{i}(t) \|_\infty +\| \nabla_{x } B^{i}(t) \|_\infty   \right)  
\\ & C   \left( \| E_0 \|_{C^2 } + \| B_0 \|_{C_2}  \right) +  C \sup_{0 \le t \le T} \left( \| \langle v \rangle ^{4 + \delta } \nabla_{x_\parallel } f^{\ell +1}(t) \|_\infty  + \| \langle v \rangle^{\ell  + \delta }  \alpha^{n} \p_{x_3 } f^{\ell +1}(t) \|_\infty   \right)  
\\ &  +  C \sup_{0 \le t \le T} \left( \| \langle v \rangle^{4 + \delta } \nabla_{x_\parallel } f^\ell(t) \|_{L^\infty(\gamma \setminus \gamma_0)}  + \| \langle v \rangle^{5 + \delta }  \alpha^{\ell-1} \p_{x_3 } f^\ell(t) \|_{L^\infty(\gamma \setminus \gamma_0)} \right) 
\\ & + C  \sup_{0 \le t \le T} \left( \| \langle v \rangle^{4  + \delta } f^{\ell+1}(t) \|_\infty  + \| E^{\ell +1} (t) \|_\infty + \| B^{\ell +1} (t) \|_\infty \right).
\end{split}
\]
From \eqref{fellboundin} and \eqref{dfelluniin}, this gives
\[
\begin{split}
\sup_{\ell} \sup_{0 \le t \le T} & \left( \| \p_t E^{\ell}(t) \|_\infty +  \| \p_t B^{\ell}(t) \|_\infty + \| \nabla_{x } E^{\ell}(t) \|_\infty +\| \nabla_{x } B^{\ell}(t) \|_\infty   \right)
\\ \le & TC \sup_{ \ell}  \sup_{0 \le  t \le T}    \left( \| \p_t E^{\ell}(t) \|_\infty +  \| \p_t B^{\ell}(t) \|_\infty + \| \nabla_{x } E^{\ell}(t) \|_\infty +\| \nabla_{x } B^{\ell}(t) \|_\infty   \right)  
\\ & + C \left( \| E_0 \|_{C^2 } + \| B_0 \|_{C_2}  \right)+ C(M_1 +M_2 +M_3).
\end{split}
\]
Therefore, by choosing $M_4 \gg 1$ and $T \ll 1$, we get
\Be
\sup_{\ell} \sup_{0 \le t \le T}  \left( \| \p_t E^{\ell}(t) \|_\infty +  \| \p_t B^{\ell}(t) \|_\infty + \| \nabla_{x } E^{\ell}(t) \|_\infty +\| \nabla_{x } B^{\ell}(t) \|_\infty   \right) < M_4.
\Ee
Together with \eqref{dfelluniin}, we conclude \eqref{flElBldsqbdin}.
\end{proof}

We have the following trace properties for $E^\ell$ and $B^\ell$:

\begin{lemma} \label{EBelltrace}
Suppose $E^\ell, B^\ell$ satisfies \eqref{flElBldsqbdin}. Then for any $\ell \ge 1$, $0 < t < T$,
\Be \label{EBpO}
E^\ell(t,\cdot ,0) \in L^\infty(\p \O ), \ B^\ell(t,\cdot,0) \in L^\infty(\p \O ).
\Ee
\end{lemma}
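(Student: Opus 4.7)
The plan is to deduce the trace bound directly from the uniform $W^{1,\infty}$ control already established in \eqref{flElBldsqbdin} together with the uniform $L^\infty$ bound in \eqref{fellboundin}. Combining these two estimates shows that, for each fixed $\ell \ge 1$ and each fixed $t \in (0,T)$,
\[
\| E^\ell(t) \|_{L^\infty(\O)} + \| B^\ell(t) \|_{L^\infty(\O)} \le M_2, \qquad
\| \nabla_x E^\ell(t) \|_{L^\infty(\O)} + \| \nabla_x B^\ell(t) \|_{L^\infty(\O)} \le M_4,
\]
so $E^\ell(t,\cdot), B^\ell(t,\cdot) \in W^{1,\infty}(\O)$ with bounds independent of $\ell$.

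Next, I would invoke the standard Morrey-type identification: every $W^{1,\infty}(\O)$ function on the Lipschitz domain $\O = \R^3_+$ agrees almost everywhere with a uniformly Lipschitz continuous function, and hence admits a unique continuous extension up to $\bar\O$. In particular, the restriction to $\p\O = \{x_3 = 0\}$ is well-defined pointwise and bounded, which is exactly the classical trace in this Lipschitz setting. Applying this to $E^\ell(t,\cdot)$ and $B^\ell(t,\cdot)$ yields functions on $\p\O$ whose $L^\infty$ norms are controlled by the $L^\infty(\O)$ norms of the interior fields, giving
\[
\| E^\ell(t,\cdot,0) \|_{L^\infty(\p\O)} + \| B^\ell(t,\cdot,0) \|_{L^\infty(\p\O)} \le M_2,
\]
which is \eqref{EBpO}.

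There is no real obstacle here: the lemma is essentially a bookkeeping statement that converts the already-proved $W^{1,\infty}$ estimates into a well-defined, bounded trace on the flat boundary $\{x_3 = 0\}$. The only mild point is to emphasize that, because $\O$ is the half-space (in particular Lipschitz with a global flat boundary chart), the Lipschitz extension and pointwise evaluation at $x_3 = 0$ coincide with the usual $L^\infty$ trace, so no additional regularity beyond \eqref{flElBldsqbdin} is needed. This trace control will then be used in the subsequent section to pass to the limit in $\alpha^{\ell-1} \to \alpha$ as in \eqref{alphp3flim}.
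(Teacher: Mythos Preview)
Your proposal is correct and takes essentially the same approach as the paper: both use the $W^{1,\infty}$ bound from \eqref{flElBldsqbdin} (together with the $L^\infty$ bound from \eqref{fellboundin}) to obtain Lipschitz continuity and hence a bounded trace on $\p\O$. The only cosmetic difference is that the paper makes the extension explicit via the fundamental theorem of calculus in the $x_3$ direction, writing $E^\ell(t,x_\parallel,0) = E^\ell(t,x_\parallel,x_3) - \int_0^{x_3} \p_3 E^\ell(t,x_\parallel,y)\,dy$ and letting $x_3 \downarrow 0$, whereas you invoke the general Morrey identification; the content is the same.
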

\begin{proof}
From \eqref{flElBldsqbdin} we have
\[
E^\ell(t,x) \in W^{1,\infty}((0,T) \times \O), \  B^\ell (t,x) \in W^{1,\infty}((0,T) \times \O),
\]
in particular, from the Morrey's inequality, $E(t)$, $B(t)$ are Lipschitz continuous on $\O$. Now pick any $x_\parallel \in \mathbb R^2$, and $0< x_3 < 1$, from the fundamental theorem of calculus, we have
\[
\begin{split}
E^\ell (t,x_\parallel, 0 ) = & E^\ell(t,x_\parallel, x_3) -\int_0^{x_3} \p_{3} E^\ell(t,x_\parallel, y ) dy,
\\ B^\ell(t,x_\parallel, 0 ) = & B^\ell(t,x_\parallel, x_3) -\int_0^{x_3} \p_{3} B^\ell(t,x_\parallel, y ) dy.
\end{split}
\]
Therefore 
\[
\begin{split}
& \| E^\ell(t,\cdot , 0 ) \|_{L^\infty(\p \O ) } \le \| E^\ell(t) \|_{L^\infty(\O ) } + x_3  \|   \p_{3} E^\ell(t ) \|_{L^\infty(\O ) } ,   
\\ & \| B^\ell(t,\cdot, 0 ) \|_{L^\infty(\p \O ) } \le \| B^\ell(t) \|_{L^\infty(\O ) } +  x_3 \|   \p_{3} B^\ell(t ) \|_{L^\infty(\O ) } ,
\end{split}
\]
for any $x_3 > 0 $. Thus
\[
 \| E^\ell(t,\cdot , 0 ) \|_{L^\infty(\p \O ) } \le \| E^\ell(t) \|_{L^\infty(\O ) } < \infty, \ \| B^\ell(t,\cdot, 0 ) \|_{L^\infty(\p \O ) } \le \| B^\ell(t) \|_{L^\infty(\O ) } < \infty, 
\]
and we conclude \eqref{EBpO}.
\end{proof}

Next, we prove the strong convergence of the sequence $f^\ell$.

\begin{lemma} \label{fEBsollemmain}
Suppose $f_0, g$ satisfies \eqref{f0bdd}, \eqref{inflowdata}, $E_0$, $B_0$ satisfy \eqref{E0B0g}, \eqref{E0B0bdd}. There exists functions $(f,E,B)$ with $  \langle v \rangle^{4 +\delta }  f(t,x,v) \in L^\infty( (0, T) ; L^\infty( \bar \O \times \mathbb R^3 ) )  $, and $(E,B) \in L^\infty((0,T) ; L^\infty( \O) \cap  L^\infty( \p \O ) )$, such that as $\ell \to \infty$, 
\Be \label{EnBnconvergein}
 \sup_{0 \le t \le T} \left(  \|  E^\ell (t) - E(t) \|_{L^\infty( \O)} +  \|  E^\ell (t) - E(t) \|_{L^\infty( \p \O)} +  \|  B^\ell (t) - B(t) \|_{L^\infty( \O)} +  \|  B^\ell (t) - B(t) \|_{L^\infty( \p \O)} \right)   \to 0, 
\Ee
and
\Be \label{fnconvergein}
 \sup_{0 \le t \le T}  \| \langle v \rangle^{4 +\ell } f^\ell(t) -  \langle v \rangle^{4 +\delta }  f (t) \|_{L^\infty(\bar \O \times \mathbb R^3)}  \to 0.
\Ee
Moreover, $(f,E,B)$ is a (weak) solution of the system \eqref{VMfrakF1}-\eqref{rhoJ1}, and \eqref{inflow}.
\end{lemma}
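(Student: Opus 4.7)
The plan is to show that the iteration $(f^\ell,E^\ell,B^\ell)$ is Cauchy in the norm
\[
\mathcal{N}(h,\mathcal{E},\mathcal{B}):= \sup_{0\le t\le T}\|\langle v\rangle^{4+\delta}h(t)\|_{L^\infty}+\sup_{0\le t\le T}\bigl(\|\mathcal{E}(t)\|_{L^\infty(\bar\Omega)}+\|\mathcal{B}(t)\|_{L^\infty(\bar\Omega)}\bigr)
\]
provided $T\ll 1$, and then pass to the limit in the weak formulation of Definition \ref{weaksoldef} using the uniform bounds \eqref{fellboundin} and \eqref{flElBldsqbdin}.

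\textbf{Step 1 (difference of distributions).} Subtracting the transport equations for $f^{\ell+1}$ and $f^\ell$ in \eqref{fellseqin} gives
\[
(\partial_t+\hat v\cdot\nabla_x+\mathfrak F^\ell\cdot\nabla_v)(f^{\ell+1}-f^\ell)=-(\mathfrak F^\ell-\mathfrak F^{\ell-1})\cdot\nabla_v f^\ell,
\]
with vanishing initial and inflow data. Integrating along the characteristics $(X^\ell,V^\ell)$ of \eqref{XV_ell}, using \eqref{Vsvin} to trade $\langle V^\ell(s)\rangle$ for $\langle v\rangle$, the uniform bound $\|\langle v\rangle^{5+\delta}\nabla_v f^\ell(t)\|_\infty\le M_3$ from \eqref{flElBldsqbdin}, and the linear dependence $\mathfrak F^\ell-\mathfrak F^{\ell-1}=(E^\ell-E^{\ell-1})+\hat v\times(B^\ell-B^{\ell-1})$, I would obtain
\[
\sup_{0\le t\le T}\|\langle v\rangle^{4+\delta}(f^{\ell+1}-f^\ell)(t)\|_{L^\infty}\le C\,T\sup_{0\le t\le T}\bigl(\|E^\ell-E^{\ell-1}\|_{L^\infty}+\|B^\ell-B^{\ell-1}\|_{L^\infty}\bigr).
\]

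\textbf{Step 2 (difference of fields).} Writing $(E^\ell-E^{\ell-1},B^\ell-B^{\ell-1})$ through the Glassey--Strauss formulas \eqref{Eesttat0pos}--\eqref{Eest3bdrycontri} and \eqref{Besttat0pos}--\eqref{Bestbdrycontri} applied to $f^\ell-f^{\ell-1}$ (the initial data cancel), and repeating the pointwise estimates of Lemma \ref{EBlinflemma}, I get
\[
\sup_{0\le t\le T}\bigl(\|E^\ell-E^{\ell-1}\|_{L^\infty(\Omega)}+\|B^\ell-B^{\ell-1}\|_{L^\infty(\Omega)}\bigr)\le C\,T\sup_{0\le t\le T}\|\langle v\rangle^{4+\delta}(f^\ell-f^{\ell-1})(t)\|_{L^\infty}.
\]
Chaining Step 1 and Step 2 yields a contraction factor $C T^{2}$; for $T$ small enough the sequence $(f^\ell,E^\ell,B^\ell)$ is Cauchy in $\mathcal{N}$, producing limits $(f,E,B)$ with $\langle v\rangle^{4+\delta}f\in L^\infty((0,T);L^\infty(\bar\Omega\times\mathbb R^3))$ and $E,B\in L^\infty((0,T);L^\infty(\Omega))$, and establishing \eqref{fnconvergein} together with the bulk part of \eqref{EnBnconvergein}.

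\textbf{Step 3 (trace convergence on $\partial\Omega$).} For convergence of the traces, I would invoke the argument of Lemma \ref{EBelltrace}: since $\partial_{x_3}E^\ell,\partial_{x_3}B^\ell$ are uniformly bounded in $L^\infty$ by \eqref{flElBldsqbdin}, the fundamental theorem of calculus gives, for any $x_3>0$,
\[
\|(E^\ell-E^m)(t,\cdot,0)\|_{L^\infty(\partial\Omega)}\le \|(E^\ell-E^m)(t)\|_{L^\infty(\Omega)}+x_3\bigl(\|\partial_{x_3}E^\ell(t)\|_\infty+\|\partial_{x_3}E^m(t)\|_\infty\bigr),
\]
and the analogue for $B$. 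Sending first $\ell,m\to\infty$ and then $x_3\to 0$ shows $\{E^\ell(t,\cdot,0)\}$, $\{B^\ell(t,\cdot,0)\}$ are Cauchy in $L^\infty(\partial\Omega)$, yielding the boundary convergence in \eqref{EnBnconvergein}.

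\textbf{Step 4 (passing to the weak limit).} Finally, I would insert the convergence into the weak formulations \eqref{weakf}, \eqref{Maxweak1}, \eqref{Maxweak2}, \eqref{nablaEBweak}. The linear Maxwell terms pass directly by $L^\infty$ convergence against compactly supported test functions. For the transport equation, the nonlinear term $\mathfrak F^{\ell-1}\cdot\nabla_v\phi\,f^\ell$ is handled by writing
\[
\mathfrak F^{\ell-1}f^\ell-\mathfrak F f=(\mathfrak F^{\ell-1}-\mathfrak F)f^\ell+\mathfrak F(f^\ell-f),
\]
and using bulk $L^\infty$ convergence of $E^{\ell-1},B^{\ell-1}$ together with the uniform $\langle v\rangle^{4+\delta}$ decay of $f^\ell$; the boundary term on $\gamma_-$ is exactly $g$ by construction, and the outgoing boundary integral $\int\phi f^\ell\hat v_3$ converges by the uniform $W^{1,\infty}$-type bound on $f^\ell$ restricted to $\gamma_+\setminus\gamma_0$ from \eqref{flElBldsqbdin} and Lemma \ref{traceUkai}/Lemma \ref{tracepf}.

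\textbf{Main obstacle.} The delicate part will be Step 4 on the outgoing boundary: the trace of $f$ on $\gamma_+$ must be identified as the limit of $f^\ell|_{\gamma_+}$, and the test function $\phi$ is permitted to meet $\gamma_+\setminus\gamma_0$. The uniform estimates on $\nabla_x f^\ell,\nabla_v f^\ell$ away from $\gamma_0$ together with Lemma \ref{tracepf} give strong convergence of $f^\ell\hat v_3$ on compact subsets of $\gamma_+\setminus\gamma_0$, which suffices since $\mathrm{supp}\,\phi$ excludes $\gamma_0$ by Definition \ref{weaksoldef}; this is the point where the kinetic weight $\alpha^{\ell-1}$ and its strong convergence $\alpha^{\ell-1}\to\alpha$ (which follows from the boundary $L^\infty$ convergence established in Step 3) must be used to pass the limit in $\alpha^{\ell-1}\partial_{x_3}f^\ell$, completing the identification of $(f,E,B)$ as a weak solution.
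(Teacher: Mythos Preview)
Your overall strategy is sound and close to the paper's, but two points deserve correction.

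\textbf{Step 2 is not quite linear.} The Glassey--Strauss representation is \emph{not} linear in $f$: the terms \eqref{EestSpos}--\eqref{EestSneg} and \eqref{BestSpos}--\eqref{BestSneg} come from replacing $Sf$ via the Vlasov equation, which couples $f$ to the force. For $f^\ell$ this force is $\mathfrak F^{\ell-1}$, while for $f^{\ell-1}$ it is $\mathfrak F^{\ell-2}$. Hence the difference $E^\ell-E^{\ell-1}$ picks up, in addition to the term you wrote, a contribution of size
\[
CT^2\,\sup_{0\le t\le T}\|\mathfrak F^{\ell-1}-\mathfrak F^{\ell-2}\|_{L^\infty}\cdot M_1
\]
from $(\mathfrak F^{\ell-1}-\mathfrak F^{\ell-2})f^{\ell-1}$ in the $Sf$-integral (cf.\ \eqref{Elinftyest3} and the paper's \eqref{iterate2in}). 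This does not destroy the contraction---after substituting Step~1 into the corrected Step~2 you still obtain $\|\mathfrak F^\ell-\mathfrak F^{\ell-1}\|\le C'T^2\|\mathfrak F^{\ell-1}-\mathfrak F^{\ell-2}\|$---but the estimate as you wrote it is incomplete. The paper handles this by working with general $m>n$ and iterating the inequality \eqref{iterate2in} $l$ times to produce a factor $(Ct)^l/l!$, which avoids relying on a one-step contraction constant; your consecutive-difference version is a legitimate alternative once the cross term is included.

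\textbf{Your ``main obstacle'' is misplaced.} The convergence $\alpha^{\ell-1}\to\alpha$ and the passage to the limit in $\alpha^{\ell-1}\partial_{x_3}f^\ell$ are \emph{not} needed anywhere in this lemma. They are the content of the next lemma (Lemma~\ref{fEBregin}), which establishes the regularity bounds \eqref{pfbdlimitin}--\eqref{pEBbdlimitin} for the limit. For the present lemma, the strong $L^\infty(\bar\Omega\times\mathbb R^3)$ convergence of $f^\ell$ from Step~2 already controls the outgoing boundary integral $\int_{\gamma_+}\phi f^\ell\hat v_3$, since $\phi$ has compact support away from $\gamma_0$ and $f^\ell|_{\gamma_+}$ inherits the uniform convergence (Lemma~\ref{traceUkai}). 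No weighted derivative estimate is required at this stage.
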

\begin{proof}
Let $m > n  \ge 1$. Note that $f^m - f^n $ satisfies $(f^m - f^n ) |_{t = 0 } = 0 $ and 
\[
(f^m- f^n )|_{\gamma_-} = 0.
\] 

The equation for $f^m - f^n $ is
\[
\p_t(f^m- f^n ) + \hat v \cdot \nabla_x (f^m - f^n ) + \mathfrak F^{m-1} \cdot \nabla_v (f^{m} - f^n ) = - ( \mathfrak F^{m-1} - \mathfrak F^{n-1} ) \cdot \nabla_v f^n.
\]
Thus, for any $(t,x,v) \in (0,T) \times \bar \O \times \mathbb R^3$,  using \eqref{Vsvin}, we get
\[
\begin{split}
& |  \langle v \rangle^{4 + \delta } (f^m - f^n)(t,x,v) | 
\\  \le & C_1  \int_{\max \{ {\tb}^{m-1} , 0 \} }^t |    \langle V^{m-1}(s) \rangle^{4 + \delta }  ( \mathfrak F^{m-1} - \mathfrak F^{n-1} ) \cdot \nabla_v f^n  )(s, X^{m-1}(s), V^{m-1}(s) ) | ds 
\end{split}
\]
where $C_{1} =  (1+ T (M_2+g) )^{4 + \delta}$. Then from \eqref{flElBldsqbdin},
\Be \label{fellnmiterate1in}
\begin{split}
\| \langle  v \rangle^{4 + \delta} f^m(t) -  \langle  v \rangle^{4 + \delta} f^n)(t) \|_\infty  \le & C_1 \left( \sup_{\ell}  \sup_{0 \le s \le t } \| \langle  v \rangle^{4 + \delta} \nabla_v f^\ell(s) \|_\infty \right) \int_0^t   \|   \mathfrak F^{m-1} (s) - \mathfrak F^{n-1}(s)  \|_\infty ds
\\ \le  & C_2  \int_0^t  \|   \mathfrak F^{m-1} (s) - \mathfrak F^{n-1}(s)  \|_\infty ds,
\end{split}
\Ee
where $C_2 = C_1M_3$.

Now, 
from \eqref{fellseqin} and using the same argument as Lemma \ref{EBlinflemma} with \eqref{fellnmiterate1in}, we have
\Be \label{iterate2in}
\begin{split}
 \|   \mathfrak F^{m-1} (s) - \mathfrak F^{n-1}(s)  \|_\infty   \le &   \| E^{n-1}(s) - E^{m-1}(s) \|_\infty + \| B^{n-1}(s) - B^{m-1}(s)  \|_\infty
\\ \le & C \left(  \sup_{0 \le s' \le s } \| \langle v \rangle^{4 + \delta } (f^{n-1} - f^{m-1} )(s' ) \|_\infty +  \int_0^s \|   \mathfrak F^{m-2} (s') - \mathfrak F^{n-2 }(s')  \|_\infty ds'  \right)
\\ \le & C \int_0^s   \|   \mathfrak F^{m-2} (s') - \mathfrak F^{n-2}(s')  \|_\infty ds'
\\ \le & C \int_0^s \left(  \| E^{m-2}(s') - E^{n-2}(s') \|_\infty + \| B^{m-2}(s') - B^{n-2}(s')  \|_\infty \right) ds'.
\end{split}
\Ee
Iteration of \eqref{iterate2in} and using \eqref{fellboundin} yields
\[
\begin{split}
&  \| E^{m}(t) - E^{n}(t) \|_\infty + \| B^{m}(t) - B^{n}(t)  \|_\infty
\\ \le & C^2 \int_0^t \int_0^s   \left(  \| E^{m-2}(s') - E^{n-2}(s') \|_\infty + \| B^{m-2}(s') - B^{n-2}(s')  \|_\infty \right) ds' ds
 \\   =  & C^2 \int_0^t \tau    \left(  \| E^{m-2}(\tau) - E^{n-2}(\tau) \|_\infty + \| B^{m-2}(\tau) - B^{n-2}(\tau)  \|_\infty \right) d\tau
 \\ \le & C^l \int_0^t \frac{ \tau^{l-1}}{ (l-1)!} \left(  \| E^{m-i}(\tau) - E^{n-k}(\tau) \|_\infty + \| B^{m-k}(\tau) - B^{n-k}(\tau)  \|_\infty \right) d\tau
 \\ \le  &M_2 \frac{C^l t^l}{l!}.
\end{split}
\]
%
Thus the sequences $E^\ell$, $B^\ell$ are Cauchy in $L^\infty((0,T) \times \O ) $, moreover, from Lemma \ref{EBelltrace}, $E^\ell, B^\ell \in L^\infty([0,T] \times \p \O )$. Therefore, there exists functions $E,B \in L^\infty((0,T) ; L^\infty( \O) \cap  L^\infty( \p \O ) )$, such that 
\Be \label{EnBncovin}
E^\ell \to E, B^\ell \to B \text{ in } L^\infty((0,T) \times  \O ) \cap  L^\infty((0,T) \times \p \O ) .
\Ee
This proves \eqref{EnBnconvergein}. Also, from \eqref{fellnmiterate1in}, \eqref{iterate2in},
\[
\| \langle v\rangle^{4 + \delta} f^m(t) -  \langle v \rangle^{4 + \delta} f^n)(t) \|_{L^\infty((0,T) \times \bar \O ) }   \le M_2 \frac{C^{l-1} t^{l-1}}{(l-2)!},
\]
therefore we get \eqref{fnconvergein}.

Now, take any $\phi(t,x,v) \in C_c^\infty( [0,T) \times \bar \O \times \mathbb R^3$ with $\text{supp } \phi   \subset \{ [0, T) \times \bar \O \times \mathbb R^3 \} \setminus \{ (0 \times \gamma ) \cup (0,T) \times \gamma_0 \} $, from \eqref{fellseqin}, we have
\Be \label{weakfellVMin}
\begin{split}
& \int_{\O \times \mathbb R^3 } f_0 \phi (0) dv dt +  \int_0^T \int_{\O \times \mathbb R^3}  f^\ell \left(  \p_t \phi + \hat v \cdot \nabla_x \phi +   \mathfrak F^{\ell-1}   \cdot \nabla_v \phi \right) dv dx dt
\\ = & \int_0^T \int_{\gamma_+} \phi f^\ell \hat v_3 dv dS_x + \int_0^T \int_{\gamma_-} \phi g \hat v_3 dv dS_x.
\end{split}
\Ee
Because of the strong convergence \eqref{EnBnconvergein}, \eqref{fnconvergein}, we have that as $\ell \to \infty$, each term in \eqref{weakfellVMin} goes to the corresponding terms with $f^\ell$ replaced by $f$ and $\mathfrak F^\ell$ replaced by $\mathfrak F$. Therefore we conclude that $(f,E,B)$ satisfy \eqref{weakf}.

Next, from Proposition \ref{Eiform} and Proposition \ref{Biform}, we have that $E^\ell$ and $B^\ell$ are (weak) solutions to the wave equations with the initial data, boundary condition and forcing term in \eqref{E12solin}-\eqref{B12solin}, with $\rho, J$ changed to $\rho^\ell ,J^\ell$. Then from \eqref{flElBldsqbdin} and Lemma \ref{wavetoMax}, we have 
\Be \label{Maxwellell}
\begin{split}
\p_t E^\ell & = \nabla_x \times B^\ell - 4 \pi J^\ell, \, \nabla_x \cdot E^\ell = 4\pi \rho^\ell,
\\ \p_t B^\ell & = - \nabla_x \times E^\ell, \, \nabla_x \cdot B^\ell = 0,
\end{split}
\Ee
with
\Be \label{EellBellDin}
E^\ell_1 = E^\ell_2 = 0, B^\ell_3 = 0, \ \text{ on } \p \O.
\Ee
Clearly, \eqref{nablaEBweak} is satisfied. Now, for any test functions $\Psi(t,x)  \in C_c^\infty([0,T) \times \bar \O ; \mathbb R^3 ), \    \Phi(t,x) \in C_c^\infty([0,T) \times \O ; \mathbb R^3 )$, from \eqref{Maxwellell} and \eqref{EellBellDin}, we have
\Be \label{EBellweak1}
\int_0^T \int_\O E^\ell \cdot \p_t \Psi dx dt - \int_\O \Psi(0,x) \cdot E_0 dx  = - \int_0^T \int_\O (\nabla_x \times \Psi) \cdot B^\ell dx dt + 4\pi \int_0^T \int_\O \Psi \cdot J^\ell dx dt,
\Ee
and
\Be \label{EBellweak2}
\int_0^T \int_\O B^\ell \cdot \p_t \Phi dx dt + \int_\O \Phi(0,x) \cdot B_0 dx = \int_0^T \int_\O (\nabla_x \times \Phi) \cdot E^\ell dx dt,
\Ee

Then from the strong convergence \eqref{fnconvergein}, \eqref{EnBnconvergein}, we can pass $\ell \to \infty$ and deduce that each term in \eqref{EBellweak1} and \eqref{EBellweak2} converges to the corresponding term with $E^\ell$, $B^\ell$, $J^\ell$ replace by $E$, $B$, and $J$ respectively. Therefore, $(f,E,B)$ satisfy \eqref{Maxweak1} and \eqref{Maxweak2}.
So we conclude that $(f,E,B)$ is a (weak) solution of the RVM system \eqref{VMfrakF1}-\eqref{rhoJ1} with inflow BC \eqref{inflow}.
\end{proof}

In the next lemma, we consider the regularity of the solution.

\begin{lemma} \label{fEBregin}
Let $\alpha(t,x,v)$ be defined as in \eqref{alphadef}. The solution $(f,E,B)$ obtained in Lemma \ref{fEBsollemma} satisfies
\Be \label{pfbdlimitin}
 \| \langle v \rangle^{4 + \delta } \nabla_{x_\parallel } f(t) \|_\infty  + \| \langle v \rangle^{5 + \delta }  \alpha^{} \p_{x_3 } f(t) \|_\infty + \|  \langle v \rangle^{4 + \delta }  \nabla_v f(t) \|_\infty < \infty,
\Ee
and
\Be \label{pEBbdlimitin}
\| \p_t E(t) \|_\infty +  \| \p_t B(t) \|_\infty + \| \nabla_{x } E(t) \|_\infty +\| \nabla_{x } B(t) \|_\infty   < \infty.
\Ee
\end{lemma}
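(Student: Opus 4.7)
The strategy is to upgrade the uniform bounds in \eqref{flElBldsqbdin} to bounds on the limit $(f,E,B)$ by combining Banach--Alaoglu weak-$*$ compactness with the strong convergences from Lemma \ref{fEBsollemmain}, and then closing by weak-$*$ lower semicontinuity of the $L^\infty$-norm. After extracting a common subsequence, each sequence appearing in \eqref{flElBldsqbdin} converges weakly-$*$ in the appropriate $L^\infty$-space to some limit; the main task is to identify each limit with the corresponding derivative of $(f,E,B)$.

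The identifications for the \emph{unweighted} derivatives are routine. Testing $\nabla_x E^\ell$, $\p_t E^\ell$, $\nabla_x B^\ell$, $\p_t B^\ell$, $\langle v\rangle^{4+\delta}\nabla_{x_\parallel}f^\ell$, and $\langle v\rangle^{4+\delta}\nabla_v f^\ell$ against compactly supported smooth test functions and moving the derivatives onto the test functions by integration by parts, the strong convergences $E^\ell\to E$, $B^\ell\to B$ in $L^\infty((0,T)\times\bar\O)$ and $\langle v\rangle^{4+\delta}f^\ell\to\langle v\rangle^{4+\delta}f$ in $L^\infty((0,T)\times\bar\O\times\mathbb R^3)$ from Lemma \ref{fEBsollemmain} allow one to pass to the limit on the test side. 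Weak-$*$ lower semicontinuity of the $L^\infty$-norm then gives \eqref{pEBbdlimitin} and the first and third terms of \eqref{pfbdlimitin}.

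The crux, and the main obstacle, is the middle bound $\|\langle v\rangle^{5+\delta}\alpha\,\p_{x_3}f\|_\infty<\infty$, since the product $\alpha^{\ell-1}\p_{x_3}f^\ell$ couples the field iterate (through the boundary trace in the definition \eqref{alphan}) with $\p_{x_3}f^\ell$ nonlinearly. This will be handled in three steps. First, from $(\alpha^{\ell-1})^2-\alpha^2=-2\frac{x_3}{\langle v\rangle}\bigl(\mathfrak F^{\ell-1}_3-\mathfrak F_3\bigr)|_{x_3=0}$ together with the strong $L^\infty$ convergence of the boundary traces $E^{\ell-1}|_{\p\O}\to E|_{\p\O}$ and $B^{\ell-1}|_{\p\O}\to B|_{\p\O}$ (Lemma \ref{EBelltrace} combined with Lemma \ref{fEBsollemmain}), one obtains $(\alpha^{\ell-1})^2\to\alpha^2$ uniformly on bounded subsets of $\bar\O\times\mathbb R^3$. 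Second, on any compact $K\subset\O\times\mathbb R^3$ we have $x_3\ge c_K>0$, so $\alpha,\alpha^{\ell-1}\ge c_K$ uniformly in $\ell$, and dividing by $\alpha+\alpha^{\ell-1}$ promotes the previous convergence to $\alpha^{\ell-1}\to\alpha$ strongly in $L^\infty(K)$. Third, the same positivity gives $\|\p_{x_3}f^\ell\|_{L^\infty(K)}\le c_K^{-1}M_3$, so $\p_{x_3}f^\ell$ is locally uniformly bounded and converges weakly-$*$ to the distributional derivative $\p_{x_3}f$ of the strong $L^\infty$-limit $f$. Combining strong convergence of $\alpha^{\ell-1}$ with weak-$*$ convergence of $\p_{x_3}f^\ell$ on $K$ yields $\alpha^{\ell-1}\p_{x_3}f^\ell\overset{*}{\rightharpoonup}\alpha\,\p_{x_3}f$ in $L^\infty(K)$; arbitrariness of $K$ gives this identification on all of $\O\times\mathbb R^3$, and weak-$*$ lower semicontinuity then closes the desired bound.
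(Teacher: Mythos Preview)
Your proposal is correct and follows essentially the same strategy as the paper: weak-$*$ compactness from the uniform bounds \eqref{flElBldsqbdin}, identification of the limits via the strong convergences of Lemma \ref{fEBsollemmain}, and weak-$*$ lower semicontinuity, with the key observation that the strong boundary-trace convergence $E^{\ell-1}|_{\p\O}\to E|_{\p\O}$, $B^{\ell-1}|_{\p\O}\to B|_{\p\O}$ forces $\alpha^{\ell-1}\to\alpha$ strongly while the lower bound $\alpha^{\ell-1}\gtrsim x_3$ controls $\p_{x_3}f^\ell$ away from $\p\O$. The only cosmetic difference is that the paper, instead of invoking the ``strong $\times$ weak-$*$'' product rule on $\alpha^{\ell-1}\cdot\p_{x_3}f^\ell$, integrates by parts to move $\p_{x_3}$ onto the test function and then handles the resulting terms $\p_{x_3}\alpha^{\ell-1}$ explicitly; both routes use the same two ingredients and yield the same conclusion.
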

\begin{proof}
From the $L^\infty$ strong convergence \eqref{EnBnconvergein}, and the uniform-in-$\ell$ bound \eqref{flElBldsqbdin}, we can pass the limit up to subsequence if necessary and get the weak$-*$ convergence
\Be \label{dEnBncovin}
\p_t E^\ell  \overset{\ast}{\rightharpoonup} \p_t E , \ \nabla_x E^\ell  \overset{\ast}{\rightharpoonup} \nabla_x E, \  \p_t B^\ell  \overset{\ast}{\rightharpoonup} \p_t B, \  \nabla_x B^\ell  \overset{\ast}{\rightharpoonup} \nabla_x B \text{ in } L^\infty((0,T) \times \O ),
\Ee
and
\Be \label{dfnconvergein}
\langle v \rangle ^{4 + \delta} \nabla_{x_\parallel } f^\ell   \overset{\ast}{\rightharpoonup}  \langle v \rangle^{4 + \delta}\nabla_{x_\parallel } f, \  \langle v \rangle ^{4 + \delta} \nabla_{v } f^\ell   \overset{\ast}{\rightharpoonup}  \langle v \rangle^{4 + \delta}\nabla_{v } f \text{ in } L^\infty((0,T) \times \O \times \mathbb R^3 ).
\Ee
We also claim
\Be \label{ap3fellcovin}
\langle v \rangle ^{5 + \delta}  \alpha^{\ell-1} \p_{x_3}  f^\ell   \overset{\ast}{\rightharpoonup}  \langle v \rangle^{5 + \delta} \alpha \p_{x_3} f \text{ in } L^\infty((0,T) \times \O \times \mathbb R^3 ).
\Ee
For any test function $\phi \in C^\infty_c((0,T) \times \O \times \mathbb R^3 ) $, we have
\begin{eqnarray}
&  & \notag \int_0^t \iint_{\O \times \mathbb R^3} ( \langle v \rangle^{5 + \delta} \alpha^{\ell-1} \p_{x_3}  f^\ell  - \langle v \rangle^{5 + \delta} \alpha \p_{x_3}  f ) \phi dv dx dt  
\\ \label{alphanpfn1in} = & &-   \int_0^t \iint_{\O \times \mathbb R^3}  ( \langle v \rangle^{5 + \delta} \alpha^{\ell-1}  f^\ell  - \langle v \rangle^{5 + \delta} \alpha f ) \p_{x_3} \phi dv dx dt
\\ \label{alphanpfn2in} & & -    \int_0^t \iint_{\O \times \mathbb R^3}  ( \langle v \rangle^{5 + \delta} \p_{x_3} \alpha^{\ell-1}  f^\ell  - \langle v \rangle^{5 + \delta} \p_{x_3} \alpha^{\ell-1} f )  \phi dv dx dt
\\ \label{alphanpfn3in} &  &-    \int_0^t \iint_{\O \times \mathbb R^3}  ( \langle v \rangle^{5 + \delta} \p_{x_3} \alpha^{\ell-1}  f  - \langle v \rangle^{5 + \delta} \p_{x_3} \alpha f )  \phi dv dx dt.
\end{eqnarray}
From \eqref{alphan} and \eqref{EnBnconvergein} we have 
\Be \label{alphancovin}
\| \alpha^\ell - \alpha \|_{L^\infty((0,T) \times \O \times \mathbb R^3)} \to 0 \text{ as } \ell \to \infty.
\Ee
Thus, together with \eqref{fnconvergein}, we have $\eqref{alphanpfn1in}  \to 0$ as $n \to \infty$. Next, note that
\[
 \p_{x_3} \alpha^\ell  =  \frac{x_3 -  \left( E^\ell_3(t,x_\parallel, 0 ) + E_e + (\hat v \times B^\ell)_3(t,x_\parallel, 0 )  -  g \right) \frac{1}{\langle v \rangle} }{\alpha^\ell}.
\]
For $(t,x,v) \in$ supp $\phi$, $x_3 > 0$ for some $c > 0$. Thus, $\frac{1}{\alpha^\ell(t,x,v) } < \frac{1}{c}$, so 
\[
|  \p_{x_3} \alpha^\ell  \mathbf 1_{\text{supp}( \phi ) }(t,x,v)  | < \frac{ C M_2}{c}
\]
From \eqref{fnconvergein}, this yields
\[
| \eqref{alphanpfn2in} | \le  \frac{ C M_2}{c}  \int_0^t \iint_{\O \times \mathbb R^3} |  \langle v \rangle^{5 + \delta} (f^\ell - f ) \phi  |  dv dx dt \to 0.
\]
For \eqref{alphanpfn3in}, since
\[
\begin{split}
& \p_{x_3} \alpha^\ell - \p_{x_3} \alpha 
\\  = & \frac{x_3 -  \left( E^\ell_3(t,x_\parallel, 0 ) + E_e + (\hat v \times B^\ell)_3(t,x_\parallel, 0 )  -  g \right) \frac{1}{\langle v \rangle} }{\alpha^\ell} -   \frac{x_3 -  \left( E_3(t,x_\parallel, 0 ) + E_e + (\hat v \times B)_3(t,x_\parallel, 0 )  -  g \right) \frac{1}{\langle v \rangle} }{\alpha}
\\  = &  \frac{ -  \left( (E^\ell-E)_3(t,x_\parallel, 0 ) + (\hat v \times( B^\ell - B))_3(t,x_\parallel, 0 )   \right) \frac{1}{\langle v \rangle} }{\alpha^\ell} +   \\ & + \left( x_3 -  \left( E_3(t,x_\parallel, 0 ) + E_e + (\hat v \times B)_3(t,x_\parallel, 0 )  -  g \right) \right) \frac{1}{\langle v \rangle}\frac{ \alpha - \alpha^\ell }{\alpha^\ell \alpha}.
\end{split}
\]
Again, for $(t,x,v) \in$ supp $\phi$, $x_3 > 0$ for some $c > 0$. Thus $\frac{1}{\alpha^\ell(t,x,v)} < \frac{1}{c}$,  $\frac{1}{\alpha(t,x,v)} < \frac{1}{c}$. So from \eqref{EnBnconvergein}, \eqref{alphancovin}, we have
\Be \label{px3alphacov3}
\begin{split}
& \|  (\p_{x_3}  \alpha^\ell - \p_{x_3} \alpha ) \mathbf 1_{\text{supp} (\phi)  }(t,x,v) \|_{L^\infty((0,T) \times \O \times \mathbb R^3)} 
\\ & \le \frac{1}{c} \sup_{0 \le t \le T} \left(  \| E(t) - E^\ell(t) \|_\infty + \| B(t) - B^\ell(t) \|_\infty \right) + \frac{CM_2}{c^2}  \| \alpha^\ell - \alpha \|_{L^\infty((0,T) \times \O \times \mathbb R^3)} \to 0 \text{ as } \ell \to \infty.
\end{split}
\Ee
Thus, we have $\eqref{alphanpfn3in} \to 0$, and this gives \eqref{ap3fellcovin}.

Therefore, from using the weak lower semi-continuity of the weak-$*$ convergence \eqref{dEnBncovin}, \eqref{dfnconvergein}, \eqref{ap3fellcovin}, and the uniform-in-$\ell$ bound \eqref{flElBldsqbdin}, we conclude \eqref{pfbdlimitin}, \eqref{pEBbdlimitin}.

\end{proof}

Next, we prove the uniqueness of the solutions of the RVM system \eqref{VMfrakF1}--\eqref{rhoJ1}, \eqref{inflow}.

\begin{lemma} \label{VMuniqlemmain}
Suppose  $(f,E_f, B_f)$ and $(g, E_g, B_g)$ are solutions to the VM system \eqref{VMfrakF1}--\eqref{rhoJ1}, \eqref{inflow} with $f(0) = g(0)$, $E_f(0) = E_g(0)$, $B_f(0) = B_g(0)$, and that 
\[
E_f, B_f, E_g, B_g \in W^{1,\infty}((0,T) \times \O ), \ \nabla_x \rho_{f}, \nabla_x J_f,  \p_t J_f , \nabla_x \rho_{g}, \nabla_x J_g,  \p_t J_g \in  L^\infty((0,T); L_{\text{loc}}^p(\O)) \text{ for some } p>1.
\]
And
\Be \label{dvfgbdin}
\sup_{0 < t < T} \| \langle v \rangle^{5+ \delta} \nabla_v f(t) \|_\infty <\infty, \sup_{0 < t < T} \| \langle v \rangle^{5+ \delta} \nabla_v g(t) \|_\infty <\infty.
\Ee
Then $f = g, E_f = E_g, B_f = B_g$.
\end{lemma}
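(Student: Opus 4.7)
The plan is to establish an $L^\infty$ stability estimate for the differences $h := f-g$, $\mathcal{E} := E_f - E_g$, $\mathcal{B} := B_f - B_g$ and close the loop by Gronwall. Writing the equation for $h$,
\[
\partial_t h + \hat v \cdot \nabla_x h + \mathfrak F_f \cdot \nabla_v h = -\bigl(\mathcal{E} + \hat v\times \mathcal{B}\bigr) \cdot \nabla_v g, \qquad h|_{t=0}=0,\quad h|_{\gamma_-}=0,
\]
so integrating along the characteristics of $(f,E_f,B_f)$, using the $\langle V_f(s)\rangle \sim \langle v\rangle$ bound from \eqref{Vsvin} (valid since $E_f,B_f\in L^\infty$) together with the a priori bound \eqref{dvfgbdin} on $\nabla_v g$, I obtain
\[
\sup_{0\le s\le t}\|\langle v\rangle^{4+\delta} h(s)\|_\infty \;\lesssim\; \sup_{0\le s\le t}\|\langle v\rangle^{5+\delta}\nabla_v g(s)\|_\infty \int_0^t \bigl(\|\mathcal{E}(s)\|_\infty + \|\mathcal{B}(s)\|_\infty\bigr)ds.
\]

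For the field side, observe that $(\mathcal{E},\mathcal{B})\in W^{1,\infty}((0,T)\times\O)$ solves the Maxwell system \eqref{Maxwell1}--\eqref{Maxwell4} with $\rho_h, J_h$ as sources, zero initial data, and the perfect conductor BC \eqref{percond}, while $\nabla \cdot J_h=-\partial_t\rho_h$ holds by subtraction. By Lemma \ref{Maxtowave}, the components of $(\mathcal{E},\mathcal{B})$ are weak solutions of wave equations (Dirichlet or Neumann) with zero initial data and sources driven linearly by $\rho_h,J_h$. Uniqueness for such weak solutions (Lemmas \ref{wavesol}, \ref{wavesolD}) forces them to coincide with the explicit formulas built in Propositions \ref{Eiform} and \ref{Biform}, applied to $h$ in place of $f$ and with all initial--data terms vanishing. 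Reading off the bounds of Lemma \ref{EBlinflemma} for the resulting representation gives
\[
\|\mathcal{E}(t)\|_\infty + \|\mathcal{B}(t)\|_\infty \;\lesssim\; t\,\sup_{0\le s\le t}\|\langle v\rangle^{4+\delta} h(s)\|_\infty,
\]
since every term in that lemma involving $f_0$, $E_0$, $B_0$ drops out for the differences.

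Inserting this into the previous inequality yields, for some constant $C_\ast$ depending on the two solutions,
\[
\sup_{0\le s\le t}\|\langle v\rangle^{4+\delta} h(s)\|_\infty \;\le\; C_\ast\, t^2 \sup_{0\le s\le t}\|\langle v\rangle^{4+\delta} h(s)\|_\infty.
\]
Choosing $T_\ast>0$ with $C_\ast T_\ast^2 < 1$ forces $h\equiv 0$ on $[0,T_\ast]$, hence $\mathcal{E}=\mathcal{B}=0$ there as well. A standard bootstrap, restarting the argument from $T_\ast, 2T_\ast,\dots$, covers the full interval $[0,T]$ and completes the uniqueness statement; uniqueness of $(E_f,B_f)$ given $\rho_f, J_f$ may equally well be invoked through Theorem \ref{EBunique}.

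The main obstacle is the third step: justifying that, despite $(\mathcal{E},\mathcal{B})$ being only Lipschitz, we may represent them by the Glassey--Strauss formulas of Section 2 with the appropriate vanishing of the initial--data contributions and a source term linear in $h$. This is precisely where Lemmas \ref{Maxtowave}, \ref{wavesol}, \ref{wavesolD} together with the derivation in Propositions \ref{Eiform} and \ref{Biform} do the needed work, letting the $L^\infty$ estimate of Lemma \ref{EBlinflemma} control the field differences by $\|\langle v\rangle^{4+\delta}h\|_\infty$ alone; once that representation is in place, the rest is a short Gronwall argument.
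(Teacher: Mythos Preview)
Your proposal is correct and follows essentially the same route as the paper: write the transport equation for $h=f-g$ with zero inflow data, integrate along the $\mathfrak F_f$-characteristics, identify $(\mathcal E,\mathcal B)$ with the Glassey--Strauss representation via Lemma~\ref{Maxtowave} and the wave-uniqueness Lemmas~\ref{wavesol}, \ref{wavesolD}, then feed the $L^\infty$ bound of Lemma~\ref{EBlinflemma} back into the $h$-estimate. The only cosmetic difference is the closing step: the paper applies Gronwall directly to obtain $\sup_{0\le s\le t}\|\langle v\rangle^{4+\delta}h(s)\|_\infty \le e^{C't}\|\langle v\rangle^{4+\delta}h(0)\|_\infty=0$ on the whole interval, whereas you use a $C_\ast t^2<1$ smallness argument followed by a continuation in time; both are standard and equivalent here.
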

\begin{proof}
The difference function $f-g $ satisfies
\Be \label{fminusgeqin}
\begin{split}
(\p_t + \hat v \cdot \nabla_x + \mathfrak F_f \cdot \nabla_v)(f-g) = (\mathfrak F_g - \mathfrak F_f ) \cdot \nabla_v g, 
\\ (f-g)(0) = 0, \, (f- g )|_{\gamma_-  } =  0,
\end{split} 
\Ee
where
\[
\mathfrak F_f = E_f + E_{\text{ext}} + \hat v \times ( B_f + B_{\text{ext}}) - g \mathbf e_3 , \, \mathfrak F_g = E_g + E_{\text{ext}} + \hat v \times ( B_g + B_{\text{ext}}) - g \mathbf e_3, 
\]
so
\Be \label{mathfrakFfgin}
\mathfrak F_g - \mathfrak F_f = E_f - E_g + \hat v \times (B_f - B_g ).
\Ee
From Lemma \ref{Maxtowave} we have $E_{f,1} - E_{g,1} , E_{f,2} - E_{g,2}, B_{f,3} - B_{g,3}$ solve the wave equation with the Dirichlet boundary condition \eqref{waveD} in the sense of \eqref{waveD_weak} with \begin{align}
u_0 = 0, \  u_1 = 0 , \ G = -4\pi \p_{x_i} (\rho_f - \rho_g) - 4 \pi \p_t (J_{f,i} - J_{g,i} ), \ g = 0 , \ \ \text{for} \  E_{f,i} - E_{g,i},  i =1,2, \label{E12solin} \\
 u_0 = 0, \ u_1 = 0, \   G =  4 \pi (\nabla_x \times (J_f -J_g) )_3, \  g = 0, \ \ \text{for} \ B_{f,3}- B_{g,3},  \label{B3solin}
 \end{align} 
respectively. And $E_{f,3} - E_{g,3}, B_{f,1} - B_{g,1}, B_{f,2}- B_{g,2}$ solve the wave equation with the Neumann boundary condition \eqref{waveNeu}  in the sense of \eqref{waveinner} \text{ with }
\begin{align}
u_0 = 0, \  u_1 = 0 , \ G = -4\pi \p_{x_3} ( \rho_f - \rho_g) - 4 \pi \p_t (J_{f,3} - J_{g,3} ) , \ g = - 4\pi (\rho_f - \rho_g), \ \ \text{for} \  E_{f,3} - E_{g,3}, \label{E3solin} \\
 u_0 = 0, \ u_1 = 0, \   G =  4 \pi (\nabla_x \times (J_f - J_g) )_i, \  g = (-1)^{i+1} 4 \pi (J_{f,{\underline i}} - J_{g, \underline i } ), \ \ \text{for} \ B_{f,i} - B_{j,i}, \ i=1,2, \label{B12solin}
 \end{align} 
respectively. Therefore, from Lemma \ref{wavesol} and Lemma \ref{wavesolD}, we know that $E_f - E_g$  and $B_f - B_g$ would have the form of
\Be \label{EBdiffformin}
\begin{split}
& E_f - E_g = \eqref{Eesttat0pos} + \dots + \eqref{Eest3bdrycontri}, \  B_f -B_g = \eqref{Besttat0pos} + \dots + \eqref{Bestbdrycontri},
\\ & \text{ with } E_0, B_0 \text{ changes to } 0, \text{ and } f \text{ changes to } f -g.
\end{split}
\Ee 

Now consider the characteristics
\[
\begin{split}
\dot X_f(s;t,x,v) = & \hat V_f(s;t,x,v) ,
\\ \dot V_f(s;t,x,v) = & \mathfrak F_f(s, X_f(s;t,x,v), V_f(s;t,x,v) ) .
\end{split}
\]
Then from \eqref{fminusgeqin}, same as \eqref{fnmiterate2in}, we obtain
\Be \label{fnmiterate2final}
\begin{split}
& |  \langle v \rangle^{4 + \delta } (f - g)(t,x,v) | 
\\  \le &   C_{1}    \int_{0 }^{t }    |   \langle V_f(s) \rangle^{4 + \delta }  ( \mathfrak F_g - \mathfrak F_g ) \cdot \nabla_v f^{}  )(s, X_f^{}(s), V_f^{}(s) ) | ds 
\end{split}
\Ee
So using  \eqref{flElBldsqbdin}, we have
\Be \label{fgdiffrepin}
\sup_{ 0 \le s \le t } \| \langle v \rangle^{4 + \delta} (f-g)(s) \|_\infty \le C   \int^t_0  \|  (\mathfrak F_g - \mathfrak F_f )(s) \|_\infty \|\langle v \rangle^{4 + \delta} \nabla_v g (s) \|_\infty  ds.
\Ee
Now, from \eqref{EBdiffformin} and the estimate in Lemma \ref{EBlinflemma}, we have
\Be \label{FgFfdiffin}
\begin{split}
\|  (\mathfrak F_g - \mathfrak F_f )(s) \|_\infty \le &  \| (E_f - E_g )(s) \|_\infty + \| (B_f - B_g )(s) \|_\infty
\\ \le & C \sup_{0 \le s' \le s } \| \langle v \rangle^{4 + \delta} (f-g )(s' ) \|_\infty,
\end{split}
\Ee
and from the assumption \eqref{dvfgbdin}, $\sup_{0 \le s \le t }   \| \langle v \rangle^{4+\delta} \nabla_v g (s) \|_\infty < C$. Therefore from \eqref{fgdiffrepin} and \eqref{FgFfdiffin}, we have
\Be
\sup_{0 \le s \le t } \|  \langle v \rangle^{4 +\delta} (f-g)(s) \|_\infty \le C' \int^t_{0 } \sup_{0 \le s' \le s } \|  \langle v \rangle^{4 +\delta}(f-g )(s' ) \|_\infty  ds.
\Ee
Therefore from Gronwall
\[
\sup_{0 \le s' \le t } \|  \langle v \rangle^{4 +\delta} (f-g)(s') \|_\infty \le   e^{C't}  \|  \langle v \rangle^{4 +\delta} (f-g)(0) \|_\infty = 0.
\]
Therefore we conclude that the solutions to  \eqref{VMfrakF1}--\eqref{rhoJ1}, \eqref{inflow}, is unique.
\end{proof}

\begin{proof}[proof of Theorem \ref{main1}]
Using the sequence $f^\ell, E^\ell , B^\ell$ constructed in \eqref{fellseqin}, \eqref{ElBlin}, we have from Lemma \ref{fEBsollemmain} that the limit $(f,E,B)$ is a solution to the VM system \eqref{VMfrakF1}--\eqref{rhoJ1}, \eqref{inflow}. This proves the existence. From Lemma \ref{fEBregin}, we have the regularity estimate \eqref{inflowfreg}, \eqref{inflowEBreg}. And from Lemma \ref{VMuniqlemmain}, we conclude the uniqueness.
\end{proof}

\section{Diffuse BC} \label{DiffuseSec}


In \eqref{diffuseBC}, we denote $ \mu = \frac{1}{ (2 \pi )^{3/2} } e^{-\frac{|v|^2}{2} } $. And let the constant $c_{\mu}$ be such that $c_{\mu} \int_{v_3 > 0 } \hat v_{3} \mu (v) dv  = 1$.

We first prove an a priori estimate for diffuse BC.
\begin{proposition} \label{diffuseprop}
Let $(f,E,B)$ be a solution of \eqref{VMfrakF1}--\eqref{rhoJ1}, \eqref{diffuseBC}. Suppose the fields satisfies \eqref{signcondition},
and
\Be \label{nablaEBass}
\sup_{0 \le t \le T}  \left(\| \nabla_{x} E(t)  \|_\infty + \| \nabla_{x} B(t)  \|_\infty \right) < \infty.
\Ee
Assume that for $\delta > 0$, $ \langle v \rangle^{4 + \delta}   \nabla_{x_\parallel} f,  \langle v \rangle^{5 + \delta}  \alpha \p_{x_3} f, \langle v \rangle^{5 + \delta}   \nabla_{v} f \in L^\infty((0,T) \times \O \times \mathbb R^3 ) $, 
then for $0 < T \ll 1$, there exists a $C> 0$ such that 
\Be \label{diffusedfbd}
\begin{split}
& \sup_{0 \le t \le T} \left( \| \langle v \rangle^{4 + \delta}   \nabla_{x_\parallel} f(t) \|_\infty  + \| \langle v \rangle^{5+\delta}  \alpha \p_{x_3} f(t) \|_\infty  +  \| \langle v \rangle^{5+\delta}   \nabla_{v} f(t) \|_\infty \right) 
\\ &+ \sup_{0 \le t \le T} \left( \|  \langle v \rangle^{4 + \delta}  \nabla_{x_\parallel} f(t) \|_{L^\infty(\gamma \setminus \gamma_0)}  + \|  \langle v \rangle^{5 + \delta}  \alpha \p_{x_3} f(t) \|_{L^\infty(\gamma \setminus \gamma_0)}  +  \|  \langle v \rangle^{5 + \delta}   \nabla_{v} f(t) \|_{L^\infty(\gamma \setminus \gamma_0)} \right) 
\\ <  & C \left(  \| \langle v \rangle ^{5 + \delta }   \nabla_{x_\parallel} f_0 \|_\infty +  \| \langle v \rangle^{5 + \delta }   \alpha \p_{x_3} f_0 \|_\infty + \| \langle v \rangle^{5 + \delta }    \nabla_v f_0 \|_\infty \right).
\end{split}
\Ee
\end{proposition}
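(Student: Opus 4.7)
The strategy is to extend the Lagrangian argument of Proposition \ref{inflowprop} by exploiting that, for trajectories with $\tb < t$, the diffuse condition yields the closed formula
\Be \label{diffuserep}
f(t,x,v) \;=\; c_{\mu}\, \mu(\vb)\, h(t-\tb, (\xb)_\parallel), \qquad h(s, y_\parallel) := \int_{u_3 < 0} -f(s, y_\parallel, 0, u)\, \hat u_3 \, du,
\Ee
together with $f(t,x,v) = f_0(X(0), V(0))$ when $\tb \ge t$. The decisive new feature (absent from inflow) is the exponentially decaying Maxwellian factor $\mu(\vb)$: any polynomial weight $\langle \vb \rangle^{k}$ produced by differentiating $\tb,\xb,\vb$ is harmless, so the weighted estimates reduce to controlling derivatives of the scalar quantity $h$.

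First I would differentiate \eqref{diffuserep} in $x_\parallel, x_3, v$ using the chain rule, invoking the formulas \eqref{pxitb}--\eqref{pxbvb} for $\nabla \tb,\nabla \xb, \nabla\vb$. Exactly as in \eqref{alphavnablaf0}--\eqref{alphavptgxgvgest}, the $\alpha$-weight on $\p_{x_3} f$ absorbs the dangerous factor $1/{\hat\vb}_{,3}$ in $\p_{x_3}\tb$ through the velocity lemma (Lemma \ref{vlemma}), since $\alpha(t,x,v) \lesssim \alpha(t-\tb,\xb,\vb) = |\hat\vb_{,3}|$. After this reduction every term is bounded by $\mu(\vb) \cdot \text{poly}(\langle \vb\rangle) \cdot \bigl(|h| + |\nabla_{x_\parallel} h| + |\p_t h|\bigr)(t-\tb,(\xb)_\parallel)$, and the Maxwellian kills the polynomial factors.

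The main obstacle is estimating $\p_t h$, because this is where the trace of $\p_{x_3} f$ on $\p\O$ enters. Using $\p_t f = -\hat u\cdot \nabla_x f - \mathfrak F\cdot\nabla_u f$ inside the integral defining $h$, and integrating by parts in $u$ on the $\nabla_u f$ piece (the would-be boundary integral at $u_3 = 0$ vanishes thanks to the $\hat u_3$ weight), one obtains
\Be
\p_t h(t,x_\parallel) \;=\; \int_{u_3<0} \hat u_3 \bigl( \hat u_\parallel \cdot \nabla_{x_\parallel} f + \hat u_3\, \p_{x_3} f \bigr)\, du \;+\; \int_{u_3<0} f\, \mathcal{K}(u,E,B)\, du,
\Ee
where $\mathcal K$ collects the $\nabla_u$--derivatives of $\mathfrak F \hat u_3$ and is bounded by \eqref{nablaEBass}. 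The perilous piece is $\int (\hat u_3)^2 \p_{x_3} f\, du$, which I rewrite as $\int \hat u_3 (\alpha \p_{x_3} f)\, du$ using the identity $\alpha = \hat u_3$ on $\p\O$; combined with the $\langle v\rangle^{-5-\delta}$ decay built into the norm, this integral is uniformly bounded. Hence
\Be
|h| + |\nabla_{x_\parallel} h| + |\p_t h| \;\lesssim\; \sup_{0\le s \le T} \Bigl( \|\langle v\rangle^{4+\delta} f(s)\|_\infty + \|\langle v\rangle^{4+\delta}\nabla_{x_\parallel} f(s)\|_\infty + \|\langle v\rangle^{5+\delta}\alpha \p_{x_3} f(s)\|_\infty\Bigr).
\Ee

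Combining the bulk reduction with this bound on $h$, and using $\mu(\vb)\cdot\text{poly}(\langle\vb\rangle) \lesssim 1$, the weighted $L^\infty$ norm on the left of \eqref{diffusedfbd} satisfies an estimate of the form $N(T) \le C \|f_0\|_{\text{weighted}} + C T\, N(T)$; taking $T\ll 1$ absorbs the right-hand side and produces the desired bound. Finally, the boundary trace component of \eqref{diffusedfbd} on $\gamma_+$ follows from a trace argument analogous to Lemma \ref{tracepf} (integrating the weighted equation satisfied by $\p_{\mathbf e} f$ backward along characteristics from $\gamma_+$), while on $\gamma_-$ it follows by differentiating \eqref{diffuserep} directly with $\tb = 0$ and using the bulk bound on $h$ and its derivatives. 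The hard part is the simultaneous presence of the grazing singularity $1/\hat\vb_{,3}$ and the self-referential boundary term $\p_t h$; the $\alpha$-weight and the Maxwellian factor are designed precisely to resolve these two issues in tandem.
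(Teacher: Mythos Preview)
Your proposal has a genuine gap at the closing step. You claim that a single application of the Lagrangian representation \eqref{diffuserep} yields an inequality of the form $N(T) \le C\|f_0\|_{\text{weighted}} + CT\, N(T)$, but there is no factor of $T$ in the leading self-referential term. Trace through the case $i=3$ carefully: the dominant contribution to $\alpha\,\p_{x_3} f$ on the set $\{\tb<t\}$ is
\[
\alpha\,\p_{x_3} f(t,x,v)\ \sim\ \frac{\alpha(t,x,v)}{\hat{\vb}_{,3}}\,\mu(\vb)\,\bigl(|\p_t h|+|\nabla_{x_\parallel}h|\bigr)(t-\tb,(\xb)_\parallel),
\]
coming from the term $\delta_{i3}/\hat{\vb}_{,3}$ in \eqref{pxitb}. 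By the velocity lemma $\alpha/\hat{\vb}_{,3}\lesssim 1$, and your own estimate on $h$ gives $|\p_t h|+|\nabla_{x_\parallel}h|\lesssim N(T)$, so the right-hand side is $O(1)\cdot N(T)$ with no smallness. (The $T$-dependence in the inflow proof \eqref{alphavptgxgvgest} enters only through the \emph{correction} terms multiplied by $\|\nabla_x E\|$; the leading terms there involve prescribed data $g$, not the solution itself.) Consequently a single bounce only yields $N(T)\le C\|f_0\|+C\,N(T)$ with $C$ not small, which does not close.

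The paper resolves this by iterating: one re-expands $\nabla_x f(t-\tb,\xb,u)$ inside the $u$-integral using its own Lagrangian representation, introducing the stochastic cycles $(t^i,x^i,v^i)$ of \eqref{diffusecycles0}--\eqref{diffusecycles2}. After $l$ iterations the estimate reads $N(T)\le C_l\|f_0\|+C\bigl(\tfrac12\bigr)^{l}N(T)$, where the crucial smallness $\bigl(\tfrac12\bigr)^{l}$ comes from the measure-theoretic fact \eqref{trajexpansionendterm} that the set of velocity sequences $(v^1,\dots,v^{l-1})$ producing $l$ bounces in time $T$ has exponentially small measure under $d\Sigma^{l-1}_{l-1}$ (trajectories with $v^j_3>\delta$ spend at least time $\sim\delta$ between bounces, so at most $O(T/\delta)$ of them can be non-grazing). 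Choosing $l$ large then absorbs the self-referential term. The smallness mechanism is thus the \emph{number of bounces}, not the \emph{shortness of} $T$, and this iteration is the essential new ingredient over the inflow case that your argument omits.
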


\begin{proof}
For any $(t,x,v) \in (0,T) \times \O \times \mathbb R^3$, from \eqref{nablaxfest} and \eqref{nablavfest}, we have
\Be
\begin{split}
& | \p_{x_i } f(t,x,v) |
\\ \lesssim &    \mathbf 1_{\tb > t }  \{ |  \nabla_x  f_0( X(0), V(0)) | |  \p_{x_i}  X(0)  | +  |  \nabla_v  f_0(X(0), V(0))| |  \p_{x_i} V(0)  | \}
\\ & +  \mathbf 1_{\tb <  t } \left( |  \p_t f(t-\tb, \xb, \vb ) | \frac{\delta_{i3} + \tb }{ \hat {\vb}_{,3} } +  |\nabla_{x_\parallel } f(t-\tb, \xb, \vb ) |  \frac{\delta_{i3} + \tb }{ \hat {\vb}_{,3} }  +   |\nabla_{v } f(t-\tb, \xb, \vb ) |  \left( \frac{\delta_{i3} + \tb }{ \hat {\vb}_{,3} }  + \langle v \rangle \right)  \right),
\end{split}
\Ee
and
\Be
\begin{split}
& | \p_{v_i } f(t,x,v) |
\\ \lesssim &    \mathbf 1_{\tb > t }  \{ |  \nabla_x  f_0( X(0), V(0)) | |  \p_{v_i}  X(0)  | +  |  \nabla_v  f_0(X(0), V(0))| |  \p_{v_i} V(0)  | \}
\\ & +  \mathbf 1_{\tb <  t } \left( |  \p_t f(t-\tb, \xb, \vb ) | \frac{ \tb }{ \hat {\vb}_{,3} } +  |\nabla_{x_\parallel } f(t-\tb, \xb, \vb ) |  \frac{ \tb }{ \hat {\vb}_{,3} }  +   |\nabla_{v } f(t-\tb, \xb, \vb ) |  \left( \frac{ \tb }{ \hat {\vb}_{,3} }  + \langle v \rangle \right)  \right)
\end{split}
\Ee
Now, using the boundary condition \eqref{diffuseBC} and equation \eqref{VMfrakF1}, we have
\[
\begin{split}
\nabla_{x_\parallel } f(t-\tb, \xb, \vb ) = & c_\mu \mu (\vb)   \int_{u_3 < 0 }  - \nabla_{x_\parallel }  f(t-\tb, \xb ,  u ) \hat u _3 du,
\\ \nabla_{v } f(t-\tb, \xb, \vb )  = &  c_\mu \vb \mu (\vb)   \int_{u_3 < 0 }   f(t-\tb, \xb ,  u ) \hat u _3 du,
\\ \p_t f(t-\tb, \xb, \vb )  =  & c_\mu \mu(\vb) \int_{u_3 < 0 }  - \p_t  f(t -\tb,\xb ,  u ) \hat u _3 du
 \\ = &  c_\mu \mu(\vb) \int_{u_3 < 0 }  \left( \hat u \cdot \nabla_x  f(t -\tb,\xb ,  u ) + \mathfrak F \cdot \nabla_v f (t-\tb, \xb, u ) \right)  \hat u _3  du 
  \\ = &  c_\mu \mu(\vb) \int_{u_3 < 0 }   \hat u \cdot \nabla_x  f(t -\tb,\xb ,  u ) \hat u_3 +  f (t-\tb, \xb, u )  \mathfrak F \cdot      \nabla_u(  \hat  u _3 )  du .
\end{split}
\]
Therefore, for $i = 1,2$, 
\Be \label{dxparallelfest1}
\begin{split}
& | \p_{x_i } f(t,x,v)  | 
\\ \lesssim &    \mathbf 1_{\tb > t }  \{ |  \nabla_x  f_0( X(0), V(0)) | |  \p_{x_i}  X(0)  | +  |  \nabla_v  f_0(X(0), V(0))| |  \p_{x_i} V(0)  | \}
\\ & +  \mathbf 1_{\tb <  t } \left(  c_\mu \mu(\vb)  \langle v \rangle^2  \int_{u_3 < 0 } \left( | \nabla_x f (t-\tb, \xb, u ) | \hat u_3   + f(t-\tb, \xb, u ) \right) du   \right)
  \\ \lesssim &  \mathbf 1_{\tb > t }  \{ |  \nabla_{x_\parallel}   f_0( X(0), V(0)) | |\p_{x_i } X_\parallel (0 ) | +  |  \p_{x_3}   f_0( X(0), V(0)) | |\nabla_{x_\parallel} X_3 (0 ) |  +  |  \nabla_v  f_0(X(0), V(0))| |\p_{x_i } V(0 ) | \}
\\ & +  \mathbf 1_{\tb <  t } \left(  c_\mu \mu(\vb)    \langle v \rangle^2  \int_{u_3 < 0 } \left( | \nabla_x f (t-\tb, \xb, u ) | \hat u_3   + f(t-\tb, \xb, u ) \right) du   \right)  ,
\end{split}
\Ee
where we've used \eqref{tbbdvb}. And for $i =3$, we have
\Be \label{dx3fest1}
\begin{split}
& | \p_{x_3 } f(t,x,v)  | 
\\ \lesssim &    \mathbf 1_{\tb > t }  \{  |  \nabla_{x_\parallel}   f_0( X(0), V(0)) | |\p_{x_3 } X_\parallel (0 ) | +  |  \p_{x_3}   f_0( X(0), V(0)) | |\nabla_{x_3} X_3 (0 ) |  +  |  \nabla_v  f_0(X(0), V(0))| |\p_{x_i } V(0 ) | \}
\\ & +  \mathbf 1_{\tb <  t } \left(  c_\mu \frac{1}{ \hat {\vb}_{,3} } \mu(\vb)  \langle v \rangle^2  \int_{u_3 < 0 } \left( | \nabla_x f (t-\tb, \xb, u ) | \hat u_3   + f(t-\tb, \xb, u ) \right) du   \right).
\end{split}
\Ee
Also,
\Be \label{dvfest1}
\begin{split}
& | \p_{v_i } f(t,x,v)  | 
\\  \lesssim &  \mathbf 1_{\tb > t }  \{ |  \nabla_{x_\parallel}   f_0( X(0), V(0)) | |\p_{v_i } X_\parallel (0 ) | +  |  \p_{x_3}   f_0( X(0), V(0)) | |\nabla_{v_i} X_3 (0 ) |  +  |  \nabla_v  f_0(X(0), V(0))| |\p_{v_i } V(0 ) | \}
\\ & +  \mathbf 1_{\tb <  t } \left(  c_\mu \mu(\vb) |\vb|  \langle v \rangle^2  \int_{u_3 < 0 } \left( | \nabla_x f (t-\tb, \xb, u ) | \hat u_3   + f(t-\tb, \xb, u ) \right) du   \right) .
\end{split}
\Ee

Let $(x,v) \notin \gamma_0$ and $(t^0,x^0,v^0) = (t,x,v)$. For the characteristic
\Be \label{diffusecycles0}
\begin{split}
\frac{d}{ds}X(s;t,x,v) &= \hat V(s;t,x,v),\\
\frac{d}{ds} V(s;t,x,v) &= \mathfrak F (s, X(s;t,x,v),V(s;t,x,v) ),
\end{split}
\Ee
we define the stochastic (diffuse) cycles as 
\begin{equation} \label{diffusecycles1} \begin{split}
& t^1 = t - \tb(t,x,v), \, x^1 = \xb(t,x,v) = X(t - \tb(t,x,v);t,x,v), 
\\ & v_b^0 = V(t - \tb(t,x,v);t,x,v) = \vb(t,x,v),
\end{split} \end{equation}
 and $v^1 \in \mathbb R^3$ with $n(x^1) \cdot v^1 > 0$. 
For $l\ge1$, define
\Be \begin{split}
& t^{l+1} = t^l - \tb(t^l,x^l,v^l), x^{l + 1 } = \xb(t^l,x^l,v^l), 
\\ & v_b^l = \vb(t^l,x^l,v^l), 
\end{split} 
\Ee
and $v^{l+1} \in \mathbb R^3 \text{ with } n(x^{l+1}) \cdot v^{l+1} > 0$.
Also, define 
\Be  \label{diffusecycles2}
X^l(s) = X(s;t^l,x^l,v^l), \, V^l(s) = V(s;t^l,x^l,v^l),
\Ee
 so $X(s) = X^0(s), V(s) = V^0(s)$.
 
 Expanding $\nabla_x f(t^1, x^1 , v^1 )  + f(t^1,x^1,v^1)$ in \eqref{dx3fest1} again, we get for $i=1,2$,
\[ 
\begin{split}
& | \p_{x_i } f(t,x,v)  | 
\\ \lesssim &    \mathbf 1_{t^1 < 0 }  \{ |  \nabla_x  f_0( X(0), V(0)) | |\p_{x_i } X(0 ) |  +  |  \nabla_v  f_0(X(0), V(0))| |\p_{x_i } V(0 ) | \}
\\ & +  \mathbf 1_{  t^2  < 0 < t^1 } \bigg\{  c_\mu \mu(\vb)  \langle v \rangle^2  \int_{v^1_3 < 0 } \big( \left( | \nabla_x f (0, X^1(0) , V^1(0) ) +  \langle v^1 \rangle  | \nabla_v f (0, X^1(0) , V^1(0) ) | \right) | \hat v^1_3  
\\ & \quad \quad \quad \quad \quad \quad \quad \quad \quad \quad \quad \quad  + f(0 , X^1(0) , V^1(0) ) \big) d v^1   \bigg \}
\\ & +  \mathbf 1_{   t^2 > 0 } \left(  c_\mu \mu(\vb)  \langle v \rangle^2  \int_{v^1_3 < 0 } \left(  c_\mu \mu(\vb^1) \langle v_1 \rangle^2 \frac{\hat v^1_3 }{\hat \vb_{,3}^1} \int_{v^2_3 < 0 } | \left(  \nabla_{x_\parallel} f(t^2,x^2, v^2 ) | \hat v^2_3   + f(t^2, x^2, v^2 )  \right) dv^2 \right) dv^1  \right),
\end{split}
\]
keep doing the expansion we get for $\ell > 1$,
\Be \label{dxparadiffusef2}
\begin{split}
& | \nabla_{x_\parallel} f(t,x,v)  | 
\\ \lesssim &  \mathbf 1_{t^1 < 0 }  \{ |  \nabla_{x_\parallel}   f_0( X(0), V(0)) | |\nabla_{x_\parallel} X_\parallel (0 ) | +  |  \p_{x_3}   f_0( X(0), V(0)) | |\nabla_{x_\parallel} X_3 (0 ) |  +  |  \nabla_v  f_0(X(0), V(0))| |\nabla_{x_\parallel} V(0 ) | \}
\\  &+   \mu (\vb)  \langle  v \rangle^2 \int_{\prod_{j=1}^{l-1} \mathcal V_j} \sum_{i=1}^{l-1} \textbf{1}_{\{t^{i+1} < 0 < t^i \}} \bigg( \left( |  \nabla_x f^{}  (0,X^{i}(0), V^{i}(0)) | + \langle v^i \rangle   |  \nabla_v f^{}  (0,X^{i}(0), V^{i}(0)) |  \right) \hat v^i_3
\\ &  \quad \quad \quad \quad \quad \quad \quad \quad \quad \quad \quad \quad \quad \quad \quad \quad + f(0 , X^i(0) , V^i(0) )   \bigg)  \, d \Sigma_{i}^{l-1}
\\ & +   \mu (\vb)  \langle v \rangle^2  \int_{\prod_{j=1}^{l-1} \mathcal V_j}  \textbf{1}_{\{t^{l} > 0 \}} \int_{\mathcal V_l}  \left(  | \nabla_{x_\parallel}  f^{} (t^l,x^l, v^l ) | \hat v^l_3 + f(t^l, x^l, v^l )  \right)  d v^l  d \Sigma_{l-1}^{l-1},
\end{split}
\Ee
where $\mathcal V_j = \{ v^j \in \mathbb R^3:  v^J_3 < 0 \}$,
and 
\[ \begin{split}
d \Sigma_i^{l -1 } = & \{\prod_{j=i+1}^{l-1}  \mu(v^j) c_\mu |  \hat v_3^j | dv^j \} 
 \{\prod_{j=1}^{i-1} c_\mu \mu(\vb^j ) \langle v^j \rangle^2    \frac{\hat v^J_3 }{\hat \vb_{,3}^j} d v^j\},
\end{split} \]
where $c_\mu$ is the constant that $c_\mu  \int_{\mathbb R^3 }  \mu(v^j)  | \hat v_3^j  | dv^j = 1$. Similarly, we get
\Be
\begin{split}
& | \p_{x_3 } f(t,x,v)  | 
\\ \lesssim &  \mathbf 1_{t^1 < 0 }  \{ |  \nabla_{x_\parallel}   f_0( X(0), V(0)) | |\p_{x_3 } X_\parallel (0 ) | +  |  \p_{x_3}   f_0( X(0), V(0)) | | \p_{x_3} X_3 (0 ) |  +  |  \nabla_v  f_0(X(0), V(0))| |\p_{x_3 } V(0 ) | \}
\\ & +   \frac{\mu (\vb)}{ \hat {\vb}_{,3} }   \langle  v \rangle^2 \int_{\prod_{j=1}^{l-1} \mathcal V_j} \sum_{i=1}^{l-1} \textbf{1}_{\{t^{i+1} < 0 < t^i \}} \bigg(  \left( |  \nabla_x f^{}  (0,X^{i}(0), V^{i}(0)) | + \langle v^i \rangle   |  \nabla_v f^{}  (0,X^{i}(0), V^{i}(0)) | \right) \hat v^i_3
\\ &  \quad \quad \quad \quad \quad \quad \quad \quad \quad \quad \quad \quad \quad \quad \quad \quad + f(0 , X^i(0) , V^i(0) )   \bigg)  \, d \Sigma_{i}^{l-1}
\\ & +  \frac{\mu (\vb)}{ \hat {\vb}_{,3} }  \langle v \rangle^2  \int_{\prod_{j=1}^{l-1} \mathcal V_j}  \textbf{1}_{\{t^{l} > 0 \}}  \int_{\mathcal V_l}  \left(  | \nabla_x  f^{} (t^l,x^l, v^\ell ) | \hat v^\ell_3 + f(t^\ell, x^\ell, v^\ell )  \right) d v^l d \Sigma_{l-1}^{l-1}.
\end{split}
\Ee
And
\Be \label{dvdiffusef2}
\begin{split}
& | \nabla_{v} f(t,x,v)  | 
\\ \lesssim &  \mathbf 1_{t^1 < 0 }  \{ |  \nabla_{x_\parallel}   f_0( X(0), V(0)) | |\nabla_{v} X_\parallel (0 ) | +  |  \p_{x_3}   f_0( X(0), V(0)) | |\nabla_{v} X_3 (0 ) |  +  |  \nabla_v  f_0(X(0), V(0))| |\nabla_{v} V(0 ) | \}
\\  &+   \mu (\vb) |\vb|  \langle  v \rangle^2 \int_{\prod_{j=1}^{l-1} \mathcal V_j} \sum_{i=1}^{l-1} \textbf{1}_{\{t^{i+1} < 0 < t^i \}} \bigg( \left( |  \nabla_x f^{}  (0,X^{i}(0), V^{i}(0)) | + \langle v^i \rangle   |  \nabla_v f^{}  (0,X^{i}(0), V^{i}(0)) |  \right) \hat v^i_3
\\ &  \quad \quad \quad \quad \quad \quad \quad \quad \quad \quad \quad \quad \quad \quad \quad \quad + f(0 , X^i(0) , V^i(0) )   \bigg)  \, d \Sigma_{i}^{l-1}
\\ & +   \mu (\vb) |\vb|  \langle v \rangle^2  \int_{\prod_{j=1}^{l-1} \mathcal V_j}  \textbf{1}_{\{t^{l} > 0 \}} \int_{\mathcal V_l}  \left(  | \nabla_x  f^{} (t^l,x^l, v^l ) | \hat v^l_3 + f(t^l, x^l, v^l )  \right)  d v^l  d \Sigma_{l-1}^{l-1},
\end{split}
\Ee

Next, we claim that there exists $l_0 \gg 1$ such that for $l \ge l_0$,  we have
\Be \label{trajexpansionendterm}
\int_{\prod_{j=1}^{l-1} \mathcal V_j} \textbf{1}_{\{t^{l}(t,x,v,v^1,...,v^{l-1} ) >0 \}} \, d \, \Sigma_{l-1}^{l-1} \lesssim \left( \frac{1}{2} \right)^{l}.
\Ee
Since
\[
|\vb^j|^2 \lesssim |v^j |^2 + (t^j - t^{j+1} ) ^2  ( \| E \|_\infty^2 + \| B \|_\infty^2 ), \, \langle \vb^j \rangle \lesssim \langle v^j \rangle + (t^j - t^{j+1} ) ( \| E \|_\infty + \| B \|_\infty ) ,
\]
and using \eqref{alphaest}, we have for some fixed constant $C_{0 } > 0$, 
\[ \begin{split}
 \, d \, \Sigma_{l-1}^{l-1} 
 \le (C_{0 })^l \prod_{j=1}^{l-1} \sqrt{ \mu (v_j) } \langle v^j \rangle^2 dv^j.
\end{split} \]
Choose a sufficiently small $\delta = \delta (C_0) > 0$. Define
\[
\mathcal V_j^\delta = \{ v^j \in \mathcal V_j: v^J_3 \ge \delta
\},
\]
where we have $\int_{\mathcal V_j \setminus \mathcal V_j^\delta} C_0  \sqrt{ \mu (v_j) } \langle v^j \rangle^2 dv^j \lesssim \delta$.

On the other hand if $v^j \in \mathcal V_j^\delta$,  we have from \eqref{Xiformula}
\[
\begin{split}
| (t^j - t^{j+1} ) \hat{v}_3^j  | = &  |  \int^{t^j }_{ t^{j+1} }\int^{t^j}_s \hat{\mathfrak F } _{3}(\tau, X^j (\tau), V^j(\tau) ) \dd \tau \dd s | 
\\ \le &   \int^{t^j }_{ t^{j+1} }\int^{t^j}_s \frac{2 g }{ \langle V^j(\tau ) \rangle }  d\tau ds   \le  \frac{(t^j - t^{j+1} )^2  g }{ \min_{ 0 \le \tau \le T} \langle V^j(\tau) \rangle }.
\end{split}
\]
Thus
\[
|t^j - t^{j+1} | \ge \frac{ v^J_3 }{g} \frac{ \min_{ 0 \le \tau \le T} \langle V^j(\tau) \rangle}{ \max_{ 0 \le \tau \le T} \langle V^j(\tau) \rangle }  \gtrsim   v^J_3 \ge \delta.
\]
Now if $t^l \ge 0$ then there are at most $ \left[ \frac{ C_\Omega }{ \delta }  \right] + 1$ numbers of $v^m \in \mathcal V^\delta_m$ for $1 \le m \le l-1$. Equivalently there are at least $l -2 - \left[ \frac{ C_\Omega }{ \delta }  \right]$ numbers of $v^{m_i} \in \mathcal V_{m_i} \setminus \mathcal V_{m_i }^\delta$. Therefore we have:
\Be \label{tailtermsmall} \begin{split}
\int_{\prod_{j=1}^{l-1} \mathcal V_j} & \textbf{1}_{\{t^{l}(t,x,v,v^1,...,v^{l-1} ) >0 \}} \, d \, \Sigma_{l-1}^{l-1}
\\ \le & \sum_{m = 1}^{  \left[ \frac{ C_\Omega }{ \delta }  \right] +1} \int_{ \left\{ \parbox{15em}{there are exactly $m$ of $v^{m_i} \in \mathcal V_{m_i}^\delta $ and $l -1 -m $ of $v^{m_i} \in \mathcal V_{m_i } \setminus \mathcal V^\delta_{m_i}$ } \right\} } \prod_{j =1}^{l-1} C_0  \sqrt{ \mu (v_j) } \langle v^j \rangle^2  d v^j
\\ \le & \sum_{m = 1}^{  \left[ \frac{ C_\Omega }{ \delta }  \right]+1} {l-1  \choose m} \left\{ \int_{\mathcal V} C_0  \sqrt{ \mu (v) } \langle v \rangle^2 dv \right\}^m \left\{ \int_{\mathcal V \setminus \mathcal V^\delta} C_0 \sqrt{ \mu (v) } \langle v \rangle^2 dv \right\}^{l-1-m}
\\ \le & \left(  \left[ \frac{ C_\Omega }{ \delta}  \right] + 1 \right) ( l -1)^{ \left[ \frac{ C_\Omega }{ \delta }  \right] + 1} ( \delta)^{l - 2 -  \left[ \frac{ C_\Omega }{ \delta }  \right] } \left\{ \int_{\mathcal V} C_0 \sqrt{ \mu (v) } \langle v \rangle^2 dv \right\}^{ \left[ \frac{ C_\Omega }{ \delta }  \right]+1}
\\ \le & C \delta^{l/2} \le C (\frac{1}{2})^{l},
\end{split} \Ee
if $l \gg 1$, say $ l = 2 \left(  \left[ \frac{ C_\Omega }{ \delta }  \right] +1 \right) ^2$.


Therefore, from \eqref{tbdV0},  \eqref{pxviXVest}, \eqref{pxipviXV}, \eqref{dxparadiffusef2},\eqref{trajexpansionendterm}, and \eqref{tailtermsmall} we have
\Be \label{dxparadiffusef3}
\begin{split}
& \langle v \rangle^{4 +\delta } | \nabla_{x_\parallel} f(t,x,v)  | 
\\ \lesssim & \langle v \rangle^{4 +\delta } \{ |  \nabla_{x_\parallel}   f_0( X(0), V(0)) | + t  |  \p_{x_3}   f_0( X(0), V(0)) |   + \langle v \rangle  |  \nabla_v  f_0(X(0), V(0))| \}
\\  &+   \langle v \rangle^{4 +\delta }  \mu (\vb)  \langle  v \rangle^2 \int_{\prod_{j=1}^{l-1} \mathcal V_j} \sum_{i=1}^{l-1} \textbf{1}_{\{t^{i+1} < 0 < t^i \}} \bigg( \left( |  \nabla_x f^{}  (0,X^{i}(0), V^{i}(0)) | + \langle v^i \rangle   |  \nabla_v f^{}  (0,X^{i}(0), V^{i}(0)) |  \right) \hat v^i_3
\\ &  \quad \quad \quad \quad \quad \quad \quad \quad \quad \quad \quad \quad \quad \quad \quad \quad + f(0 , X^i(0) , V^i(0) )   \bigg)  \, d \Sigma_{i}^{l-1}
\\ & +  \langle v \rangle^{4 +\delta }   \mu (\vb)  \langle v \rangle^2  \int_{\prod_{j=1}^{l-1} \mathcal V_j}  \textbf{1}_{\{t^{l} > 0 \}} \int_{\mathcal V_l}  \left(  | \nabla_{x_\parallel}  f^{} (t^l,x^l, v^l ) | \hat v^l_3 + f(t^l, x^l, v^l )  \right)  d v^l  d \Sigma_{l-1}^{l-1}
\\ \lesssim & C_l \left(  \| (1 +|v|^{4 + \delta } )  \nabla_{x_\parallel}   f_0 \|_\infty +  \| (1 +|v|^{4 + \delta } )  \alpha \p_{x_3}   f_0 \|_\infty +  \| (1 +|v|^{5 + \delta } )  \nabla_{v}   f_0 \|_\infty  \right)  
\\ & + C \left( \frac{1}{2} \right)^l \sup_{0 \le t \le T }   \| \langle v \rangle^{4+\delta}   \nabla_{x_\parallel} f(t) \|_\infty ,
\end{split}
\Ee
and similarly, 
\Be \label{dx3diffusef3}
\begin{split}
& \langle v \rangle^{5 +\delta } | \alpha \p_{x_3} f(t,x,v)  | 
\\ \lesssim & \langle v \rangle^{5 +\delta } \alpha(t,x,v) \{ |  \nabla_{x_\parallel}   f_0( X(0), V(0)) | +  |  \p_{x_3}   f_0( X(0), V(0)) |   + \langle v \rangle  |  \nabla_v  f_0(X(0), V(0))| \}
\\  &+   \langle v \rangle^{5 +\delta }   \frac{\alpha(t,x,v) \mu (\vb)}{ \hat {\vb}_{,3} }   \langle  v \rangle^2 \int_{\prod_{j=1}^{l-1} \mathcal V_j} \sum_{i=1}^{l-1} \textbf{1}_{\{t^{i+1} < 0 < t^i \}} \bigg( \left( |  \nabla_x f^{}  (0,X^{i}(0), V^{i}(0)) | + \langle v^i \rangle   |  \nabla_v f^{}  (0,X^{i}(0), V^{i}(0)) |  \right) \hat v^i_3
\\ &  \quad \quad \quad \quad \quad \quad \quad \quad \quad \quad \quad \quad \quad \quad \quad \quad + f(0 , X^i(0) , V^i(0) )   \bigg)  \, d \Sigma_{i}^{l-1}
\\ & +  \langle v \rangle^{5 +\delta }    \frac{\alpha(t,x,v) \mu (\vb)}{ \hat {\vb}_{,3} }  \langle v \rangle^2  \int_{\prod_{j=1}^{l-1} \mathcal V_j}  \textbf{1}_{\{t^{l} > 0 \}} \int_{\mathcal V_l}  \left(  | \nabla_x  f^{} (t^l,x^l, v^l ) | \hat v^l_3 + f(t^l, x^l, v^l )  \right)  d v^l  d \Sigma_{l-1}^{l-1}
\\ \lesssim & C_l \left(  \| (1 +|v|^{5 + \delta } )  \nabla_{x_\parallel}   f_0 \|_\infty +  \| (1 +|v|^{5 + \delta } )  \alpha \p_{x_3}   f_0 \|_\infty +  \| (1 +|v|^{5 + \delta } )  \nabla_{v}   f_0 \|_\infty  \right)  
\\ & + C \left( \frac{1}{2} \right)^l \sup_{0 \le t \le T }   \| \langle v \rangle^{5+\delta} \alpha  \nabla_x f(t) \|_\infty ,
\end{split}
\Ee
and
\Be \label{dvdiffusef4}
\begin{split}
& \langle v \rangle^{5 +\delta } | \nabla_{v} f(t,x,v)  | 
\\ \lesssim & C_l \left(  \| (1 +|v|^{4 + \delta } )  \nabla_{x_\parallel}   f_0 \|_\infty +  \| (1 +|v|^{4 + \delta } )  \alpha \p_{x_3}   f_0 \|_\infty +  \| (1 +|v|^{5 + \delta } )  \nabla_{v}   f_0 \|_\infty  \right)  
\\ & + C \left( \frac{1}{2} \right)^l \sup_{0 \le t \le T }   \| \langle v \rangle^{5+\delta} \alpha  \nabla_x f(t) \|_\infty ,
\end{split}
\Ee
where we've used \eqref{alphaest}. Adding \eqref{dxparadiffusef2}, \eqref{dx3diffusef3}, and \eqref{dvdiffusef4} and choosing $l \gg 1$, we get for a large $C > 0$, 
\Be \label{diffusefinal}
\begin{split}
& \sup_{ 0 \le t \le T}  \left( \| \langle v \rangle^{4 + \delta}\nabla_{x_\parallel } f(t) \|_\infty +  \|  \langle v \rangle^{5 + \delta} \alpha \p_{x_3} f(t) \|_\infty   +  \|  \langle v \rangle^{5 + \delta}  \nabla_v f(t) \|_\infty \right)
\\ <  & C \left(   \|  \langle v \rangle^{5 + \delta}  \nabla_{x_\parallel}   f_0 \|_\infty +  \|  \langle v \rangle^{5 + \delta}  \alpha \p_{x_3}   f_0 \|_\infty +  \| \langle v \rangle^{5 + \delta}  \nabla_{v}   f_0 \|_\infty \right).
\end{split} 
\Ee

Next, using the same argument in Lemma \ref{tracepf}, we obtain
\Be \label{xparavfbdp}
\begin{split}
& \sup_{0 \le t \le T}  \| \langle v \rangle^{4 + \delta }  \nabla_{x_\parallel} f (t) \|_{L^\infty(\gamma \setminus \gamma_0) }  \le \sup_{0 \le t \le T}   \| \langle v \rangle^{4 + \delta }  \nabla_{x_\parallel} f (t) \|_\infty, 
\\ &  \sup_{0 \le t \le T}   \| \langle v \rangle^{5 + \delta }  \nabla_{v} f (t) \|_{L^\infty(\gamma \setminus \gamma_0) }  \le  \sup_{0 \le t \le T}   \| \langle v \rangle^{5 + \delta }  \nabla_{v} f (t) \|_\infty,
\\ &  \sup_{0 \le t \le T}   \| \langle v \rangle^{5+\delta} \alpha \p_{x_3} f  (t,x,v)  \|_{L^\infty(\gamma \setminus \gamma_0 ) } \le  \sup_{0 \le t \le T}     \| \langle v \rangle^{5+\delta} \alpha \p_{x_3} f  (t,x,v)  \|_\infty.
\end{split}
\Ee
Together \eqref{diffusefinal}, we conclude \eqref{diffusedfbd}.

\end{proof}

In order to construct a solution to the system \eqref{VMfrakF1}--\eqref{rhoJ1}, \eqref{diffuseBC}, we define a sequence of functions:
\[
f^0(t,x,v) = f_0(x,v), \ E^0(t,x) = E_0(t,x), \ B^0(t,x) = B_0(x).
\]
For $\ell \ge 1$, let $f^\ell$ be the solution of
\Be \label{fellseq}
\begin{split}
\p_t f^\ell + \hat v \cdot \nabla_x f^\ell +  \mathfrak F^{\ell-1}   \cdot \nabla_v f^\ell = &  0, \text{ where }   \mathfrak F^{\ell-1} =  E^{\ell-1} + E_{\text{ext}} + \hat v \times ( B^{\ell-1} + B_{\text{ext}} ) - g \mathbf e_3,
\\f^\ell(0,x,v)  = &  f_0(x,v) ,
\\ f^\ell(t,x,v) |_{\gamma_-} = &  c_\mu \mu(v) \int_{u_3 < 0 }  - f^{\ell-1} (t,x,  u ) \hat u _3 du.
\end{split}
\Ee
Let $\rho^\ell = \int_{\mathbb R^3 } f^\ell dv, j^\ell = \int_{\mathbb R^3 } \hat v  f^\ell dv$. Let 
\Be \label{ElBl}
E^\ell = \eqref{Eesttat0pos} + \dots + \eqref{Eest3bdrycontri}, \  B^\ell = \eqref{Besttat0pos} + \dots + \eqref{Bestbdrycontri}, \text{ with } f \text{ changes to } f^\ell. 
\Ee
And let
\Be \label{Fell}
\mathfrak F^\ell = E^\ell + E_{\text{ext}} - \hat v \times ( B^\ell + B_{\text{ext}} )  - g \mathbf e_3. 
\Ee
We prove several uniform-in-$\ell$ bounds for the sequence before passing the limit.

\begin{lemma}
Suppose $f_0$ satisfies \eqref{f0bdd}, $E_0$, $B_0$ satisfy \eqref{E0B0g}, \eqref{E0B0bdd}, then there exits $M_1, M_2$, and $c_0$, such that for $0 < T \ll 1 $, 
\Be \label{fellbound}
\begin{split}
 \sup_\ell \sup_{0 \le t \le T} \left( \| \langle v \rangle^{4 + \delta } f^\ell(t) \|_{L^\infty(\bar \O \times \mathbb R^3)}    \right) < & M_1, \ 
\\  \sup_\ell \sup_{0 \le t \le T} \left( \| E^\ell (t) \|_\infty + \| B^\ell (t) \|_\infty \right) + |B_e| + E_e + g < & M_2,
\\ \inf_{\ell} \inf_{ t ,x_\parallel} \left( g - E_e  -  E^\ell_3(t,x_\parallel, 0 )  -  (\hat v \times B^\ell)_3(t,x_\parallel, 0 ) \right) > &  c_0.
\end{split}
\Ee
\end{lemma}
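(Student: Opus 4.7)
The plan is to prove all three bounds by induction on $\ell$, closely paralleling the inflow estimate \eqref{fellboundin} but replacing the trivial boundary propagation there with the diffuse-cycle expansion developed in the proof of Proposition \ref{diffuseprop}. Assume inductively that the three estimates hold for every iterate with index $\le \ell$. To control $\|\langle v\rangle^{4+\delta} f^{\ell+1}(t)\|_\infty$, I would trace $f^{\ell+1}$ back along the characteristics of $\mathfrak F^\ell$; each time the trajectory hits $\p\O$, the boundary condition in \eqref{fellseq} replaces $f^{\ell+1}$ at that point by $c_\mu\mu(v_b)\int_{u_3<0}-f^{\ell}(\cdot,u)\hat u_3\,du$. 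Iterating gives exactly the stochastic-cycle expansion underlying \eqref{dxparadiffusef2}: a sum of ``initial-data'' terms (where some cycle time $t^{i+1}$ falls below $0$) plus a remainder integrated against $f^{\ell+1-l}$ on the phase boundary after $l$ cycles.

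The crucial point is the tail bound \eqref{trajexpansionendterm}--\eqref{tailtermsmall}, which for $l$ sufficiently large (depending only on $M_2$, $g$, $|B_e|$ through \eqref{Vsvin}, not on $\ell$) dominates the remainder measure by $C(1/2)^l$. Since \eqref{Vsvin} keeps $\langle V^\ell(s)\rangle$ comparable to $\langle v\rangle$, the Gaussian factors $\mu(v_b^i)$ in the cycle measure absorb the weight $\langle v\rangle^{4+\delta}$, so the initial-data terms are controlled by $C_l\|\langle v\rangle^{4+\delta} f_0\|_\infty$. This yields
\begin{equation*}
\|\langle v\rangle^{4+\delta} f^{\ell+1}(t)\|_\infty \le C_l \|\langle v\rangle^{4+\delta} f_0\|_\infty + C\Big(\tfrac{1}{2}\Big)^l \sup_{0\le k\le \ell}\|\langle v\rangle^{4+\delta} f^k(t)\|_\infty;
\end{equation*}
fixing $l$ so that $C(1/2)^l\le 1/2$ and choosing $M_1\ge 2 C_l\|\langle v\rangle^{4+\delta} f_0\|_\infty$ closes the first bound.

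For the field bounds I would apply Lemma \ref{EBlinflemma} directly to the Glassey--Strauss form \eqref{ElBl} of $(E^{\ell+1},B^{\ell+1})$, now with source $f^{\ell+1}$ satisfying the just-established weighted bound by $M_1$ and with $(E^\ell,B^\ell)$ bounded by $M_2$ appearing in the $Sf^{\ell+1}$ integrand (since $f^{\ell+1}$ satisfies \eqref{fellseq} driven by $\mathfrak F^\ell$). This reproduces \eqref{BEellinftyestinflow1} verbatim:
\begin{equation*}
\|E^{\ell+1}(t)\|_\infty+\|B^{\ell+1}(t)\|_\infty \le C(\|E_0\|_{C^1}+\|B_0\|_{C^1}) + CTM_1\bigl(1+T(M_2+g+|B_e|+E_e)\bigr).
\end{equation*}
Setting $M_2:=(C+1)(\|E_0\|_{C^1}+\|B_0\|_{C^1})+|B_e|+E_e+g$ and then $T\ll 1$ recovers the second bound. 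For the sign condition, the hypothesis \eqref{E0B0g} provides an initial gap $>c_1$ at $t=0$; since $E^{\ell+1}_3$ and $B^{\ell+1}|_{\p\O}$ deviate from $E_{0,3}|_{\p\O}$ and $B_0|_{\p\O}$ by at most terms of size $CTM_1(1+T(M_2+g+|B_e|))$ appearing in the same field estimate, setting $c_0:=c_1/2$ and taking $T$ small enough preserves the strict inequality uniformly in $\ell$, as in \eqref{BEellinftyestinflow4}.

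The principal technical obstacle lies in the first step: the diffuse boundary kernel is not a contraction on the weighted $L^\infty$ space, so the iteration cannot be closed after a single bounce. One must commit to the full stochastic-cycle expansion and verify carefully that the combinatorial tail \eqref{trajexpansionendterm}--\eqref{tailtermsmall} applies with constants independent of $\ell$ (depending only on $M_2$, $g$, $|B_e|$, all controlled by the inductive hypothesis), so that the choice of cycle depth $l$ can be made before fixing $M_1$; this is what ultimately allows the three constants $M_1$, $M_2$, $c_0$ and the time $T$ to be chosen in a mutually consistent way.
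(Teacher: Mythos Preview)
Your proposal is correct and follows essentially the same approach as the paper: induction on $\ell$, stochastic-cycle expansion of $f^{\ell+1}$ through the diffuse boundary, the combinatorial tail bound \eqref{trajexpansionendterm}--\eqref{tailtermsmall} (appearing in the paper as \eqref{1over2k}) to absorb the remainder, and then the field bounds via Lemma \ref{EBlinflemma} exactly as in \eqref{BEellinftyestinflow1}--\eqref{BEellinftyestinflow4}. One point worth making explicit in your write-up is that the cycles here are the $\ell$-dependent ones \eqref{cycle}--\eqref{cycle_ellin}: at the $k$-th bounce you are tracing $f^{\ell+1-k}$ along the characteristics of $\mathfrak F^{\ell-k}$, not along a fixed field as in \eqref{dxparadiffusef2}; the tail estimate still goes through uniformly because the lower bound on $|t_j-t_{j+1}|$ for $v^j\in\mathcal V_j^\delta$ depends only on $M_2$ and $g$, which are controlled by the inductive hypothesis.
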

\begin{proof}
Let $\ell \ge 1$. By induction hypothesis we assume that
\Be \label{inductfEB}
\begin{split}
\sup_{ 0 \le i \le \ell }  \sup_{0 \le t \le T} \left( \| \langle v \rangle^{4 + \delta} f^{\ell-i}(t) \|_{L^\infty(\bar \O \times \mathbb R^3)}   \right) < & M_1,
\\  \sup_{ 0 \le i \le \ell }  \sup_{0 \le t \le T} \left(  \| E^{\ell-i} (t) \|_\infty + \| B^{\ell-i} (t) \|_\infty \right) + |B_e| + E_e + g< & M_2.
\end{split}
\Ee

Let the characteristics $(X^\ell, V^\ell)$ be defined as in \eqref{XV_ell}. We define the stochastic cycles:
		\Be
		\begin{split}\label{cycle}
			t^{\ell}_1 (t,x,v)&:= 
			\sup\{ s<t:
			X^\ell(s;t,x,v) \in \p\O
			\}
			,\\
			x^\ell_1 (t,x,v ) &:= X^\ell (t^{\ell}_1 (t,x,v);t,x,v)
			,\\
			t^{\ell-1}_2 (t,x,v, v_1) &:= \sup\{ s<t^\ell_1:
			X^{\ell-1}(s;t^{\ell}_1 (t,x,v),x^{\ell}_1 (t,x,v),v_1) \in \p\O
			\}
			,\\
			x^{\ell-1}_2 (t,x,v, v_1 ) &:= X^{\ell-1} (t^{\ell-1}_2 (t,x,v,v_1);t^\ell_1(t,x,v),x^\ell_1(t,x,v),v_1)
			,\\
		\end{split}
		\Ee
		and inductively 
		\Be
		\begin{split}\label{cycle_ellin}
			& t^{\ell-(k-1)}_k (t,x,v, v_1, \cdots, v_{k-1})  \\
			&:=     \sup\big\{ s<t^{\ell-(k-2)}_{k-1} 
			:
			X^{\ell-1}(s;t_{k-1}^{\ell - (k-2)}  , x_{k-1}^{\ell - (k-2)} ,v_{k-1}) \in \p\O
			\big\},\\
			& x_k^{\ell - (k-1)} (t,x,v, v_1, \cdots, v_{k-1})\\
			&:= X^{\ell- (k-2)} (t_k^{\ell- (k-1)}; t_{k-1}^{\ell- (k-2)},x_{k-1}^{\ell- (k-2)} , v_{k-1})
			.
		\end{split}
		\Ee
		Here,
		\Be\begin{split}\notag
			t^{\ell-(i-1)}_{i } &:= t^{\ell-(i-1)}_{i }
			(t,x,v,v_1, \cdots, v_{i-1}),\\
			x^{\ell-(i-1)}_{i } &:= x^{\ell-(i-1)}_{i }
			(t,x,v,v_1, \cdots, v_{i-1}).\end{split}\Ee
			
First, we note that for any $t^{\ell-i}_{i+1} \le s < t^{\ell-(i-1)}_{i}$, since
\[
V^{\ell - i} (s;   t^{\ell-(i-1)}_{i}, x^{\ell-(i-1)}_{i }, v_i ) = v_i - \int_{s}^{ t^{\ell-(i-1)}_{i} } \mathfrak F^{\ell - i} (\tau, X^{\ell-i}(\tau), V^{\ell-i}(\tau)  ) d\tau,
\]
from \eqref{inductfEB}, we have
\[
|v_i | - (t^{\ell-(i-1)}_{i} - t^{\ell-i}_{i+1})M_2 \le | V^{\ell - i} (s;   t^{\ell-(i-1)}_{i}, x^{\ell-(i-1)}_{i }, v_i ) | \le |v_i | +  (t^{\ell-(i-1)}_{i} - t^{\ell-i}_{i+1})M_2.
\]
Thus
\Be \label{Vsv}
\left( 1 + (t^{\ell-(i-1)}_{i} - t^{\ell-i}_{i+1})M_2 \right)^{-1} \langle v \rangle  \le \langle V^{\ell - i} (s;   t^{\ell-(i-1)}_{i}, x^{\ell-(i-1)}_{i }, v_i ) \rangle \le \left( 1 + (t^{\ell-(i-1)}_{i} - t^{\ell-i}_{i+1})M_2 \right) \langle v \rangle.
\Ee
			
From \eqref{fellseq} we have for any $(t,x,v) \in (0,T) \times \bar \O \times \mathbb R^3$, 
\Be
\begin{split}
f^{\ell+1}(t,x,v) = & \mathbf 1_{t^\ell_1 \le 0} f^{\ell+1}( 0, X^\ell(0), V^\ell(0)) +  \mathbf 1_{t_1^\ell \ge 0 } f^{\ell+1}(t_1^\ell , X^\ell(t_1^\ell; t,x,v), V^\ell(t_1^\ell;t,x,v) ) 
\\ = &  \mathbf 1_{t^\ell_1 \le 0} f^{\ell+1}( 0, X^\ell(0), V^\ell(0))   - \mathbf 1_{t_1^\ell \ge 0 } c_\mu \mu(V^\ell(t^\ell_{1}) ) \int_{v_{1,3} < 0 } f^\ell (t^\ell_1, x^\ell_1, v_1 ) \hat{v}_{1,3}  \dd v_1.
\end{split}
\Ee
And \eqref{Vsv} gives
\Be
\begin{split}
& \langle v \rangle^{4 + \delta }  |f^{\ell+1}(t,x,v) | 
\\  \le &   \mathbf 1_{t^\ell_1 \le 0} (1+ T (M_2+g))^{4 + \delta} | \langle V^\ell(0) \rangle^{4 + \delta }  f^{\ell+1}( 0, X^\ell(0), V^\ell(0)) |  
\\ &  +   \mathbf 1_{t_1^\ell \ge 0 }   c_\mu  (1+ T (M_2+g))^{4 + \delta}  | \langle V^\ell(t^\ell_{1}) \rangle^{4 + \delta}  \mu(V^\ell(t^\ell_{1}) ) \int_{v_{1,3} < 0 } \langle v_1 \rangle^{4 +\delta}  f^\ell (t^\ell_1, x^\ell_1, v_1 ) \frac{\hat{v}_{1,3}}{\langle v_1 \rangle^{4 + \delta } }   \dd v_1 | .
\end{split}
\Ee

Then inductively, we obtain
\Be \label{vfinftyinduc}
\begin{split}
&  \langle v \rangle^{4 + \delta }  | f^{\ell+1}(t,x,v) | \le  \mathbf 1_{t^\ell_1 \le 0}  (1+t(M_2+g))^{4 + \delta} | |   \langle V^\ell(0) \rangle^{4 + \delta } f^{\ell+1}( 0, X^\ell(0), V^\ell(0))  |
\\  &+  (1+ T (M_2+g))^{4 + \delta}   \int_{\prod_{j=1}^{k-1} \mathcal V_j} \sum_{i=1}^{k-1} \textbf{1}_{\{t^{\ell-i}_{i+1} \le 0 < t^{\ell-(i-1)}_{i} \}}   |  \langle V^{\ell - i } (0; v_i) \rangle^{4 + \delta }  f^{\ell - (i-1)}(0 , X^{\ell-i}(0; v_i) , V^{\ell-i}(0;v_i) )   |  \, d \Sigma_{i}^{k-1}
\\ & +     (1+ T (M_2+g))^{4 + \delta}  \int_{\prod_{j=1}^{k-1} \mathcal V_j}  \textbf{1}_{\{t_k^{\ell - (k-1)} > 0 \}} \int_{\mathcal V_k}  |  f^{\ell - k } ( t_k^{\ell - (k-1) }, x_k^{\ell-(k-1) }, v_{k} ) |  d v_k d \Sigma_{k-1}^{k-1},
\end{split}
\Ee
where
\[
\begin{split}
X^{\ell-i }(0;v_i) = & X^{\ell-i}(0; t^{\ell - (i-1) }_i, x^{\ell-(i-1) }_i, v_i),
\\ V^{\ell-i }(0;v_i) = & V^{\ell-i}(0; t^{\ell - (i-1) }_i, x^{\ell-(i-1) }_i, v_i),
\end{split}
\]
$\mathcal V_j = \{ v_j \in \mathbb R^3:  v_{j,3} < 0 \}$,
and 
\Be \label{Nujsigmaprod}
\begin{split}
d \Sigma_i^{k -1 } = &  \{\prod_{j=1}^{i-1} c_\mu (1+ T (M_2+g) )^{4 + \delta} \mu(V^{\ell-(j-1)}(t^{\ell- (j-1)}_{j }  ) )     \frac{\hat v_{j,3} \langle  V^{\ell-(j-1)}(t^{\ell- (j-1)}_{j }  ) \rangle^{4 + \delta} }{ {\hat  V}^{\ell-(j-1)}_3(t^{\ell- (j-1)}_{j }  ) \langle v_j \rangle^{4 + \delta} } d v_j\}  \{\prod_{j=i+1}^{k-1}  \mu(v_j) c_\mu |  \hat v_{j,3} | dv_j \}. 
\end{split}
\Ee
From the same argument as in \eqref{trajexpansionendterm}-\eqref{tailtermsmall}, we get there exists $k_0 \gg 1$ such that for $k \ge k_0$,
\Be \label{1over2k}
 \int_{\prod_{j=1}^{k-1} \mathcal V_j}  \textbf{1}_{\{t_k^{\ell - (k-1)} > 0 \}} d \Sigma_{k-1}^{k-1} \le \left( \frac{1}{2} \right)^{k }.
\Ee
Thus, from \eqref{vfinftyinduc}, \eqref{1over2k}, we have
\Be
\begin{split}
& \sup_{0 \le t \le T} \|   \langle v \rangle^{4 + \delta }  f^{\ell+1}(t) \|_{L^\infty(\bar \O \times \mathbb R^3)}
\\ &  \le k (1 + TM_2)^{4 + \delta }  \| \langle v \rangle^{4 + \delta } f_0 \|_\infty  +   (1 + T(M_2+g))^{4 + \delta }  \left( \frac{1}{2} \right)^{k }  \sup_{0 \le t \le T} \| \langle v \rangle^{4 + \delta} f^{\ell-i}(t) \|_{L^\infty(\bar \O \times \mathbb R^3)}.
\end{split}
\Ee
By choosing $M_1 \gg 1$ and then $ T \ll 1$, we get 
\Be \label{fell1final}
 \sup_{0 \le t \le T} \| \langle v \rangle^{4 + \delta } f^{\ell+1} (t) \|_{L^\infty(\bar \O \times \mathbb R^3)} < M_1. 
\Ee
Now from \eqref{ElBl} and \eqref{E0B0g}, using the same argument as \eqref{BEellinftyestinflow1}--\eqref{BEellinftyestinflow4}, we get
\Be \label{EBlifinal}
  \sup_{0 \le t \le T} \| E^{\ell+1} (t) \|_\infty + \sup_{0 \le t \le T} \| B^{\ell+1} (t) \|_\infty + |B_e| +E_e + g < M_2,
\Ee
and
\Be
 \inf_{ t ,x_\parallel} \left( g - E_e  -  E^{\ell+1}_3(t,x_\parallel, 0 )  -  (\hat v \times B^{\ell+1})_3(t,x_\parallel, 0 ) \right) >   c_0.
\Ee
Thus we conclude \eqref{fellbound} by induction. 
\end{proof}

Next, we consider the derivative of the sequences. Define $\alpha^{\ell} $ as in \eqref{alphan}.
We have the following estimate.
\begin{lemma}
Suppose $f_0$ satisfies \eqref{f0bdd}, $E_0$, $B_0$ satisfy \eqref{E0B0bdd}, then there exits $M_3, M_4$ such that for $0 < T \ll 1 $, 
\Be \label{flElBldsqbd}
\begin{split}
 &\sup_\ell \sup_{0 \le t \le T} \left( \| \langle v \rangle^{4 + \delta } \nabla_{x_\parallel } f^\ell(t) \|_\infty  + \| \langle v \rangle^{5 + \delta }  \alpha^{\ell-1} \p_{x_3 } f^\ell(t) \|_\infty + \|  \langle v \rangle^{4 + \delta }  \nabla_v f^\ell(t) \|_\infty   \right) 
 \\& + \sup_\ell \sup_{0 \le t \le T} \left( \| \langle v \rangle^{4 + \delta } \nabla_{x_\parallel } f^\ell(t) \|_{L^\infty(\gamma \setminus \gamma_0)}  + \| \langle v \rangle^{5 + \delta }  \alpha^{\ell-1} \p_{x_3 } f^\ell(t) \|_{L^\infty(\gamma \setminus \gamma_0)} + \|  \langle v \rangle^{4 + \delta }  \nabla_v f^\ell(t) \|_{L^\infty(\gamma \setminus \gamma_0)}   \right) <  M_3 ,
\\ & \sup_\ell \sup_{0 \le t \le T} \left( \| \p_t E^\ell(t) \|_\infty +  \| \p_t B^\ell(t) \|_\infty + \| \nabla_{x } E^\ell(t) \|_\infty +\| \nabla_{x } B^\ell(t) \|_\infty  \right) < M_4.
\end{split} 
\Ee
\end{lemma}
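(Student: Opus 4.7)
The strategy is to run an induction on $\ell$ that closes the estimate \eqref{flElBldsqbd} by combining (i) the velocity lemma applied to $\alpha^{\ell-1}$, (ii) a diffuse-BC stochastic cycle expansion of $\nabla_{x,v}f^\ell$ analogous to \eqref{dxparadiffusef2}--\eqref{dvdiffusef2}, and (iii) the Glassey--Strauss-based field estimate of Lemma \ref{EBW1inftylemma}. Throughout, the uniform bounds \eqref{fellbound} play the role of the a priori control on $(f^\ell,E^\ell,B^\ell)$ and give a uniform lower bound $c_0$ for the sign condition used in the velocity lemma.

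The induction hypothesis is that \eqref{flElBldsqbd} holds, with the stated $M_3,M_4$, for all indices up to $\ell$. First, since $E^{\ell-1},B^{\ell-1}\in W^{1,\infty}$ uniformly by the induction hypothesis and $\inf(g-E_e-E^{\ell-1}_3-(\hat v\times B^{\ell-1})_3)>c_0$ by \eqref{fellbound}, Lemma \ref{vlemma} applied to $\alpha^{\ell-1}$ along the characteristics $(X^{\ell-1},V^{\ell-1})$ of $\mathfrak F^{\ell-1}$ yields
\Be\notag
e^{-C|t-s|}\alpha^{\ell-1}(t,x,v)\le \alpha^{\ell-1}(s,X^{\ell-1}(s),V^{\ell-1}(s))\le e^{C|t-s|}\alpha^{\ell-1}(t,x,v),
\Ee
with $C=C(M_2,M_4,c_0)$ independent of $\ell$. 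Also the backward exit time estimates \eqref{tbbdvb}--\eqref{tbdV0} hold uniformly in $\ell$ with $\alpha,\vb,\hat v_3$ replaced by $\alpha^{\ell-1},\vb^\ell,\hat V^{\ell-1}_{\mathbf b,3}$.

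Next I differentiate the Lagrangian representation of $f^{\ell+1}$ exactly as in the a priori estimate of Proposition \ref{diffuseprop}. Using \eqref{fellseq} together with the diffuse BC $f^{\ell+1}|_{\gamma_-}=c_\mu\mu(v)\!\int_{u_3<0}\!-f^\ell(t,x,u)\hat u_3\,du$, the key identities replacing those in Proposition \ref{diffuseprop} become
\Be\notag
\nabla_{x_\parallel}f^{\ell+1}|_{\gamma_-}=c_\mu\mu(v)\!\int_{u_3<0}\!-\nabla_{x_\parallel}f^\ell\,\hat u_3\,du,\qquad \nabla_v f^{\ell+1}|_{\gamma_-}=-c_\mu v\mu(v)\!\int_{u_3<0}\!f^\ell\,\hat u_3\,du,
\Ee
and, after replacing $\partial_t f^\ell$ via the Vlasov equation for $f^\ell$ (using $\mathfrak F^{\ell-1}$), integrating by parts in $v$ to remove $\nabla_v f^\ell$, one arrives at the same schematic bound as \eqref{dxparallelfest1}--\eqref{dvfest1}, with $\nabla_{x,v}f,\alpha,\vb,\hat v_3$ replaced by $\nabla_{x,v}f^\ell,\alpha^{\ell-1},\vb^\ell,\hat V^\ell_{\mathbf b,3}$, and with an extra factor $\mathfrak F^{\ell-1}$ bounded uniformly by $M_2$. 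I then iterate the boundary contribution via the stochastic cycles \eqref{cycle}--\eqref{cycle_ellin}, reproducing \eqref{dxparadiffusef2}--\eqref{dvdiffusef2} for the sequence. The decay estimate \eqref{trajexpansionendterm}--\eqref{tailtermsmall} of the tail measure $d\Sigma^{k-1}_{k-1}$ is purely geometric (it uses only $g$ and the uniform bound $M_2$), so it holds uniformly in $\ell$. Choosing $k=k(M_2,c_0)$ so that the tail factor $(1/2)^k$ absorbs $\sup_\ell\|\langle v\rangle^{4+\delta}\nabla_{x,v}f^\ell\|_\infty/2$, and picking $T\ll 1$ so that $C_k T$ is small, I obtain
\Be\notag
\sup_{0\le t\le T}\bigl(\|\langle v\rangle^{4+\delta}\nabla_{x_\parallel}f^{\ell+1}\|_\infty+\|\langle v\rangle^{5+\delta}\alpha^\ell\partial_{x_3}f^{\ell+1}\|_\infty+\|\langle v\rangle^{5+\delta}\nabla_v f^{\ell+1}\|_\infty\bigr)\le C(\text{data})\le M_3,
\Ee
provided $M_3$ is chosen large enough. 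The trace versions on $\gamma\setminus\gamma_0$ follow exactly as in Lemma \ref{tracepf}: along the $\mathfrak F^\ell$-characteristics one integrates the commutator equations satisfied by $\langle v\rangle^{4+\delta}\nabla_{x_\parallel}f^{\ell+1}$, $\langle v\rangle^{5+\delta}\alpha^\ell\partial_{x_3}f^{\ell+1}$, $\langle v\rangle^{5+\delta}\nabla_vf^{\ell+1}$ and uses that the source is uniformly bounded in $L^\infty$. Finally, having closed the $f^{\ell+1}$ estimate, I apply Lemma \ref{EBW1inftylemma} to $E^{\ell+1},B^{\ell+1}$ in the representation \eqref{ElBl} to obtain
\Be\notag
\|\nabla_x E^{\ell+1}\|_\infty+\|\nabla_x B^{\ell+1}\|_\infty\lesssim \|E_0\|_{C^2}+\|B_0\|_{C^2}+M_1+M_3,
\Ee
and the bound on $\partial_t E^{\ell+1},\partial_t B^{\ell+1}$ follows from the Maxwell equations \eqref{Maxwellell} combined with \eqref{fellbound}. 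Choosing $M_4$ accordingly closes the induction.

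The principal obstacle will be verifying uniform-in-$\ell$ applicability of the velocity lemma: $\alpha^{\ell-1}$ is built from $(E^{\ell-1},B^{\ell-1})$ whereas the characteristics for $f^{\ell+1}$ are driven by $\mathfrak F^\ell$, so the key computation in \eqref{diffalpha3} produces an extra mismatch term of the form $[\mathfrak F^\ell-\mathfrak F^{\ell-1}]\cdot\nabla_v\alpha^{\ell-1}$ inside the transport derivative. Controlling this by $M_4\alpha^{\ell-1}$ at the cost of a harmless constant requires the induction hypothesis on the gradient bound $M_4$ together with the sign condition \eqref{fellbound} to keep $-\mathfrak F^{\ell-1}_3|_{\partial\Omega}\ge c_0$. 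Once this is handled, the remaining arguments are quantitative repetitions of Proposition \ref{diffuseprop} and Lemma \ref{tracepf} with the two-index shift $(\ell,\ell-1)$ tracked through the cycles.
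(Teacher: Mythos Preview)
Your approach is essentially the same as the paper's: reduce to the argument of Proposition \ref{diffuseprop} using the mixed-index stochastic cycles \eqref{cycle}--\eqref{cycle_ellin}, invoke the velocity lemma uniformly in $\ell$, absorb the $(1/2)^k$ tail by choosing $k$ large, apply Lemma \ref{tracepf} for the trace bounds, and then close the field estimates via Lemma \ref{EBW1inftylemma}.

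One correction concerning your ``principal obstacle'': the mismatch you describe does not actually occur. For $f^{\ell+1}$ the relevant weight is $\alpha^{\ell}$ (as you correctly write in your displayed estimate), and the characteristics $(X^{\ell},V^{\ell})$ along which $f^{\ell+1}$ flows are driven by $\mathfrak F^{\ell}$, which is the \emph{same} field that defines $\alpha^{\ell}$ via \eqref{alphan}. Hence the computation of Lemma \ref{vlemma} applies verbatim with matching indices, and no cross term of the form $[\mathfrak F^{\ell}-\mathfrak F^{\ell-1}]\cdot\nabla_v\alpha^{\ell-1}$ arises. The same consistency holds at every later stage of the cycle: on the $i$-th segment one tracks $f^{\ell+1-i}$ along $(X^{\ell-i},V^{\ell-i})$, and the pertinent weight $\alpha^{\ell-i}$ is built from the same $\mathfrak F^{\ell-i}$. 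Your index bookkeeping in the final paragraph is simply off by one; once corrected, the velocity lemma needs only the uniform $W^{1,\infty}$ field bound from the induction hypothesis together with the uniform sign condition in \eqref{fellbound}, and no additional argument.
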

\begin{proof}
The proof is essentially the same as the proof of Proposition \ref{diffuseprop}. The only difference is that instead of using the stochastic cycles \eqref{diffusecycles0}-\eqref{diffusecycles2} that flows under fixed $E(t,x), B(t,x)$, we use the \eqref{cycle}-\eqref{cycle_ellin} that flows with a different $E^\ell(t,x), B^\ell(t,x)$ after each bounce. 

From the uniform estimate \eqref{fellbound}, and from the velocity lemma (Lemma \ref{vlemma}), we have for some $C>0$,
\Be
e^{-C|t-s| } \alpha^\ell(t,x,v) \le \alpha^\ell(s, X^\ell(s;t,x,v) , V^\ell(s;t,x,v) ) \le e^{C|t-s| } \alpha^\ell(t,x,v), \ \text{ for all } \ell.
\Ee
Therefore, following the same proof of Proposition \ref{diffuseprop} we get
\Be \label{}
\begin{split}
& \sup_{0 \le t \le T} \left( \| \langle v \rangle^{4 +\delta } \nabla_{x_\parallel} f^{\ell+1} (t)  \|_\infty  + \|  \langle v \rangle^{5 +\delta }  \alpha^\ell \p_{x_3} f ^{\ell+1} (t) \|_\infty  + \| \langle v \rangle^{5 +\delta } \nabla_{v} f^{\ell+1} (t)  \|_\infty \right)
\\ + & \sup_{0 \le t \le T} \left( \| \langle v \rangle^{4 + \delta } \nabla_{x_\parallel } f^\ell(t) \|_{L^\infty(\gamma \setminus \gamma_0)}  + \| \langle v \rangle^{5 + \delta }  \alpha^{\ell-1} \p_{x_3 } f^\ell(t) \|_{L^\infty(\gamma \setminus \gamma_0)} + \|  \langle v \rangle^{4 + \delta }  \nabla_v f^\ell(t) \|_{L^\infty(\gamma \setminus \gamma_0)}   \right) 
\\ \le & C_k \left( \| \langle v \rangle^{4 +\delta }  \nabla_{x_\parallel}   f_0 \|_\infty +  \| \| \langle v \rangle^{4 +\delta }   \alpha \p_{x_3}   f_0 \|_\infty +  \| \| \langle v \rangle^{5 +\delta }  \nabla_{v}   f_0 \|_\infty  \right)  
\\ & + C \left( \frac{1}{2} \right)^k  \sup_{0 \le i \le \ell }  \bigg( \sup_{0 \le t \le T } \left(   \| \langle v\rangle^{4 +\delta}   \nabla_{x_\parallel} f^i(t) \|_\infty +  \| \langle v\rangle^{5 +\delta} \alpha  \p_{x_3} f^i(t) \|_\infty +  \| \langle v \rangle^{5 +\delta } \nabla_{v} f^i(t)  \|_\infty \right) 
\\  & + \sup_{0 \le i \le \ell} \sup_{0 \le t \le T} \left( \| \langle v \rangle^{4 + \delta } \nabla_{x_\parallel } f^i(t) \|_{L^\infty(\gamma \setminus \gamma_0)}  + \| \langle v \rangle^{5 + \delta }  \alpha^{i-1} \p_{x_3 } f^i(t) \|_{L^\infty(\gamma \setminus \gamma_0)} + \|  \langle v \rangle^{4 + \delta }  \nabla_v f^i(t) \|_{L^\infty(\gamma \setminus \gamma_0)}   \right)  \bigg).
\end{split}
\Ee
Thus, by choosing $k \gg 1 $ and $M_3 \gg 1$, we conclude
\Be \label{dfelluni}
\begin{split}
& \sup_\ell \sup_{0 \le t \le T} \left( \| \langle v \rangle^{4 + \delta } \nabla_{x_\parallel } f^\ell(t) \|_\infty  + \| \langle v \rangle^{5 + \delta }  \alpha^{\ell-1} \p_{x_3 } f^\ell(t) \|_\infty + \| \langle v \rangle^{4 + \delta }  \nabla_v f^\ell(t) \|_\infty   \right)  \\& + \sup_\ell \sup_{0 \le t \le T} \left( \| \langle v \rangle^{4 + \delta } \nabla_{x_\parallel } f^\ell(t) \|_{L^\infty(\gamma \setminus \gamma_0)}  + \| \langle v \rangle^{5 + \delta }  \alpha^{\ell-1} \p_{x_3 } f^\ell(t) \|_{L^\infty(\gamma \setminus \gamma_0)} + \|  \langle v \rangle^{4 + \delta }  \nabla_v f^\ell(t) \|_{L^\infty(\gamma \setminus \gamma_0)}   \right)<  M_3.
\end{split}
\Ee
From this, we use the same argument to get \eqref{dxEBfinal} in the proof of Lemma \ref{EBW1inftylemma} and obtain
\[
\begin{split}
 \sup_{0 \le t \le T} & \left( \| \p_t E^{\ell+1}(t) \|_\infty +  \| \p_t B^{\ell+1}(t) \|_\infty + \| \nabla_{x } E^{\ell+1}(t) \|_\infty +\| \nabla_{x } B^{\ell+1}(t) \|_\infty   \right)
\\ \le & TC \sup_{1 \le i \le \ell}  \sup_{0 \le  t \le T}    \left( \| \p_t E^{i}(t) \|_\infty +  \| \p_t B^{i}(t) \|_\infty + \| \nabla_{x } E^{i}(t) \|_\infty +\| \nabla_{x } B^{i}(t) \|_\infty   \right)  
\\ & C   \left( \| E_0 \|_{C^2 } + \| B_0 \|_{C_2}  \right) +  C \sup_{0 \le t \le T} \left( \| \langle v \rangle ^{4 + \delta } \nabla_{x_\parallel } f^{\ell +1}(t) \|_\infty  + \| \langle v \rangle^{\ell  + \delta }  \alpha^{n} \p_{x_3 } f^{\ell +1}(t) \|_\infty   \right)  
\\ &  +  C \sup_{0 \le t \le T} \left( \| \langle v \rangle^{4 + \delta } \nabla_{x_\parallel } f^\ell(t) \|_{L^\infty(\gamma \setminus \gamma_0)}  + \| \langle v \rangle^{5 + \delta }  \alpha^{\ell-1} \p_{x_3 } f^\ell(t) \|_{L^\infty(\gamma \setminus \gamma_0)} \right) 
\\ & + C  \sup_{0 \le t \le T} \left( \| \langle v \rangle^{4  + \delta } f^{\ell+1}(t) \|_\infty  + \| E^{\ell +1} (t) \|_\infty + \| B^{\ell +1} (t) \|_\infty \right).
\end{split}
\]
From \eqref{fellbound} and \eqref{dfelluni}, this gives
\[
\begin{split}
\sup_{\ell} \sup_{0 \le t \le T} & \left( \| \p_t E^{\ell}(t) \|_\infty +  \| \p_t B^{\ell}(t) \|_\infty + \| \nabla_{x } E^{\ell}(t) \|_\infty +\| \nabla_{x } B^{\ell}(t) \|_\infty   \right)
\\ \le & TC \sup_{ \ell}  \sup_{0 \le  t \le T}    \left( \| \p_t E^{\ell}(t) \|_\infty +  \| \p_t B^{\ell}(t) \|_\infty + \| \nabla_{x } E^{\ell}(t) \|_\infty +\| \nabla_{x } B^{\ell}(t) \|_\infty   \right)  
\\ & + C \left( \| E_0 \|_{C^2 } + \| B_0 \|_{C_2}  \right)+ C(M_1 +M_2 +M_3).
\end{split}
\]
Therefore, by choosing $M_4 \gg 1$ and $T \ll 1$, we get
\Be
\sup_{\ell} \sup_{0 \le t \le T}  \left( \| \p_t E^{\ell}(t) \|_\infty +  \| \p_t B^{\ell}(t) \|_\infty + \| \nabla_{x } E^{\ell}(t) \|_\infty +\| \nabla_{x } B^{\ell}(t) \|_\infty   \right) < M_4.
\Ee
Together with \eqref{dfelluni}, we conclude \eqref{flElBldsqbd}.
\end{proof}

Note that from \eqref{flElBldsqbd}, using the argument in Lemma \ref{EBelltrace} we have $E^\ell |_{\p \O}, B^\ell  |_{\p \O} \in L^\infty((0,T) \times \p \O ) $ for all $\ell$. Next, we prove the strong convergence of the sequence $f^\ell$.

\begin{lemma} \label{fEBsollemma}
Suppose $f_0$ satisfies \eqref{f0bdd}, $E_0$, $B_0$ satisfy \eqref{E0B0g}, \eqref{E0B0bdd}. There exists functions $(f,E,B)$ with $  \langle v \rangle^{4 +\delta }  f(t,x,v) \in L^\infty( (0, T) ; L^\infty( \bar \O \times \mathbb R^3 ) )  $, and $(E,B) \in L^\infty((0,T) ; L^\infty( \O) \cap  L^\infty( \p \O ) )$, such that as $\ell \to \infty$, 
\Be \label{EnBnconverge}
 \sup_{0 \le t \le T} \left(  \|  E^\ell (t) - E(t) \|_{L^\infty( \O)} +  \|  E^\ell (t) - E(t) \|_{L^\infty( \p \O)} +  \|  B^\ell (t) - B(t) \|_{L^\infty( \O)} +  \|  B^\ell (t) - B(t) \|_{L^\infty( \p \O)} \right)   \to 0, 
\Ee
and
\Be \label{fnconverge}
 \sup_{0 \le t \le T}  \| \langle v \rangle^{4 +\ell } f^\ell(t) -  \langle v \rangle^{4 +\delta }  f (t) \|_{L^\infty(\bar \O \times \mathbb R^3)}  \to 0.
\Ee
Moreover, $(f,E,B)$ is a (weak) solution of the system \eqref{VMfrakF1}--\eqref{rhoJ1}, and \eqref{diffuseBC}.
\end{lemma}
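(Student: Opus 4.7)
The plan is to mimic the strategy of Lemma \ref{fEBsollemmain}, showing that $\{(f^\ell, E^\ell, B^\ell)\}$ is Cauchy in a suitable $L^\infty$ space, identifying the limit, and then passing to the limit in the weak formulations. The essential new ingredient, compared with the inflow argument, is that $f^{\ell+1}|_{\gamma_-}$ depends on $f^\ell|_{\gamma_+}$ through the diffuse integral operator, so the difference $f^{m+1}-f^{n+1}$ carries both a bulk source $(\mathfrak F^m-\mathfrak F^n)\cdot\nabla_v f^{m+1}$ and a boundary source that couples $f^m-f^n$ at the previous iterate. I would therefore unfold the trajectory via the stochastic cycles \eqref{cycle}--\eqref{cycle_ellin} exactly as in the proof of Proposition \ref{diffuseprop}.

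First I would write, for $(t,x,v)\in(0,T)\times\bar\O\times\mathbb R^3$,
\[
(\p_t+\hat v\cdot\nabla_x+\mathfrak F^m\cdot\nabla_v)(f^{m+1}-f^{n+1})
= -(\mathfrak F^m-\mathfrak F^n)\cdot\nabla_v f^{n+1},
\]
with $(f^{m+1}-f^{n+1})|_{t=0}=0$ and the boundary trace of $f^{m+1}-f^{n+1}$ governed by $f^m-f^n$ via the diffuse BC in \eqref{fellseq}. Using the uniform bounds \eqref{fellbound}--\eqref{flElBldsqbd}, $\langle V^m(s)\rangle\sim\langle v\rangle$ along trajectories as in \eqref{Vsv}, and Lemma \ref{EBlinflemma} applied to the Glassey--Strauss representation for $E^m-E^n$, $B^m-B^n$ (which has the form \eqref{EBdiffformin} with $E_0=B_0=0$ and $f$ replaced by $f^m-f^n$), I would obtain the bulk bound
\[
\|(\mathfrak F^{m}-\mathfrak F^{n})(s)\|_\infty
\lesssim \sup_{0\le s'\le s}\|\langle v\rangle^{4+\delta}(f^{m}-f^{n})(s')\|_\infty.
\]
Next, following the Duhamel formula along the stochastic cycles and using \eqref{1over2k} to absorb the $k$-th remainder, I would derive the recursive estimate
\[
\sup_{0\le s\le t}\|\langle v\rangle^{4+\delta}(f^{m+1}-f^{n+1})(s)\|_\infty
\le C\!\int_0^t\!\sup_{0\le s'\le s}\|\langle v\rangle^{4+\delta}(f^m-f^n)(s')\|_\infty\,ds + \tfrac12\sup_{0\le s\le t}\|\langle v\rangle^{4+\delta}(f^m-f^n)(s)\|_\infty,
\]
where the second term comes from the leftover boundary contribution at depth $k$ and the first from the bulk source combined with the uniform bound on $\nabla_v f^{n+1}$. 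Iterating this recursion $\ell$ times and using the Cauchy--type bound $\tfrac{(Ct)^\ell}{\ell!}$ (as in the proof of Lemma \ref{fEBsollemmain}) yields that $\{\langle v\rangle^{4+\delta}f^\ell\}$ and consequently $\{E^\ell\},\{B^\ell\}$ are Cauchy in $L^\infty([0,T];L^\infty(\bar\O\times\mathbb R^3))$ and $L^\infty([0,T];L^\infty(\O))$ respectively, for $T\ll 1$.

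With the strong convergences \eqref{EnBnconverge} and \eqref{fnconverge} in hand, trace convergence on $\p\O$ follows from the uniform $W^{1,\infty}$ bound \eqref{flElBldsqbd} and the argument of Lemma \ref{EBelltrace}. Passing to the limit in \eqref{fellseq} tested against any admissible $\phi$ gives \eqref{weakf} with the diffuse boundary piece $\eqref{weakf}_{\text{BC}}$ (the integral against $\hat v_3 f^\ell$ converges by strong $L^\infty$ convergence of $f^\ell$ combined with the $v$-decay and $\mu$-weight); the Maxwell weak formulations \eqref{Maxweak1}--\eqref{Maxweak2} and Gauss' law \eqref{nablaEBweak} for the limit follow by applying Lemma \ref{wavetoMax} to each $(E^\ell,B^\ell)$ (whose Glassey--Strauss representation automatically satisfies the wave equations with the correct boundary and initial data by Proposition \ref{Eiform}--\ref{Biform}), and then passing the strong limit in the resulting linear identities.

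I expect the main obstacle to be the Cauchy estimate at the boundary. Unlike the inflow case where $(f^{m+1}-f^{n+1})|_{\gamma_-}=0$, here the boundary trace is itself a velocity-averaged difference of the previous iterates, so the iteration picks up at each step both a time integral of $\|(f^m-f^n)(s)\|_\infty$ and a boundary term involving $f^{m-1}-f^{n-1}$ along a previous specular/diffuse cycle. Controlling the latter requires using \eqref{1over2k} to ensure the $k$-fold composition contracts, together with the uniform upper bound on $\mu(V^{\ell-(j-1)})\langle V^{\ell-(j-1)}\rangle^{4+\delta}/\hat V^{\ell-(j-1)}_3$ coming from the velocity lemma and \eqref{Vsv}. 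Once this double contraction is established, the rest of the argument is a routine adaptation of the inflow proof.
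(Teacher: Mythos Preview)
Your overall strategy is correct and matches the paper's: unfold the difference $f^{m}-f^{n}$ along the stochastic cycles of Section~\ref{DiffuseSec}, use \eqref{1over2k} to control the depth-$k$ remainder, convert force differences into $f$-differences via the Glassey--Strauss representation, and iterate to get factorial-type decay. The passing-to-limit steps you outline are also essentially what the paper does.

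However, the specific recursive inequality you write down,
\[
\sup_{0\le s\le t}\|\langle v\rangle^{4+\delta}(f^{m+1}-f^{n+1})(s)\|_\infty
\le C\!\int_0^t\!\sup_{0\le s'\le s}\|\langle v\rangle^{4+\delta}(f^m-f^n)(s')\|_\infty\,ds + \tfrac12\sup_{0\le s\le t}\|\langle v\rangle^{4+\delta}(f^m-f^n)(s)\|_\infty,
\]
does \emph{not} follow from the stochastic-cycle expansion. The point is that at each bounce the iteration index drops by one (the scheme \eqref{fellseq} sets $f^{\ell}|_{\gamma_-}$ in terms of $f^{\ell-1}|_{\gamma_+}$, and the characteristics between bounces flow under $\mathfrak F^{\ell-1}$). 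Hence after $k$ bounces the bulk sources involve $(\mathfrak F^{m-i}-\mathfrak F^{n-i})\cdot\nabla_v f^{\,n+1-i}$ for $i=0,\dots,k-1$, and the tail remainder involves $f^{m+1-k}-f^{n+1-k}$, not $f^m-f^n$. You actually note this index shift in your final paragraph, but your displayed recursion ignores it; a one-bounce estimate would give a boundary coefficient that is not $\le\tfrac12$ without further unfolding, and the further unfolding is precisely what shifts the indices.

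The paper's fix is simple but essential: instead of a recursion between consecutive indices, it bounds (see \eqref{fnmiterate2}--\eqref{iterate2})
\[
\|\langle v\rangle^{4+\delta}(f^m-f^n)(t)\|_\infty
\le C_2\int_0^t \sup_{1\le i\le k}\|\mathfrak F^{m-i}-\mathfrak F^{n-i}\|_\infty(s)\,ds,
\]
and then iterates directly on the \emph{field} differences over blocks of indices,
\[
\|E^m-E^n\|_\infty(t)+\|B^m-B^n\|_\infty(t)\le C\int_0^t\sup_{1\le i\le 2k}\big(\|E^{m-i}-E^{n-i}\|_\infty+\|B^{m-i}-B^{n-i}\|_\infty\big)(s)\,ds,
\]
which after $l$ iterations and the uniform bound $M_2$ gives the $M_2\,(Ct)^l/l!$ decay. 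If you replace your two-term recursion by this block-supremum estimate, the rest of your argument goes through unchanged.
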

\begin{proof}
Let $m > n  \ge 1$. Note that $f^m - f^n $ satisfies $(f^m - f^n ) |_{t = 0 } = 0 $ and 
\[
(f^m- f^n )|_{\gamma_-} = c_\mu \mu \int_{\gamma_+} (f^{m-1} - f^{n-1} )(t,x,u ) \hat u_3 du.
\] 

The equation for $f^m - f^n $ is
\[
\p_t(f^m- f^n ) + \hat v \cdot \nabla_x (f^m - f^n ) + \mathfrak F^{m-1} \cdot \nabla_v (f^{m} - f^n ) = - ( \mathfrak F^{m-1} - \mathfrak F^{n-1} ) \cdot \nabla_v f^n.
\]
Thus, for any $(t,x,v) \in (0,T) \times \bar \O \times \mathbb R^3$,  using \eqref{Vsv}, we get
\[
\begin{split}
& |  \langle v \rangle^{4 + \delta } (f^m - f^n)(t,x,v) | 
\\  \le & C_1  \int_{\max \{ t^{m-1}_1 , 0 \} }^t |    \langle V^{m-1}(s) \rangle^{4 + \delta }  ( \mathfrak F^{m-1} - \mathfrak F^{n-1} ) \cdot \nabla_v f^n  )(s, X^{m-1}(s), V^{m-1}(s) ) | ds 
\\ & + \mathbf 1_{t^{m-1}_1 > 0 } C_{1} c_\mu V^{m-1}(t_1^m) \mu (V^m(t_1^m ) ) \int_{v_{1,3} < 0 } |(f^{m-1} - f^{n-1} )(t_1^{m-1}, x_1^{m-1} , v_1 ) \hat v_{1,3} | dv_1,
\end{split}
\]
where $C_{1} =  (1+ T (M_2+g) )^{4 + \delta}$ . Doing this inductively, we obtain
\Be \label{fnmiterate2}
\begin{split}
& |  \langle v \rangle^{4 + \delta } (f^m - f^n)(t,x,v) | 
\\  \le &   C_{1}  \int_{\prod_{j=1}^{k-1} \mathcal V_j} \sum_{ i =1}^{k-1}  \int_{\max \{ t^{m-i}_{i} , 0 \} }^{t^{m -(i-1) }_{ i-1} }    \mathbf{1}_{ \{  t^{m -i }_i \le 0 < t^{m -(i-1) }_{ i-1} \} }
\\ & \quad \quad \quad \quad  \times |   \langle V^{m-i}(s) \rangle^{4 + \delta }  ( \mathfrak F^{m-i} - \mathfrak F^{n-i} ) \cdot \nabla_v f^{n-(i-1)}  )(s, X^{m-i}(s), V^{m-i}(s) ) | ds  d \Sigma_{i}^{k-1}
\\ &  +  C_1 \int_{\prod_{j=1}^{k-1} \mathcal V_j}  \textbf{1}_{\{t_{k-1}^{m - (k-1)} > 0 \}} \int_{\mathcal V_k}  |  (f^{m - k } - f^{n-k} )( t_k^{m - (k-1) }, x_k^{m-(k-1) }, v_{k} ) |  d v_k d \Sigma_{k-1}^{k-1}.
\end{split}
\Ee
Where, $\mathcal V_j$ and $\Sigma_{i}^{k-1}$ are in \eqref{Nujsigmaprod}. Then from \eqref{1over2k} and \eqref{flElBldsqbd}, by fixing $k \gg 1$,we get
\Be \label{fellnmiterate1}
\begin{split}
\| \langle  v \rangle^{4 + \delta} f^m(t) -  \langle  v \rangle^{4 + \delta} f^n)(t) \|_\infty  \le & C_k C_1 \left( \sup_{\ell}  \sup_{0 \le s \le t } \| \langle  v \rangle^{4 + \delta} \nabla_v f^\ell(s) \|_\infty \right) \int_0^t \sup_{1 \le i \le k}  \|   \mathfrak F^{m-i} (s) - \mathfrak F^{n-i}(s)  \|_\infty ds
\\ \le  & C_2  \int_0^t  \sup_{1 \le i \le k}  \|   \mathfrak F^{m-i} (s) - \mathfrak F^{n-i}(s)  \|_\infty ds,
\end{split}
\Ee
where $C_2 = C_k C_1M_3$.

Now, 
from \eqref{Fell} and using the same argument as Lemma \ref{EBlinflemma} with \eqref{fellnmiterate1}, we have
\Be \label{iterate2}
\begin{split}
 \|   \mathfrak F^{m-i} (s) - \mathfrak F^{n-i}(s)  \|_\infty   \le &   \| E^{n-i}(s) - E^{m-i}(s) \|_\infty + \| B^{n-i}(s) - B^{m-i}(s)  \|_\infty
\\ \le & C \left(  \sup_{0 \le s' \le s } \| \langle v \rangle^{4 + \delta } (f^{n-i} - f^{m-i} )(s' ) \|_\infty +  \int_0^s \|   \mathfrak F^{m-i-1} (s') - \mathfrak F^{n-i-1 }(s')  \|_\infty ds'  \right)
\\ \le & C \int_0^s  \sup_{1 \le i_1 \le k } \|   \mathfrak F^{m-i - i_1} (s') - \mathfrak F^{n-i - i_1}(s')  \|_\infty ds'
\\ \le & C \int_0^s \sup_{1 \le i \le 2k}  \left(  \| E^{m-i}(s') - E^{n-i}(s') \|_\infty + \| B^{m-i}(s') - B^{n-i}(s')  \|_\infty \right) ds'.
\end{split}
\Ee
Iteration of \eqref{iterate2} and using \eqref{fellbound} yields
\[
\begin{split}
&  \| E^{m}(t) - E^{n}(t) \|_\infty + \| B^{m}(t) - B^{n}(t)  \|_\infty
\\ \le & C^2 \int_0^t \int_0^s  \sup_{1 \le i \le 2k}  \left(  \| E^{m-i}(s') - E^{n-i}(s') \|_\infty + \| B^{m-i}(s') - B^{n-i}(s')  \|_\infty \right) ds' ds
 \\   =  & C^2 \int_0^t \tau \sup_{1 \le i \le 2k }   \left(  \| E^{m-i}(\tau) - E^{n-i}(\tau) \|_\infty + \| B^{m-i}(\tau) - B^{n-i}(\tau)  \|_\infty \right) d\tau
 \\ \le & C^l \int_0^t \frac{ \tau^{l-1}}{ (l-1)!} \sup_{1 \le i \le lk}  \left(  \| E^{m-i}(\tau) - E^{n-i}(\tau) \|_\infty + \| B^{m-i}(\tau) - B^{n-i}(\tau)  \|_\infty \right) d\tau
 \\ \le  &M_2 \frac{C^l t^l}{l!}.
\end{split}
\]
%
Thus the sequences $E^\ell$, $B^\ell$ are Cauchy in $L^\infty((0,T) \times \O ) $, moreover, from Lemma \ref{EBelltrace}, $E^\ell, B^\ell \in L^\infty([0,T] \times \p \O )$. Therefore, there exists functions $E,B \in L^\infty((0,T) ; L^\infty( \O) \cap  L^\infty( \p \O ) )$, such that 
\Be \label{EnBncov}
E^\ell \to E, B^\ell \to B \text{ in } L^\infty((0,T) \times  \O ) \cap  L^\infty((0,T) \times \p \O ) .
\Ee
This proves \eqref{EnBnconverge}. Also, from \eqref{fellnmiterate1}, \eqref{iterate2},
\[
\| \langle v\rangle^{4 + \delta} f^m(t) -  \langle v \rangle^{4 + \delta} f^n)(t) \|_{L^\infty((0,T) \times \bar \O ) }   \le M_2 \frac{C^{l-1} t^{l-1}}{(l-2)!},
\]
therefore we get \eqref{fnconverge}.

Now, take any $\phi(t,x,v) \in C_c^\infty( [0,T) \times \bar \O \times \mathbb R^3$ with $\text{supp } \phi   \subset \{ [0, T) \times \bar \O \times \mathbb R^3 \} \setminus \{ (0 \times \gamma ) \cup (0,T) \times \gamma_0 \} $, from \eqref{fellseq}, we have
\Be \label{weakfellVM}
\begin{split}
& \int_{\O \times \mathbb R^3 } f_0 \phi (0) dv dt +  \int_0^T \int_{\O \times \mathbb R^3}  f^\ell \left(  \p_t \phi + \hat v \cdot \nabla_x \phi +   \mathfrak F^{\ell-1}   \cdot \nabla_v \phi \right) dv dx dt
\\ = & \int_0^T \int_{\gamma_+} \phi f^\ell \hat v_3 dv dS_x +  \int_0^T \int_{\gamma_+ }  \left( - c_\mu   \int_{u_3 > 0 }  \mu(u) \phi (t,x,u )\hat u_3 du  \right) \hat v_3   f^\ell  \, dv dS_x.
\end{split}
\Ee
Because of the strong convergence \eqref{EnBnconverge}, \eqref{fnconverge}, we have that as $\ell \to \infty$, each term in \eqref{weakfellVM} goes to the corresponding terms with $f^\ell$ replaced by $f$ and $\mathfrak F^\ell$ replaced by $\mathfrak F$. Therefore we conclude that $(f,E,B)$ satisfy \eqref{weakf}.

Next, using the same argument as in \eqref{Maxwellell}-\eqref{EBellweak2}, we get $(f,E,B)$ satisfy \eqref{Maxweak1} and \eqref{Maxweak2}. 
Therefore, we conclude that $(f,E,B)$ is a (weak) solution of the RVM system \eqref{VMfrakF1}--\eqref{rhoJ1} with diffuse BC \eqref{diffuseBC}.
\end{proof}

In the next lemma, we consider the regularity of the solution.

\begin{lemma} \label{fEBreg}
Let $\alpha(t,x,v)$ be defined as in \eqref{alphadef}. The solution $(f,E,B)$ obtained in Lemma \ref{fEBsollemma} satisfies
\Be \label{pfbdlimit}
 \| \langle v \rangle^{4 + \delta } \nabla_{x_\parallel } f(t) \|_\infty  + \| \langle v \rangle^{5 + \delta }  \alpha^{} \p_{x_3 } f(t) \|_\infty + \|  \langle v \rangle^{4 + \delta }  \nabla_v f(t) \|_\infty < \infty,
\Ee
and
\Be \label{pEBbdlimit}
\| \p_t E(t) \|_\infty +  \| \p_t B(t) \|_\infty + \| \nabla_{x } E(t) \|_\infty +\| \nabla_{x } B(t) \|_\infty   < \infty.
\Ee
\end{lemma}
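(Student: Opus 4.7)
The plan is to repeat the weak-$*$ compactness argument used for the inflow case in Lemma \ref{fEBregin}, adapted to the diffuse sequence. From the uniform bound \eqref{flElBldsqbd}, after passing to a subsequence, I can extract weak-$*$ limits in $L^\infty((0,T)\times\O)$ for $\p_t E^\ell$, $\nabla_x E^\ell$, $\p_t B^\ell$, $\nabla_x B^\ell$, and weak-$*$ limits in $L^\infty((0,T)\times\O\times\mathbb R^3)$ for $\langle v\rangle^{4+\delta}\nabla_{x_\parallel}f^\ell$ and $\langle v\rangle^{5+\delta}\nabla_v f^\ell$. The strong $L^\infty$ convergences \eqref{EnBnconverge} and \eqref{fnconverge} from Lemma \ref{fEBsollemma} identify these weak-$*$ limits as distributional derivatives of the corresponding limits $E,B,f$. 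Then the weak-$*$ lower semicontinuity of the norm immediately gives the $L^\infty$ bounds for $\nabla_{x,t}E$, $\nabla_{x,t}B$, $\langle v\rangle^{4+\delta}\nabla_{x_\parallel}f$, and $\langle v\rangle^{4+\delta}\nabla_v f$ in \eqref{pfbdlimit}--\eqref{pEBbdlimit}.

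The genuine obstacle is the $\alpha$-weighted derivative, namely showing
\[
\langle v\rangle^{5+\delta}\,\alpha^{\ell-1}\p_{x_3}f^\ell \overset{\ast}{\rightharpoonup} \langle v\rangle^{5+\delta}\,\alpha\,\p_{x_3}f \quad \text{in } L^\infty((0,T)\times\O\times\mathbb R^3),
\]
because the weight $\alpha^{\ell-1}$ itself depends on $(E^{\ell-1},B^{\ell-1})$ and therefore on the sequence. I would proceed exactly as in the proof of Lemma \ref{fEBregin}: for any test function $\phi\in C_c^\infty((0,T)\times\O\times\mathbb R^3)$, integrate by parts in $x_3$ and split the difference into three pieces
\[
\int\!\!\!\int (\alpha^{\ell-1}f^\ell-\alpha f)\p_{x_3}\phi,\qquad \int\!\!\!\int(\p_{x_3}\alpha^{\ell-1})(f^\ell-f)\phi,\qquad \int\!\!\!\int(\p_{x_3}\alpha^{\ell-1}-\p_{x_3}\alpha)f\,\phi,
\]
(times the $\langle v\rangle^{5+\delta}$ weight). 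The first term vanishes in the limit by \eqref{alphancov-type} and \eqref{fnconverge}. For the other two, the key observation is that on $\mathrm{supp}(\phi)$ one has $x_3\ge c>0$, so by the explicit formula \eqref{alphan} the denominator $\alpha^{\ell-1}$ is bounded below uniformly in $\ell$, giving a uniform $L^\infty$ bound on $\p_{x_3}\alpha^{\ell-1}$ on the support. Then strong convergence of $E^\ell,B^\ell$ in $L^\infty(\p\O)$ (which comes from Lemma \ref{EBelltrace} applied to the uniform $W^{1,\infty}$ bound) yields $\p_{x_3}\alpha^{\ell-1}\to\p_{x_3}\alpha$ uniformly on $\mathrm{supp}(\phi)$, and combined with the uniform bound on $f^\ell$ kills the remaining two terms.

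Having established the weak-$*$ convergence displayed above, the $L^\infty$ norm of $\langle v\rangle^{5+\delta}\alpha\,\p_{x_3}f$ is bounded by $\liminf_\ell\|\langle v\rangle^{5+\delta}\alpha^{\ell-1}\p_{x_3}f^\ell\|_\infty\le M_3$, completing \eqref{pfbdlimit}. The bounds on $\p_t E$, $\nabla_x E$, $\p_t B$, $\nabla_x B$ in \eqref{pEBbdlimit} follow from the first paragraph. The main subtlety throughout is the localization argument that turns the nonlinear weight $\alpha^{\ell-1}$ into something that commutes with weak-$*$ convergence; away from the grazing set this is routine, but one must verify that the test-function-supported estimates never require pointwise control near $\gamma_0$.
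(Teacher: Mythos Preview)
Your proposal is correct and follows essentially the same approach as the paper: the paper's proof simply invokes the weak-$*$ compactness from the uniform bounds \eqref{flElBldsqbd}, and for the $\alpha$-weighted term refers back to the identical three-term splitting argument \eqref{ap3fellcovin}--\eqref{px3alphacov3} from the inflow case, which is exactly what you have written out. Your explicit verification that $\p_{x_3}\alpha^{\ell-1}$ is uniformly bounded and converges on $\mathrm{supp}(\phi)$ (because $x_3\ge c>0$ there) is precisely the content of those referenced equations.
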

\begin{proof}
From the $L^\infty$ strong convergence \eqref{EnBnconverge}, and the uniform-in-$\ell$ bound \eqref{flElBldsqbd}, we can pass the limit up to subsequence if necessary and get the weak$-*$ convergence
\Be \label{dEnBncov}
\p_t E^\ell  \overset{\ast}{\rightharpoonup} \p_t E , \ \nabla_x E^\ell  \overset{\ast}{\rightharpoonup} \nabla_x E, \  \p_t B^\ell  \overset{\ast}{\rightharpoonup} \p_t B, \  \nabla_x B^\ell  \overset{\ast}{\rightharpoonup} \nabla_x B \text{ in } L^\infty((0,T) \times \O ),
\Ee
and
\Be \label{dfnconverge}
\langle v \rangle ^{4 + \delta} \nabla_{x_\parallel } f^\ell   \overset{\ast}{\rightharpoonup}  \langle v \rangle^{4 + \delta}\nabla_{x_\parallel } f, \  \langle v \rangle ^{4 + \delta} \nabla_{v } f^\ell   \overset{\ast}{\rightharpoonup}  \langle v \rangle^{4 + \delta}\nabla_{v } f \text{ in } L^\infty((0,T) \times \O \times \mathbb R^3 ).
\Ee
Then using the same argument as in \eqref{ap3fellcovin}--\eqref{px3alphacov3}, we also have
\Be \label{ap3fellcov}
\langle v \rangle ^{5 + \delta}  \alpha^{\ell-1} \p_{x_3}  f^\ell   \overset{\ast}{\rightharpoonup}  \langle v \rangle^{4 + \delta} \alpha \p_{x_3} f \text{ in } L^\infty((0,T) \times \O \times \mathbb R^3 ).
\Ee

Therefore, from using the weak lower semi-continuity of the weak-$*$ convergence \eqref{dEnBncov}, \eqref{dfnconverge}, \eqref{ap3fellcov}, and the uniform-in-$\ell$ bound \eqref{flElBldsqbd}, we conclude \eqref{pfbdlimit}, \eqref{pEBbdlimit}.

\end{proof}

Next, we prove the uniqueness of the solutions of the RVM system \eqref{VMfrakF1}--\eqref{rhoJ1}, \eqref{diffuseBC}.

\begin{lemma} \label{VMuniqlemma}
Suppose  $(f,E_f, B_f)$ and $(g, E_g, B_g)$ are solutions to the VM system \eqref{VMfrakF1}--\eqref{rhoJ1}, \eqref{diffuseBC} with $f(0) = g(0)$, $E_f(0) = E_g(0)$, $B_f(0) = B_g(0)$, and that 
\[
E_f, B_f, E_g, B_g \in W^{1,\infty}((0,T) \times \O ), \ \nabla_x \rho_{f}, \nabla_x J_f,  \p_t J_f , \nabla_x \rho_{g}, \nabla_x J_g,  \p_t J_g \in  L^\infty((0,T); L_{\text{loc}}^p(\O)) \text{ for some } p>1.
\]
And
\Be \label{dvfgbd}
\sup_{0 < t < T} \| \langle v \rangle^{5+ \delta} \nabla_v f(t) \|_\infty <\infty, \sup_{0 < t < T} \| \langle v \rangle^{5+ \delta} \nabla_v g(t) \|_\infty <\infty.
\Ee
Then $f = g, E_f = E_g, B_f = B_g$.
\end{lemma}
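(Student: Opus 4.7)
The overall strategy parallels the inflow uniqueness argument in Lemma \ref{VMuniqlemmain}, but the boundary condition for the difference $f-g$ is now of diffuse type rather than zero, so the characteristic representation must be iterated along stochastic (diffuse) cycles in the spirit of Proposition \ref{diffuseprop}. First I would write the equation
\begin{equation}\notag
(\p_t + \hat v \cdot \nabla_x + \mathfrak F_f \cdot \nabla_v )(f-g) = ( \mathfrak F_g - \mathfrak F_f ) \cdot \nabla_v g,
\qquad (f-g)|_{t=0}=0,
\end{equation}
with the boundary condition
\begin{equation}\notag
(f-g)|_{\gamma_-}(t,x,v) = c_\mu \mu(v)\int_{u_3<0}-(f-g)(t,x,u)\hat u_3\,du,
\end{equation}
and use the Maxwell--to--wave reduction (Lemma \ref{Maxtowave}) together with Lemma \ref{wavesol} and Lemma \ref{wavesolD} to represent $E_f-E_g$ and $B_f-B_g$ through the Glassey--Strauss formulas with $f$ replaced by $f-g$ and zero initial data, exactly as in \eqref{EBdiffformin}. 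By the pointwise bound of Lemma \ref{EBlinflemma} (with zero initial data), this yields
\begin{equation}\notag
\|(\mathfrak F_g-\mathfrak F_f)(s)\|_\infty \le C\,\sup_{0\le s'\le s}\|\langle v\rangle^{4+\delta}(f-g)(s')\|_\infty.
\end{equation}

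Next I would track $f-g$ along the characteristics of $\mathfrak F_f$, setting up stochastic cycles $(t^i,x^i,v^i)$ analogous to \eqref{cycle}--\eqref{cycle_ellin} but with the single flow $(X_f,V_f)$ used throughout (since we only iterate one of the two solutions). On each cycle piece, Duhamel combined with \eqref{Vsv}-type weight propagation gives
\begin{equation}\notag
\begin{split}
|\langle v\rangle^{4+\delta}(f-g)(t,x,v)|
\le \ & C_1\int_{\max\{t_1,0\}}^t |\langle V_f(s)\rangle^{4+\delta}(\mathfrak F_g-\mathfrak F_f)\cdot\nabla_v g|\,ds\\
&+\mathbf 1_{t_1>0}\,C_1\,c_\mu\mu(V_f(t_1))\langle V_f(t_1)\rangle^{4+\delta}\int_{v_{1,3}<0}|(f-g)(t_1,x_1,v_1)||\hat v_{1,3}|\,dv_1.
\end{split}
\end{equation}
Iterating $k$ times, I obtain a sum of $k$ characteristic time-integrals of $|(\mathfrak F_g-\mathfrak F_f)\nabla_v g|$ (the initial-data terms vanish because $f(0)=g(0)$) plus a ``tail'' term that is an integral over $\prod_{j=1}^{k-1}\mathcal V_j$ against $d\Sigma_{k-1}^{k-1}$ of $|(f-g)(t_k,x_k,v_k)|$. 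The tail factor $\int\mathbf 1_{t_k>0}\,d\Sigma_{k-1}^{k-1}$ is dominated by $(1/2)^k$ for $k\gg 1$ by the small-grazing argument \eqref{trajexpansionendterm}--\eqref{tailtermsmall}, which applies verbatim since \eqref{fellbound}-type $L^\infty$ bounds on $(E_f,B_f)$ and the sign condition \eqref{signcondition} hold under the hypotheses.

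Combining these, fixing such a $k$, and using the uniform bound on $\langle v\rangle^{5+\delta}\nabla_v g$ from \eqref{dvfgbd} to convert each characteristic integral into an $L^\infty_t$ integral, I arrive at
\begin{equation}\notag
\sup_{0\le s\le t}\|\langle v\rangle^{4+\delta}(f-g)(s)\|_\infty
\le C_k\int_0^t\|(\mathfrak F_g-\mathfrak F_f)(s)\|_\infty\,ds+\tfrac{1}{2}\sup_{0\le s\le t}\|\langle v\rangle^{4+\delta}(f-g)(s)\|_\infty,
\end{equation}
where the last term can be absorbed. Substituting the field estimate above then gives a closed Gronwall inequality in $\sup_{0\le s\le t}\|\langle v\rangle^{4+\delta}(f-g)(s)\|_\infty$; since the initial value is zero this forces $f\equiv g$, hence $\rho_f=\rho_g$, $J_f=J_g$, and the Glassey--Strauss formula forces $E_f=E_g$, $B_f=B_g$.

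The main obstacle I anticipate is the tail term of the cycle expansion: unlike the inflow case, the boundary condition keeps feeding $(f-g)$ back into itself, so one needs the small-measure lemma \eqref{tailtermsmall} to apply uniformly with the \emph{same} solution $(E_f,B_f)$ used to define every cycle, and the Maxwellian weight $\mu(V_f(t_j))$ must be controlled against the velocity growth factor $\langle V_f(t_j)\rangle^{4+\delta}$ inherited from the polynomial weight on $(f-g)$. Both ingredients are at hand because $(E_f,B_f)$ satisfy \eqref{signcondition} and \eqref{nablaEBass}, so the grazing-exclusion argument and the velocity-growth bound \eqref{Vsv} go through and the tail can indeed be made $(1/2)^k$-small.
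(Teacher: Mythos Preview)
Your proposal is correct and follows essentially the same route as the paper: both set up the difference equation with the diffuse boundary condition, use the Maxwell-to-wave reduction and Glassey--Strauss representation for $E_f-E_g$, $B_f-B_g$, expand $f-g$ along stochastic cycles driven by $\mathfrak F_f$, kill the tail via the $(1/2)^k$ small-measure estimate \eqref{tailtermsmall}/\eqref{1over2k}, and close by Gronwall. Your explicit absorption of the $\tfrac12$-tail term is exactly what the paper does implicitly when it invokes \eqref{tailtermsmall} and \eqref{1over2k} to pass directly to \eqref{fgdiffrep}.
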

\begin{proof}
The difference function $f-g $ satisfies
\Be \label{fminusgeq}
\begin{split}
(\p_t + \hat v \cdot \nabla_x + \mathfrak F_f \cdot \nabla_v)(f-g) = (\mathfrak F_g - \mathfrak F_f ) \cdot \nabla_v g 
\\ (f-g)(0) = 0, \, (f- g )|_{\gamma_-  } =  c_\mu \mu(v) \int_{u_3 < 0 }  -  (f - g) (t,x,  u ) \hat u _3 du,
\end{split} 
\Ee
where
\[
\mathfrak F_f = E_f + E_{\text{ext}} + \hat v \times ( B_f + B_{\text{ext}}) - g \mathbf e_3 , \, \mathfrak F_g = E_g +E_{\text{ext}} + \hat v \times ( B_g + B_{\text{ext} } ) - g \mathbf e_3, 
\]
so
\Be \label{mathfrakFfg}
\mathfrak F_g - \mathfrak F_f = E_f - E_g + \hat v \times (B_f - B_g ).
\Ee
From Lemma \ref{Maxtowave} we have $E_{f,1} - E_{g,1} , E_{f,2} - E_{g,2}, B_{f,3} - B_{g,3}$ solve the wave equation with the Dirichlet boundary condition \eqref{waveD} in the sense of \eqref{waveD_weak} with \begin{align}
u_0 = 0, \  u_1 = 0 , \ G = -4\pi \p_{x_i} (\rho_f - \rho_g) - 4 \pi \p_t (J_{f,i} - J_{g,i} ), \ g = 0 , \ \ \text{for} \  E_{f,i} - E_{g,i},  i =1,2, \label{E12sol_A} \\
 u_0 = 0, \ u_1 = 0, \   G =  4 \pi (\nabla_x \times (J_f -J_g) )_3, \  g = 0, \ \ \text{for} \ B_{f,3}- B_{g,3},  \label{B3sol_A}
 \end{align} 
respectively. And $E_{f,3} - E_{g,3}, B_{f,1} - B_{g,1}, B_{f,2}- B_{g,2}$ solve the wave equation with the Neumann boundary condition \eqref{waveNeu}  in the sense of \eqref{waveinner} \text{ with }
\begin{align}
u_0 = 0, \  u_1 = 0 , \ G = -4\pi \p_{x_3} ( \rho_f - \rho_g) - 4 \pi \p_t (J_{f,3} - J_{g,3} ) , \ g = - 4\pi (\rho_f - \rho_g), \ \ \text{for} \  E_{f,3} - E_{g,3}, \label{E3sol_A} \\
 u_0 = 0, \ u_1 = 0, \   G =  4 \pi (\nabla_x \times (J_f - J_g) )_i, \  g = (-1)^{i+1} 4 \pi (J_{f,{\underline i}} - J_{g, \underline i } ), \ \ \text{for} \ B_{f,i} - B_{j,i}, \ i=1,2, \label{B12sol_A}
 \end{align} 
respectively. Therefore, from Lemma \ref{wavesol} and Lemma \ref{wavesolD}, we know that $E_f - E_g$  and $B_f - B_g$ would have the form of
\Be \label{EBdiffform}
\begin{split}
& E_f - E_g = \eqref{Eesttat0pos} + \dots + \eqref{Eest3bdrycontri}, \  B_f -B_g = \eqref{Besttat0pos} + \dots + \eqref{Bestbdrycontri},
\\ & \text{ with } E_0, B_0 \text{ changes to } 0, \text{ and } f \text{ changes to } f -g.
\end{split}
\Ee 

Now consider the characteristics
\[
\begin{split}
\dot X_f(s;t,x,v) = & \hat V_f(s;t,x,v) ,
\\ \dot V_f(s;t,x,v) = & \mathfrak F_f(s, X_f(s;t,x,v), V_f(s;t,x,v) ) .
\end{split}
\]
Then from \eqref{fminusgeq}, same as \eqref{fnmiterate2}, we obtain
\Be \label{fnmiterate2final}
\begin{split}
& |  \langle v \rangle^{4 + \delta } (f - g)(t,x,v) | 
\\  \le &   C_{1}  \int_{\prod_{j=1}^{k-1} \mathcal V_j} \sum_{ i =1}^{k-1}  \int_{\max \{ t^{}_{i} , 0 \} }^{t^{ }_{ i-1} }    \mathbf{1}_{ \{  t^{}_i \le 0 < t^{ }_{ i-1} \} }
\\ & \quad \quad \quad \quad  \times |   \langle V_f(s) \rangle^{4 + \delta }  ( \mathfrak F_g - \mathfrak F_f ) \cdot \nabla_v f^{}  )(s, X_f^{}(s), V_f^{}(s) ) | ds  d \Sigma_{i}^{k-1}
\\ &  +  C_1 \int_{\prod_{j=1}^{k-1} \mathcal V_j}  \textbf{1}_{\{t_{}^{m - (k-1)} > 0 \}} \int_{\mathcal V_k}  |  (f - g )( t_k^{}, x_k^{ }, v_{k} ) |  d v_k d \Sigma_{k-1}^{k-1}.
\end{split}
\Ee
So using \eqref{tailtermsmall} and \eqref{1over2k}, we have
\Be \label{fgdiffrep}
\sup_{ 0 \le s \le t } \| \langle v \rangle^{5 + \delta} (f-g)(s) \|_\infty \le C   \int^t_0  \|  (\mathfrak F_g - \mathfrak F_f )(s) \|_\infty \|\langle v \rangle^{5 + \delta} \nabla_v g (s) \|_\infty  ds.
\Ee
Now, from \eqref{EBdiffform} and the estimate in Lemma \ref{EBlinflemma}, we have
\Be \label{FgFfdiff}
\begin{split}
\|  (\mathfrak F_g - \mathfrak F_f )(s) \|_\infty \le &  \| (E_f - E_g )(s) \|_\infty + \| (B_f - B_g )(s) \|_\infty
\\ \le & C \sup_{0 \le s' \le s } \| \langle v \rangle^{5 + \delta} (f-g )(s' ) \|_\infty,
\end{split}
\Ee
and from the assumption \eqref{dvfgbd}, $\sup_{0 \le s \le t }   \|(1 + |v |^{5 + \delta } ) \nabla_v g (s) \|_\infty < C$. Therefore from \eqref{fgdiffrep} and \eqref{FgFfdiff}, we have
\Be
\sup_{0 \le s \le t } \|  \langle v \rangle^{5 +\delta} (f-g)(s) \|_\infty \le C' \int^t_{0 } \sup_{0 \le s' \le s } \|  \langle v \rangle^{5 +\delta}(f-g )(s' ) \|_\infty  ds.
\Ee
Therefore from Gronwall
\[
\sup_{0 \le s' \le t } \|  \langle v \rangle^{5 +\delta} (f-g)(s') \|_\infty \le   e^{C't}  \|  \langle v \rangle^{5 +\delta} (f-g)(0) \|_\infty = 0.
\]
Therefore we conclude that the solutions to \eqref{VMfrakF1}--\eqref{rhoJ1}, \eqref{diffuseBC} is unique.
\end{proof}

\begin{proof}[proof of Theorem \ref{main2}]
Using the sequence $f^\ell, E^\ell , B^\ell$ constructed in \eqref{fellseq}, \eqref{ElBl}, we have from Lemma \ref{fEBsollemma} that the limit $(f,E,B)$ is a solution to the VM system \eqref{VMfrakF1}--\eqref{rhoJ1}, \eqref{diffuseBC}. This proves the existence. From Lemma \ref{fEBreg}, we have the regularity estimate \eqref{inflowfreg}, \eqref{inflowEBreg}. And from Lemma \ref{VMuniqlemma}, we conclude the uniqueness.
\end{proof}

\section{Specular BC} \label{chapspec}
In this section we consider the solution $f$ of the Vlasov-Maxwell system \eqref{VMfrakF1} satisfies the specular reflection boundary condition \eqref{spec}. We have the following a priori estimate for $f$.
\begin{proposition} \label{specBCprop}
Let $(f,E,B)$ be a solution of \eqref{VMfrakF1}--\eqref{rhoJ1}, \eqref{spec}. Suppose the fields satisfies \eqref{gbig10}, and
\Be \label{pEBassspec}
\sup_{0 \le t \le T}  \left(\| \nabla_{x} E(t)  \|_\infty + \| \nabla_{x} B(t)  \|_\infty \right) < \infty.
\Ee
Assume that for some $\delta > 0$, and some $C > 0 $ such that 
\[
\begin{split}
 \| \langle v \rangle^{5 + \delta }    e^{\frac{C}{\sqrt{ \alpha \langle v \rangle } } }   \nabla_{x} f_0 \|_\infty +  \| \langle v \rangle^{5 + \delta }    e^{\frac{C}{\sqrt{ \alpha \langle v \rangle } } }      \nabla_v f_0 \|_\infty  < \infty,
\end{split}
\]
then there exists a $0 < T \ll 1$ small enough such that
\Be \label{specdfbd}
\begin{split}
& \sup_{0 \le t \le T} \left( \|  \langle v \rangle^{4+\delta}   \nabla_{x} f(t) \|_\infty   +  \| \langle v \rangle^{4+\delta}   \nabla_{v} f(t) \|_\infty \right) 
\\ & + \sup_{0 \le t \le T} \left( \|  \langle v \rangle^{4+\delta}   \nabla_{x} f(t) \|_{L^\infty(\gamma \setminus \gamma_0 ) }   +  \| \langle v \rangle^{4+\delta}   \nabla_{v} f(t) \|_{L^\infty(\gamma \setminus \gamma_0 ) }  \right)  <  \infty.
\end{split}
\Ee
\end{proposition}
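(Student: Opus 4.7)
The plan is to propagate derivatives of $f$ along the generalized (specular) characteristics $(X_{\mathbf{cl}}, V_{\mathbf{cl}})$. Since $f$ is constant along these trajectories, one has $f(t,x,v)=f_0(X_{\mathbf{cl}}(0;t,x,v),V_{\mathbf{cl}}(0;t,x,v))$ for any $(t,x,v)$ away from the grazing set, hence
\[
\p_{\mathbf e} f(t,x,v) = \nabla_x f_0 \cdot \p_{\mathbf e} X_{\mathbf{cl}}(0;t,x,v) + \nabla_v f_0 \cdot \p_{\mathbf e} V_{\mathbf{cl}}(0;t,x,v),\qquad \p_{\mathbf e}\in\{\nabla_x,\nabla_v\}.
\]
The problem therefore reduces to two steps: (i) control the sensitivity of the specular cycles in terms of $\alpha$ and $\langle v\rangle$, and (ii) combine that sensitivity with the weighted decay of $\nabla_{x,v}f_0$ near the grazing set provided by the hypothesis.

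Step (i) is the principal obstacle. Between consecutive reflection times $t^j>t^{j+1}$ the trajectory $(X,V)$ satisfies \eqref{HamiltonODE} with the $W^{1,\infty}$ field $\mathfrak F$, so on each leg \eqref{pxviXVest} applies and gives amplification by a factor $e^{C_1 (t^j-t^{j+1})}$. At each bounce the Jacobian of the specular map involves the factor $1/\hat V_3$ at the foot point, which by the velocity lemma \eqref{alphaest} is comparable to $\alpha(t,x,v)^{-1/2}\langle v\rangle^{-1/2}$ up to a uniform constant; see the computation of $\p_x\tb$ and $\p_v\tb$ in \eqref{pxitb}--\eqref{pxbvb}. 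Iterating this bounce-by-bounce, using the lower bound $t^j-t^{j+1}\gtrsim \hat v_{\mathbf b,3}^j$ that was already exploited in \eqref{tailtermsmall} to bound the number of bounces in $[0,t]$ by $1/\hat v_{\mathbf b,3}^j$, gives the crucial estimate
\[
|\p_{\mathbf e}X_{\mathbf{cl}}(s;t,x,v)|+|\p_{\mathbf e}V_{\mathbf{cl}}(s;t,x,v)|\le C_1\langle v\rangle\, e^{\tfrac{C_1}{\sqrt{\alpha(t,x,v)\langle v\rangle}}},
\]
exactly the bound asserted in \eqref{lemma_Dxv1}. I would formulate this as a separate lemma and prove it by induction on the number of reflections $k$, with the inductive hypothesis carrying an exponential weight $e^{C_1\sum_{j\le k}(\alpha^j\langle v^j\rangle)^{-1/2}}$; the velocity lemma lets one collapse this sum to the single expression in $\alpha(t,x,v)$ since $\alpha$ is essentially invariant along the cycle.

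For step (ii) the hypothesis \eqref{f0spec} supplies a weight $e^{C/\sqrt{\alpha\langle v\rangle}}$ on $\nabla_{x,v}f_0$ with $C$ that may be chosen strictly larger than the constant $C_1$ from step (i). Applying the velocity lemma once more to transport the weight on $f_0$ back to $(t,x,v)$ (that is, $\alpha(0,X_{\mathbf{cl}}(0),V_{\mathbf{cl}}(0))\sim \alpha(t,x,v)$ and $\langle V_{\mathbf{cl}}(0)\rangle\sim\langle v\rangle$ for $T\ll1$), the two exponential factors combine as
\[
e^{-C/\sqrt{\alpha(t,x,v)\langle v\rangle}}\cdot e^{C_1/\sqrt{\alpha(t,x,v)\langle v\rangle}}\lesssim 1,
\]
uniformly in $(t,x,v)$, which, together with the polynomial weight $\langle v\rangle^{-5-\delta}\langle V_{\mathbf{cl}}(0)\rangle^{5+\delta}\lesssim 1$, yields the interior bound $\sup_{0\le t\le T}\|\langle v\rangle^{4+\delta}\nabla_{x,v}f(t)\|_\infty<\infty$ of \eqref{specdfbd}.

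Finally, to upgrade the interior $L^\infty$ bound to the trace bound on $\gamma\setminus\gamma_0$, I will repeat the Ukai-type argument of Lemma \ref{traceUkai} and Lemma \ref{tracepf}: $\p_{\mathbf e}f$ satisfies a transport equation whose right-hand side $-\p_{\mathbf e}\hat v\cdot\nabla_x f-\p_{\mathbf e}\mathfrak F\cdot\nabla_v f$ is now known to be in $L^\infty$, so integrating along the characteristic for an arbitrarily short time from any non-grazing boundary point gives $\sup_{0\le t\le T}\|\langle v\rangle^{4+\delta}\nabla_{x,v}f(t)\|_{L^\infty(\gamma\setminus\gamma_0)}\lesssim\sup_{0\le t\le T}\|\langle v\rangle^{4+\delta}\nabla_{x,v}f(t)\|_\infty$, completing the proof of \eqref{specdfbd}.
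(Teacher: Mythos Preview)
Your overall architecture is correct and matches the paper: Lagrangian differentiation along the specular cycles, the key derivative estimate \eqref{lemma_Dxv1} on $(X_{\mathbf{cl}},V_{\mathbf{cl}})$, absorption of the resulting singularity by the weighted initial-data assumption via the velocity lemma, and the Ukai-type trace upgrade. Steps (ii) and the trace step are essentially how the paper argues.

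The gap is in your sketch of step~(i). The claim that the per-bounce Jacobian contributes a factor $1/\hat V_3\sim(\alpha\langle v\rangle)^{-1/2}$ is wrong on both counts: at a reflection point $\hat V_3=\hat v_{\mathbf b,3}=\alpha$, so the naive factor is $1/\alpha$; and your proposed inductive weight $e^{C_1\sum_j(\alpha^j\langle v^j\rangle)^{-1/2}}$, once you use $\alpha^j\sim\alpha$ and the bounce count $\ell^*\sim T/(\alpha\langle v\rangle)$ from \eqref{tdiffalpha}, gives an exponent $\sim(\alpha\langle v\rangle)^{-3/2}$, not $(\alpha\langle v\rangle)^{-1/2}$. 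A straight bounce-by-bounce product of $1/\hat v_{\mathbf b,3}$ factors is hopeless here.

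What the paper actually does (Lemma~\ref{dXVcl}) is compute the full $6\times6$ transition matrix $J^{\ell+1}_\ell=\partial(t^{\ell+1},x_\parallel^{\ell+1},v_3^{\ell+1},v_\parallel^{\ell+1})/\partial(t^\ell,x_\parallel^\ell,v_3^\ell,v_\parallel^\ell)$ and shows, using $|t^\ell-t^{\ell+1}|\sim\alpha\langle v\rangle$, that it has the form $I+O(\alpha\langle v\rangle)$ in all entries \emph{except} the $\partial/\partial v_3^\ell$ column, which is $O(1)$. A crucial cancellation (via \eqref{v3elliint}) forces $\partial v_3^{\ell+1}/\partial v_3^\ell=1+O(\alpha\langle v\rangle)$ rather than the $-1+O(1/\alpha)$ one would get from \eqref{partialv3ell1} and \eqref{partialtell1} naively. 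Diagonalizing this specific matrix yields eigenvalues $1+O(\sqrt{\alpha\langle v\rangle})$, and only then does raising to the $\ell^*\sim1/(\alpha\langle v\rangle)$ power produce $e^{C/\sqrt{\alpha\langle v\rangle}}$. The square root in the exponent arises from the spectral structure of the bounce matrix, not from any pointwise bound on a single entry; your induction does not capture this and would need to be replaced by the matrix computation.
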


Let $(t,x,v) \in (0,T) \times \bar \O \times \mathbb R^3$. Recall the definition of $\tb(t,x,v), \xb(t,x,v), \vb(t,x,v)$ in \eqref{tb}. Now let $(t^{0}, x^{0}, v^{0}) = (t,x,v).$ We define the specular cycles, for $\ell\geq 0,$
\[
(t^{\ell+1}, x^{\ell+1},v^{\ell+1}) = (t^{\ell}-t_{\mathbf{b}}(t^\ell, x^{\ell}, v^{\ell}), x_{\mathbf{b}}(t^\ell,x^{\ell},v^{\ell}),   v_{\mathbf b}(t^\ell, x^{\ell}, v^{\ell}) - 2 v_{\mathbf b , 3}(t^\ell, x^{\ell}, v^{\ell}) \mathbf e_3 ).
\]
And we define the generalized characteristics for the specular BC as
\begin{equation} \label{cycles} 
\begin{split}
X_{\mathbf{cl}}(s;t,x,v)   \ = \ \sum_{\ell} \mathbf{1}_{[t^{\ell+1},t^{%
\ell})}(s) X(s;t^\ell, x^\ell , v^\ell ),  \ \ 
V_{\mathbf{cl}}(s;t,x,v)   \ = \ \sum_{\ell} \mathbf{1}_{[t^{\ell+1},t^{%
\ell})}(s)  V(s;t^\ell, x^\ell , v^\ell ).
\end{split}%
\end{equation}

The key to prove Proposition \ref{specBCprop} is the following estimate for the derivative of the characteristics under the specular reflection.

\begin{lemma} \label{dXVcl}
For any $(t,x,v) \in (0,T) \times \bar \O \times \mathbb R^3$, and $0 \le s \le t $, let $\p_{\mathbf e } \in \{ \nabla_x , \nabla_v \} $, then for some $C_1 \gg 1$, we have
\begin{equation}\label{lemma_Dxv}
\begin{split}
| \p_{\mathbf e } X_{\mathbf{cl}}(s;t,x,v)| & \le C_1 \langle v \rangle  e^{\frac{C_1}{\sqrt{ \alpha(t,x,v) \langle v \rangle } } }   , 
\\ | \p_{\mathbf e } V_{\mathbf{cl}}(s;t,x,v)| &  \le  C_1 \langle v \rangle  e^{\frac{C_1}{\sqrt{ \alpha(t,x,v) \langle v \rangle } } }.
\end{split}
\end{equation}
\end{lemma}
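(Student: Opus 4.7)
I would approach Lemma \ref{dXVcl} by an inductive, bounce-by-bounce analysis of the specular cycles $(t^\ell, x^\ell, v^\ell)$. The plan is to track a weighted quantity $D^\ell := \langle v^\ell \rangle^{-1}|\partial_{\mathbf e} x^\ell| + |\partial_{\mathbf e} v^\ell|$ and show $D^{\ell+1} \le (1+A_\ell) D^\ell$ for an increment $A_\ell$ whose total sum $\sum_{\ell=0}^{N} A_\ell$ is bounded by $C_1/\sqrt{\alpha(t,x,v)\langle v\rangle}$, thereby yielding $D^\ell \le \exp(C_1/\sqrt{\alpha\langle v\rangle}) D^0$. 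The requested bound on $|\partial_{\mathbf e} X_{\mathbf{cl}}(s)|, |\partial_{\mathbf e} V_{\mathbf{cl}}(s)|$ then follows by evaluating the smooth flow on the final sub-interval containing $s$.

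On each interior segment $s \in [t^{\ell+1}, t^\ell]$ the trajectory solves the ODE \eqref{HamiltonODE} with no boundary contact, so the derivative estimates \eqref{pxviXVest} and \eqref{pxiXj} from the previous section apply verbatim, producing a controlled Gronwall-type factor $e^{C_1|t^\ell - t^{\ell+1}|}$ acting on $D^\ell$. At the bounce time $t^\ell$, the chain rule combined with the identities \eqref{pxitb}--\eqref{pxbvb}, together with the specular reflection $v^{\ell+1} = v^\ell_{\mathbf b} - 2 v^\ell_{\mathbf b,3}\mathbf e_3$, expresses $(\partial_{\mathbf e} x^{\ell+1}, \partial_{\mathbf e} v^{\ell+1})$ in terms of $(\partial_{\mathbf e} x^\ell, \partial_{\mathbf e} v^\ell)$. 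The only potentially singular factor is $\partial\tb^\ell \sim 1/\hat v^\ell_{\mathbf b,3}$, but the Velocity Lemma (Lemma \ref{vlemma}) guarantees $|\hat v^\ell_{\mathbf b,3}| = \alpha(t^\ell, x^\ell, v^\ell) \gtrsim e^{-C t}\alpha(t,x,v)$ uniformly in $\ell$.

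To count the number of bounces, I would invoke the lower bound on the inter-bounce time implicit in the argument of \eqref{tbbdvb}: the Pannekoek--Rosseland-type sign condition \eqref{gbig10} forces the net vertical force to be bounded below, so the return time for a particle with normal speed $\hat v^\ell_{\mathbf b,3}$ satisfies $\tb^\ell \gtrsim \langle v^\ell\rangle\hat v^\ell_{\mathbf b,3}/c_0 \gtrsim \alpha(t,x,v)\langle v\rangle$. Summing, the total number of bounces on $[0,t]$ obeys $N \lesssim t/(\alpha\langle v\rangle)$. One then decomposes the per-bounce amplification into a part coming from the smooth reflection matrix (producing a benign factor $(1+O(\tb^\ell))$) and a singular part involving $1/\hat v^\ell_{\mathbf b,3}$ that multiplies only the tangential contribution of $(\partial_{\mathbf e} x^\ell, \partial_{\mathbf e} v^\ell)$, which is itself of size $O(\tb^\ell) \lesssim \alpha\langle v\rangle$. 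Consequently $A_\ell \lesssim \sqrt{\alpha\langle v\rangle}\cdot(\text{something summable to }1)$, and $\sum_\ell A_\ell \lesssim N\sqrt{\alpha\langle v\rangle} \lesssim 1/\sqrt{\alpha\langle v\rangle}$, which is precisely the exponent required.

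The main obstacle, in my view, is the last step: extracting the $\sqrt{\alpha\langle v\rangle}$ smallness factor from the per-bounce amplification $A_\ell$ without losing it. A naive bound based on $|\partial \tb^\ell| \lesssim 1/\hat v^\ell_{\mathbf b,3} \sim 1/\alpha$ combined with $N\sim 1/(\alpha\langle v\rangle)$ bounces would give the vastly worse $\exp(C/(\alpha\langle v\rangle)^2)$. The saving comes from a careful decomposition showing that the $1/\alpha$ contribution in $\partial\tb^\ell$ multiplies only the $(\tb^\ell)^2$-small horizontal displacement component of $\partial x^\ell$, together with an elementary but subtle use of the Velocity Lemma to propagate this smallness across cycles. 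Once the pointwise estimate \eqref{lemma_Dxv} is established, the trace version $|\partial_{\mathbf e} X_{\mathbf{cl}}|, |\partial_{\mathbf e} V_{\mathbf{cl}}| \in L^\infty(\gamma\setminus\gamma_0)$ required for \eqref{specdfbd} follows by exactly the Ukai-style trace argument used in Lemma \ref{tracepf}.
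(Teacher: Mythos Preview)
Your overall strategy (factor the Jacobian across bounces via the chain rule, count bounces $N \lesssim T/(\alpha\langle v\rangle)$ using the sign condition, and bound the per-bounce amplification) is exactly the paper's plan. But the specific mechanism you describe for obtaining the $\sqrt{\alpha\langle v\rangle}$ per-bounce increment is not correct, and the scalar norm $D^\ell = \langle v^\ell\rangle^{-1}|\partial_{\mathbf e} x^\ell| + |\partial_{\mathbf e} v^\ell|$ will not give it.

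The issue is the entry $\partial t^{\ell+1}/\partial v_3^\ell$. From \eqref{pvitb} one has $|\partial t^{\ell+1}/\partial v_3^\ell| \sim \tb^\ell/(\hat v_{\mathbf b,3}^\ell\langle v\rangle) \sim (\alpha\langle v\rangle)/(\alpha\langle v\rangle) = O(1)$, \emph{not} $O(\alpha\langle v\rangle)$: the singular $1/\hat v_{\mathbf b,3}$ factor multiplies a quantity of size $\tb$, not $(\tb)^2$. Consequently the transition matrix $J_\ell^{\ell+1}$ in the coordinates $(t^\ell, x_\parallel^\ell, v_3^\ell, v_\parallel^\ell)$ has its $v_3$-column of size $O(1)$ while all other off-diagonal entries are $O(\alpha\langle v\rangle)$. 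Any unweighted scalar norm picks up the $O(1)$ entry and yields $D^{\ell+1} \le C D^\ell$ with a fixed $C>1$, which after $N \sim 1/(\alpha\langle v\rangle)$ bounces blows up catastrophically. Your claim that the $1/\alpha$ term ``multiplies only the $(\tb^\ell)^2$-small horizontal displacement component'' misidentifies the dangerous coupling.

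What actually produces the $\sqrt{\alpha\langle v\rangle}$ is the \emph{mixed} scaling: the $v_3^\ell \to t^{\ell+1}$ entry is $O(1)$ but the reverse $t^\ell \to v_3^{\ell+1}$ entry is $O(\alpha\langle v\rangle)$, so the two-step composite is $O(\alpha\langle v\rangle)$ and the spectral radius of $J_\ell^{\ell+1}$ is $1+O(\sqrt{\alpha\langle v\rangle})$. The paper extracts this by explicitly diagonalizing the $6\times 6$ majorant matrix $J(\alpha\langle v\rangle)$ and finding its largest eigenvalue to be $1 + M(\sqrt{\alpha\langle v\rangle(4\alpha\langle v\rangle+5)} + 3\alpha\langle v\rangle)$. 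An equivalent scalar approach would require an anisotropic norm that weights $|\partial v_3^\ell|$ by roughly $1/\sqrt{\alpha\langle v\rangle}$ relative to $|\partial t^\ell|$; your $D^\ell$ does not do this, so the inductive step $D^{\ell+1} \le (1+A_\ell)D^\ell$ with $A_\ell \lesssim \sqrt{\alpha\langle v\rangle}$ fails as stated.
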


%

\begin{proof}
We need to estimate along the bounces:
\begin{equation}\label{chain}
\begin{split}
& \frac{\partial ( X_{\mathbf{cl}}(s;t,x,v),V_{\mathbf{cl}}(s;t,x,v))}{\partial (x,v)}\\
 = &\underbrace{\frac{\partial ( X_{\mathbf{cl}}(s), V_{\mathbf{cl}}(s))}{\partial (t^{\ell_{*}},  {x}_{\parallel_{\ell_{*}}}^{\ell_{*}}, {v}_{3_{\ell_{*}}}^{\ell_{*}}, {v}_{\parallel_{\ell_{*}}}^{\ell_{*}})}  }_{\text{from the last bounce to the }s-\text{plane}} 
\times \underbrace{\prod_{\ell=1}^{\ell_*}  \frac{\partial (t^{\ell+1}, {x}_{\parallel}^{\ell+1}, {v}_{3 }^{\ell+1},{v}_{\parallel }^{\ell+1})}{\partial (t^{\ell}, {x}_{\parallel }^{\ell}, {v}_{3 }^{\ell},{v}_{\parallel }^{\ell})} }_{\text{ intermediate groups}}
 \times  \underbrace{ \frac{\partial (t^{1},  {x}_{\parallel_{1}}^{1}, {v}_{3_{1}}^{1}, {v}_{\parallel_{1}}^{1})}{\partial (x,v)}}_{\text{from the } t-\text{plane to the first bounce} }.
\end{split}
\end{equation}
%
%
%
%
%
%
%

We first find out the matrix of derivatives in the intermediate groups from the $\ell$-th bounce to the $(\ell + 1)$-th bounce:
\Be \label{Jell1matrix}
\begin{split}
J^{\ell+1}_{\ell}&:= \frac{\partial (t^{\ell+1}, {x}_{\parallel}^{\ell+1}, {v}_{3 }^{\ell+1},{v}_{\parallel }^{\ell+1})}{\partial (t^{\ell}, {x}_{\parallel }^{\ell}, {v}_{3 }^{\ell},{v}_{\parallel }^{\ell})}.
\end{split}
\Ee
We have
\Be \label{ellellplus1}
(t^\ell - t^{\ell +1 } ) \hat v_3^\ell = \int_{t^{\ell +1}}^{t^\ell } \int_s^{t^\ell } \hat{\mathfrak F}_3(\tau ) d\tau ds.
\Ee
Taking $\frac{\p}{\p t^\ell }  \eqref{ellellplus1} $ gives
\[ \begin{split}
(1 - \frac{\p t^{\ell+1} }{\p t^\ell }  ) \hat v_3^\ell = -  \frac{\p t^{\ell+1} }{\p t^\ell }    \int_{t^{\ell+1}}^{t^\ell } \hat{\mathfrak F}_3(\tau ) d\tau  + \int_{t^{\ell +1}}^{t^\ell }  \hat{\mathfrak F}_3( t^\ell )  ds + \int_{t^{\ell +1}}^{t^\ell } \int_s^{t^\ell } \frac{ \p  \hat{\mathfrak F}_3(\tau )}{ \p_{t^\ell } }  d\tau ds, 
\end{split}
\]
so
\[ \begin{split}
- \hat v_3^{\ell +1 } \frac{\p t^{\ell+1} }{\p t^\ell } = & - ( \hat v_3^\ell -  \int_{t^{\ell +1}}^{t^\ell }  \hat{\mathfrak F}_3( t^\ell )  ds )  + \int_{t^{\ell +1}}^{t^\ell } \int_s^{t^\ell } \frac{ \p  \hat{\mathfrak F}_3(\tau )}{ \p_{t^\ell } }  d\tau ds
\\  = &  - \hat v_3^{\ell+1}  + \int_{t^{\ell+1}}^{t^\ell }  (  \hat{\mathfrak F}_3( t^\ell )  -   \hat{\mathfrak F}_3( s )  ) ds  + \int_{t^{\ell +1}}^{t^\ell } \int_s^{t^\ell } \frac{ \p  \hat{\mathfrak F}_3(\tau )}{ \p_{t^\ell } }  d\tau ds
\\  = &  - \hat v_3^{\ell+1}   + \int_{t^{\ell +1}}^{t^\ell } \int_s^{t^\ell } \left(  \frac{ \p  \hat{\mathfrak F}_3(\tau )}{ \p_{\tau} }   + \frac{ \p  \hat{\mathfrak F}_3(\tau )}{ \p_{t^\ell } } \right)  d\tau ds  , 
\end{split}
\]
thus
\Be \label{ptltlplus1}
\frac{\p t^{\ell+1} }{\p t^\ell }  = 1 - \frac{1}{ \hat v_3^{\ell +1 } }   \int_{t^{\ell +1}}^{t^\ell } \int_s^{t^\ell }  \left(  \frac{ \p  \hat{\mathfrak F}_3(\tau )}{ \p_{\tau} }   + \frac{ \p  \hat{\mathfrak F}_3(\tau )}{ \p_{t^\ell } } \right) d\tau ds.
\Ee
Taking $\frac{\p}{\p t^{\ell } } $ derivative to
\Be \label{xell1rep}
x_{\parallel}^{\ell + 1 } = x_\parallel^\ell - (t^\ell - t^{\ell +1 } ) \hat v_\parallel^\ell + \int_{t^{\ell + 1 }}^{t^\ell } \int_s^{t^\ell } \hat{ \mathfrak F}_\parallel (\tau ) d\tau ds,
\Ee
we get
\Be
\begin{split}
\frac{ \p x_\parallel^{\ell+1}}{\p t^\ell } = & - ( 1 - \frac{\p t^{\ell +1}}{\p t^\ell } ) \hat v_\parallel^\ell -  \frac{\p t^{\ell +1}}{\p t^\ell }  \int_{t^{\ell+1}}^{t^\ell }  \hat {\mathfrak F}_\parallel(\tau) d\tau +  \int_{t^{\ell+1}}^{t^\ell }  \hat {\mathfrak F}_\parallel(t^\ell) d\tau + \int_{t^{\ell +1}}^{t^\ell } \int_s^{t^\ell } \frac{ \p  \hat{\mathfrak F}_\parallel (\tau )}{ \p_{t^\ell } }  d\tau ds
\\ & = - \hat v_\parallel^\ell + \frac{\p t^{\ell +1}}{\p t^\ell }  \hat v_\parallel^{\ell + 1 }   +  \int_{t^{\ell+1}}^{t^\ell }  \hat {\mathfrak F}_\parallel(t^\ell) d\tau + \int_{t^{\ell +1}}^{t^\ell } \int_s^{t^\ell } \frac{ \p  \hat{\mathfrak F}_\parallel (\tau )}{ \p_{t^\ell } }  d\tau ds
\\ & =  - \hat v_\parallel^\ell +   \hat v_\parallel^{\ell + 1 } - \frac{\hat v_\parallel^{\ell+1} }{ \hat v_3^{\ell +1 } }   \int_{t^{\ell +1}}^{t^\ell } \int_s^{t^\ell }  \left(  \frac{ \p  \hat{\mathfrak F}_3(\tau )}{ \p_{\tau} }   + \frac{ \p  \hat{\mathfrak F}_3(\tau )}{ \p_{t^\ell } } \right) d\tau ds  +  \int_{t^{\ell+1}}^{t^\ell }  \hat {\mathfrak F}_\parallel(t^\ell) d\tau + \int_{t^{\ell +1}}^{t^\ell } \int_s^{t^\ell } \frac{ \p  \hat{\mathfrak F}_\parallel (\tau )}{ \p_{t^\ell } }  d\tau ds
\\ & = \int_{t^{\ell +1}}^{t^\ell } \int_s^{t^\ell } \left(  \frac{ \p  \hat{\mathfrak F}_\parallel (\tau )}{ \p_{\tau} }   + \frac{ \p  \hat{\mathfrak F}_\parallel (\tau )}{ \p_{t^\ell } } \right)  d\tau ds  - \frac{\hat v_\parallel^{\ell+1} }{ \hat v_3^{\ell +1 } }   \int_{t^{\ell +1}}^{t^\ell } \int_s^{t^\ell }  \left(  \frac{ \p  \hat{\mathfrak F}_3(\tau )}{ \p_{\tau} }   + \frac{ \p  \hat{\mathfrak F}_3(\tau )}{ \p_{t^\ell } } \right) d\tau ds.
\end{split}
\Ee
And taking $\frac{\p}{\p t^{\ell } } $ derivative to
\Be \label{vell1rep}
v_\parallel^{\ell+1} = v_\parallel^\ell - \int_{t^{\ell+1}}^{t^\ell} \mathfrak F_\parallel(s) ds,
\Ee
we get
\Be
\begin{split}
\frac{ \p v_\parallel^{\ell+1}}{\p t^\ell } = & - \mathfrak F_\parallel(t^\ell) + \frac{\p t^{\ell +1}}{\p t^\ell }   \mathfrak F_\parallel(t^{\ell+1}) - \int_{t^{\ell+1}}^{t^\ell} \frac{ \p \mathfrak F_\parallel(s) }{\p t^\ell }  ds
\\ = &   - \mathfrak F_\parallel(t^\ell) + F_\parallel(t^{\ell+1}) -  \frac{ \mathfrak F_\parallel(t^{\ell+1} )  }{ \hat v_3^{\ell +1 } }   \int_{t^{\ell +1}}^{t^\ell } \int_s^{t^\ell }  \left(  \frac{ \p  \hat{\mathfrak F}_3(\tau )}{ \p_{\tau} }   + \frac{ \p  \hat{\mathfrak F}_3(\tau )}{ \p_{t^\ell } } \right) d\tau ds - \int_{t^{\ell+1}}^{t^\ell} \frac{ \p \mathfrak F_\parallel(s) }{\p t^\ell }  ds
\\ = &  -  \frac{ \mathfrak F_\parallel(t^{\ell+1} )  }{ \hat v_3^{\ell +1 } }   \int_{t^{\ell +1}}^{t^\ell } \int_s^{t^\ell }  \left(  \frac{ \p  \hat{\mathfrak F}_3(\tau )}{ \p_{\tau} }   + \frac{ \p  \hat{\mathfrak F}_3(\tau )}{ \p_{t^\ell } } \right) d\tau ds - \int_{t^{\ell+1}}^{t^\ell }  \left(  \frac{ \p  {\mathfrak F}_\parallel (s )}{ \p_{s} }   + \frac{ \p  {\mathfrak F}_\parallel (s )}{ \p_{t^\ell } } \right)  ds.
\end{split}
\Ee
Similarly, taking  taking $\frac{\p}{\p t^{\ell } } $ derivative to
\Be \label{v3ell1rep}
v_3^{\ell+1} = -v_3^\ell - \int_{t^{\ell+1}}^{t^\ell} \mathfrak F_3(s) ds,
\Ee
we get
\Be \label{v3tlderi}
\begin{split}
\frac{ \p v_3^{\ell+1}}{\p t^\ell } = & - \mathfrak F_3(t^\ell) + \frac{\p t^{\ell +1}}{\p t^\ell }   \mathfrak F_3(t^{\ell+1}) - \int_{t^{\ell+1}}^{t^\ell} \frac{ \p \mathfrak F_3(s) }{\p t^\ell }  ds
\\ = &   - \mathfrak F_3(t^\ell) + F_3(t^{\ell+1}) -  \frac{ \mathfrak F_3(t^{\ell+1} )  }{ \hat v_3^{\ell +1 } }   \int_{t^{\ell +1}}^{t^\ell } \int_s^{t^\ell }  \left(  \frac{ \p  \hat{\mathfrak F}_3(\tau )}{ \p_{\tau} }   + \frac{ \p  \hat{\mathfrak F}_3(\tau )}{ \p_{t^\ell } } \right) d\tau ds - \int_{t^{\ell+1}}^{t^\ell} \frac{ \p \mathfrak F_3(s) }{\p t^\ell }  ds
\\ = &  -  \frac{ \mathfrak F_3(t^{\ell+1} )  }{ \hat v_3^{\ell +1 } }   \int_{t^{\ell +1}}^{t^\ell } \int_s^{t^\ell }  \left(  \frac{ \p  \hat{\mathfrak F}_3(\tau )}{ \p_{\tau} }   + \frac{ \p  \hat{\mathfrak F}_3(\tau )}{ \p_{t^\ell } } \right) d\tau ds - \int_{t^{\ell+1}}^{t^\ell }  \left(  \frac{ \p  {\mathfrak F}_3 (s )}{ \p_{s} }   + \frac{ \p  {\mathfrak F}_3 (s )}{ \p_{t^\ell } } \right)  ds.
\end{split}
\Ee

Now let's calculate the matrix $J_\ell^{\ell+1}$ in \eqref{Jell1matrix}. Taking $\p_{\mathbf e } \in \{ \p_{x_\parallel^\ell } , \p_{v_3^\ell }, \p_{v_\parallel^\ell } \}$ derivatives to \eqref{ellellplus1} we get
\Be \label{partialtell1}
\begin{split}
\frac{ \p t^{\ell+1}}{\p x_\parallel^\ell }  = &  - \frac{1}{\hat v_3^{\ell +1} }  \int_{t^{\ell +1}}^{t^\ell } \int_s^{t^\ell } \frac{ \p  \hat{\mathfrak F}_3  (\tau )  }{\p_{x_\parallel^\ell } }d\tau ds, 
\\ \frac{ \p t^{\ell+1}}{\p v_3^\ell }  = &   \frac{t^\ell - t^{\ell +1 } }{\hat v_3^{\ell +1 } } \frac{ \p \hat v_3^\ell }{\p v_3^\ell } +  \frac{1}{\hat v_3^{\ell +1} }  \int_{t^{\ell +1}}^{t^\ell } \int_s^{t^\ell } \frac{ \p  \hat{\mathfrak F}_3  (\tau )  }{\p_{v_3^\ell } }d\tau ds
\\ = &   \frac{(t^\ell - t^{\ell +1 }  )}{\hat v_3^{\ell +1 } } \frac{ ( 1 - (\hat v_3^\ell )^2 )}{\langle v^\ell \rangle } +  \frac{1}{\hat v_3^{\ell +1} }  \int_{t^{\ell +1}}^{t^\ell } \int_s^{t^\ell } \frac{ \p  \hat{\mathfrak F}_3  (\tau )  }{\p_{v_3^\ell } }d\tau ds,
\\ \frac{ \p t^{\ell+1}}{\p v_\parallel^\ell }  = & - \frac{1}{\hat v_3^{\ell +1} }  \int_{t^{\ell +1}}^{t^\ell } \int_s^{t^\ell } \frac{ \p  \hat{\mathfrak F}_3  (\tau )  }{\p_{v_\parallel^\ell } }d\tau ds.
\end{split}
\Ee
Taking $\p_{\mathbf e } \in \{ \p_{x_\parallel^\ell } , \p_{v_3^\ell }, \p_{v_\parallel^\ell } \}$ derivatives to \eqref{xell1rep} we get
\Be \label{partialxell1}
\begin{split}
\frac{ \p x_\parallel^{\ell+1}}{\p {x_\parallel } ^\ell } = & \textbf{Id}_{2,2} +  \frac{ \p t^{\ell+1}}{\p x_\parallel^\ell } \hat v_\parallel^{\ell +1 } +  \int_{t^{\ell +1}}^{t^\ell } \int_s^{t^\ell } \frac{ \p  \hat{\mathfrak F}_\parallel  (\tau )  }{\p_{x_\parallel^\ell } }d\tau ds,
\\ \frac{ \p x_\parallel^{\ell+1}}{\p {v_3 } ^\ell } = &  \frac{ \p t^{\ell+1}}{\p v_3^\ell } \hat v_\parallel^{\ell +1 } +  \int_{t^{\ell +1}}^{t^\ell } \int_s^{t^\ell } \frac{ \p  \hat{\mathfrak F}_\parallel  (\tau )  }{\p_{v_3^\ell } }d\tau ds,
\\ \frac{ \p x_\parallel^{\ell+1}}{\p {v_\parallel } ^\ell } = &  \frac{ \p t^{\ell+1}}{\p v_\parallel^\ell } \hat v_\parallel^{\ell +1 } - (t^\ell - t^{\ell +1 } ) \frac{ \p \hat v_\parallel^\ell }{\p v_\parallel^\ell }  +  \int_{t^{\ell +1}}^{t^\ell } \int_s^{t^\ell } \frac{ \p  \hat{\mathfrak F}_\parallel  (\tau )  }{\p_{v_\parallel^\ell } }d\tau ds
\\ = & \frac{ \p t^{\ell+1}}{\p v_\parallel^\ell } \hat v_\parallel^{\ell +1 } - (t^\ell - t^{\ell +1 } ) \frac{1 - (\hat v_\parallel^\ell ) ^2 }{\langle v^\ell \rangle }  +  \int_{t^{\ell +1}}^{t^\ell } \int_s^{t^\ell } \frac{ \p  \hat{\mathfrak F}_\parallel  (\tau )  }{\p_{v_\parallel^\ell } }d\tau ds .
\end{split}
\Ee
Taking $\p_{\mathbf e } \in \{ \p_{x_\parallel^\ell } , \p_{v_3^\ell }, \p_{v_\parallel^\ell } \}$ derivatives to \eqref{vell1rep} we get
\Be \label{partialvell1}
\begin{split}
\frac{ \p v_\parallel^{\ell+1}}{\p {x_\parallel } ^\ell } = &   \frac{ \p t^{\ell+1}}{\p x_\parallel^\ell }  \mathfrak F_\parallel (t^{\ell +1 } ) -  \int_{t^{\ell +1}}^{t^\ell }  \frac{ \p  {\mathfrak F}_\parallel  (s )  }{\p_{x_\parallel^\ell } } ds,
\\ \frac{ \p v_\parallel^{\ell+1}}{\p {v_3 } ^\ell } = &   \frac{ \p t^{\ell+1}}{\p v_3^\ell }  \mathfrak F_\parallel (t^{\ell +1 } ) -  \int_{t^{\ell +1}}^{t^\ell }  \frac{ \p  {\mathfrak F}_\parallel  (s )  }{\p_{v_3^\ell } } ds,
\\ \frac{ \p v_\parallel^{\ell+1}}{\p {v_\parallel } ^\ell } = &   \textbf{Id}_{2,2} + \frac{ \p t^{\ell+1}}{\p v_\parallel^\ell }  \mathfrak F_\parallel (t^{\ell +1 } ) -  \int_{t^{\ell +1}}^{t^\ell }  \frac{ \p  {\mathfrak F}_\parallel  (s )  }{\p_{v_\parallel^\ell } } ds.
\end{split}
\Ee
And finally, taking $\p_{\mathbf e } \in \{ \p_{x_\parallel^\ell } , \p_{v_3^\ell }, \p_{v_\parallel^\ell } \}$ derivatives to \eqref{v3ell1rep} we get
\Be \label{partialv3ell1}
\begin{split}
\frac{ \p v_3^{\ell+1}}{\p {x_\parallel } ^\ell } = &   \frac{ \p t^{\ell+1}}{\p x_\parallel^\ell }  \mathfrak F_3 (t^{\ell +1 } ) -  \int_{t^{\ell +1}}^{t^\ell }  \frac{ \p  {\mathfrak F}_3  (s )  }{\p_{x_\parallel^\ell } } ds,
\\ \frac{ \p v_3^{\ell+1}}{\p {v_3 } ^\ell } = &  -1 -   \frac{ \p t^{\ell+1}}{\p v_3^\ell }  \mathfrak F_3 (t^{\ell +1 } ) +  \int_{t^{\ell +1}}^{t^\ell }  \frac{ \p  {\mathfrak F}_3  (s )  }{\p_{v_3^\ell } } ds,
\\ \frac{ \p v_3^{\ell+1}}{\p {v_\parallel } ^\ell } = &    \frac{ \p t^{\ell+1}}{\p v_\parallel^\ell }  \mathfrak F_3 (t^{\ell +1 } ) -  \int_{t^{\ell +1}}^{t^\ell }  \frac{ \p  {\mathfrak F}_3  (s )  }{\p_{v_\parallel^\ell } } ds.
\end{split}
\Ee
For the estimate, from \eqref{ptltlplus1} and that $|  \frac{ \p  \hat{\mathfrak F}_3(\tau )}{ \p_{\tau} }   + \frac{ \p  \hat{\mathfrak F}_3(\tau )}{ \p_{t^\ell } } | \lesssim \frac{1}{ \langle v \rangle } $, we obtain
\Be
\begin{split}
| \frac{\p t^{\ell+1} }{\p t^\ell } | & \le  1 + M \frac{ |t^\ell - t^{\ell +1} |^2 }{ \hat v_3^{\ell +1 } \langle v \rangle },
\\ | \frac{\p x_\parallel^{\ell+1} }{\p t^\ell } |  & \le M   \frac{ |t^\ell - t^{\ell +1} |^2 }{ \hat v_3^{\ell +1 } \langle v \rangle },
\\ | \frac{\p v_\parallel^{\ell+1} }{\p t^\ell } |  & \le M(  \frac{ |t^\ell - t^{\ell +1} |^2 }{ \hat v_3^{\ell +1 } \langle v \rangle } + \frac{|t^\ell - t^{\ell +1 } | }{\langle v \rangle } ),
\\ | \frac{\p v_3^{\ell+1} }{\p t^\ell } |  &  \le M(  \frac{ |t^\ell - t^{\ell +1} |^2 }{ \hat v_3^{\ell +1 } \langle v \rangle } + \frac{ |t^\ell - t^{\ell +1 } | }{\langle v \rangle} ).
\end{split}
\Ee

Next, we estimate $\frac{ \p  \hat{\mathfrak F}  (\tau )  }{\p_{x_\parallel^\ell } } $, $\frac{ \p  \hat{\mathfrak F}  (\tau )  }{\p_{v^\ell } }$, $\frac{ \p  {\mathfrak F}  (\tau )  }{\p_{x_\parallel^\ell } } $, $\frac{ \p  {\mathfrak F}  (\tau )  }{\p_{v^\ell } }$. Since
\[
\begin{split}
 \frac{ \p  \hat{\mathfrak F}  (\tau )  }{\p_{x_\parallel^\ell } }  =  \nabla_{x_\parallel}  \hat{\mathfrak F}(\tau ) \cdot \p_{x_\parallel^\ell } X_\parallel (\tau) + \p_{x_3}  \hat{\mathfrak F}(\tau ) \cdot \p_{x_\parallel^\ell } X_3 (\tau) + \nabla_v  \hat{\mathfrak F}(\tau ) \cdot \p_{x_\parallel^\ell }  V (\tau).
\end{split}
\]
From \eqref{pxviXVest}, \eqref{pxiXj}, \eqref{nablaxhatF}, \eqref{nablavhatF}, we have
\Be \notag
\begin{split}
| \frac{ \p  \hat{\mathfrak F}  (\tau )  }{\p_{x_\parallel^\ell } }|  \lesssim \frac{1}{\langle V(\tau ) \rangle^2 } |t^\ell - t^{\ell + 1 } | + \frac{1}{  \langle V(\tau ) \rangle }.
\end{split}
\Ee
And,
\Be \notag
\begin{split}
|  \frac{ \p  \hat{\mathfrak F}  (\tau )  }{\p_{v^\ell } } |   = |    \nabla_{x}  \hat{\mathfrak F}(\tau ) \cdot \p_{v^\ell } X (\tau)    + \nabla_v  \hat{\mathfrak F}(\tau ) \cdot \p_{v^\ell }  V (\tau) | \lesssim \frac{|t^\ell - t^{\ell +1} | }{ \langle V(\tau ) \rangle^2  }  + \frac{1}{ \langle V(\tau ) \rangle^2 }.
\end{split}
\Ee
Similarly, from \eqref{pxviXVest}, \eqref{pxiXj}, \eqref{nablaxfrakF} and \eqref{nablavfrakF},
\[
\begin{split}
|  \frac{ \p  {\mathfrak F}  (\tau )  }{\p_{x_\parallel^\ell } }  |  = &   | \nabla_{x_\parallel}  {\mathfrak F}(\tau ) \cdot \p_{x_\parallel^\ell } X_\parallel (\tau) + \p_{x_3}  {\mathfrak F}(\tau ) \cdot \p_{x_\parallel^\ell } X_3 (\tau) + \nabla_v  {\mathfrak F}(\tau ) \cdot \p_{x_\parallel^\ell }  V (\tau)|  \lesssim  \frac{1}{ \langle V(\tau) \rangle } |t^\ell - t^{\ell + 1 } | + 1,
\\ |  \frac{ \p  {\mathfrak F}  (\tau )  }{\p_{v^\ell } } |   = &  |    \nabla_{x}  {\mathfrak F}(\tau ) \cdot \p_{v^\ell } X (\tau)    + \nabla_v  {\mathfrak F}(\tau ) \cdot \p_{v^\ell }  V (\tau) | \lesssim \frac{|t^\ell - t^{\ell +1} | }{ \langle V(\tau ) \rangle   }  + \frac{1}{ \langle V(\tau ) \rangle }.
\end{split}
\]
Thus, we have
\Be  \label{phatFpxvellest}
\begin{split}
\int_{t^{\ell+1}}^{t^\ell } \int_s^{t^\ell }  | \frac{ \p  \hat{\mathfrak F}  (\tau )  }{\p_{x_\parallel^\ell } }|  d\tau ds  \lesssim & \frac{ |t^\ell - t^{\ell +1 } |^2}{\langle v \rangle}, \ \  \int_{t^{\ell+1}}^{t^\ell } \int_s^{t^\ell }  | \frac{ \p  \hat{\mathfrak F}  (\tau )  }{\p_{v^\ell } }|  d\tau ds  \lesssim \frac{ |t^\ell - t^{\ell +1 } |^2}{\langle v \rangle^2},
\\  \int_{t^{\ell+1}}^{t^\ell }  | \frac{ \p  {\mathfrak F}  (\tau )  }{\p_{x_\parallel^\ell } }|   ds  \lesssim &  |t^\ell - t^{\ell +1 } |  , \ \  \int_{t^{\ell+1}}^{t^\ell }   | \frac{ \p  {\mathfrak F}  (s )  }{\p_{v^\ell } }|   ds  \lesssim \frac{  |t^\ell - t^{\ell +1 } | }{\langle v \rangle} .
\end{split}
\Ee

Thus, from \eqref{partialtell1} and \eqref{phatFpxvellest} , we obtain
\Be
\begin{split}
| \frac{ \p t^{\ell+1}}{\p x_\parallel^\ell }  |  \le  & M  \frac{ |t^\ell - t^{\ell+1} | ^2}{ \hat v_3^{\ell +1 } \langle v \rangle },
\\ |  \frac{ \p t^{\ell+1}}{\p v_3^\ell } |  \le  & M   \frac{ |t^\ell - t^{\ell+1} |}{ \hat v_3^{\ell +1 } \langle v \rangle },
\\ |  \frac{ \p t^{\ell+1}}{\p v_\parallel ^\ell } |  \le  & M  \frac{ |t^\ell - t^{\ell+1} |^2}{ \hat v_3^{\ell +1 } \langle v \rangle^2 }.
\end{split}
\Ee
From  \eqref{partialxell1} and \eqref{phatFpxvellest},
\Be
\begin{split}
| \frac{ \p x_\parallel^{\ell+1}}{\p x_\parallel^\ell }  |  \le  & 1 +  M  \frac{ |t^\ell - t^{\ell+1} | ^2}{ \hat v_3^{\ell +1 } \langle v \rangle },
\\ |  \frac{ \p x_\parallel^{\ell+1}}{\p v_3^\ell } |  \le  & M   \frac{ |t^\ell - t^{\ell+1} |}{ \hat v_3^{\ell +1 } \langle v \rangle },
\\ |  \frac{ \p x_\parallel^{\ell+1}}{\p v_\parallel ^\ell } |  \le  & M \left(  \frac{ |t^\ell - t^{\ell+1} |^2}{ \hat v_3^{\ell +1 } \langle v \rangle^2 } + \frac{|t^\ell - t^{\ell +1} | }{\langle v \rangle } \right) .
\end{split}
\Ee
From  \eqref{partialvell1} and \eqref{phatFpxvellest},
\Be
\begin{split}
| \frac{ \p v_\parallel^{\ell+1}}{\p x_\parallel^\ell }  |  \le  &  M \left( \frac{ |t^\ell - t^{\ell+1} | ^2}{ \hat v_3^{\ell +1 } \langle v \rangle } + |t^\ell - t^{\ell +1} | \right) ,
\\ |  \frac{ \p v_\parallel^{\ell+1}}{\p v_3^\ell } |  \le  & M  \left(  \frac{ |t^\ell - t^{\ell+1} |}{ \hat v_3^{\ell +1 } \langle v \rangle } + \frac{ |t^\ell - t^{\ell +1} | }{\langle v \rangle} \right),
\\ |  \frac{ \p v_\parallel^{\ell+1}}{\p v_\parallel ^\ell } |  \le  & 1 +  M \left(  \frac{ |t^\ell - t^{\ell+1} |^2}{ \hat v_3^{\ell +1 } \langle v \rangle^2 } + \frac{ | t^\ell - t^{\ell+1} |}{\langle v \rangle} \right) .
\end{split}
\Ee
From \eqref{partialv3ell1} and \eqref{phatFpxvellest},
\Be
\begin{split}
| \frac{ \p v_3^{\ell+1}}{\p x_\parallel^\ell }  |  \le  &  M \left( \frac{ |t^\ell - t^{\ell+1} | ^2}{ \hat v_3^{\ell +1 } \langle v \rangle } + |t^\ell - t^{\ell +1} |  \right) ,
\\ |  \frac{ \p v_3^{\ell+1}}{\p v_\parallel ^\ell } |  \le  &   M \left(  \frac{ |t^\ell - t^{\ell+1} |^2}{ \hat v_3^{\ell +1 } \langle v \rangle^2 } + \frac{ | t^\ell - t^{\ell+1} |}{\langle v \rangle}  \right) .
\end{split}
\Ee
Now we estimate $|  \frac{ \p v_3^{\ell+1}}{\p v_3^\ell } |  $. Notice that since
\Be \label{v3elliint}
\begin{split}
0 = x_3^{\ell} =  & x_3^{\ell+1} + \int_{t^{\ell +1}}^{t^\ell } \hat V(s ) ds 
\\ = & (t^\ell - t^{\ell+1} )  \hat v_3^{\ell+1} +  \int_{t^{\ell +1}}^{t^\ell }  \int_{t^{\ell+1}}^s \hat {\mathfrak F}_3 (\tau) d\tau ds 
\\ = &  (t^\ell - t^{\ell+1} )  \hat v_3^{\ell+1} + \frac{(t^\ell - t^{\ell+1} )^2}{2}  \hat {\mathfrak F}_3 (t^{\ell+1} ) +   \int_{t^{\ell +1}}^{t^\ell }  \int_{t^{\ell+1}}^s \int_{t^{\ell+1}}^\tau \frac{d}{d \tau' }  \hat {\mathfrak F}_3 (\tau')  d\tau' d\tau ds,
\end{split}
\Ee
and from \eqref{nablaxhatF}, \eqref{nablavhatF},
\[
|  \frac{d}{d \tau' }  \hat {\mathfrak F}_3 (\tau') | \le \frac{M}{\langle v \rangle } 
\]
where $M$ depends on $\| \nabla_x E \|_\infty$, $\| \nabla_x B \|_\infty$, and $g$. Thus
\Be
1 + \frac{(t^\ell - t^{\ell+1} )}{2 \hat v_3^{\ell+1} }  \hat {\mathfrak F}_3 (t^{\ell+1} )  =  M \frac{ |t^\ell - t^{\ell+1} |^2}{\hat v_3^{\ell+1} \langle v \rangle } .
\Ee
From \eqref{hatF},
\[
\hat {\mathfrak F}_3 (t^{\ell+1} ) = \frac{ 1 - |\hat v_3^{\ell+1} |^2 }{\langle v^{\ell+1} \rangle } \mathfrak F_3(t^{\ell+1} )  + O_{\mathfrak F} (1)  \frac{  \hat v_3^{\ell+1} }{\langle v^{\ell+1} \rangle }.
\]
Therefore,
\Be \label{vell3d1}
| 1 + \frac{(t^\ell - t^{\ell+1} )}{2 \hat v_3^{\ell+1} } \frac{ 1 - |\hat v_3^{\ell+1} |^2 }{\langle v^{\ell+1} \rangle } \mathfrak F_3(t^{\ell+1} ) | \le M \left( \frac{ |t^\ell - t^{\ell+1} |^2}{\hat v_3^{\ell+1} \langle v \rangle }  + |t^\ell - t^{\ell+1} | \right).
\Ee
Also, notice that
\Be \label{vell3d2}
| \frac{ ( 1 - (\hat v_3^\ell )^2 )}{\langle v^\ell \rangle } - \frac{ ( 1 - (\hat v_3^{\ell+1} )^2 )}{\langle v^{\ell+1} \rangle } |  = |  \int_{t^{\ell+1}}^{t^\ell}  \frac{d}{ds} \left( \frac{ ( 1 - (\hat V^\ell_3(s) )^2 )}{\langle  V^\ell(s) \rangle }  \right) ds |   \le  M \frac{ |t^\ell - t^{\ell+1} |}{ \langle v \rangle ^2 }.
\Ee
Therefore from \eqref{phatFpxvellest}, \eqref{phatFpxvellest}, \eqref{vell3d1}, and \eqref{vell3d2}, we have
\Be
\begin{split}
\frac{ \p v_3^{\ell+1}}{\p {v_3 } ^\ell } =   &  -1 -   \frac{ \p t^{\ell+1}}{\p v_3^\ell }  \mathfrak F_3 (t^{\ell +1 } ) +  \int_{t^{\ell +1}}^{t^\ell }  \frac{ \p  {\mathfrak F}_3  (s )  }{\p_{v_3^\ell } } ds
\\  = & -1  - \left(  \frac{(t^\ell - t^{\ell +1 }  )}{\hat v_3^{\ell +1 } } \frac{ ( 1 - (\hat v_3^\ell )^2 )}{\langle v^\ell \rangle } +  \frac{1}{\hat v_3^{\ell +1} }  \int_{t^{\ell +1}}^{t^\ell } \int_s^{t^\ell } \frac{ \p  \hat{\mathfrak F}_3  (\tau )  }{\p_{v_3^\ell } }d\tau ds \right)   \mathfrak F_3 (t^{\ell +1 } )+  \int_{t^{\ell +1}}^{t^\ell }  \frac{ \p  {\mathfrak F}_3  (s )  }{\p_{v_3^\ell } } ds
\\ = & - 1 -   \frac{(t^\ell - t^{\ell +1 }  )}{\hat v_3^{\ell +1 } } \frac{ ( 1 - (\hat v_3^{\ell+1} )^2 )}{\langle v^{\ell+1} \rangle } \mathfrak F_3 (t^{\ell +1 } )  +   \frac{(t^\ell - t^{\ell +1 }  )}{\hat v_3^{\ell +1 } } \left(  \frac{ ( 1 - (\hat v_3^\ell )^2 )}{\langle v^\ell \rangle } - \frac{ ( 1 - (\hat v_3^{\ell+1} )^2 )}{\langle v^{\ell+1} \rangle } \right) \mathfrak F_3 (t^{\ell +1 } ) 
\\ &-  \frac{ \mathfrak F_3 (t^{\ell +1 } )}{\hat v_3^{\ell +1} }  \int_{t^{\ell +1}}^{t^\ell } \int_s^{t^\ell } \frac{ \p  \hat{\mathfrak F}_3  (\tau )  }{\p_{v_3^\ell } }d\tau ds  +  \int_{t^{\ell +1}}^{t^\ell }  \frac{ \p  {\mathfrak F}_3  (s )  }{\p_{v_3^\ell } } ds
\\ = & -1 + 2  + M \left( \frac{ |t^\ell - t^{\ell+1} |^2}{\hat v_3^{\ell+1} \langle v \rangle^2 } +  \frac{  |t^\ell - t^{\ell +1 } | }{\langle v \rangle} \right)
\\ =  & 1  + M \left( \frac{ |t^\ell - t^{\ell+1} |^2}{\hat v_3^{\ell+1} \langle v \rangle^2 } +  \frac{  |t^\ell - t^{\ell +1 } | }{\langle v \rangle} \right).
\end{split}
\Ee
Finally, from \eqref{tbbdvb}, \eqref{alphaest},
\[
|t^\ell - t^{\ell+1} | \lesssim \hat v_3^{\ell+1} \langle v \rangle \lesssim \alpha(t,x,v) \langle v \rangle,
\]
and from \eqref{v3elliint},
\[
 |t^\ell - t^{\ell+1} | \hat v_3^{\ell+1} =   - \int_{t^{\ell +1}}^{t^\ell }  \int_{t^{\ell+1}}^s \hat {\mathfrak F}_3 (\tau) d\tau ds \lesssim \frac{|t^\ell - t^{\ell+1}|^2 g }{\langle v \rangle } ,
\]
so by \eqref{alphaest},
\[
 \alpha(t,x,v) \langle v \rangle \lesssim  \hat v_3^{\ell+1} \langle v \rangle \lesssim   |t^\ell - t^{\ell+1} |.
\]
Therefore, we have for $\ell \ge 1$,  there exists $c, C >0$ depending on $T, \| E \|_\infty, \|B\|_\infty$, and $g$ such that 
\Be \label{tdiffalpha}
c  \alpha(t,x,v) \langle v \rangle \le   |t^\ell - t^{\ell+1} |  \le C \alpha(t,x,v) \langle v \rangle. 
\Ee
Put together the above estimates and using \eqref{tdiffalpha}, we have for some $M>0$, 
\Be
\begin{split}
J^{\ell+1}_{\ell} \le & 
 \left[\begin{array}{c|cc|c|cc}
 1 + M \alpha \langle v \rangle &   M \alpha \langle v \rangle & M \alpha \langle v \rangle &  M &  M \alpha \langle v \rangle  & M  \alpha \langle v \rangle
 \\  \hline  M \alpha \langle v \rangle & 1 +  M \alpha \langle v \rangle &  M \alpha \langle v \rangle & M &  M  \alpha \langle v \rangle  & M \alpha \langle v \rangle 
 \\  M \alpha \langle v \rangle  &    M \alpha \langle v \rangle & 1+  M \alpha \langle v \rangle & M &  M  \alpha \langle v \rangle  & M \alpha \langle v \rangle 
 \\  \hline  M \alpha \langle v \rangle  & M \alpha \langle v \rangle & M \alpha \langle v \rangle  & 1 + M \alpha \langle v \rangle &  M  \alpha \langle v \rangle  & M  \alpha \langle v \rangle 
 \\ \hline  M \alpha \langle v \rangle & M \alpha \langle v \rangle & M \alpha \langle v \rangle & M & 1 +  M  \alpha \langle v \rangle & M  \alpha \langle v \rangle
 \\  M \alpha \langle v \rangle  & M \alpha \langle v \rangle & M \alpha \langle v \rangle & M &   M  \alpha \langle v \rangle & 1+  M  \alpha \langle v \rangle
 \end{array} \right]
 \\ : = & J(\alpha \langle v \rangle) .
\end{split}
\Ee
From diagonalization, we get
\[
J(\alpha \langle v \rangle ) = \mathcal P \Lambda \mathcal P^{-1},
\]
where
\[
\Lambda = \text{diag} \left[ 1, 1, 1, 1,  1 + M \left( \sqrt{\alpha \langle v \rangle ( 4 \alpha \langle v \rangle + 5 )  }  + 3 \alpha \langle v \rangle \right) , 1 + M \left( - \sqrt{\alpha \langle v \rangle ( 4 \alpha \langle v \rangle + 5 )  }  + 3 \alpha \langle v \rangle \right) \right], 
\]
and
\[
\begin{split}
\mathcal P = &
\begin{bmatrix}
-1 & -1 & -1 & -1 & 1 & 1 
\\ 1 & 0 & 0 & 0 & 1 & 1
\\ 0 & 1 & 0 & 0 & 1 & 1
\\ 0 & 0 & 0 & 0 &  \sqrt{\alpha \langle v \rangle ( 4 \alpha \langle v \rangle + 5 )  } - 2 \alpha \langle  v \rangle & - \sqrt{\alpha \langle v \rangle ( 4 \alpha \langle v \rangle + 5 )  } - 2 \alpha \langle  v \rangle 
\\ 0 & 0 & 1 & 0 & 1 & 1
\\ 0 & 0 & 0 & 1 & 1 & 1
\end{bmatrix},
\\ \mathcal P^{-1} = & \begin{bmatrix}
-\frac{1}{5} & \frac{4}{5}  & -\frac{1}{5} & 0 & -\frac{1}{5}  & -\frac{1}{5} 
\\  -\frac{1}{5} & -\frac{1}{5}  & \frac{4}{5} & 0 &  -\frac{1}{5}  & -\frac{1}{5} 
\\ -\frac{1}{5} & -\frac{1}{5}  & -\frac{1}{5}  & 0 & \frac{4}{5}  & -\frac{1}{5}  
\\ -\frac{1}{5} & -\frac{1}{5}  & -\frac{1}{5}  & 0 & -\frac{1}{5}  & \frac{4}{5} 
\\ a &  a  &  a  & b & a & a 
\\ - a  & -a  & -a  & -b & -a & -a
\end{bmatrix},
\end{split}
\]
where
\[
a =   \frac{ 2 \alpha \sqrt{ \langle v \rangle} +  \sqrt{  \alpha( 4 \alpha \langle v \rangle + 5 )} }{10   \sqrt{  \alpha( 4 \alpha \langle v \rangle + 5 )} } , \ b = \frac{1}{2 \sqrt{ \alpha \langle v \rangle (4 \alpha \langle v \rangle + 5 ) }}.
\]
Now from \eqref{tdiffalpha}, the number of bounces
\Be \label{ellstarbd}
 \ell^*(0;t,x,v) \le \frac{T}{c \alpha(t,x,v) \langle v \rangle }.
\Ee
Therefore,
\Be
\begin{split}
  \prod_{\ell = 1 }^{\ell^*(0;t,x,v) } J^{\ell+1}_{\ell }  & \le  J^{\ell^*(0;t,x,v) }  
\le   \widetilde{   \mathcal P  }  \widetilde{ \Lambda} ^{\ell^*}  \widetilde{   \mathcal P^{-1} }
 \le ( 1 + 2 M \sqrt{\alpha \langle v \rangle } )^{\frac{1}{c \alpha \langle v \rangle } }   \widetilde{   \mathcal P  }   \widetilde{   \mathcal P^{-1} },
\end{split}
\Ee
where we use the notation: for a matrix $A$, the entries of a matrix $\widetilde A$ are absolute values of the entries of $A$, i.e. $(\widetilde A)_{ij} = | (A)_{ij} |$. From
\[
( 1 + 2 M \sqrt{\alpha \langle v \rangle } )^{\frac{1}{c \alpha \langle v \rangle } } = \left(  \left( 1 + 2 M \sqrt{\alpha \langle v \rangle } \right)^{\frac{1}{ 2 M \sqrt{ \alpha \langle v \rangle } } } \right)^{  \frac{2M}{c \sqrt{ \alpha \langle v \rangle } } }  \le e^{  \frac{2M}{c \sqrt{ \alpha \langle v \rangle } } },
\]
and that $ \left( \widetilde{   \mathcal P  }   \widetilde{   \mathcal P^{-1} } \right)_{ij} \le  \frac{M}{\sqrt{\alpha \langle v \rangle } } $, we get
\Be \label{midij}
 \left( \prod_{\ell = 1 }^{\ell^*(0;t,x,v) } J^{\ell+1}_{\ell } \right)_{ij}  \le   e^{  \frac{2M}{c \sqrt{ \alpha \langle v \rangle } } }  \left(\widetilde{   \mathcal P  }   \widetilde{   \mathcal P^{-1} } \right)_{ij} \le   \frac{M}{\sqrt{\alpha \langle v \rangle } }  e^{  \frac{2M}{c \sqrt{ \alpha \langle v \rangle } } } .
\Ee
Next, we estimate  $ \frac{\partial ( X_{\mathbf{cl}}(s), V_{\mathbf{cl}}(s))}{\partial (t^{\ell_{*}},  {x}_\parallel^{\ell_{*}}, {v}_{3}^{\ell_{*}}, {v}_{\parallel}^{\ell_{*}})} $ and  $ \frac{\partial (t^{1},  {x}_{\parallel}^{1}, {v}_{3}^{1}, {v}_{\parallel}^{1})}{\partial (x,v)}$. From
\[
\begin{split}
X(s;t^{\ell^*},x^{\ell^*},v^{\ell^*}) =  & x^{\ell^*} - (t^{\ell^*}-s) \hat{v}^{\ell^*} + \int^{t^{\ell^*}}_s\int^{t^{\ell^*}}_\tau \hat{\mathfrak F } _{}(\tau^\prime, X (\tau^\prime), V(\tau^\prime) ) \dd \tau^\prime \dd \tau,
\\ V(s;t^{\ell^*},x^{\ell^*},v^{\ell^*}) = & v^{\ell^*} - \int_s^{t^{\ell^*}} \mathfrak F (\tau, X(\tau) , V(\tau) ) \dd \tau,
\end{split}
\]
we have
\[
\begin{split}
\frac{\p X(s) }{\p t^{\ell^*} } = & -   \hat{v}^{\ell^*} +  \int^{t^{\ell^*}}_s \hat{\mathfrak F } ( t^{\ell^*} ) +   \int^{t^{\ell^*}}_s\int^{t^{\ell^*}}_\tau \frac{\p \hat{\mathfrak F } _{}(\tau^\prime )}{\p t^{\ell^*} }  \dd \tau^\prime \dd \tau
\\ = & - \hat V(s) +  \int^{t^{\ell^*}}_s\int^{t^{\ell^*}}_\tau \frac{\p \hat{\mathfrak F } _{}(\tau^\prime )}{\p t^{\ell^*} }  \dd \tau^\prime \dd \tau
\\ \frac{\p V(s) }{\p t^{\ell^*} } = & - \mathfrak F( t^{\ell^*} ) -  \int_s^{t^{\ell^*}} \frac{ \p \mathfrak F (\tau )}{\p t^{\ell^*} }   \dd \tau.
\end{split}
\]
Therefore, $ | \frac{\p X(s) }{\p t^{\ell^*} }  | \lesssim |  \hat V(s) | \lesssim 1$, and $  \frac{\p V(s) }{\p t^{\ell^*} } \lesssim 1 $. Combine this with \eqref{pxviXVest}, we have
\Be \label{lastij}
  \frac{\partial ( X_{\mathbf{cl}}(s), V_{\mathbf{cl}}(s))}{\partial (t^{\ell_{*}},  {x}_\parallel^{\ell_{*}}, {v}_{3}^{\ell_{*}}, {v}_{\parallel}^{\ell_{*}})}  \le 
  \begin{bmatrix}
 M & M & \frac{|t^{\ell^*}-s| }{\langle v \rangle } 
 \\ M &   |t^{\ell^*}-s| & M
  \end{bmatrix}.
\Ee
Lastly, since $t^1 = \tb(t,x,v)$, $(x^1_\parallel, 0 ) = \xb(t,x,v)$, $(v^1_3, v^1_\parallel) = \vb(t,x,v)$, from \eqref{pxitb} and \eqref{pvitb},
\[ 
| \p_x t^1 | \lesssim \frac{1}{ \hat {v }_3^1} \lesssim \frac{1}{\alpha} , \  | \p_v t^1 | \lesssim \frac{t^1 }{ \hat {v }_3^1 \langle v \rangle } \lesssim M.
\]
And thus from \eqref{pxbvb}, we have
\Be \label{firstij}
 \frac{\partial (t^{1},  {x}_{\parallel}^{1}, {v}_{3}^{1}, {v}_{\parallel}^{1})}{\partial (x,v)}
  \le 
  \begin{bmatrix}
\frac{M}{\alpha} &  M
\\ 1 +  \frac{M}{\alpha} & M
\\  \frac{M}{\alpha} & M
  \end{bmatrix}.
\Ee
Finally, combining \eqref{midij}, \eqref{lastij}, and \eqref{firstij}, we get for some $C_1= C_1(M,c)$, 
\Be
\left| \left( \frac{\partial ( X_{\mathbf{cl}}(s;t,x,v),V_{\mathbf{cl}}(s;t,x,v))}{\partial (x,v)} \right)_{ij} \right| \le \frac{ 36 M^3}{\alpha^{3/2}  \sqrt{ \langle v \rangle } } e^{  \frac{2M}{c \sqrt{ \alpha \langle v \rangle } }} \le C_1 \langle v \rangle  e^{\frac{C_1}{\sqrt{ \alpha \langle v \rangle } } }.  
\Ee
\end{proof}

Proposition \ref{specBCprop} comes as a consequence of the lemma.
\begin{proof}[proof of Proposition \ref{specBCprop}]
For any $(t,x,v) \in (0,T) \times \bar \O \times \mathbb R^3$, let $\p_{\mathbf e } \in \{ \nabla_x , \nabla_v \} $, then
\Be \label{pef}
\begin{split}
\p_{\mathbf e }  f(t,x,v ) & = \p_{\mathbf e} (f(0, X_{\mathbf{cl}}(0;t,x,v) , V_{\mathbf{cl}}(0;t,x,v) ) 
\\ & = \nabla_x f_0 \cdot \p_{\mathbf e } X_{\mathbf{cl}}(0;t,x,v) +  \nabla_v f_0 \cdot \p_{\mathbf e } V_{\mathbf{cl}}(0;t,x,v).
\end{split}
\Ee
Now, from \eqref{alphaest} and \eqref{lemma_Dxv},  write $X(0) = X_{\mathbf{cl}}(0;t,x,v)$, and $V(0) = V_{\mathbf{cl}}(0;t,x,v)$, we have
\Be \label{nablaxfest1}
\begin{split}
& \langle v \rangle^{4+\delta} |  \p_{\mathbf e }  f (t,x,v ) |
\\  \le &  C_1 | \nabla_x f(0,X(0), V(0) )  |   \langle v \rangle^{5+\delta}  e^{\frac{C_1}{\sqrt{ \alpha(t,x,v) \langle v \rangle } } }  +  C_1 | \nabla_v f(0,X(0), V(0) )  |    \langle v \rangle^{5+\delta}  e^{\frac{C_1}{\sqrt{ \alpha(t,x,v) \langle v \rangle } } }
\\ \le &  C | \nabla_x f(0,X(0), V(0) )   \langle |V(0)\rangle^{5+\delta}  e^{\frac{C}{\sqrt{ \alpha(0,X(0),V(0)) \langle V(0) \rangle } } }
\\ &  +  C | \nabla_v f(0,X(0), V(0) )    \langle V(0) \rangle ^{5+\delta}  e^{\frac{C}{\sqrt{ \alpha(0,X(0),V(0)) \langle V(0) \rangle } } }
\\ \le & C \left(  \| \langle v \rangle^{5 + \delta }    e^{\frac{C}{\sqrt{ \alpha \langle v \rangle } } }   \nabla_{x} f_0 \|_\infty +  \| \langle v \rangle^{5 + \delta }    e^{\frac{C}{\sqrt{ \alpha \langle v \rangle } } }      \nabla_v f_0 \|_\infty \right) .
\end{split}
\Ee

Next, using the same argument in Lemma \ref{tracepf}, we obtain
\Be \label{xparavfbdpspec}
\begin{split}
& \sup_{0 \le t \le T}  \| \langle v \rangle^{4 + \delta }  \nabla_{x} f (t) \|_{L^\infty(\gamma \setminus \gamma_0) }  \le \sup_{0 \le t \le T}   \| \langle v \rangle^{4 + \delta }  \nabla_{x} f (t) \|_\infty, 
\\ &  \sup_{0 \le t \le T}   \| \langle v \rangle^{4 + \delta }  \nabla_{v} f (t) \|_{L^\infty(\gamma \setminus \gamma_0) }  \le  \sup_{0 \le t \le T}   \| \langle v \rangle^{4 + \delta }  \nabla_{v} f (t) \|_\infty.
\end{split}
\Ee

Combining with \eqref{nablaxfest1}, we conclude \eqref{specdfbd}.
\end{proof}

We consider the sequence of functions:
\[
f^0(t,x,v) = f_0(x,v), \ E^0(t,x) = E_0(t,x), \ B^0(t,x) = B_0(x).
\]
For $\ell \ge 1$, let $f^\ell$ be the solution of
\Be \label{fellseqspec}
\begin{split}
\p_t f^\ell + \hat v \cdot \nabla_x f^\ell +  \mathfrak F^{\ell-1}   \cdot \nabla_v f^\ell = &  0, \text{ where }   \mathfrak F^{\ell-1} =  E^{\ell-1} + E_{\text{ext}} + \hat v \times (B^{\ell-1} + B_{\text{ext} } ) - g \mathbf e_3,
\\f^\ell(0,x,v)  = &  f_0(x,v) ,
\\  f^\ell(t,x,v) |_{\gamma_-} = &  f^{\ell-1}(t,x,v_\parallel, - v_3 ).
\end{split}
\Ee
Let $\rho^\ell = \int_{\mathbb R^3 } f^\ell dv, J^\ell = \int_{\mathbb R^3 } \hat v  f^\ell dv$. Let 
\Be \label{ElBlspec}
E^\ell = \eqref{Eesttat0pos} + \dots + \eqref{Eest3bdrycontri}, \  B^\ell = \eqref{Besttat0pos} + \dots + \eqref{Bestbdrycontri}, \text{ with } f \text{ changes to } f^\ell. 
\Ee
And let
\Be \label{Fellspec}
\mathfrak F^\ell = E^\ell + E_{\text{ext}} - \hat v \times (B^\ell + B_{\text{ext} } ) - g \mathbf e_3. 
\Ee
We prove several uniform-in-$\ell$ bounds for the sequence before passing the limit.

\begin{lemma} \label{fEBellbdlemspec}
Suppose $f_0$ satisfies \eqref{f0bdd}, $E_0$, $B_0$ satisfy \eqref{E0B0g}, \eqref{E0B0bdd}, then there exits $M_1, M_2$ such that for $0 < T \ll 1 $, 
\Be \label{fellboundspec}
\begin{split}
 \sup_\ell \sup_{0 \le t \le T} \left( \| \langle v \rangle^{4 + \delta } f^\ell(t) \|_{L^\infty(\bar \O \times \mathbb R^3)}    \right) < & M_1, \ 
\\  \sup_\ell \sup_{0 \le t \le T} \left( \| E^\ell (t) \|_\infty + \| B^\ell (t) \|_\infty \right) + |B_e| + E_e +  g < & M_2,
\\ \inf_{\ell} \inf_{ t ,x_\parallel} \left( g - E_e  -  E^\ell_3(t,x_\parallel, 0 )  -  (\hat v \times B^\ell)_3(t,x_\parallel, 0 ) \right) > &  c_0.
\end{split}
\Ee
\end{lemma}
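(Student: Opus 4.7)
The plan is to proceed by induction on $\ell$, mirroring the arguments given for the inflow case (see the derivation of \eqref{fellboundin}--\eqref{BEellinftyestinflow4}) and the diffuse case \eqref{fellbound}. Assume the three bounds hold for indices $0,1,\ldots,\ell$ with some fixed constants $M_1, M_2, c_0$ to be chosen; I then establish them for $\ell+1$ and close the induction by taking $T \ll 1$.

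For the $f^{\ell+1}$ bound, the specular BC is actually the simplest of the three cases because reflection is an isometry on momentum and preserves the value of $f$. Along the generalized characteristics $(X^\ell_{\mathbf{cl}}, V^\ell_{\mathbf{cl}})$ defined as in \eqref{cycles} but using the field $\mathfrak F^\ell$ from \eqref{Fellspec}, the magnitude $|V^\ell_{\mathbf{cl}}|$ changes only through the action of $\mathfrak F^\ell$, since each bounce flips the sign of $v_3$ without altering $|v|$. The induction hypothesis gives $\|\mathfrak F^\ell(s)\|_\infty \le M_2$, so the same Gronwall argument yielding \eqref{Vsv} produces
\[
(1+T(M_2+g))^{-1}\langle v\rangle \le \langle V^\ell_{\mathbf{cl}}(s;t,x,v)\rangle \le (1+T(M_2+g))\langle v\rangle
\]
for all $s\in[0,t]$. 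The unique solution to \eqref{fellseqspec} is $f^{\ell+1}(t,x,v)=f_0(X^\ell_{\mathbf{cl}}(0),V^\ell_{\mathbf{cl}}(0))$, since the boundary condition transports values across bounces without modification. Therefore
\[
\|\langle v\rangle^{4+\delta} f^{\ell+1}(t)\|_{L^\infty(\bar\O\times\R^3)} \le (1+T(M_2+g))^{4+\delta}\|\langle v\rangle^{4+\delta} f_0\|_\infty,
\]
and choosing $M_1 = 2\|\langle v\rangle^{4+\delta} f_0\|_\infty$ together with $T\ll 1$ closes the first bound.

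For the field bound, I apply Lemma \ref{EBlinflemma} to the Glassey-Strauss representation \eqref{ElBlspec} for $E^{\ell+1}, B^{\ell+1}$, which depends only on $f^{\ell+1}$ (not on $\mathfrak F^\ell$ directly, save through the $Sf$ integration by parts, which is controlled by \eqref{inductfEB}). Exactly as in \eqref{BEellinftyestinflow1}--\eqref{EBlifinalin}, this produces
\[
\sup_{0\le t\le T}\left(\|E^{\ell+1}(t)\|_\infty + \|B^{\ell+1}(t)\|_\infty\right) \le C(\|E_0\|_{C^1}+\|B_0\|_{C^1}) + CTM_1\bigl(1+T(M_2+g+|B_e|)\bigr).
\]
Setting $M_2 = (C+1)(\|E_0\|_{C^1}+\|B_0\|_{C^1})+|B_e|+E_e+g$ and shrinking $T$ absorbs the residual, closing the second bound.

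The main obstacle, as in the inflow case, is the sign condition at $\p\O$: we need pointwise values of $E^{\ell+1}_3(t,x_\parallel,0)$ and $(\hat v\times B^{\ell+1})_3(t,x_\parallel,0)$, not mere $L^\infty_{\text{loc}}$ bounds. The representation formulas in Proposition \ref{Eiform} and Proposition \ref{Biform} are in fact well-defined pointwise for every $(t,x)\in[0,T]\times\bar\O$, including $x_3=0$: the singular boundary terms \eqref{Eestbdrypos}, \eqref{Eestbdryneg}, \eqref{Eest3bdrycontri}, \eqref{Bestbdrypos}, \eqref{Bestbdryneg}, \eqref{Bestbdrycontri} all remain integrable at $x_3=0$ thanks to the $\langle v\rangle^{-4-\delta}$ decay of $f^{\ell+1}$ from the first bound, so the trace is defined by direct evaluation. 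Moreover, at $t=0$ these representations reproduce $E_{0,3}|_{\p\O}$ and $B_{0,i}|_{\p\O}$ by the compatibility condition \eqref{EBintialC}, and the same Glassey-Strauss estimate of Lemma \ref{EBlinflemma}, applied now at $x_3=0$, gives
\[
\sup_{t,x_\parallel}\Bigl(|E^{\ell+1}_3(t,x_\parallel,0)-E_{0,3}(x_\parallel,0)|+|B^{\ell+1}_i(t,x_\parallel,0)-B_{0,i}(x_\parallel,0)|\Bigr) \le CT\bigl(M_1+M_2+\|E_0\|_{C^1}+\|B_0\|_{C^1}\bigr).
\]
Combined with \eqref{E0B0g}, taking $c_0=c_1/2$ and $T\ll 1$ preserves the strict inequality at level $\ell+1$, completing the induction.
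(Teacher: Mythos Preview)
Your approach is essentially the same as the paper's: induction on $\ell$, velocity control along specular cycles using the uniform field bound $M_2$, then the Glassey--Strauss estimate from Lemma \ref{EBlinflemma} for the fields and the sign condition. The one technical slip is in your representation formula: the boundary condition in \eqref{fellseqspec} reads $f^{\ell+1}|_{\gamma_-} = f^{\ell}(t,x,v_\parallel,-v_3)$, not $f^{\ell+1}|_{\gamma_-} = f^{\ell+1}(t,x,v_\parallel,-v_3)$, so after each bounce the relevant transport field drops one index (from $\mathfrak F^\ell$ to $\mathfrak F^{\ell-1}$, etc.). The paper tracks this carefully via the cycles \eqref{speccycle}--\eqref{speccycles} and the telescoping sum leading to \eqref{vfracVspec}. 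Your cycles ``using the field $\mathfrak F^\ell$'' throughout therefore do not quite produce $f^{\ell+1}(t,x,v)=f_0(X^\ell_{\mathbf{cl}}(0),V^\ell_{\mathbf{cl}}(0))$. That said, since the induction hypothesis bounds \emph{all} the $\mathfrak F^{\ell-i}$ by $M_2$ and each reflection preserves $|v|$, the velocity comparison $\langle v\rangle \lesssim (1+TM_2)\langle V(0)\rangle$ and hence the weighted $L^\infty$ bound on $f^{\ell+1}$ go through unchanged with the corrected cycles, so the oversight does not damage the conclusion.
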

\begin{proof}
By induction hypothesis we assume that
\Be \label{inductfEBspec}
\begin{split}
\sup_{ 0 \le i \le \ell }  \sup_{0 \le t \le T} \left( \| \langle v \rangle^{4 + \delta} f^{i}(t) \|_{L^\infty(\bar \O \times \mathbb R^3)}   \right) < & M_1,
\\  \sup_{ 0 \le i \le \ell }  \sup_{0 \le t \le T} \left(  \| E^{i} (t) \|_\infty + \| B^{i} (t) \|_\infty \right) + |B_e| + E_e +  g < & M_2.
\end{split}
\Ee

Denote the characteristics $(X^\ell, V^\ell)$ which solves
\Be\label{specXV_ell}
\begin{split}
\frac{d}{ds}X^\ell(s;t,x,v) &= \hat V^\ell(s;t,x,v),\\
\frac{d}{ds} V^\ell(s;t,x,v) &= \mathfrak F^\ell (s, X^\ell(s;t,x,v),V^\ell(s;t,x,v) ).\end{split}
\Ee
And define the specular cycles:
		\Be
		\begin{split}\label{speccycle}
			t^{\ell}_1 (t,x,v)&:= 
			\sup\{ s \ge 0 :
			X^\ell(\tau;t,x,v) \in \O \text{ for all } \tau \in (t-s, t ) 
			\}
			,\\
			x^\ell_1 (t,x,v ) &:= X^\ell (t^{\ell}_1 (t,x,v);t,x,v),
\\ v^{\ell}_1(t,x,v) & : = V^\ell(t^\ell_1(t,x,v);t,x,v) - 2  V_3^\ell(t^\ell_1(t,x,v);t,x,v) \mathbf e_3
		\end{split}
		\Ee
		and inductively for $k \ge 2$, 
		\Be
		\begin{split}\label{cycle_ell}
			& t^{\ell-(k-1)}_k (t,x,v)  \\
			&:=     \sup\big\{  s \ge 0 
			:
			X^{\ell-(k-1)}(\tau;t_{k-1}^{\ell - (k-2)}  , x_{k-1}^{\ell - (k-2)} ,v^{\ell - (k-2) }_{k-1}) \in \O  \text{ for all } \tau \in (t-s, t )
			\big\},\\
			& x_k^{\ell - (k-1)} (t,x,v)\\
			&:= X^{\ell- (k-1)} (t_k^{\ell- (k-1)}; t_{k-1}^{\ell- (k-2)},x_{k-1}^{\ell- (k-2)} , v^{\ell - (k-2) } _{k-1}),
			\\ & v_k^{\ell - (k-1)} (t,x,v)\\
			&:= V^{\ell- (k-1)} (t_k^{\ell- (k-1)}; t_{k-1}^{\ell- (k-2)},x_{k-1}^{\ell- (k-2)} , v^{\ell - (k-2) } _{k-1}) - 2 V_3^{\ell- (k-1)} (t_k^{\ell- (k-1)}; t_{k-1}^{\ell- (k-2)},x_{k-1}^{\ell- (k-2)} , v^{\ell - (k-2) } _{k-1}) \mathbf e_3 .
		\end{split}
		\Ee
And we define the generalized characteristics for the specular BC as
\begin{equation} \label{speccycles} 
\begin{split}
& X^\ell_{\mathbf{cl}}(s;t,x,v)   \ = \ \mathbf 1_{[t^\ell_1(t,x,v) , t ) } (s) X^\ell(s;t,x,v)  + \sum_{k \ge 1} \mathbf{1}_{[t^{\ell-k}_{k+1},t^{%
\ell- (k-1)}_{k})}(s) X^{\ell- k}(s;t^{\ell - (k-1)}_{k}, x^{\ell - (k-1) }_{k} , v^{\ell-(k-1)}_{k} ), 
\\ &  
V^\ell_{\mathbf{cl}}(s;t,x,v)   \ = \ \mathbf 1_{[t^\ell_1(t,x,v) , t ) } (s) V^\ell(s;t,x,v)  + \sum_{k \ge 1} \mathbf{1}_{[t^{\ell-k}_{k+1},t^{%
\ell- (k-1)}_{k})}(s) V^{\ell- k}(s;t^{\ell - (k-1)}_{k}, x^{\ell - (k-1) }_{k} , v^{\ell-(k-1)}_{k} ).
\end{split}%
\end{equation}

From \eqref{fellseqspec} and \eqref{speccycle}, for any $(t,x,v) \in (0,T) \times \bar \O \times \mathbb R^3$, let $k$ be such that $ t^{\ell-k}_{k+1}(t,x,v) \le 0 < t^{\ell- (k-1)}_{k}(t,x,v) $, then we have
\Be \label{fellspecto0}
\begin{split}
f^{\ell+1}(t,x,v) 
= & f^{\ell- (k-1) } \left(0, X^{\ell- k }_{}\left(0; t^{\ell- (k-1)}_{k}, x^{\ell- (k-1)}_{k},  v^{\ell- (k-1)}_{k}\right) , V^{\ell-k}_{}\left(0;t^{\ell- (k-1)}_{k}, x^{\ell- (k-1)}_{k},  v^{\ell- (k-1)}_{k}\right)   \right)
\\ = &  f_0 \left( X^{\ell- k }_{}\left(0; t^{\ell- (k-1)}_{k}, x^{\ell- (k-1)}_{k},  v^{\ell- (k-1)}_{k}\right) , V^{\ell-k}_{}\left(0;t^{\ell- (k-1)}_{k}, x^{\ell- (k-1)}_{k},  v^{\ell- (k-1)}_{k}\right)   \right).
\end{split}
\Ee
Thus
\Be \label{specv4f1}
\langle v \rangle^{4 + \delta }  f^{\ell+1}(t,x,v) \le  \frac{ \langle v \rangle ^{4 + \delta}}{  \left( V^{\ell-k}_{}\left(0;t^{\ell- (k-1)}_{k}, x^{\ell- (k-1)}_{k},  v^{\ell- (k-1)}_{k}\right) \right)^{4 + \delta} }   \| \langle v \rangle^{4 + \delta} f_0 \|_\infty.
\Ee
Now, since
\[
\begin{split}
&|  v - V^{\ell-k}_{}\left(0;t^{\ell- (k-1)}_{k}, x^{\ell- (k-1)}_{k},  v^{\ell- (k-1)}_{k}\right)  | 
\\  & = \left|  v  - v^{\ell}_1 + \sum_{i=1}^{k-1} \left(  v^{\ell - (i-1)}_i -  v^{\ell - i}_{i+1} \right) + v^{\ell - ( k-1)}_k - V^{\ell-k}_{}\left(0;t^{\ell- (k-1)}_{k}, x^{\ell- (k-1)}_{k},  v^{\ell- (k-1)}_{k}\right) \right| 
\\ & \le  \int_{t^\ell_1}^t \| \mathfrak F^\ell (s) \|_\infty ds +  \sum_{i=1}^{k-1} \int_{t^{\ell - i }_{i+1}}^{t^{\ell -(i-1) }_i } \| \mathfrak F^{\ell - i } (s) \|_\infty  ds  + \int_{0}^{t^{\ell - (k-1) }_k } \| \mathfrak F^{\ell - k } (s) \|_\infty ds
\\ & \le \int_0^t \sup_{0 \le i \le l } \| \mathfrak F^i(s) \|_\infty ds.
\end{split}
\]
From \eqref{inductfEBspec} we have
\[
|v| \le \left|  V^{\ell-k}_{}\left(0;t^{\ell- (k-1)}_{k}, x^{\ell- (k-1)}_{k},  v^{\ell- (k-1)}_{k}\right)  \right| + t M_2,
\]
and this yields 
\Be \label{vfracVspec}
\begin{split}
\frac{ \langle v \rangle }{  \langle V^{\ell-k}_{}\left(0;t^{\ell- (k-1)}_{k}, x^{\ell- (k-1)}_{k},  v^{\ell- (k-1)}_{k}\right) \rangle }  < 1 + t M_2 . 
\end{split}
\Ee
Thus \eqref{specv4f1} gives
\Be \label{fell1finalspec}
\sup_{0 \le t \le T} \| \langle v \rangle^{4 + \delta }  f^{\ell+1}(t )  \|_{L^\infty(\bar \O \times \mathbb R^3 ) }   < (1 +   T (g+M_2) )  \| \langle v \rangle^{4 + \delta} f_0 \|_\infty < M_1,
\Ee
for $T$ small enough. Now from \eqref{ElBlspec} and \eqref{E0B0g}, using the same argument as \eqref{BEellinftyestinflow1}--\eqref{BEellinftyestinflow4}, we get
\Be \label{EBlifinal}
  \sup_{0 \le t \le T} ( \| E^{\ell+1} (t) \|_\infty + \| B^{\ell+1} (t) \|_\infty) + |B_e| + E_e + g < M_2,
\Ee
and
\Be
 \inf_{ t ,x_\parallel} \left( g - E_e  -  E^{\ell+1}_3(t,x_\parallel, 0 )  -  (\hat v \times B^{\ell+1})_3(t,x_\parallel, 0 ) \right) >   c_0.
\Ee
Thus we conclude \eqref{fellbound} by induction. 

\end{proof}

Next, we define $\alpha^\ell(t,x,v)$ in the same way as in \eqref{alphan}. Then we have

\begin{lemma}
Suppose $f_0$ satisfies \eqref{f0spec}, $E_0$, $B_0$ satisfy \eqref{E0B0g}, \eqref{E0B0bdd}, then there exits $M_3, M_4$ such that for $0 < T \ll 1 $, 
\Be \label{flElBldsqbdspec}
\begin{split}
 &\sup_\ell \sup_{0 \le t \le T} \left( \| \langle v \rangle^{4 + \delta } \nabla_{x } f^\ell(t) \|_\infty  + \|  \langle v \rangle^{4 + \delta }  \nabla_v f^\ell(t) \|_\infty   \right) 
 \\& + \sup_\ell \sup_{0 \le t \le T} \left( \| \langle v \rangle^{4 + \delta } \nabla_{x } f^\ell(t) \|_{L^\infty(\gamma \setminus \gamma_0)}   + \|  \langle v \rangle^{4 + \delta }  \nabla_v f^\ell(t) \|_{L^\infty(\gamma \setminus \gamma_0)}   \right) <  M_3 ,
\\ & \sup_\ell \sup_{0 \le t \le T} \left( \| \p_t E^\ell(t) \|_\infty +  \| \p_t B^\ell(t) \|_\infty + \| \nabla_{x } E^\ell(t) \|_\infty +\| \nabla_{x } B^\ell(t) \|_\infty  \right) < M_4.
\end{split} 
\Ee
And with the $M_2$ as in Lemma \ref{fEBellbdlemspec},
\Be \label{EBellpObd}
\sup_\ell \sup_{0 \le  t \le T} \left( \| E^\ell (t,x ) \|_{L^\infty( \p \O ) } + \| B^\ell (t,x ) \|_{L^\infty( \p \O ) } \right)  < M_2.
\Ee
\end{lemma}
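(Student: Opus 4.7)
The plan is to proceed by induction on $\ell$, in the same spirit as the a priori estimate Proposition~\ref{specBCprop}, using the uniform $L^\infty$ bounds from Lemma~\ref{fEBellbdlemspec} to transfer the key derivative estimate for specular cycles (Lemma~\ref{dXVcl}) to the iterates. Specifically, assume by induction that the three uniform bounds in \eqref{flElBldsqbdspec} hold for all $i\leq \ell$, and we will show they propagate to $i=\ell+1$ once $T$ is chosen sufficiently small and $M_3,M_4$ sufficiently large.

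First, I would use Lemma~\ref{fEBellbdlemspec} together with the induction hypothesis $\|\nabla_x E^{i}\|_\infty+\|\nabla_x B^{i}\|_\infty<M_4$ to invoke the velocity lemma (Lemma~\ref{vlemma}) for the $i$-th system; this yields the two-sided bound $e^{-C|t-s|}\alpha^{i}(t,x,v)\leq \alpha^{i}(s,X^{i}(s),V^{i}(s))\leq e^{C|t-s|}\alpha^{i}(t,x,v)$ with a constant $C$ independent of $i$. Next, rerunning the proof of Lemma~\ref{dXVcl} along the mixed specular cycle $X^{\ell}_{\mathbf{cl}},V^{\ell}_{\mathbf{cl}}$ defined in \eqref{speccycles}—which between consecutive bounces solves \eqref{specXV_ell} with a field satisfying the same uniform bounds—produces the estimate
\[
|\partial_{\mathbf e} X^{\ell}_{\mathbf{cl}}(s;t,x,v)|+|\partial_{\mathbf e} V^{\ell}_{\mathbf{cl}}(s;t,x,v)|\;\leq\; C_1\langle v\rangle\, e^{C_1/\sqrt{\alpha^\ell(t,x,v)\langle v\rangle}}
\]
with $C_1$ depending only on $M_2,M_4,c_0,g$. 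The crucial inputs that the pointwise bounds \eqref{phatFpxvellest} for $\nabla_x\mathfrak F,\nabla_v\mathfrak F$ and the number-of-bounces estimate \eqref{ellstarbd} are preserved at the iterate level follow directly from the induction hypothesis.

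Then, using the representation \eqref{fellspecto0} for $f^{\ell+1}$ in terms of $f_0$ along the specular cycle, differentiating and applying the above derivative bound together with the two-sided velocity-lemma control of $\alpha^\ell$ and the growth bound \eqref{vfracVspec} for $\langle v\rangle/\langle V^{\ell-k}(0)\rangle$, we get, exactly as in \eqref{nablaxfest1},
\[
\langle v\rangle^{4+\delta}|\nabla_{x,v} f^{\ell+1}(t,x,v)|\;\leq\; C\bigl(\|\langle v\rangle^{5+\delta}e^{C/\sqrt{\alpha\langle v\rangle}}\nabla_x f_0\|_\infty+\|\langle v\rangle^{5+\delta}e^{C/\sqrt{\alpha\langle v\rangle}}\nabla_v f_0\|_\infty\bigr),
\]
which, by \eqref{f0spec}, is bounded by a constant independent of $\ell$, giving the interior bound $<M_3$ provided $M_3$ is chosen large. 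The trace estimate on $\gamma\setminus\gamma_0$ follows from the same Ukai-type argument used in Lemma~\ref{tracepf}, since the transport equation for $\langle v\rangle^{4+\delta}\partial_{\mathbf e}f^{\ell+1}$ has an $L^\infty$ right-hand side controlled by the quantities we have just bounded. With the uniform $\nabla_{x,v}f^{\ell+1}$ bound in hand, invoking Lemma~\ref{EBW1inftylemma} (whose proof only uses the kinetic weight through Lemma~\ref{1alphaintv}, here applied with the uniformly controlled $\alpha^\ell$) yields $\|\partial_t E^{\ell+1}\|_\infty+\|\partial_t B^{\ell+1}\|_\infty+\|\nabla_x E^{\ell+1}\|_\infty+\|\nabla_x B^{\ell+1}\|_\infty<M_4$ after absorbing a $TM_4$ term by choosing $T\ll 1$, closing the induction. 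The boundary bound \eqref{EBellpObd} follows verbatim from the Morrey-type argument of Lemma~\ref{EBelltrace}.

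The main obstacle is showing that the derivative estimate of Lemma~\ref{dXVcl}, whose proof rests on explicit diagonalization of a transfer matrix between consecutive bounces, survives intact for the mixed cycle where each ballistic arc is driven by a different field $\mathfrak F^{\ell-k}$. The point to check carefully is that the bounds \eqref{tdiffalpha} for the intersection times and the row-wise bounds on $J^{\ell+1}_\ell$ depend only on $M_2,M_4,g,B_e$ and on the lower bound $c_0$ from Lemma~\ref{fEBellbdlemspec}, all of which are uniform in $\ell$ by the induction hypothesis; the spectral structure of $J(\alpha\langle v\rangle)$ used to produce the $e^{C_1/\sqrt{\alpha\langle v\rangle}}$ factor is then unchanged.
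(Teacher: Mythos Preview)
Your proposal is correct and follows essentially the same route as the paper: differentiate the representation \eqref{fellspecto0}, re-run the transfer-matrix argument of Lemma~\ref{dXVcl} along the mixed specular cycle \eqref{speccycles} to obtain \eqref{dXellkVellkspec}, combine with \eqref{f0spec} and \eqref{vfracVspec} to close the $\langle v\rangle^{4+\delta}\nabla_{x,v}f^{\ell+1}$ bound, then feed this into the proof of Lemma~\ref{EBW1inftylemma} and absorb a $TC$ term to close the field-derivative estimate. The one point you handle more carefully than the paper is the induction on the $M_4$ bound: the paper writes that \eqref{dXellkVellkspec} follows ``from \eqref{fellboundspec}'' with $C_1=C_1(M_1,M_2)$, but the proof of Lemma~\ref{dXVcl} in fact uses $\|\nabla_x E\|_\infty,\|\nabla_x B\|_\infty$ through \eqref{nablaxhatF}, \eqref{nablaxfrakF}, and the constant $M$ defined after \eqref{v3elliint}; your explicit inductive hypothesis $\|\nabla_x E^i\|_\infty+\|\nabla_x B^i\|_\infty<M_4$ for $i\leq\ell$ is exactly what is needed to make that step rigorous, and the closing mechanism you describe (absorbing a $TM_4$ term) is the same as the paper's.
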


\begin{proof}
Let $\p_{\mathbf e } \in \{ \nabla_x, \nabla_v \}$. From \eqref{fellspecto0} we have
\Be \label{pefell1spec}
\begin{split}
\p_{\mathbf e } f^{\ell+1}(t,x,v)  = & \nabla_x f_0 \cdot \p_{\mathbf e } X^{\ell- k }_{}\left(0; t^{\ell- (k-1)}_{k}(t,x,v), x^{\ell- (k-1)}_{k}(t,x,v),  v^{\ell- (k-1)}_{k}(t,x,v) \right)
\\ & + \nabla_v f_0 \cdot \p_{\mathbf e } V^{\ell- k }_{}\left(0; t^{\ell- (k-1)}_{k}(t,x,v), x^{\ell- (k-1)}_{k}(t,x,v),  v^{\ell- (k-1)}_{k}(t,x,v) \right)
\end{split}
\Ee
Then from \eqref{fellboundspec}, we can use essentially the same argument as the proof of Lemma \ref{dXVcl} to get
\Be \label{dXellkVellkspec}
\begin{split}
& \left| \left( \frac{\partial ( X_{}^{\ell - k}(0;  t^{\ell- (k-1)}_{k}, x^{\ell- (k-1)}_{k},  v^{\ell- (k-1)}_{k} ),V_{}^{\ell - k }(0; t^{\ell- (k-1)}_{k}, x^{\ell- (k-1)}_{k},  v^{\ell- (k-1)}_{k})}{\partial (x,v)} \right)_{ij} \right|
\\ &  \le C_1 \langle  V^{\ell -k }(0)   \rangle  \exp \left({\frac{C_1}{\sqrt{ \alpha^{\ell - k }(0, X^{\ell - k}(0), V^{\ell -k }(0)  )  \langle  V^{\ell -k }(0) \rangle } } } \right),
\end{split}
\Ee
where
\[
X^{\ell - k}(0) = X^{\ell - k}(0; t^{\ell- (k-1)}_{k}, x^{\ell- (k-1)}_{k},  v^{\ell- (k-1)}_{k} ), \  V^{\ell -k }(0)  = V_{}^{\ell - k }(0; t^{\ell- (k-1)}_{k}, x^{\ell- (k-1)}_{k},  v^{\ell- (k-1)}_{k}),
\]
\[
t^{\ell- (k-1)}_{k} =t^{\ell- (k-1)}_{k} (t,x,v), \  x^{\ell- (k-1)}_{k} = x^{\ell- (k-1)}_{k} (t,x,v), \   v^{\ell- (k-1)}_{k} = v^{\ell- (k-1)}_{k} (t,x,v),
\]
and $C_1$ depends on $M_1$, $M_2$.
%
Therefore, from \eqref{pefell1spec} and \eqref{dXellkVellkspec},
\[
\langle v \rangle^{4 + \delta}  \p_{\mathbf e } f^{\ell+1}(t,x,v) \le C_1 \frac{\langle v \rangle^{4 + \delta}}{\langle  V^{\ell -k }(0)   \rangle^{4 + \delta} } \left( \| \langle v \rangle ^{5 + \delta }    e^{\frac{C}{\sqrt{ \alpha \langle v \rangle } } }   \nabla_{x} f_0 \|_\infty +  \| \langle v \rangle^{5 + \delta }    e^{\frac{C}{\sqrt{ \alpha \langle v \rangle } } }      \nabla_v f_0 \|_\infty  \right).
\]
And using \eqref{vfracVspec}, we conclude
\Be \label{pefell1bdend}
\sup_\ell \sup_{0 \le t \le T} \| \langle v \rangle^{4 + \delta}  \p_{\mathbf e } f^{\ell+1}(t) \|_\infty \le C \left( \| \langle v \rangle ^{5 + \delta }    e^{\frac{C}{\sqrt{ \alpha \langle v \rangle } } }   \nabla_{x} f_0 \|_\infty +  \| \langle v \rangle^{5 + \delta }    e^{\frac{C}{\sqrt{ \alpha \langle v \rangle } } }      \nabla_v f_0 \|_\infty  \right) < M_3.
\Ee
Then, from the same argument as in Lemma \ref{tracepf}, we get
\Be \label{pefell1pObdend}
\begin{split}
\sup_\ell \sup_{0 \le t \le T} \| \langle v \rangle^{4 + \delta}  \p_{\mathbf e } f^{\ell+1}(t) \|_{L^\infty(\gamma \setminus \gamma_0 ) }  \le & \sup_\ell \sup_{0 \le t \le T} \| \langle v \rangle^{4 + \delta}  \p_{\mathbf e } f^{\ell+1}(t) \|_\infty
< M_3.
\end{split}
\Ee
From this, we use the same argument to get \eqref{dxEBfinal} in the proof of Lemma \ref{EBW1inftylemma} and obtain
\[
\begin{split}
 \sup_{0 \le t \le T} & \left( \| \p_t E^{\ell+1}(t) \|_\infty +  \| \p_t B^{\ell+1}(t) \|_\infty + \| \nabla_{x } E^{\ell+1}(t) \|_\infty +\| \nabla_{x } B^{\ell+1}(t) \|_\infty   \right)
\\ \le & TC \sup_{1 \le i \le \ell}  \sup_{0 \le  t \le T}    \left( \| \p_t E^{i}(t) \|_\infty +  \| \p_t B^{i}(t) \|_\infty + \| \nabla_{x } E^{i}(t) \|_\infty +\| \nabla_{x } B^{i}(t) \|_\infty   \right)  
\\ & C   \left( \| E_0 \|_{C^2 } + \| B_0 \|_{C_2}  \right) +  C \sup_{0 \le t \le T} \left( \| \langle v \rangle ^{4 + \delta } \nabla_{x_\parallel } f^{\ell +1}(t) \|_\infty  + \| \langle v \rangle^{\ell  + \delta }  \alpha^{n} \p_{x_3 } f^{\ell +1}(t) \|_\infty   \right)  
\\ &  +  C \sup_{0 \le t \le T} \left( \| \langle v \rangle^{4 + \delta } \nabla_{x_\parallel } f^\ell(t) \|_{L^\infty(\gamma \setminus \gamma_0)}  + \| \langle v \rangle^{5 + \delta }  \alpha^{\ell-1} \p_{x_3 } f^\ell(t) \|_{L^\infty(\gamma \setminus \gamma_0)} \right) 
\\ & + C  \sup_{0 \le t \le T} \left( \| \langle v \rangle^{4  + \delta } f^{\ell+1}(t) \|_\infty  + \| E^{\ell +1} (t) \|_\infty + \| B^{\ell +1} (t) \|_\infty \right).
\end{split}
\]
From \eqref{fellboundspec} and \eqref{pefell1bdend}, this gives
\[
\begin{split}
\sup_{\ell} \sup_{0 \le t \le T} & \left( \| \p_t E^{\ell}(t) \|_\infty +  \| \p_t B^{\ell}(t) \|_\infty + \| \nabla_{x } E^{\ell}(t) \|_\infty +\| \nabla_{x } B^{\ell}(t) \|_\infty   \right)
\\ \le & TC \sup_{ \ell}  \sup_{0 \le  t \le T}    \left( \| \p_t E^{\ell}(t) \|_\infty +  \| \p_t B^{\ell}(t) \|_\infty + \| \nabla_{x } E^{\ell}(t) \|_\infty +\| \nabla_{x } B^{\ell}(t) \|_\infty   \right)  
\\ & + C \left( \| E_0 \|_{C^2 } + \| B_0 \|_{C_2}  \right)+ C(M_1 +M_2 +M_3).
\end{split}
\]
Therefore, by choosing $M_4 \gg 1$ and $T \ll 1$, we get
\Be \label{EBptnxbdspec}
\sup_{\ell} \sup_{0 \le t \le T}  \left( \| \p_t E^{\ell}(t) \|_\infty +  \| \p_t B^{\ell}(t) \|_\infty + \| \nabla_{x } E^{\ell}(t) \|_\infty +\| \nabla_{x } B^{\ell}(t) \|_\infty   \right) < M_4.
\Ee
Together with \eqref{pefell1bdend} and \eqref{pefell1pObdend}, we conclude \eqref{flElBldsqbdspec}.

Now, from \eqref{EBptnxbdspec}, we use the same argument as in Lemma \ref{EBelltrace} to conclude that  for any $0 < t <T$, 
\[
E^\ell(t,x ) |_{\p \O } \in L^\infty( \p \O ), \ B^\ell(t,x ) |_{\p \O } \in L^\infty( \p \O ),
\]
and
\[
\sup_{0 \le  t \le T} \left( \| E^\ell (t,x ) \|_{L^\infty( \p \O ) } + \| B^\ell (t,x ) \|_{L^\infty( \p \O ) } \right)  <  \sup_{0 \le  t \le T} \left( \| E^\ell (t,x ) \|_{\infty } + \| B^\ell (t,x ) \|_{\infty } \right) < M_2.
\]
This proves \eqref{EBellpObd}.
\end{proof}

Next, we prove a pointwise convergence result for $f^\ell$.

\begin{lemma} \label{specpointwise}
Suppose $f_0$ satisfies \eqref{f0spec}, $E_0$, $B_0$ satisfy \eqref{E0B0g}, \eqref{E0B0bdd}. Then there exists a function $f$ such that $f^\ell \to f$ pointwise almost everywhere on $(0,T) \times ( \bar \O \times \mathbb R^3 \setminus \gamma_0 ) $.
\end{lemma}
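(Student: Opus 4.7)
The plan is to combine the uniform $W^{1,\infty}$ bounds from \eqref{flElBldsqbdspec} with the explicit representation \eqref{fellspecto0} to extract a pointwise subsequential limit, and then to upgrade this to convergence of the entire sequence via uniqueness of the RVM specular solution.

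First I would use the equiboundedness and equicontinuity of $\{\langle v \rangle^{4+\delta} f^\ell\}$ (equicontinuity in $(x,v)$ coming from the derivative bounds \eqref{flElBldsqbdspec}, and in $t$ from the transport equation $\p_t f^\ell = -\hat v \cdot \nabla_x f^\ell - \mathfrak F^{\ell-1} \cdot \nabla_v f^\ell$) together with the equiboundedness and equicontinuity of $\{(E^\ell, B^\ell)\}$ on $[0,T]\times\bar\O$ furnished by \eqref{flElBldsqbdspec} and \eqref{EBellpObd}. Arzel\`a--Ascoli and a diagonal extraction then deliver a subsequence (still indexed by $\ell$) along which $f^\ell \to f$ locally uniformly on $(0,T)\times \bar\O \times \R^3$ and $(E^\ell, B^\ell)\to(E,B)$ uniformly on $[0,T]\times\bar\O$.

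Next I would pass the limit through the specular cycles. For $(t,x,v)\notin\gamma_0$ with $\alpha(t,x,v)>0$ (which covers almost every point of $(0,T)\times(\bar\O\times\R^3\setminus\gamma_0)$, since $\alpha$ vanishes only on $\gamma_0$ under \eqref{E0B0g}), the velocity lemma (Lemma \ref{vlemma}) propagates strict positivity of $\alpha$ along the specular cycle, the non-grazing estimate \eqref{tdiffalpha} guarantees that each hitting is transverse with $\hat v_{\mathbf b,3}$ uniformly bounded below, and the bound \eqref{ellstarbd} forces only finitely many bounces $k=k(t,x,v)$ on $(0,t)$. Since $\mathfrak F^{\ell-i}\to\mathfrak F$ uniformly for each $i\le k$, ODE stability combined with the implicit function theorem applied at each transverse crossing yields convergence of the iterated bounce data $(t^{\ell-(i-1)}_i, x^{\ell-(i-1)}_i, v^{\ell-(i-1)}_i)$ to the analogous quantities for the limit flow, and hence convergence of $(X^{\ell-k}(0), V^{\ell-k}(0))$. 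The representation \eqref{fellspecto0} together with continuity of $f_0$ then gives $f^{\ell+1}(t,x,v)\to f_0(X(0),V(0))$ pointwise on this full-measure set.

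The main obstacle is upgrading this subsequential convergence to convergence of the entire sequence, since a priori the limit can depend on the chosen subsequence. The plan here is the standard subsubsequence argument: any further subsequence of $\{(f^\ell,E^\ell,B^\ell)\}$ admits, by the same compactness, a subsubsequence with some limit $(\tilde f,\tilde E,\tilde B)$, and arguing exactly as in Lemma \ref{fEBsollemma}, each such limit is a weak solution of the RVM system with the specular BC \eqref{spec} and the prescribed initial data $(f_0,E_0,B_0)$. The uniqueness for the specular problem, proved separately in Section \ref{chapspec} along the lines of Lemma \ref{VMuniqlemma} (with \eqref{specdfbd} providing the needed a priori control of $\nabla_v$), then forces every subsubsequential limit to coincide with $(f,E,B)$, so the entire sequence $f^\ell$ converges pointwise almost everywhere on $(0,T)\times(\bar\O\times\R^3\setminus\gamma_0)$.
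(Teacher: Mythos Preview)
There is a genuine gap. Your compactness + uniqueness strategy breaks down because the iteration is \emph{staggered}: $f^{\ell}$ is driven by $\mathfrak F^{\ell-1}$, and the specular representation \eqref{fellspecto0} for $f^{\ell+1}(t,x,v)$ involves the whole chain $\mathfrak F^{\ell},\mathfrak F^{\ell-1},\ldots,\mathfrak F^{\ell-k}$. Arzel\`a--Ascoli gives you a subsequence $\ell_j$ along which $(f^{\ell_j},E^{\ell_j},B^{\ell_j})$ converges, but your middle step then asserts ``$\mathfrak F^{\ell-i}\to\mathfrak F$ uniformly for each $i\le k$,'' which concerns the \emph{shifted} subsequences $(E^{\ell_j-i},B^{\ell_j-i})$. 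These are not subsequences of $\{\ell_j\}$ and there is no reason they converge, let alone to the same limit. Even if you further extract so that each shift converges, the limits $(E^{(0)},B^{(0)}),\ldots,(E^{(k)},B^{(k)})$ may all be different, and the limit object you obtain is then neither a single RVM solution (so uniqueness does not apply) nor the specular cycle for a single field. The same obstruction blocks the subsubsequence argument: passing to the limit in the weak formulation \eqref{weakfellVMspec} produces $f$ solving the transport equation with force $\lim \mathfrak F^{\ell_j-1}$, which is the Glassey--Strauss field of $\lim f^{\ell_j-1}$, not of $f$ itself.

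To close this you would need $\|(E^{\ell}-E^{\ell-1},B^{\ell}-B^{\ell-1})\|_\infty\to 0$, i.e.\ essentially a Cauchy statement, which is what the lemma is about. The paper confronts this head-on: it fixes $(t,x,v)\notin\gamma_0$ and shows $\{f^\ell(t,x,v)\}$ is Cauchy directly, via the Duhamel formula along the (iterated) specular cycles. The key device---absent from your plan---is a splitting of the Glassey--Strauss formulas for $E^{m-i}-E^{n-i}$ into the regions $\{|v_3|>\delta'\}$ and $\{|v_3|\le\delta'\}$: on the former the number of bounces is \emph{uniformly} bounded by $T/(c\delta')$, allowing iteration with a factorial gain $(Ct)^l/l!$; on the latter the contribution is $O((\delta')^\delta)$ by the velocity smallness. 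Choosing first $\delta'$ small and then $l$ large yields the pointwise Cauchy property. This splitting is exactly what replaces the uniform-in-$\ell$ bounce count that your argument implicitly needs.
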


\begin{proof}
Fix any $(t,x,v) \in (0,T) \times ( \bar \O \times \mathbb R^3 \setminus \gamma_0 ) $, then it suffices to show that $\{ f^\ell (t,x,v) \}_{\ell=1}^\infty$ is a Cauchy sequence. Fix $\e > 0$, and let $n > m \ge  N_0$. Note that $f^m - f^n $ satisfies $(f^m - f^n ) |_{t = 0 } = 0 $ and 
\[
(f^m- f^n )(t,x,v) |_{\gamma_-} =  (f^{m-1} - f^{n-1} )(t,x,v_\parallel, - v_3) .
\] 
The equation for $f^m - f^n $ is
\Be \label{fmfneq}
\p_t(f^m- f^n ) + \hat v \cdot \nabla_x (f^m - f^n ) + \mathfrak F^{m-1} \cdot \nabla_v (f^{m} - f^n ) = - ( \mathfrak F^{m-1} - \mathfrak F^{n-1} ) \cdot \nabla_v f^n.
\Ee
Thus, for any $(t,x,v) \in (0,T) \times ( \bar \O \times \mathbb R^3 \setminus \gamma_0 )$, there is a $k \ge 0$ such that $ t^{m-k}_{k}(t,x,v) \le 0 < t^{m- (k-1)}_{k-1}(t,x,v) $, and we have from \eqref{fmfneq}, 
\Be \label{fnmiterate2spec}
\begin{split}
 &   (f^m - f^n)(t,x,v)  
    \\  =  & \int_{t^{m-1}_1}^t  \left(  - ( \mathfrak F^{m-1} - \mathfrak F^{n-1} ) \cdot \nabla_v f^n \right)(s, X^{m-1}(s) , V^{m-1}(s) ) ds
    \\ & +    \sum_{i=1}^{k-2} \int_{t^{m - (i +1)}_{i+1}}^{t^{m -i }_{i} }  \left( - ( \mathfrak F^{m-1-i} - \mathfrak F^{n-1 - i} ) \cdot \nabla_v f^{n-i} \right) ( s, X^{m-1-i} (s), V^{m-1-i} (s)) ds 
    \\ & + \int_0^{ t^{m- (k-1)}_{k-1} }  \left( - ( \mathfrak F^{m-1-(k-1)} - \mathfrak F^{n-1 - (k-1)} ) \cdot \nabla_v f^{n-(k-1)} \right) ( s, X^{m-1-(k-1)} (s), V^{m-1-(k-1)} (s)) ds.
\end{split}
\Ee
Together with \eqref{vfracVspec}, this implies
\Be \label{fmfnspecest1}
 \langle v \rangle^{4 + \delta}  | (f^m- f^n )(t,x,v) |  \le  C_1 \left( \sup_{\ell}  \sup_{0 \le s \le t } \| \langle  v \rangle^{4 + \delta} \nabla_v f^\ell(s) \|_\infty \right) \int_0^t \sup_{1 \le i \le k-1}  \|   \mathfrak F^{m-i} (s) - \mathfrak F^{n-i}(s)  \|_\infty ds
\Ee
where $C_1 =  (1+ T (M_2+g) )^{4 + \delta}$. Note that since $(x,v) \notin \gamma_0$, $\alpha(t,x,v ) > 0$. And from \eqref{tdiffalpha} and \eqref{ellstarbd}, we have
\[
k \le  \frac{T}{c \alpha(t,x,v) \langle v \rangle },
\]
where $c $ depends on $M_2$ and $g$. Now, for some small $\delta' > 0$, we write 
\[
\begin{split}
 \mathfrak F^{m-i}  - \mathfrak F^{n-i} = &  E_{f^{m-i} } - E_{f^{n-i} } + B_{f^{m-i} } - B_{f^{n-i} }
 \\ = &  E_{f^{m-i} -f^{n-i}  }+ B_{f^{m-i} - f^{n-i} } 
 \\ = & E_{ \mathbf 1_{ \{ |v_3| > \delta' \}} (f^{m-i} -f^{n-i} ) } + E_{ \mathbf 1_{ \{ |v_3| \le \delta' \}} (f^{m-i} -f^{n-i} ) } + B_{ \mathbf 1_{ \{ |v_3| > \delta' \} } (f^{m-i} -f^{n-i} ) } + B_{ \mathbf 1_{ \{ |v_3| \le \delta' \}} (f^{m-i} -f^{n-i} ) },
 \end{split}
 \]
where
\Be \label{EBsplit}
\begin{split}
& E_{ \mathbf 1_{ \{ |v_3| > \delta' \} } (f^{m-i} -f^{n-i} ) } = \eqref{Eesttat0pos} + \dots + \eqref{Eest3bdrycontri}, \  \text{ with } f \text{ changes to  } \mathbf 1_{|v_3| > \delta'} (f^{m-i} -f^{n-i} ),
\\ & B_{ \mathbf 1_{ \{ |v_3| > \delta' \}} (f^{m-i} -f^{n-i} ) } = \eqref{Besttat0pos} + \dots + \eqref{Bestbdrycontri}, \ \text{ with } f \text{ changes to  } \mathbf 1_{|v_3| > \delta'} (f^{m-i} -f^{n-i} ),
\\ & E_{ \mathbf 1_{ \{ |v_3| \le \delta' \} } (f^{m-i} -f^{n-i} ) } = \eqref{Eesttat0pos} + \dots + \eqref{Eest3bdrycontri}, \  \text{ with } f \text{ changes to  } \mathbf 1_{|v_3| \le \delta'} (f^{m-i} -f^{n-i} ),
\\ & B_{ \mathbf 1_{ \{| v_3| \le \delta' \}} (f^{m-i} -f^{n-i} ) } = \eqref{Besttat0pos} + \dots + \eqref{Bestbdrycontri}, \ \text{ with } f \text{ changes to  } \mathbf 1_{|v_3| \le \delta'} (f^{m-i} -f^{n-i} ).
\end{split}
\Ee
Now, using the estimate in Lemma \ref{EBlinflemma} and that 
\[
\int_{|v_3 | < \delta' } \frac{1}{\langle v \rangle^{3 + \delta } }  dv \le C ( \delta')^{\delta},
\]
we have
\Be \label{fmfnspecest2}
\|  E_{ \mathbf 1_{ \{ |v_3| \le \delta' \} } (f^{m-i} -f^{n-i} ) }  (s) \|_\infty +  \| B_{ \mathbf 1_{ \{ |v_3| \le \delta' \}} (f^{m-i} -f^{n-i} ) } (s) \|_\infty \le C  ( \delta')^{\delta} \sup_{0 \le s' \le s }  \| \langle v \rangle^{4+\delta}   (f^{m-i} -f^{n-i} ) (s') \|_\infty.
\Ee
And
\Be \label{fmfnspecest3}
\begin{split}
& \|  E_{ \mathbf 1_{ \{ |v_3| > \delta' \} } (f^{m-i} -f^{n-i} ) }  (s) \|_\infty +  \| B_{ \mathbf 1_{ \{ |v_3| > \delta' \}} (f^{m-i} -f^{n-i} ) } (s) \|_\infty 
\\ \le & C   \sup_{0 \le s' \le s }  \| \mathbf 1_{ \{ |v_3| > \delta' \} } \langle v \rangle^{4+\delta}  (f^{m-i} -f^{n-i} ) (s') \|_\infty.
\end{split}
\Ee
So from \eqref{fmfnspecest1}, \eqref{fmfnspecest2}, and \eqref{fmfnspecest3},
\Be
\begin{split}
 \langle v \rangle^{4 + \delta}  | (f^m- f^n )(t,x,v) | \le C C_1 M_3 \int_0^t \left(  \sup_{1 \le i \le k-1} \sup_{0 \le s' \le s }  \| \mathbf 1_{ \{ |v_3| > \delta' \} } \langle v \rangle^{4+\delta}  (f^{m-i} -f^{n-i} ) (s') \|_\infty + M_1 ( \delta')^{\delta} \right) ds.
\end{split}
\Ee
Now, let $j$ be such that $t^{m-i-j}_{j}(s',x,v)  \le 0 < t^{m - i - (j-1) }_{j -1} (s',x,v) $. Then if $|v_3 | > \delta'$, from \eqref{ellstarbd},
\[
j \le \frac{T}{c \alpha(s',x,v) \langle v \rangle } \le \frac{T}{c \delta ' } : = k'. 
\]
So same as \eqref{fmfnspecest1}, this gives
\[
\begin{split}
& \sup_{1 \le i \le k-1} \sup_{0 \le s' \le s }  \| \mathbf 1_{ \{ |v_3| > \delta' \} } \langle v \rangle^{4+\delta}  (f^{m-i} -f^{n-i} ) (s') \|_\infty 
\\ & \le C_1M_3 \int_0^s \sup_{1 \le i' \le k' -1} \sup_{1 \le i \le k-1}  \|   \mathfrak F^{m-i - i'} (s') - \mathfrak F^{n-i - i'}(s')  \|_\infty ds'.
\end{split}
\]
Using the same split \eqref{EBsplit}, like \eqref{fmfnspecest2} and \eqref{fmfnspecest3}, we thus get
\Be \label{fmfnspecest4}
\begin{split}
& \sup_{1 \le i \le k-1} \sup_{0 \le s' \le s }  \| \mathbf 1_{ \{ |v_3| > \delta' \} } \langle v \rangle^{4+\delta}  (f^{m-i} -f^{n-i} ) (s') \|_\infty 
\\ & \le C C_1M_3 \int_0^s \sup_{2 \le i \le k + k' } \sup_{0 \le s'' \le s'}  \left(    \| \mathbf 1_{ \{ |v_3| > \delta' \} } \langle v \rangle^{4+\delta}  (f^{m-i} -f^{n-i} ) (s'') \|_\infty + M_1   ( \delta')^{\delta} \right) ds'.
\end{split}
\Ee
Plug \eqref{fmfnspecest4} into \eqref{fmfnspecest3} yields
\[
\begin{split}
&  \langle v \rangle^{4 + \delta}  | (f^m- f^n )(t,x,v) | 
 \\ \le & (C C_1 M_3)^2 \int_0^t \int_0^s   \sup_{2 \le i \le k + k' } \sup_{0 \le s'' \le s'}  \| \mathbf 1_{ \{ |v_3| > \delta' \} } \langle v \rangle^{4+\delta}  (f^{m-i} -f^{n-i} ) (s'') \|_\infty  ds' ds 
 \\ & + M_1  (\delta')^\delta \left( CC_1M_3 t  +  \frac{ (CC_1M_3 t)^2}{2} \right).
 \end{split}
\]
Iteration of the above gives
\[
\begin{split}
&  \langle v \rangle^{4 + \delta}  | (f^m- f^n )(t,x,v) | 
 \\ \le &  (C C_1 M_3)^l  \frac{t^l}{l!}  \sup_{2 \le i \le k + l k' } \sup_{0 \le s \le t}   \| \mathbf 1_{ \{ |v_3| > \delta' \} } \langle v \rangle^{4+\delta}  (f^{m-i} -f^{n-i} ) (s) \|_\infty    +  M_1  (\delta')^\delta \sum_{i =1}^l    \frac{ (CC_1M_3 t)^i}{i!}.
 \end{split}
\]
Now, by choosing $\delta'$ small enough we have
\[
 M_1  (\delta')^\delta \sum_{i =1}^l    \frac{ (CC_1M_3 t)^i}{i!} <  M_1  (\delta')^\delta e^{  CC_1M_3 t} < \frac{\e}{2}.
\]
And choosing $l$ large enough such that
\[
 (C C_1 M_3)^l  \frac{t^l}{l!}  \sup_{2 \le i \le k + l k' } \sup_{0 \le s \le t}   \| \mathbf 1_{ \{ |v_3| > \delta' \} } \langle v \rangle^{4+\delta}  (f^{m-i} -f^{n-i} ) (s) \|_\infty < \frac{\e}{2}.
\] 
Finally choose $N_0 $ large enough such that $N_0 > k + l k'$, we get for $n,m > N_0$,
\[
\langle v \rangle^{4 + \delta}  | (f^m- f^n )(t,x,v) |  < \e.
\]
Therefore the sequence $ \{   f^\ell(t,x,v)  \}_{\ell=1}^\infty$ is Cauchy, and this proves the lemma.

\end{proof}

\begin{lemma} \label{fEBsollemmaspec}
Suppose $f_0$ satisfies \eqref{f0spec}, $E_0$, $B_0$ satisfy \eqref{E0B0g}, \eqref{E0B0bdd}. Then for $0 < T \ll 1$, there exists functions $(f,E,B)$ with $  \langle v \rangle^{4 +\delta }  f(t,x,v) \in L^\infty( (0, T) ; L^\infty( \bar \O \times \mathbb R^3 ) )  $, and $(E,B) \in L^\infty((0,T) ; L^\infty( \O) \cap  L^\infty( \p \O ) )$, such that 
$(f,E,B)$ is a (weak) solution of the system \eqref{VMfrakF1}--\eqref{rhoJ1}, \eqref{spec}. Moreover,
\Be \label{pfbdlimitspec}
 \| \langle v \rangle^{4 + \delta } \nabla_{x } f(t) \|_\infty  + \|  \langle v \rangle^{4 + \delta }  \nabla_v f(t) \|_\infty < \infty,
\Ee
and
\Be \label{pEBbdlimitspec}
\| \p_t E(t) \|_\infty +  \| \p_t B(t) \|_\infty + \| \nabla_{x } E(t) \|_\infty +\| \nabla_{x } B(t) \|_\infty   < \infty.
\Ee

\end{lemma}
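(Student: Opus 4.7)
The plan is to mirror the structure used for the inflow and diffuse BC cases (Lemmas \ref{fEBsollemmain}, \ref{fEBregin} and \ref{fEBsollemma}, \ref{fEBreg}), but now exploiting the pointwise convergence already established in Lemma \ref{specpointwise} in place of a uniform Cauchy argument, since the specular case only yields the exponential-in-$\alpha$ singularity \eqref{lemma_Dxv} and does not permit the strong $L^\infty$ estimates used for inflow/diffuse.

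First, I would upgrade the pointwise a.e. convergence $f^\ell \to f$ on $(0,T)\times(\bar\Omega\times\mathbb R^3\setminus\gamma_0)$ from Lemma \ref{specpointwise}. Since $\gamma_0$ has measure zero in phase space and $\{\langle v\rangle^{4+\delta}f^\ell\}$ is uniformly bounded in $L^\infty$ by \eqref{fellboundspec}, the dominated convergence theorem gives convergence of $f^\ell\to f$ in $L^p_{\mathrm{loc}}$ for every $p<\infty$, and in particular strong convergence of the moments $\rho^\ell\to\rho$ and $J^\ell\to J$ in $L^p_{\mathrm{loc}}((0,T)\times\Omega)$. Because $\langle v\rangle^{4+\delta}f^\ell$ is uniformly bounded and converges a.e., weak-$*$ lower semicontinuity yields $\|\langle v\rangle^{4+\delta}f(t)\|_\infty\le M_1$.

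Next I would pass to the limit in $E^\ell,B^\ell$ using the Glassey–Strauss representation (Propositions \ref{Eiform}, \ref{Biform}). Each of the integrals there is linear in $f^\ell$ up to the $Sf$ terms; rewriting those via \eqref{EestSposibp}, \eqref{BestSposibp} they become linear in $f^\ell$ with coefficient $\mathfrak F^{\ell-1}$, which is uniformly bounded in $L^\infty$ by \eqref{fellboundspec} and \eqref{EBellpObd}. Combining the $L^p_{\mathrm{loc}}$ convergence of $f^\ell$ with the uniform $L^\infty$ bound on $\mathfrak F^{\ell-1}$ and the fact that $(E^\ell,B^\ell)$ is Cauchy in $L^\infty((0,T)\times\Omega)$ (obtainable by repeating the iteration in \eqref{iterate2in}–\eqref{fellnmiterate1in}, where now one uses \eqref{pefell1bdend} to bound $\nabla_v f^\ell$), I obtain $E^\ell\to E$, $B^\ell\to B$ strongly in $L^\infty((0,T)\times\Omega)$. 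The trace convergence on $\partial\Omega$ then follows exactly as in Lemma \ref{EBelltrace}, using the uniform $W^{1,\infty}$ bound from \eqref{flElBldsqbdspec}.

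Having established these convergences, I would verify the weak formulation exactly as in Lemma \ref{fEBsollemmain}: pass to the limit in the variational identity \eqref{weakfellVMin} (with the specular boundary term treated via the identity on $\gamma_\pm$), and in the Maxwell weak formulations \eqref{EBellweak1}–\eqref{EBellweak2}. All convergences needed are guaranteed by the strong convergences above plus the uniform bounds. Finally, for \eqref{pfbdlimitspec}–\eqref{pEBbdlimitspec}, the uniform bound \eqref{flElBldsqbdspec} yields, up to subsequence, the weak-$*$ convergences $\nabla_{x,v}f^\ell\overset*\rightharpoonup \nabla_{x,v}f$ in $L^\infty$ and $\nabla_x E^\ell\overset*\rightharpoonup\nabla_x E$, $\p_t E^\ell\overset*\rightharpoonup\p_t E$, etc., so weak-$*$ lower semicontinuity delivers the stated bounds with constant $M_3,M_4$.

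The main obstacle is that, unlike the inflow and diffuse cases, we cannot get uniform $L^\infty$ Cauchy control on $\nabla_{x,v}f^\ell$ because of the exponential factor $e^{C/\sqrt{\alpha\langle v\rangle}}$ in \eqref{lemma_Dxv}; this is precisely why Lemma \ref{specpointwise} was phrased pointwise. The workaround is to rely on $L^\infty$ (strong) convergence only for $f^\ell$ itself together with the field $(E^\ell,B^\ell)$, and to extract derivative bounds for the limit via weak-$*$ compactness, checking carefully that the weak-$*$ limit of $\nabla_{x,v} f^\ell$ coincides with the distributional derivative of the pointwise limit $f$ (standard, since $f^\ell\to f$ in $\mathcal D'$ and $\nabla_{x,v}f^\ell\overset*\rightharpoonup g$ in $L^\infty$ force $g=\nabla_{x,v}f$).
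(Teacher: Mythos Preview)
Your proposal has a real gap in the claim that $(E^\ell,B^\ell)$ is Cauchy in $L^\infty((0,T)\times\Omega)$ ``by repeating the iteration in \eqref{iterate2in}--\eqref{fellnmiterate1in}''. That inflow iteration starts from $\|\langle v\rangle^{4+\delta}(f^m-f^n)(t)\|_\infty\le C\int_0^t\|\mathfrak F^{m-1}-\mathfrak F^{n-1}\|_\infty\,ds$, which holds there because $(f^m-f^n)|_{\gamma_-}=0$. For specular reflection one has $(f^m-f^n)|_{\gamma_-}(t,x,v)=(f^{m-1}-f^{n-1})(t,x,v_\parallel,-v_3)$, so tracing back along the specular cycles forces the estimate \eqref{fmfnspecest1} with a supremum over indices $1\le i\le k-1$, where $k\sim T/(c\,\alpha(t,x,v)\langle v\rangle)$ is \emph{unbounded} as $(x,v)$ approaches the grazing set. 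This is exactly why Lemma \ref{specpointwise} gives only pointwise convergence; the same obstruction blocks a direct uniform-in-$(x,v)$ Cauchy estimate for $f^m-f^n$, and hence your route to strong $L^\infty$ convergence of the fields.

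The paper bypasses strong field convergence entirely: from the uniform bounds one extracts only weak-$*$ limits $E^\ell\overset{*}{\rightharpoonup}E$, $B^\ell\overset{*}{\rightharpoonup}B$ in $L^\infty$. The only place strong information is needed is the nonlinear term $\int f^\ell\,\mathfrak F^{\ell-1}\!\cdot\!\nabla_v\phi$, which is split as $\int(f^\ell-f)\,\mathfrak F^{\ell-1}\!\cdot\!\nabla_v\phi+\int f\,(\mathfrak F^{\ell-1}-\mathfrak F)\!\cdot\!\nabla_v\phi$. For the second piece, $f\nabla_v\phi\in L^1$ and weak-$*$ convergence of $\mathfrak F^{\ell-1}$ suffices. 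For the first, the test function $\phi$ has compact support $D$ disjoint from $\gamma_0$, so Lemma \ref{specpointwise} plus the uniform bound $|f^\ell|\le M_1\langle v\rangle^{-4-\delta}$ and dominated convergence give $\int_D|f^\ell-f|\to 0$, while $\|\mathfrak F^{\ell-1}\|_\infty\le M_2$. Your use of pointwise convergence plus DCT is correct, but it should be applied locally on $\mathrm{supp}\,\phi$ for the nonlinear term, not leveraged into a global $L^\infty$ Cauchy statement for the fields. (A salvageable variant of your route would be to invoke the $\delta'$-splitting of Lemma \ref{specpointwise} to show $\{E^\ell\}$ is Cauchy in $L^\infty$, but that is essentially reproving that lemma and is not the inflow iteration you cite.)
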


\begin{proof}
From the uniform-in-$\ell$ bound \eqref{fellboundspec}, we can pass the limit up to subsequence if necessary and get the weak$-*$ convergence
\Be \label{fellweakcov}
  \langle v \rangle^{4 + \delta } f^\ell   \overset{\ast}{\rightharpoonup}   \langle v \rangle^{4 + \delta }  f \text{ in } L^\infty((0,T) \times \O \times \mathbb R^3 ) \cap L^\infty((0,T) \times \gamma), 
\Ee
\Be \label{EBellweakcov}
E^\ell   \overset{\ast}{\rightharpoonup} E, \  B^\ell   \overset{\ast}{\rightharpoonup} B   \text{ in }  L^\infty((0,T) \times \O ) \cap L^\infty((0,T) \times \p \O ). \ 
\Ee
for some $(f, E, B)$. Then from \eqref{flElBldsqbdspec} we also have
\Be \label{dEnBncovspec}
\p_t E^\ell  \overset{\ast}{\rightharpoonup} \p_t E , \ \nabla_x E^\ell  \overset{\ast}{\rightharpoonup} \nabla_x E, \  \p_t B^\ell  \overset{\ast}{\rightharpoonup} \p_t B, \  \nabla_x B^\ell  \overset{\ast}{\rightharpoonup} \nabla_x B \text{ in } L^\infty((0,T) \times \O ),
\Ee
and
\Be \label{dfnconvergespec}
\langle v \rangle ^{4 + \delta} \nabla_{x } f^\ell   \overset{\ast}{\rightharpoonup}  \langle v \rangle^{4 + \delta}\nabla_{x } f, \  \langle v \rangle ^{4 + \delta} \nabla_{v } f^\ell   \overset{\ast}{\rightharpoonup}  \langle v \rangle^{4 + \delta}\nabla_{v } f \text{ in } L^\infty((0,T) \times \O \times \mathbb R^3 ).
\Ee

Now it left to show that $(f,E,B)$ is a solution to the system \eqref{VMfrakF1}--\eqref{rhoJ1}, \eqref{spec}. 
Take any $\phi(t,x,v) \in C_c^\infty( [0,T) \times \bar \O \times \mathbb R^3$ with $\text{supp } \phi   \subset \{ [0, T) \times \bar \O \times \mathbb R^3 \} \setminus \{  \{ 0 \} \times \gamma \cup (0,T) \times \gamma_0 \} $, from \eqref{fellseqspec}, we have
\Be \label{weakfellVMspec}
\begin{split}
& \int_{\O \times \mathbb R^3 } f_0 \phi (0) dv dt +  \int_0^T \int_{\O \times \mathbb R^3}   f^\ell  (  \p_t \phi + \hat v \cdot \nabla_x \phi  + \mathfrak F^{\ell-1}   \cdot \nabla_v \phi  ) dv dx dt
\\ = & \int_0^T \int_{\gamma_+} \phi f^\ell \hat v_3 dv dS_x + \int_0^T \int_{\gamma_+} \phi(t,x,v_\parallel, - v_3 ) f^\ell \hat v_3 \  dv dS_x.
\end{split}
\Ee

Because of \eqref{fellweakcov} and \eqref{EBellweakcov}, we have
\Be \label{weakfellVMspec1}
\begin{split}
 & \int_0^T \int_{\O \times \mathbb R^3}  f^\ell( \p_t \phi + \hat v \cdot \nabla_x \phi )  dv dx dt +   \int_0^T \int_{\gamma_+} \phi f^\ell \hat v_3 dv dS_x + \int_0^T \int_{\gamma_+} \phi(t,x,v_\parallel, - v_3 ) f^\ell_\pm \hat v_3 \  dv dS_x 
\\ \to  &  \int_0^T \int_{\O \times \mathbb R^3}  f( \p_t \phi + \hat v \cdot \nabla_x \phi )  dv dx dt +   \int_0^T \int_{\gamma_+} \phi f \hat v_3 dv dS_x + \int_0^T \int_{\gamma_+} \phi(t,x,v_\parallel, - v_3 ) f \hat v_3 \  dv dS_x
\end{split}
\Ee
as $\ell \to \infty$. 
As for the term $\int_0^T \int_{\O \times \mathbb R^3 } f^\ell \mathfrak F^{\ell - 1 } \cdot \nabla_v \phi  dv dx dt$, since
\Be \label{weakfellVMspec2}
\begin{split}
& \int_0^T \int_{\O \times \mathbb R^3 } ( f^\ell \mathfrak F^{\ell - 1 }  -f \mathfrak F) \cdot \nabla_v \phi  dv dx dt 
\\ = &  \int_0^T \int_{\O \times \mathbb R^3 } ( f^\ell - f ) \mathfrak F^{\ell - 1 }  \cdot \nabla_v \phi  dv dx dt  + \int_0^T \int_{\O \times \mathbb R^3 } f (  \mathfrak F^{\ell - 1 } -  \mathfrak F) \cdot \nabla_v \phi  dv dx dt
\end{split}
\Ee
From \eqref{EBellweakcov}, we have $ \int_0^T \int_{\O \times \mathbb R^3 } f (  \mathfrak F^{\ell - 1 } -  \mathfrak F) \cdot \nabla_v \phi  dv dx dt \to 0$ as $\ell \to \infty$. 

Now, let $\text{supp} (\phi) = D$. From Lemma \ref{specpointwise}, $ \mathbf 1_D(t,x,v) f^\ell (t,x,v) $ converges to $\mathbf 1_D(t,x,v) f(t,x,v)$ pointwise almost everywhere. And from \eqref{fellboundspec}, $ | \mathbf 1_D(t,x,v) f^\ell (t,x,v) | \le  \mathbf 1_D(t,x,v) M_1$. Therefore, from the dominated convergence theorem, we have
\[
\int_0^T \int_{\O \times \mathbb R^3} |  \mathbf 1_D  ( f- f^\ell )  | dv dx dt \to 0 \text{ as } \ell \to \infty.
\]
Thus
\Be \label{weakfellVMspec3}
\begin{split}
&  \int_0^T \int_{\O \times \mathbb R^3} ( f- f^\ell ) \mathfrak F^{\ell-1} \cdot \nabla_v \phi dv dx dt 
 \\ \le & \sup_{0 \le t \le T } \| \mathfrak F^{\ell-1} (t) \|_\infty   \sup_{0 \le t \le T } \| \nabla_v \phi (t) \|_\infty \int_0^T \int_{\O \times \mathbb R^3} |  \mathbf 1_D  ( f- f^\ell )  | dv dx dt \to 0 \text{ as } \ell \to \infty.
 \end{split}
\Ee
Put together \eqref{weakfellVMspec}--\eqref{weakfellVMspec3}, we deduce that $(f,E,B)$ satisfy \eqref{weakf}.

Next, using the same argument as in \eqref{Maxwellell}-\eqref{EBellweak2}, we get $(f,E,B)$ satisfy \eqref{Maxweak1} and \eqref{Maxweak2}. 
Therefore, we conclude that $(f,E,B)$ is a (weak) solution of the RVM system \eqref{VMfrakF1}--\eqref{rhoJ1}, with specular BC \eqref{spec}.


Finally, from using the weak lower semi-continuity of the weak-$*$ convergence \eqref{dEnBncovspec}, \eqref{dfnconvergespec}, and the uniform-in-$\ell$ bound \eqref{flElBldsqbdspec}, we conclude \eqref{pfbdlimitspec}, \eqref{pEBbdlimitspec}.
\end{proof}

Lastly, we prove the uniqueness.

\begin{lemma} \label{VMuniqlemmaspec}
Suppose  $(f,E_f, B_f)$ and $(g, E_g, B_g)$ are solutions to the RVM system \eqref{VMfrakF1}--\eqref{rhoJ1}, \eqref{spec} with $f(0) = g(0)$, $E_f(0) = E_g(0)$, $B_f(0) = B_g(0)$, and that 
\[
E_{f}, B_f, E_g, B_g \in W^{1,\infty}((0,T) \times \O ), \ \nabla_x \rho_{f}, \nabla_x J_f,  \p_t J_f , \nabla_x \rho_{g}, \nabla_x J_g,  \p_t J_g \in  L^\infty((0,T); L_{\text{loc}}^p(\O)) \text{ for some } p>1.
\]
And
\Be \label{dvfgbdspec}
\sup_{0 < t < T} \| \langle v \rangle^{4+ \delta} \nabla_v f(t) \|_\infty <\infty, \sup_{0 < t < T} \| \langle v \rangle^{4+ \delta} \nabla_v g(t) \|_\infty <\infty.
\Ee
Then $f = g, E_f = E_g, B_f = B_g$.
\end{lemma}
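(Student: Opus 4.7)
\medskip

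\textbf{Proof proposal for Lemma \ref{VMuniqlemmaspec}.} The plan is to mimic the inflow/diffuse uniqueness arguments (Lemma \ref{VMuniqlemmain} and Lemma \ref{VMuniqlemma}), but replace the role played by the inflow extension / Ukai-type averaging by a direct Lagrangian integration along the specular generalized characteristics $X_{\mathbf{cl}}^f, V_{\mathbf{cl}}^f$ of $f$ defined in \eqref{cycles}. The key structural observation is that the specular boundary condition for the difference $f-g$ reads
\begin{equation*}
(f-g)(t,x,v_\parallel,v_3)=(f-g)(t,x,v_\parallel,-v_3)\quad\text{on }\R_+\times \gamma_-,
\end{equation*}
so that $f-g$ is \emph{preserved} across each specular bounce of $X^f_{\mathbf{cl}}$. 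Since $(f-g)|_{t=0}=0$, this reduces the whole analysis to an integral equation with no boundary contribution at all.

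First I would derive, exactly as in the proof of Lemma \ref{VMuniqlemma}, the equation
\begin{equation*}
(\p_t+\hat v\cdot\nabla_x+\mathfrak F_f\cdot\nabla_v)(f-g)=(\mathfrak F_g-\mathfrak F_f)\cdot\nabla_v g,\qquad \mathfrak F_g-\mathfrak F_f=(E_f-E_g)+\hat v\times (B_f-B_g),
\end{equation*}
and note that $(E_f-E_g,B_f-B_g)\in W^{1,\infty}((0,T)\times\O)$ with zero initial data and with the same conductor/Neumann boundary data as in \eqref{E12sol_A}--\eqref{B12sol_A} (and analogously for the Neumann components). Invoking Lemma \ref{Maxtowave} followed by Lemma \ref{wavesol} and Lemma \ref{wavesolD} then lets me represent $E_f-E_g$ and $B_f-B_g$ by the Glassey--Strauss formulae of Proposition \ref{Eiform} and Proposition \ref{Biform} with $f$ replaced by $f-g$ and \emph{vanishing} initial data. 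Applying the pointwise $L^\infty$ bound of Lemma \ref{EBlinflemma} to this representation gives
\begin{equation*}
\|(\mathfrak F_g-\mathfrak F_f)(s)\|_\infty\ \lesssim\ \|(E_f-E_g)(s)\|_\infty+\|(B_f-B_g)(s)\|_\infty\ \le\ C\sup_{0\le s'\le s}\|\langle v\rangle^{4+\delta}(f-g)(s')\|_\infty.
\end{equation*}

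Next I would integrate the transport identity along the specular characteristic $(X^f_{\mathbf{cl}}(\cdot;t,x,v),V^f_{\mathbf{cl}}(\cdot;t,x,v))$ backward from time $t$ to time $0$. Because $f-g$ is invariant under the specular reflection on $\gamma_-$, the sum of boundary jumps across all bounces vanishes, and since $(f-g)(0,\cdot,\cdot)=0$, one obtains the simple formula
\begin{equation*}
(f-g)(t,x,v)=\int_0^t \bigl[(\mathfrak F_g-\mathfrak F_f)\cdot\nabla_v g\bigr]\bigl(s,X^f_{\mathbf{cl}}(s),V^f_{\mathbf{cl}}(s)\bigr)\,\dd s,
\end{equation*}
valid for almost every $(t,x,v)\notin\gamma_0$. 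Multiplying by $\langle v\rangle^{4+\delta}$ and using the uniform equivalence $\langle v\rangle\sim\langle V^f_{\mathbf{cl}}(s)\rangle$ (as in \eqref{vfracVspec}) together with assumption \eqref{dvfgbdspec} to control $\langle V^f_{\mathbf{cl}}(s)\rangle^{4+\delta}|\nabla_v g|$, and then inserting the field bound from the previous step, produces the closed inequality
\begin{equation*}
\sup_{0\le s\le t}\|\langle v\rangle^{4+\delta}(f-g)(s)\|_\infty\ \le\ C\int_0^t \sup_{0\le s'\le s}\|\langle v\rangle^{4+\delta}(f-g)(s')\|_\infty\,\dd s.
\end{equation*}
Gronwall then forces $f\equiv g$ on $(0,T)$, and consequently $\rho_{f-g}\equiv 0$, $J_{f-g}\equiv 0$; applying Theorem \ref{EBunique} (uniqueness for the Maxwell system) with these vanishing sources yields $E_f\equiv E_g$ and $B_f\equiv B_g$.

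The main obstacle I anticipate is the control of the specular integration on the grazing set. The characteristic flow degenerates near $\gamma_0$ and the number of bounces explodes like $(\alpha\langle v\rangle)^{-1}$ by \eqref{tdiffalpha}--\eqref{ellstarbd}, so one must argue that $(X^f_{\mathbf{cl}},V^f_{\mathbf{cl}})$ is well-defined and the Lagrangian identity holds for a.e.~$(t,x,v)\notin\gamma_0$, and that $\gamma_0$ is a null set for the phase-space measure used in integrating the resulting pointwise bound (which follows from $|\gamma_0|=0$). Once this is in place the proof is completely parallel to the inflow and diffuse uniqueness arguments; the absence of any boundary term --- guaranteed by the invariance of $f-g$ under specular reflection --- is precisely what makes the Gronwall closure work despite the exponential singularity in $\alpha$ that afflicted the regularity estimate \eqref{lemma_Dxv}.
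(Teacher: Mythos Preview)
Your proposal is correct and follows essentially the same route as the paper's proof: set up the difference equation, obtain the Glassey--Strauss representation for $E_f-E_g$ and $B_f-B_g$ via Lemmas \ref{Maxtowave}, \ref{wavesol}, \ref{wavesolD}, integrate along the specular cycles (the paper writes this as the sum \eqref{fnmiterate2finalspec} over bounce intervals, which is exactly your single integral from $0$ to $t$), use \eqref{vfracVspec} and \eqref{dvfgbdspec} to bound $\langle v\rangle^{4+\delta}|\nabla_v g|$ along the trajectory, close with Lemma \ref{EBlinflemma} and Gronwall. Your remarks on the grazing set and on invoking Theorem \ref{EBunique} for the fields are additional care that the paper leaves implicit.
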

\begin{proof}
The difference function $f-g $ satisfies
\Be \label{fminusgeqspec}
\begin{split}
(\p_t + \hat v \cdot \nabla_x + \mathfrak F_f \cdot \nabla_v)(f-g) = (\mathfrak F_g - \mathfrak F_f ) \cdot \nabla_v g 
\\ (f-g)(0) = 0, \, (f- g )(t,x,v)|_{\gamma_-  } =     (f - g) (t,x, v_\parallel, -v_3 ),
\end{split} 
\Ee
where
\[
\mathfrak F_f = E_f + E_{\text{ext}}+ \hat v \times ( B_f + B_{\text{ext}}) - g \mathbf e_3 , \, \mathfrak F_g = E_g  + E_{\text{ext}}+ \hat v \times ( B_g + B_{\text{ext} } )- g \mathbf e_3, 
\]
so
\Be \label{mathfrakFfgspec}
\mathfrak F_g - \mathfrak F_f = E_f - E_g + \hat v \times (B_f - B_g ).
\Ee
From Lemma \ref{Maxtowave} we have $E_{f,1} - E_{g,1} , E_{f,2} - E_{g,2}, B_{f,3} - B_{g,3}$ solve the wave equation with the Dirichlet boundary condition \eqref{waveD} in the sense of \eqref{waveD_weak} with \begin{align}
u_0 = 0, \  u_1 = 0 , \ G = -4\pi \p_{x_i} (\rho_f - \rho_g) - 4 \pi \p_t (J_{f,i} - J_{g,i} ), \ g = 0 , \ \ \text{for} \  E_{f,i} - E_{g,i},  i =1,2, \label{E12sol_B} \\
 u_0 = 0, \ u_1 = 0, \   G =  4 \pi (\nabla_x \times (J_f -J_g) )_3, \  g = 0, \ \ \text{for} \ B_{f,3}- B_{g,3},  \label{B3sol_B}
 \end{align} 
respectively. And $E_{f,3} - E_{g,3}, B_{f,1} - B_{g,1}, B_{f,2}- B_{g,2}$ solve the wave equation with the Neumann boundary condition \eqref{waveNeu}  in the sense of \eqref{waveinner} \text{ with }
\begin{align}
u_0 = 0, \  u_1 = 0 , \ G = -4\pi \p_{x_3} ( \rho_f - \rho_g) - 4 \pi \p_t (J_{f,3} - J_{g,3} ) , \ g = - 4\pi (\rho_f - \rho_g), \ \ \text{for} \  E_{f,3} - E_{g,3}, \label{E3sol_A} \\
 u_0 = 0, \ u_1 = 0, \   G =  4 \pi (\nabla_x \times (J_f - J_g) )_i, \  g = (-1)^{i+1} 4 \pi (J_{f,{\underline i}} - J_{g, \underline i } ), \ \ \text{for} \ B_{f,i} - B_{j,i}, \ i=1,2, \label{B12sol_C}
 \end{align} 
respectively. Therefore, from Lemma \ref{wavesol} and Lemma \ref{wavesolD}, we know that $E_f - E_g$  and $B_f - B_g$ would have the form of
\Be \label{EBdiffformspec}
\begin{split}
& E_f - E_g = \eqref{Eesttat0pos} + \dots + \eqref{Eest3bdrycontri}, \  B_f -B_g = \eqref{Besttat0pos} + \dots + \eqref{Bestbdrycontri},
\\ & \text{ with } E_0, B_0 \text{ changes to } 0, \text{ and } f \text{ changes to } f -g.
\end{split}
\Ee 

Now consider the characteristics
\[
\begin{split}
\dot X_f(s;t,x,v) = & \hat V_f(s;t,x,v) ,
\\ \dot V_f(s;t,x,v) = & \mathfrak F_f(s, X_f(s;t,x,v), V_f(s;t,x,v) ) .
\end{split}
\]
Then from \eqref{fminusgeqspec}, same as \eqref{fnmiterate2spec}, we obtain
\Be \label{fnmiterate2finalspec}
\begin{split}
 &   (f - g)(t,x,v)  
    \\  =  & \int_{t^{}_1}^t  \left(   ( \mathfrak F_g - \mathfrak F_f ) \cdot \nabla_v g \right)(s, \dot X(s) , \dot V(s) ) ds
    \\ & +     \sum_{i=1}^{k-2} \int_{t^{}_{i+1}}^{t^{}_{i} }  \left(  ( \mathfrak F_g^{} - \mathfrak F_f^{} ) \cdot \nabla_v g^{} \right) ( s, \cdot X^{} (s), \cdot V^{} (s)) ds 
    \\ & +  \int_0^{ t^{}_{k-1} }  \left(  ( \mathfrak F_g^{} - \mathfrak F_f^{} ) \cdot \nabla_v g^{} \right) ( s, \cdot X^{} (s), \cdot V^{} (s)) ds.
\end{split}
\Ee
So using \eqref{vfracVspec}, \eqref{dvfgbdspec}, we have
\Be \label{fgspecrep}
\begin{split}
\sup_{ 0 \le s \le t } \| \langle v \rangle^{4 + \delta} (f-g)(s) \|_\infty \le & \sup_{0 \le t < T}   \|\langle v \rangle^{4 + \delta} \nabla_v g (t) \|_\infty    \int^t_0 \sup_{0 \le s' \le s } \|  (\mathfrak F_g - \mathfrak F_f )(s') \|_\infty ds
\\ \le & C  \int^t_0 \sup_{0 \le s' \le s } \|  (\mathfrak F_g - \mathfrak F_f )(s') \|_\infty ds .
\end{split}
\Ee
Now, from \eqref{EBdiffform} and the estimate in Lemma \ref{EBlinflemma}, we have
\Be \label{FgFfdiff}
\begin{split}
\sup_{0 \le s' \le s} \|  (\mathfrak F_g - \mathfrak F_f )(s') \|_\infty \le & \sup_{0 \le s' \le s}  \| (E_f - E_g )(s') \|_\infty + \sup_{0 \le s' \le s}  \| (B_f - B_g )(s') \|_\infty
\\ \le & C \sup_{0 \le s' \le s } \| \langle v \rangle^{4 + \delta} (f-g )(s' ) \|_\infty,
\end{split}
\Ee
Therefore from \eqref{fgspecrep} and \eqref{FgFfdiff}, we have
\Be
\sup_{0 \le s \le t } \|  \langle v \rangle^{4 +\delta} (f-g)(s) \|_\infty \le C' \int^t_{0 } \sup_{0 \le s' \le s } \|  \langle v \rangle^{4 +\delta}(f-g )(s' ) \|_\infty  ds.
\Ee
Therefore from Gronwall
\[
\sup_{0 \le s' \le t } \|  \langle v \rangle^{4 +\delta} (f-g)(s') \|_\infty \le   e^{C't}  \|  \langle v \rangle^{4 +\delta} (f-g)(0) \|_\infty = 0.
\]
Therefore we conclude that the solutions to \eqref{VMfrakF1}--\eqref{rhoJ1}, \eqref{spec} is unique.
\end{proof}

We conclude the section by proving Theorem \ref{main3}.
%

\begin{proof}[proof of Theorem \ref{main3}]
Using the sequence $f^\ell, E^\ell , B^\ell$ constructed in \eqref{fellseqspec}, \eqref{ElBlspec}, we have from Lemma \ref{VMuniqlemmaspec} that the limit $(f,E,B)$ is a solution to the RVM system \eqref{VMfrakF1}--\eqref{rhoJ1}, \eqref{diffuseBC}, and it satisfies the regularity estimate \eqref{specfbd}, \eqref{inflowEBreg}. And from Lemma \ref{VMuniqlemmaspec}, we conclude the uniqueness.
\end{proof}

\appendix

\section{
}

In terms of energy. We have from \eqref{Maxwell}, we have
\Be \label{ptEE}
\int_{\O} ( \p_t E \cdot E ) dx  = \int_\O ( c(\nabla_x \times B ) \cdot E - 4 \pi J \cdot E ) dx,
\Ee
and
\Be \label{ptBB}
\int_{\O} ( \p_t B \cdot B ) dx  = \int_\O c( - \nabla_x \times E ) \cdot B dx,
\Ee
Adding \eqref{ptEE} and \eqref{ptBB} we have
\Be
\begin{split}
	& \p_t   \int_\O  \left(\frac{|E|^2}{2} + \frac{|B|^2}{2}  \right)  dx 
	\\ =  & \int_{\O } c \big(  (\p_2 B_3 - \p_3 B_2) E_1 - (\p_1 B_3 - \p_3 B_1) E_2 + ( \p_1 B_2 - \p_2 B_1) E_3
	\\ &  -  (\p_2 E_3 - \p_3 E_2) B_1 + (\p_1 E_3 - \p_3 E_1) B_2 - ( \p_1 E_2 - \p_2 E_1) B_3 \big) dx - \int_\O  (4 \pi J \cdot E )dx.
\end{split}
\Ee
From integration by parts and the perfect conductor boundary condition \eqref{E12B3bc}, 
\[
\begin{split}
	& \int_{\O } \big(  (\p_2 B_3 - \p_3 B_2) E_1 - (\p_1 B_3 - \p_3 B_1) E_2 + ( \p_1 B_2 - \p_2 B_1) E_3
	\\ &  -  (\p_2 E_3 - \p_3 E_2) B_1 + (\p_1 E_3 - \p_3 E_1) B_2 - ( \p_1 E_2 - \p_2 E_1) B_3 \big) dx
	\\ = & \int_\O \left( - \p_3 B_2 E_1 + \p_3 B_1 E_2 + \p_3 E_2 B_1 - \p_3 E_1 B_2 \right) dx
	\\ = & \int_{\p \O } \left( - E_2 B_1 + E_1 B_2 \right) d x_\parallel  = 0
\end{split}
\]
Therefore,
\Be \label{pEEpBB}
\p_t   \int_\O  \left(\frac{|E|^2}{2} + \frac{|B|^2}{2}  \right)  dx  = - \int_\O  (4 \pi J \cdot E )dx.
\Ee
On the other hand, define
\Be
\langle v \rangle_\pm := \sqrt{ m_\pm^2 + |v|^2 / c^2 }.
\Ee
Note that $\hat v_\pm = \frac{v}{\langle v \rangle_\pm } $. Adding up the integration of $\langle v \rangle_\pm \times   \eqref{VMfrakF}_\pm$  for both $f_+$ and $f_-$ gives
\Be \label{vfint}
\begin{split}
	&  \p_t \int_{\O \times \mathbb R^3}  (\langle v \rangle_+ f_+ + \langle v \rangle_- f_-) dvdx 
	\\  = &  - \int_{\O \times \mathbb R^3} ( \langle v \rangle_+ \hat v_+ \cdot \nabla_x f_+ + \langle v \rangle_- \hat v_- \cdot \nabla_x f_-)  dv dx -   \int_{\O \times \mathbb R^3} \left(  \langle v \rangle_+ \mathfrak F_+ \cdot \nabla_v f_+ +  \langle v \rangle_-  \mathfrak F_- \cdot \nabla_v f_- \right) dv dx
	\\ = & \int_{\p \O  \times \mathbb R^3 }    v_3 ( f_{+} +   f_{-} )  dv dS_x + \int_{\O \times \mathbb R^3} \frac{1}{c^2}   (  \hat v_+ \cdot \mathfrak F_+ f_+ + \hat v_- \cdot \mathfrak F_- f_- )   dv dx,
\end{split}
\Ee
where we've used $ \nabla_v \langle v \rangle_\pm = \frac{1}{c^2} \hat v_\pm $, and that $ \nabla_v \cdot \mathfrak F_\pm = 0 $. Now, since
\[
\hat v_\pm \cdot \left( \hat v_\pm \times  B \right)  = 0, 
\]
we have
\Be \label{jFint}
\begin{split}
	& \frac{1}{c^2} \int_{\O \times \mathbb R^3}   [ \hat v_+ \cdot \mathfrak F_+ f_+ +  \hat v_- \cdot \mathfrak F_- f_- ]   dv dx  
	\\  = & \frac{1}{c^2}  \int_{\O \times \mathbb R^3 }  (\hat v_+  ( e_+ (E + E_{\text{ext}})   - m_+ g \mathbf e_3 ) f_+ + \hat v_- ( e_- (E + E_{\text{ext}}) - m_- g \mathbf e_3 ) f_-   )  dv dx
	\\ = & \frac{1}{c^2}  \left( \int_\O ( J \cdot ( E + E_{\text{ext}} )) dx -  g \int_{\O \times \mathbb R^3} ( \hat v_{+,3} m_+ f_+ + \hat v_{-,3} m_- f_- ) dv dx \right).
\end{split}
\Ee
Adding up \eqref{pEEpBB}, \eqref{vfint}, and \eqref{jFint}, we get
\Be
\begin{split}
	& \frac{ \p}{\p t }  \left(   \int_\O  \left(\frac{|E|^2}{2} + \frac{|B|^2}{2}  \right)  dx +   4 \pi c^2 \int_{\O \times \mathbb R^3}   (\langle v \rangle_+ f_+ + \langle v \rangle_- f_-) dvdx  \right) 
	\\ = & 4\pi c^2 \int_{\gamma_- }   v_3  (f_{+} + f_{-} ) dv dS_x +  4\pi c^2 \int_{\gamma_+ }  v_3    (f_{+} + f_{-} ) dv dS_x
	\\ & + 4 \pi \int_{\O} ( J \cdot E_{\text{ext}} ) dx  - 4 \pi g  \int_{\O \times \mathbb R^3}  ( \hat v_{+,3} m_+ f_+ + \hat v_{-,3} m_- f_- ) dv dx.
\end{split}
\Ee
For the specular BC \eqref{spec},
\[
\int_{v_3 < 0} v_3 f_\pm dv  = -  \int_{v_3 > 0 } v_3  f_\pm dv \text{ for } x \in \p \O.
\]
Thus a solution of the system \eqref{VMfrakF}-\eqref{rhoJ} with the specular BC \eqref{spec} has
\Be
\begin{split}
	& \frac{ \p}{\p t }  \left(   \int_\O  \left(\frac{|E|^2}{2} + \frac{|B|^2}{2}  \right)  dx +   4 \pi c^2 \int_{\O \times \mathbb R^3}   (\langle v \rangle_+ f_+ + \langle v \rangle_- f_-) dvdx  \right) 
	\\ = & 4 \pi E_e \int_{\O  \times \mathbb R^3 }( \hat v_{+,3} e_+ f_+ + \hat v_{-,3} e_- f_- ) dv dx  - 4 \pi g  \int_{\O \times \mathbb R^3} ( \hat v_{+,3} m_+ f_+ + \hat v_{-,3} m_- f_- ) dv dx.
\end{split}
\Ee
\hide

\section{}

We finish the section by demonstrating that $\p_3 E_3$, $\p_3 B_1$, $\p_3 B_2$ have a trace at $\p\O$:
\begin{remark}
	Suppose $\p_t E, \nabla_x E, \p_t B, \nabla_x B \in L^\infty((0,T) \times \O)$, and $ \nabla \rho, \p_t J , \nabla_x J \in  L^\infty((0,T); L_{\text{loc}}^p(\O))$ for $p>1$. Suppose 
	\Be\label{divE=rho}
	\nabla_x \cdot E = 4 \pi \rho, \ \p_t E = \nabla_x \times B - 4 \pi J  \ \text{ in the sense of distribution }  \mathcal{D}((0,T) \times \O),
	\Ee	
	and $E_\parallel = (E_1, E_2)$ is a weak solution to 
	\Be
	\begin{split}\label{pde:E_tan}
		\p_t^2 E_\parallel - \Delta E_\parallel, = - 4\pi \nabla_\parallel \rho -4 \pi \p_{t} J_\parallel  \ \ &\text{in }  [0,T] \times \O\\
		E_\parallel =0  \ \ &\text{on } \p\O;
	\end{split}
	\Ee
	$B_3$ is a weak solution to
	\Be
	\begin{split}\label{pde:B3}
		\p_t^2 B_3 - \Delta B_3 = 4 \pi \p_1J_2 - 4\pi \p_2 J_1  \ \ &\text{in }  [0,T] \times \O\\
		B_3 =0  \ \ &\text{on } \p\O;
	\end{split}
	\Ee
	
	Then 
	\Be\label{trace:E_3}
	\p_3 E_3  \ \text{has a trace and } 	\p_3 E_3(t,x) = 4\pi \rho(t,x) \ \ \text{on} \  \  (0,T) \times \p\O \ \text{ a.e.},
	\Ee
	and for $i=1,2$,
	\Be\label{trace:B_tan}
	\p_3 B_i  \ \text{has a trace and } 	\p_3 B_i(t,x) = (-1)^{i+1} 4\pi J_{\underline i } (t,x) \ \ \text{on} \  \  (0,T) \times \p\O \ \text{ a.e.}.
	\Ee
	Here, we have used a convenient notation:
	\Be\label{def_under_i}
	\underline i = \begin{cases} 2, \text{ if } i=1, \\ 1, \text{ if } i=2. \end{cases}
	\Ee
\end{remark}
\begin{proof}
	Step 1. Let $\phi (t) \in C([0,T]) \cap W^{1,1}(0,T)$ which is compactly supported in $(0,T)$, meaning that $\phi(0)= 0= \phi(T)$. We test $\phi (t)$ to \eqref{pde:E_tan}: this is possible since $E_\parallel$ might have a trace so the test function needs not vanish at the boundary $[0,T] \times \p\O$. Then $\bar E_\parallel (x) = \int_0^T E_\parallel (s, x)  \phi(s) \dd s  \in W^{1, \infty}(\O)$ solves
	\Be\label{pde:barE_tan}
	\begin{split}
		- \Delta \bar E_\parallel = \int_0^T \p_t E_\parallel (s,x) \p_t \phi(s) \dd s 
		-4 \pi \int_0^T   \nabla_\parallel \rho \phi(s) \dd s  -4 \pi \int_0^T    \p_t  J_\parallel(s,x ) \phi (s) \dd s   \ \ &\text{in }  \O,\\
		\bar{E}_\parallel =0 \ \ &\text{ on }   \ \p\O.
	\end{split}
	\Ee
	Note that this is a Poisson equation of an $L^p(\O)$-source term with zero Dirichlet boundary condition. From the elliptic regularity theory, we have 
	\Be
	\bar E_\parallel = \int^T_0 E_\parallel (s,x) \phi(s) \dd s   \in W_{\text{loc}}^{2,p}(\O)
	\ \ \text{and } \ \	 \nabla_\parallel \bar E_\parallel  \in W_{\text{loc}}^{1,p}(\O).
	\Ee
	Then by the trace theorem $W_{\text{loc}}^{1,p}(\O) \rightarrow W_{\text{loc}}^{1-1/p,p}(\p\O)$ (\cite{Leoni}, Theorem 18.27, Page 608), we conclude that 
	\Be
	\nabla_\parallel \bar E_\parallel \ \text{has a trace and } \ 	\nabla_\parallel \bar E_\parallel \in W_{\text{loc}}^{1-1/p,p}(\p\O).
	\Ee
	
	On the other hand, we note that $\nabla_\parallel \bar E_\parallel (x) = \int_0^T \nabla_\parallel E_\parallel (s, x)  \phi(s) \dd s$ a.e. in $\O$. Therefore we deduce that 
	\Be
	\int_0^T \nabla_\parallel E_\parallel (s, x)  \phi(s) \dd s \ \text{has a trace and } \ 	\int_0^T \nabla_\parallel E_\parallel (s, x)  \phi(s) \dd s \in W_{\text{loc}}^{1-1/p,p}(\p\O),
	\Ee
	for any	$\phi (t) \in C([0,T]) \cap W^{1,1}(0,T)$ which is compactly supported in $(0,T)$. Since the dual of $W_0^{1,1}((0,T) )$ is $W^{-1,\infty}((0,T))$, we conclude that 
	\Be\label{trace:NE_p}
	\nabla_\parallel E_\parallel
	\in W^{-1, \infty} ((0,T) ; W_{\text{loc}}^{1,p} (\O))
	\ \text{has a trace and } \ \nabla_\parallel E_\parallel  \in W^{-1, \infty} ((0,T); W_{\text{loc}}^{1-1/p,p}(\p\O)).
	\Ee

	On the other hand, since uniformly continuous function can be extended on the boundary, we regard $E$ to be continuous functions in $\bar \O$. For the continuous functions $E_\parallel |_{\p\O}=0$, we derive that 
	\Be\label{bdry:N_pE}
	\nabla_{\parallel} E_\parallel(x_1,x_2,0) =0 \ \ \text{for almost every } (x_1, x_2) \in \R^2.
	\Ee
	\hide

	Since a distributional derivative $\nabla_x \cdot E$ equals $4 \pi \rho$ in $\O$, $\nabla_x \cdot E$ is also continuous function in $\bar \O$.

	On the other hand,  Therefore we conclude that a distributional derivative $\p_3 E_3= \nabla_x \cdot E$ on $\p\O$.

	Moreover, since $E_\parallel=0$ in $\p\O$ now we conclude that 
	\Be\label{bdry:N_pE}
	\nabla_\parallel E_\parallel(t,\cdot ) =0 \ \text{ a.e.  } \ x \in \O \ \text{ almost all } \ t \in (0,T). 
	\Ee\unhide

	Step 2. From \eqref{divE=rho}, \eqref{trace:NE_p}, and $\nabla \rho \in L^\infty((0,T) ; L_{\text{loc}}^p(\O))$, we have 
	\Be
	\p_3 E_3 = - \p_1 E_1 - \p_2 E_2 + 4 \pi \rho\in W^{-1, \infty} ((0,T) ; W_{\text{loc}}^{1,p} (\O)).
	\Ee
	From the trace theorem $W^{-1,\infty} ((0,T) ; W_{\text{loc}}^{1,p} (\O)) \rightarrow W^{-1,\infty} ((0,T) ; W_{\text{loc}}^{1-1/p,p} (\p\O))$ and \eqref{bdry:N_pE}, finally we prove that 
	\Be\label{trace:-1infty}
	\p_3 E_3 (t,x) - 4\pi \rho(t,x)=0 \ \ \text{ in } \ \ W^{-1,\infty} ((0,T) ; W_{\text{loc}}^{1-1/p,p} (\p\O)).
	\Ee
	
	Note that $W^{-1,\infty} (0,T)$ should be considered as a subspace of $\mathcal{D}^\prime(0,T)$. For $g (t)\in W^{-1,\infty} (0,T)$, there exist $\psi_0(t) \in L^\infty$ and $\psi_1(t) \in L^\infty$ such that 
	\Be
	\int^T_0g(s) \phi(s) \dd s = 	\int^T_0\psi_0(s) \phi(s) \dd s
	- 	\int^T_0\psi_1(s)\p_t  \phi(s) \dd s \ \ \text{for all } \phi \in W^{1,1}_0 (0,T)
	.	\Ee
	Therefore \eqref{trace:-1infty} implies that $\p_3 E_3 (t,\cdot) = 4\pi \rho(t,\cdot)$ for almost every $t \in (0,T)$. Therefore we conclude \eqref{trace:E_3}.
	
	Step 3. Next, from \eqref{pde:B3} and using the same argument as in Step 1, we get
	\Be\label{trace:B3_p}
	\nabla_\parallel B_3
	\in W^{-1, \infty} ((0,T) ; W_{\text{loc}}^{1,p} (\O))
	\ \text{has a trace and } \ \nabla_\parallel B_3  \in W^{-1, \infty} ((0,T); L_{\text{loc}}^p(\p\O)).
	\Ee 
	And by regarding $B_3$ to be a continuous function in $\bar \O$ and $B_3 |_{\p \O} = 0$, we have
	\Be\label{bdry:N_B3}
	\nabla_{\parallel} B_3 (x_1,x_2,0) =0 \ \ \text{for almost every } (x_1, x_2) \in \R^2.
	\Ee
	
	Now, for any $\phi (t) \in C([0,T]) \cap W^{1,1}(0,T)$ which is compactly supported in $(0,T)$,
	\Be
	\int_0^T \p_t E_\parallel (s,x) \phi(s) \dd s = \int_0^T E_\parallel (s,x) \p_t \phi(s) \dd s \in W^{1,\infty}(\O ). 
	\Ee	
	Thus	
	\[
	\int_0^T \p_t E_\parallel (s, x)  \phi(s) \dd s \ \text{has a trace and } \ 	\int_0^T \p_t E_\parallel (s, x)  \phi(s) \dd s \in L^\infty(\p\O),
	\]	
	Again, using the dual of $W_0^{1,1}((0,T) )$ is $W^{-1,\infty}((0,T))$, we deduce that
	\Be\label{trace:ptE}
	\p_t E_\parallel
	\in W^{-1, \infty} ((0,T) ; W^{1,\infty} (\O))
	\ \text{has a trace and } \ \p_t E_\parallel  \in W^{-1, \infty} ((0,T); L^{\infty}(\p\O)).
	\Ee 
	On the other hand, we regard $E$ to be continuous function in $\bar \O$ and $E_\parallel |_{\p \O } = 0$, we derive that
	\Be \label{ptEtan0}
	\p_t E_\parallel (x_1, x_2, 0 ) ) =0 \ \ \text{for almost every } (x_1, x_2) \in \R^2.
	\Ee
	From \eqref{divE=rho}, \eqref{trace:B3_p}, \eqref{trace:ptE}, and $\nabla j \in L^\infty((0,T) ; L_{\text{loc}}^p(\O) ) $, we have
	\[
	\begin{split}
		\p_3 B_1 = & \p_t E + \p_1 B_3 + 4 \pi J_2 \in W^{-1, \infty} ((0,T) ; W_{\text{loc}}^{1,p} (\O)),
		\\ \p_3 B_2 =&  - \p_t E + \p_2 B_3 - 4 \pi J_1 \in W^{-1, \infty} ((0,T) ; W_{\text{loc}}^{1,p} (\O)).
	\end{split}
	\]
	From the trace theorem $W^{-1,\infty} ((0,T) ; W_{\text{loc}}^{1,p} (\O)) \rightarrow W^{-1,\infty} ((0,T) ;L_{\text{loc}}^p (\p\O))$ and \eqref{bdry:N_B3}, \eqref{ptEtan0}, finally we prove that 
	\Be\label{trace:-1infty1}
	\p_3 B_1 - 4 \pi J_2=0, \ \text{ and } \p_3 B_2 + 4\pi J_1 = 0  \ \ \text{ in } \ \ W^{-1,\infty} ((0,T) ; L_{\text{loc}}^{p} (\p\O)).
	\Ee
	This implies that $\p_3 B_1 (t,\cdot) =  4\pi J_2(t,\cdot)$, and $\p_3 B_2 (t,\cdot) = - 4\pi J_1(t,\cdot)$ for almost every $t \in (0,T)$. Therefore we conclude \eqref{trace:B_tan}.\end{proof}

\unhide

\end{document}